\documentclass[10pt,notitlepage]{amsart} 
\usepackage[french,english]{babel}
\usepackage[utf8]{inputenc}
\usepackage[T1]{fontenc}
\usepackage{amsmath}
\usepackage{amsthm}
\usepackage{amsfonts}
\usepackage{amssymb,amscd,epsf,verbatim,mathtools,framed}
\usepackage{mathrsfs}
\usepackage{graphicx}
\usepackage{latexsym}
\usepackage{lscape}
\usepackage{extarrows}
\usepackage[colorlinks=true]{hyperref}
\hypersetup{colorlinks, citecolor=blue, filecolor=black, linkcolor=purple, urlcolor=violet}
\usepackage{epstopdf}
\usepackage{tikz}
\usetikzlibrary{calc}
\usetikzlibrary{matrix,arrows,decorations.pathmorphing}
\usepackage{tikz-cd}
\usepackage[all,cmtip]{xy}
\usepackage{color}
\usepackage{multirow}  
\usepackage{scalefnt}
\usepackage{fancyhdr}
\usepackage{enumitem}
\usepackage[margin=1.25in]{geometry}

\setlist[enumerate]{leftmargin=1cm}
\setlist[itemize]{leftmargin=1cm}

\usepackage{relsize} 
\usepackage[bbgreekl]{mathbbol} 
\DeclareSymbolFontAlphabet{\mathbb}{AMSb} 
\DeclareSymbolFontAlphabet{\mathbbl}{bbold} 
\newcommand{\Prism}{{\mathlarger{\mathbbl{\Delta}}}}

\newcommand{\Z}{\mathbb{Z}}

\newcommand{\N}{\mathbb{N}}

\newcommand{\Q}{\mathbb{Q}}
\newcommand{\A}{\mathbb{A}}
\renewcommand{\L}{\mathbb{L}}

\newcommand{\B}{\mathbb{B}}

\newcommand{\mD}{\mathcal{D}}
\newcommand{\mE}{\mathcal{E}}
\newcommand{\mF}{\mathcal{F}}

\newcommand{\mH}{\mathcal{H}}
\newcommand{\mI}{\mathcal{I}}

\newcommand{\mK}{\mathcal{K}}
\newcommand{\mM}{\mathcal{M}}

\newcommand{\mO}{\mathcal{O}}

\newcommand{\mS}{\mathcal{S}}

\newcommand{\fm}{\mathfrak{m}} 
\newcommand{\fU}{\mathfrak{U}}
\newcommand{\fX}{\mathfrak{X}}
\newcommand{\fY}{\mathfrak{Y}}
\newcommand{\fD}{\mathfrak{D}}
\newcommand{\fB}{\mathfrak{B}}
\newcommand{\fS}{\mathfrak{S}}
\newcommand{\fM}{\mathfrak{M}}

\newcommand{\bM}{\mathbb{M}}
\newcommand{\bV}{\mathbb{V}}

\newcommand{\ul}{\underline}

\newcommand{\ra}{\rightarrow}

\newcommand{\minus}{\backslash}

\DeclareMathOperator{\Hom}{Hom}

\newcommand{\BKF}{\mathrm{BKF}}
\newcommand{\DBKF}{\mathrm{DBKF}}

\newcommand{\dR}{\mathrm{dR}}
\newcommand{\CR}{\mathrm{Vect}}

\newcommand{\crys}{\mathrm{crys}}
\newcommand{\Acrys}{A_{\crys}}
\newcommand{\perf}{\mathrm{perf}}
\newcommand{\vp}{\varphi}
\newcommand{\Vect}{\mathrm{Vect}}
\newcommand{\an}{\mathrm{an}}
\newcommand{\Loc}{\mathrm{Loc}}
\newcommand{\st}{{\mathrm{st}}}
\newcommand{\DD}{\mathrm{DD}}
\newcommand{\txi}{{\widetilde{\xi}}}
\newcommand{\conv}{\mathrm{conv}}
\newcommand{\logAinf}{{(\ul\Ainf, \txi)}}
\newcommand{\logqAinf}{{(\ul\Ainf, \xi)}}
\newcommand{\ev}{\mathrm{ev}}
\newcommand{\Strat}{\mathrm{Strat}}
\newcommand{\htimes}{\widehat{\otimes}}
\newcommand{\Kos}{\mathrm{Kos}}
\newcommand{\MIC}{\mathrm{MIC}}
\renewcommand{\le}{\leqslant}
\renewcommand{\ge}{\geqslant}
\newcommand{\simra}{\xrightarrow{\sim}}
\newcommand{\nilp}{\mathrm{nilp}}
\newcommand{\Fil}{\mathrm{Fil}}
\newcommand{\Perfd}{\mathrm{Perfd}}

\newcommand{\MF}{\mathrm{MF}}
\newcommand{\hMF}{\widehat{\MF}}

\newcommand{\hB}{\widehat{B}}

\newcommand{\hD}{{\widehat{D}}}
\newcommand{\rank}{\mathrm{rank}}
\newcommand{\DR}{\mathrm{DR}}

\newcommand{\pre}{\mathrm{pre}}
\newcommand{\Mod}{\mathrm{Mod}}
 
\newcommand{\Ainf}{A_{\mathrm{inf}}}

\newcommand{\Ainfx}{\mathbb{A}_{\mathrm{inf}, X}}
\newcommand{\AinfU}{\mathbb{A}_{\mathrm{inf}, U}}

\newcommand{\logcrys}{\mathrm{logcrys}}

\DeclareMathOperator{\psh}{psh}

\DeclareMathOperator{\lra}{\: \longrightarrow \:}

\DeclareMathOperator{\gp}{{\textup{gp}}}
\newcommand{\cont}{{\mathrm{cont}}}
\newcommand{\ct}{\mathrm{ct}}
\newcommand{\Isoc}{\mathrm{Isoc}}

\newcommand{\gr}[1]{\langle {#1} \rangle} 
\DeclareMathOperator{\Rep}{Rep}
\DeclareMathOperator{\spec}{Spec}
\DeclareMathOperator{\spf}{Spf}
\DeclareMathOperator{\spa}{Spa}

\newcommand{\ett}{\mathrm{\acute{e}t}}
\newcommand{\ket}{{\mathrm{k\acute{e}t}}}

\newcommand{\proet}{{\mathrm{pro\acute{e}t}}}
\newcommand{\proket}{{\mathrm{prok\acute{e}t}}}

\DeclareMathOperator{\Rlim}{\underset{\longleftarrow}{Rlim}}

\newcommand{\bi}{\begin{itemize}}
\newcommand{\ei}{\end{itemize}}
\newcommand{\bt}{\begin{theorem}}
\newcommand{\et}{\end{theorem}}
\newcommand{\bbt}{\begin{theorem*}}
\newcommand{\eet}{\end{theorem*}}
\newcommand{\bp}{\begin{proposition}}
\newcommand{\ep}{\end{proposition}}
\newcommand{\bl}{\begin{lemma}}
\newcommand{\el}{\end{lemma}}
\newcommand{\bbl}{\begin{lemma*}}
\newcommand{\eel}{\end{lemma*}}
\newcommand{\bc}{\begin{corollary}}
\newcommand{\ec}{\end{corollary}}
\newcommand{\beg}{\begin{example}}
\newcommand{\eeg}{\end{example}}
\newcommand{\br}{\begin{remark}}
\newcommand{\er}{\end{remark}}
\newcommand{\bbr}{\begin{remark*}}
\newcommand{\eer}{\end{remark*}}
\newcommand{\bd}{\begin{definition}}
\newcommand{\ed}{\end{definition}}
\newcommand{\be}{\begin{enumerate}}
\newcommand{\ee}{\end{enumerate}}
\newcommand{\bex}{\begin{exercise}}
\newcommand{\eex}{\end{exercise}}
\newcommand{\bproof}{\begin{proof}}
\newcommand{\eproof}{\end{proof}}

\theoremstyle{definition}
\newtheorem{theorem}{Theorem}[section] 
\newtheorem{example}[theorem]{Example}
\newtheorem{definition}[theorem]{Definition}
\newtheorem{proposition}[theorem]{Proposition} 

\newtheorem{lemma}[theorem]{Lemma}
\newtheorem{corollary}[theorem]{Corollary}
\newtheorem{construction}[theorem]{Construction} 

\newtheorem{notation}[theorem]{Notation}
\newtheorem{remark}[theorem]{Remark}

\newtheorem{assumption}[theorem]{Assumption}

\usepackage{etoolbox}
\patchcmd{\section}{\scshape}{\bfseries}{}{}
\makeatletter
\renewcommand{\@secnumfont}{\bfseries}
\makeatother

\numberwithin{equation}{section}

\title[Logarithmic $A_{\mathrm{inf}}$--cohomology: Part II]{  Logarithmic $\scalebox{1.3}{$A_{\mathrm{inf}}$}$--cohomology: Part II \\ \vspace{0.08in} \large $\textup{---  \normalfont  cohomology with coefficients and the}$ $C_{\mathrm{st}}$ $\textup{\normalfont  conjecture}$ }

\author{Hansheng Diao, Zhefan Duan, and Zijian Yao}

\date{}

\begin{document}

\begin{abstract}
We develop a theory of logarithmic $\Ainf$-cohomology with coefficients for a class of $p$-adic log formal schemes that are ``sufficiently log smooth'', where the coefficients are given by relative log BKF modules. Then we establish comparison isomorphisms with \'etale, de Rham, and crystalline cohomology, and also extend these results to the derived setting. As an application, we give a new proof of the $C_{\mathrm{st}}$ conjecture for semistable local systems. The proof also uses the prismatic interpretation of semistable local systems established by Du--Liu--Moon--Shimizu.
\end{abstract}
 
\maketitle
\thispagestyle{empty}

\vspace{-0.2in}

\tableofcontents

\setlength{\parskip}{0.15em}

\section{Introduction} \label{section:intro}
\noindent This article is a continuation of \cite{log1}. It is organized in two parts.
\begin{enumerate}
    \item[(a)] We develop the theory of log $\Ainf$-cohomology with coefficients in so-called relative log Breuil--Kisin--Fargues modules, generalizing the works of \cite{BMS1}, \cite{MT}, and \cite{log1}.
    \item[(b)] As an application, we give a new proof of the semistable comparison theorem for semistable local systems.
\end{enumerate}

In particular, in part (b), we prove the following main theorem. For the setup, we fix a prime number $p$. Let $K$ be a $p$-adic field; i.e., a discretely-valued nonarchimedean field of mixed characteristic $(0,p)$ with perfect residue field. Let $\mO_K$ be its ring of integers and $k$ be the residue field. Let $K_0=W(k)$. Moreover, let $G_K=\mathrm{Gal}(\overline{K}/K)$ be the absolute Galois group.

\begin{theorem}[{Semistable comparison theorem, Theorem~\ref{thm: semistable comparison}}]\label{mainthm}
    Let $\fY$ be a proper semistable formal scheme over $\mO_K$ with adic generic fiber $Y$ and special fiber $\fY_k$.\footnote{Here, $\fY$ is equipped with the divisorial log structure given by the special fiber and $\fY_k$ is equipped with the pullback log structure from $\fY$. The ring $B_{\mathrm{st}}$ stands for the semistable period ring of Fontaine. $H^i_{\ett}$ and $H^i_{\logcrys}$ denote the $p$-adic \'etale cohomology and log-crystalline cohomology, respectively.}
    Let $\L$ be a semistable $\Z_p$-local system on $X$ and let $\mE_{\crys,\Q}$ be the associated $F$-isocrystal on the log crystalline site $(\fY_k/W(\ul k))_{\crys}$. Then there are natural isomorphisms
        \[
           C_{\st} \colon H^i_{\ett}(Y_{\overline{K}}, \L) \otimes_{\Z_p}B_\st \simra H^i_{\logcrys}(\fY_k/W(\ul k), \mE_{\crys,\Q})\otimes_{K_0}B_{\st},
        \]
compatible with $G_K$-actions, Frobenii, filtrations,\footnote{There are natural filtrations on both sides after tensor with the de Rham period ring $B_{\mathrm{dR}}$.} and monodromy operators.
\end{theorem}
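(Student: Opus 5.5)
Following the strategy of Bhatt--Morrow--Scholze, as carried out in the logarithmic setting of \cite{log1}, I would pass through a single integral cohomology theory that specializes to both sides. The first step is to translate the coefficients. By the prismatic characterization of semistable local systems due to Du--Liu--Moon--Shimizu, $\L$ corresponds to a log prismatic $F$-crystal on $\fY$. Evaluating this crystal on perfectoid log covers produces a relative log BKF module $\bM$ on the pro-Kummer-\'etale site of $Y_{\overline{K}}$ (the base change along $\mO_K\to\mO_C$, $C=\widehat{\overline K}$). We then have $\bM\otimes \Ainf[1/\mu]\simeq \L\otimes\Ainfx[1/\mu]$, and the crystalline specialization of $\bM$ recovers $\mE_{\crys}$.

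Next I would form the log $\Ainf$-cohomology with coefficients,
\[
R\Gamma_{\Ainf}(\fY_{\mO_C},\bM)=R\Gamma(\fY_{\mO_C}, L\eta_\mu R\nu_*\bM),
\]
and prove two comparison theorems for it. The \'etale comparison reads
\[
R\Gamma_{\Ainf}\otimes\Ainf[1/\mu]\simeq R\Gamma_\ett(Y_C,\L)\otimes\Ainf[1/\mu].
\]
It follows from a primitive comparison theorem for $\L$, which holds because $\fY$ is proper, together with the fact that $L\eta_\mu$ does not change anything after inverting $\mu$. The crystalline comparison identifies the base change of $R\Gamma_{\Ainf}$ along $\Ainf\to \Acrys$ with $R\Gamma_{\logcrys}(\fY_{\mO_C/p}/\Acrys,\mE_{\crys})$. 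To prove it, I would compute locally, on small semistable charts, with the Koszul complexes of the $q$-connection. The operation $L\eta_\mu$ turns the Galois cohomology into a $q$-de Rham complex with coefficients, and after tensoring to $\Acrys$ this becomes the log de Rham complex of the crystal. These local identifications then have to be glued functorially.

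To obtain $C_\st$, I would use the Hyodo--Kato-type identification in the Beilinson style. After inverting $p$, there is an isomorphism
\[
R\Gamma_{\logcrys}(\fY_{\mO_C/p}/\Acrys,\mE)[1/p]\simeq R\Gamma_{\logcrys}(\fY_k/W(\ul k),\mE)\otimes_{K_0} B^+_{\st}\ \text{(after $N$-invariants)}.
\]
This comes from the log crystalline site over $\Acrys$ with the log structure generated by a compatible system $[\underline{p}^\flat]$, which introduces the variable $u=\log[\underline{p}^\flat]$. I would then combine this with the \'etale comparison over $B_\st$ and check that $\mu$ becomes a unit in $B_\st$ up to $t$. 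Compatibility with $\vp$ and $G_K$ follows from functoriality. The monodromy operator is compatible because the Hyodo--Kato section is monodromy-equivariant. For the filtrations, I would compare with the de Rham specialization $\Ainf\to B^+_\dR$, where the result is the filtered de Rham cohomology of $\mE$ over $Y$.

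I expect the main obstacle to be the crystalline comparison with coefficients in the log setting: making the local $q$-de Rham computation functorial and independent of the chart, and checking that the $L\eta_\mu$ step really commutes with $\otimes\Acrys$ (the $p$-completion and torsion issues). A secondary obstacle is showing that $\mu$-isogeny suffices to get isomorphisms after $\otimes B_\st$ rather than only up to bounded torsion. I would handle this by rationalizing and invoking properness, which makes all cohomology groups finite.
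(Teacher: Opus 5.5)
Your architecture matches the paper's: log $\Ainf$-cohomology with coefficients as the bridge, an \'etale comparison via the primitive comparison theorem, a crystalline comparison via local $q$-de Rham/Koszul complexes glued over all choices of coordinates, then a Hyodo--Kato-type passage to $B_\st$ and a $B^+_\dR$-comparison for filtrations. \textbf{However, your first step fails in general.} The theorem of Du--Liu--Moon--Shimizu identifies semistable local systems with \emph{analytic} prismatic $F$-crystals $\Vect^{\an,\vp}(\fY_\Prism)$. These are compatible vector bundles on $\spec(A)\minus V(p,I)$, not vector bundles on $\spec A$. Locally, the corresponding Kisin descent datum $\fM$ over $\fS_R$ is only a finite torsion-free module with $\fM[1/p]$ and $\fM[1/E]$ projective and $\fM=\fM[1/p]\cap\fM[1/E]$. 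As soon as $\fY$ has positive relative dimension, $\fS_R$ has Krull dimension at least $3$, and such a reflexive module need not be projective. So there is in general no locally finite free $\mathbb{A}_{\mathrm{inf},Y_C}$-module, trivial modulo $<\mu$, whose \'etale specialization is $\L$. Consequently the comparison theorems for $\BKF^{\log}(\fY_{\mO},\vp)$ cannot be invoked. The paper says explicitly that $\L$ is in general not the \'etale specialization of a relative log BKF module.

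The missing idea is to work with a derived object. Push forward along $j\colon \spec(A)\minus V(p,I)\hookrightarrow \spec A$; this is a log version of Guo--Reinecke's Theorem 5.10. It gives a fully faithful functor $\Vect^{\an,\vp}(\fY_\Prism)\to D^{\vp}_{\perf}(\fY_\Prism)$, which lands in perfect complexes because, on $\fS_R$ and its self-products, $j_*$ of a vector bundle is coherent, in fact perfect. Then apply the Frobenius-twisted base change to $(\fY^{(1)}_{\mO}/(\ul\Ainf,\txi))_\Prism$ and the embedding $\beta_\Prism$. This yields the following data:
\begin{itemize}
\item a perfect complex $\bM$, locally quasi-isomorphic to a bounded complex of relative log BKF modules;
\item a sheaf $\bV$ with $\bV[1/\xi]$ and $\bV[1/\txi]$ locally free, carrying a Frobenius on $\bV[1/\txi]$;
\item an identification $\iota$ of $\bM$ with $\bV$ after inverting $\txi$.
\end{itemize}
With this object, define $A\Omega^{\log}$ from $\bM$ and prove the crystalline comparison termwise on the complex. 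Prove the \'etale comparison through $\iota$ and the $\vp$-invariants of $\bV\otimes W(\widehat{\mO}_{Y_C^\flat})$. Finally, check that the crystalline specialization recovers $\mE_{\crys,\Q}$ rationally. With this replacement, your later steps go through.

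Your $B_\st$ step, a Kato-style Hyodo--Kato isomorphism followed by $N$-invariants, is a legitimate alternative to the paper's route. The paper instead goes through Breuil's $\widehat{A}_\st$, uses $\widehat{B}_\st^{N\text{-nilp}}=B_\st$, and concludes with a dimension count. Your displayed formula should be read as $\bigl(R\Gamma_{\logcrys}(\fY_k/W(\ul k),\mE)\otimes_{K_0}B^+_\st\bigr)^{N=0}\simeq R\Gamma_{\logcrys}(\fY_{\mO_C/p}/\Acrys,\mE)[1/p]$.
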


Theorem~\ref{mainthm} is also known as the \emph{$C_{\mathrm{st}}$-conjecture with coefficients}. For a precise definition of semistable local systems (as well as their associated $F$-isocrystals), see \cite{Guo-Yang} and \cite{DLMS2}; also see \S \ref{section: semistable local systems} for a quick review. Intuitively, a semistable local system can be viewed as a geometric family of semistable $p$-adic Galois representations. 

Before describing the strategy of the proof, we make some remarks on related works in the literature.
\begin{enumerate}
\item The original $C_{\mathrm{st}}$-conjecture with trivial coefficients was formulated by Fontaine--Jansen, and first proved by Tsuji \cite{tsuji1999}, who used log-syntomic cohomology as a bridge to relate the \'etale and log-crystalline sides.
\item Later, the $C_{\mathrm{st}}$-conjecture with trivial coefficients was revisited and reproved in \cite{Faltings_almost}, \cite{Niziol08}, \cite{andreatta2012semistable}, \cite{Bhatt12}, \cite{Beilinson13}, \cite{CK_semistable}, \cite{Colmez-Niziol17}, and \cite{Colmez-Niziol25}.
\item The $C_{\mathrm{st}}$-conjecture with general coefficients (in the algebraic setting) was already discussed in \cite{Faltings_almost}. In the setting of rigid analytic spaces, the first complete proof of the $C_{\mathrm{st}}$-conjecture with general coefficients (i.e., Theorem~\ref{mainthm}) was given by Du--Moon--Shimizu \cite{DMS}. They used \emph{log prismatic cohomology} (see \cite{Koshikawa, logprism}) as a bridge to relate \'etale and log-crystalline cohomology. More precisely, they proved a comparison theorem between log crystalline cohomology and log prismatic cohomology, while the comparison between \'etale and log prismatic cohomology was earlier handled by Tian \cite{Tian_prismatic_etale_comparison}.
\item Let us mention that Inoue and Koshikawa have also announced Theorem~\ref{mainthm} in a more general setting.
\end{enumerate}

Our proof in this article uses \emph{logarithmic $\Ainf$-cohomology} (with general coefficients) as an intermediate object to relate the \'etale and log-crystalline sides.

\vspace{0.1in}
\subsection{Logarithmic $\Ainf$-cohomology with coeffcients in relative log BKF modules}
\noindent
\vspace{0.1in}

\noindent In \cite{BMS1}, Bhatt, Morrow, and Schloze establish the theory of $\Ainf$-cohomology (with trivial coefficients) for smooth $p$-adic formal schemes. Using $\Ainf$-cohomology as a unifying theory, they are able to prove a sequence of comparison theorems between various \emph{integral} $p$-adic cohomology theories. In a previous work \cite{log1}, the first and third authors of this article generalize the work of Bhatt--Morrow--Scholze to the logarithmic setting; in particular, the $p$-adic formal schemes is allowed to possess singularities as long as they are still \emph{admissibly smooth} (cf. \cite[Definition 6.6]{log1}). In a different direction, Morrow and Tsuji \cite{MT} generalize the work of Bhatt--Morrow--Scholze to $\Ainf$-cohomology with coefficients in so-called \emph{relative Breuil--Kisin--Fargues modules}. 

In this article, we combine the frameworks of \cite{MT} and \cite{log1}, and develop a theory of log $\Ainf$-cohomology with coefficients in \emph{relative log Breuil--Kisin--Fargues modules}.

\vspace{0.05in}

To proceed, let us introduce some notation:

\begin{itemize}
\item Let $C$ be a perfectoid field of characteristic 0 containing all $p$-power roots of unity and let $\mO=\mO_C$ be the ring of integers. Let $\Ainf=W(\mO^{\flat})$, equipped with the natural Frobenius. We fix a compatible system $\{\zeta_{p^n}\}_{n\ge 0}$ of $p$-power roots of unity and let $\varepsilon:=(1,\zeta_p, \zeta_{p^2}, \cdots)\in \mathcal{O}^{\flat}$. Let $\mu:=[\varepsilon]-1$, $\xi:=\mu/\varphi^{-1}(\mu)$, and $\widetilde{\xi}:=\varphi(\xi)$ in $\Ainf$.

\item Suppose that $\mO$ is equipped with a pre-log structure $N_\infty \ra \mO$ where $N_\infty$ is uniquely $n$-divisible for every $n \in \Z_{>0}$. Let $(\mO, N_\infty)^a$ denote the associated log ring.
\item Let $\fX$ be a $p$-adic log formal scheme that is admissibly smooth over $\spf(\mO, N_\infty)^a$. Roughly speaking, this requires $\fX$ to be the saturated base change of some $\fX_0$ that is log smooth over $\spf (\mO, N)^a$ 
\[
\begin{tikzcd}
    \fX \arrow[r] \arrow[d] & \fX_0 \arrow[d] \\
    \spf (\mO, N_\infty)^a \arrow[r, "\kappa"] & \spf (\mO, N)^a
\end{tikzcd}
\]
where $N$ is an fs monoid and $\kappa$ is induced by a monoid homomorphism $N \ra N_\infty$. We refer the reader to \cite[Definition 1.5]{log1} for the precise definition.\footnote{As an example, if $\fY$ is a semistable formal scheme over $\mathcal{O}_K$ (viewed as a $p$-adic log formal scheme with the standard divisorial log structure), then the base change $\fY_{\mO_C}$ is admissibly smooth where $C=\widehat{\overline{K}}$. In fact, we will specialize to this situation in \S \ref{section: Cst comparison} when we prove Theorem~\ref{mainthm}. On the other hand, the notion of admissible smoothness is much more general, allowing both vertical and horizontal log structures.}

\item Let $X$ be the (log) adic generic fiber of $\fX$. Let $X_{\proket}$ be the pro-Kummer \'etale site on $X$ defined in \cite[\S 7.5]{log1}, and let $\widehat{\mO}_X^+$ (resp. $\widehat{\mO}_X^{\flat+}$) be the completed integral structure sheaf (resp. tilted structure sheaf) on $X_{\proket}$. Let $\Ainfx:=W(\widehat{\mO}_X^{\flat+})$ equipped with the natural Frobenius. 
\end{itemize}

The following definition is a straightforward generalization of \cite[Definition 5.1]{MT}.

\begin{definition}[{Definition~\ref{defn: log BKF modules}}]
Let $\nu: X_{\proket}\rightarrow \fX_{\ett}$ be the natural projection of sites.
\begin{enumerate}
\item Let $\bM$ be a sheaf of $\Ainfx$-modules on $X_{\proket}$. For each $r\in \Z_{>0}$, let $\xi_r=\frac{[\varepsilon]-1}{[\varepsilon^{1/p^r}]-1}\in \Ainf$. We say that $\bM$ is \emph{trivial modulo} $\xi_r$ if the sheaf of $\nu_*\big(\Ainfx/\xi_r\big)$-modules $\nu_*(\bM/\xi_r)$ is locally finite free and the counit
\[\nu^{-1}\nu_*(\bM/\xi_r)\otimes_{\nu^{-1}\nu_*(\Ainfx/\xi_r)}\Ainfx/\xi_r\rightarrow \bM/\xi_r\]
is an isomorphism of sheaves on $X_{\proket}$. We say that $\bM$ is \emph{trivial modulo} $<\mu$ if it is trivial modulo $\xi_r$ for all $r\ge 1$.
\item A \emph{relative logarithmic Breuil--Kisin--Fargues module} over $\fX$ is a pair $(\bM, \varphi_{\bM})$ where $\bM$ is a locally finite free $\Ainfx$-module that is trivial modulo $<\mu$, and $\varphi_{\bM}:\varphi^*\bM[\txi^{-1}]\rightarrow \bM[\txi^{-1}]$ is an isomorphism of sheaves of $\Ainfx[\txi^{-1}]$-modules. The category of such objects is denoted by $\BKF^{\log}(\fX, \varphi)$.
\end{enumerate}
\end{definition}

There are natural \'etale, de Rham, and crystalline specialization functors (see Definitions \ref{defn: etale specialization}, \ref{defn: de Rham specialization}, \ref{defn: crystalline specialization})\footnote{For the crystalline specialization, we need to assume that $\fX$ locally admits free charts (see Definition~\ref{defn: locally admits free charts}).}
\begin{align*}
    \sigma^*_{\ett}\colon &\BKF^{\log}(\fX, \varphi)\rightarrow \big\{\textrm{locally free }\widehat{\Z}_p\textrm{-sheaves on }X_{\proket}\big\}\\
    \sigma^*_{\textrm{dR}}\colon & \BKF^{\log}(\fX, \varphi)\rightarrow \big\{\textrm{vector bundles with integrable connections on }\fX\big\}\\
    \sigma^*_{\textrm{crys}}\colon & \BKF^{\log}(\fX, \varphi)\rightarrow \big\{\textrm{locally finite free $F$-crystals on }(\fX_k/W(\ul k))_{\crys}\big\},
\end{align*}
where $\ul k=(k,N_\infty)$ is the residue field of $\ul \mO$, and $\fX_k$ denotes the special fiber of $\fX$ over $\ul k$. The constructions of \'etale and de Rham specialization functors essentially follow the same approach as in \cite[\S 6]{MT}. For crystalline specialization, we first attach to a relative log BKF module an $F$-$q$-crystals on the log $q$-crystalline site $(\fX/(\underline{\Ainf}, \xi))_{q\crys}$ (cf. Definition \ref{defn: qcrys specialization}), then further specialize to an $F$-crystal on $(\fX_k/W(\ul k))_{\crys}$ via base change.

After clarifying the coefficients, we are ready to define log $\Ainf$-cohomology.

\begin{definition} Let $\fX$ be a saturated $p$-adic log formal scheme that is admissibly smooth over $\ul \mO$ and let $X$ be the log adic generic fiber. Let $\bM\in \mathrm{BKF}^{\log}(\fX, \varphi)$. We define
\[
    A\Omega^{\log}_{\fX}(\bM):= L\eta_{\mu}(\widehat{R\nu_*\bM})\in D(\fX_{\ett}, \Ainf)
\]
where the completion is the derived $p$-adic completion, $\nu \colon X_{\proket} \ra \fX_{\ett}$ is the natural projection of sites, and $L\eta$ stands for the d\'ecalage functor (see \cite[\S 6]{BMS1}). The \emph{log $\Ainf$-cohomology of $\bM$} is defined to be
\[R\Gamma_{\Ainf}(\fX, \bM):=R\Gamma\big(\fX_{\ett}, A\Omega^{\log}_{\fX}(\bM)\big)\in D(\Ainf).\]
\end{definition}

As expected, log $\Ainf$-cohomology (with coefficients) can be naturally compared with \'etale, de Rham, and (log-)crystalline cohomology (with coefficients), where the coefficients are matched up through the specialization functors.

\begin{theorem}[Theorem {\ref{thm: etale comparison}}, \ref{thm: de Rham comparison}, Corollary \ref{cor: crystalline comparison}, also see Theorem \ref{thm: perfect complex}] \label{thm: comparisons intro}
There are natural isomorphisms \footnote{For the crystalline comparison, we need to assume that $\fX$ locally admits free charts (see Definition~\ref{defn: locally admits free charts}).}
    \begin{align*}
        \widehat{\Big(A\Omega^{\log}_{\fX}(\bM)[\frac{1}{\mu}]\Big)}^{\varphi_{\bM}=1}&\,\,\cong R\nu_*(\sigma^*_{\ett}\bM),\\
        A\Omega^{\log}_{\fX}(\bM)\otimes^{\L}_{\Ainf, \theta} \mO\,\,&\,\,\cong \sigma^*_{\dR}\bM\otimes_{\mO_{\fX}} \Omega^{\bullet}_{\fX/\ul \mO},\\
        A\Omega^{\log}_{\fX}(\bM)\htimes^{\L}_{\Ainf}W(k)\,\,&\,\,\cong R\upsilon_*(\sigma^*_{\crys}\bM),
    \end{align*}
    where $\upsilon\colon (\fX_k/W(\ul k))_{\crys}\rightarrow \fX_{k, \ett}$ is the natural projection of sites. As a corollary, we obtain isomorphisms on the corresponding cohomologies.
\end{theorem}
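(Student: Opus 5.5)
The plan is to follow the strategy of \cite{MT} (itself modelled on \cite{BMS1}), with the log-affinoid perfectoid computations of \cite{log1} replacing the smooth ones. All three statements are local on $\fX_{\ett}$, so we work étale locally on a small affine $\fX=\spf(R,\ldots)$ with a chart, i.e.\ an étale map to the saturated base change of $\spf \mO\langle P\rangle$ along $N\ra N_\infty$. The chart gives a log-affinoid perfectoid pro-Kummer-étale cover $X_\infty\ra X$ with Galois group $\Gamma\cong \Z_p(1)^d$, where $d=\rank P^{\gp}-\rank N^{\gp}$.

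\textbf{Step 1: a local model via group cohomology.} Since $\Ainfx$ has almost vanishing higher cohomology on log-affinoid perfectoids (\cite{log1}, §7), the Cartan--Leray spectral sequence gives $R\Gamma(X_{\proket},\bM)\approx R\Gamma_{\cont}(\Gamma, M_\infty)$ up to almost zero error, where $M_\infty:=\bM(X_\infty)$. Next we use that $\bM$ is trivial modulo $<\mu$. Exactly as in \cite[§3--4]{MT}, this lets us descend $M_\infty$ to a finite projective module $M^{\square}$ over the framed ring $A^\square(R)$ (the $\Ainf$-lift of $R$ built from $[x^{\flat}]$, $x\in P$). The $\Gamma$-action on $M^{\square}$ is then given by commuting $q$-connection operators $\gamma_i-1$ that are divisible by $\mu$ modulo $\xi_r$ for every $r$.

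The main obstacle is the next sub-step. We must show that $L\eta_\mu$ kills the ``non-integral'' part: the complement of $M^\square$ in $M_\infty$ splits off according to the $P^{\gp}[1/p]/P^{\gp}$-grading, and on it some $\gamma_i-1$ acts invertibly up to $\varphi^{-r}(\mu)$-torsion. In the log setting the exponents live in the saturated monoid $P_{\Q_{\ge0}}$ rather than in $\Z[1/p]^d$, so I would first redo the decomposition of \cite[§6]{log1} carefully before twisting by the module. The plan is to filter by powers of $\xi_r$, reduce to the trivial case $\bM/\xi_r\cong$ free, and use the completeness and $\mu$-torsion-freeness results of \cite{log1}. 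The output is $A\Omega^{\log}_{\fX}(\bM)(\fX)\simeq q\Omega^\bullet(M^\square)$, a $q$-de Rham (Koszul) complex on $\gamma_i-1$ divided by $\mu$.

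\textbf{Step 2: the three comparisons.} Each is checked on the model and then globalised by functoriality, using Theorem~\ref{thm: perfect complex} for perfectness.

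For the \emph{de Rham comparison}, base change along $\theta$ sends $q\to1$. The $q$-de Rham complex then becomes the log de Rham complex $\sigma^*_{\dR}\bM\otimes\Omega^\bullet_{\fX/\ul\mO}$, because $\sigma^*_{\dR}\bM$ is by construction $M^\square\otimes_\theta$ with the induced connection. Independence of the chart comes from the global definition via $L\eta_\mu$.

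For the \emph{étale comparison}, after inverting $\mu$ the décalage disappears. So $A\Omega^{\log}_\fX(\bM)[1/\mu]\simeq R\nu_*\bM[1/\mu]$, and the Artin--Schreier sequence $0\to\sigma^*_{\ett}\bM\to\bM[1/\mu]^\wedge\xrightarrow{\varphi_\bM-1}\bM[1/\mu]^\wedge\to0$ on $X_{\proket}$ gives the claim. Surjectivity is proved locally, as in \cite[§6]{MT}, by reducing mod $p$ and using almost purity for $\widehat{\mO}^{\flat}$.

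For the \emph{crystalline comparison}, identify the model complex with the log $q$-crystalline cohomology of the $F$-$q$-crystal attached to $\bM$ (Definition~\ref{defn: qcrys specialization}), via the $q$-Čech--Alexander description over $(\ul\Ainf,\xi)$ in free charts. Then base change along $\Ainf\to W(k)$ with $q\mapsto1$, which turns this into log-crystalline cohomology of $\sigma^*_{\crys}\bM$; this is where the free-chart hypothesis enters. The Frobenius on $\bM$ is used to twist by $\varphi$, as in \cite{BMS1}, since $\xi$ versus $\txi$ appears.
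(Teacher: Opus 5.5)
\textbf{Étale and crystalline parts.} Your étale argument is the paper's: inverting $\mu$ removes $L\eta_\mu$ canonically, $\widehat{\Ainfx[1/\mu]}=W(\widehat{\mO}_{X^\flat})$, and one applies the Artin--Schreier sequence of Proposition~\ref{Frobenius invariant}. Routing the crystalline comparison through $\bM_{q\crys}$ and then $q\mapsto 1$ is also the paper's route.

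\textbf{The gap.} It lies in the step ``checked on the model and then globalised by functoriality'', and for the de Rham comparison it fails as stated. Your model $q\Omega^\bullet(M^\square)$ depends on the chart, namely the generators $\gamma_i$ and the elements $U_i$ of Lemma~\ref{Lem: the assumptions of qDR}. It is not functorial for \'etale maps between small affines. Isomorphisms in the derived category over each small affine therefore do not glue to a natural isomorphism in $D(\fX_{\ett})$. That the source $A\Omega^{\log}_\fX(\bM)$ is globally defined says nothing about the target or the map.

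Moreover, $\sigma^*_{\dR}\bM$ is not ``by construction'' $M^\square\otimes_\theta\mO$ with the reduced $q$-connection. In Definition~\ref{defn: de Rham specialization} its connection is the Bockstein connection $\nabla_\xi$ on $\nu_*(\bM/\xi)$. You would still owe a proof that the $q=1$ reduction of your $q$-connection is chart-independent and agrees with $\nabla_\xi$.

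\textbf{How the paper avoids this.} For de Rham, the paper uses no coordinates.
\begin{itemize}
\item Lemmas~\ref{group coh vs presheaf coh} and \ref{group coh base change} and Proposition~\ref{Hodge-Tate comparison} give a canonical Hodge--Tate comparison $A\Omega^{\log}_\fX(\bM)/\txi\simeq L\eta_{(\zeta_p-1)}R\nu_*(\bM/\xi)\simeq \nu_*(\bM/\xi)\otimes_{\mO_\fX}\widetilde{\Omega}^{\log}_\fX$. The last step uses triviality modulo $\xi$.
\item The log $p$-adic Cartier isomorphism (Proposition~\ref{p adic Cartier}) computes its cohomology sheaves.
\item After the Frobenius twist, $L\eta_{\varphi(\mu)}=L\eta_{\txi}L\eta_\mu$ and the Bockstein description of $L\eta_{\txi}(-)\otimes^{\L}\Ainf/\txi$ from \cite{BMS1} produce exactly $\sigma^*_{\dR}\bM\otimes_{\mO_\fX}\Omega^{\bullet,\ct}_{\fX/\ul\mO}$.
\end{itemize}

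For crystalline, the paper likewise does not glue local isomorphisms. Construction~\ref{construction of map from qcrys to Ainf} builds a functorial map $R\upsilon_*(\bM_{q\crys})\to A\Omega^{\log}_\fX(\bM)$ as a colimit over all embeddings $\Phi=(S,P_\Lambda,\iota)$. Only then does Proposition~\ref{prop: q-crystalline comparison} check that it is an isomorphism on a small affine with a free chart, where $(\ul{A(R)},\xi)$ is weakly final. Free charts are needed there and in defining $\bM_{q\crys}$ (Theorem~\ref{thm: MT thm 5.15}, Lemma~\ref{Lem: Tian Lemma 1.9}), not in the base change $q\mapsto 1$.

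\textbf{Repairs.} You could fix the de Rham part with the Bockstein argument. Alternatively, reduce the $q$-crystalline comparison along $\theta$, but that imports the free-chart hypothesis, which the paper's de Rham comparison avoids.

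\textbf{Smaller point.} In Step~1 you only record that $\gamma_i-1$ is divisible by each $\xi_r$. Forming the Koszul complex on $(\gamma_i-1)/\mu$ and descending to $A(R)$ via Theorem~\ref{thm: MT 1.13} requires triviality modulo $\mu$ itself. That is the content of the nontrivial Theorem~\ref{thm: MT 1.14}.
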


For the \'etale and de Rham comparisons, we follow the same approach as in \cite[\S 6]{MT}. For the crystalline comparison, we first construct the comparison on small affine \'etale opens, then consider ``all possible coordinates'' as in \cite{BMS1} to guarantee that the comparison is independent of choices of charts.

\vspace{0.1in}
\subsection{Proof of semistable comparison theorem}
\noindent
\vspace{0.1in}

\noindent Now we briefly describe our strategy for proving Theorem~\ref{mainthm}. Let $K$ be the $p$-adic field at the very beginning of the article and let $\mO_K$ be its ring of integers. Let $\pi$ be a uniformizer in $\mO_K$. We equip $\mO_K$ with the pre-log structure $\N \to \mO_K$ sending $1 \mapsto \pi$. For the rest of this introduction, we take $C=\widehat{\overline{K}}$ and let $\mO=\mO_C$ be its ring of integers. Then the pre-log ring $\underline{\mO_K}=(\mO_K, \mathbb{N})$ extends to a pre-log ring $\underline{\mO}=(\mO, \mathbb{Q}_{\ge 0})$ by choosing a compatible system of $n$-th roots of $\pi$ for all $n\ge 1$. 

Let $\fY$ be a semistable formal scheme over $\mO_K$ equipped with the divisorial log structure given by the special fiber; in particular, $\fY$ is log smooth over $\underline{\mO_K}$. Then the base change $\fY_{\mO}$ is admissibly smooth over $\underline{\mO}$. Let $Y$ be the adic generic fiber of $\fY$ and let $\L$ be a semistable local system on $Y$. Ideally, if $\L$ (viewed as an \'etale $\Z_p$-local system on $Y_C$) is associated with a relative log BKF module via the \'etale specialization functor $\sigma^*_{\ett}$, then the desired semistable comparison would be an immediate consequence of the comparison theorems (Theorem~\ref{thm: comparisons intro}). Unfortunately, this is not true in general.

To fix this issue, we need to work within a larger category --- the category of \emph{derived relative log BKF modules}. To define these objects, let $D_{\perf}\big((\fY_{\mO}/\logAinf)_\Prism\big)$ denote the category of perfect complexes of crystals on the log prismatic site $(\fY_{\mO}/\logAinf)_\Prism$, and let $D_{\perf}(\mathbb{A}_{\mathrm{inf}, Y_C})$ be the category of perfect complexes of $\mathbb{A}_{\mathrm{inf}, Y_C}$-modules on $Y_{C,\proet}$.
Then the fully faithful functor (see \S \ref{subsection: log prismatic crystals})
\[
    \Vect\big((\fY_{\mO}/\logAinf)_\Prism\big) \xrightarrow[]{\sim} \BKF^{\log}(\fY_{\mO}) \hookrightarrow \Vect(\mathbb{A}_{\mathrm{inf}, Y_C})
\]
induces a fully faithful functor
\[
    \beta_{\Prism}\colon D_{\perf}\big((\fY_{\mO}/\logAinf)_\Prism\big) \hookrightarrow D_{\perf}(\mathbb{A}_{\mathrm{inf}, Y_C}).
\]

\begin{definition}[{Definition~\ref{defn: derived BKF modules}}]\label{defn: derived BKF modules_intro}
A \emph{derived relative logarithmic Breuil--Kisin--Fargues module} over $\fY_{\mO}$ is a quadruple $(\bM, \bV, \iota, \varphi_{\bV})$ where 
    \begin{itemize}
    \item $\bM$ is a sheaf of $\mathbb{A}_{\mathrm{inf}, Y_C}$-modules in perfect complexes that lies in the essential image of $\beta_{\Prism}$;
        \item $\bV$ is a sheaf of finite projective $\mathbb{A}_{\mathrm{inf}, Y_C}[\frac 1 \mu]$-module;
        \item $\iota \colon \bM\otimes^{\L}_{\mathbb{A}_{\mathrm{inf}, Y_C}}\mathbb{A}_{\mathrm{inf}, Y_C}[\frac{1}{\mu}] \xrightarrow[]{\sim} \bV$ is an isomorphism in $D_{\perf}(\mathbb{A}_{\mathrm{inf}, Y_C}[\frac 1 \mu])$
        \item $\varphi_{\bV} \colon \varphi^*\bV [\frac 1\txi] \ra \bV[\frac 1\txi]$
        is an isomorphism of $\mathbb{A}_{\mathrm{inf}, Y_C}[\frac 1 \txi]$-modules.
        \end{itemize}
The category of such objects is denoted by $\DBKF^{\log}(\fY_{\mO},\vp)$.
\end{definition}

Since $\Vect\big((\fY_{\mO}/\logAinf)_\Prism\big) \cong \BKF^{\log}(\fY_{\mO})$, we see that the notion of derived relative log BKF modules indeed extends the notion of relative log BKF modules. Moreover, it turns out that the specialization functors and comparison theorems all extend to the context of derived relative log BKF modules (see Propositions~\ref{prop: etale comparison for derived BKF modules} and \ref{cor: crystalline comparison for derived BKF modules}). A key observation is that semistable local systems are associated with derived relative log BKF modules under the \'etale specialization functor. To see this, we consider the absolute log prismatic site $\fY_{\Prism}$ and the category $\Vect^{\an,\vp}(\fY_\Prism)$ of \emph{analytic prismatic $F$-crystals} (cf. Definition \ref{defn: analytic prismatic F-crystals}). By the main theorem of \cite{DLMS2}, there is an equivalence of categories
\begin{equation}\label{eq: st local system as analytic prismatic crystal--intro}
\Vect^{\an,\vp}(\fY_\Prism)\cong \mathrm{Loc}_{\Z_p}^{\mathrm{st}}(Y)
\end{equation}
where $\mathrm{Loc}_{\Z_p}^{\mathrm{st}}(Y)$ is the category of semistable local systems on $Y$. Now, the following logarithmic analogue of \cite[Theorem 5.10]{Guo-Reinecke} is precisely what we need.

\begin{proposition}[Proposition \ref{prop: GR Thm 5.10}]\label{prop: pushforward--intro}
Taking pushforward along the inclusion $\spec(A)\setminus V(p,I) \hookrightarrow \spec(A)$, for all log prisms $(A,I,\mM_A) \in \fY_\Prism$, induces a fully faithful functor 
    \[
        j_*\colon \Vect^{\an, \vp} (\fY_\Prism) \ra D_{\perf}^\vp(\fY_\Prism).
    \]
\end{proposition}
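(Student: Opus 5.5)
We follow the strategy of \cite[Theorem 5.10]{Guo-Reinecke}, adapted to the log prismatic site $\fY_\Prism$. The plan has three steps: define $j_*$ prism by prism, check it lands in perfect crystals, and prove full faithfulness by reducing to a cover. First, for each log prism $(A,I,\mM_A)\in\fY_\Prism$, write $U_A:=\spec(A)\setminus V(p,I)$ and $j_A\colon U_A\hookrightarrow\spec(A)$. An analytic prismatic $F$-crystal $\mE$ gives a vector bundle $\mE_A$ on $U_A$. We set $(j_*\mE)_A:=R\Gamma(U_A,\mE_A)$, computed by the \v{C}ech complex of the cover $\{\spec A[1/p],\ \spec A[1/I]\}$, and take the Frobenius induced from that of $\mE$.

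Second, we check that this defines an object of $D_{\perf}^\vp(\fY_\Prism)$. Two things are needed: each $(j_*\mE)_A$ is a perfect $A$-complex, and the assignment satisfies the crystal property, namely that for a map of log prisms $A\to B$ the natural map $(j_*\mE)_A\otimes^{\L}_A B\to (j_*\mE)_B$ is an isomorphism.
\begin{itemize}
\item For perfectness, we work locally. Using the log quasi-syntomic or chart-based covers available for semistable $\fY$, it suffices to treat log prisms covering the final object. These are Breuil--Kisin type prisms $\fS$ attached to small semistable charts, or their perfections, which are products of copies of $\Ainf$. For these $A$ is regular (or a perfect prism), and $V(p,I)$ has codimension $\ge 2$ in the Breuil--Kisin case. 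A vector bundle on the punctured spectrum of a regular ring of dimension $\ge 2$ extends to a reflexive module, whose local cohomology is bounded and finitely generated. Hence $R\Gamma(U_A,\mE_A)$ is perfect, with amplitude in $[0,1]$ for perfect prisms, as in Guo--Reinecke.
\item Base change along arbitrary maps of prisms uses flat base change for the \v{C}ech complex. For non-flat maps we need that $R\Gamma(U_A,-)$ commutes with derived base change to $B$. Guo--Reinecke obtain this by showing $(p,I)$-completely faithfully flat descent for the pushforward, then comparing along the cover by a perfect prism, where it is explicit.
\end{itemize}
In the log setting we must also check that the log structures play no role. This holds because the functor only involves the underlying rings, and log prisms admit strict flat covers by log perfect prisms.

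Third, we prove full faithfulness. Both categories satisfy descent, so it suffices to show that for each $A$ in a basis,
\[
\Hom_{U_A}(\mE_A,\mE'_A)\to\Hom\big(R\Gamma(U_A,\mE_A),R\Gamma(U_A,\mE'_A)\big)
\]
is bijective and compatible with $\vp$. Since $j^*j_*\cong\mathrm{id}$ on $U_A$, the map has a retraction given by restriction to $U_A$. For injectivity and surjectivity we use adjunction $\Hom(Rj_*\mE,Rj_*\mE')=\Hom(j^*Rj_*\mE,\mE')$, which holds on the level of quasi-coherent complexes on $\spec A$ versus $U_A$. This gives full faithfulness prism by prism. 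We then glue using the crystal property from the second step and the Frobenius compatibility.

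We expect the main obstacle to be the base change and perfectness statement in the second step for arbitrary log prisms, which are not Noetherian. We would first try to reduce, via the log analogue of the Breuil--Kisin log prism covering the final object of the topos of $\fY_\Prism$, to Breuil--Kisin log prisms and their perfections. On those we would use the explicit analysis of Guo--Reinecke and \cite{DLMS2}, and then transfer to general prisms by descent.
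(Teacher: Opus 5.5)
Your construction breaks at Step~2, in two ways.

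First, the functor lands outside $D_{\perf}$. You take $(j_*\mE)_A:=R\Gamma(U_A,\mE_A)$, the derived pushforward, and claim it is perfect because the local cohomology of the reflexive extension is ``bounded and finitely generated''. That is false. Take $\mE=\mO$ on the Breuil--Kisin log prism. There $V(p,I)=V(p,E(u))$ is cut out by a regular sequence of length $2$, and your \v{C}ech complex gives
\[
H^1(U_{\fS_R},\mO)\cong \fS_R[\tfrac{1}{pE}]\big/\big(\fS_R[\tfrac1p]+\fS_R[\tfrac1E]\big)=H^2_{V(p,E)}(\fS_R).
\]
This module is nonzero and $(p,E)$-power torsion of unbounded exponent: the class of $1/(p^nE)$ is not killed by $p^{n-1}$. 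Hence it is not finitely generated. The same happens over $\Ainf$, so there is no perfectness ``with amplitude $[0,1]$'' for perfect prisms either.

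The $j_*$ of the statement, as in \cite{Guo-Reinecke}, is the underived pushforward $\Gamma(U_A,\mE_A)$. On the self-products $\fS_R(n)$ of the Breuil--Kisin log prism it is a coherent module by the argument of \cite[Lemma 5.8]{Guo-Reinecke}, and this is the object the paper places in $D^{\vp}_{\perf}(\fS_R(n))$. With this $j_*$, your Step~3 (via $j^*j_*\cong\mathrm{id}$) still gives full faithfulness on each $\fS_R(n)$.

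Second, you try to define $j_*$ at every log prism and prove perfectness and the crystal property for arbitrary maps $A\to B$. This cannot work. The site $\fY_\Prism$ contains crystalline log prisms $(A,(p),\mM_A)$, for example $(\mS_R,(p),\vp^*\N^d)^a$, which receives a map from $(\fS_R,E(u),\N^d)^a$. There $V(p,I)=V(p)$, so $j_{A,*}\mE_A$ is a nonzero $A[\frac1p]$-module. By derived Nakayama, such a module is never perfect over the $p$-complete ring $A$. So your reduction ``it suffices to treat prisms covering the final object'' presupposes a crystal property that the prism-by-prism definition does not have.

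The object $j_*\mE$ has to be produced by descent instead:
\begin{enumerate}
\item The paper reduces \'etale locally to a framed semistable small affine.
\item It applies $j_{n,*}$ only on the \v{C}ech nerve $\fS_R(\bullet)$.
\item All coface maps $\fS_R\to\fS_R(n)$ are classically faithfully flat \cite[Corollary 2.12]{DLMS2}, so flat base change makes these functors compatible.
\item It glues via
\[
\Vect^{\an,\vp}(\fY_\Prism)\simeq\lim_{[n]}\Vect^{\an,\vp}(\fS_R(n)),\qquad D^{\vp}_{\perf}(\fY_\Prism)\simeq\lim_{[n]}D^{\vp}_{\perf}(\fS_R(n)).
\]
\end{enumerate}
The value of $j_*\mE$ at a general prism $B$ is then the derived base change of $j_{\fS_R,*}\mE$, not $j_{B,*}\mE_B$.

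Finally, you never check that the result is independent of the framing. The paper does this using the coproduct $\fS_{\square,\square^\prime}$ of two Breuil--Kisin log prisms and flat base change.
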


Combining Proposition~\ref{prop: pushforward--intro} and \eqref{eq: st local system as analytic prismatic crystal--intro}, we deduce that every semistable local system is indeed associated with a derived relative log BKF module. Consequently, the desired semistable comparison follows from (the derived version of) Theorem~\ref{thm: comparisons intro}. We refer the reader to \S \ref{subsection: proof of Cst conjecture} for more details.

\begin{remark}
    We expect the $C_{\st}$-conjecture with coefficients to hold for a more general class of admissibly smooth $p$-adic log formal schemes, as long as one can extend the notion of semistable local systems to this context. We leave this to interested readers.
\end{remark}

\vspace{0.1in}
\subsection*{Outline of the article}

\begin{itemize}
    \item In \S \ref{section: log BKF modules}, we define relative log BKF modules. We also prove that the category of relative log BKF modules is equivalent to the category of generalized representations, the category of modules with $q$-connections, and the category of prismatic $F$-crystals.
    \item In \S \ref{section: coefficients}, we construct the \'etale, de Rham, and crystalline specialization functors and prove the comparison theorems with log $\Ainf$-cohomology.
    \item In \S \ref{section: semistable local systems}, we recall the notion of semistable local systems. We also recall the interpretation of semistable local systems in terms of analytic prismatic crystals, established in \cite{DLMS2}. 
    \item In \S \ref{section: Cst comparison}, we introduce the notion of derived relative log BKF modules, and finish the proof of the semistable comparison theorem.
\end{itemize}

\vspace{0.1in}
\subsection*{Notation and convention}
\noindent
\vspace{0.1in}

\noindent We adopt the following notation and convention throughout the article:
\begin{itemize}
\item We follow the convention on adic geometry in \cite{Huber} and \cite{Scholze12}, and log adic geometry in \cite{DLLZ}. Also see \cite[\S 2.3]{log1} for some additional conventions on monoids.
\item All formal schemes and adic spaces in this article are assumed to be qcqs.
\item For a topological group $\Gamma$, we simply use $H^i(\Gamma, -)$ to denote the continuous group cohomology $H^i_{\mathrm{cont}}(\Gamma, -)$, by a slight abuse of notation. 
\item For simplicity, we often write $R\Gamma_{\crys}$ (resp. $H^i_{\crys}$) instead of $R\Gamma_{\logcrys}$ (resp. $H^i_{\logcrys}$) for log-crystalline cohomology. We also use the term \emph{$F$-crystals} (resp. \emph{$F$-isocrystals}) instead of log $F$-crystals (resp. log $F$-isocrystals) whenever the context is clear.
\end{itemize}

\vspace{0.1in}
\subsection*{Acknowledgement}
This article is a long‑overdue sequel to \cite{log1}; in particular, the main results of the present work were already announced in \emph{loc.cit.}. The delay arose as the first and third authors (H.D. and Z.Y.) were occupied with other projects. H.D. and Z.Y. would like to thank Z.D. for agreeing to join this project and for his encouragement to complete the manuscript.
During the project, H.D. was partially supported by the National Natural Science Foundation of China (Grant No. 12422101) and the National Key R\&D Program of China (Grant Nos. 2023YFA1009703 and 2021YFA1000704).

\vspace{0.3in}
\section{Relative logarithmic Breuil--Kisin--Fargues modules}\label{section: log BKF modules}
\noindent In this section, we introduce the category $\BKF^{\log}(\fX, \varphi)$ of \emph{relative logarithmic Breuil--Kisin--Fargues modules} over a $p$-adic log formal scheme $\fX$ that is admissibly smooth over a divisible perfectoid split log point $\underline{\mO}$. (The notions of \emph{admissible smoothness} and \emph{divisible perfectoid split log point} will be recalled in \S \ref{subsection: defn of log BKF modules}.) This notion is a logarithmic analogue of the one introduced in \cite{MT} and will serve as the coefficients for our log $\Ainf$-cohomology theory.

We shall also prove that, locally on $\fX$, the category of relative logarithmic Breuil--Kisin--Fargues modules is equivalent to 
\begin{enumerate}
\item a certain category of generalized representations (cf. Definition \ref{defn: generalized representations});
\item a certain category of log $q$-connections equipped with a Frobenius action (cf. Definition \ref{defn: q-connections});
\item a certain category of log prismatic $F$-crystals. (cf. Definition \ref{def: prismatic F-crystals}).
\end{enumerate}

\vspace{0.1in}
\subsection{Relative logarithmic Breuil--Kisin--Fargues modules}\label{subsection: defn of log BKF modules}
\noindent
\vspace{0.1in}

\noindent We first recall the notions of admissible smoothness and divisible perfectoid split log point introduced in \cite{log1}. 

Throughout the section, let $C$ be a perfectoid field of characteristic 0 and residue characteristic $p$ such that it contains all $p$-power roots of unity. Let $\mO=\mO_C$ be its ring of integers and let $\Ainf=W(\mathcal{O}^{\flat})$. We fix a compatible system $\{\zeta_{p^n}\}_{n\ge 0}$ of $p$-power roots of unity and let $\varepsilon:=(1,\zeta_p, \zeta_{p^2}, \cdots)\in \mathcal{O}^{\flat}$. Consider elements $\mu:=[\varepsilon]-1$, $\xi:=\mu/\varphi^{-1}(\mu)$, and $\widetilde{\xi}:=\varphi(\xi)$ in $\Ainf$. Note that $\xi$ is a generator of $\theta: \Ainf\rightarrow \mathcal{O}$ and $\widetilde{\xi}$ is a generator of $\theta\circ \varphi^{-1}$. Moreover, for each integer $r\ge 1$, we consider $\xi_r:=\mu/\varphi^{-r}(\mu)=\frac{[\varepsilon]-1}{[\varepsilon^{1/p^r}]-1}\in \Ainf$.

Let $N_{\infty}$ be a saturated monoid that is \emph{uniquely $n$-divisible} for all $n\ge 1$; that is, the multiplication-by-$n$ map $[n]\colon N_{\infty}\rightarrow N_{\infty}$ is bijective for all $n\ge 1$. Notice that $N_{\infty}$ is necessarily sharp. Let $\alpha\colon N_{\infty}\rightarrow \mO$ be a homomorphism of monoids where $\mO$ is equipped with the multiplicative monoid structure. In particular, the triple  $\underline{\mO}=(\mO, N_{\infty}, \alpha)$ defines a \emph{divisible perfectoid} pre-log ring in the sense of \cite[Definition 5.1]{log1}. We further require that the pre-log ring $\underline{\mO}$ is \emph{split}. Recall that a pre-log ring $(R, M,\alpha)$ is split if the log structure $\mathcal{M}_X$ on $X=\mathrm{Spec}\,R$ associated with the constant pre-log structure $M_X\rightarrow \mO_{X_{\ett}}$ admits a splitting $\mathcal{M}_X\cong \mO^{\times}_{X_{\ett}}\oplus M_X$. In the rest of the section, we fix such a split divisible perfectoid pre-log ring $\underline{\mO}$.

Let $\spf(\ul \mO)^a= \spf(\mO, N_{\infty})^a$ be the $p$-adic log formal scheme associated with the pre-log ring $\underline{\mO}$ and let $\spa(C, \mO_C)_{N_{\infty}}$ be its adic generic fiber. It turns out $\spa(C, \mO_C)_{N_{\infty}}$ is a \emph{divisible log point} in the sense of \cite[Definition 6.1]{log1}.

Let $\fX$ be a $p$-adic log formal scheme which is \emph{admissibly smooth} over $\spf(\mO, N_{\infty})^a$ in the sense of \cite[Definition 6.6 (2)]{log1}. This means $\fX$ is a saturated $p$-adic log formal scheme (i.e., a $p$-adic log formal scheme which \'etale locally admits charts modeled on saturated monoids) that fits into a Cartesian diagram
\[ 
\begin{tikzcd} 
\fX \arrow[d] \arrow[r] & \fX_0 \arrow[d] 
\\ 
\spf(\mO, N_{\infty})^a \arrow[r] & \spf(\mO, N)^a 
\end{tikzcd}
\]
where
\begin{itemize}
\item $N$ is a toric monoid and $\spf(\mO, N)^a$ is the $p$-adic log formal scheme associated with a split pre-log ring $\alpha_0\colon N\rightarrow \mO$.
\item The map $\spf(\mO, N_{\infty})^a\rightarrow \spf(\mO, N)^a$ is induced from a monoid homomorphism $N\rightarrow N_{\infty}$ and it is identity on the underlying formal schemes.
\item The map $\fX_0\rightarrow \spf(\mO, N)^a$ is an integral, ($p$-completely) log smooth, and saturated morphism of $p$-adic formal fs log schemes. \footnote{According to \cite[Proposition 6.7, Remark 6.9]{log1}, one could replace \emph{saturated} by \emph{pseudo-saturated} without changing the definition. See \cite[Definition 3.8]{log1} for the notion of pseudo-saturated morphisms.}
\end{itemize}
In this case, the map $\fX_0\rightarrow \spf(\mO, N)^a$ is referred to as a \emph{finite model} of $\fX\rightarrow \spf(\mO, N_{\infty})^a$.\footnote{Note that $\fX\rightarrow \fX_0$ is an isomorphism on the underlying formal schemes.} Let $X$ denote the adic generic fiber of $\fX$ which exists by \cite[Proposition 6.3]{log1}. By construction, $X$ is admissibly smooth over $\spa(C, \mO_C)_{N_{\infty}}$ in the sense of \cite[Definition 6.5 (2)]{log1}.

Let $X_{\proket}$ be the pro-Kummer \'etale site on $X$ defined in \cite[Section 7.5]{log1} and let $\widehat{\mO}_X^+$ (resp. $\widehat{\mO}_X^{\flat+}$) be the completed integral structure sheaf (resp. tilted structure sheaf) on $X_{\proket}$. Let $\Ainfx:=W(\widehat{\mO}_X^{\flat+})$. Consider the natural projection of sites $\nu\colon X_{\proket}\rightarrow \fX_{\ett}$. The following definitions are analogues of \cite[Definition 5.1]{MT}.

\begin{definition}\label{defn: log BKF modules}
\begin{enumerate}
\item Let $\bM$ be a sheaf of $\Ainfx$-modules on $X_{\proket}$. We say that $\bM$ is \emph{trivial modulo} $\xi_r$ if the sheaf of $\nu_*\big(\Ainfx/\xi_r\big)$-module $\nu_*(\bM/\xi_r)$ is locally finite free and the counit
\[\nu^{-1}\nu_*(\bM/\xi_r)\otimes_{\nu^{-1}\nu_*(\Ainfx/\xi_r)}\Ainfx/\xi_r\rightarrow \bM/\xi_r\]
is an isomorphism of sheaves on $X_{\proket}$. We say that $\bM$ is \emph{trivial modulo} $<\mu$ if it is trivial modulo $\xi_r$ for all $r\ge 1$.
\item A \emph{relative logarithmic Breuil--Kisin--Fargues module without Frobenius} over $\fX$ is a locally finite free $\Ainfx$-module that becomes trivial modulo $<\mu$. The category of such objects is denoted by $\mathrm{BKF}^{\log}(\fX)$.
\item A \emph{relative logarithmic Breuil--Kisin--Fargues module} over $\fX$ is a pair $(\bM, \varphi_{\bM})$ where 
\begin{itemize}
    \item $\bM\in \mathrm{BKF}^{\log}(\fX)$, and 
    \item $\varphi_{\bM}\colon(\varphi^*\bM)[\txi^{-1}]\xrightarrow[]{\sim} \bM[\txi^{-1}]$ is an isomorphism of sheaves of $\Ainfx[\txi^{-1}]$-modules.\footnote{This is equivalent to giving a Frobenius-semilinear isomorphism $\bM[\xi^{-1}]\xrightarrow[]{\sim} \bM[\txi^{-1}]$.}
\end{itemize} 
The category of such objects is denoted by $\mathrm{BKF}^{\log}(\fX, \varphi)$. For simplicity, we refer to these objects as \emph{relative log BKF modules}.
\end{enumerate}
\end{definition}
 
\begin{remark}
If $\underline{\mO}$ is equipped with the trivial log structure, we recover the notion of relative BKF modules introduced in \cite[\S 5]{MT}.
\end{remark}

\vspace{0.1in}
\subsection{Generalized representations}\label{subsection: generalized repn}
\noindent
\vspace{0.1in}

\noindent As a first attempt to understand relative log BKF modules, we study certain generalized representations \`a la \cite{MT}. In \S \ref{subsection: log BKF vs generalized repn}, we will relate these generalized representations to relative log BKF modules.

We recall the following definition from \cite[Definition 1.1]{MT}.

\begin{definition}\label{defn: MT 1.1}
Let $B$ be a ring equipped with an adic topology and let $G$ be a topological group. Suppose $B$ is equipped with a continuous action of $G$. 
\begin{enumerate}
\item[(1)] A \emph{generalized representation} of $G$ on $B$-modules is a finite projective $B$-module $M$ equipped with a continuous semilinear action of $G$. The category of all such generalized representations is denoted by $\Rep_G(B)$.
\item[(2)] Given an element $b\in B$, we say that a generalized representation $M$ is \emph{trivial modulo $b$} if the $(B/b)^G$-module $(M/b)^G$ is finite projective and the natural map
\[(M/b)^G\otimes_{(B/b)^G}B/b\rightarrow M/b\] is an isomorphism. Let $\Rep^b_G(B)\subset \Rep_G(B)$ denote the full subcategory of generalized representations which are trivial modulo $b$.
\end{enumerate}
\end{definition}

We would like to study generalized representations arising from a ``small'' neighborhood of $\fX$ (cf. \cite[Definition 9.1]{log1}).

\begin{definition}\label{definition: small affine}
Suppose $\fX$ is admissbly smooth over $\underline{\mO}=(\mO, N_{\infty})$ as above. We say that an affine \'etale open  $\fU = \spf R \ra \fX$ is \emph{small} if there exist
\begin{enumerate}
\item an injective homomorphism $u\colon N\hookrightarrow P$ such that
\begin{itemize}
\item $N$ is sharp fs and $P$ is torsion-free fs;
\item $P\cap (-N)=\{0\}$;
\item $u$ is saturated;
\item the cokernel of $u^{\gp}\colon N^{\gp}\rightarrow P^{\gp}$ is torsion-free;
\end{itemize}
\item an injection $\iota\colon N\hookrightarrow N_{\infty}$ (which gives rise to an injection $N_{\Q_{\ge 0}}:= \varinjlim  \frac{1}{m} N \hookrightarrow N_{\infty}$)
\end{enumerate}
such that there is a strictly \'etale morphism $\fU=\spf R \rightarrow \spf(\underline{R^{\square}})^a$ which is a composition of rational localizations and strictly finite \'etale morphisms where $\underline{R^{\square}}$ is the pre-log algebra
\[\underline{R^{\square}}=(P\sqcup_NN_{\infty}\rightarrow R^{\square}:=\mO\langle P\rangle\widehat{\otimes}_{\mO\langle N\rangle}\mO).\]
In particular, $\underline{R^{\square}}$ fits into the following commutative diagram of pre-log rings
\[
\begin{tikzcd}[column sep = 1em, row sep = 2.5em]
(\mO  \gr{P}, P) \arrow[r] 
& (R^{\square}, P)  \arrow[r] 
& \ul{R^{\square}} 
\\
(\mO  \gr{N}, N)   \arrow[u] \arrow[r] 
& (\mO, N) \arrow[r] \arrow[u] 
&  (\mO, N_\infty) \arrow[u] .
\end{tikzcd}
\]
where
\begin{enumerate}
\item both squares are pushout squares in the category of saturated $p$-complete pre-log rings;
\item the left vertical arrow is induced by $u\colon N\hookrightarrow P$;
\item the bottom left horizontal arrow is given by $e^n \mapsto \alpha\circ \iota(n)$ for all $n \in N$;
\item the bottom right horizontal arrow is induced by the injection $\iota\colon N\hookrightarrow N_{\infty}$.
\end{enumerate}
It is clear that $\mathfrak{U}=\spf(\underline{R})^a$ where $\underline{R}$ is the pre-log ring $(P\sqcup_NN_{\infty}\rightarrow R^{\square}\rightarrow R)$. In the situation above, we say that $\fU$ (as well as $\ul{R^{\square}}$  and $\underline{R}$) is modeled on the \emph{small chart} $u\colon N \ra P$. We also refer to the map $R^{\square} \ra R$ as a \emph{small coordinate}. For notational simplicity, sometimes we just write $\fU= \spf R$ or $\fU= \spf \ul R$.
\end{definition}

\begin{remark}
According to \cite[Remark 6.9]{log1}, $\fX$ is \'etale locally small.
\end{remark}

\begin{assumption}
In light of \cite[Remark 8.2]{log1}, it does not harm to assume $N_{\infty}=N_{\Q_{\ge 0}}$. We will make this assumption for the rest of the section.
\end{assumption}

\begin{remark}\label{rmk: N free}
In practice, we are mostly interested in the case when $N$ is free (i.e., $N\cong \mathbb{N}^{\oplus n}$ for some $n$). For example, when we prove the $C_{\mathrm{st}}$-conjecture in \S \ref{section: Cst comparison}, the underlying $p$-adic log formal scheme are required to have semistable reduction; in this case, we can take $N=\mathbb{N}$.
\end{remark}

Let $R$, $R^{\square}$, $N$, $P$ be as in Definition \ref{definition: small affine}. To proceed, we consider some suitable perfectoid cover $R_{\infty}$ (resp. $R_{\infty}^{\square}$) of $R$ (resp. $R^{\square}$), as well as their ``$\Ainf$-deformations'' $A(R)$, $A(R^{\square})$, $\Ainf(R_{\infty})$, and $\Ainf(R^{\square}_{\infty})$ (cf. \cite[Section 9]{log1}). More precisely, 
\begin{itemize}
\item Let $P_{\infty}:= P\sqcup_N N_\infty$.
\item Let $R^{\square}_{\infty}:=\mO\langle P_{\Q\ge 0}\rangle\widehat{\otimes}_{\mO\langle N_{\infty}\rangle}\mO$ and $\underline{R_{\infty}^{\square}}:=(R_{\infty}^{\square}, P_{\Q\ge 0})$. In particular, $R^{\square}_{\infty}$ is a perfectoid $\mO$-algebra and $\underline{R_{\infty}^{\square}}$ is a divisible perfectoid pre-log ring.
\item Let $R_{\infty}:=R^{\square}_{\infty}\widehat{\otimes}_{R^{\square}} R$ and $\underline{R_{\infty}}:=(R_{\infty}, P_{\Q\ge 0})$. In particular, $R_{\infty}$ is a perfectoid $\mO$-algebra and $\underline{R_{\infty}}$ is a divisible perfectoid pre-log ring.
\item Let $\Ainf(R^{\square}_{\infty}):= W((R^{\square}_{\infty})^{\flat}) \cong\Ainf\langle P_{\Q\ge 0}\rangle \widehat{\otimes}_{\Ainf\langle N_{\infty}\rangle} \Ainf$ where the completion is $(p,\mu)$-adic. Let $\ul{\Ainf( R^{\square}_{\infty})}$ be the pre-log ring $(\Ainf(R^{\square}_{\infty}), P_{\Q\ge 0})$.
\item Let $A(R^{\square}):=\Ainf\langle P\rangle\widehat{\otimes}_{\Ainf\langle N\rangle} \Ainf$ where the completion is $(p,\mu)$-adic. Let $\ul{A(R^{\square})}$ be the pre-log ring $(A(R^{\square}), P_{\infty})$.
\item Let $A(R^{\square})\rightarrow A(R)$ be the unique formally \'etale map, where $A(R)$ is $(p,\mu)$-adically complete, that lifts the formally \'etale map $R^{\square}\rightarrow R$. Let $\ul{A(R)}$ be the pre-log ring $(A(R), P_{\infty})$.
\item Let $\Ainf(R_{\infty}):=W(R_{\infty}^{\flat})\cong \Ainf(R^{\square}_{\infty})\widehat{\otimes}_{A(R^{\square})}A(R)$ where the completion is $(p,\mu)$-adic. Let $\ul{\Ainf(R_{\infty})}$ be the pre-log ring $(\Ainf(R_{\infty}), P_{\Q\ge 0})$.
\end{itemize}

Notice that $\spf \underline{R_{\infty}}\rightarrow \spf\underline{R}$ and $\spf \underline{R^{\square}_{\infty}}\rightarrow \spf\underline{R^{\square}}$ are both Galois covers with Galois group $\Gamma=\Hom (P^{\gp}/N^{\gp}, \widehat{\Z}(1))$ (cf. \cite[Section 9]{log1}). By choosing a $\Z$-basis of $P^{\gp}/N^{\gp}$, we have $\Gamma\cong  \widehat{\Z}(1)^d$ where $d=\mathrm{rk}_{\Z}(P^{\gp}/N^{\gp})$. Also recall that all of $\Ainf(R^{\square}_{\infty})$, $\Ainf(R_{\infty})$, $A(R^{\square})$, and $A(R)$ admit natural continuous actions of $\Gamma$.

\begin{definition}\label{defn: generalized representations}
\begin{enumerate}
\item Let $\Rep_{\Gamma}(\Ainf(R_{\infty}))$ (resp. $\Rep^{\mu}_{\Gamma}(\Ainf(R_{\infty}))$) denote the category of generalized representations of $\Gamma$ on $\Ainf(R_{\infty})$-modules (resp. the subcategory of those generalized representations which are trivial modulo $\mu$) in the sense of Definition \ref{defn: MT 1.1}. 

\item Let $\Rep^{<\mu}_{\Gamma}(\Ainf(R_{\infty}))$ denote the subcategory of generalized representations of $\Gamma$ which are ``trivial modulo $<\mu$''; that is trivial modulo $\xi_r$ for all integers $r\ge 1$.

\item Let $\Rep_{\Gamma}(\Ainf(R_{\infty}), \varphi)$, $\Rep^{\mu}_{\Gamma}(\Ainf(R_{\infty}), \varphi)$, and $\Rep^{<\mu}_{\Gamma}(\Ainf(R_{\infty}), \varphi)$ be the corresponding category of generalized representations with Frobenius; that is a pair $(M, \varphi_M)$ where 
\begin{itemize}
    \item $M\in \Rep_{\Gamma}(\Ainf(R_{\infty}))$ (resp. $\Rep^{\mu}_{\Gamma}(\Ainf(R_{\infty}))$; resp. $\Rep^{<\mu}_{\Gamma}(\Ainf(R_{\infty})))$, and
    \item $\varphi_M\colon (\varphi^*M)[\txi^{-1}]\xrightarrow[]{\sim} M[\txi^{-1}]$ is a $\Gamma$-equivariant $\Ainf(R_{\infty})[\txi^{-1}]$-linear isomorphism.
\end{itemize}
\item The categories $\Rep_{\Gamma}(A(R))$, $\Rep^{\mu}_{\Gamma}(A(R))$, $\Rep^{<\mu}_{\Gamma}(A(R))$, $\Rep_{\Gamma}(A(R), \varphi)$, $\Rep^{\mu}_{\Gamma}(A(R), \varphi)$, and $\Rep^{<\mu}_{\Gamma}(A(R), \varphi)$ are defined similarly.
\end{enumerate}
\end{definition}

The goal of this section is to prove the following analogues of \cite[Theorem 1.13]{MT} and \cite[Thm 1.14]{MT}.

\begin{theorem}\label{thm: MT 1.13}
The base change functor 
\[-\otimes_{A(R)}\Ainf(R_{\infty})\colon \Rep_{\Gamma}^{\mu}(A(R))\rightarrow \Rep_{\Gamma}^{\mu}(\Ainf(R_{\infty})))\]
is an equivalence of categories. As a corollary, the base change functor
\[-\otimes_{A(R)}\Ainf(R_{\infty})\colon \Rep_{\Gamma}^{\mu}(A(R), \varphi)\rightarrow \Rep_{\Gamma}^{\mu}(\Ainf(R_{\infty})), \varphi)\]
is also an equivalence of categories.
\end{theorem}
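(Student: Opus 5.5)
We follow the strategy of \cite[Theorem 1.13]{MT}, which rests on a general descent criterion \cite[Proposition 1.11]{MT} (or its variant). Write $B=A(R)$ and $B_\infty=\Ainf(R_{\infty})$, with $B\to B_\infty$ a $\Gamma$-equivariant map of $(p,\mu)$-adically complete rings. The criterion says that base change $\Rep^{\mu}_{\Gamma}(B)\to\Rep^{\mu}_{\Gamma}(B_\infty)$ is an equivalence provided that:
\begin{enumerate}
\item[(a)] $B\to B_\infty$ is $\Gamma$-equivariantly split, i.e. $B_\infty\cong B\,\widehat{\oplus}\, B_\infty^{\mathrm{nonint}}$ as topological $B$-modules with $\Gamma$-action;
\item[(b)] on the complement $B_\infty^{\mathrm{nonint}}$, the continuous group cohomology $H^i(\Gamma,-)$ is killed by $\varphi^{-1}(\mu)$, or at least by an element dividing $\mu$ in the appropriate sense, for all $i$, and this persists modulo $\mu$ and $\mu^n$;
\item[(c)] $\Gamma$ acts trivially on $B/\mu$, so that $(B/\mu)^\Gamma=B/\mu$.
\end{enumerate}
Given (a)--(c), one runs the usual d\'evissage. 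For $M_\infty\in\Rep^\mu_\Gamma(B_\infty)$, lift a basis of the trivialized module $(M_\infty/\mu)^\Gamma$ and use the vanishing in (b) to successively correct the descended module modulo $\mu^n$. Passing to the limit via $(p,\mu)$-completeness produces $M\in\Rep^\mu_\Gamma(B)$. Full faithfulness comes from applying the same cohomology computation to $\Hom$-modules, which lie in $\Rep^\mu$ since that category is stable under internal Hom.

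The real content is checking (a)--(c) in the logarithmic setting. For (a), we first treat $R^\square$. Here
\[A(R^\square)=\Ainf\langle P\rangle\widehat{\otimes}_{\Ainf\langle N\rangle}\Ainf\]
and $\Ainf(R^\square_\infty)$ is the corresponding completed monoid algebra on $P_{\Q\ge0}$ relative to $N_\infty$. The monoid $P_{\Q\ge 0}\sqcup_{N_{\Q\ge0}} N_\infty$ decomposes according to the image in $(P^{\gp}/N^{\gp})\otimes\Q/\Z$: the integral part is the summand of class zero, and the nonintegral part is the sum over the nonzero classes. This uses that $u$ is saturated and that $\mathrm{coker}(u^{\gp})$ is torsion-free, exactly as in the computation of log $\Ainf$-cohomology in \cite[Section 9]{log1}. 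For general $R$ we base change along the formally \'etale map $A(R^\square)\to A(R)$, using $\Ainf(R_\infty)\cong\Ainf(R^\square_\infty)\widehat{\otimes}_{A(R^\square)}A(R)$.

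For (b), a basis of $P^{\gp}/N^{\gp}$ identifies $\Gamma\cong\widehat{\Z}(1)^d$, and the group cohomology of each nonintegral graded piece is computed by a Koszul complex. On the piece indexed by a monomial whose exponent has a nonzero class in $\frac1{p^r}\Z/\Z$ in some coordinate, the corresponding generator acts by $[\varepsilon^{a/p^r}]$ with $a$ prime to $p$. So $[\varepsilon^{a/p^r}]-1$ kills the cohomology, and this element divides $\varphi^{-1}(\mu)$ up to a unit. This is the \cite[Lemma 9.6]{log1}-type computation, and we will cite it rather than redo it.

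For (c), on $A(R^\square)$ the group $\Gamma$ acts on the monomial $e^x$, $x\in P$, through $[\varepsilon]^{\langle\gamma,x\rangle}$, which is $\equiv1\bmod\mu$. Triviality then extends to $A(R)$ by \'etaleness: the action modulo $\mu$ is the unique lift of the identity.

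The Frobenius statement is formal once the first equivalence holds. $\varphi$ is compatible with base change, and $\varphi^*$ preserves triviality modulo $\mu$ because $\varphi(\mu)=\mu\txi$ is divisible by $\mu$. By full faithfulness applied after inverting $\txi$, a Frobenius $\varphi_{M_\infty}$ on $M_\infty=M\otimes B_\infty$ descends uniquely to $M$: the isomorphism $\varphi^*M_\infty[\txi^{-1}]\simeq M_\infty[\txi^{-1}]$ is a $\Gamma$-equivariant map between base changes of objects of $\Rep^\mu_\Gamma(B)$ with $\txi$ inverted. This needs full faithfulness to survive localization at $\txi$, which we get by writing the localization as a colimit over multiplication by $\txi^{n}$.

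The main obstacle is (a)–(b): showing the splitting is a genuine $\Gamma$-stable topological decomposition in the saturated, log setting where $P$ need not be free. We will follow \cite[Section 9]{log1} closely to get this.
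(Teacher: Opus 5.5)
Your overall architecture is the one the paper uses. There, the statement is the case $c=\mu$ of Theorem~\ref{thm: MT 1.16}, proved by transplanting \cite[Theorem 1.16]{MT}: isotypic decomposition, cohomology of the non-trivial part killed by $\varphi^{-1}(\mu)$, and cocycle approximation. Your reduction of the Frobenius statement to full faithfulness after clearing $\txi$-denominators is fine. (The descent result in \cite{MT} is Theorem 1.16; Proposition 1.11 there says that triviality modulo $<\mu$ implies triviality modulo $\varphi^{-1}(\mu)$.) However, the essential-surjectivity d\'evissage you describe does not work as written, for two reasons.

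\textbf{The invariants live over the wrong ring.} You start by lifting a basis of $(M_\infty/\mu)^\Gamma$ as if it were a free module over $B/\mu$, and your condition (c) only computes $(B/\mu)^\Gamma$. But $(M_\infty/\mu)^\Gamma$ is finite projective over $(B_\infty/\mu)^\Gamma$, which is strictly larger than $B/\mu$.
\begin{itemize}
\item Take $y\in\Ainf(R_\infty)_\psi$ with $\psi(\gamma_i)=[\varepsilon^{k_i}]$ and $k_i\in\frac1p\Z$. Then $\gamma_i(\xi y)-\xi y=([\varepsilon^{k_i}]-1)\xi y\in\mu\Ainf(R_\infty)$.
\item So $\xi y$ is a $\Gamma$-invariant of $B_\infty/\mu$ lying in the non-integral summand, and it is nonzero if $y\notin\varphi^{-1}(\mu)\Ainf(R_\infty)$.
\item Moreover $\xi$ is not nilpotent modulo $\mu$; it only becomes nilpotent modulo $(p,\mu)$.
\end{itemize}
Hence $(M_\infty/\mu)^\Gamma$ need not be free. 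To reduce to the free case you must localize by elements coming from $A(R)$, descend over each $A(R')$, and glue using full faithfulness. This needs two facts: that $A(R)/\mu\to(B_\infty/\mu)^\Gamma$ is a proj-isomorphism inducing a homeomorphism on $\spec$ after reducing mod $p$, and that invariants commute with localization. These are Lemmas~\ref{lemma: MT 1.23} and~\ref{lemma: MT 1.24}. Their proofs use two inputs:
\begin{itemize}
\item the $p$-torsion-freeness in Lemma~\ref{lemma: MT 1.20}, to commute invariants with reduction mod $p$;
\item the fact that the annihilator of $\varepsilon^k-1$ in $\Ainf(R_\infty)/(\mu,p)$ is generated by $(\varepsilon-1)/(\varepsilon^k-1)$, which makes the extra invariants nilpotent.
\end{itemize}
The second input rests on the freeness of $(\mO^\flat/c)[P_\infty]\otimes_{(\mO^\flat/c)[N_\infty]}(\mO^\flat/c)\cong(\mO^\flat/c)[Q]$. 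This, rather than (a) and (b) (which are imported from \cite[\S10]{log1}), is where the log structure genuinely has to be handled, and it is missing from your plan.

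\textbf{The obstruction does not vanish.} Write $\alpha(\gamma)=1+\mu\beta(\gamma)$. The non-integral part of $\beta \bmod \mu$ is a cocycle whose class in $H^1(\Gamma,M_n(B_\infty^{\mathrm{nonint}}/\mu))$ is only killed by $\varphi^{-1}(\mu)$.
\begin{itemize}
\item From $\varphi^{-1}(\mu)\beta^{\mathrm{nonint}}(\gamma)=\gamma(Y)-Y+\mu Z(\gamma)$, the available correction is $X=1-\xi Y$.
\item After this correction the non-integral part is divisible by $\mu\xi$, not by $\mu^2$.
\end{itemize}
So there is no correction ``modulo $\mu^n$''. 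The iteration produces $X\in 1+\xi M_n(B_\infty)$ converging $\xi$-adically, i.e.\ $(\mu/\varphi^{-1}(\mu))$-adically. This is Lemma~\ref{lemma: MT 1.22}, and it is why Theorem~\ref{thm: MT 1.16} assumes $c/\varphi^{-1}(\mu)\notin A^\times$.

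The same point affects your full-faithfulness step. Condition (b) for $B_\infty^{\mathrm{nonint}}$ does not directly give $H^0(\Gamma,H\widehat{\otimes}_B B_\infty^{\mathrm{nonint}})=0$ when $H=\Hom(N_1,N_2)$ has non-trivial action; modulo $\mu$ these invariants are nonzero. You need either a $\xi$-adic induction or the paper's argument. The paper twists by $\chi$ and factors
$[\varepsilon^{k_i}]\gamma_i-1=([\varepsilon^{k_i}]-1)\bigl(1+\tfrac{[\varepsilon^{1/p}]-1}{[\varepsilon^{k_i}]-1}[\varepsilon^{k_i}]\,\xi\,\delta_i\bigr)$, whose second factor is invertible by $\xi$-adic completeness.
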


\begin{theorem}\label{thm: MT 1.14}
Let $M\in \Rep_{\Gamma}(\Ainf(R_{\infty}))$. Then $M$ is trivial modulo $\mu$ if and only if it is trivial modulo $<\mu$. Namely, $\Rep_{\Gamma}^{\mu}(\Ainf(R_{\infty}))=\Rep_{\Gamma}^{<\mu}(\Ainf(R_{\infty}))$.
\end{theorem}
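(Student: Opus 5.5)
One direction is immediate. Each $\xi_r$ divides $\mu$, since $\mu=\xi_r\cdot\varphi^{-r}(\mu)$. So if $M$ is trivial modulo $\mu$, set $N:=(M/\mu)^\Gamma$, a finite projective $(\Ainf(R_\infty)/\mu)^\Gamma$-module with $N\otimes_{(\Ainf(R_\infty)/\mu)^\Gamma}\Ainf(R_\infty)/\mu\simeq M/\mu$. Reducing further modulo $\xi_r$ gives a $\Gamma$-equivariant isomorphism $\bar N\otimes\Ainf(R_\infty)/\xi_r\simeq M/\xi_r$, where $\bar N$ is the reduction of $N$.

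Taking $\Gamma$-invariants and using that $\bar N$ is projective (and has trivial action), we get $(M/\xi_r)^\Gamma\simeq \bar N\otimes (\Ainf(R_\infty)/\xi_r)^\Gamma$, which is finite projective. The counit is then the isomorphism above. Hence trivial modulo $\mu$ implies trivial modulo $<\mu$.

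For the converse, I follow \cite[Theorem 1.14]{MT}. Assume $M$ is trivial modulo $\xi_r$ for all $r$. The plan is to show that the $\Gamma$-action on $M/\mu$ is trivial on a descended lattice. The first step is to use the decomposition of $\Ainf(R_\infty)$ as a completed direct sum, as in \cite[\S 9]{log1}. Concretely, $\Ainf(R_\infty)\simeq A(R)\,\widehat\oplus\,\widehat\bigoplus_{a}A(R)\cdot[\underline{a}]$, with $a$ running over the non-integral part of $P_{\Q\ge0}^{\gp}/P^{\gp}$ (modulo $N$-divisible directions). On the summand indexed by $a$, each generator $\gamma$ of $\Gamma\cong\widehat\Z(1)^d$ acts by multiplication by $[\varepsilon^{\langle a,\gamma\rangle}]$. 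The key computation, analogous to \cite[Lemma 9.?]{log1} and used in the trivial-coefficient case of \cite{log1}, is the following. For $a$ of exact denominator $p^s$, the element $[\varepsilon^{a}]-1$ generates the same ideal as $\varphi^{-s}(\mu)$. Therefore the Koszul complex of $\Gamma$ on that summand is killed by $\varphi^{-s}(\mu)$. Moreover, modulo $\xi_r$ for $r\ge s$, the invariants of the nonintegral summands vanish up to $\varphi^{-s}(\mu)$-torsion. This yields $(\Ainf(R_\infty)/\xi_r)^\Gamma$ and controls $H^1(\Gamma,-)$ of it.

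The second step is to fix $r$ and use triviality modulo $\xi_r$ to choose a basis of $M/\xi_r$ (Zariski locally on $R$) on which $\Gamma$ acts through a matrix $\equiv 1$. Lift this to $M$. Then $\gamma-1$ acts on $M$ by a matrix with entries in $\xi_r\Ainf(R_\infty)$. I would then pass to the limit over $r$. The ideals satisfy $\bigcap_r(\xi_r)=(\mu)$ in the relevant $(p,\mu)$-complete rings, since $\mu=\lim$ of the $\xi_r$ up to units tending to $\varphi^{-\infty}(\mu)$. Using compatibility of the choices, i.e.\ taking $(M/\xi_{r})^\Gamma$ and comparing across the transition maps $M/\xi_{r+1}\to M/\xi_r$, one wants a $\Gamma$-stable lattice on which $\gamma-1$ is divisible by $\mu$.

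The main obstacle is this limit step. The invariants $(M/\xi_r)^\Gamma$ need not map surjectively to $(M/\xi_{r'})^\Gamma$. The remedy in \cite{MT} is the following. Consider the projective system $\{(M/\xi_r)^\Gamma\}_r$. By the vanishing results from the first step, the obstruction to lifting lies in $H^1(\Gamma,\xi_{r'}\Ainf(R_\infty)/\xi_r)$ on the nonintegral summands, which is killed by a bounded power of $\varphi^{-r'}(\mu)$. One then shows this system is Mittag-Leffler up to this bounded torsion, and after completion produces a finite projective module over $(\Ainf(R_\infty)/\mu)^\Gamma$. By Theorem \ref{thm: MT 1.13}'s framework (identifying $(\Ainf(R_\infty)/\mu)^\Gamma$ with $A(R)/\mu$ up to the explicit summands), this module base-changes isomorphically to $M/\mu$, giving triviality modulo $\mu$. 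I would carry out the torsion bookkeeping exactly as in \cite{MT}, replacing their monoid $\Z^d$ by the log monoid $P$. The only new input is that the decomposition and the valuation estimates of \cite[\S 9]{log1} hold for saturated charts with torsion-free $P^{\gp}/N^{\gp}$.
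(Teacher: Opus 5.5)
Your first direction is fine. The converse has a genuine gap, at exactly the step you flag as the main obstacle, and the proposed remedy does not close it.

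Working directly over $\Ainf(R_\infty)$, the $\Gamma$-fixed bases of $M/\xi_r$ you choose for different $r$ are unrelated. There is therefore no single object on which the conditions ``$\gamma-1\equiv 0$ modulo $\xi_r$'' can be intersected over $r$. Your target is also mis-stated. Since $\Gamma$ does not act trivially on $\Ainf(R_\infty)/\mu$, no $\Ainf(R_\infty)$-lattice $L$ can satisfy $(\gamma-1)L\subseteq\mu L$; what you actually need is a basis, i.e.\ a descended structure.

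The Mittag-Leffler argument on $\{(M/\xi_r)^\Gamma\}_r$ is not substantiated. Torsion bounds at each finite stage give no control of the limit. Moreover, $\varprojlim_r\Ainf(R_\infty)/\xi_r=W(R_\infty)$, which contains $\Ainf(R_\infty)/\mu$ only as a submodule, with cokernel $\Xi(R_\infty)$ as in the proof of Lemma~\ref{lemma: MT 1.7}. So a limit of invariants naturally lives over $W(R_\infty)$, not over $(\Ainf(R_\infty)/\mu)^\Gamma$. Getting back to $\Ainf(R_\infty)/\mu$ is precisely the nontrivial content of Lemma~\ref{lemma: MT 1.7} and the corollary after it, which you never use. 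This is also not how \cite{MT} argue.

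The missing idea is descent to $A(R)$, which produces a canonical, $r$-independent lattice on which $\Gamma$ acts trivially modulo $\mu$. The paper does this in four steps.
\begin{enumerate}
\item It reduces modulo $p$. Then $M/p$ is trivial modulo $\bar\xi_r=(\varepsilon-1)/(\varepsilon^{1/p^r}-1)=(\varepsilon^{1/p^r}-1)^{p^r-1}$. For $r\ge 2$ this $c$ satisfies the hypotheses of the mod $p$ case of Theorem~\ref{thm: MT 1.16}.
\item That theorem gives $M/p\cong N\otimes_{A(R)/p}R_\infty^\flat$ with $N$ trivial modulo $\bar\xi_r$, and by full faithfulness $N$ is independent of $r$. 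Integrally this step is unavailable: $\xi_r\notin\varphi^{-1}(\mu)\Ainf$ (map to $W(k)$, where $\xi_r\mapsto p^r$), so you cannot descend modulo $\xi_r$ over $\Ainf$.
\item Since $\Gamma$ acts trivially on $A(R)/(p,\mu)$, triviality of $N$ modulo $\bar\xi_r$ just means $(\gamma-1)N\subseteq\bar\xi_r N$. Moreover $\bigcap_r\bar\xi_r\,(A(R)/p)=(\varepsilon-1)(A(R)/p)$, by topological freeness over $\mO^\flat$ and \cite[Lemma 1.5]{MT}. Hence $M$ is trivial modulo $(p,\mu)$.
\item Part (2) of the corollary following Lemma~\ref{lemma: MT 1.7} (trivial modulo $<\mu$ and modulo $(p,\mu)$ implies trivial modulo $\mu$) then finishes the proof.
\end{enumerate}
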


The proofs of Theorem \ref{thm: MT 1.13} and Theorem \ref{thm: MT 1.14} are similar to the ones in \cite{MT}. We adopt their overall strategy and mainly focus on where our proof differs from \emph{loc. cit.}

Firstly, let us recall some calculations from \cite[Section 10]{log1}. Fix a set of topological generators $\gamma_1, \ldots, \gamma_d$ of $\Gamma\cong \widehat{\Z}(1)^d$. For a character $\psi\colon \Gamma\rightarrow \Ainf^{\times}$, we have defined
\[\Ainf(R_{\infty})_{\psi}:=\big\{x\in \Ainf(R_{\infty})\,|\, \gamma\cdot x=\psi(\gamma)x,\,\,\,\forall x\in\Gamma \big\}.\]
Then we have
\[\Ainf(R_{\infty})=\widehat{\oplus}_{\psi}\Ainf(R_{\infty})_{\psi}\]
where the completion is $(p,\mu)$-adic and $\psi$ runs through all characters which sends $\gamma_i$ to $[\epsilon^{\alpha_i}]$, for all $i=1, \ldots, d$. Here $\alpha_i=\frac{a_i}{p^{r_i}n_i}$ for some $n_i\in \Z_{>0}$ coprime to $p$, some $r_i\in \Z_{\ge 0}$, and some $a_i\in (\Z/n_i\Z)^{\times}\times \Z_p^{\times}$, and 
\[[\epsilon^{\alpha_i}]:=[(\zeta_{p^{r_i}n_i}, \zeta_{p^{r_i+1}n_i}, \zeta_{p^{r_i+2}n_i}, \cdots)]^{a_i}.\]
Moreover, for every character $\chi\colon\Gamma\rightarrow \mO^{\times}$ of finite order, we write
\[\Ainf(R_{\infty})_{\chi}:=\widehat{\oplus}\Ainf(R_{\infty})_{\psi}\]
where the completion is $(p,\mu)$-adic and the direct sum runs through all $\psi$ that lifts $\chi$ (i.e., the composition of $\psi$ with the mod $\xi$ map $\Ainf^{\times}\rightarrow \mO^{\times}$ is $\chi$). Clearly, we have
\[\Ainf(R_{\infty})=\widehat{\oplus}_{\chi}\Ainf(R_{\infty})_{\chi}\]
where the completion is $(p,\mu)$-adic and $\chi$ runs through all finite order characters $\chi$. If $\chi=1$, we have $\Ainf(R_{\infty})_1=A(R)$. For general $\chi$, we know that $\Ainf(R_{\infty})_{\chi}$ is a finite $A(R)$-module.

We also need the following useful fact: If $M$ is an topological abelian group of the form $M=\varprojlim_{n} M_n$ with surjective transition maps such that each $M_n$ is a discrete $\Gamma$-module killed by some power of $p$ and $M$ is equipped with the inverse limit topology, then we have $R\Gamma_{\cont}(\Gamma, M)\cong R\Gamma(\Z^d, M)$. This allows us to reduce the computation of continuous group cohomology to the non-topological group cohomology.

Consider the isomorphisms $\theta_r\colon \Ainf(R_{\infty})/\xi_r\xrightarrow[]{\sim} W_r(R_{\infty})$. Taking inverse limit as $r\rightarrow \infty$, we obtain a natural injection $\Ainf(R_{\infty})/\mu\hookrightarrow W(R_{\infty})$ of $\Z^d$-modules. Notice that the $\Gamma$-action on $\Ainf(R_{\infty})/\mu$ does not extends to $W(R_{\infty})$ continuously.

\begin{lemma}\label{lemma: MT 1.7}
We have
\begin{enumerate}
\item $H^i(\Z^d, \Ainf(R_{\infty})/\mu)\rightarrow H^i(\Z^d, W(R_{\infty}))$ is injective for all $i\ge 0$.
\item $H^i(\Z^d, \Ainf(R_{\infty})/(\mu, p^N))\rightarrow H^i(\Z^d, W(R_{\infty})/p^N)$ is injective for all $i\ge 0$ and $N\ge 0$.
\item $H^i(\Z^d, \Ainf(R_{\infty})/\varphi^{-s}(\mu))$ is $p$-torsion-free for all $i\ge 0$ and $s\in \Z$.
\item $H^1(\Z^d, W(R_{\infty}))$ is $p$-torsion-free.
\end{enumerate}
\end{lemma}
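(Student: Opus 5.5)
This is the logarithmic analogue of \cite[Lemma 1.7]{MT}. The plan is to reduce everything to an explicit computation on the character decomposition recalled above, following \emph{loc. cit.}

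\textbf{Character decomposition.} Write $\Ainf(R_{\infty})/\mu=\widehat{\oplus}_{\chi}\Ainf(R_{\infty})_{\chi}/\mu$. Each summand is a finite $A(R)$-module on which $\Gamma$ acts through characters $\psi$ lifting $\chi$. For a nontrivial $\psi$ with $\psi(\gamma_i)=[\varepsilon^{\alpha_i}]$, the Koszul complex $\Kos(\psi(\gamma_1)-1,\dots,\psi(\gamma_d)-1)$ computes $R\Gamma(\Z^d,-)$. Its cohomology is governed by the element $[\varepsilon^{\alpha}]-1$ with minimal valuation. Since $\mu=[\varepsilon]-1$ is divisible by $[\varepsilon^{\alpha}]-1$, the quotient $\mu/([\varepsilon^{\alpha}]-1)$ is computable, and after the standard reduction the relevant cohomology is computed by the Koszul complex on the single element $[\varepsilon^{\alpha}]-1$.

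\textbf{Part (3).} In $\Ainf/\varphi^{-s}(\mu)$, the $\psi$-isotypic cohomology is a direct sum of copies of
\[
\ker\bigl([\varepsilon^{\alpha}]-1\bigr)\quad\text{and}\quad \mathrm{coker}\bigl([\varepsilon^{\alpha}]-1\bigr)
\]
acting on $A(R)$-type modules modulo $\varphi^{-s}(\mu)$. If $[\varepsilon^{\alpha}]-1$ divides $\varphi^{-s}(\mu)$, these are $A(R)$-modules of the form $A/(\text{gcd})$, where the gcd is some $[\varepsilon^{\beta}]-1$. Otherwise $[\varepsilon^{\alpha}]-1$ is a unit multiple of $\varphi^{-s}(\mu)$ times a unit-like factor. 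Modules of the form $A(R)\otimes\Ainf/([\varepsilon^{\beta}]-1)$ are $p$-torsion-free, because $A(R)$ is flat over $\Ainf$ and $\Ainf/([\varepsilon^{\beta}]-1)$ is $p$-torsion-free; the latter holds since $[\varepsilon^\beta]-1$ and $p$ form a regular sequence. The trivial character contributes $A(R)/\varphi^{-s}(\mu)\otimes\wedge^\bullet$, which is again $p$-torsion-free. The only genuinely new input compared with \cite{MT} is that $\Ainf(R_{\infty})_\chi$ is a finite, and in fact projective, $A(R)$-module in the log setting. This follows from the chart description $\Ainf\langle P_{\Q\ge0}\rangle\htimes_{\Ainf\langle N_\infty\rangle}\Ainf$: each graded piece is free of rank one over $A(R^\square)$-type pieces, using saturation of $u$ and torsion-freeness of $P^{\gp}/N^{\gp}$, as recorded in \cite[\S 10]{log1}.

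\textbf{Parts (1) and (2).} Use the compatible isomorphisms $\theta_r\colon\Ainf(R_\infty)/\xi_r\simeq W_r(R_\infty)$ and the inverse limit over $r$. Write $\Ainf(R_\infty)/\mu=\varprojlim_r\Ainf(R_\infty)/\xi_r$. One has to check that this equality holds and that $\lim^1$ of the cohomologies vanishes. The transition maps on the group cohomology of $\Ainf(R_\infty)/\xi_r$ are then analyzed character by character, using the explicit description from part (3) with $\xi_r=\mu/\varphi^{-r}(\mu)$. A class in $H^i(\Z^d,\Ainf(R_\infty)/\mu)$ that dies in $H^i(\Z^d,W(R_\infty))=R\varprojlim_r H^i(\Z^d,W_r(R_\infty))$ must die modulo every $\xi_r$. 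Injectivity of
\[
H^i(\Ainf/\mu)\to\varprojlim_r H^i(\Ainf/\xi_r)
\]
then follows by checking on each isotypic piece: a nonzero element of $A(R)/([\varepsilon^\beta]-1)$ stays nonzero modulo some $\xi_r$, because $\bigcap_r \xi_r$-divisibility forces divisibility by $\mu$. Part (2) is the same argument after reducing modulo $p^N$. Here part (3) is exactly what guarantees that reducing modulo $p^N$ commutes with taking cohomology, with no Tor terms appearing.

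\textbf{Part (4).} This follows from (1) and (3) via the short exact sequence $0\to W(R_\infty)\xrightarrow{p}W(R_\infty)\to W(R_\infty)/p\to 0$. The $p$-torsion in $H^1(\Z^d,W(R_\infty))$ is the image of $H^0(\Z^d,W(R_\infty)/p)$ modulo $H^0(\Z^d,W(R_\infty))$, so it suffices to show that invariants lift. Modulo $p$ one has $W(R_\infty)/p\cong R_\infty^{\flat}$-type pieces, and $\Z^d$-invariants can be computed by the decomposition, where they come from $\Ainf/\mu$-invariants and therefore lift. I expect the main obstacle to be controlling this limit and $\lim^1$ interchange between $\Ainf(R_\infty)/\mu$ and $W(R_\infty)$ when the action does not extend continuously. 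I would handle it exactly as in \cite{MT}, by working with the non-topological group $\Z^d$ throughout.
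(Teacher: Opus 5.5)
\textbf{There is a genuine gap.} It comes from the identification $\Ainf(R_\infty)/\mu=\varprojlim_r\Ainf(R_\infty)/\xi_r$ that you propose to check in (1)--(2). That identification is false.

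Through the $\theta_r$, the right-hand side \emph{is} $W(R_\infty)$, and the natural map is exactly the injection appearing in the statement. Its cokernel $\Xi(R_\infty)$ is $p$-torsion-free and killed by $W(\fm^\flat)$, but it is not zero in general. For $d\ge 1$, take a compatible family $\bigl(\sum_{n<r}\xi_n x_n\bigr)_r$ with the $x_n$ in infinitely many distinct $\chi$-components and $\xi_n x_n\not\equiv 0 \bmod (p,\mu)$. This defines an element of the limit, but it cannot come from $\Ainf(R_\infty)/\mu$, since only finitely many $\chi$-components of an element of $\Ainf(R_\infty)/\mu$ are nonzero modulo $(p,\mu)$. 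If your identification held, (1) would be a tautology.

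The paper's proof is built around this cokernel. Consider the long exact sequences for $0\to\Ainf(R_\infty)/\mu\to W(R_\infty)\to\Xi(R_\infty)\to 0$ and its reductions mod $p^N$. The connecting maps into $H^i(\Z^d,\Ainf(R_\infty)/\mu)$ and $H^i(\Z^d,\Ainf(R_\infty)/(\mu,p^N))$ have image killed by $W(\fm^\flat)$. These groups have no nonzero $W(\fm^\flat)$-torsion by (the proof of) \cite[Proposition 10.12]{log1}, so the connecting maps vanish and injectivity follows.

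The same conflation invalidates your part (4). $W(R_\infty)/p$ has no character decomposition, and its $\Z^d$-invariants need not come from $\Ainf(R_\infty)/(\mu,p)$. Grant (1) and (2) in degree $1$ and (3) for $s=0$. Apply the snake lemma to the reduction-mod-$p$ map from the exact sequence $0\to H^0(\Ainf(R_\infty)/\mu)\to H^0(W(R_\infty))\to H^0(\Xi(R_\infty))\to 0$ to its mod-$p$ analogue. This identifies $H^1(\Z^d,W(R_\infty))[p]$ with $H^1(\Z^d,\Xi(R_\infty))[p]$. So the entire content of (4) lives in the cokernel your argument assumes away. You still need the argument of \cite[Lemma 1.7]{MT} that deduces (4) from (1)--(3).

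\textbf{Two further points.}

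For (1)--(2), the mechanism you actually use is sound and can be kept. A class dying in $H^i(\Z^d,W(R_\infty))$ dies in every $H^i(\Z^d,\Ainf(R_\infty)/\xi_r)$, and one checks character by character that nothing nonzero dies in all of them. This is a legitimate, more hands-on alternative to the $W(\fm^\flat)$-torsion argument, and it needs neither the false equality nor any $\lim^1$ vanishing. But the description from (3) does not apply at level $\xi_r$, which is not of the form $\varphi^{-s}(\mu)$.
\begin{itemize}
\item For a nontrivial character whose minimal exponent has denominator $p^j$, the cokernel terms mod $\xi_r$ are $\Ainf/(\xi_r,\varphi^{-j}(\mu))$, and these have $p$-torsion.
\item What makes the argument work is that the kernel of $H^i(\Z^d,\Ainf(R_\infty)_\chi/\mu)\to H^i(\Z^d,\Ainf(R_\infty)_\chi/\xi_r)$ is contained in $\varphi^{-j}(\mu)/\varphi^{-r}(\mu)$ times that group.
\item The intersection of these over $r$ corresponds to $\varphi^{-j}(\mu)$. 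This intersection step uses the freeness coming from $\Ainf\langle Q\rangle$, as in the proof of Theorem~\ref{thm: MT 1.14}. It does not use projectivity of $\Ainf(R_\infty)_\chi$ over $A(R)$, which can fail when $P$ is not free.
\end{itemize}

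For (3), your computation assumes $\Gamma$ acts trivially on $A(R)/\varphi^{-s}(\mu)$ and by scalars on each $\chi$-piece modulo $\varphi^{-s}(\mu)$. This holds for $s\ge 0$, because the action is trivial modulo $\mu$. It fails for $s<0$. The paper sidesteps this by applying the $\Gamma$-equivariant automorphism $\varphi^{s}$, which carries $\Ainf(R_\infty)/\varphi^{-s}(\mu)$ onto $\Ainf(R_\infty)/\mu$.
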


\begin{proof}
This is an analogue of \cite[Lemma 1.7]{MT}. Extend the injection $\Ainf(R_{\infty})/\mu\hookrightarrow W(R_{\infty})$ to a short exact sequence
\[0\rightarrow \Ainf(R_{\infty})/\mu\rightarrow W(R_{\infty})\rightarrow \Xi(R_{\infty})\rightarrow 0\] where $\Xi(R_{\infty})$ is a $p$-torsion free $\Ainf(R_{\infty})$-module that is killed by $W(\mathfrak{m}^{\flat})$. Note that $\Ainf(R_{\infty})/(\mu, p^N)$ has no nonzero elements killed by $W(\mathfrak{m}^{\flat})$ (see the proof of \cite[Proposition 10.12 (3)]{log1}). Hence, the sequence
\[0\rightarrow \Ainf(R_{\infty})/(\mu, p^N)\rightarrow W(R_{\infty})/p^N\rightarrow \Xi(R_{\infty})/p^N\rightarrow 0\] is exact for all $N$. To finish the proof of (1) and (2), it suffices to show $H^i(\Z^d, \Ainf(R_{\infty})/\mu)$ and $H^i(\Z^d, \Ainf(R_{\infty})/(\mu, p^N))$ have no nonzero elements killed by $W(\mathfrak{m}^{\flat})$, for all $N$. But this is already proved in (the proof of) \cite[Proposition 10.12 (3)]{log1}.

To see (3), it suffices to apply $\varphi^s$ and reduce to the case $s=0$, which is proved in (the proof of) \cite[Proposition 10.12 (2)]{log1}.

Given (1)-(3), the proof of (4) is exactly the same as in \cite[Lemma 1.7]{MT}.
\end{proof}

\begin{corollary}\label{cor: MT 1.11 & 1.12}
Let $M\in \Rep_{\Gamma}(\Ainf(R_{\infty}))$. 
\begin{enumerate}
\item If $M$ is trivial modulo $<\mu$, then it is trivial modulo $\varphi^{-1}(\mu)$.
\item If $M$ is trivial modulo $<\mu$ and trivial modulo $(p,\mu)$, then $M$ is trivial modulo $\mu$.
\end{enumerate}
\end{corollary}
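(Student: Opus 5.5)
We follow the proofs of \cite[Corollaries 1.11, 1.12]{MT}; the only input specific to the log setting is Lemma \ref{lemma: MT 1.7}. Both parts rest on the same principle: $\Gamma$-invariants of $M$ modulo some element can be detected after passing to $W(R_{\infty})$, or to the finite levels $W_r(R_\infty)$, where we already have triviality.

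For (1), we use the isomorphisms $\theta_r\colon \Ainf(R_\infty)/\xi_r \simeq W_r(R_\infty)$. These identify $M/\xi_r$ with $M\otimes W_r(R_\infty)$. Triviality modulo $\xi_r$ says that $M/\xi_r \cong (M/\xi_r)^\Gamma\otimes_{(\Ainf(R_\infty)/\xi_r)^\Gamma} \Ainf(R_\infty)/\xi_r$, with finite projective invariants $N_r := (M/\xi_r)^\Gamma$ over $(W_r(R_\infty))^\Gamma$. Since $\xi_{r+1}$ is divisible by $\xi_r$, the $N_r$ form a compatible inverse system.

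We pass to the limit to get $N:=\varprojlim N_r$, a finite projective module over $W(R_\infty)^\Gamma$ with $N\otimes W(R_\infty)\cong M\otimes_{\Ainf(R_\infty)}W(R_\infty)$. To obtain a compatible choice of bases, we use the surjectivity of $N_{r+1}\to N_r$. This surjectivity is deduced from the vanishing of the relevant $H^1$, or more precisely from $p$-torsion-freeness and injectivity statements; this is where Lemma \ref{lemma: MT 1.7}(4) enters.

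Next we pass back down to $\varphi^{-1}(\mu)$. Note that $\xi_r\to \varphi^{-1}(\mu)\cdot(\text{unit})$ fails, so instead we argue with Frobenius twists: $\varphi^{-1}(\mu)$ divides $\mu$ and $\Ainf/\varphi^{-1}(\mu)\hookrightarrow W(R_\infty)$ through the appropriate twisted $\theta$. We then deduce that the invariants of $M/\varphi^{-1}(\mu)$ generate, using the injectivity in Lemma \ref{lemma: MT 1.7}(1). Lemma \ref{lemma: MT 1.7}(3), applied with $s=1$, gives the $p$-torsion-freeness needed to lift bases step by step.

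For (2), we use the short exact sequence $0\to M/\mu \xrightarrow{p} M/\mu \to M/(p,\mu)\to 0$ and take $\Gamma$-cohomology. Triviality mod $(p,\mu)$ provides a basis of invariants of $M/(p,\mu)$. We lift this basis to invariants of $M/\mu$: the obstruction lies in $H^1(\Gamma, M/\mu)[p]$. Using the embedding $M/\mu \hookrightarrow M\otimes W(R_\infty)$, the triviality over $W(R_\infty)$ obtained in (1), and the injectivity of Lemma \ref{lemma: MT 1.7}(1),(2), this obstruction injects into $H^1(\Z^d,W(R_\infty))^{\oplus n}[p]$, which vanishes by Lemma \ref{lemma: MT 1.7}(4). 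The lifted invariant elements generate $M/\mu$ by Nakayama's lemma, since everything is $p$-adically complete. Linear independence follows from freeness mod $p$ and the $p$-torsion-freeness of $H^0$ from Lemma \ref{lemma: MT 1.7}(3). Hence $M$ is trivial mod $\mu$.

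The main obstacle I expect is in (1): producing a compatible system of invariant bases across all $\xi_r$ and controlling the limit. The difficulty is that the $\Gamma$-action does not extend continuously to $W(R_\infty)$, so we must work with abstract $\Z^d$-cohomology throughout. I would first try to mimic \cite[Cor.~1.11]{MT} verbatim, checking at each step that only properties (1)–(4) of Lemma \ref{lemma: MT 1.7} are used.
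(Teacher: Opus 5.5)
Part (1) of your proposal has a genuine gap: nothing in it produces $\Gamma$-invariant elements of $M/\varphi^{-1}(\mu)$ in the first place. The twisted embedding $\Ainf(R_\infty)/\varphi^{-1}(\mu)\hookrightarrow W(R_\infty)$, the triviality of $M\otimes W(R_\infty)$, and the injectivity and torsion-freeness statements of Lemma~\ref{lemma: MT 1.7} can only show that obstructions vanish. In other words, they can \emph{lift} a given invariant basis modulo $(p,\varphi^{-1}(\mu))$, but your ``lift bases step by step'' has no base case.

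Note also that nothing in your sketch distinguishes $\varphi^{-1}(\mu)$ from $\mu$. Run with $\theta_\infty$ itself (and Lemma~\ref{lemma: MT 1.7}(1) is literally about $\mu$), it would give ``trivial modulo $<\mu$ implies trivial modulo $\mu$'' with no further input. This is exactly why (2) carries an extra hypothesis, and why Theorem~\ref{thm: MT 1.14} needs a separate mod-$p$ descent and $\bigcap_r$ argument.

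The missing observation is that modulo $p$, i.e.\ in $\mO^\flat$, $\varphi^{-1}(\mu)$ and $\xi$ become $(\varepsilon-1)^{1/p}$ and $(\varepsilon-1)^{(p-1)/p}$. Hence $\xi\in(p,\varphi^{-1}(\mu))$. Triviality modulo an ideal $I$ implies triviality modulo any ideal $J\supseteq I$, because the finite projective invariants base change. So triviality modulo $\xi=\xi_1$ gives triviality modulo $(p,\varphi^{-1}(\mu))$. Likewise $\xi_{r+1}=\xi\cdot\varphi^{-1}(\xi_r)$ gives triviality modulo $\varphi^{-1}(\xi_r)$ for all $r$.

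Now transport along the $\Gamma$-equivariant automorphism $\varphi$ of $\Ainf(R_\infty)$. The representation $\varphi^*M$ is trivial modulo $<\mu$ and modulo $(p,\mu)$. Applying (2) to $\varphi^*M$ gives (1), so you should prove (2) first.

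Your (2) is essentially right, but the key step is stated circularly. Lemma~\ref{lemma: MT 1.7}(1) concerns $\Ainf(R_\infty)/\mu$, and transferring it to $H^1(\Gamma,M/\mu)$ presupposes that $M/\mu$ is already trivial. Instead, do the lifting one power of $p$ at a time.
\begin{itemize}
\item If $e$ is invariant in $M/(p^k,\mu)$, the obstruction to lifting it to an invariant of $M/(p^{k+1},\mu)$ lies in $H^1(\Z^d,M/(p,\mu))$.
\item Since $M/(p,\mu)$ is trivial by hypothesis, this group injects into $H^1(\Z^d,(M\otimes W(R_\infty))/p)$ by Lemma~\ref{lemma: MT 1.7}(2).
\item The image of the obstruction vanishes because $((M\otimes W(R_\infty))/p^{k+1})^{\Z^d}=N/p^{k+1}$ surjects onto $N/p^k$. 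This identification is where Lemma~\ref{lemma: MT 1.7}(4) is actually needed.
\item Then pass to the limit by $p$-adic completeness.
\end{itemize}
Conversely, you do not need Lemma~\ref{lemma: MT 1.7}(4) for the surjectivity of $N_{r+1}\to N_r$. Projectivity of $N_{r+1}$ and triviality modulo $\xi_{r+1}$ give $N_r\cong N_{r+1}\otimes_{(\Ainf(R_\infty)/\xi_{r+1})^\Gamma}(\Ainf(R_\infty)/\xi_r)^\Gamma$ directly.
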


\begin{proof}
Given Lemma \ref{lemma: MT 1.7}, the proof is the same as the proof of \cite[Proposition 1.11 \& Corollary 1.12]{MT}.
\end{proof}

Following the strategy of \cite{MT}, the key to prove Theorem \ref{thm: MT 1.13} and Theorem \ref{thm: MT 1.14} is the following theorem.

\begin{theorem}\label{thm: MT 1.16}
Let the triple $(A, A_{\infty}, A^{\square})$ stand for either
\begin{enumerate}
\item[(i)] (integral case) $A=\Ainf$, $A_{\infty}=\Ainf(R_{\infty})$, $A^{\square}=A(R)$; or
\item[(ii)] (mod $p$ case) $A=\mO^{\flat}=\Ainf/p$, $A_{\infty}=R_{\infty}^{\flat}=\Ainf(R_{\infty})/p$, $A^{\square}=A(R)/p$.
\end{enumerate}
Moreover, let $c\in A$ such that $\mu A\subset cA\subset \varphi^{-1}(\mu)A$ and such that $c/\varphi^{-1}(\mu)\not\in A^{\times}$. Then the base change functor
\[-\otimes_{A^{\square}} A_{\infty}\colon \Rep^{c}_{\Gamma}(A^{\square})\rightarrow \Rep^{c}_{\Gamma}(A_{\infty})\]
is an equivalence of categories.
\end{theorem}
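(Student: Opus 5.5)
We follow the strategy of \cite[Theorem 1.16]{MT}, using the eigenspace decomposition $A_{\infty}=\widehat{\oplus}_{\chi}A_{\infty,\chi}$ into finite-order characters $\chi$ of $\Gamma$, with $A_{\infty,1}=A^{\square}$ in the integral case. In the mod $p$ case we use the analogous decomposition obtained by reducing mod $p$. The plan has three parts: full faithfulness, then essential surjectivity, with everything resting on a cohomological vanishing statement for the nontrivial isotypic pieces.

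\textbf{Key vanishing.} For every finite-order character $\chi\neq 1$, the group cohomology $H^i(\Gamma, A_{\infty,\chi}/c)$ should vanish, and $H^i(\Gamma, A_{\infty,\chi})$ should be killed by $\varphi^{-1}(\mu)$, or at least by $c$. This is where the hypotheses on $c$ enter, and it is the step I expect to be the main obstacle.

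\begin{itemize}
\item Each $A_{\infty,\chi}$ is a completed sum of pieces $A_{\infty,\psi}$ on which the generators act by $[\varepsilon^{\alpha_i}]$.
\item If some $\alpha_i\notin\Z$, the Koszul complex is governed by $[\varepsilon^{\alpha_i}]-1$.
\item For $\chi\neq 1$, the element $[\varepsilon^{\alpha_i}]-1$ divides $\varphi^{-1}(\mu)$ up to units.
\item Since $c/\varphi^{-1}(\mu)$ is a non-unit and $c\mid\varphi^{-1}(\mu)$, the element $[\varepsilon^{\alpha_i}]-1$ divides $c$. So the Koszul complex of $A_{\infty,\psi}/c$ is acyclic, and this is uniform in $\psi$.
\end{itemize}

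The uniform estimates would be imported from the computations in \cite[\S 10]{log1}, in particular \cite[Proposition 10.12]{log1}, which already treat exactly these log-toric pieces $A_{\infty,\psi}$. Passing from individual $\psi$ to completed direct sums uses the comparison $R\Gamma_{\cont}(\Gamma,-)\cong R\Gamma(\Z^d,-)$ recalled above. The delicate point is the $(p,\mu)$-adic completion: we must check that the bounds are uniform in $\psi$. If they are not, I would first try to prove the weaker statement that $H^i(\Gamma,A_{\infty,\chi})$ is killed by $c$, and deduce the mod-$c$ vanishing from it.

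\textbf{Full faithfulness.} For $M,M'\in\Rep^c_\Gamma(A^\square)$, $\Hom$ is computed by $H^0(\Gamma,\underline{\Hom}(M,M')\otimes A_\infty)$. Since $\underline{\Hom}(M,M')$ is finite projective, the decomposition gives
\[H^0(\Gamma, M''\otimes_{A^\square}A_\infty)=H^0(\Gamma,M'')\oplus\widehat{\bigoplus}_{\chi\ne1}H^0(\Gamma,M''\otimes A_{\infty,\chi}),\]
where $M''=\underline{\Hom}(M,M')$. We need the terms with $\chi\neq 1$ to vanish. Triviality mod $c$ reduces the $\Gamma$-action on $M''/c$ to the trivial twist. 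The vanishing step then kills $H^0$ modulo $c$, and $c$-torsion-freeness together with completeness kills $H^0$ integrally.

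\textbf{Essential surjectivity.} Given $M\in\Rep^c_\Gamma(A_\infty)$, set $N:=M_{\chi=1}$, the trivial-character component; more concretely, $N$ is the $\Gamma$-fixed part of $M\otimes_{A_\infty}A_\infty$ read off via the decomposition.

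\begin{itemize}
\item Modulo $c$, triviality gives $M/c\cong (M/c)^\Gamma\otimes A_\infty/c$.
\item The vanishing for $\chi\neq1$ shows that $(M/c)^\Gamma$ is projective over $A^\square/c$, using $(A_\infty/c)^\Gamma=A^\square/c$, and that $N/c\to (M/c)^\Gamma$ is an isomorphism.
\item We lift along the $(p,c)$-adic filtration, inductively on $M/c^n$ or $M/(p^k,c^n)$. At each stage the obstructions to lifting invariants lie in $H^1$ of the $\chi\neq 1$ parts, which are killed by $c$. This gives the surjectivity of $(M/c^{n+1})^\Gamma\to(M/c^n)^\Gamma$ after an index shift, exactly as in \cite[1.16]{MT}.
\end{itemize}

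Passing to the limit, $N$ is finite projective over $A^\square$, and $N\otimes_{A^\square}A_\infty\to M$ is an isomorphism by Nakayama, since it is an isomorphism mod $c$ and both sides are complete. The mod $p$ case runs identically, using Lemma~\ref{lemma: MT 1.7} only to guarantee torsion-freeness of the relevant cohomology groups.
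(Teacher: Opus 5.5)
Your proposal has a genuine gap, located in the ``key vanishing'' step, and your essential surjectivity argument depends on it. First a slip: the hypothesis $cA\subset\varphi^{-1}(\mu)A$ means $\varphi^{-1}(\mu)\mid c$, not $c\mid\varphi^{-1}(\mu)$. So $[\varepsilon^{\alpha_i}]-1$ does divide $c$, but this is exactly the wrong direction for acyclicity. On $A_{\infty,\psi}/c$, multiplication by $[\varepsilon^{\alpha_i}]-1$ has kernel $\frac{c}{[\varepsilon^{\alpha_i}]-1}A_{\infty,\psi}/cA_{\infty,\psi}$ and cokernel $A_{\infty,\psi}/([\varepsilon^{\alpha_i}]-1)$. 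Both are isomorphic to $A_{\infty,\psi}/([\varepsilon^{\alpha_i}]-1)$, which is nonzero as soon as $[\varepsilon^{\alpha_i}]-1$ is not a unit (e.g.\ $\alpha_i=1/p$). For instance, if $\psi(\gamma_1)=[\varepsilon^{1/p}]$ and $\psi(\gamma_j)=1$ for $j\neq1$, the $\Gamma$-invariants of $A_{\infty,\psi}/c$ are $\frac{c}{\varphi^{-1}(\mu)}A_{\infty,\psi}/cA_{\infty,\psi}\neq0$. These are exactly the nilpotent extra invariants appearing in the proof of Lemma~\ref{lemma: MT 1.23}. The true statement is only that $H^i(\Gamma,A_{\infty,\chi\neq1}/c)$ is \emph{killed by} $\varphi^{-1}(\mu)$ (Lemma~\ref{lemma: MT 1.20}(1)). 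In particular $(A_\infty/c)^\Gamma\neq A^\square/c$, so your claims that $(M/c)^\Gamma$ is projective over $A^\square/c$ and that $N/c\to(M/c)^\Gamma$ is an isomorphism fail already for $M=A_\infty$.

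Full faithfulness survives with the correct statement. A $\Gamma$-invariant $f$ in the $\chi\neq1$ part satisfies $\varphi^{-1}(\mu)f\in c(\cdots)$, hence $f\in\frac{c}{\varphi^{-1}(\mu)}(\cdots)$ with an invariant cofactor. Iterating, using completeness and the hypothesis that $c/\varphi^{-1}(\mu)$ is a non-unit, gives $f=0$; this is in substance the paper's factorization of $[\varepsilon^{k_i}]\gamma_i-1$.

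Essential surjectivity, however, is missing its main ideas. Your $N$ is not well defined: $M$ has no a priori isotypic decomposition, and $M^\Gamma$ or the invariants of $M/c^n$ are the wrong objects. The module you are looking for carries a $\Gamma$-action that is trivial only modulo $c$; its action must be descended, not trivialized, so ``lifting invariants'' cannot produce it. The paper proceeds in two steps.
\begin{enumerate}
\item It descends the finite projective $(A_\infty/c)^\Gamma$-module $(M/c)^\Gamma$ to $A^\square/c$. This uses that $A^\square/c\to(A_\infty/c)^\Gamma$ is a proj-isomorphism because the extra invariants are nilpotent (Lemma~\ref{lemma: MT 1.23}), together with compatible localizations (Lemma~\ref{lemma: MT 1.24}) to reduce to free modules.
\item In a basis, it conjugates the cocycle with values in $1+cM_n(A_\infty)$ by some $X\in1+\frac{c}{\varphi^{-1}(\mu)}M_n(A_\infty)$ so that it lands in $1+cM_n(A^\square)$ (Lemma~\ref{lemma: MT 1.22}). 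Each approximation step uses only that $H^1$ of the $\chi\neq1$ part modulo $c$ is killed by $\varphi^{-1}(\mu)$. The iteration converges $\frac{c}{\varphi^{-1}(\mu)}$-adically, which is where the non-unit hypothesis on $c/\varphi^{-1}(\mu)$ really enters.
\end{enumerate}
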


\begin{proof}[Proof of fully-faithful-ness in Theorem \ref{thm: MT 1.16}]
The functor is faithful as $A^{\square}\rightarrow A_{\infty}$ is injective. For any $N_1, N_2\in \Rep^c_{\Gamma}(A^{\square})$ and any $A_{\infty}$-linear, $\Gamma$-equivariant homomorphism $f\colon N_1\otimes_{A^{\square}}A_{\infty}\rightarrow N_2\otimes_{A^{\square}}A_{\infty}$, we have to show that $f(N_1)\subset N_2$.

For every finite order character $\chi\colon \Gamma\rightarrow \mO^{\times}$, let $A_{\infty, \chi}$ either denote $\Ainf(R_{\infty})_{\chi}$ in the integral case, or denote $\Ainf(R_{\infty})_{\chi}/p$ in the mod $p$ case. Then we have $A_{\infty}\cong \widehat{\oplus}_{\chi} A_{\infty, \chi}$ where the completion is $(p,\mu)$-adic. This induces a decomposition
\[N_2\otimes_{A^{\square}}A_{\infty}\cong \widehat{\oplus}_{\chi} \big(N_2\otimes_{A^{\square}}A_{\infty, \chi}\big).\]
For every $n\in N_1$, we write $f(n)=\sum_{\chi} f_{\chi}(n)$ with $f_{\chi}(n)\in N_2\otimes_{A^{\square}}A_{\infty, \chi}$. Suppose $\chi$ corresponds to $(k_1, \ldots, k_d)\in (\Q\cap [0,1))^d$; namely, $\chi$ sends $\gamma_i$ to $\zeta^{k_i}$. We define a generalized representation $N_{2,\chi}\in \Rep_{\Gamma}(A^{\square})$ as follows. As an $A^{\square}$-module, $N_{2,\chi}:=N_2\otimes_{A^{\square}}A_{\infty, \chi}$, while the $\Gamma$-action is twisted by
\[\gamma_i\cdot x:=[\varepsilon^{-k_i}]\cdot \gamma_ix\]
for all $x\in N_{2,\chi}$ and all $i=1, \ldots, d$. One checks that $N_{2,\chi}\in \Rep^c_{\Gamma}(A^{\square})$. Now, $f_{\chi}$ defines an element in $N_{\chi}:=\Hom_{A^{\square}}(N_1, N_{2,\chi})\in \Rep^c_{\Gamma}(A^{\square})$ such that $([\varepsilon^{k_i}]\gamma_i-1)\cdot f_{\chi}=0$. 

The rest of the proof is the same as in \cite[Theorem 1.16]{MT}. We include here for completeness. Write $\gamma_i=1+c\delta_i$ for some $\delta_i\in \mathrm{End}_{A^{\square}}(N_{\chi})$. Suppose $\chi\neq 1$. Then $k_i\neq 0$ for some $i$. In this case, we can write
\[[\varepsilon^{k_i}]\gamma_i-1=([\varepsilon^{k_i}]-1)\Big(1+\frac{[\varepsilon^{1/p}]-1}{[\varepsilon^{k_i}]-1}[\varepsilon^{k_i}](c/\varphi^{-1}(\mu))\delta_i\Big).\]
Notice that $[\varepsilon^{k}]-1$ divides $[\varepsilon^{1/p}]-1$ in $\Ainf$ for any $k\in \Q\cap (0,1)$. Also notice that $1+\frac{[\varepsilon^{1/p}]-1}{[\varepsilon^{k_i}]-1}[\varepsilon^{k_i}](c/\varphi^{-1}(\mu))\delta_i$ is an automorphism of $N_{\chi}$ as $N_{\chi}$ is $(c/\varphi^{-1}(\mu))$-adically complete. Consequently, $[\varepsilon^{k_i}]\gamma_i-1$ is injective on $N_{\chi}$ and hence $f_{\chi}=0$, as desired.
\end{proof}

To prove essential surjectivity, we need the following sequence of lemmas.

\begin{lemma}\label{lemma: MT 1.20}
Let $A_{\infty, \chi\neq 1}:=\widehat{\oplus}_{\chi} A_{\infty, \chi}$ where $\chi$ runs through all non-trivial characters $\chi\colon\Gamma\rightarrow \mO^{\times}$ of finite order and the completion is $(p,\mu)$-adic as usual. Notice that $A_{\infty}=A^{\square}\oplus A_{\infty, \chi\neq 1}$.
\begin{enumerate}
\item $H^i(\Gamma, A_{\infty, \chi\neq 1}/c)$ is killed by $\varphi^{-1}(\mu)$ for all $i\ge 0$.
\item In the integral case, $H^i(\Gamma, A_{\infty, \chi\neq 1}/c)$ is $p$-torsion-free, for all $i\ge 0$.
\item In the integral case, $H^1(\Gamma, A_{\infty}/c)$ and $H^1(\Gamma, A^{\square}/c)$ are both $p$-torsion-free.
\end{enumerate}
\end{lemma}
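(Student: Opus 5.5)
We follow the strategy of \cite[Lemma 1.20]{MT}, using the explicit character decomposition recalled above. Since $c$ divides $\mu$, the group $\Gamma$ acts on each $A_{\infty,\chi}/c$. Write $A_{\infty,\chi\neq 1}/c=\widehat{\oplus}_{\chi\neq 1}A_{\infty,\chi}/c$. Group cohomology commutes with this completed direct sum: each piece is an inverse limit of discrete $p$-power torsion modules with surjective transitions, so we may use $R\Gamma(\Z^d,-)$ and Koszul complexes, and these commute with completed sums. It therefore suffices to prove (1) and (2) for a single nontrivial $\chi$, with bounds uniform in $\chi$.

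For (1), fix $\chi\neq 1$ corresponding to $(k_1,\dots,k_d)$ with some $k_i\neq 0$. Each $\psi$ lifting $\chi$ sends $\gamma_i$ to $[\varepsilon^{\alpha_i}]$ with $\alpha_i\equiv k_i$ modulo $\Z_p$-type integers. The Koszul complex computing $H^*(\Z^d, A_{\infty,\chi}/c)$ then has $\gamma_i-1$ acting on the $\psi$-summand as multiplication by $[\varepsilon^{\alpha_i}]-1$. The standard Koszul argument shows that cohomology is killed by the ideal generated by the $\gamma_i-1$. Since $[\varepsilon^{\alpha_i}]-1$ divides $[\varepsilon^{1/p}]-1=\varphi^{-1}(\mu)$ up to a unit, for $\alpha_i\in \Q\cap(0,1)$ (as used in the full faithfulness proof), the operator $\gamma_i-1$ acts on $A_{\infty,\chi}$ as $([\varepsilon^{\alpha_i}]-1)\cdot(\text{unit})$. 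Multiplication by $\varphi^{-1}(\mu)$ is then null-homotopic on the Koszul complex, via the homotopy given by $\varphi^{-1}(\mu)/([\varepsilon^{\alpha_i}]-1)$ times the contraction in the $i$-th direction. The main obstacle is to make this uniform in $\psi$ and to carry it out modulo $c$. The divisibility $c\mid \mu$ keeps the action well defined. Where the description of $A_{\infty,\chi}$ as a completed sum of rank-one pieces is not literally available (because of the étale map $A(R^\square)\to A(R)$), we reduce via base change: $A_{\infty,\chi}=\Ainf(R^\square_\infty)_\chi\widehat{\otimes}_{A(R^\square)}A(R)$, and $A(R)$ is flat over $A(R^\square)$ with trivial action up to the same units. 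So we first prove the statement for $R^\square$, where the monomial basis indexed by $P_{\Q\ge0}$ gives exactly rank-one pieces, and then base change.

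For (2), note that $c\mid\varphi^{-1}(\mu)$ and $\mu\mid c$. Via the explicit Koszul computation on monomials, the cohomology of each rank-one piece is a direct sum of modules of the form $\Ainf/(c,[\varepsilon^{\alpha}]-1)$ and their annihilator analogues, possibly completed. These are $p$-torsion-free, because $\Ainf/(f)$ is $p$-torsion-free for $f$ with nonzero image in $\mO^\flat$, and $([\varepsilon^\alpha]-1)\mid c$, so $(c,[\varepsilon^\alpha]-1)=([\varepsilon^\alpha]-1)$. The kernel pieces $\Ainf/c[\,[\varepsilon^\alpha]-1\,]\cong \Ainf/([\varepsilon^\alpha]-1)$ are also $p$-torsion-free. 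Passing to completed sums and tensoring flatly with $A(R)$ over $A(R^\square)$ preserves $p$-torsion-freeness.

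For (3), the decomposition $A_\infty=A^\square\oplus A_{\infty,\chi\neq1}$ is $\Gamma$-stable. Given (2), it therefore suffices to treat $H^1(\Gamma, A^\square/c)$. Here $\Gamma$ acts on $A(R)/c$ trivially modulo $\varphi^{-1}(\mu)$. Lemma~\ref{lemma: MT 1.7}(3), which gives $p$-torsion-freeness of $H^i(\Z^d,\Ainf(R_\infty)/\varphi^{-s}(\mu))$, handles the extreme cases $c=\mu$ and $c=\varphi^{-1}(\mu)$. For general $c$, we run the same argument as in \cite[Lemma 1.20(3)]{MT}. The exact sequence $0\to A/c\xrightarrow{p}A/c\to A/(p,c)\to0$ identifies the $p$-torsion in $H^1(\Gamma,A_\infty/c)$ with the cokernel of $H^0(\Gamma,A_\infty/c)\to H^0(\Gamma,A_\infty/(p,c))$. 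This cokernel equals that of $A^\square/c\to (A^\square/(p,c))\oplus H^0(\Gamma,A_{\infty,\chi\neq1}/(p,c))$. By the mod $p$ version of the Koszul computation, the last summand vanishes, since $[\varepsilon^\alpha]-1$ is a nonzerodivisor mod $(p,c)$. The map is therefore surjective.
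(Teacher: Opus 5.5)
Your computation for (1) and (2) is the right one: Koszul complexes on the monomial pieces for $R^{\square}$, using that each relevant $[\varepsilon^{\alpha}]-1$ divides $\varphi^{-1}(\mu)$ and hence $c$. It is essentially the computation the paper defers to. The argument for (3), however, contains a false step.

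You claim $H^0(\Gamma, A_{\infty,\chi\neq 1}/(p,c))=0$ because $[\varepsilon^{\alpha}]-1$ is a nonzerodivisor modulo $(p,c)$. It is not. For $\chi$ of $p$-power order (e.g.\ order $p$), $\varepsilon^{\alpha}-1$ is a non-unit dividing $c$ in $\mO^{\flat}$. On $\mO^{\flat}/c$ it is therefore a zero-divisor, with annihilator generated by $c/(\varepsilon^{\alpha}-1)\neq 0$. The invariants are $\frac{c}{\varepsilon^{k}-1}\cdot A_{\infty,\chi}/(p,c)\neq 0$. This is exactly what the paper uses in Lemmas~\ref{lemma: MT 1.23} and~\ref{lemma: MT 1.24}, and it is the mod $p$ analogue of your own kernel computation $(\Ainf/c)\bigl[[\varepsilon^{\alpha}]-1\bigr]\cong\Ainf/([\varepsilon^{\alpha}]-1)$ in (2). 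Your cokernel bookkeeping is also off. The source is $H^0(\Gamma,A_\infty/c)=A^{\square}/c\oplus H^0(\Gamma,A_{\infty,\chi\neq 1}/c)$, and you dropped the second summand. With that summand missing and the target summand nonzero, your formula would actually produce nonzero $p$-torsion.

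The repair is easy. Having reduced to $A^{\square}$ via (2), stay there. $\Gamma$ acts on $A(R)$ trivially modulo $\mu$, and $c\mid\mu$, so it acts \emph{trivially} on $A^{\square}/c$, not merely modulo $\varphi^{-1}(\mu)$ as you wrote. Hence $H^1(\Gamma,A^{\square}/c)[p]$ is the cokernel of the surjection $A^{\square}/c\to A^{\square}/(p,c)$, which is zero. Equivalently, $H^1(\Gamma,A^{\square}/c)\cong(A^{\square}/c)^{d}$, which is $p$-torsion-free. If you prefer to work with $A_\infty$ directly, the $\chi\neq 1$ part of the cokernel vanishes because $H^0(\Gamma,A_{\infty,\chi\neq1}/c)\to H^0(\Gamma,A_{\infty,\chi\neq1}/(p,c))$ is surjective. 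That surjectivity follows from (2), not from any vanishing.

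Two smaller points. First, in (2) you wrote $c\mid\varphi^{-1}(\mu)$ and $\mu\mid c$. The hypothesis $\mu A\subset cA\subset\varphi^{-1}(\mu)A$ gives the reverse, $\varphi^{-1}(\mu)\mid c\mid\mu$, and that is what your conclusion $([\varepsilon^{\alpha}]-1)\mid c$ needs. Second, the base change from $R^{\square}$ to $R$ in (1)--(2) is not justified by flatness alone. Group cohomology does not commute with $-\widehat{\otimes}_{A(R^{\square})}A(R)$ when $\Gamma$ acts nontrivially on $A(R)$. What makes it work is again that $\Gamma$ acts trivially on $A(R^{\square})/c$ and on $A(R)/c$. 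Consequently, on $A_{\infty,\chi}/c\cong(\Ainf(R^{\square}_\infty)_\chi/c)\widehat{\otimes}_{A(R^{\square})/c}A(R)/c$ the group acts $A(R)/c$-linearly through the first factor. Then $p$-complete flatness, together with the $p$-torsion-freeness of the $R^{\square}$-cohomology, gives the base change. Relatedly, if you apply your null-homotopy directly on $A_{\infty,\chi}$ for general $R$, where $\gamma_i-1=([\varepsilon^{k_i}]-1)u_i$ with $u_i$ an automorphism rather than a scalar, the homotopy must include $u_i^{-1}$.
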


\begin{proof}
The proof is similar to the computation in the proof of \cite[Proposition 10.12]{log1}.
\end{proof}

\begin{lemma}\label{lemma: MT 1.22}
Let $\alpha\colon \Gamma\rightarrow \mathrm{GL}_n(A_{\infty})$ be a continuous 1-cocycle such that 
$\alpha(\gamma)\in 1+cM_n(A_{\infty})$ for all $\gamma\in \Gamma$. Then there exists $X\in 1+(c/\varphi^{-1}(\mu))M_n(A_{\infty})$ such that the 1-cocycle $\alpha'$ sending $\gamma$ to $X^{-1}\alpha(\gamma)\gamma(X)$ satisfies $\alpha'(\gamma)\in 1+cM_n(A^{\square})$ for all $\gamma\in \Gamma$.
\end{lemma}

\begin{proof}
This is an analogue of \cite[Lemma 1.22]{MT}. The proof of \emph{loc. cit.} applies verbatim as long as we use Lemma \ref{lemma: MT 1.20} in place of \cite[Lemma 1.20]{MT}.
\end{proof}

\begin{lemma}\label{lemma: MT 1.23}
\begin{enumerate}
\item The natural homomorphism $A^{\square}/c=(A^{\square}/c)^{\Gamma}\rightarrow (A_{\infty}/c)^{\Gamma}$ is a proj-isomorphism; namely, it induces an equivalence between the category of finite projective $A^{\square}/c$-modules and the category of finite projective $(A_{\infty}/c)^{\Gamma}$-modules.
\item The natural homomorphisms $A^{\square}/(c, p)\rightarrow (A_{\infty}/c)^{\Gamma}/p\rightarrow A_{\infty}/(c,p)$ induce homeomorphisms on $\mathrm{Spec}$.
\end{enumerate}
\end{lemma}

\begin{proof}
This is an analogue of \cite[Lemma 1.23]{MT}. By Lemma \ref{lemma: MT 1.20}, we have an isomorphism $(\Ainf(R_{\infty})/c)^{\Gamma}/p\xrightarrow[]{\sim}\big(\Ainf(R_{\infty})/(c,p)\big)^{\Gamma}$. Since $(\Ainf(R_{\infty})/c)^{\Gamma}$ is $p$-adically complete and separated, the homomorphism $(\Ainf(R_{\infty})/c)^{\Gamma}\rightarrow \big(\Ainf(R_{\infty})/(c,p)\big)^{\Gamma}$ is a proj-isomorphism. Similarly, the homomorphism $A(R)/c\rightarrow A(R)/(c,p)$ is a proj-isomorphism. To prove (1), it remains to show the inclusion $A(R)/(c,p)\hookrightarrow \big(\Ainf(R_{\infty})/(c,p)\big)^{\Gamma}$ is a proj-isomorphism. It suffices to check that the inclusion becomes an isomorphism after modulo the nilradicals on both sides.

Every element in $\big(\Ainf(R_{\infty})/(c,p)\big)^{\Gamma}$ can be expressed as a finite sum $\sum_{\chi}f_{\chi}$ with $f_{\chi}\in \Ainf(R_{\infty})_{\chi}$ such that, for every $\chi$ appearing in the sum, $(\varepsilon^{k_i}-1)f_{\chi}=0$ where $(k_1, \ldots, k_d)\in \Q\cap[0,1)$ is the $d$-tuple corresponding to $\chi$. We claim that for any $k\in \Q\cap (0,1)$, the annihilators of $(\varepsilon^k-1)$ in $\Ainf(R_{\infty})/(c,p)$ are precisely multiples of $c/(\varepsilon^k-1)$. Indeed, by definition $\Ainf(R_{\infty})/(c,p)$ is \'etale over $(\mO^{\flat}/c)[P_{\infty}] \otimes_{(\mO^{\flat}/c)[N_{\infty}]} (\mO^{\flat}/c)$ and, by the argument in \cite[Section 8]{log1}, we have an identification of $(\mO^{\flat}/c)$-modules
\[(\mO^{\flat}/c)[P_{\infty}] \otimes_{(\mO^{\flat}/c)[N_{\infty}]} (\mO^{\flat}/c)\cong (\mO^{\flat}/c)[Q]\] where $Q=Q(N, P, u)$ as in \cite[Section 8]{log1}. In particular, $(\mO^{\flat}/c)[P_{\infty}] \otimes_{(\mO^{\flat}/c)[N_{\infty}]} (\mO^{\flat}/c)$ is free as an $(\mO^{\flat}/c)$-module. This yields the claim. 

We obtain that, if $\chi\neq 1$, each $f_{\chi}$ must be a multiple of $c/(\varepsilon^k-1)$ which is nilpotent. This is enough to conclude part (1).

This also takes care of the first arrow in part (2). To finish the proof, it suffices to show that for every element $f\in \Ainf(R_{\infty})/(c,p)$, there exists $N\ge 1$ such that $f^N\in A(R)/(c,p)$. This is clear from the definition.
\end{proof}

\begin{lemma}\label{lemma: MT 1.24}
Let $R'$ be the $p$-adic completion of a localization of $R$ and let $R'_{\infty}:=R'\widehat{\otimes}_R R_{\infty}$. We can define $\Ainf(R'_{\infty})$ and $A(R')$. Then
\begin{enumerate}
\item There is a natural isomorphism
\[\big(\Ainf(R_{\infty})/(c,p)\big)^{\Gamma}\otimes_{A(R)/(c,p)}A(R')/(c,p)\xrightarrow[]{\sim}\big(\Ainf(R'_{\infty})/(c,p)\big)^{\Gamma}.\]
\item The natural homomorphisms 
\[A(R)/(c,p)\rightarrow A(R')/(c,p)\]
and \[\big(\Ainf(R_{\infty})/(c,p)\big)^{\Gamma}\rightarrow \big(\Ainf(R'_{\infty})/(c,p)\big)^{\Gamma}\]
are localizations.
\end{enumerate}
\end{lemma}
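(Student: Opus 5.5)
This is the analogue of \cite[Lemma 1.24]{MT}, and the plan is to reduce everything to the decomposition of $\Ainf(R_{\infty})/(c,p)$ into $\chi$-isotypic pieces and to the behaviour of the invariants computed in Lemma \ref{lemma: MT 1.23}.

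\textbf{Step 1: base change for each isotypic piece.} Write $A(R)/(c,p)\to A(R')/(c,p)$. Since $R'$ is the $p$-adic completion of a localization of $R$, the deformation $A(R')$ is the $(p,\mu)$-completed localization of $A(R)$. Indeed, it is the unique formally étale lift, and localizations lift uniquely. Modulo $(c,p)$ the completion is therefore harmless: $A(R')/(c,p)=S^{-1}(A(R)/(c,p))$ for a multiplicative set $S$. Concretely, we lift the localizing elements via the homeomorphism on $\mathrm{Spec}$ in Lemma \ref{lemma: MT 1.23}(2), noting that a nilpotent thickening does not change localizations. This gives the first half of (2).

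Next, $R'_{\infty}=R'\widehat{\otimes}_R R_{\infty}$, so $\Ainf(R'_{\infty})\cong \Ainf(R_{\infty})\widehat{\otimes}_{A(R)}A(R')$. Reducing mod $(c,p)$ gives
\[\Ainf(R'_{\infty})/(c,p)\cong \Ainf(R_{\infty})/(c,p)\otimes_{A(R)/(c,p)}A(R')/(c,p),\]
which is compatible with the $\Gamma$-actions and with the $\chi$-decompositions. Each piece becomes $\Ainf(R'_{\infty})_\chi/(c,p)=\Ainf(R_{\infty})_\chi/(c,p)\otimes A(R')/(c,p)$; here the direct sum is an honest (uncompleted) sum mod $(c,p)$.

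\textbf{Step 2: invariants commute with the flat base change.} For each $\chi$, the $\Gamma$-invariants of $\Ainf(R_\infty)_\chi/(c,p)$ are the kernel of the finitely many endomorphisms $\varepsilon^{k_i}\gamma_i-1$. These endomorphisms are $A(R)/(c,p)$-linear, because $\Gamma$ acts trivially on $A(R)/(c,p)$ modulo $c$. The exception is that $\Gamma$ may act nontrivially on $A(R)/(c,p)$ itself, and we handle this by using the description of invariants as the annihilator of $(\varepsilon^{k}-1)$ from the proof of Lemma \ref{lemma: MT 1.23}.

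Since localization is flat and kernels of linear maps commute with flat base change, taking invariants commutes with $\otimes_{A(R)/(c,p)}A(R')/(c,p)$. Summing over $\chi$, which is allowed because tensor commutes with direct sums, yields the isomorphism in (1).

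The second half of (2) then follows formally. The ring $(\Ainf(R'_\infty)/(c,p))^\Gamma$ is the base change of $(\Ainf(R_\infty)/(c,p))^\Gamma$ along the localization $A(R)/(c,p)\to S^{-1}A(R)/(c,p)$, so it is the localization at the image of $S$.

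\textbf{Main obstacle.} The delicate point is justifying that the $\Gamma$-action on each isotypic piece is linear over $A(R)/(c,p)$, or at least compatible with flat base change. In \cite{MT} this uses the explicit coordinates. Here I would instead use the freeness of $(\mO^\flat/c)[P_\infty]\otimes_{(\mO^\flat/c)[N_\infty]}(\mO^\flat/c)\cong(\mO^\flat/c)[Q]$ from \cite[Section 8]{log1}, together with the étaleness of $A(R)/(c,p)$ over it. With these, $\Gamma$ acts on the $Q$-graded pieces by scalars $\varepsilon^{k}$, and the invariants are described uniformly before and after localization.
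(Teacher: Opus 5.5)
Your proposal is correct and is essentially the paper's argument. Both rest on two facts: the map $A(R)/(c,p)\to A(R')/(c,p)$ is a flat localization, and there is a $\Gamma$-equivariant identification $\Ainf(R'_{\infty})/(c,p)\cong \Ainf(R_{\infty})/(c,p)\otimes_{A(R)/(c,p)}A(R')/(c,p)$. The paper matches the invariants on the two sides through the explicit description $\frac{c}{\varepsilon^k-1}\cdot\big(\Ainf(R_\infty)_\chi/(c,p)\big)$, and gets the localization property from the map being \'etale and a localization modulo the nilpotent $\xi$. You instead use that kernels commute with flat base change, and that localizations lift uniquely along formally \'etale maps.

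The ``main obstacle'' you flag is vacuous, and your remark that $\Gamma$ may act nontrivially on $A(R)/(c,p)$ is false. Since $\mu\in cA$ and $\Gamma$ acts on $A(R)$ trivially modulo $\mu$, it acts trivially on $A(R)/(c,p)$. Hence each $\gamma_i$ is $A(R)/(c,p)$-linear on $\Ainf(R_\infty)/(c,p)$, and $(\Ainf(R_\infty)/(c,p))^{\Gamma}=\bigcap_i\ker(\gamma_i-1)$ follows directly. You need neither the $\chi$-decomposition nor the $Q$-grading for this step.
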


\begin{proof} 
This is an analogue of \cite[Lemma 1.24]{MT}. Notice that \[\big(\Ainf(R_{\infty})/(c,p)\big)^{\Gamma}=\oplus_{\chi}\big(\Ainf(R_{\infty})_{\chi}/(c,p)\big)^{\Gamma}.\] From the argument in the proof of Lemma \ref{lemma: MT 1.23}, we have seen that, if $\chi\neq 1$ and $\chi$ corresponds to $(k_1, \ldots, k_d)\in \big(\Q\cap[0,1)\big)^d$, we have
\[\big(\Ainf(R_{\infty})_{\chi}/(c,p)\big)^{\Gamma}=\frac{c}{\varepsilon^k-1}\big(\Ainf(R_{\infty})_{\chi}/(c,p)\big)\]
where $k$ is the element in $\{k_1, \ldots, k_d\}$ with smallest $p$-adic valuation. The same computation applies to $\Ainf(R'_{\infty})$. Notice that $A(R)/(c,p)\rightarrow A(R')/(c,p)$ is \'etale by construction. The assertion (1) then follows from the flatness of $A(R)/(c,p)\rightarrow A(R')/(c,p)$.

After modulo the nilpotent element $\xi$, the map $A(R)/(c,p)\rightarrow A(R')/(c,p)$ becomes $R/(c,p)\rightarrow R'/(c,p)$ which is a localization. Hence, the \'etale map $A(R)/(c,p)\rightarrow A(R')/(c,p)$ is itself a localization. The last statement follows from part (1).
\end{proof}

\begin{proof}[Proof of essential surjectivity in Theorem \ref{thm: MT 1.16}]
The proof of \cite[Theorem 1.16]{MT} applies here verbatim as long as we replace \cite[Lemma 1.22, Lemma 1.23, Lemma 1.24]{MT} with Lemma \ref{lemma: MT 1.22}, Lemma \ref{lemma: MT 1.23}, Lemma \ref{lemma: MT 1.24}, respectively.
\end{proof}

We are ready to finish the proofs of Theorem \ref{thm: MT 1.13} and Theorem \ref{thm: MT 1.14}.

\begin{proof}[Proof of Theorem \ref{thm: MT 1.13}]
Just take $c=\mu$ in the integral case of Theorem \ref{thm: MT 1.16}.
\end{proof}

\begin{proof}[Proof of Theorem \ref{thm: MT 1.14}]
Suppose $M$ is trivial modulo $<\mu$. Then $M/p\in \Rep_{\Gamma}(R_{\infty}^{\flat})$ is trivial modulo $(\varepsilon-1)/(\varepsilon^{1/p^r}-1)$ for all $r\ge 1$. Applying the mod $p$ case of Theorem \ref{thm: MT 1.16} with $c=(\varepsilon-1)/(\varepsilon^{1/p^r}-1)$, we conclude that $M/p\cong N\otimes_{A(R)/p} R_{\infty}^{\flat}$ for some $N\in \Rep^{(\varepsilon-1)/(\varepsilon^{1/p^r}-1)}_{\Gamma}(A(R)/p)$.

We claim that $N$ must be trivial modulo $\varepsilon-1$ (notice that the $\Gamma$-action on $A(R)/p$ is trivial modulo $\varepsilon-1$). Indeed, since $\gamma(n)-n\in \frac{\varepsilon-1}{\varepsilon^{1/p^r}-1}N$ for all $n\in N$ and $\gamma\in \Gamma$. It remains to prove that 
\[\bigcap_{r\ge 1}\frac{\varepsilon-1}{\varepsilon^{1/p^r}-1}N=(\varepsilon-1)N.\] 
It suffices to prove
\begin{equation}\label{eq: MT 1.14}
\bigcap_{r\ge 1}\frac{\varepsilon-1}{\varepsilon^{1/p^r}-1}(A(R)/p)=(\varepsilon-1)(A(R)/p).
\end{equation}
Recall that \[\Ainf(R_{\infty})/p=\widehat{\oplus}_{\chi} \big(\Ainf(R_{\infty})_{\chi}/p\big)\]
where $\chi$ runs through all finite order characters $\chi\colon\Gamma\rightarrow \mO^{\times}$ and the completion is $\mu$-adic. Recall that $\big(\Ainf(R_{\infty})_{\chi=1}\big)/p=A(R)/p$. Hence, it suffices to prove the analogue of (\ref{eq: MT 1.14}) for $\Ainf(R_{\infty})/p$. Indeed, $\Ainf(R_{\infty})/p$ is formally \'etale over $\mO^{\flat}\langle P_{\infty}\rangle \widehat{\otimes}_{\mO^{\flat}\langle N_{\infty}\rangle} \mO^{\flat}$ and, by the argument in \cite[Section 8]{log1}, we have an identification
\[\mO^{\flat}\langle P_{\infty}\rangle \widehat{\otimes}_{\mO^{\flat}\langle N_{\infty}\rangle} \mO^{\flat}\cong \mO^{\flat}\langle Q\rangle\] where $Q=Q(N, P, u)$ as in \cite[Section 8]{log1}. In particular, $\mO^{\flat}\langle P_{\infty}\rangle \widehat{\otimes}_{\mO^{\flat}\langle N_{\infty}\rangle} \mO^{\flat}$ is topologically free as an $\mO^{\flat}$-module. It remains to prove 
\[\bigcap_{r\ge 1}\frac{\varepsilon-1}{\varepsilon^{1/p^r}-1}\mO^{\flat}=(\varepsilon-1)\mO^{\flat}.\]
But this follows from \cite[Lemma 1.5]{MT}.
\end{proof}

\vspace{0.1in}
\subsection{Relative log BKF modules as generalized representations}\label{subsection: log BKF vs generalized repn}
\noindent
\vspace{0.1in}

\noindent Now we show that, on a small affine \'etale open $\mathfrak{U}=\spf R$ in $\fX$, relative log BKF modules (with or without Frobenius) on $\mathfrak{U}$ can be explicitly described in terms of generalized representations of $\Gamma$ on $\Ainf(R_{\infty})$.

\begin{theorem}\label{thm: BKF mod vs generalized representations}
Let $\mathfrak{U}=\spf R$ be a small affine open in $\fX$ as above and let $U$ be its log adic generic fiber. Let $U_{\infty}$ denote the log affinoid perfectoid object in $U_{\proket}$ whose associated perfectoid space is $\spa(R_{\infty}[\frac{1}{p}], R_{\infty})$. Then the global section functors
\[\Gamma(U_{\infty}, -)\colon \mathrm{BKF}^{\log}(\mathfrak{U})\rightarrow \Rep_{\Gamma}^{<\mu}(\Ainf(R_{\infty}))\]
and 
\[\Gamma(U_{\infty}, -)\colon \mathrm{BKF}^{\log}(\mathfrak{U}, \varphi)\rightarrow \Rep_{\Gamma}^{<\mu}(\Ainf(R_{\infty}), \varphi)\]
are equivalences of categories.
\end{theorem}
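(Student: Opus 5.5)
The plan is to adapt the proof of \cite[Theorem 5.6]{MT}, using the log pro-Kummer étale setup of \cite{log1}. Write $U_\infty^{j}$ for the $(j+1)$-fold fibre product of $U_\infty$ over $U$ in $U_{\proket}$. Since $U_\infty\to U$ is a pro-Kummer étale $\Gamma$-torsor, $U_\infty^{j}\simeq U_\infty\times\underline{\Gamma}^{j}$. By almost purity for log affinoid perfectoids (\cite[\S 7--9]{log1}), we then have $\Gamma(U_\infty^{j},\Ainfx)=C^0_{\cont}(\Gamma^j,\Ainf(R_\infty))$. Moreover, $H^i(U_\infty^j,\Ainfx)$ is almost zero for $i>0$ (in fact zero for $W$ of the tilt, using $p$-completeness).

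\textbf{Step 1: descent along $U_\infty\to U$.} Locally finite free $\Ainfx$-modules on $U_{\proket}$ whose restriction to $U_\infty$ is free correspond, via $\bM\mapsto \bM(U_\infty)$, to finite projective $\Ainf(R_\infty)$-modules with a continuous semilinear $\Gamma$-action. This is \v{C}ech descent along the torsor, with descent data identified with $\Gamma$-actions via $C^0_{\cont}(\Gamma^j,-)$. Two inputs are needed here. The first is that $\Ainfx$-vector bundles on the log affinoid perfectoid $U_\infty$ are determined by their global sections, which follows by the standard argument from vanishing of higher cohomology of $\Ainfx$ on log affinoid perfectoids (including the $(p,\xi)$-adic devissage). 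The second is continuity of the resulting action. This gives full faithfulness of $\Gamma(U_\infty,-)$ on $\mathrm{BKF}^{\log}(\mathfrak{U})$.

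\textbf{Step 2: matching the triviality conditions.} Next I will check that the condition ``trivial modulo $\xi_r$'' in Definition~\ref{defn: log BKF modules} matches ``trivial modulo $\xi_r$'' in Definition~\ref{defn: MT 1.1}. I expect this to be the main obstacle. The key computation is that $\nu_*(\Ainfx/\xi_r)$ on $\mathfrak{U}_{\ett}$ has sections over an étale $\spf R'\to\mathfrak U$ equal to $(\Ainf(R'_\infty)/\xi_r)^{\Gamma}$, up to the almost and derived subtleties. This uses the descent of Step 1 together with Lemma~\ref{lemma: MT 1.24}, which says the invariants base change along localizations. Given this, if $M=\bM(U_\infty)$ is trivial mod $\xi_r$ as a representation, then $\nu_*(\bM/\xi_r)$ is the quasi-coherent sheaf attached to the finite projective $(\Ainf(R_\infty)/\xi_r)^\Gamma$-module $(M/\xi_r)^\Gamma$. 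The counit isomorphism can then be checked on $U_\infty$.

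Conversely, suppose $\bM$ is trivial mod $\xi_r$. A priori $\nu_*(\bM/\xi_r)$ is only locally free on $\mathfrak U_{\ett}$. To pass to global sections over $\mathfrak U$, I will use Lemma~\ref{lemma: MT 1.23}(2): $\spec$ of $(\Ainf(R_\infty)/\xi_r)^\Gamma$ is homeomorphic to $\spec R/p$, and Lemma~\ref{lemma: MT 1.24} makes étale (Zariski-type) localizations correspond. Local freeness over a cover then glues to finite projectivity of $(M/\xi_r)^\Gamma$, and evaluating the counit on $U_\infty$ gives the isomorphism $(M/\xi_r)^\Gamma\otimes \Ainf(R_\infty)/\xi_r\simeq M/\xi_r$.

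\textbf{Step 3: essential surjectivity and Frobenius.} Given $M\in\Rep^{<\mu}_\Gamma(\Ainf(R_\infty))$, Step 1 produces a locally finite free $\Ainfx$-module $\bM$ by descent, and Step 2 shows it is trivial modulo $<\mu$. Theorem~\ref{thm: MT 1.14} may simplify matters by replacing $<\mu$ with $\mu$, and Theorem~\ref{thm: MT 1.13} lets us descend $M$ to $A(R)$ when convenient. Finally, the Frobenius versions follow formally. The functor commutes with $\varphi^*$ and with inverting $\txi$, since $\Gamma(U_\infty,-)$ is exact on these sheaves and $\bM[\txi^{-1}]$ is a filtered colimit on a qcqs object. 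By full faithfulness applied to $\Ainfx[\txi^{-1}]$-modules, Frobenius structures $\varphi_{\bM}$ therefore correspond bijectively to $\Gamma$-equivariant $\varphi_M$.
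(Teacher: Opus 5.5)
\textbf{Gap in Step 1.} Your ``first input'' is that an $\Ainfx$-vector bundle on $U_\infty$ has the form $\bM(U_\infty)\otimes\Ainfx$ with $\bM(U_\infty)$ finite projective. You justify it by vanishing of $H^i(V,\Ainfx)$, $i>0$, on log affinoid perfectoids, and you assert that this vanishing is exact ``using $p$-completeness''. It is not exact. Almost purity only shows these groups are almost zero (killed by $W(\mathfrak m^\flat)$), and $p$-adic completeness does nothing to remove the almost ambiguity. Run through your dévissage, almost vanishing gives only two things: $\bM(U_\infty)$ is almost finite projective, and $\bM(U_\infty)\otimes\Ainfx|_{U_\infty}\to\bM|_{U_\infty}$ is an almost isomorphism.

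So your descent in Step 1 applies only to objects already known to be generated by their sections over $U_\infty$. For a general $\bM\in\mathrm{BKF}^{\log}(\mathfrak U)$ you have not shown that $\Gamma(U_\infty,-)$ lands in finite projective $\Ainf(R_\infty)$-modules (well-definedness), nor full faithfulness on all of $\mathrm{BKF}^{\log}(\mathfrak U)$. The same issue is hidden in the ``almost and derived subtleties'' you postpone in Step 2. To compare $\nu_*(\bM/\xi_r)(\mathfrak U)=\big((\bM/\xi_r)(U_\infty)\big)^\Gamma$ with $(M/\xi_r)^\Gamma$, you need $(\bM/\xi_r)(U_\infty)=\bM(U_\infty)/\xi_r$. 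The cokernel of $\bM(U_\infty)/\xi_r\hookrightarrow(\bM/\xi_r)(U_\infty)$ is $H^1(U_\infty,\bM)[\xi_r]$, which is almost zero but need not be zero a priori.

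\textbf{What is needed, and how the paper does it.} The hypothesis ``trivial modulo $<\mu$'' must be used exactly here, not only to match conditions. Lemma~\ref{lemma: MT 5.13} shows the following for $\bM$ locally finite projective and trivial modulo $<\mu$, and any log affinoid perfectoid $V$: $H^i(V,\bM)$ is killed by $[\mathfrak m^\flat]$ for $i>0$, and $H^1(V,\bM)$ has no nonzero $p$- or $\xi_r$-torsion. Combine this with the fact that $\bM/\xi_r$ is pulled back from $\mathfrak U_{\mathrm{\acute et}}$, so that $\bM/\xi_r|_V$ is genuinely free for $V$ over a trivializing étale affine. This yields $\Gamma(V,\bM/\xi_r)=\Gamma(V,\bM)/\xi_r$, and that $\Gamma(V,\bM)$ is finite projective and generates $\bM|_V$.

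The paper then works modulo $p^s$. Proposition~\ref{prop: MT 5.7} with $\mathbb B=\AinfU/p^s$ gives the equivalence between sheaves and representations trivial modulo $<\mu$; this subsumes your Step 2. One then passes to $\bM=\varprojlim_s\bM_s$ using the equivalence (a)$\Leftrightarrow$(b) of Lemma~\ref{lemma: MT 5.13}.

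\textbf{A smaller point in Step 2.} Lemmas~\ref{lemma: MT 1.23} and \ref{lemma: MT 1.24} are proved under the hypothesis $\mu A\subset cA\subset\varphi^{-1}(\mu)A$ of Theorem~\ref{thm: MT 1.16}. In the integral case $c=\xi_r$ violates this, since $\xi_r/\varphi^{-1}(\mu)=\xi/\varphi^{-r}(\mu)\notin\Ainf$. Also, $\spec\big((\Ainf(R_\infty)/\xi_r)^\Gamma\big)$ is not homeomorphic to $\spec R/p$; only its reduction modulo $p$ is. So that gluing step would need its own argument as well.
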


The proof of Theorem \ref{thm: BKF mod vs generalized representations} is similar to that of \cite[Theorem 5.14]{MT}. For completeness, we include a sketch of the proof.

We need some preparations. Consider a triple $(B, \mathfrak{M}, \mathbb{B})$ where $B$ is a ring, $\mathfrak{M}\subset B$ is an ideal, and $\mathbb{B}$ is a sheaf of $B$-algebras on $U_{\proket}$. We assume that $(B, \mathfrak{M}, \mathbb{B})$ is in one of the following three cases:
\begin{itemize}
\item $B=\mO$, $\mathfrak{M}=\mathfrak{m}$, $\mathbb{B}=\widehat{\mO}^+_U$;
\item $B=W_r(\mO)$, $\mathfrak{M}=W_r(\mathfrak{m})$, $\mathbb{B}=W_r(\widehat{\mO}^+_U)=\AinfU/\xi_r$, for some integer $r\ge 1$;
\item $B=W_r(\mO^{\flat})$, $\mathfrak{M}=W_r(\mathfrak{m}^{\flat})$, $\mathbb{B}=W_r(\widehat{\mO}^{\flat+}_U)=\AinfU/p^r$, for some integer $r\ge 1$.
\end{itemize}

One checks that such triples $(B, \mathfrak{M}, \mathbb{B})$ satisfy (the pro-Kummer \'etale analogues of) the conditions (B1)-(B4) in \cite[\S 5.1]{MT}. We have the following analogue of \cite[Theorem 5.7]{MT}.

\begin{proposition}\label{prop: MT 5.7}
For any $\pi\in \mathfrak{M}$, taking global sections induces an equivalence of categories
\[\Gamma(U_{\infty}, -)\colon \Big\{\begin{array}{l} \textrm{locally finite projective sheaves of } \mathbb{B}\textrm{-modules}\\ \textrm{on }U_{\proket} \textrm{ that are trivial modulo }<\pi\end{array} \Big\}\xrightarrow[]{\sim} \Rep^{<\pi}_{\Gamma}(\Gamma(U_{\infty}, \mathbb{B})).\]
Moreover, for an element $\bM$ in the category on the left-hand side, there is a natural almost isomorphism (with respect to $\mathfrak{M}$) of complexes of $\Gamma(X, \mathbb{B})$-modules
\[\mathrm{R}\Gamma_{\cont}(\Gamma, \Gamma(U_{\infty}, \bM))\rightarrow \mathrm{R}\Gamma_{\proket}(U, \bM).\]
\end{proposition}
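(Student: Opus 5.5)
The plan is to follow the structure of \cite[Theorem 5.7]{MT}, which is a formal consequence of conditions (B1)--(B4) of \cite[\S 5.1]{MT}. We first verify those conditions in the pro-Kummer \'etale setting and then run the formal argument. The key geometric input is that $U_\infty\to U$ is a pro-Kummer \'etale Galois cover with group $\Gamma\cong\widehat{\Z}(1)^d$. It is log affinoid perfectoid, and so are all its fibre powers $U_\infty^{\times_U(n+1)}\cong U_\infty\times\Gamma^n$ (profinite set times $U_\infty$).

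First we establish almost acyclicity. For each of the three triples $(B,\mathfrak{M},\mathbb{B})$, and for any log affinoid perfectoid $V\in U_{\proket}$, we claim that $H^i(V,\mathbb{B})$ is almost zero for $i>0$, and that $\Gamma(V\times\Gamma^n,\mathbb{B})=\mathrm{Map}_{\cont}(\Gamma^n,\Gamma(V,\mathbb{B}))$.
\begin{itemize}
\item For $\mathbb{B}=\widehat{\mO}^+_U$ this is the almost purity and vanishing result for log affinoid perfectoids on the pro-Kummer \'etale site (\cite[\S 7.5]{log1}, after \cite{DLLZ}).
\item For $W_r(\widehat{\mO}^+_U)$ and $W_r(\widehat{\mO}^{\flat+}_U)$ it follows by d\'evissage along the Witt vector filtration, together with tilting.
\end{itemize}
The \v{C}ech-to-derived spectral sequence for the cover $U_\infty\to U$ then yields an almost isomorphism
\[R\Gamma_{\cont}(\Gamma,\Gamma(U_\infty,\mathbb{B}))\to R\Gamma_{\proket}(U,\mathbb{B}).\]
For a locally finite projective $\bM$ trivial modulo $<\pi$, we need the same statement for $\bM$. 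Write $\bM/\pi^{1/n}$-type quotients as $\nu^{-1}$ of finite projective modules tensored with $\mathbb{B}$. Triviality mod $<\pi$ makes $\bM$, mod each such element, locally free over $\mathbb{B}$ \emph{on $U_\infty$ itself} after a Zariski localization of $R$. Hence $\Gamma(U_\infty\times\Gamma^n,\bM)=\mathrm{Map}_{\cont}(\Gamma^n,\Gamma(U_\infty,\bM))$ and higher cohomology on $U_\infty\times\Gamma^n$ is almost zero. Passing to the limit over $\pi$-adic (or $p$-adic) truncations, using the Mittag-Leffler property of the almost-zero terms, gives the almost quasi-isomorphism in the second assertion.

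Next we prove the equivalence. That $\Gamma(U_\infty,\bM)$ is a finite projective $\Gamma(U_\infty,\mathbb{B})$-module follows from the almost vanishing above and local freeness. It is also trivial modulo each $\pi'$ dividing $\pi$ properly: indeed $(\bM/\pi')$ descends to $\nu_*$, and taking $\Gamma$-invariants of $\Gamma(U_\infty,\bM/\pi')$ recovers $\Gamma(\mathfrak{U},\nu_*(\bM/\pi'))$ up to almost zero error, which is removed by the honest (not almost) triviality hypothesis as in \cite{MT}. Full faithfulness follows from the $H^0$ part of the comparison applied to $\mathcal{H}om(\bM_1,\bM_2)$, which again lies in the category.

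For essential surjectivity, given $M\in\Rep^{<\pi}_\Gamma(\Gamma(U_\infty,\mathbb{B}))$, we define $\bM$ on objects $V\to U_\infty$ by $M\otimes\Gamma(V,\mathbb{B})$ and descend along the Galois cover using the $\Gamma$-action. Continuity of the action is exactly what makes the descent datum on $U_\infty\times\Gamma$ well defined. We then check that the descended sheaf is locally finite projective and trivial modulo $<\pi$, by reducing modulo each $\pi'$ and using that $(M/\pi')^\Gamma$ gives the required $\nu_*$-module.

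The main obstacle is the almost-vs-honest issue in the triviality condition and in the $H^0$ comparison. The comparison only holds almost, so one must show that $\nu_*(\bM/\pi')$ is honestly identified with $(M/\pi')^\Gamma\otimes$ (structure). We will handle this as \cite{MT} do. One uses that $\Gamma(\mathfrak{U},\nu_*\mathbb{B}/\pi')\to(\Gamma(U_\infty,\mathbb{B})/\pi')^\Gamma$ is a proj-isomorphism, which is the content analogous to Lemma~\ref{lemma: MT 1.23}. One then passes between $\pi'$ and $\pi''$ with $\pi'\mid\pi''\mid\pi$ to absorb almost-zero kernels and cokernels, which are killed by $\mathfrak{M}$ and hence by $\pi''/\pi'$.
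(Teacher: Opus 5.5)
Your plan is the paper's proof: verify the pro-Kummer \'etale analogues of conditions (B1)--(B4) of \cite[\S 5.1]{MT} for the three triples $(B,\mathfrak{M},\mathbb{B})$, using almost purity for log affinoid perfectoids from \cite{DLLZ} and \cite{log1} plus Witt-vector d\'evissage, and then run the proof of \cite[Theorem 5.7]{MT} unchanged. One correction to your sketch concerns the ``proj-isomorphism analogous to Lemma~\ref{lemma: MT 1.23}'': at a fixed level the natural map actually runs $(\Gamma(U_\infty,\mathbb{B})/\pi')^{\Gamma}\hookrightarrow \Gamma(\mathfrak{U},\nu_*(\mathbb{B}/\pi'))$ with almost-zero cokernel, that lemma (which compares $A(R)$ with $\Ainf(R_\infty)$ and feeds Theorem~\ref{thm: MT 1.16}) is not the relevant input, and the almost-versus-honest discrepancy is removed by the level-shifting between $\pi'$ and $\pi''$ that you describe.
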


\begin{proof}
The same proof of \cite[Theorem 5.7]{MT} applies here.
\end{proof}

In order to prove Theorem \ref{thm: BKF mod vs generalized representations}, we also need the following classification result of relative log BKF modules.

\begin{lemma}\label{lemma: MT 5.13}
Let $\bM$ be a sheaf of $\AinfU$-modules. Then the following are equivalent
\begin{enumerate}
\item[(a)] $\bM$ is locally finite projective and trivial modulo $<\mu$. 
\item[(b)] We can write $\bM=\varprojlim_s\bM_s$ where each $\bM_s$ is a locally finite projective sheaf of $\AinfU/p^s$-modules which is trivial modulo $<\mu$, and such that the transition maps $\bM_s\rightarrow \bM_{s-1}$ induces isomorphisms $\bM_s\xrightarrow[]{\sim} \bM_{s-1}$, for all $s\ge 1$.
\end{enumerate}
Moreover, if $\bM$ satisfies the equivalence condition above and $V\in U_{\proket}$ is any log affinoid perfectoid object, then
\begin{enumerate}
\item $H^i_{\proket}(V, \bM)$ is killed by $[\mathfrak{m}^{\flat}]$ for all $i >0$.
\item $H^1_{\proket}(V, \bM)$ has no non-zero elements killed by either $p$ or $\xi_r$, for any $r\ge 1$.
\item $\Gamma(V, \bM)$ is a finite projective $\Gamma(V, \AinfU)$-module and the canonical map 
\[\Gamma(V, \bM)\otimes_{\Gamma(V, \AinfU)} \AinfU|_V\rightarrow \bM|_V\]
is an isomorphism.
\item If $\nu_*(\bM/\xi)$ is a finite free $\nu_*(\AinfU/\xi)$-module, then $\bM|_V$ is a finite free $\AinfU|_V$-module.
\end{enumerate}
In particular, such an $\bM$ is necessarily locally finite free, and hence $\bM\in \mathrm{BKF}^{\log}(\mathfrak{U})$.
\end{lemma}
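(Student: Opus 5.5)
This is the log analogue of \cite[Lemma 5.13]{MT}, and the plan is to follow that proof. The only log-specific inputs are Proposition \ref{prop: MT 5.7}, applied to the triples $(B,\mathfrak{M},\mathbb{B})$ with $\mathbb{B}=\AinfU/p^s$ and $\AinfU/\xi_r$, and the fact that log affinoid perfectoids form a basis of $U_{\proket}$ on which $H^i(V,\widehat{\mO}^{\flat+}_U)$ is almost zero for $i>0$.

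\textbf{(a)$\Rightarrow$(b).} Take $\bM_s:=\bM/p^s$. Local finite projectivity passes to the quotient. Triviality modulo $\xi_r$ for $\bM_s$ follows from triviality modulo $\xi_r$ for $\bM$ by base change along $\AinfU/\xi_r\to\AinfU/(p^s,\xi_r)$, since $\nu_*$ commutes with this reduction up to the finite-level identifications. The identity $\bM=\varprojlim\bM/p^s$ holds because $\AinfU$ is $p$-adically complete and $\bM$ is locally a direct summand of a finite free module.

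\textbf{(b)$\Rightarrow$(a) and the auxiliary statements.} Fix a log affinoid perfectoid $V$.

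\begin{enumerate}
\item \emph{Mod $p^s$ statements.} Apply Proposition \ref{prop: MT 5.7}, in its version over $V$ (with $\Gamma$ replaced by trivial action, or by localizing the cover), with $\mathbb{B}=\AinfU/p^s$. This gives that $\Gamma(V,\bM_s)$ is finite projective over $W_s(\widehat{\mO}^{\flat+}(V))$, that $\bM_s|_V$ is generated by it, and that $H^i(V,\bM_s)$ is almost zero for $i>0$.
\item \emph{Passage to the limit.} Use $R\Gamma(V,\bM)=R\varprojlim R\Gamma(V,\bM_s)$. The transition maps $\Gamma(V,\bM_s)\to\Gamma(V,\bM_{s-1})$ are surjective, because $H^1(V,p^{s-1}\bM_s)$ is only almost zero; this is patched by the fact that finite projective modules over $W_s$ lift uniquely, so $\Gamma(V,\bM_s)\otimes W_{s-1}\cong\Gamma(V,\bM_{s-1})$. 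Hence $\varprojlim^1$ vanishes. It follows that $\Gamma(V,\bM)$ is a finite projective module over the $p$-complete ring $\Gamma(V,\AinfU)$, and that the canonical base change map is an isomorphism. This proves (3), and with it (a).
\item \emph{Statement (1).} The Milnor sequence shows $H^i(V,\bM)$ is killed by $W(\mathfrak{m}^\flat)$ for $i>0$. An element killed by all almost-zero elements is killed by $[\mathfrak{m}^\flat]$, which gives (1).
\item \emph{Statement (2).} This is where the real work lies. Since $\Gamma(V,\bM)$ is finite projective, $H^1(V,\bM)$ is a direct summand of $H^1(V,\AinfU)^{\oplus n}$, so it suffices to treat $\bM=\AinfU$.
\begin{itemize}
\item For $p$: the sequence $0\to\AinfU\xrightarrow{p}\AinfU\to\widehat{\mO}^{\flat+}\to0$ gives $H^1[p]=\mathrm{coker}\big(\Ainf(V)\to\widehat{\mO}^{\flat+}(V)\big)=0$.
\item For $\xi_r$: use $\AinfU/\xi_r=W_r(\widehat{\mO}^+_U)$. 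The map on global sections $\Ainf(V)\to W_r(\widehat{\mO}^+(V))$ is surjective because $V$ is perfectoid, via $\theta_r$. So again $H^1[\xi_r]=0$.
\end{itemize}
\item \emph{Statement (4).} If $\nu_*(\bM/\xi)$ is free, then $\Gamma(V,\bM)/\xi$ is free. A finite projective module over the $\xi$-adically henselian (indeed $(p,\xi)$-complete) ring $\Gamma(V,\AinfU)$ with free reduction is free, by lifting a basis and applying Nakayama.
\end{enumerate}

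The final claim, that $\bM$ is locally finite free, follows by applying (4) on small opens where $\nu_*(\bM/\xi)$ is free. Such opens exist because $\bM$ is trivial modulo $\xi_1=\xi$.

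\textbf{Main obstacle.} The hardest step is (2) in the $\xi_r$ case together with the $\varprojlim^1$ control in step 2. There I must ensure that the almost-vanishing over the log pro-Kummer-étale site behaves as in the non-log setting. My plan is to reduce to the statement for $\widehat{\mO}^{+}_U$ on log affinoid perfectoids, citing \cite[\S 7]{log1}.
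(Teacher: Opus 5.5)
Your formal steps after Step~1 are fine. Suppose that, for every log affinoid perfectoid $V$, $\Gamma(V,\bM_s)$ is an honest finite projective $\Gamma(V,\AinfU/p^s)$-module that generates $\bM_s|_V$. Then the passage to the limit goes through, and so do (1), (3), (4) and the final claim. Statement (2) also follows: reduce to $\AinfU$ and use surjectivity of $W(\widehat{\mO}^{\flat+}_U(V))\to\widehat{\mO}^{\flat+}_U(V)$ and of $\theta_r$.

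\emph{Step~1 is the gap.} It is the whole content of the lemma, and you do not prove it. Proposition~\ref{prop: MT 5.7} is a statement about the specific Galois cover $U_\infty\to U$ of a small affine, so it only tells you about $\Gamma(U_\infty,\bM_s)$. It cannot be applied ``over $V$ with trivial $\Gamma$'': triviality modulo $\xi_r$ is defined through $\nu\colon U_{\proket}\to\fU_{\ett}$, and $V$ is not the standard cover of a small formal model. Descending from $V\times_U U_\infty$ to $V$ is again only almost descent. Local finite projectivity plus almost vanishing of cohomology yields only almost statements. Concretely, the obstruction to lifting a basis of $\Gamma(V,\bM_s/\xi_r)$ to $\Gamma(V,\bM_s)$ is $H^1(V,\bM_s)[\xi_r]$, which is almost zero but not a priori zero. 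This is exactly where ``trivial modulo $<\mu$'' (several $\xi_r$ at once) must enter, and your argument never uses it.

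\emph{One way to close it.} For $r'>r$ put $c=\xi_{r'}/\xi_r$. Map the sequence $0\to\bM_s\xrightarrow{\xi_{r'}}\bM_s\to\bM_s/\xi_{r'}\to 0$ to its $\xi_r$-analogue via $(c,\mathrm{id},\mathrm{proj})$. This gives $\delta_r(\bar y)=c\,\delta_{r'}(y)$ whenever $y\in\Gamma(V,\bM_s/\xi_{r'})$ maps to $\bar y$. Since $c\equiv p^{r'-r}$ modulo the ideal $W(\fm^\flat)\subset\Ainf$, for $r'-r\ge s$ the image of $c$ lies in $W_s(\fm^\flat)$, so it kills the almost zero group $H^1(V,\bM_s)$. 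Hence $\Gamma(V,\bM_s)/\xi_r$ equals the image of $\Gamma(V,\bM_s/\xi_{r'})\to\Gamma(V,\bM_s/\xi_r)$. Using compatible trivializations of $\bM_s/\xi_{r'}$ and $\bM_s/\xi_r$, and the same argument for $\AinfU/p^s$, this image is an honest finite projective $\Gamma(V,\AinfU/p^s)/\xi_r$-module. Over an \'etale open where $\nu_*(\bM_s/\xi_{r'})$ is free, it is free. Then $\xi_r$-adic completeness and Nakayama give Step~1.

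\emph{Second omission.} In (b)$\Rightarrow$(a) you only show that $\bM$ is locally finite projective. But (a) also asserts that $\bM$ is trivial modulo every $\xi_r$: $\nu_*(\bM/\xi_r)$ must be locally free over $\nu_*(\AinfU/\xi_r)$ with invertible counit. You do not address this. It is not a formal consequence of the same property for the $\bM_s$, because $\nu_*$ does not commute with reduction modulo $p^s$. Already $\nu_*(\widehat{\mO}^+_U/p)\neq \mO_{\fU}/p$, since on a small affine $(R_\infty/p)^\Gamma$ picks up contributions from nontrivial isotypic components of $R_\infty$. So this needs a separate argument. The same remark shows that your justification of (a)$\Rightarrow$(b) (``$\nu_*$ commutes with this reduction'') is wrong. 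There the conclusion still holds, simply by reducing a local free trivialization of $\bM/\xi_r$ modulo $p^s$.

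Your ``main obstacle'' is also misplaced. Within your plan, (2) and the vanishing of $\varprojlim^1$ follow at once from Step~1. The real work is Step~1 and the triviality of the limit.
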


\begin{proof}
This is an analogue of \cite[Proposition 5.13]{MT}. The proof in \emph{loc. cit.} applies here verbatim.
\end{proof}

\begin{proof}[Proof of Theorem \ref{thm: BKF mod vs generalized representations}]
Given Proposition \ref{prop: MT 5.7} and Lemma \ref{lemma: MT 5.13}, one checks that the functor is well-defined and fully faithful, following the arguments in the proof of \cite[Theorem 5.4]{MT}. 

To check the essential surjectivity, we construct the inverse functor. Let $M$ be a generalized represetation in $\Rep^{<\mu}_{\Gamma}(\Ainf(R_{\infty}))$. For every $s\ge 1$, the quotient $M_s:=M/p^s$ is a finite projective $\Ainf(R_{\infty})/p^s$-modules equipped with a continuous semilinear $\Gamma$-action which is trivial modulo $<\mu$. Using the equivalence in Proposition \ref{prop: MT 5.7}, it corresponds to a locally finite projective sheaf $\bM_s$ of $\AinfU/p^s$-modules which is trivial modulo $<\mu$. One checks that the inverse system $(\bM_s)_{s \ge 1}$ satisfies the condition in Lemma \ref{lemma: MT 5.13}, and hence $\bM:=\varprojlim_s \bM_s$ is indeed a relative log BKF module.
\end{proof}

\vspace{0.1in}
\subsection{Generalized representations as log $q$-connections}\label{subsection: log q-connections} 
\noindent
\vspace{0.1in}

\noindent Our next goal is to relate relative log BKF modules and generalized representations with certain logarithmic $q$-connections. More precisely, we will establish equivalences of categories
\[\Rep_{\Gamma}^{\mu}(A(R))\cong q\mathrm{MIC}(A(R))\]
and
\[\Rep_{\Gamma}^{\mu}(A(R), \varphi)\cong q\mathrm{MIC}(A(R), \varphi)\]
where $q\mathrm{MIC}(A(R))$ (resp. $q\mathrm{MIC}(A(R), \varphi)$) is the category of the finite projective $A(R)$-modules with integrable log $q$-connections (resp. those equipped with an additional semilinear Frobenius action).

We first recall the notions of log $q$-de Rham complex and log $q$-connections in a general setup (cf. \cite[\S 2.1-2.2]{MT}). For the rest of the section, we keep the following assumptions:

\begin{assumption}\label{assumption: qDR}
Let $A$ be a commutative ring. 
\begin{enumerate}[leftmargin=1.5cm]
\item[($q$DR1)] Suppose $A^{\square}$ is a commutative $A$-algebra equipped with $d$ commuting $A$-algebra automorphisms $\gamma_1, \ldots, \gamma_d$. In particular, $A^{\square}$ admits an action of $\Z^d$. Fix $q\in A$ such that $q-1$ is a nonzero divisor of $A^{\square}$. Assume that $\gamma_i\equiv \mathrm{id} \,(\textrm{mod }q-1)$ for all $i=1,\ldots, d$.
\item[($q$DR2)] Assume there exist elements $U_1, \ldots, U_d\in A^{\square}$ such that $\gamma_i(U_i)=qU_i$ and $\gamma_i(U_j)=U_j$ if $i\neq j$.
\item[($q$DR$\varphi$)] There is an endomorphisms $\varphi\colon A\rightarrow A$ such that $\varphi(q)=q^p$ in $A$. There is also a $\varphi$-semilinear endomorphism on $A^{\square}$, still denoted by $\varphi$, that commutes with $\gamma_1, \ldots, \gamma_d$ on $A^{\square}$. Moreover, we assume that $\varphi(U_i)=U_i^p$ for all $i=1, \ldots, d$. 
\end{enumerate}
\end{assumption}

\begin{definition}\label{defn: q-dR complex}
The \emph{logarithmic $q$-de Rham complex} $q\Omega^{\bullet}_{A^{\square}/A}:=\oplus_{n=0}^d q\Omega^n_{A^{\square}/A}$ is the differential graded algebra satisfying the following properties:
\begin{itemize}
\item $q\Omega^0_{A^{\square}/A}=A^{\square}$.
\item $q\Omega^1_{A^{\square}/A}$ is an $A^{\square}$-bimodule. As a left $A^{\square}$-module, it is the free left $A^{\square}$-module with formal basis ``$d\log (U_1)$'', \ldots, ``$d\log (U_d)$''. The right $A^{\square}$-module structure is determined by
\[d\log(U_i)\cdot f=\gamma_i(f)d\log (U_i)\]
for all $f\in A^{\square}$ and $i=1, \ldots, d$.
\item There is an identification as left $A^{\square}$-modules
\[\bigoplus_{1\le i_1<\cdots<i_n\le d}A^{\square}=q\Omega^n_{A^{\square}/A}\]
sending \[(f_{i_1, \ldots, i_n})\mapsto \sum_{1\le i_1<\cdots<i_n\le d} f_{i_1, \ldots, i_d}\,\, d\log(U_{i_1})\cdots d\log(U_{i_n}).\] 
\item $d\log (U_i) d\log (U_i)=0$ and $d\log (U_i) d\log (U_j)=-d\log (U_j) d\log (U_i)$ if $i\neq j$.
\item The differential $\mD_q\colon A^{\square}\rightarrow q\Omega^1_{A_{\square}/A}$ sends \[f\mapsto \sum_{i=1}^d \frac{\gamma_i(f)-f}{q-1} d\log (U_i).\] The general differential is induced by $\mD_q$ such that $d\log (U_i)$'s satisfy the cocycle conditions.
\end{itemize}
\end{definition}

\begin{remark}
In \cite{MT}, $q\Omega^{\bullet}_{A^{\square}/A}$ is simply called the ``$q$-de Rham complex''.
\end{remark}

Notice that $\mD_q\colon A^{\square}\rightarrow q\Omega^1_{A^{\square}/A}$ is $A$-linear satisfying the Leibniz rule
\[\mD_q(fg)=\mD_q(f)g+f\mD_q(g)\]
for all $f, g\in A^{\square}$. It is also clear that $\mD_q(U_i)=U_i\,d\log (U_i)$ for all $i$.

\begin{definition}\label{defn: q-connections}
A \emph{module with log $q$-connection} over $A^{\square}$ is a right $A^{\square}$-module $N$ equipped with an $A$-linear map $\nabla\colon N\rightarrow N\otimes_{A^{\square}}q\Omega^1_{A^{\square}/A}$ satisfying the Leibniz rule
\[\nabla(nf)=\nabla(n)f+n\otimes \mD_q(f)\]
for all $n\in N$ and $f\in A^{\square}$. Such a log $q$-connection naturally extends to 
\[\nabla\colon N\otimes_{A^{\square}}q\Omega^{\bullet}_{A^{\square}/A}\rightarrow N\otimes_{A^{\square}}q\Omega^{\bullet+1}_{A^{\square}/A}.\]
We say that the log $q$-connection is \emph{integrable} if \[\nabla^2\colon N\rightarrow N\otimes_{A^{\square}}q\Omega^{2}_{A^{\square}/A}\]
equals zero.
\end{definition}

\begin{definition}
Let $q\mathrm{MIC}(A^{\square})$ denote the category of finitely generated $A^{\square}$-modules with integrable log $q$-connections.
\end{definition}

\begin{definition}
For $(N, \nabla)\in q\mathrm{MIC}(A^{\square})$ and each $i=1, \ldots, d$, we define $\nabla_i^{\log}\colon N\rightarrow N$ to be the $A^{\square}$-linear endomorphism given by $\nabla(-)=\sum_{i=1}^d \nabla_i^{\log}(-)\otimes d\log U_i$.
\end{definition}

\begin{theorem}\label{thm: MT 2.6}
\begin{enumerate}
\item Given a generalized representation $N\in \Rep_{\Z^d}^{q-1}(A^{\square})$, the map 
\[\nabla\colon N\rightarrow N\otimes_{A^{\square}}q\Omega^1_{A^{\square}/A}; \ \ n\mapsto \sum_{i=1}^d \frac{\gamma_i(n)-n}{q-1}\,\mathrm{dlog}(U_i)\]
is an integrable log $q$-connection on $N$. Moreover, the resulting log $q$-de Rham complex is naturally quasi-isomorphic to $L\eta_{q-1}R\Gamma(\Z^d, N)$. (Here, $L\eta$ stands for the d\'ecalage functor, see \cite[Section 2.2]{log1} for a brief review.)
\item Suppose $A^{\square}$ is in addition $(p,q-1)$-adically complete and separated. Then the construction in (i) induces an equivalence of categories
\[\Rep_{\Z^d}^{q-1}(A^{\square})\xrightarrow[]{\sim} q\mathrm{MIC}(A^{\square})\]
compatible with tensor products and internal homs. 
\end{enumerate}
\end{theorem}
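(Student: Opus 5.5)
This is the logarithmic analogue of \cite[Theorem 2.6]{MT}. The plan is to check that the argument there only uses the abstract axioms ($q$DR1)--($q$DR2), with the coordinates $U_i$ entering only through the identity $\mD_q(U_i)=U_i\,d\log(U_i)$. Throughout, $N$ is regarded as a right $A^{\square}$-module with $\Z^d$ acting semilinearly, and we write $\gamma_i(n)=n+(q-1)\delta_i(n)$, which is possible because $N$ is trivial modulo $q-1$.

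For part (1), the first step is to show that the operators $\nabla_i^{\log}:=\delta_i=(\gamma_i-1)/(q-1)$ are well defined. Since $q-1$ is a nonzerodivisor on $A^{\square}$ and $N$ is finite projective, $q-1$ is also a nonzerodivisor on $N$, so division by $q-1$ is legitimate. The twisted Leibniz rule follows from
\[\gamma_i(nf)-nf=(\gamma_i(n)-n)\gamma_i(f)+n(\gamma_i(f)-f),\]
and this is exactly the rule encoded by the bimodule relation $d\log(U_i)\cdot f=\gamma_i(f)\,d\log(U_i)$. For integrability, the relation $\nabla^2=0$ reduces to $\delta_i\delta_j=\delta_j\delta_i$, which holds because the $\gamma_i$ commute.

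For the comparison with $L\eta_{q-1}R\Gamma(\Z^d,N)$, I represent $R\Gamma(\Z^d,N)$ by the Koszul complex $\Kos(N;\gamma_1-1,\ldots,\gamma_d-1)$. Each $\gamma_i-1$ equals $(q-1)\delta_i$, and the $\delta_i$ commute. The lemma from \cite[\S 2.2]{log1} (equivalently \cite[Lemma 7.9]{BMS1}) says that $\eta_{q-1}$ of such a Koszul complex is the Koszul complex on the $\delta_i$. That lemma requires $q-1$ to be a nonzerodivisor, and I need one more input to apply it: $L\eta_{q-1}$ is computed by $\eta_{q-1}$ on this Koszul complex because every term is $(q-1)$-torsion-free. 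Finally I identify $\Kos(N;\delta_1,\ldots,\delta_d)$ with $N\otimes q\Omega^\bullet_{A^{\square}/A}$ using the basis $d\log(U_{i_1})\cdots d\log(U_{i_n})$.

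For part (2), I construct an inverse functor. Given $(N,\nabla)$, set $\gamma_i:=1+(q-1)\nabla_i^{\log}$. This operator is $\gamma_i$-semilinear by the Leibniz rule. It is an automorphism because $N$ is $(p,q-1)$-complete, so $1+(q-1)\nabla_i^{\log}$ is invertible with inverse given by a convergent geometric series; this inverse is checked to be semilinear for $\gamma_i^{-1}$. Integrability makes these operators commute, so they define a $\Z^d$-action. Continuity for the $(p,q-1)$-adic topology holds because $\gamma_i\equiv 1$ modulo $q-1$. Next I show that $N$ is trivial modulo $q-1$. The $\gamma_i$ act trivially on $N/(q-1)$ and on $A^{\square}/(q-1)$, so the invariants are everything and the required map is the identity; in particular $(N/(q-1))^{\Z^d}=N/(q-1)$ is finite projective over $A^{\square}/(q-1)$. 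Here I need $N$ finite projective. If $q\mathrm{MIC}$ is meant to be finitely generated only, I would restrict to finite projective objects, as in \cite{MT}. The two constructions are visibly mutually inverse and are compatible with morphisms. Compatibility with tensor products and internal homs follows from $\gamma_i(n\otimes n')=\gamma_i(n)\otimes\gamma_i(n')$: the induced $q$-connection on $N\otimes N'$ is the $q$-Leibniz one, and the analogous statement holds for homs.

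The main obstacle is bookkeeping rather than conceptual. I must check that the right-module and bimodule conventions make the Leibniz rule match semilinearity exactly, and that the Koszul/d\'ecalage identification holds as a quasi-isomorphism in $D(A)$ and not just termwise. I would handle the latter by citing the Koszul d\'ecalage lemma verbatim.
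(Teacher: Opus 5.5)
Your proposal is correct and is essentially the argument of \cite[Proposition 2.6]{MT}, which the paper invokes without further proof (it also cites \cite[Proposition 2.5]{Tian_prismatic_etale_comparison}): you divide $\gamma_i-1$ by $q-1$, identify $\eta_{q-1}\Kos(N;\gamma_1-1,\ldots,\gamma_d-1)$ with the log $q$-de Rham complex, and invert $1+(q-1)\nabla_i^{\log}$ using completeness. Your caveat about finite projectivity is genuinely needed: under the literal definition of $q\mathrm{MIC}(A^{\square})$ with merely finitely generated modules, an object such as $A^{\square}/(q-1)$ with the reduction of $\mD_q$ lies outside the essential image, so one must use finite projective modules, as the paper itself does at the start of \S\ref{subsection: log q-connections}.
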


\begin{proof}
This is basically \cite[Proposition 2.6]{MT}. It is also proved in \cite[Proposition 2.5]{Tian_prismatic_etale_comparison}.
\end{proof}

There is also a version with Frobenius. For a log $q$-connection $(N, \nabla)\in q\mathrm{MIC}(A^{\square})$, the $\varphi$-twist $\varphi^*N:=N\otimes_{A^{\square}, \varphi} A^{\square}$ is equipped with a natural log $q$-connection $\varphi^*\nabla$. More precisely, if we write
\[\nabla(n)=\sum_{i=1}^d \nabla_i^{\log}(n)\otimes d\log (U_i),\]
then, for every $n\otimes f\in \varphi^*N=N\otimes_{A^{\square}, \varphi} A^{\square}$, we put
\[(\varphi^*\nabla)(n\otimes f):=\sum_{i=1}^d (\varphi^*\nabla)^{\log}_i (n\otimes f)\otimes d\log(U_i)\]
where
\[(\varphi^*\nabla)^{\log}_i (n\otimes f)=\nabla_i^{\log}(n)\otimes[p]_q \,\gamma_i(f)+n\otimes \frac{\gamma_i(f)-f}{q-1}.\]
Here, $[p]_q$ stands for $\frac{q^p-1}{q-1}=1+q+\cdots+q^{p-1}\in A$, the usual ``$q$-analogue of $p$''.

\begin{definition}
Keep the notations in Assumption \ref{assumption: qDR}.
Let $q\mathrm{MIC}(A^{\square}, \varphi)$ denote the category of integrable log $q$-connections on finitely generated $A^{\square}$-modules equipped with a semilinear Frobenius automorphism; namely, it is the category of triples $(N, \nabla, \varphi_N)$ where $(N, \nabla)\in q\mathrm{MIC}(A^{\square})$ and \[\varphi_N\colon (\varphi^*N)[\frac{1}{[p]_q}]\xrightarrow[]{\sim} N[\frac{1}{[p]_q}]\]
is an isomorphism of log $q$-connections.
\end{definition}

\begin{corollary}\label{cor: MT 2.14}
Suppose $A^{\square}$ is $(p,q-1)$-adically complete and separated. Then the functor in Theorem \ref{thm: MT 2.6} induces an equivalence of categories
\[\Rep_{\Z^d}^{q-1}(A^{\square}, \varphi)\xrightarrow[]{\sim} q\mathrm{MIC}(A^{\square}, \varphi).\]
\end{corollary}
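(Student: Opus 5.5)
The plan is to deduce the corollary from Theorem~\ref{thm: MT 2.6}(2) by checking that the functor there is compatible with Frobenius pullback. The base change $\varphi^*$ is along the semilinear $\varphi$ on $A^{\square}$ from ($q$DR$\varphi$), and we must invert $[p]_q$ on both sides. Concretely, for $N\in\Rep_{\Z^d}^{q-1}(A^{\square})$ I will give $\varphi^*N=N\otimes_{A^{\square},\varphi}A^{\square}$ the diagonal action $\gamma_i(n\otimes f)=\gamma_i(n)\otimes\gamma_i(f)$. This is well defined because $\varphi$ commutes with the $\gamma_i$.

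The first step is to check that $\varphi^*N$ again lies in $\Rep^{q-1}_{\Z^d}(A^{\square})$. Write $\gamma_i(n)=n+(q-1)\nabla_i^{\log}(n)$ for $n\in N$, and note that $\varphi(q-1)=q^p-1=(q-1)[p]_q$. Then
\[
\gamma_i(n\otimes f)-n\otimes f=(q-1)\Big(\nabla_i^{\log}(n)\otimes[p]_q\gamma_i(f)+n\otimes\tfrac{\gamma_i(f)-f}{q-1}\Big).
\]
Here the factor $q-1$ coming from $n$ becomes $q^p-1$ after crossing the $\varphi$-twisted tensor. So $\varphi^*N$ is trivial modulo $q-1$, and the connection that Theorem~\ref{thm: MT 2.6} attaches to it is exactly the formula $(\varphi^*\nabla)^{\log}_i$ given before the corollary. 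This shows that the equivalence of Theorem~\ref{thm: MT 2.6} intertwines $\varphi^*$ on both sides, naturally in $N$. One caveat: triviality modulo $q-1$ in Definition~\ref{defn: MT 1.1} also asks that $(M/(q-1))^{\Z^d}$ be finite projective. Here the action on $\varphi^*N/(q-1)$ is trivial, so $(\varphi^*N/(q-1))^{\Z^d}=\varphi^*N/(q-1)$, and this is finite projective.

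The second step is to pass to $[p]_q$-localizations. Theorem~\ref{thm: MT 2.6} gives an identification on $\Hom$ sets, and its compatibility with internal homs lets us treat $\Hom(\varphi^*N,N)$ as an object of each category. A morphism $\varphi^*N[1/[p]_q]\to N[1/[p]_q]$ is a $[p]_q$-power-divided morphism $\varphi^*N\to N$, that is, an element of $\Hom(\varphi^*N,N)[1/[p]_q]$, since $N$ is finite projective. So I will reduce to showing that such an element is $\Z^d$-equivariant if and only if it is horizontal. Multiplying by a power of $[p]_q$ brings us to a genuine map $\varphi^*N\to N$, and $[p]_q\in A$ commutes with the $\gamma_i$. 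Equivariance of $h$ is equivalent to $\gamma_i(h)-h=0$, that is, $(q-1)\nabla_i^{\log}(h)=0$. Because $q-1$ is a nonzerodivisor on $A^{\square}$ and on the projective $\Hom$ module, this is equivalent to $\nabla_i^{\log}(h)=0$, which is horizontality. Invertibility is preserved in both directions by the same argument applied to the inverse.

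With these two steps, the functor sends $(N,\varphi_N)$ to $(N,\nabla,\varphi_N)$. Full faithfulness follows from the full faithfulness in Theorem~\ref{thm: MT 2.6} together with the requirement that morphisms commute with $\varphi_N$. Essential surjectivity follows by taking $N$ from Theorem~\ref{thm: MT 2.6} and transporting $\varphi_N$ through the second step.

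The main obstacle is the bookkeeping in the first step. One must make sure that the object $\varphi^*N$, formed with the diagonal action, really lies in the subcategory of representations trivial modulo $q-1$, and that it corresponds to $(\varphi^*N,\varphi^*\nabla)$ naturally in $N$. The other point needing care is the nonzerodivisor argument after inverting $[p]_q$, which uses that $N$ is finite projective.
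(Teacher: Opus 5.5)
Your proposal is correct and takes the same route as the paper. The paper simply defers to \cite[Proposition 2.14]{MT} and notes that $\varphi_N$ commutes with the $\gamma_i$ if and only if it is horizontal for $\nabla$; your check that $\varphi^*N$ with the diagonal action corresponds to $(\varphi^*N,\varphi^*\nabla)$, and your use of $q-1$ being a nonzerodivisor after inverting $[p]_q$, fill in details the paper leaves implicit.
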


\begin{proof}
The proof is the same as in \cite[Proposition 2.14]{MT}. To see that the functor is well-defined, it suffices to notice that $\varphi_N$ commutes with $\gamma_i$'s if and only if it is horizontal with respect to $\nabla$. But this is precisely the condition in the definition of $q\mathrm{MIC}(A^{\square}, \varphi)$. 
\end{proof}

Now we retrieve the assumption in \S \ref{subsection: generalized repn} and apply the results above to $A=\Ainf$, $A^{\square}=A(R)$ and $q=[\epsilon]$ (i.e., $q-1=\mu$). Notice that $A(R)$ is $(p,\mu)$-adically complete and separated, and it is equipped with a natural action of $\Gamma$ which is trivial modulo $\mu$. To fulfill the assumptions ($q$DR1), ($q$DR2), ($q$DR$\varphi$), we need the following Lemma 
\begin{lemma}\label{Lem: the assumptions of qDR}
Choose an isomorphism $\Gamma\cong \widehat{\Z}(1)^d=\oplus_{i=1}^d \widehat{\Z}(1)\gamma_i$. Then there are elements $U_1, \ldots, U_d\in A(R)$ such that 
\[
\gamma_i(U_j)=\begin{cases}
  qU_j  & \text{ if } i=j\\
  U_j  & \text{ if } i\neq j
\end{cases}
\]
and $\varphi(U_i)=U_i^p$ for all $i$.
\end{lemma}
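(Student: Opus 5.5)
The plan is to build the $U_j$ as Teichmüller-type lifts of monomials attached to a $\Z$-basis of $P^{\gp}/N^{\gp}$ that is dual to the chosen generators $\gamma_i$. Since $\mathrm{coker}(u^{\gp})$ is torsion-free, I fix a basis $\bar e_1,\ldots,\bar e_d$ of $P^{\gp}/N^{\gp}$ such that, under $\Gamma=\Hom(P^{\gp}/N^{\gp},\widehat{\Z}(1))$, the generator $\gamma_i$ corresponds to the homomorphism sending $\bar e_j$ to $\delta_{ij}\cdot(\zeta_{p^n})_n$. The action of $\gamma\in\Gamma$ on $\Ainf\langle P_{\Q\ge 0}\rangle$ is then $\gamma(e^{a})=[\varepsilon^{\langle \gamma,\bar a\rangle}]e^{a}$ for $a\in P_{\Q\ge0}$. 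On $A(R^{\square})=\Ainf\langle P\rangle\widehat{\otimes}_{\Ainf\langle N\rangle}\Ainf$ it is the same formula with $\bar a\in P^{\gp}/N^{\gp}$ integral, so that $\gamma_i(e^{a})=q^{\langle \bar e_i^\vee,\bar a\rangle}e^{a}$ with $q=[\varepsilon]$.

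The main obstacle is that the natural candidate $U_j=e^{e_j}$ is not available when the lift of $\bar e_j$ to $P^{\gp}$ does not lie in $P$. I will deal with this in three steps.
\begin{itemize}
\item First, lift $\bar e_j$ to some $p_j\in P^{\gp}$ and write $p_j=a_j-b_j$ with $a_j,b_j\in P$.
\item Second, show that the image of $e^{b_j}$ becomes a unit in $A(R)$. It suffices to arrange that $b_j$ maps into $N^{\gp}$ modulo things invertible in $A(R^{\square})$. A cleaner route is to use the structure results for small charts from \cite[Section 8]{log1}: $Q=Q(N,P,u)$ and the identification $\mO\langle P\rangle\widehat{\otimes}_{\mO\langle N\rangle}\mO\cong \mO\langle Q\rangle$ (the integral version, used already in Lemma~\ref{lemma: MT 1.23}). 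That identification should exhibit invertible monomials in the directions of $P^{\gp}/N^{\gp}$. Concretely, because $P\cap(-N)=\{0\}$ and $u$ is saturated and injective with torsion-free cokernel, I expect one can choose, after possibly shrinking via the chart (which is allowed since small opens are defined up to this choice), elements $t_j\in P$ with $t_j\mapsto\bar e_j$ such that the $e^{t_j}$ are units in $A(R^{\square})$.
\item Fallback, if that fails in general: replace $U_j$ by $e^{a_j}\cdot (e^{b_j})^{-1}$ computed in the localization where $e^{b_j}$ is a unit. The fact I would aim to verify is that $\Gamma$ acts on $e^{b}$ via $q^{\langle\cdot,\bar b\rangle}$, and that $\varphi(e^b)=e^{pb}$ since $\varphi$ on $\Ainf\langle P\rangle$ is the Frobenius on $\Ainf$ together with $e^{a}\mapsto e^{pa}$.
\end{itemize}

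Once suitable $U_j$ are in $A(R^{\square})$, I will push them to $A(R)$ along the $\Gamma$- and $\varphi$-equivariant map $A(R^{\square})\to A(R)$. This map is equivariant because both structures on $A(R)$ are defined via the unique formally étale lift of $R^{\square}\to R$. The required identities then follow immediately:
\[
\gamma_i(U_j)=[\varepsilon^{\langle\bar e_i^\vee,\bar e_j\rangle}]U_j=q^{\delta_{ij}}U_j,\qquad \varphi(U_j)=e^{pt_j}=U_j^p .
\]
If monomials must be inverted, these identities persist under inversion.
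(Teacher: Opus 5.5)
Your action formula and your final step are correct. If $t_j\in P$ has class $\bar e_j$ in $P^{\gp}/N^{\gp}$, then the image of $e^{t_j}$ under the $\Gamma$- and $\varphi$-equivariant map $A(R^{\square})\to A(R)$ satisfies $\gamma_i(e^{t_j})=q^{\delta_{ij}}e^{t_j}$ and $\varphi(e^{t_j})=e^{pt_j}$. This is slightly more direct than the paper, which works in $\Ainf(R^{\square}_\infty)\cong\Ainf\langle Q\rangle$ with $U_i=e^{f_N(e_i)}$ and then uses $A(R)=\Ainf(R_\infty)_{\chi=1}$.

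The gap is in how you handle the ``main obstacle''. Step 2 is false in general: monomials $e^{b}$ with $b\neq 0$ are typically not units in $A(R^{\square})$. You also may not shrink $\fU$. The lemma concerns the given $R$, and inverting monomials replaces $\fU$ by a proper open subset with a different $A(R)$. The fallback $e^{a_j}(e^{b_j})^{-1}$ lives in a localization of $A(R)$, not in $A(R)$, so it does not prove the statement either.

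In fact no repair of this kind can work once the generators are fixed arbitrarily. Take $N=0$ and $P=\N$, so $A(R^{\square})=\Ainf\langle T\rangle$ with $\gamma_1(T)=qT$, and use the generator $\gamma_1^{-1}$ instead. Its dual basis vector is $-1\notin P$, and your recipe produces $T^{-1}\notin A(R)$. Indeed, any $U=\sum_n c_nT^n$ with $\gamma_1^{-1}(U)=qU$ satisfies $c_n(q^{-n}-q)=0$ for all $n$, hence $U=0$.

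The missing idea is that the isomorphism $\Gamma\cong\widehat{\Z}(1)^d$ is yours to choose, and you should choose it adapted to $P$. By Lemma~\ref{lemma: splitting} there is a splitting $P^{\gp}\cong N^{\gp}\oplus\bigoplus_{i=1}^d\Z e_i$ with every $e_i\in P$. Take $\gamma_1,\dots,\gamma_d$ dual to the classes of $e_1,\dots,e_d$. With that choice your candidate $U_i=e^{e_i}$ already lies in $A(R^{\square})$, no inversion is needed, and your final computation finishes the proof. This is exactly what the paper's proof does; the choice of generators is implicit in its use of the $e_i$ from Lemma~\ref{lemma: splitting}.
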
 

To prove Lemma \ref{Lem: the assumptions of qDR}, we first need the following splitting lemma.

\begin{lemma}\label{lemma: splitting}
The injection $N^{\gp}\hookrightarrow P^{\gp}$ admits a splitting
\[P^{\gp}\cong N^{\gp}\oplus \big(\oplus_{i=1}^d \Z e_i \big)\]
such that $e_i\in P$. 
\end{lemma}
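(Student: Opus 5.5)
The plan is to exploit the hypotheses in Definition \ref{definition: small affine}. Since $u\colon N\hookrightarrow P$ is injective and $\mathrm{coker}(u^{\gp})=P^{\gp}/N^{\gp}$ is torsion-free and finitely generated, hence free of rank $d$, the sequence $0\to N^{\gp}\to P^{\gp}\to P^{\gp}/N^{\gp}\to 0$ splits. So the only real content is to choose the lifts $e_i$ of a basis of $P^{\gp}/N^{\gp}$ inside $P$ itself, rather than merely in $P^{\gp}$.

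I will first pick any splitting $s\colon P^{\gp}/N^{\gp}\to P^{\gp}$, with images $e_1',\dots,e_d'$ of some $\Z$-basis $\bar e_1,\dots,\bar e_d$. The freedom is that each $e_i'$ may be replaced by $e_i'+n_i$ for arbitrary $n_i\in N^{\gp}$ without changing the fact that $N^{\gp}\oplus\bigoplus\Z e_i\to P^{\gp}$ is an isomorphism. So it suffices to show that every coset $x+N^{\gp}$ with $x\in P^{\gp}$ meets $P$.

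To prove this, write $x=a-b$ with $a,b\in P$, since $P^{\gp}$ is generated by $P$. It then suffices to find $n\in N^{\gp}$ with $x+n\in P$. The natural attempt uses an element of $N$ that is ``large'' in $P$. Because $N$ is sharp fs and $u$ is injective, I will choose $n_0$ in the interior of $N$. The key claim is that for $m\gg0$, $m\,u(n_0)-b\in P$; then $x+m\,u(n_0)=a+(m\,u(n_0)-b)\in P$.

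This claim is the main obstacle, because it needs $u(n_0)$ to lie in the interior of $P$, or at least that $b$ is dominated by a multiple of $u(n_0)$. That fails for a general injective $u$: if $P=N\oplus\mathbb N$ with $N$ sitting in the first factor, the interior of $N$ does not dominate $P$.

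To get around this, I will modify the basis elements using $P$ itself. Write $\bar x=\bar a-\bar b$ and note that $P^{\gp}/N^{\gp}$ is generated as a group by the image $\bar P$ of $P$. I will then choose the basis $\bar e_i$ inside $\bar P$ directly, since $\bar P$ is a finitely generated monoid generating the free group $P^{\gp}/N^{\gp}$. It is not automatic that a $\Z$-basis lies in $\bar P$, so I will use a unimodular-basis argument: pick generators of the cone of $\bar P$ and apply elementary operations. Concretely, if $\bar P$ spans a full-dimensional cone, then its interior contains a basis. The reason is that adding a large multiple of a fixed interior element $\bar c$ to each vector of any basis $\{\bar f_i\}$ gives vectors in $\bar P$ after saturation, and a triangular change of basis ($\bar f_1\mapsto \bar f_1+k\bar c$ and so on, with $\bar c$ written in terms of the basis) keeps determinant $\pm1$. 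To see that $\bar P$ is saturated enough for this, I will use that $u$ is saturated (integral and saturated), so $\bar P$ is saturated in $P^{\gp}/N^{\gp}$. I will also use that $P\cap(-N)=\{0\}$ together with torsion-freeness to see that the cone is full-dimensional.

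With a basis $\bar e_i\in\bar P$ in hand, I lift each $\bar e_i$ to some $e_i\in P$, which is possible by the definition of $\bar P$. This gives the required splitting $P^{\gp}\cong N^{\gp}\oplus\bigoplus_i\Z e_i$ with $e_i\in P$.
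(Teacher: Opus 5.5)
Your final argument is correct, but drop the first half of the proposal entirely. The reduction to ``every coset $x+N^{\gp}$ meets $P$'' is not just hard to prove; it is false, as your own example shows. For $u\colon\N\to\N^2$, $1\mapsto(1,0)$, which satisfies all the conditions on $u$ in Definition~\ref{definition: small affine}, the cosets are indexed by the second coordinate, and the negative ones miss $P$.

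What remains has the same core as the paper's proof: a full-dimensional rational cone contains a $\Z$-basis of the lattice, and saturation puts that basis inside the monoid. The difference is where you run it. You work in $L=P^{\gp}/N^{\gp}$ with the image monoid $\bar P$ and lift at the end. The paper works in $P^{\gp}_\Q$, treats $d$ vectors $\delta_i$ spanning a simplicial subcone of $P_{\Q_{\ge0}}$ as a basis, and applies a $\mathrm{GL}_d(\Z)$-change to the $e'_i$. Read literally, the paper's version has the right dimension count only when $N^{\gp}=0$. Moreover, a $\mathrm{GL}_d(\Z)$-change of the $e'_i$ without translating by $N^{\gp}$ can fail: in the example above, $e'_1=(-5,1)$ gives a splitting, but $\pm e'_1\notin P$. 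Working in $L$ builds that translation in from the start.

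The price is that you need $\bar P$ to be saturated in $L$. You correctly obtain this from the saturatedness of $u$, by pushing out along $N\to\{0\}$. It also follows from saturatedness of $P$ alone, which is what the paper uses. If $y\in L$ lies in the cone of $\bar P$, choose $z\in P_{\Q_{\ge0}}$ and $x_0\in P^{\gp}$ over $y$, and write $x_0-z\in N^{\gp}_\Q$ as $\sum_j q_j u(n_j)$ with $n_j\in N$, $q_j\in\Q$. Then $x_0+\sum_j\lceil -q_j\rceil u(n_j)$ lies in $P^{\gp}\cap P_{\Q_{\ge0}}=P$ and maps to $y$.

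Two steps of your sketch need repair.

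\begin{itemize}
\item The unimodular change of basis, as written, does not work. Replacing every $\bar f_i$ by $\bar f_i+k\bar c$, with $\bar c=\sum_j c_j\bar f_j$, has determinant $1+k\sum_j c_j$. For $d=1$, $\bar f_1=-1$, $\bar c=1$, this is $1-k$. Instead, take $\bar c$ to be a \emph{primitive} lattice point in the interior of the cone (divide any interior lattice point by the gcd of its coordinates). Complete it to a basis $\bar c,\bar f_2,\dots,\bar f_d$ of $L$, and use $\bar c,\bar f_2+k\bar c,\dots,\bar f_d+k\bar c$. This is a basis for every $k$, and for $k\gg0$ it lies in the interior, hence in $\Q_{\ge0}\bar P\cap L=\bar P$.
\item Full-dimensionality of the cone needs neither $P\cap(-N)=\{0\}$ nor torsion-freeness. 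It holds simply because $\bar P$ generates $L$ as a group, since $P$ generates $P^{\gp}$.
\end{itemize}

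With these adjustments the proof is complete.
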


\begin{proof}
Choose any splitting $P^{\gp}\cong N^{\gp}\oplus \big(\oplus_{i=1}^d \Z e'_i \big)$ with $e'_i\in P^{\gp}$. Since $P_{\Q_{\ge 0}}$ is a rational polyhedron cone inside $\Q^d$, there exists $\delta_1, \ldots, \delta_d\in P^{\gp}_{\Q}$ such that $\oplus_{i=1}^d \Q_{\ge 0} \delta_i\subset P_{\Q_{\ge 0}}$. Use $\delta_1, \ldots, \delta_d$ as basis for the $\Q$-vector space $P^{\gp}_{\Q}$ and consider the coordinates for $e'_1, \ldots, e'_d$. By basic linear algebra, there exists a linear transformation in $\mathrm{GL}_d(\Z)$ which translate $e'_1, \ldots, e'_d$ to $e_1, \ldots, e_d$ such that all $e_i$'s have non-negative coordinates. This means \[e_i\in P^{\gp}\cap \big(\oplus_{i=1}^d \Q_{\ge 0} \delta_i\big)\subset P^{\gp}\cap P_{\Q}=P\]
as desired.
\end{proof}

\begin{proof}[Proof of Lemma \ref{Lem: the assumptions of qDR}]

Recall that $A(R)=\Ainf(R_{\infty})_{\chi=1}$. It suffices to construct elements $U_1, \ldots, U_d \in \Ainf(R_{\infty})$ such that \[
\gamma_i(U_j)=\begin{cases}
  qU_j  & \text{ if } i=j\\
  U_j  & \text{ if } i\neq j
\end{cases}
\]
and $\varphi(U_i)=U_i^p$ for all $i$. Since $\Ainf(R_{\infty})$ is formally \'etale over $\Ainf(R_{\infty}^{\square})$, it suffices to find such elements $U_1, \ldots, U_d$ inside $\Ainf(R_{\infty}^{\square})$.

Recall that there is an identification 
\[\Ainf(R_{\infty}^{\square})=\Ainf\langle P_{\Q\ge 0}\rangle\widehat{\otimes}_{\Ainf\langle N_{\infty}\rangle}\Ainf \cong \Ainf\langle Q\rangle\]
as $\Ainf$-modules, where $Q=Q(N, P, u)$ as in \cite[Section 8]{log1}. Let $e_1, \ldots, e_d\in P$ as in Lemma \ref{lemma: splitting} and let $q_i:=f_N(e_i)$ (cf. \cite[Section 3.2]{log1}) for all $i=1, \ldots, d$. We claim that $U_i:=e^{q_i}\in \Ainf\langle Q\rangle$ satisfies the desired properties.

To see this, first consider the natural maps of sets $\iota\colon Q\hookrightarrow P_{\Q\ge 0}\rightarrow P_{\Q\ge 0}^{\gp}/N_{\infty}^{\gp}$. By the definition of $Q$, one sees that this composition $\iota$ is injective. For any 
\[ 
    \gamma\in \Gamma=\mathrm{Hom}\big((P_{\Q\ge 0}^{\gp}/P^{\gp})/(N_{\Q}^{\gp}/N^{\gp}), \mu_\infty(\mO)\big)
\] 
and any $\alpha\in Q$, the action of $\gamma$ on $e^{\alpha}\in \Ainf\langle Q\rangle$ is given by the formula
\[
    \gamma\cdot e^{\alpha}=[\gamma(\iota(e^{\alpha}))^{\flat}]e^{\alpha},
\] 
where $\gamma(\iota(e^{\alpha}))^{\flat}$ stands for
\[
    \big(\gamma(\iota(e^{\alpha})), \gamma(\iota(e^{\alpha})/p), \gamma(\iota(e^{\alpha})/p^2), \cdots \big),
\]
and $[\cdot]$ is the Techm\"uller lift. Now, it follows from the construction of $q_i$'s that $\gamma_i\cdot e^{q_i}= [\epsilon] e^{q_i}$ and $\gamma_i\cdot e^{q_j}= e^{q_j}$ if $i\neq j$. It is also clear that $\varphi(e^{q_i})=e^{pq_i}=(e^{q_i})^p$ (notice that the subset $Q\subset P_{\infty}$ is closed under multiplication by $p$).
\end{proof}

\begin{theorem}\label{representations vs q-connections}
Fix $\gamma_1,\ldots, \gamma_d\in \Gamma$ and $U_1, \ldots, U_d\in A(R)$ as above. Given a generalized representation $M\in \Rep_{\Gamma}^{\mu}(A(R))$, the map 
\[
    \nabla\colon M\rightarrow M\otimes_{A(R)}q\Omega^1_{A(R)/\Ainf}; \ \ 
    m\mapsto \sum_{i=1}^d \frac{1}{\mu}(\gamma_i(m)-m)\otimes \mathrm{dlog}(U_i)
\]
is an integrable log $q$-connection. Moreover, the resulting $q$-de Rham complex is naturally quasi-isomorphic to $L\eta_{\mu}R\Gamma_{\cont}(\Gamma, M)$. This induces equivalences of categories
\[\Rep_{\Gamma}^{\mu}(A(R))\xrightarrow[]{\sim} q\mathrm{MIC}(A(R))\]
and 
\[\Rep_{\Gamma}^{\mu}(A(R), \varphi)\xrightarrow[]{\sim} q\mathrm{MIC}(A(R), \varphi).\]
\end{theorem}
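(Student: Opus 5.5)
The plan is to reduce the statement to the abstract results Theorem \ref{thm: MT 2.6} and Corollary \ref{cor: MT 2.14}, applied with $A=\Ainf$, $A^{\square}=A(R)$, $q=[\varepsilon]$ (so $q-1=\mu$), with the commuting automorphisms taken to be the chosen topological generators $\gamma_1,\ldots,\gamma_d$ of $\Gamma$.

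First I will check Assumption \ref{assumption: qDR}. For ($q$DR1): $\mu$ is a nonzero divisor on $A(R)$ because $A(R)$ is formally étale over $A(R^{\square})$, which is topologically free over $\Ainf$ (it is the $\chi=1$ summand of $\Ainf(R^{\square}_\infty)\cong\Ainf\langle Q\rangle$). The $\Gamma$-action on $A(R)$ is trivial modulo $\mu$, as noted just before Lemma \ref{Lem: the assumptions of qDR}. Assumptions ($q$DR2) and ($q$DR$\varphi$) are exactly Lemma \ref{Lem: the assumptions of qDR}. In addition, $\varphi$ commutes with $\Gamma$ on $A(R)$, and $\varphi(q)=q^p$. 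Finally, $A(R)$ is $(p,\mu)$-adically complete and separated.

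Next I will pass between $\Gamma$ and $\Z^d$. Restricting along the dense subgroup $\Z^d=\oplus\Z\gamma_i\subset\Gamma$ gives a functor $\Rep^{\mu}_{\Gamma}(A(R))\to\Rep^{\mu}_{\Z^d}(A(R))$. I claim this is an equivalence. It is fully faithful because a map commuting with the $\gamma_i$ commutes with $\Gamma$ by continuity. For essential surjectivity, take $N$ trivial modulo $\mu$ with $\gamma_i\equiv 1 \pmod{\mu}$. Then $\gamma_i^{p^n}=(1+\mu\delta_i)^{p^n}$ tends to $1$ in the $(p,\mu)$-adic topology, because $\binom{p^n}{k}\mu^k$ becomes highly divisible by $p$ or $\mu$. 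Hence the $\Z^d$-action extends uniquely and continuously to $\widehat{\Z}^d$, and therefore to $\Gamma$ via the chosen basis (the twist $(1)$ is absorbed in the choice of $\gamma_i$). I must also compare cohomology: $R\Gamma_{\cont}(\Gamma,M)\simeq R\Gamma(\Z^d,M)$. This follows from the fact recalled earlier for $M=\varprojlim M/(p,\mu)^n$ with discrete, $p$-power-torsion quotients. These quotients are discrete $\Gamma$-modules by the same continuity argument.

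With these in hand, Theorem \ref{thm: MT 2.6}(1) gives that the displayed $\nabla$ is an integrable log $q$-connection, with log $q$-de Rham complex quasi-isomorphic to $L\eta_\mu R\Gamma(\Z^d,M)\simeq L\eta_\mu R\Gamma_{\cont}(\Gamma,M)$. Theorem \ref{thm: MT 2.6}(2) composed with the restriction equivalence gives $\Rep^{\mu}_\Gamma(A(R))\simeq q\MIC(A(R))$. For the Frobenius version I use Corollary \ref{cor: MT 2.14}, noting that $[p]_q=\varphi(\mu)/\mu=\txi$. So inverting $[p]_q$ is the same as inverting $\txi$, and $\Gamma$-equivariance of $\varphi_M$ is equivalent to $\Z^d$-equivariance, which in turn is equivalent to horizontality.

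I expect the main obstacle to be the continuity and extension step from $\Z^d$ to $\Gamma$. Specifically, I need that an arbitrary finite projective module with $\gamma_i\equiv1\pmod{\mu}$ carries a continuous $\Gamma$-action for the $(p,\mu)$-adic topology. I would handle this with the binomial estimate on $(1+\mu\delta)^{p^n}$, modelled on the corresponding step in \cite{MT}.
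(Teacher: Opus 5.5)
Your proposal is correct and follows essentially the same route as the paper: reduce to Theorem~\ref{thm: MT 2.6} and Corollary~\ref{cor: MT 2.14}, and show that restriction from $\Gamma$ to $\Z^d$ is an equivalence using the binomial estimate $\gamma^{p^m}\equiv 1 \bmod (p,\mu)^m$. The paper routes the extension through $\Z_p^d$ and then shows that the kernel of $\widehat{\Z}^d\to\Z_p^d$ acts trivially, which is the same density and continuity argument as yours; your extra remarks that $R\Gamma_{\cont}(\Gamma,M)\simeq R\Gamma(\Z^d,M)$ and that $[p]_q=\varphi(\mu)/\mu=\txi$ fill in points the paper leaves implicit.
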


\begin{proof}
By fixing a compatible system of roots of unity, we may identify $\widehat{\Z}(1)$ with $\widehat{\Z}$. Using Theorem \ref{thm: MT 2.6} and Corollary \ref{cor: MT 2.14}, it suffices to show that the functor $\Rep_{\Gamma}^{\mu}(A(R))\rightarrow \Rep_{\Z^d}^{\mu}(A(R))$ induced from the natural inclusion \[\Z^d=\oplus_{i=1}^d\Z\gamma_i\hookrightarrow\Gamma= \oplus_{i=1}^d\widehat{\Z}\gamma_i\] is an equivalence of categories. 

Consider the inclusion $\Z^d=\oplus_{i=1}^d\Z\gamma_i\hookrightarrow \Z_p^d=\oplus_{i=1}^d\Z_p\gamma_i$, which factors as $\Z^d\hookrightarrow \Gamma=\widehat{\Z}^d\twoheadrightarrow \Z_p^d$. We first show that the inclusion $\Z^d\hookrightarrow \Z_p^d$ induces an equivalence
\[\Rep_{\Z_p^d}^{\mu}(A(R))\xrightarrow[]{\sim} \Rep_{\Z^d}^{\mu}(A(R)).\]
Indeed, by assumption, we have $\gamma\equiv \mathrm{id} \,(\mathrm{mod}\,\, \mu)$ for any $\gamma\in \Z^d$. An easy computation of binomial expansion shows that $\gamma^{p^m}\equiv \mathrm{id} \,(\mathrm{mod}\,\, (p,\mu)^m)$ for every integer $m>0$. In particular, the $\Z^d$-action automatically extends to a continuous action of $\Z_p^d$.

By the construction of $A(R)$, the $\widehat{\Z}^d$-action on $A(R)$ factors through $\Z_p^d$. Hence, the surjection $\widehat{\Z}^d\rightarrow \Z_p^d$ induces a functor $\Rep_{\Z_p^d}^{\mu}(A(R))\rightarrow \Rep_{\widehat{\Z}^d}^{\mu}(A(R))$ whose composition with $\Rep_{\widehat{\Z}^d}^{\mu}(A(R))\rightarrow \Rep_{\Z^d}^{\mu}(A(R))$ is the equivalence $\Rep_{\Z_p^d}^{\mu}(A(R))\xrightarrow[]{\sim} \Rep_{\Z^d}^{\mu}(A(R))$. It remains to show that every generalized representation in $\Rep_{\Z_p^d}^{\mu}(A(R))$ has a unique way to extend to an element in $\Rep_{\widehat{\Z}^d}^{\mu}(A(R))$. Indeed, for any $t$ in the kernel of $\widehat{\Z}^d\rightarrow \Z_p^d$, we have $t\in p^m\widehat{\Z}^d$ for every $m> 0$. Hence, $t\equiv \mathrm{id}\,(\mathrm{mod } (p,\mu)^m)$ for all $m>0$ by the computation above. This means $t=\mathrm{id}$ by the $(p,\mu)$-adic completeness.

\end{proof}

For later use, we introduce the notion of convergent log $q$-connections. We resume Assumption \ref{assumption: qDR}.

\begin{definition}\label{defn: convergent log q-connection}
An element $(N, \nabla)\in q\mathrm{MIC}(A^{\square})$ is called \emph{$(p,[p]_q)$-adically convergent} (resp., \emph{$[p]_q$-adically convergent}) if for every $n\in N$ and $1\le i\le d$, there exists $m\ge 1$ such that 
\[\nabla_i^{\log}(\nabla_i^{\log}-[1]_q)(\nabla_i^{\log}-[2]_q)\cdots(\nabla_i^{\log}-[m]_q)(n)\in (p, [p]_q)N\]
(resp., $\in [p]_qN$) where $[a]_q$ stands for $\frac{q^a-1}{q-1}=1+q+\cdots+q^{a-1}$.
For simplicity, we just say \emph{convergent} when we mean $(p,[p]_q)$-adically convergent. We use $q\mathrm{MIC}_{\mathrm{conv}}(A^{\square})$ to denote the full subcategory of convergent log $q$-connections.
\end{definition}

\begin{definition}
A generalized representation in $\Rep^{\mu}_{\Gamma}(A^{\square})$ is called \emph{convergent} if the corresponding log $q$-connection is convergent. We use $\Rep^{\mu}_{\Gamma, \mathrm{conv}}(A^{\square})$ to denote the full subcategory of convergent generalized representations.
\end{definition}

It turns out every log $q$-connection equipped with Frobenius is automatically convergent.

\begin{proposition}\label{prop: automatically convergent}
Every element in $q\mathrm{MIC}(A^{\square}, \varphi)$ is $[p]_q$-adically convergent (hence also $(p,[p]_q)$-adically convergent).
\end{proposition}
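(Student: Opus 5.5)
The plan is to use the Frobenius structure to compare $\nabla_i^{\log}$ on $N$ with the operator $(\varphi^*\nabla)_i^{\log}$ on $\varphi^*N$, which is visibly divisible by $[p]_q$ up to a shift term. The key identity is the formula
\[(\varphi^*\nabla)^{\log}_i(n\otimes f)=\nabla_i^{\log}(n)\otimes[p]_q\gamma_i(f)+n\otimes \tfrac{\gamma_i(f)-f}{q-1}.\]
On pure tensors $n\otimes 1$ this reads $(\varphi^*\nabla)_i^{\log}(n\otimes 1)=[p]_q\,\nabla_i^{\log}(n)\otimes 1$. Hence on the $A^{\square}$-submodule $N':=\mathrm{im}(N\otimes_{A^\square,\varphi}A^\square)$ generated by such tensors, $(\varphi^*\nabla)_i^{\log}$ is congruent to the "twist" term mod $[p]_q$. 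The twist term comes from $\frac{\gamma_i(f)-f}{q-1}$.

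First I would reduce to monomials. Since $A^\square$ is generated topologically over $\varphi(A^\square)$ by monomials $U^{a}$ with $0\le a_i<p$ (plausibly true in the examples, e.g.\ $A(R)$ via Lemma \ref{Lem: the assumptions of qDR}), consider the elements $n\otimes U_i^{a}$. We have $\gamma_i(U_i^a)=q^aU_i^a$, so
\[(\varphi^*\nabla)_i^{\log}(n\otimes U_i^a)=[p]_q q^a\nabla_i^{\log}(n)\otimes U_i^a+[a]_q\, n\otimes U_i^a .\]
Thus modulo $[p]_q$, the operator $(\varphi^*\nabla)_i^{\log}$ acts on $n\otimes U_i^a$ by the scalar $[a]_q$, with $0\le a\le p-1$. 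Consequently the product $\prod_{a=0}^{p-1}\big((\varphi^*\nabla)_i^{\log}-[a]_q\big)$ kills $\varphi^*N$ modulo $[p]_q$. For general $f$, iterated $q$-Leibniz handles this, since each $\frac{\gamma_i-1}{q-1}$ acts diagonally on monomials.

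Next I transport this to $N$ via $\varphi_N$. By definition $\varphi_N:\varphi^*N[1/[p]_q]\simra N[1/[p]_q]$ is horizontal, i.e.\ it intertwines $(\varphi^*\nabla)_i^{\log}$ and $\nabla_i^{\log}$. Since $N$ is finitely generated, I choose $k$ with $[p]_q^k\varphi_N(\varphi^*N)\subset N$ and also $[p]_q^kN\subset\varphi_N(\varphi^*N)$; the latter needs care, as it uses the inverse $\varphi_N^{-1}$ on generators. For $n\in N$, write $[p]_q^kn=\varphi_N(x)$ with $x\in\varphi^*N$. Apply the product over $a=0,\dots,p-1$ repeated $2k+1$ times, i.e.\ $\prod_{a=0}^{m}(\nabla_i^{\log}-[a]_q)$ with $m$ a suitable multiple, using $[a+p]_q\equiv [a]_q \pmod{[p]_q}$ up to a unit. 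This lands $x$ in $[p]_q^{2k+1}\varphi^*N$, and therefore $[p]_q^kn$ lands in $[p]_q^{k+1}N$.

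The main obstacle is dividing back by $[p]_q^k$, because $N$ may have $[p]_q$-torsion. I would first attempt to circumvent this by noting that the product of factors $(\nabla_i^{\log}-[a]_q)$ for $a$ ranging over a full period, $\prod_{a=0}^{p-1}$, satisfies the congruence statement above directly on the lattice $N$. To make that work, I would use that $N$ is finite projective in the relevant categories, so $N$ is $[p]_q$-torsion-free since $[p]_q$ is a nonzero divisor. Then $[p]_q^k y\in[p]_q^{k+1}N$ implies $y\in[p]_qN$. The operators commute with multiplication by $[p]_q$, which is what makes this cancellation legitimate. If finite projectivity is not available, I would instead argue on $N/[p]_q$-torsion and use the finite generation of $N$ to pick $m$ uniformly on generators. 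Extending from generators to all of $N$ requires the Leibniz rule: the polynomial in $\nabla_i^{\log}$ applied to $fn$ expands via $\gamma_i$-twisted $q$-binomial terms, which stay in the ideal. Finally, convergence for $(p,[p]_q)$ follows trivially from the $[p]_q$ case.
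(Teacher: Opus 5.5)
Your proposal is correct and is essentially the paper's proof. A block of $p$ consecutive factors $\prod_a\bigl((\varphi^*\nabla)_i^{\log}-[a]_q\bigr)$ kills $\varphi^*N$ modulo $[p]_q$, via generation of $A^\square$ over $\varphi(A^\square)$ by the monomials $U^k$ with $0\le k_j\le p-1$. One then transports this through the horizontal $\varphi_N$ using bounds $[p]_q^kN\subset\varphi_N(\varphi^*N)\subset[p]_q^{-k}N$, and cancels $[p]_q^k$ by $[p]_q$-torsion-freeness of $N$, which the paper uses only implicitly.

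One caution concerns your side remark that a single period $\prod_{a=0}^{p-1}(\nabla_i^{\log}-[a]_q)$ already works directly on $N$: this is false in general. Take $N$ free on $e_1,e_2$ with $\nabla_i^{\log}e_1=0$, $\nabla_i^{\log}e_2=e_1$, $\varphi_N(e_1\otimes 1)=e_1$ and $\varphi_N(e_2\otimes 1)=[p]_qe_2$; then $e_2$ needs a second period. So what the cancellation actually yields is the statement for the product of $(2k+1)p$ factors. Also, no extension from generators via Leibniz is needed, since $k$ does not depend on $n$.
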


\begin{proof}
The proof is similar to \cite[Lemma 2.24]{MT}. Recall that $\varphi^*N=N\otimes_{A^{\square}, \varphi}A^{\square}$ is equipped with a log $q$-connection $\varphi^*\nabla$. Using the logarithmic coordinates \[\nabla (-)=\sum_{i=1}^d \nabla_i^{\log}(-)\otimes d\log U_i\] and 
\[\varphi^*\nabla (-)=\sum_{i=1}^d (\varphi^*\nabla)_i^{\log}(-)\otimes d\log U_i,\] 
we have the description
\[
    (\varphi^*\nabla)_i^{\log}\colon\varphi^*N\rightarrow \varphi^*N; \ \  n\otimes f\mapsto \nabla_i^{\log}(n)\otimes [p_q]\gamma_i(f)+n\otimes d_{q,i}^{\log}(f),
\]
where $d_{q,i}^{\log}(f)$ stands for $\frac{\gamma_i(f)-f}{q-1}$.

We claim that for any integer $m\ge 0$ and any $f\in A^{\square}$, we have 
\[(d_{q,i}^{\log}-[m]_q)(d_{q,i}^{\log}-[m+1]_q)\cdots (d_{q,i}^{\log}-[m+p-1]_q)(f)\in [p]_qA^{\square}.\]
Notice that $d_{q,i}^{\log}(\varphi(f))\in [p]_q A^{\square}$ for all $f\in A^{\square}$. It follows that
\[
(d_{q,i}^{\log}-[m]_q)(d_{q,i}^{\log}-[m+1]_q)\cdots (d_{q,i}^{\log}-[m+p-1]_q)(\varphi(f))\in [p]_qA^{\square},
\]
because exactly one of $[m]_q, [m+1]_q, \ldots, [m+p-1]+q$ is divisible by $[p]_q$. Since $A^{\square}$ is generated by $\varphi(A^{\square})$ and $U_1^{k_1}\cdots U_d^{k_d}$ for $0\le k_1, \ldots, k_d\le p-1$, it suffices to show that 
\[(d_{q,i}^{\log}-[m]_q)(d_{q,i}^{\log}-[m+1]_q)\cdots (d_{q,i}^{\log}-[m+p-1]_q)(U_1^{k_1}\cdots U_d^{k_d})\in [p]_qA^{\square}\]
for such $k_1, \ldots, k_d$. Straightforward calculation shows that
\begin{align*}
& (d_{q,i}^{\log}-[m]_q)(d_{q,i}^{\log}-[m+1]_q)\cdots (d_{q,i}^{\log}-[m+p-1]_q)(U_1^{k_1}\cdots U_d^{k_d})\\
=& ([k_i]_q-[m]_q)([k_i]_q-[m+1]_q)\cdots ([k_i]_q-[m+p-1]_q)U_1^{k_1}\cdots U_d^{k_d}.
\end{align*}
Notice that $k_i\equiv m+t\,\,(\textrm{mod}\,\,p)$ for some $0\le t\le p-1$. It follows that $[k_i]_q-[m+t]_q\in [p]_q A^{\square}$, as desired.

Back to the proof. Given the claim, we conclude that 
\[((\varphi^*\nabla)_i^{\log}-[m]_q)((\varphi^*\nabla)_i^{\log}-[m+1]_q)\cdots ((\varphi^*\nabla)_i^{\log}-[m+p-1]_q)(\varphi^*N)\subset [p]_q \varphi^*N\]
for every $1\le i\le d$ and any $m\ge 0$. Hence, 
\[(\varphi^*\nabla)_i^{\log}((\varphi^*\nabla)_i^{\log}-[1]_q)\cdots ((\varphi^*\nabla)_i^{\log}-[pt-1]_q)(\varphi^*N)\subset [p]_q^t \varphi^*N\]
for every $1\le i \le d$ and any $t\ge 1$.

By assumption, there exists $m_1, m_2\in \Z$ such that $[p]_q^{m_1}N\subset \varphi_N(\varphi^*N)\subset [p]_q^{m_2}N$. Using the identity $(\nabla_i^{\log}-[m]_q)\varphi_N=\varphi_N((\varphi^*\nabla)_i^{\log}-[m]_q)$ for every $m\ge 0$, we conclude that 
\[\nabla_i^{\log}(\nabla_i^{\log}-[1]_q)(\nabla_i^{\log}-[2]_q)\cdots(\nabla_i^{\log}-[p(m_1-m_2+1)-1]_q)(n)\in [p]_qN\]
as desired.
\end{proof}

\vspace{0.1in}
\subsection{Relative log BKF modules as log prismatic crystals}\label{subsection: log prismatic crystals}
\noindent
\vspace{0.1in}

\noindent Our final task of this section is to relate relative log BKF modules with log prismatic crystals introduced in \cite{Koshikawa}. In the rest of the section, we are mainly interested in the case where $N$ is free; i.e., $N \cong \N^{\oplus n}$ for some $n$. (cf. Remark \ref{rmk: N free}.)

First of all, we briefly review the theory of log prismatic cohomology from \cite{Koshikawa}. Recall that a \emph{$\delta_{\log}$-ring} is a quadruple $(A, \alpha\colon M \ra A, \delta, \delta_{\log})$ where
\begin{enumerate}
\item[(1)] $(A, \alpha\colon M \ra A)$ is a pre-log ring;
\item[(2)] $\delta \colon A \ra A$ is map making $(A, \delta)$ a $\delta$-ring; and 
\item[(3)] $\delta_{\log} \colon M \ra A$ is a map satisfying the following conditions:
\begin{itemize}
    \item $\delta_{\log}(0) =0$, 
    \item $\delta_{\log}(m_1+m_2)= \delta_{\log}(m_1)+\delta_{\log}(m_2)+p\delta_{\log}(m_1)\delta_{\log}(m_2)$,
    \item $\alpha(m)^p\cdot \delta_{\log}(m)= \delta(\alpha(m))$.
\end{itemize}
\end{enumerate}
When the context is clear, we simply write $(A, M)$ for the $\delta_{\log}$-ring.

A \emph{$\delta_{\log}$-triple} is a triple $(A,I,M)$ where $(A,M)$ is a $\delta_{\log}$-ring and $I\subset A$ is an ideal.

A \emph{pre-log prism} is a triple $(A, I, M)$ where $(A, M)$ is a $\delta_{log}$-ring and $(A, I)$ is a prism in the sense of \cite{BS}. A pre-log prism $(A, I, M)$ is called \emph{bounded} if $(A,I)$ is a bounded prism. A \emph{log prism} is $(A, I, \mM)= (A, I, M)^a$ where $(A, I, M)$ is a bounded pre-log prism and ``$a$'' stands for taking the associated log structure.

\begin{definition}
Let $(A,I,M_A)$ be a bounded pre-log prism such that $M_A$ is integral, and let $(A,I,\mM_A)$ be the associated log prism. Let $\fX= (\fX, \mM_{\fX})$ be a $p$-adic log formal scheme over $(A/I, M_A)$. The category $(\fX/(A,I,M_A))_{\Prism}$ is defined to be the opposite of the category of integral log prisms $(B,IB,\mM_B)= (B,IB,M_B)^a$ equipped with 
\begin{itemize}
    \item a morphism $(A,I,\mM_A) \ra (B,IB,\mM_B)$ of log prisms,
    \item a map $f\colon \spf(B/IB) \ra \fX$ of $p$-adic formal schemes, and
    \item an exact closed immersion $(\spf(B/IB), f^*\mM_{\fX}) \ra (\spf B ,\mM_B)$ of log $(p,I)$-adic formal schemes.
\end{itemize}
There is an obvious notion of morphisms between such objects. When the context is clear, we simply write $(\spf B \leftarrow \spf(B/IB)\rightarrow \fX)$, or $(B,IB,\mM_B)$ (or even $(B,IB,M_B)$) for an object in this category. 

Moreover, a morphism $(B,IB,\mM_B) \ra (C,IC,\mM_C)$ is called an \emph{\'etale cover} if the map $(\spf C,\mM_C)\ra (\spf B, \mM_B)$ of log $(p,I)$-adic formal schemes is strict and the ring map $B\rightarrow C$ is $(p,I)$-completely \'etale, and faithfully flat. Equipped with the \'etale topology, we obtain the \emph{(relative) log prismatic site} $(\fX/(A,I,M_A))_{\Prism}$. When $\fX=\spf \ul R$ is affine,\footnote{Recall that, for a pre-log ring $\ul R$, we use $\spf \ul R$ to denote $\spf (\ul R)^a$.} we also use the notation $(\ul R/ (A,I,M_A))_{\Prism}$. For an object in $(\ul R/ (A,I,M_A))_{\Prism}$, we write $(B\rightarrow B/IB \leftarrow R)$ instead of $(\spf B \leftarrow \spf(B/IB)\rightarrow \spf R)$.

There is a structure sheaf $\mO_{\Prism}$ (resp. reduced one $\overline{\mO}_{\Prism}$) sending $(B,IB,\mM_B)\mapsto B$ (resp., $(B,IB,\mM_B)\mapsto B/IB$). 
\end{definition}

\begin{notation}
\begin{enumerate}
\item For any $\Ainf$-algebra $B$, we write $B^{(1)}$ for the Frobenius twist $B\otimes_{\Ainf,\varphi}\Ainf$ along $\vp\colon \Ainf \xrightarrow[]{\cong} \Ainf$. 
\item Any $\mO$-algebra $C$ may be regarded as an $\Ainf$-algebra via $\theta \colon \Ainf \ra \Ainf/\xi \cong \mO$, and thus we have $C^{(1)} \cong C\otimes_{\Ainf/\xi,\vp}\Ainf/\txi$.
\item Let $R, R_{\infty}, A(R), \Ainf(R_{\infty})$, and $\Gamma$ be as in \S \ref{subsection: generalized repn}. Let $R^{(1)}$ and $A(R)^{(1)}$ be the Frobenius twists of $R$ and $A(R)$ respectively. We regard $R^{(1)}$ as a quotient of $A(R)$ through the $\Ainf$-linear homomorphism
\[
    A(R) \xrightarrow[]{\widetilde\theta} A(R)/\txi \xrightarrow[\cong]{\vp^{-1}} (A(R)/\xi)^{(1)} = R^{(1)}.
\]
Notice that we also have $R^{(1)} = A(R)^{(1)}/\txi$.
\item Recall that we have fixed a split divisible perfectoid pre-log ring $\underline{\mO}=(\mO, N_{\infty}, \alpha)$. Consider the pre-log ring $\underline{\Ainf}:=(\Ainf, N_{\infty}, \alpha')$ given by $\alpha'\colon N_{\infty}\rightarrow \Ainf$ sending $n\mapsto [\alpha(n)^{\flat}]=[(\alpha(n), \alpha(n/p), \ldots)]$.
\item Let $\underline{R^{(1)}}$ be the pre-log ring with the pre-log structure 
\[
    P_{\infty}=P\sqcup_N N_\infty \cong (P\sqcup_N N_\infty)\sqcup_{N_\infty,p}N_\infty \xrightarrow[]{\alpha_R\sqcup \alpha'} R\otimes_{\Ainf,\vp}\Ainf = R^{(1)},
\]
where $\alpha_R \colon P_{\infty} \ra R$ denotes the pre-log structure on $R$.
\item Consider the log prismatic site $\big(\underline{R^{(1)}}/(\Ainf, \txi, N_\infty)\big)_{\Prism}$. For simplicity, we sometimes just write $\big(R^{(1)}/(\ul\Ainf, \txi)\big)_{\Prism}$. Notice that $\big(A(R) \ra A(R)/\txi \xleftarrow[]{\vp} R^{(1)}\big)$ is a prism in $\big(R^{(1)}/(\ul\Ainf, \txi)\big)_{\Prism}$, where $A(R)$ is equipped with the usual pre-log structure $P_{\infty}\rightarrow A(R)$.
\end{enumerate}
\end{notation}

\begin{definition}\label{def: prismatic F-crystals}
    Let $(A,I,M_A)$ be a bounded pre-log prism with $M_A$ being integral. Let $\fX$ be a $p$-adic log formal scheme over $(A/I, M_A)$. A \emph{locally finite free log crystal} on the log prismatic site $(\fX/(A,I,M_A))_{\Prism}$ is a sheaf of $\mO_{\Prism}$-module $\mF$ such that 
    \begin{itemize}
        \item for every object $\fB=(\spf(B)\leftarrow \spf(B/IB)\ra\fX)$ of $(\fX/(A,I,M_A))_{\Prism}$, $\mF(\fB)$ is a finite projective $B$-module;
        \item for every morphism \[\fB^\prime=(\spf(B^\prime)\leftarrow \spf(B^\prime/IB^\prime)\ra\fX) \ra \fB=(\spf(B)\leftarrow \spf(B/IB)\ra\fX)\] in $(\fX/(A,M_A))_{\Prism}$, the pullback homomorphism $\mF(\fB)\otimes_{B}B^\prime \ra \mF(\fB^\prime)$ is an isomorphism.
    \end{itemize}
    Let $\CR\big((\fX/(A,I,M_A))_{\Prism}\big)$ denote the category of locally finite free log crystals on $(\fX/(A,I,M_A))_{\Prism}$. When $\fX= \spf \underline{R}$ is affine, we also write $\CR\big((R/(A,I,M_A))_{\Prism}\big)$.
\end{definition} 

Consider the log prismatic site $\big(R^{(1)}/(\underline{\Ainf}, \txi)\big)_{\Prism}$. Taking specialization along $\big(A(R) \ra A(R)/\txi \xleftarrow[]{\vp} R^{(1)}\big)$, we obtain a functor
\[
    \mathrm{ev}_{A(R)}\colon\CR\big(\big(R^{(1)}/(\ul\Ainf, \txi)\big)_{\Prism}\big)\rightarrow \Rep^{\mu}_{\Gamma}(A(R)).
\] 
Indeed, for any log prismatic crystal $\mF\in \CR(R^{(1)}/(\ul{\Ainf}, \txi)_{\Prism})$, the $\Gamma$-action on $\mathrm{ev}_{A(R)}(\mF)$ is induced from the action of $\Gamma$ on $A(R)$. To see the triviality modulo $\mu$, notice that $\big(A(R)/\mu\rightarrow R/(\zeta_p-1)\leftarrow R^{(1)}\big)$ is also a prism in $\big(R^{(1)}/(\ul\Ainf, \txi)\big)_{\Prism}$ with trivial $\Gamma$-action and that $\mathrm{ev}_{A(R)/\mu}=\mathrm{ev}_{A(R)}/\mu$. Note that $\mathrm{ev}_{A(R)}$ fits into the following commutative diagram
\[
 \begin{tikzcd}
 \CR\big(\big(R^{(1)}/(\ul\Ainf, \txi)\big)_{\Prism}\big)  \arrow[r, "\mathrm{ev}_{A(R)}"] \arrow[dr, "\mathrm{ev}_{\Ainf(R_{\infty})}"'] & \Rep^{\mu}_{\Gamma}(A(R)) \arrow[d, "\cong"]\\
& \Rep^{\mu}_{\Gamma}(\Ainf(R_{\infty}))
\end{tikzcd}
\]
where $\mathrm{ev}_{\Ainf(R_{\infty})}$ is the specialization map at the prism $(\Ainf(R_{\infty}) \rightarrow \Ainf(R_{\infty})/\tilde{\xi} \xleftarrow[]{\varphi} R^{(1)})$.

Recall that $\Rep_{\Gamma,\conv}^{\mu}(A(R))$ stands for the full subcategory of $\Rep_{\Gamma}^{\mu}(A(R))$ consisting of those generalized representations that are convergent (i.e., $(p,\mu)$-adically convergent). The main goal of this section is to prove the following theorem.

\begin{theorem}\label{thm: fully faithful}
Let $\fU=\spf\underline{R}$ be a small affine \'etale open (cf. Definition \ref{definition: small affine}) modeled on the small chart $u:N\rightarrow P$. We further assume that both $N$ and $P$ are free. Then $\mathrm{ev}_{A(R)}$ is fully faithful with essential image $\Rep_{\Gamma,\conv}^{\mu}(A(R))$. 
\end{theorem}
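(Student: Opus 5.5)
The strategy follows the non-logarithmic argument of Morrow--Tsuji (prismatic crystals versus $q$-connections, cf. \cite[\S 2--3]{MT} and \cite[\S 4]{Tian_prismatic_etale_comparison}). The idea is to describe crystals on $\big(R^{(1)}/(\ul\Ainf,\txi)\big)_{\Prism}$ by stratifications on the \v{C}ech nerve of a weakly final object, and then identify those stratifications with convergent log $q$-connections. Via Theorem \ref{representations vs q-connections}, convergent log $q$-connections are in turn the same as $\Rep^{\mu}_{\Gamma,\conv}(A(R))$.

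\textbf{Step 1: $A(R)$ covers the final object.} Because $N$ and $P$ are free, the pre-log ring $\ul{A(R)}$ is a $\delta_{\log}$-lift of $\underline{R^{(1)}}$ that is log-smooth and log-\'etale over a free chart $\Ainf\gr{P}\widehat{\otimes}_{\Ainf\gr{N}}\Ainf$. Using the log deformation theory of \cite{Koshikawa}, I will show that for every object $(B\to B/IB\leftarrow R^{(1)})$ there is, \'etale locally, a map from the prism $\big(A(R)\to A(R)/\txi\leftarrow R^{(1)}\big)$. Consequently the evaluation $\mathrm{ev}_{A(R)}$ identifies crystals with finite projective $A(R)$-modules $M$ equipped with a descent datum (stratification)
\[
\varepsilon\colon M\otimes_{A(R),p_2}A(R)^{(2)}\simra M\otimes_{A(R),p_1}A(R)^{(2)}
\]
satisfying the cocycle condition on $A(R)^{(3)}$. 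Here $A(R)^{(n)}$ is the $n$-fold coproduct in the site.

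\textbf{Step 2: explicit coproducts.} Next I will compute $A(R)^{(2)}$ as the log prismatic envelope of the exactification of $A(R)\widehat{\otimes}_{\Ainf}A(R)\to R^{(1)}$. Because the charts are free, the exactification adjoins $u_i=U_i\otimes U_i^{-1}$ (using the coordinates $U_i$ of Lemma \ref{Lem: the assumptions of qDR} and Lemma \ref{lemma: splitting}). The envelope is then the $(p,\txi)$-completed $q$-PD envelope of the ideal generated by the $u_i-1$. Concretely, it is $A(R)$ adjoined the completed $q$-divided powers $\prod_{j<m}(u_i-q^j)/[m]_q!$, where the twist by $\varphi$ accounts for $\txi=[p]_q$. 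This is the log analogue of \cite[Lemma 3.?]{MT} and of Tian's computation of the $q$-crystalline coproduct.

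\textbf{Step 3: stratifications versus connections.} Expanding $\varepsilon$ in the $q$-divided power basis gives a sequence of operators. The first-order term recovers $\nabla_i^{\log}$, and the higher-order terms are forced to be the $q$-Pochhammer products $\nabla_i^{\log}(\nabla_i^{\log}-[1]_q)\cdots(\nabla_i^{\log}-[m-1]_q)$, suitably normalized. The cocycle condition translates into integrability. The series converges in $(p,\txi)$-completed $A(R)^{(2)}\otimes M$ exactly when these products tend to zero $(p,[p]_q)$-adically, which is the convergence condition of Definition \ref{defn: convergent log q-connection}. This yields an equivalence between crystals and $q\MIC_{\conv}(A(R))$.

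\textbf{Step 4: compatibility.} Finally I will check that the $\Gamma$-action on $\mathrm{ev}_{A(R)}(\mF)$ agrees with the one obtained from the stratification by specializing along $\gamma_i$, that is, along the map $A(R)^{(2)}\to A(R)$ sending $u_i\mapsto q$. Granting this, Theorem \ref{representations vs q-connections} matches the resulting $q$-connection with the one attached to $\mathrm{ev}_{A(R)}(\mF)$, which gives full faithfulness and identifies the essential image as $\Rep^{\mu}_{\Gamma,\conv}(A(R))$.

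The main obstacle is Step 2 together with the convergence bookkeeping in Step 3. Two points need care: showing that the log exactification really just adjoins the $u_i$ (this is where freeness of $N$ and $P$ enters), and showing that the completed $q$-PD envelope is topologically free on the $q$-divided powers, so that the convergence criterion is sharp. For the latter I would first try to reduce to the model case $\Ainf\gr{P}\widehat{\otimes}_{\Ainf\gr{N}}\Ainf$ with $P=N\oplus\N^d$, and then base change along the \'etale map to $A(R)$.
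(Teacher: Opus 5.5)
Your overall architecture matches the paper's. You cover the final object by $\fD(R)=(A(R),\txi,P_\infty)^a$, identify its \v{C}ech nerve with the log $q$-PD envelopes $\mD(R)(\bullet)$ of $A(R)(\bullet)\to R$ over $(\ul\Ainf,\xi)$ via the Frobenius twist, pass to stratifications, compare with convergent representations, and check compatibility of the $\Gamma$-actions. The genuine gap is Step~1, in two respects.

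\emph{The mechanism does not produce morphisms in the site.} Log deformation theory can at best produce ring (or log ring) maps $A(R)\to B$. A morphism in the site must be a map of $\delta_{\log}$-rings compatible with the Frobenius-twisted structure maps from $R^{(1)}$. Forcing $\delta_{\log}=0$ on the images of the coordinates is a Frobenius equation of Kummer type. This requires a faithfully flat cover (adjoining $p$-power roots of the coordinates), not an \'etale one, and deformation theory gives no handle on it.

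\emph{The log structure map is never constructed.} For an arbitrary object $(B,\txi,\mM_B)$ you must first lift the chart $P_\infty$ on $B/\txi$ to a chart $\alpha_B\colon P_\infty\to\Gamma(\spf B,\mM_{\spf B})$ compatible with the structure map from $(\Ainf,\txi,N_\infty)$. This is Lemma~\ref{Lem: Tian Lemma 1.9}, and it is exactly where freeness of $N$ and $P$ is used. Lifts $m_i$ of the basis $e_i$ of $P$ satisfy $\iota(e'_j)=u_j\prod_{i\in A_j}m_i$ with $u_j\in 1+(\txi)$. To absorb all the $u_j$ one needs pairwise distinct indices $a_j\in A_j$, obtained by a Hall-type induction.

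With this lemma, Proposition~\ref{prop: weakly final obj: free chart} proceeds as follows:
\begin{enumerate}
\item pass to the quasi-syntomic cover of $B/\txi$ by $(B/\txi)\htimes_{R^{(1)}}R^{(1)}_\infty$;
\item use \cite[Proposition 7.11]{BS} to get a flat prism cover $C$ of $B$;
\item use \cite[Lemma 4.8]{BS} to get a map $\Ainf(R_\infty)\to C$;
\item pull the lifted chart back to $C$.
\end{enumerate}
You instead attribute the freeness hypothesis to the exactification, where it is not needed. Lemma~\ref{lemma: existence of log q-PD-envelope} computes the exactification for any small chart as $\Ainf\gr{(P^{\gp}/N^{\gp})^{\oplus m}}\htimes_{\Ainf}A(R^{\square})$, using only that $P^{\gp}/N^{\gp}$ is torsion free. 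So your proposal places the hypothesis in the wrong step and leaves unproved the step that actually needs it.

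Steps~2--3 are a different and heavier route than the paper's. To read off $\nabla$ and convergence from an expansion of $\varepsilon$, you need $\mD(R)(1)$ to be topologically free over $A(R)$ on the twisted $q$-divided powers $\prod_{j<m}(u_i-q^j)/[m]_q!$. That first requires showing these elements lie in the $q$-PD envelope at all. For $m=p$ this follows from $\prod_{j<p}(u-q^j)\equiv u^p-1\pmod{[p]_q}$, but general $m$ needs an induction. Only then does freeness follow by reduction modulo $\mu$.

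The paper never analyses $\mD(R)(1)$ integrally:
\begin{enumerate}
\item It uses only that $\mD(R)(1)/\mu$ is the $p$-completed PD polynomial algebra over $A(R)/\mu$ in the $\bar\tau_i$.
\item It gets convergence of $\ev^{\Strat}_{A(R)}$ from the classical Taylor expansion mod $\mu$ (Proposition~\ref{prop: stratifications are convergent}).
\item It builds the stratification attached to a convergent representation as the inverse of $(N\otimes_{A(R),p_1}\mD(R)(1))^{\Gamma}\simra N$ (Lemma~\ref{lemma: MT 3.17}), checked mod $\mu$.
\item The cocycle condition comes from injectivity on $\Gamma\times\Gamma\times\{1\}$-invariants.
\end{enumerate}
Your route would yield explicit formulas for $\varepsilon$, but only after the basis statement is proved.
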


This is an analogue of \cite[Theorem 3.2]{MT} and the proof is similar to the one of \emph{loc. cit.} The idea is to pass through an intermediate category of \emph{stratifications}. 

Firstly, we recall the notion of log $q$-PD triples (cf. \cite[Definition 7.1]{Koshikawa}). Let $A= \Z[\![q-1]\!]$ with $\delta$-structure given by $\delta(q)=1$. Let 
\[
    [p]_q= \dfrac{q^p-1}{q-1}= 1+q+ \cdots + q^{p-1}
\]
which is a $q$-analogue of ``$p$'' in $A$. The resulting prism $(A,[p]_q)$ is called the \emph{$q$-crystalline prism}. A \emph{$q$-PD pair} (cf. \cite[Definition 16.2]{BS}) is a $(p, [p]_q)$-complete $\delta$-ring $(D,I)$ over $(A,(q-1))$ such that
\begin{itemize}
    \item $(D,[p]_q)$ is a bounded prism over $(A,[p]_q)$,
    \item $\varphi(I) \subseteq [p]_q D$ and $\gamma(I) \subseteq I$, where 
    \[
        \gamma(x):= \frac{\varphi(x)}{[p]_q}-\delta(x),
    \]
    \item $D/(q-1)$ is $p$-torsion free with finite $(p,[p]_q)$-complete Tor-amplitude over $D$, and 
    \item $D/I$ is classically $p$-complete.
\end{itemize}

\begin{definition}\label{def: log q-PD triple}
\begin{enumerate}
\item[(1)] A \emph{pre-log $q$-PD triple} is a triple $(D,I,M)$ such that $(D,M)$ is a $\delta_{\log}$-ring and $(D,I)$ is a $q$-PD pair.
\item[(2)] A \emph{log $q$-PD triple} is a triple $(D,I, \mathcal{M})$ such that $(D,I)$ is a $q$-PD pair and 
\[(D,[p]_q,\mathcal{M})=(D,[p]_q, M)^a\]
for some pre-log $q$-PD triple $(D,I,M)$.
\end{enumerate}
\end{definition}

\begin{example}
    If we set $q= [\epsilon]$, then $\xi= \varphi^{-1}([p]_q)$. In this case, both $(\Ainf, \xi, N_\infty)$ and $(\Ainf, \txi, N_\infty)$ are pre-log $q$-PD triples. As the pre-log structure on $\Ainf$ is clear, in what follows, we simply write $(\ul\Ainf, \xi)$ and $(\ul\Ainf, \txi)$. 
\end{example}

For every $m\ge 0$, let $A(R)(m)$ be the $(p,\mu)$-adic completion of the $(m+1)$-fold tensor product $A(R)\otimes_{\Ainf} \cdots \otimes_{\Ainf} A(R)$, equipped with the natural log structure. We consider the log $q$-PD-envelope of \[A(R)(m)\xrightarrow[]{\Delta} A(R)\rightarrow A(R)/\xi=R\] over the pre-log $q$-PD pair $(\ul \Ainf, \xi)$, where $\Delta$ is the multiplication map. The existence of this log $q$-PD-envelope is guaranteed by the following lemma.

\begin{lemma}\label{lemma: existence of log q-PD-envelope}
\begin{enumerate}[leftmargin=*, itemindent=0pt]
\item Let $E(m)\rightarrow A(R)$ be the exactification of the multiplication map $\Delta\colon A(R)(m)\rightarrow A(R)$. Then the kernel of the composition $E(m)\rightarrow A(R)\rightarrow A(R)/\xi=R$ is $(p,\mu)$-adically generated by a regular sequence relative to $\Ainf$.
\item The log $q$-PD envelope of $A(R)(m)\rightarrow R$ (relative to $(\ul\Ainf, \xi)$) exists and coincides with the $q$-PD envelope of $E(m)\rightarrow R$.
\end{enumerate}
\end{lemma}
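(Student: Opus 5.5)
The plan is to make the exactification explicit using the freeness of $N$ and $P$, in the spirit of \cite[Section 7]{Koshikawa} and \cite[\S 3]{MT}.

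First I will reduce to the framed case. Write $N\cong \N^{n}$ and $P\cong \N^{r}$. By Lemma~\ref{lemma: splitting}, the group $P^{\gp}$ decomposes as $N^{\gp}\oplus\bigoplus_{i=1}^d \Z e_i$ with $e_i\in P$. Since $A(R)$ is $(p,\mu)$-completely étale over $A(R^{\square})$, and étale maps are compatible with exactification and with completed tensor products, it suffices to treat $A(R^{\square})(m)$. The general case then follows by the standard base change: form $E(m)\widehat\otimes$ with the étale algebras, and use the fact that the diagonal of an étale map is an open and closed immersion.

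Next I compute the exactification of the multiplication map $\Delta\colon A(R^{\square})(m)\to A(R^{\square})$. The monoid map underlying $\Delta$ is the sum $P_\infty^{\oplus(m+1)}$ (amalgamated over $N_\infty$) $\to P_\infty$. Its exactification is the preimage of $P_\infty$ in the group $(P_\infty^{\gp})^{m+1}/\text{(diagonal } N_\infty^{\gp})$. Because $P^{\gp}/N^{\gp}$ is free on the $e_i$, this monoid equals $P_\infty\oplus\bigoplus_{j=1}^{m}\bigoplus_{i=1}^d \Z\,(e_i^{(j)}-e_i^{(0)})$.

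From this I get an explicit description of the exactification:
\[
E(m)\cong A(R^{\square})\big\langle (U_i^{(j)}/U_i^{(0)})^{\pm1}\big\rangle^\wedge_{(p,\mu)},\qquad 1\le i\le d,\ 1\le j\le m.
\]
Here the left copy $A(R^{\square})$ is the one indexed by $j=0$. I will also have to check that the relations coming from $N$ are absorbed: the $N_\infty$-parts are identified because the tensor product is taken over $\Ainf$ with the same log structure. In this description the kernel of $E(m)\to A(R^{\square})$ is generated by the elements $t_{ij}=U_i^{(j)}/U_i^{(0)}-1$. Modulo $\xi$, the kernel of $E(m)\to R$ is generated by $\xi$ together with the $t_{ij}$. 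The sequence $(\xi, t_{ij})$ is regular relative to $\Ainf$, since $E(m)$ is a completed Laurent-type algebra over $A(R^{\square})$ and $A(R^{\square})$ is $(p,\mu)$-completely flat over $\Ainf$ (by the $Q$-description from \cite[Section 8]{log1}). This proves (1).

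For (2), I will invoke the existence of $q$-PD envelopes for regular sequences \cite[Lemma 16.10]{BS} to the exact pair $E(m)\to R$. The $\delta$-structure on $E(m)$ comes from $\delta_{\log}$, with $\delta_{\log}$ of a unit monomial determined by $\varphi(U)=U^p$. It remains to show this envelope satisfies the universal property of the log envelope. Any map from $A(R)(m)$ to a log $q$-PD triple $(D,I,\mM)$ compatible with $R$ factors uniquely through $E(m)$. The reason is that the target's log structure is exact over $R$, so the ratios $U_i^{(j)}/U_i^{(0)}$ map to units. This is the universal property of exactification, as in \cite[Proposition 7.?]{Koshikawa}, or its analogue for log prisms in \cite{logprism}.

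I expect the main obstacle to be the monoid computation of the exactification, and specifically verifying that the $N_\infty$ and divisible parts do not introduce extra non-regular relations. Freeness of $N$ and $P$ together with the saturatedness of $u$ is exactly what makes this work, and I would check it directly on generators.
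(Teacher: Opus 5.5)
Your proposal is correct and follows essentially the same route as the paper: reduce to $A(R^{\square})(m)$, identify the exactified chart with $(P^{\gp}/N^{\gp})^{\oplus m}\oplus P_\infty$ so that $E^{\square}(m)$ is a completed Laurent algebra over $A(R^{\square})$ whose augmentation ideal is generated by the regular sequence $t_{ij}$, then apply \cite[Lemma 16.10]{BS} and the construction of \cite[Lemma 7.4]{Koshikawa} to identify the log $q$-PD envelope with the $q$-PD envelope of $E(m)\to R$. There is one small slip: the preimage of $P_\infty$ should be taken in $(P_\infty^{\gp})^{m+1}$ modulo the kernel of the summation map $(N_\infty^{\gp})^{m+1}\to N_\infty^{\gp}$ (which identifies the copies of $N_\infty^{\gp}$), not modulo the diagonal, although the monoid you end up with is the right one. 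Your remark that the diagonal of the \'etale map $A(R^{\square})\to A(R)$ is open and closed is exactly what is needed, and the paper leaves it implicit, to pass from the framed case to $A(R)$.
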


\begin{proof}
It suffices to prove the corresponding statements for $R^{\square}$ and $A(R^{\square})$ in place of $R$ and $A(R)$. More precisely, for every $m\ge 0$, let $A(R^{\square})(m)$ denote the $(p,\mu)$-adic completion of the $(m+1)$-fold tensor product of $A(R^{\square})$ over $\Ainf$, equipped with the natural log structure. It suffices to prove (1) and (2) for the maps $A(R^{\square})(m)\rightarrow A(R^{\square})$ and $A(R^{\square})(m)\rightarrow R^{\square}$.
\begin{enumerate}[leftmargin=*, itemindent=0pt]
\item Notice that 
\begin{align*}
A(R^{\square})(m) & = \big(\Ainf\langle P\rangle \widehat{\otimes}_{\Ainf\langle N\rangle} \Ainf\big)\widehat{\otimes}_{\Ainf} \cdots \widehat{\otimes}_{\Ainf}\big(\Ainf\langle P\rangle \widehat{\otimes}_{\Ainf\langle N\rangle} \Ainf\big)\\
& \cong \Ainf\langle P^{\oplus (m+1)}\rangle \widehat{\otimes}_{\Ainf\langle N^{\oplus (m+1)}\rangle}\Ainf
\end{align*}
equipped a log structure modeled on the chart $P^{(m)}:= P^{\oplus (m+1)}\sqcup_{N^{\oplus (m+1)}} N_{\infty}$ where $N^{\oplus (m+1)}\rightarrow N_{\infty}$ is the composition of $N^{\oplus (m+1)}\xrightarrow[]{\Delta} N\xrightarrow[]{\iota} N_{\infty}$. We need to compute the exactification $P^{(m)}_E\rightarrow P_{\infty}$ of \[P^{(m)}=P^{\oplus (m+1)}\sqcup_{N^{\oplus (m+1)}} N_{\infty}\rightarrow P_{\infty}=P\sqcup_NN_{\infty}\]
induced by the summation map $P^{\oplus (m+1)}\rightarrow P$. By definition, $P^{(m)}_E$ is the preimage of \[P_{\infty}=P\sqcup_NN_{\infty}\subset P^{\gp}\sqcup_{N^{\gp}}N_{\infty}^{\gp}\]
under the surjection
\[(P^{(m)})^{\gp}=(P^{\gp})^{\oplus (m+1)}\sqcup_{(N^{\gp})^{\oplus (m+1)}} N_{\infty}^{\gp}\rightarrow P_{\infty}^{\gp}=P^{\gp}\sqcup_{N^{\gp}}N_{\infty}^{\gp}.\]
Consider monoids
\[P'_{(m)}:=\{(x_1, \ldots, x_{m+1})\in (P^{\gp})^{\oplus (m+1)}\,|\, x_1+\cdots+x_{m+1}\in P\}\]
and \[N'_{(m)}:=\{(x_1, \ldots, x_{m+1})\in (N^{\gp})^{\oplus (m+1)}\,|\, x_1+\cdots+x_{m+1}\in N\}.\]
Notice that the map $(N^{\gp})^{\oplus (m+1)}\rightarrow (P^{\gp})^{\oplus (m+1)}$ restricts to $N'_{(m)}\rightarrow P'_{(m)}$ and the map $(N^{\gp})^{\oplus (m+1)}\rightarrow N_{\infty}^{\gp}$ restricts to $N'_{(m)}\rightarrow N_{\infty}$. 

We claim that $P_E^{(m)}=P'_{(m)}\sqcup_{N'_{(m)}}N_{\infty}$. To prove the claim, we observe the identifications
\[N'_{(m)}\xrightarrow[]{\sim} (N^{\gp})^{\oplus m}\oplus N;\ \ (x_1,\ldots, x_{m+1})\mapsto (x_1, \ldots, x_m; x_1+\cdots +x_{m+1})\]
and 
\[P'_{(m)}\xrightarrow[]{\sim} (P^{\gp})^{\oplus m}\oplus P;\ \ (x_1,\ldots, x_{m+1})\mapsto (x_1, \ldots, x_m; x_1+\cdots +x_{m+1}).\]
Under these identifications, the map $N'_{(m)}\rightarrow P'_{(m)}$ is simply $(N^{\gp})^{\oplus m}\oplus N\rightarrow (P^{\gp})^{\oplus m}\oplus P$ induced by the inclusions $N\rightarrow P$ and $N^{\gp}\rightarrow P^{\gp}$. In particular, \[P'_{(m)}\sqcup_{N'_{(m)}}N_{\infty}\cong (P^{\gp}/N^{\gp})^{\oplus m}\oplus (P\sqcup_NN_{\infty})\] is a saturated monoid and the map \[P'_{(m)}\sqcup_{N'_{(m)}}N_{\infty}\rightarrow (P^{\gp})^{\oplus (m+1)}\sqcup_{(N^{\gp})^{\oplus (m+1)}} N_{\infty}^{\gp}\] can be identified with the map \[(P^{\gp}/N^{\gp})^{\oplus m}\oplus (P\sqcup_NN_{\infty})\rightarrow (P^{\gp}/N^{\gp})^{\oplus m}\oplus (P^{\gp}\sqcup_{N^{\gp}}N^{\gp}_{\infty})\]
induced by the identity on the first factor and the inclusion $P\sqcup_NN_{\infty}\rightarrow P^{\gp}\sqcup_{N^{\gp}}N^{\gp}_{\infty}$ on the second factor. This is exactly what we desired.

Back to the proof. By definition, the exactification $E^{\square}(m)\rightarrow A(R^{\square})$ is given by 
\begin{align*}
E^{\square}(m)& = A(R^{\square})\widehat{\otimes}_{\Ainf\langle P^{(m)}\rangle}\Ainf\langle P^{(m)}_E\rangle\\
& \cong \Ainf\langle P^{(m)}_E\rangle \widehat{\otimes}_{\Ainf\langle N_{\infty}\rangle} \Ainf\\
&\cong \left(\Ainf\langle (P^{\gp}/N^{\gp})^{\oplus m}\rangle\widehat{\otimes}_{\Ainf}\Ainf\langle P\sqcup_N N_{\infty}\rangle\right)\widehat{\otimes}_{\Ainf\langle N_{\infty}\rangle} N\\
& = \left(\Ainf\langle (P^{\gp}/N^{\gp})^{\oplus m}\rangle\right) \widehat{\otimes}_{\Ainf} A(R^{\square})
\end{align*}
It is clear that the kernel of $E^{\square}(m)\rightarrow A(R^{\square})$ is $(p,\mu)$-adically generated by a regular sequence relative to $\Ainf$.

\item By \cite[Lemma 16.10]{BS}, $E^{\square}(m)\rightarrow R^{\square}$ admits a $q$-PD envelope relative to $\Ainf$. More precisely, if $x_1, \ldots, x_r$ $(r=2m\cdot \mathrm{rk}_{\Z}(P^{\gp}/N^{\gp}))$ is a regular sequence generating the kernel of $E^{\square}(m)\rightarrow A(R^{\square})$, then the $q$-PD envelope of $E^{\square}(m)\rightarrow R^{\square}$ is given by
\[E^{\square}(m)\Big\{\frac{\varphi(x_1)}{[p]_q}, \cdots,\frac{\varphi(x_r)}{[p]_q}\Big\}^{\wedge}_\delta\]
obtained by freely adjoining $\frac{\varphi(x_i)}{[p]_q}$'s in the category of $(p,\mu)$-adically complete $\delta$-$\Ainf$-algebras. (Note that $[p]_q=\txi$ here.)

Finally, by the construction in \cite[Lemma 7.4]{Koshikawa}, the log $q$-PD envelope of the surjection $A(R^{\square})(m)\rightarrow R^{\square}$ is precisely the $q$-PD envelope of $E^{\square}(m)\rightarrow R$.
\end{enumerate}
\end{proof}

Let $\mD(R)(m)$ denote the log $q$-PD envelope of $A(R)(m) \xrightarrow[]{\Delta} A(R) \ra R$. Let $J(m)$ be the kernel of $\mD(R)(m) \ra R$, then $\vp(J(m))\subseteq (\txi)$ by the definition of log $q$-PD triples. Therefore, $\mD(R)(m)/\txi$ admits a morphism 
\[
    R^{(1)}= (\mD(R)(m)/J(m))^{(1)} \xrightarrow[]{\vp} \mD(R)(m)/\txi.
\]
Consider the pre-log prism
\[\fD(R)(m):= \big(\mD(R)(m), \txi, P^{(m)}_E\big),\]
viewed as an object in the log prismatic site $\big(R^{(1)}/(\ul\Ainf, \txi)\big)_{\Prism}$.

\begin{lemma}
For every $m\ge 0$, $\fD(R)(m)$ is equal to the coproduct of $m+1$ copies of $\fD(R)(0)=\big(A(R), \txi, P_\infty \big)$ in $(R^{(1)}/(\ul\Ainf, \txi))_{\Prism}^{\mathrm{op}}$.
\end{lemma}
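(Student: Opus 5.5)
We check the universal property of the coproduct directly, following the strategy of \cite[Lemma 3.4]{MT} and \cite[Proposition 7.6]{Koshikawa}. First, $\fD(R)(m)$ is indeed an object of $\big(R^{(1)}/(\ul\Ainf,\txi)\big)_{\Prism}$. By Lemma~\ref{lemma: existence of log q-PD-envelope}, $\mD(R)(m)$ is the $q$-PD envelope of $E(m)\to R$ relative to $(\ul\Ainf,\xi)$, so it is a bounded $(p,\txi)$-complete $\delta$-ring in which $(\txi)=([p]_q)$ is a prism ideal. The monoid $P^{(m)}_E$ gives a $\delta_{\log}$-structure: it is exact over $P_\infty$, and the $\delta_{\log}$-values on the ``$(P^{\gp}/N^{\gp})^{\oplus m}$'' part are determined because those elements become units after exactification. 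Finally, the map $\vp\colon R^{(1)}\to \mD(R)(m)/\txi$ constructed above is exact, because $P^{(m)}_E\to P_\infty$ is an exactification.

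Given an object $(B\to B/\txi B\leftarrow R^{(1)},\mM_B)$ together with $m+1$ maps from $\fD(R)(0)=(A(R),\txi,P_\infty)$, we want a unique factorization through $\fD(R)(m)$. The plan is to untwist by Frobenius. Since $\vp\colon\Ainf\to\Ainf$ is an isomorphism carrying $\xi$ to $\txi$, we can pull the structure back along $\vp^{-1}$. This identifies the site $\big(R^{(1)}/(\ul\Ainf,\txi)\big)_{\Prism}$ with prisms $(B,\txi)$ over $\Ainf$ equipped with an exact map $(R,P_\infty)\to B/\txi$ in the Frobenius-twisted sense. The $m+1$ maps $A(R)\to B$ assemble into a log $\delta$-map $A(R)(m)\to B$, compatible with the log structures via $P^{(m)}\to\mM_B$.

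The key step is to show that this map factors uniquely through $\mD(R)(m)$, and we do it in three stages.

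\textbf{Exactification.} The charts land in $\mM_B$, and the $m+1$ induced maps agree modulo $\txi$ after composing with $\vp$. Since $B\to B/\txi$ is an exact closed immersion for the log structure, the differences $x_i-x_j$ of monoid elements map to units (of the form $1+$ something). So the map extends uniquely to $E(m)$, using the explicit description $P^{(m)}_E\cong (P^{\gp}/N^{\gp})^{\oplus m}\oplus P_\infty$ from the previous proof.

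\textbf{Frobenius divisibility.} Let $J$ be the kernel of $E(m)\to R$. The compatibility with $R^{(1)}\to B/\txi$ through $\vp$ says precisely that $\vp(J)\subseteq\txi B$.

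\textbf{Universal property of the envelope.} The $q$-PD envelope adjoins $\vp(x_i)/[p]_q$ freely as a $\delta$-ring, so by its universal property (\cite[Lemma 16.10]{BS}, using that $B$ is $\txi$-torsion free) the map extends uniquely to $\mD(R)(m)$. This is where we use the structure of the $q$-PD envelope rather than an ordinary prismatic envelope.

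The main obstacle is the translation between the condition ``the reduction is compatible with $R^{(1)}\xrightarrow{\vp}B/\txi$'' and the condition $\vp(J)\subseteq \txi B$. One has to show that an arbitrary object of the site, which a priori only carries a map from $R^{(1)}$ rather than from $R$, satisfies the latter condition. I would handle this as in \cite[Lemma 16.10/Cor.\ 16.11]{BS}. Over a $q$-PD base, $\vp$ kills the relevant ideal modulo $\txi$ exactly when the generators $x_i$ satisfy $\vp(x_i)\in\txi B$. Each generator has the form $U_i\otimes1-1\otimes U_i$, or $u-1$ with $u$ a unit coming from the log part. For these, $\vp(x)=x^p+p\delta(x)$, and the desired divisibility follows from the compatibility of both maps with the $R^{(1)}$-structure via $\vp$.

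Uniqueness and the log compatibility are formal once this is in place, and the morphisms in the coproduct diagram are the obvious inclusions $A(R)\to\mD(R)(m)$.
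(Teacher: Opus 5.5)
Your proposal is correct and is essentially the paper's argument (itself modeled on \cite[Lemma 3.13]{MT}) with the intermediate object removed. The paper identifies the coproduct with the log prismatic envelope $\mD'(R)(m)$ of $A(R)(m)\to (A(R)/\txi)(m)$, then uses the explicit description in \cite[Proposition 3.9]{Koshikawa} (exactify, then adjoin the $\vp(x_i)/\txi$) to show that the Frobenius-induced map $\mD(R)(m)\to\mD'(R)(m)$ is an isomorphism compatible with the $R^{(1)}$-structures; you instead check the same two stages, exactification and $\vp(J)\subseteq\txi B$, directly against a test prism. The one step to tighten is the exactification: since the $m+1$ maps are only compatible through $\vp$, exactness of $B\to B/\txi$ only puts the $p$-th powers of the ratios of the images of $x\in P_\infty$ under the different charts into $1+\txi B$. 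You therefore need torsion-freeness of $\overline{\mM}_B^{\gp}$ (coming from the saturated chart $P_\infty$) to obtain a unit $v$ at all, after which $\vp(v-1)=v^p-1\in\txi B$ is exactly your log-generator check.
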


\begin{proof}
The proof essentially follows that of \cite[Lemma 3.13]{MT}.
For every $m\ge 0$, let $(A(R)/\txi)(m)$ denote the $p$-adic completion of the $(m+1)$-fold tensor product of $A(R)/\txi$ over $R^{(1)}$. The surjection $A(R)\rightarrow A(R)/\txi$ induces a surjection $A(R)(m)\rightarrow (A(R)/\txi)(m)$. Let $J_A'(m)$ be the kernel of $A(R)(m)\rightarrow (A(R)/\txi)(m)$. Since the coproducts in $\big(R^{(1)}/(\ul\Ainf, \txi)\big)_{\Prism}^{\mathrm{op}}$ are computed by log prismatic envelopes, the coproduct of $m+1$ copies of $\fD(R)(0)$ is given by 
\[
    \big( \mD'(R)(m) \ra \mD'(R)(m)/\txi \leftarrow R^{(1)}\big),
\]
where $\mD'(R)(m)$ is the log prismatic envelope of $A(R)(m) \ra (A(R)/\txi)(m)$. Here, the structure map $R^{(1)} \ra \mD'(R)(m)/\txi$ is given by the composition
\[
    R^{(1)} \ra (A(R)/\txi)(m) \ra \mD'(R)(m)/\txi
\]
where the second map is induced by the inclusion $\vp(J'_A(m)) \subseteq (\txi)$ in $\mD'(R)(m)$. 
Let $J'(m)$ be the kernel of $\mD(R)(m)\rightarrow (A(R)/\txi)(m)$.
Since $\txi$ lies in the kernel of $A(R)(m) \ra (A(R)/\txi)(m)$, we have a commutative diagram of pre-log rings
\begin{equation}\label{diagram: MT Lem3.13}
    \begin{tikzcd}
    A(R)(m)\arrow[r] & (A(R)/\txi)(m)\\
    A(R)(m)\arrow[u, "\varphi"] \arrow[r]& R \arrow[u]
    \end{tikzcd}
\end{equation}
where $\varphi$ is induced from the Frobenius map. Taking log $q$-PD envelopes of the bottom and top arrows respectively, we obtain a diagram
\begin{equation}\label{diagram: MT Lem3.13(2)}
    \begin{tikzcd}
    A(R)(m)\arrow[r] & \mD'(R)(m)\\
    A(R)(m)\arrow[u, "\varphi"] \arrow[r] & \mD(R)(m) \arrow[u, "\iota"']
    \end{tikzcd}
\end{equation}
We claim that $\iota \circ \vp^{-1}$ induces an isomorphism
\[
    \big( \mD(R)(m) \ra \mD(R)(m)/\txi \leftarrow R^{(1)}\big) \cong \big( \mD'(R)(m) \ra \mD'(R)(m)/\txi \leftarrow R^{(1)}\big)
\]
between log prisms. To prove this, we need to check
\begin{enumerate}
    \item the map $\iota \colon \mD(R)(m) \ra \mD'(R)(m)$ is an isomorphism;
    \item their structure maps from $R^{(1)}$ coincide, i.e. the following diagram commutes
    \[
    \begin{tikzcd}
        & (A(R)/\txi)(m) \arrow[r] & \mD'(R)(m)/\txi \\
        R^{(1)} \arrow[ru] \arrow[r,equal] & (\mD(R)(m) /J(m))^{(1)} \arrow[r,"\vp"] & \mD(R)(m)/\txi \arrow[u, "\iota\circ \vp^{-1}"']
    \end{tikzcd}
    \]
\end{enumerate}

For (1), it is equivalent to showing that $(\ul\Ainf, \txi)\rightarrow (\ul{\mD(R)(m)}, \txi)$ is the log prismatic envelope of the map of $\delta_{\log}$-triples $(\ul\Ainf, \txi)\rightarrow (\ul{A(R)(m)}, J_A'(m))$. By the explicit description of log prismatic envelope (cf. \cite[Proposition 3.9]{Koshikawa}), the log prismatic envelope of $(\ul\Ainf, \txi)\rightarrow (\ul{A(R)(m)}, J_A'(m))$ is obtained by taking the exactification $E(m)$ then freely adjoining $\frac{\varphi(x_1)}{[p]_q},\ldots, \frac{\varphi(x_1)}{[p]_q}$ as $\delta$-$\Ainf$-algebra. (Notice that $[p]_q= \txi$). This is precisely $\mD(R)(m)$.

For (2), consider diagram (\ref{diagram: MT Lem3.13(2)}). Let $J_A(m)$ denote the kernel of the $A(R)(m) \ra R$. Then $\vp(J_A(m)) \subseteq (\txi)$ in $\mD(R)(m)$ and $\vp(J_A'(m))\subseteq (\txi)$ in $\mD'(R)(m)$ respectively. Further notice that $\vp\colon A(R)(m) \ra A(R)(m)$ sends $J_A(m)$ into $J_A'(m)$ by diagram (\ref{diagram: MT Lem3.13}), then diagram (\ref{diagram: MT Lem3.13(2)}) leads to the following commutative diagram of $\delta_{\log}$-$\Ainf$-rings:
\[
\begin{tikzcd}
    (A(R)/\txi)(m) \arrow[r, equal] & A(R)(m)/J_A'(m) \arrow[r, "\vp"] & \mD'(R)(m)/\txi \\
    R \arrow[r,equal] \arrow[u] & A(R)(m)/J_A(m) \arrow[u, "\vp"] \arrow[r, "\vp"] & \mD(R)(m)/\xi \arrow[u, "\iota"].
\end{tikzcd}
\]
It fits into the commutative diagram
    \[
    \begin{tikzcd}
        & (A(R)/\txi)(m) \arrow[r] & \mD'(R)(m)/\txi \\
        & R \arrow[r] \arrow[u] & \mD(R)(m)/\xi \arrow[u, "\iota"]\\
        R^{(1)} \arrow[ru] \arrow[ruu] \arrow[r,equal] & (\mD(R)(m) /J(m))^{(1)} \arrow[r,"\vp"] & \mD(R)(m)/\txi \arrow[u, "\vp^{-1}"'],
    \end{tikzcd}
    \]
    which completes the proof.
\end{proof}

Now, $\fD(R)(\bullet)$ defines a cosimplicial object of $\big(R^{(1)}/(\ul\Ainf, \txi)\big)_{\Prism}^{\mathrm{op}}$. 
\begin{itemize}
\item Let $p_0, p_1\colon\fD(R)(0)\rightarrow \fD(R)(1)$ be the canonical morphisms corresponding to the map $[0]=\{0\}\rightarrow [1]=\{0,1\}$ with image $\{0\}$ and $\{1\}$, respectively.
\item Let $p_{01}, p_{12}, p_{02}\colon\fD(R)(1)\rightarrow \fD(R)(2)$ be the canonical morphisms corresponding to the map $[1]=\{0,1\}\rightarrow [2]=\{0,1,2\}$ with image $\{0,1\}$, $\{1,2\}$, and $\{0,2\}$, respectively.
\item Let $\Delta\colon\fD(R)(1)\rightarrow \fD(R)(0)$ be the morphism corresponding to the unique map $[1]=\{0,1\}\rightarrow [0]=\{0\}$.
\item Let $q_0, q_1, q_2\colon\fD(R)(0)\rightarrow \fD(R)(2)$ be the canonical morphisms corresponding to the map $[0]=\{0\}\rightarrow [2]=\{0,1,2\}$ with image $\{0\}$, $\{1\}$, and $\{2\}$, respectively.
\end{itemize}

\begin{definition}\label{defn: stratification}
A \emph{stratification} on an $A(R)$-module $N$ with respect to $\mD(R)(\bullet)$ is a $\mD(R)(1)$-linear isomorphism $\varepsilon\colon N\otimes_{A(R), p_2}\mD(R)(1)\xrightarrow[]{\sim} N\otimes_{A(R), p_1}\mD(R)(1)$ satisfying
\begin{enumerate}
\item The scalar extension $\Delta^*(\varepsilon)$ of $\varepsilon$ by $\Delta\colon \mD(R)(1)\rightarrow A(R)$ is the identity map on $N$.
\item There is an identification 
\[
    p_{01}^*(\varepsilon)\circ p_{12}^*(\varepsilon)=p_{02}^*(\varepsilon)\colon N\otimes_{A(R), q_2}\mD(R)(2)\xrightarrow[]{\sim}N\otimes_{A(R), q_0}\mD(R)(2).
\]
\end{enumerate}
\end{definition}

Let $\mathrm{Strat}(\mD(R)(\bullet))$ denote the category of finite projective $A(R)$-modules equipped with a stratification with respect to $\mD(R)(\bullet)$. There is a natural functor
\[
    \mathrm{ev}_{\fD(R)(\bullet)}\colon \CR\big(\big(R^{(1)}/(\ul\Ainf, \txi)\big)_\Prism\big) \rightarrow \mathrm{Strat}(\mD(R)(\bullet))
\]
sending a log prismatic crystal $\mF$ to the $A(R)$-module $N:=\mF(\fD(R)(0))$ equipped with the stratification defined as the composition
\[\varepsilon\colon N\otimes_{A(R), p_2}\mD(R)(1)\xrightarrow[p_2]{\sim} \mF(\fD(R)(1))\xrightarrow[p_1^{-1}]{\sim} N\otimes_{A(R), p_1} \mD(R)(1).\]

\begin{lemma} \label{Lem: Tian Lemma 1.9}
     Assume that $N$ and $P$ are free. Let $(B, \txi, \mM_B)$ be an object in $\big((R^{(1)},P_\infty)/(\ul\Ainf,\txi)\big)_\Prism$. Then the chart $P_\infty \ra \Gamma(\spf R^{(1)}, \mM_{\spf R^{(1)}}) \ra \Gamma\big(\spf(B/\txi), \mM_{\spf(B/\txi)}\big)$ lifts to a chart $\alpha_B \colon P_\infty \ra \Gamma(\spf B, \mM_{\spf B})$ such that the diagram of monoids
     \[
         \begin{tikzcd}
             P_\infty \arrow[rr] \arrow[d,equal] & & R^{(1)} \arrow[d] \\
             P_\infty \arrow[r, "\alpha_B"] & B \arrow[r] & B/\txi 
         \end{tikzcd}
     \]
     commutes, and $(B, P_\infty)$ has a natural induced $\delta_{\log}$-structure such that $(B,\txi, P_\infty)^a = (B, \txi, \mM_B)$.
\end{lemma}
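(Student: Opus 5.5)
Since $N$ and $P$ are free, I would work with bases: $P\cong \N^{a}$ with basis $f_1,\dots,f_a$, and $N\cong\N^{n}$. Then $P_\infty=P\sqcup_N N_\infty$, with $N_\infty=N_{\Q_{\ge 0}}$. The plan has three steps: lift the chart on the free part $P$ coordinatewise, extend across $N_\infty$ via the base prism, and check compatibility on the pushout.

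\textbf{Step 1: the $N_\infty$ part.} Here I use the structure map $(\ul\Ainf,\txi,N_\infty)\to(B,\txi,\mM_B)$ and let $N_\infty\to\Ainf\to B$ be the composite. This is compatible with $R^{(1)}\to B/\txi$ because the map $R^{(1)}\to B/\txi$ is an $\Ainf$-algebra map, and the pre-log structure of $\underline{R^{(1)}}$ on $N_\infty$ is $\alpha'$ composed with $\Ainf\to R^{(1)}$.

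\textbf{Step 2: the $P$ part.} For each basis element $f_i$, let $m_i$ be its image in $\Gamma(\spf(B/\txi),\mM)$. The closed immersion $(\spf B/\txi, f^*\mM_{\fX})\to(\spf B,\mM_B)$ is exact, so $\mM_B/(1+\txi B)^\times \cong \mM_{B/\txi}$ étale locally; here I use that $B$ is $\txi$-henselian, so units lift. Hence each $m_i$ lifts to a section $\tilde m_i$ of $\mM_B$ étale locally. Two lifts differ by a unit in $1+\txi B$, and this torsor has no cohomology obstruction on the affine $\spf B$. Thus I expect the sheaf of lifts of $m_i$ to be a torsor under the sheaf $1+\txi\mO$ (more precisely, under the kernel of $\mO_B^\times\to\mO_{B/\txi}^\times$), and a global lift exists because $H^1_{\ett}(\spf B,1+\txi\mO)=0$. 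This vanishing follows by filtering by $1+\txi^k\mO$ and using that quasi-coherent cohomology vanishes on affines, together with completeness. Freeness of $P$ means that choosing $\tilde m_i$ independently defines a monoid map $P\to\Gamma(\mM_B)$.

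\textbf{Step 3: compatibility on the pushout.} The maps on $P$ and on $N_\infty$ must agree on $N$. This is the main obstacle, since the chosen lifts need not be compatible with the $N_\infty$ image. The fix I would try first uses freeness of $N$ and saturatedness of $u$, with $\mathrm{coker}(u^{\gp})$ torsion free. Under these hypotheses (the $N=\N$ semistable case is the main interest), I would pick the basis of $P$ so that each basis element $g_j$ of $N$ maps into $P$ in a controlled way. I would then adjust one of the $\tilde m_i$ by a unit in $1+\txi B$ so that the image of $g_j$ matches. This is possible because the discrepancy already maps to $1$ modulo $\txi$, and because in a chart relation $u(g_j)=\sum c_{ij}f_i$ with $\gcd=1$ the $c_{ij}$ can be unitized using $\mathrm{coker}$ torsion-free plus $p$-adic roots in $1+\txi B$. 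If this fails, I would fall back to using directly the free $P$-basis with $N\to P$ split in $\gp$ and Lemma~\ref{lemma: splitting}.

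\textbf{Step 4: $\delta_{\log}$-structure and the associated log structure.} $(B,\mM_B)$ being a log prism, with $\mM_B$ integral and $B$ $p$-torsion free locally, there is a unique $\delta_{\log}$ on $\mM_B$ compatible with $\delta$ (Koshikawa). I would restrict it along $\alpha_B$; the value on $N_\infty$ agrees with that of $\ul\Ainf$. Finally, $(B,\txi,P_\infty)^a\to\mM_B$ is an isomorphism: it is strict modulo $\txi$ by exactness, since $P_\infty$ is a chart for $\mM_{B/\txi}$. An isomorphism of integral log structures that holds modulo a henselian ideal, and is compatible with the unit kernel, is an isomorphism.
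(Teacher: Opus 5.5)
Your Steps 1, 2 and 4 are fine, and they essentially reconstruct what the paper imports from \cite[Lemma 2.1]{DLMS2} and \cite[Lemma 1.9(2)]{Tian_prismatic_etale_comparison}. Sections of $\mM_B$ lifting a given section of $\mM_{\spf(B/\txi)}$ exist globally and form a torsor under $1+\txi B$, and the structure map gives a compatible $\iota\colon N_\infty\ra\Gamma(\spf B,\mM_B)$. Your $H^1$-vanishing argument is a fair substitute for the citation. In Step 4, note that $\delta_{\log}$ is part of the data of the log prism rather than determined by $\delta$; restricting it along $\alpha_B$ is nonetheless exactly right.

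The gap is in Step 3, which is the real content of the lemma. Your mechanism twists one $\tilde m_i$, for each basis element $g_j$ of $N$, by a $c_{ij}$-th root of the discrepancy in $1+\txi B$. This fails as stated, for two reasons.

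\emph{Roots need not exist.} $1+\txi B$ is not $p$-divisible in general. Suppose $y^p=1+\txi$ in $\Ainf$. Modulo $p$ we get $\bar y^p=(1+\bar\xi)^p$, so $y\equiv1+\xi\pmod p$ by perfectness of $\mO^\flat$, and hence $y^p\equiv(1+\xi)^p\pmod{p^2}$. Since $1+\txi=\varphi(1+\xi)=(1+\xi)^p+p\,\delta(1+\xi)$, this forces $\delta(1+\xi)\in p\Ainf$. But $\delta(1+\xi)\equiv\delta(\xi)\pmod{(p,\xi)}$ is a unit, a contradiction. So a coefficient divisible by $p$ cannot be ``unitized'' this way.

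\emph{The relations interact.} Twisting $\tilde m_i$ to fix $g_j$ changes the image of every $g_k$ with $c_{ik}\neq0$, so the relations must be solved simultaneously, not one $j$ at a time. Also, the basis of a free monoid is canonical (its irreducibles), so there is nothing to ``pick''.

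The missing idea is that the discrepancy is a character.
\begin{itemize}
\item Let $s_0\colon P\ra\Gamma(\spf B,\mM_B)$ be your initial lift. For $n\in N$, the elements $\iota(n)$ and $s_0(u(n))$ have the same image modulo $\txi$, because $n$ and $u(n)$ agree in $P\sqcup_N N_\infty$.
\item Hence $\iota(n)=w(n)\,s_0(u(n))$ for a unique $w(n)\in1+\txi B$. Uniqueness, which follows from integrality of $\mM_B$, makes $w\colon N^{\gp}\ra1+\txi B$ a homomorphism.
\item Since $\mathrm{coker}(u^{\gp})$ is torsion free, $u^{\gp}$ has a retraction $r\colon P^{\gp}\ra N^{\gp}$; this is what the splitting of Lemma~\ref{lemma: splitting} provides.
\item Then $s:=(w\circ r)\cdot s_0$ is a monoid map lifting the same chart modulo $\txi$, with $s\circ u=\iota|_N$. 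So $s$ glues with $\iota$ on $P_\infty=P\sqcup_N N_\infty$, and no roots are needed.
\end{itemize}
The paper takes a different route via \cite[Proposition 3.7]{log1}. There $u(e'_j)$ is the sum of the basis vectors $e_i$ for $i\in A_j$, so all coefficients are $0$ or $1$; one then chooses pairwise distinct $a_j\in A_j$ and twists $m_{a_j}$ by the discrepancy $u_j$. Your fallback gestures toward the retraction, but without the observation that $w$ is a homomorphism to be pulled back along $r$, Step 3 is not established.
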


\begin{proof}
    The proof is essentially the same as that of \cite[Lemma 1.9]{Tian_prismatic_etale_comparison}, but we need an additional combinatorial argument to deal with the more general situation. By definition, the log structure $\mM_{\spf (B/\txi)}$ on $\spf(B/\txi)$ coincides with the pullback log structure from $\spf R$, hence it admits a chart $P_\infty \ra \Gamma(\spf (B/\txi),\mM_{\spf (B/\txi)})$.  By \cite[Lemma 2.1]{DLMS2}, the log prism $(B, \txi, \mM_B)$ comes from a pre-log prism $(B, \txi, M_B)$ with 
    \[
        M_B = P_\infty \times_{\Gamma\big(\spf(B/\txi), \mM_{\spf (B/\txi)}\big)} \Gamma(\spf B, \mM_{\spf B})
    \]
    and the natural projection $\beta\colon M_B \ra P_\infty$ is a torsor under the group $1+(\txi) \subseteq B^\times$. The same argument in \cite[Lemma 1.9(2)]{Tian_prismatic_etale_comparison} shows that the structure map $(\Ainf,\txi) \ra (B,\txi)$ extends uniquely to a morphism between pre-log rings
    \[
        \iota\colon (\Ainf,\txi,N_\infty) \ra (B,\txi,M_B).
    \]
By assumption, both $N$ and $P$ are free. Assume that $P\cong \oplus_{i=1}^m\N e_i$ and $N \cong \oplus_{j=1}^n \N e'_j$. Since $u\colon N \ra P$ is quasi-saturated and the cokernel of $u^{\gp}$ is torsion free, by \cite[Proposition 3.7]{log1}, there are nonempty finite sets $A_j \subseteq \{1,2,\cdots,m\}$ for any $1\le j\le n$ such that $u(e_j')= \prod_{i\in A_j} e_i$.
Let $m_i\in M_B$ be a lifting of $e_i$ under the projection $\beta \colon M_B \ra P_\infty$, for all $1\le i \le m$. Then for any $1\le j \le n$, $\prod_{i\in A_j} m_i$ and $\iota(e'_j)$ have the same image in $P_\infty$ under $\beta$. This means that there exist $u_j \in 1+(\txi) \subseteq B^\times$ such that $\iota(e'_j)= u_j \prod_{i\in A_j} m_i$ for $1\le j \le n$. 

Since $u \colon N \ra P$ is injective, we must have an inequality
\begin{equation} \label{eq: inequality of cardinality}
|\bigcup_{j=1}^n A_j| \ge n
\end{equation}
on cardinality. Inspired by the inequality (\ref{eq: inequality of cardinality}), we claim that one can choose $a_j \in A_j$ for each $1\le j \le n$ such that $a_j \neq a_k$ whenever $j \neq k$. We prove this by an induction on $n$. The claim is trivial when $n=1$. In general, we may assume that $|A_n|= \min_{1\le j \le n} |A_j|$. Since $u \colon N \ra P$ is injective, we still have $|\bigcup_{j=1}^{n-1} A_j| \ge n-1$. Consider the following two situations:
\begin{enumerate}
\item $|\bigcup_{j=1}^{n-1} A_j| = n-1$. In this case, the set $(\bigcup_{j=1}^{n} A_j)\minus (\bigcup_{j=1}^{n-1} A_j)$ is non-empty. By induction, we can choose $a_j\in A_j$ for $1\le j\le n-1$ such that $a_j \neq a_k$ whenever $j \neq k$. Then we can choose $a_n \in (\bigcup_{j=1}^{n} A_j)\minus (\bigcup_{j=1}^{n-1} A_j) \subseteq A_n$.
\item $|\bigcup_{j=1}^{n-1} A_j| \ge n$. Choose any $a_n \in A_n$. Let $A_j'= (A_j\cup \{a_n\})\minus \{a_n\} \subseteq A_j$. Then $|\bigcup_{j=1}^{n-1} A_j'| \ge n-1$. Moreover, the assumption $|A_n|= \min_{1\le j \le n} |A_j|$ implies that $A_j' \neq \emptyset$ for all $1\le j\le n-1$. Applying the induction on $N'= \oplus_{j=1}^{n-1} \N e'_j$, we can choose $a_j\in A_j'\subseteq A_j$ such that $a_j \neq a_k$ whenever $j \neq k$. Moreover, $a_j \neq a_n$ for any $j\neq n$.
\end{enumerate}
This finishes the proof of the claim. 
    
Given this, we can proceed as in the proof of \cite[Lemma 1.9]{Tian_prismatic_etale_comparison}. By replacing $m_{a_j}$ by $u_im_{a_j}$, we may assume that $\iota(e'_j)=  \prod_{i\in A_j} m_i$ for $1\le j \le n$. Consider $s\colon P_\infty \ra M_B$ given by
    \[
        s\left(x\sqcup\sum_{i=1}^{m} b_ie_i\right) := \iota(x) + \sum_{i=1}^{m} b_im_i, 
    \]
which is a section of $\beta\colon M_B \ra P_\infty$. We then define $\alpha_B$ as the composition
    \[
        \alpha_B \colon P_\infty \xrightarrow[]{s} M_B \ra \Gamma\big(\spf B, \mM_{\spf B}\big).
    \]
Since $\beta\colon M_B \ra P_\infty$ is a torsor under $1+(\txi) \subseteq B^\times$, we know that $\alpha_B$ is indeed a chart for $(\spf B, \mM_{\spf B})$. The commutativity of the diagram follows from the construction.
\end{proof}

\begin{proposition}\label{prop: weakly final obj: free chart}
The log prism $\fD(R)= (A(R), \txi, P_\infty)^a$ is a cover of the final object of the log prismatic site $\big((R^{(1)},P_\infty)/(\ul\Ainf, \txi)\big)_\Prism$. 
\end{proposition}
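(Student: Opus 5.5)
The plan is to show that $\fD(R)$ is weakly final. Concretely, for every object $\fB=(B,\txi,\mM_B)$ of $\big((R^{(1)},P_\infty)/(\ul\Ainf,\txi)\big)_\Prism$, I will produce a cover $\fB\to\fB'$ (i.e.\ strict, $(p,\txi)$-completely faithfully flat) together with a morphism $\fD(R)\to\fB'$ in the site. Since the site has products computed by log prismatic envelopes, the natural choice is $\fB'=\fB\times\fD(R)$, the product in the site, i.e.\ the coproduct in the opposite category. It then suffices to prove that the projection $B\to \mathcal{C}$ to the underlying ring of this product is $(p,\txi)$-completely faithfully flat and strict. First I reduce from $R$ to $R^{\square}$. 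Since $A(R^\square)\to A(R)$ is the $(p,\mu)$-completely étale lift of $R^\square\to R$, the product for $A(R)$ is obtained from the one for $A(R^\square)$ by a $(p,\txi)$-completely étale base change, by the usual deformation argument for étale maps of prisms. So I will concentrate on the free chart $\Ainf\langle P\rangle\widehat\otimes_{\Ainf\langle N\rangle}\Ainf$.

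The first key step uses Lemma~\ref{Lem: Tian Lemma 1.9}. Because $N$ and $P$ are free, the log structure on $B$ admits a chart $\alpha_B\colon P_\infty\to B$ lifting the chart on $B/\txi$, compatibly with $N_\infty\to\Ainf$, and $(B,\txi,P_\infty)$ is a pre-log prism representing $\fB$. Hence the product is the log prismatic envelope of
\[(B\widehat\otimes_{\Ainf}A(R^\square),\ P_\infty\oplus_{N_\infty}P_\infty)\longrightarrow B/\txi .\]
Following Koshikawa's construction (cf.\ \cite[Proposition 3.9]{Koshikawa}), I will first exactify. This is the same monoid computation as in Lemma~\ref{lemma: existence of log q-PD-envelope}(1), now with one factor replaced by $B$. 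The exactified monoid is $(P^{\gp}/N^{\gp})\oplus P_\infty$, so the exactification is
\[E=B\widehat\otimes_{\Ainf}\Ainf\langle (P^{\gp}/N^{\gp})\rangle\cdot(\text{suitable completion}),\]
with new unit variables $v_i=e_i\otimes 1/\alpha_B(e_i)$, taken via the splitting of Lemma~\ref{lemma: splitting}. The ideal cutting out $B/\txi$ in $E$ is then generated by $\txi$ and the elements $v_i-1$, $i=1,\dots,d$. Modulo $\txi$ these form a $(p,\txi)$-completely regular sequence relative to $B$; morally they are Laurent coordinates over $B$.

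The second key step uses \cite[Proposition 3.13]{BS}, applied as in the non-log setting. The prismatic envelope $\mathcal{C}=E\{\frac{v_i-1}{\txi}\}^\wedge_\delta$ is $(p,\txi)$-completely flat over $B$. It is faithfully flat because $\mathcal{C}/\txi$ surjects onto $B/\txi$ modulo nilpotents, or alternatively because $B\to\mathcal C$ has a retraction on the level of $B/\txi$. Strictness of $\fB\to\fB'$ holds because both carry the chart $P_\infty$ via $\alpha_B$ after exactification. The morphism $\fD(R)\to\fB'$ is the second projection. This shows that $\fD(R)$ covers the final object.

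I expect the main obstacle to be the $\delta_{\log}$-compatibility in the exactification. In $E$ one must check that the new elements $v_i$ carry a $\delta_{\log}$-structure, i.e.\ that $\delta_{\log}$ on $\alpha_B(e_i)$ and on $e_i$ glue. This is where the specific lift from Lemma~\ref{Lem: Tian Lemma 1.9} is needed, since $\iota(e'_j)=\prod_{i\in A_j}m_i$ exactly, so that the $N_\infty$-relations hold on the nose and not merely up to units in $1+(\txi)$. I would handle this by showing that the pushout $P_\infty\oplus_{N_\infty}P_\infty$ exactifies to a monoid whose group part is exactly $P^{\gp}/N^{\gp}$, with no extra torsion, using that $\mathrm{coker}(u^{\gp})$ is torsion-free. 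Then $\delta_{\log}$ extends uniquely to the group part, as in \cite[Lemma 7.4]{Koshikawa}.
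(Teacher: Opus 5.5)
Your route is different from the paper's. The paper never forms a product: it maps $\fD(R)$ into the perfect prism $\Ainf(R_\infty)$ and covers $\fB$ by a prism that receives a map from $\Ainf(R_\infty)$, via \cite[Proposition 7.11, Lemma 4.8]{BS} and the chart of Lemma~\ref{Lem: Tian Lemma 1.9}.

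Your version has a genuine gap at its first computation. You treat $\fD(R)$ as a lift of $R^{(1)}$: you use a map $A(R)\to B/\txi$ and exactify with $v_i=U_i/\alpha_B(e_i)$. But $\fD(R)$ lies over $R^{(1)}$ through $\varphi\colon R^{(1)}\to A(R)/\txi$, and this map is the relative Frobenius, not an isomorphism. For $R=\mO\langle T\rangle$ it is $T\mapsto T^p$; on the coordinates of Lemma~\ref{Lem: the assumptions of qDR} it sends the $i$-th coordinate to $U_i^p$. (Whatever item (3) of the Notation intends with its isomorphism sign, this $\varphi$ is the structure map that makes $\fD(R)$, and $\fD(R)(m)$, objects of the site.) This is also why, in the coproduct lemma, $(A(R)/\txi)(m)$ is a tensor power over $R^{(1)}$ and the envelope is matched with a $q$-PD envelope via $\varphi$.

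Consequently there is no map $B\htimes_{\Ainf}A(R)\to B/\txi$. The product is the log prismatic envelope of $B\htimes_{\Ainf}A(R)\to(B/\txi)\htimes_{R^{(1)}}(A(R)/\txi)$. Its relations compare $\alpha_B(e_i)$ with $U_i^p$, so exactification adjoins units $v_i$ with $U_i^p=v_i\,\alpha_B(e_i)$. The resulting ring $E$ is a Kummer-type extension of $B\langle v_i^{\pm1}\rangle$ that extracts $p$-th roots of the chart elements, not a Laurent algebra over $B$. So neither the monoid computation of Lemma~\ref{lemma: existence of log q-PD-envelope}(1) nor the regular-sequence input to \cite[Proposition 3.13]{BS} is available. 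Your computation is the one appropriate for an honest lift of $R^{(1)}$ such as $A(R)^{(1)}$, where it is essentially the argument of \cite{Koshikawa}. The $\delta_{\log}$-compatibility you single out is a general feature of exactification and is not the real difficulty.

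This cannot be patched within your strategy, because in the log setting the product is in general not flat over $B$. Here is an example.
\begin{itemize}
\item Take the diagonal chart $\N\to\N^2$, so that $U_1U_2=[\pi^\flat]$ in $A(R)$.
\item Let $\fB=(W(k)[\![s]\!],(p))$ with $\varphi(s)=s^p$ and structure map $\Ainf\to W(k)$, so that $[\pi^\flat]\mapsto 0$ and $\txi\mapsto p$.
\item Let $R^{(1)}\to k[\![s]\!]$ send the two coordinates to $s$ and $0$, and give $B$ the chart $e_1\mapsto s$, $e_2\mapsto 0$.
\end{itemize}
In any object receiving both $\fB$ and $\fD(R)$ one has $U_1U_2=0$ and $U_1^p\equiv s \bmod p$. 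Hence $sU_2=U_1^{p-1}(U_1U_2)-(U_1^p-s)U_2\equiv 0\bmod p$.

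On the other hand, $B$ and $A(R)$ map compatibly to the prism $(W(k)[\epsilon]/(\epsilon^p),(p))$ with $\delta(\epsilon)=0$ and log structure pulled back from $\fB$, via $s\mapsto 0$, $U_1\mapsto 0$, $U_2\mapsto\epsilon$. So $U_2\not\equiv 0\bmod p$ in the product. Thus $s$ is a zero-divisor modulo $p$ there, and $\fB\times\fD(R)\to\fB$ is not a flat cover.

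A correct argument must instead pass to a cover of $\fB$ in which the chart elements acquire $p$-th roots chosen compatibly with the log structure. In this example, $B[x]/(x^p-s)$ with $U_1\mapsto x$ and $U_2\mapsto 0$ works. The paper's passage through $\Ainf(R_\infty)$ is meant to supply all such roots at once, with the Frobenius twist absorbed by perfectness. Note, however, that the same example shows the base change $B/\txi\to(B/\txi)\htimes_{R^{(1)}}R^{(1)}_\infty$ used there acquires $s$-torsion, so that step too needs a log-flat rather than a flat formulation.
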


\begin{proof}
    Notice that there is a morphism of pre-log prisms 
    \[
        (A(R), \txi, P_\infty) \ra (\Ainf(R_\infty), \txi, P_\infty),
    \]
    where the pre-log structure on $\Ainf(R_\infty)$ is given by 
    \[
        P_\infty \ra A(R) \ra \Ainf(R_\infty).
    \]
    We are left to check that $(\Ainf(R_\infty), \txi, P_\infty)^a$ covers the final object of $((R^{(1)},P_\infty)/(\ul\Ainf, \txi))_\Prism$. 
    
    For any $(B, \txi, \mM_B)\in ((R^{(1)},P_\infty)/(\ul\Ainf, \txi))_\Prism$, we have to find a cover of $(B, \txi, \mM_B)$ that admits a morphism (of log prisms) from $(\Ainf(R_\infty), \txi, P_\infty)^a$. To this end, we first construct a non-log prism $(C,\txi)$ that covers $(B,\txi)$ and then equip $(C,\txi)$ with a suitable log structure. Consider the quasi-syntomic cover $B/\txi \ra (B/\txi)\htimes_{R^{(1)}} R_\infty^{(1)}$. By \cite[Proposition 7.11]{BS}, there exists a (non-log) prism $(C, \txi)$ over $\spf R^{(1)}$ that covers $(B, \txi)$ such that the structure map $R^{(1)} \ra C/\txi$ factors through $R^{(1)} \ra (B/\txi)\htimes_{R^{(1)}} R_\infty^{(1)}$. 
    In particular, $C/\txi$ admits a map from $R_\infty^{(1)}$, and \cite[Lemma 4.8]{BS} implies that the map lifts to a map $(\Ainf(R_\infty), \txi) \ra (C,\txi)$ of prisms. By Lemma \ref{Lem: Tian Lemma 1.9}, there is a chart $M_B= P_\infty$ on $B$ such that $(B,\txi,P_\infty)^a = (B, \txi, \mM_B)$. Let $(C, \txi, P_\infty)$ be the pre-log prism with
    the pullback pre-log structure $P_\infty$ from $(B, \txi, P_\infty)$. Then $(B, \txi, P_\infty)^a \ra (C, \txi, P_\infty)^a$ is a cover by definition. The morphism $(\Ainf(R_\infty), \txi) \ra (C,\txi)$ naturally extends to 
    \[
        (\Ainf(R_\infty), \txi, P_\infty)^a \ra (C,\txi,P_\infty)^a,
    \]
    which completes the proof.
    \end{proof}

Now, from a module with stratification, one can construct a generalized representation. More precisely, suppose that $(N, \varepsilon)\in \mathrm{Strat}(\mD(R)(\bullet))$ is a module with stratification with respect to $\mD(R)(\bullet)$. For every $\gamma\in \Gamma$, the base change of \[\varepsilon\colon N\otimes_{A(R), p_2}\mD(R)(1)\xrightarrow[]{\sim} N\otimes_{A(R), p_1}\mD(R)(1)\] along the map $\mD(R)(1)\xrightarrow[]{(1, \gamma)} \mD(R)(1)\xrightarrow[]{\Delta} A(R)$ yields an isomorphism \[N\otimes_{A(R), \gamma} A(R)\xrightarrow[]{\sim} N\] which defines an action of $\gamma$ on $N$. Using conditions (1) and (2) in Definition \ref{defn: stratification}, one checks that the actions of different $\gamma$'s are compatible, and hence defines a semilinear action of $\Gamma$ on $N$. Moreover, since the action of $\Gamma$ on $\mD(1)/\mu$ is the identity, the induced action of $\Gamma$ on $N/\mu$ is also the identity. Thus, we obtain a functor
\[
   \mathrm{ev}^{\mathrm{Strat}}_{A(R)}\colon \mathrm{Strat}(\mD(R)(\bullet))\rightarrow \Rep^{\mu}_{\Gamma}(A(R)).
\] 
The following diagram is commutative up to canonical isomorphism (cf. \cite[Lemma 3.16]{MT}):
\[
 \begin{tikzcd}
 \CR\big(\big(R^{(1)}/(\Ainf, \txi)\big)_{\Prism}\big)  \arrow[rr, "\mathrm{ev}_{\fD(R)(\bullet)}"] \arrow[drr, "\mathrm{ev}_{A(R)}"'] & & \mathrm{Strat}(\mD(R)(\bullet)) \arrow[d, "\mathrm{ev}^{\mathrm{Strat}}_{A(R)}"]\\
& &\Rep^{\mu}_{\Gamma}(A(R)).
\end{tikzcd}
\]

\begin{proposition}\label{prop: stratifications are convergent}
An element in the image of $\mathrm{ev}^{\mathrm{Strat}}_{A(R)}$ must be convergent.
\end{proposition}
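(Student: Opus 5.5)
The plan is to follow the proof of the analogous statement in \cite[\S 3]{MT}: compute the $\Gamma$-action coming from a stratification explicitly in terms of the $q$-divided powers in $\mD(R)(1)$, and then read off the convergence of the associated log $q$-connection. Let $(N,\varepsilon)\in \mathrm{Strat}(\mD(R)(\bullet))$ and write $\nabla_i^{\log}$ for the operators attached, via Theorem~\ref{representations vs q-connections}, to $M=\mathrm{ev}^{\mathrm{Strat}}_{A(R)}(N,\varepsilon)$, so that $\gamma_i=1+\mu\nabla_i^{\log}$ on $N$.

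\textbf{Step 1: structure of $\mD(R)(1)$.} Lemma~\ref{lemma: existence of log q-PD-envelope} and its proof give $E(1)\cong A(R)\widehat{\otimes}_{\Ainf}\Ainf\langle (P^{\gp}/N^{\gp})\rangle$. Using the splitting of Lemma~\ref{lemma: splitting}, we may take the coordinates $u_i=e^{(e_i,-e_i)}$, namely the ratio $p_2(U_i)/p_1(U_i)$, with $U_i$ as in Lemma~\ref{Lem: the assumptions of qDR}. Then $\mD(R)(1)$ is the $q$-PD envelope of the ideal generated by the elements $u_i-1$. Following \cite[Lemma 16.10]{BS} and the description in \cite[\S 3]{MT}, I expect $\mD(R)(1)$ to be the $(p,\mu)$-adic completion of the free $A(R)$-module on the $q$-divided powers
\[\frac{(u_1-1)(u_1-q)\cdots(u_1-q^{n_1-1})\cdots (u_d-1)\cdots(u_d-q^{n_d-1})}{[n_1]_q!\cdots[n_d]_q!}.\]
The point to use is that these $q$-divided-power monomials tend to $0$ $(p,[p]_q)$-adically as $\sum n_i\to\infty$.

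\textbf{Step 2: expand the stratification.} Write
\[\varepsilon(n\otimes 1)=\sum_{\underline n}\theta_{\underline n}(n)\otimes (\text{$q$-divided power monomial}),\]
with $\theta_{\underline n}\in\mathrm{End}_{\Ainf}(N)$ and $\theta_{\underline n}(n)\to 0$ $(p,\mu)$-adically. Pulling back along $\mD(R)(1)\xrightarrow{(1,\gamma_i)}\mD(R)(1)\xrightarrow{\Delta}A(R)$ sends $u_j\mapsto 1$ for $j\neq i$ and $u_i\mapsto q$, where $q=[\varepsilon]$. The monomial in $u_i$ of degree $n$ therefore maps to $(q-1)(q-q)\cdots/[n]_q!$, which vanishes for $n\ge 2$. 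This only recovers $\gamma_i=1+\mu\theta_{e_i}$. To capture the higher coefficients I will instead pull back along $\gamma_i^k$, which sends $u_i\mapsto q^k$. This gives
\[\gamma_i^k=\sum_{n\ge0}\binom{k}{n}_q(q-1)^n\,\theta_{n e_i}\cdot(\text{$q$-factorial factors}).\]
Comparing with the $q$-binomial expansion of $\gamma_i^k=(1+\mu\nabla_i^{\log})^k$ should identify
\[\theta_{ne_i}=\frac{\nabla_i^{\log}(\nabla_i^{\log}-[1]_q)\cdots(\nabla_i^{\log}-[n-1]_q)}{[n]_q!}\]
up to a harmless normalization, as in the non-log computation of \cite[Lemma 3.18]{MT}. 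This identification uses the cocycle condition (2) of Definition~\ref{defn: stratification}.

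\textbf{Step 3: conclude convergence.} The convergence of $\varepsilon$ gives, for each $n\in N$, that $[n]_q!\,\theta_{ne_i}(n)\cdot(\text{unit})\in(p,[p]_q)N$ for $n$ large. Indeed, the coefficients $\theta_{ne_i}(n)$ themselves tend to zero, and $[n]_q!$ only helps. Hence $\nabla_i^{\log}(\nabla_i^{\log}-[1]_q)\cdots(\nabla_i^{\log}-[m]_q)(n)\in(p,[p]_q)N$ for some $m$, which is exactly the convergence condition of Definition~\ref{defn: convergent log q-connection}.

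The main obstacle is Steps 1--2. One must identify $\mD(R)(1)$ as an explicit completed free module on the $q$-divided powers of $u_i-1$, which requires checking that the exactification coordinates behave as in the smooth case and that the $\delta$-envelope adjoins exactly these elements. One must also carry out the $q$-binomial identification of the coefficients $\theta$ with the $q$-falling factorials of $\nabla_i^{\log}$. For the latter I would first treat the case $d=1$ and invoke the $q$-Taylor formula, then reduce the general case to it coordinatewise.
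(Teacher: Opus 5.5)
Your route differs from the paper's. The paper never uses the integral structure of $\mD(R)(1)$. It sets $L=N\otimes_{A(R),p_1}\mD(R)(1)$, with $\Gamma$ acting through the first factor, and notes $\varepsilon(N)\subseteq L^{\Gamma}=L^{\nabla_L}$. It then reduces modulo $\mu$, where $\mD(R)(1)/\mu$ is just a $p$-completed PD-polynomial algebra over $A(R)/\mu$. Solving $\nabla_{\overline{L}}(\bar\varepsilon(n))=0$ recursively shows that the PD-Taylor coefficients of $\bar\varepsilon(n)$ are $\prod_i\nabla_i^{\log}(\nabla_i^{\log}-1)\cdots(\nabla_i^{\log}-j_i+1)(n)$. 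These tend to $0$ $p$-adically, so $\nabla_{\overline N}$ is $p$-adically convergent, which the paper takes as sufficient. (Your ratio $u_i=p_2(U_i)/p_1(U_i)$ is the natural PD variable in this log setting, where $U_i$ need not be a unit. The expansion in $\bar u_i-1$ is what produces exactly these falling factorials.) You instead stay integral and read off the coefficients from the group action along $\gamma_i^k$. That step cannot be run on the mod-$\mu$ input: modulo $\mu$ the group acts trivially on $\mD(R)(1)$, every specialization $u_i\mapsto q^k$ collapses to $\Delta$, and you recover only $\theta_0=\mathrm{id}$.

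So everything rests on Step 1, which you only state as an expectation; this is a genuine gap. What is needed is that $\omega_n(u_i):=\prod_{k<n}(u_i-q^k)/[n]_q!$ lies in $\mD(R)(1)$. The basis property then follows from the mod-$\mu$ description, since $\omega_n(u_i)\equiv(u_i-1)^{[n]}$ modulo $\mu$ and $\mD(R)(1)$ is $\mu$-torsion-free and $(p,\mu)$-complete. Membership is not formal: up to units $[n]_q!=\prod_{a\ge1}\varphi^{a-1}([p]_q)^{\lfloor n/p^a\rfloor}$, and these factors are not comaximal. One way to get it is as follows.
\begin{itemize}
\item Fix $a$. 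Each block $\prod_{j<p^a}(u_i-q^{c+j})$ of $p^a$ consecutive factors is congruent modulo $\varphi^{a-1}([p]_q)=\Phi_{p^a}(q)$ to $q^{cp^a}(q^{-cp^a}u_i^{p^a}-1)$.
\item This lies in $\varphi^{a-1}([p]_q)\mD(R)(1)$, because $u_i^{p^a}-1=\varphi^{a-1}(\varphi(u_i-1))$ (as $\delta(u_i)=0$) and $\Phi_{p^a}(q)\mid q^{p^a}-1$.
\item This gives divisibility by each $\varphi^{a-1}([p]_q)^{\lfloor n/p^a\rfloor}$ separately, and flatness of $\mD(R)(1)$ over $\Z_p[\![q-1]\!]$ lets you combine them.
\end{itemize}
Alternatively, drop Step 1 and use the paper's horizontality argument modulo $\mu$. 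If you keep your route, Step 2 works by induction on $k$: the $n=k$ term carries $\mu^k$, and $N$ is $\mu$-torsion-free. Your route then buys $(p,[p]_q)$-adic convergence directly, without the paper's passage from $\nabla_{\overline N}$ to $\nabla_N$.

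Two smaller corrections.
\begin{itemize}
\item The $q$-divided-power monomials do not tend to $0$. It is the coefficients $\theta_{\underline n}(x)$ that do, and that is all Step 3 needs.
\item Your monomials already carry $1/[n]_q!$. The specialization $u_i\mapsto q^k$ gives $\prod_{j<n}(q^k-q^j)/[n]_q!=q^{\binom{n}{2}}\mu^n\binom{k}{n}_q$. On the other side, use $y^k=\sum_n\binom{k}{n}_q\prod_{j<n}(y-q^j)$ with $y=\gamma_i=1+\mu\nabla_i^{\log}$ and $y-q^j=\mu(\nabla_i^{\log}-[j]_q)$. Comparing gives $\theta_{ne_i}=q^{-\binom{n}{2}}\nabla_i^{\log}(\nabla_i^{\log}-[1]_q)\cdots(\nabla_i^{\log}-[n-1]_q)$, with no $1/[n]_q!$. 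Since $[n]_q!$ is not a unit, this is not a harmless normalization.
\end{itemize}
Step 3 still survives: $\prod_{j<n}(\nabla_i^{\log}-[j]_q)(x)=q^{\binom{n}{2}}\theta_{ne_i}(x)\to0$ $(p,\mu)$-adically, hence lies in $(p,[p]_q)N=(p,\mu^{p-1})N$ for $n\gg0$.
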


\begin{proof}
Let $(N, \varepsilon)\in \mathrm{Strat}(\mD(R)(\bullet))$ and let $\nabla_N$ be the log $q$-connection associated with the generalized representation $\mathrm{ev}^{\mathrm{Strat}}_{A(R)}(N, \varepsilon)$. We want to show that $\nabla_N$ is $(p,\mu)$-adically convergent. Let $\overline{N}:=N/\mu$ and let $\nabla_{\overline{N}}$ be the induced log connection on $\overline{N}$. It suffices to show that $\nabla_{\overline{N}}$ is $p$-adically convergent.

To simplify the notation, let $L:=N\otimes_{A(R), p_1} \mD(R)(1)$ and let $\overline{L}:=L/\mu$. Equip $L$ with a $\Gamma$-action where $\Gamma$ acts on $\mD(R)(1)$ via $\Gamma=\Gamma\times\{1\}\subset \Gamma^2$. Then $L$ is equipped with a natural log $q$-connection $\nabla_L\colon L\rightarrow L\otimes_{A(R)} q\Omega^1_{A(R)/\Ainf}$ given by
\[l\mapsto \sum_{i=1}^d\frac{1}{\mu}(\gamma_i(l)-l)\otimes d\log U_i.\]
We have $L^{\Gamma}=L^{\nabla_L}$. Identify $N$ as an $A(R)$-submodule via $N\hookrightarrow N\otimes_{A(R), p_2} \mD(R)(1)$ sending $n\mapsto n\otimes 1$. By the construction of $\mathrm{ev}^{\mathrm{Strat}}_{A(R)}$, we see that $\varepsilon (N)\subset L^{\Gamma}=L^{\nabla_L}$. Modulo $\mu$, we write $\overline{A(R)}:=A(R)/\mu$ and $\overline{\Ainf}:=\Ainf/\mu$ and we obtain $\bar{\varepsilon}(\overline{N})\subset \overline{L}^{\nabla_{\overline{L}}}$ where $\nabla_{\overline{L}}: \overline{L}\rightarrow\overline{L}\otimes_{\overline{A(R)}}\Omega^1_{\overline{A(R)}/\overline{\Ainf}}$ is the induced log connection on $\overline{L}$.

On the other hand, for each $i=1, \ldots, d$, let $\tau_i:= p_2(U_i)-p_1(U_i)\in A(R)(1)$ and let $\bar{\tau}_i\in A(R)(1)/\mu$. Then $\mD(R)(1)/\mu$ is precisely the $p$-adic completion of the PD polynomial over $\overline{A(R)}$ in variables $\bar{\tau}_1, \ldots, \bar{\tau}_d$. In particular, each $l\in \overline{L}$ can be uniquely written as
\[
    l=\sum_{\underline{j}\in \Z_{\ge 0}^d} n_{\underline{j}}\otimes \bar{\tau}^{[\underline{j}]},
\]
where $n_{\underline{j}}\in \overline{N}$, $p$-adically converging to $0$ as $|\underline{j}|\rightarrow \infty$, and $\bar{\tau}^{[\underline{j}]}$ stands for the usual divided power $\bar{\tau}_1^{[j_1]}\cdots \bar{\tau}_d^{[j_d]}$. Notice that 
\[(\gamma_i, 1)(\tau_i)=p_2(U_i)-[\varepsilon] p_1(U_i)=\tau_i-\mu p_1(U_i)\]
and $(\gamma_i, 1)(\tau_j)=\tau_j$ if $i\neq j$. 
Let $\nabla_{\overline L,i}$ and $\nabla_{\overline N,i}$ for $1\le i \le d$, be the logarithmic coordinates of the connections $\nabla_{\overline L}$
and $\nabla_{\overline N}$ respectively. We obtain
\[\nabla^{\log}_{\overline{L},i}(l) =\sum_{\underline{j}\in \Z_{\ge 0}^d} \nabla_{\overline{N}, i}^{\log} (n_{\underline{j}})\otimes \bar{\tau}^{[\underline{j}]}-\sum_{\underline{j}\in \Z_{\ge 0}^d} p_1(U_i) n_{\underline{j}}\otimes \bar{\tau}^{[\underline{j}-1_i]}.\]
where $1_i=(0,\ldots,0,1,0,\ldots,0)$ denotes the multi-index with a single $1$ in the $i$th-place, Now, if $n\in \overline{N}$ and $l=\bar{\varepsilon}(n)$, we have $\nabla^{\log}_{\overline{L},i}(l)=0$ for all $i$. Direct computation shows that \[n_{\underline{j}}= \prod_{i=1}^d \nabla_{\overline{N},i}^{\log}(\nabla_{\overline{N},i}^{\log}-1)\cdots (\nabla_{\overline{N},i}^{\log}-(j_i-1))(n)\]
for all $\underline{j}=(j_1, \ldots, j_d)$. Since $n_{\underline{j}}$ converge $p$-adically as $|\underline{j}|\rightarrow \infty$, we conclude that $\nabla_{\overline{N}}$ is $p$-adically convergent, as desired.

\end{proof}

Conversely, from a convergent generalized representation in $\Rep^{\mu}_{\Gamma, \mathrm{conv}}(A(R))$, we are able to construct a stratification. We need the following lemma.

\begin{lemma}\label{lemma: MT 3.17}
Let $N\in \Rep^{\mu}_{\Gamma, \mathrm{conv}}(A(R))$. Then the composition
\[(N\otimes_{A(R), p_1}\mD(R)(1))^{\Gamma}\hookrightarrow N\otimes_{A(R), p_1}\mD(R)(1)\rightarrow N\]
is an isomorphism, where the second map is induced by $\Delta\colon \mD(R)(1)\rightarrow A(R)$.
\end{lemma}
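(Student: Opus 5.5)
We follow the strategy of \cite[Lemma 3.17]{MT}, working with explicit coordinates on $\mD(R)(1)$. Put $\tau_i:=p_2(U_i)-p_1(U_i)$. Lemma \ref{lemma: existence of log q-PD-envelope} identifies the exactification $E(1)$ with $\Ainf\langle P^{\gp}/N^{\gp}\rangle\widehat{\otimes}_{\Ainf}A(R)$, whose extra variables are the ratios $p_2(U_i)/p_1(U_i)$. Consequently $\mD(R)(1)$, as a $p_1$-algebra over $A(R)$, is the $(p,\mu)$-completed free $q$-PD polynomial algebra in $\tau_1,\dots,\tau_d$. More precisely, it is topologically free over $A(R)$ with basis the $q$-divided powers
\[
\tau_i^{[n]_q}:=\prod_{k=0}^{n-1}\bigl(p_2(U_i)-q^k p_1(U_i)\bigr)\big/[n]_q!
\]
and their products over multi-indices. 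This uses the explicit $q$-PD envelope description of \cite[Lemma 16.10]{BS} and \cite[\S 16]{BS}; one checks that the $q$-PD structure is the one generated by these $q$-divided powers. The $\Gamma$-action through $\Gamma\times\{1\}$ is computed from $(\gamma_i,1)(p_1(U_i))=q\,p_1(U_i)$. This gives the $q$-difference formula $\frac{(\gamma_i,1)-1}{\mu}\bigl(\tau_i^{[n]_q}\bigr)=-\,(\text{unit})\cdot p_1(U_i)\tau_i^{[n-1]_q}$, up to a harmless power of $q$. So on $L=N\otimes_{A(R),p_1}\mD(R)(1)$ we have, for $l=\sum_{\underline j} n_{\underline j}\otimes\tau^{[\underline j]_q}$,
\[
\nabla^{\log}_{L,i}(l)=\sum_{\underline j}\Bigl(\nabla^{\log}_{N,i}(n_{\underline j})\otimes\gamma_i(\tau^{[\underline j]_q})-(\text{unit})\,p_1(U_i)\,n_{\underline j}\otimes\tau^{[\underline j-1_i]_q}\Bigr).
\]
This is the $q$-analogue of the formula already used in the proof of Proposition \ref{prop: stratifications are convergent}.

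Since $\Gamma$ acts trivially modulo $\mu$ and $\mu$ is a nonzerodivisor on $L$, $L^\Gamma=L^{\nabla_L}$, by the argument of Theorem \ref{representations vs q-connections}. So it suffices to solve $\nabla_L(l)=0$. Comparing coefficients of $\tau^{[\underline j]_q}$ gives a recursion expressing $n_{\underline j+1_i}$ in terms of $\nabla^{\log}_{N,i}$ applied to $n_{\underline j}$, twisted by $q$-shifts. Solving it, $n_{\underline j}$ is determined by $n_{\underline 0}$ through
\[
n_{\underline j}=(\text{unit})\cdot\prod_{i=1}^d U_i^{-j_i}\nabla^{\log}_{N,i}(\nabla^{\log}_{N,i}-[1]_q)\cdots(\nabla^{\log}_{N,i}-[j_i-1]_q)(n_{\underline 0}).
\]
Here the factors of $p_1(U_i)$ should cancel against the log normalization, so that no actual inversion of $U_i$ is needed; this must be checked against the precise $q$-PD basis. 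Since $n_{\underline 0}=\Delta(l)$, injectivity of $L^\Gamma\to N$ is immediate. For surjectivity, given $n\in N$ we define $l$ by this formula with $n_{\underline 0}=n$. The convergence hypothesis (Definition \ref{defn: convergent log q-connection}) says exactly that these products become divisible by $(p,[p]_q)=(p,\txi)$. Iterating, using that the operators commute and that convergence for $m$ propagates to higher powers, they tend to $0$ $(p,\mu)$-adically, since $\txi\equiv p$ modulo $\mu$ up to units in the relevant sense. Therefore the series defines an element of $L$, and by construction it is killed by $\nabla_L$.

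I expect the main obstacle to be the first step: pinning down the precise $q$-PD basis of $\mD(R)(1)$ in the log setting and the exact $q$-difference formula, so that the recursion matches the convergence condition as written. One must check that the exactification variables $p_2(U_i)/p_1(U_i)$ produce, after $q$-PD envelope, exactly the $q$-divided powers of $\tau_i$ (or of $p_2(U_i)/p_1(U_i)-1$). If working directly is messy, I would first reduce to the mod $\mu$ statement, where $\mD(R)(1)/\mu$ is the classical PD polynomial algebra as in Proposition \ref{prop: stratifications are convergent} and the claim is the classical Taylor formula. I would then lift using $(p,\mu)$-adic completeness and the $\mu$-torsion-freeness of $L$ and of $H^1(\Gamma,-)$, via Lemma \ref{lemma: MT 1.20}-type vanishing, by dévissage on $\mu^k$.
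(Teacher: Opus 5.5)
There is a genuine gap in both of your routes.

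Your main route is a direct integral $q$-Taylor expansion, and it rests on a description of $\mD(R)(1)$ that you do not establish. In the form you state it, that description is false in the log case. By Lemma \ref{lemma: existence of log q-PD-envelope}, the exactification adjoins the \emph{units} $v_i=p_2(U_i)/p_1(U_i)$ coming from $P^{\gp}/N^{\gp}$. The envelope then adjoins $q$-divided powers of the elements $v_i-1$.

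Since $U_i$ is in general not a unit of $A(R)$, your elements $\tau_i^{[n]_q}=p_1(U_i)^n\prod_{k<n}(v_i-q^k)/[n]_q!$ span only a proper $A(R)$-submodule of $\mD(R)(1)$. Reducing modulo $\mu$ already shows that $v_i-1$ is not in their closed span. The factor $U_i^{-j_i}$ in your coefficients is the symptom: the series you write down is not an element of $L$. (The Taylor formula in the proof of Proposition \ref{prop: stratifications are convergent}, which has no powers of $U_i$, is the one for the variables $v_i-1$.)

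Switching to suitably twisted $q$-divided powers of $v_i-1$ removes the $U_i^{-1}$; the recursion then involves only $\gamma_i^{\pm1}$, $\nabla^{\log}_{N,i}$ and $q$-integers. But you would then have to prove that the log $q$-PD envelope is the completed free $A(R)$-module on these elements. That is a real comparison statement about $q$-PD envelopes, not a routine check. The paper only ever uses the reduction of $\mD(R)(1)$ modulo $\mu$, which is a $p$-completed PD polynomial algebra. If you proved the integral statement, you would have a genuinely different argument that bypasses the acyclicity input below. As it stands, it is not a proof.

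Your fallback is essentially the paper's route, but it omits the paper's second ingredient and substitutes one that fails. Following \cite{MT}, the paper reduces to two statements modulo $\mu$:
\begin{itemize}
\item[(a)] $\Delta_N\colon\overline{L}^{\nabla_{\overline{L}}=0}\to\overline{N}$ is an isomorphism (Taylor formula plus convergence, as you say);
\item[(b)] the log de Rham complex $\overline{L}\otimes_{\overline{A(R)}}\Omega^{\bullet}_{\overline{A(R)}/\overline{\Ainf}}$ is acyclic in positive degrees.
\end{itemize}
Statement (b) is what makes the lifting work. Write $K=L\otimes_{A(R)}q\Omega^{\bullet}_{A(R)/\Ainf}$, so that $K/\mu\cong\overline{L}\otimes\Omega^{\bullet}$. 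Vanishing of $H^1(K/\mu)$ gives $\mu H^1(K)=H^1(K)$, hence $H^1(K)=0$ by derived $\mu$-completeness. Therefore $L^{\nabla_L}/\mu\cong\overline{L}^{\nabla_{\overline{L}}=0}\cong\overline{N}$, and completeness finishes.

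Your substitute, $\mu$-torsion-freeness of $H^1(\Gamma,-)$ via Lemma \ref{lemma: MT 1.20}, does not work, for two reasons:
\begin{itemize}
\item That lemma concerns the nontrivial-character part $A_{\infty,\chi\neq1}$ and says nothing about $L$.
\item $H^1(\Gamma,L)$ actually has large $\mu$-torsion. Since $\Gamma$ acts trivially on $\overline{L}$, $\mu$-torsion-freeness would make $L^{\Gamma}\to\overline{L}$ surjective, contradicting $L^{\Gamma}/\mu\cong\overline{N}$.
\end{itemize}
The relevant cohomology is that of $L\eta_{\mu}R\Gamma(\Gamma,L)\simeq K$, not of $R\Gamma(\Gamma,L)$. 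Your d\'evissage does give injectivity of $L^{\Gamma}\to N$, using only (a) and $\mu$-adic separatedness. Surjectivity needs (b), which your proposal never addresses.
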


\begin{proof}
The proof is similar to \cite[Lemma 3.17]{MT}. Let $\overline{A(R)}, \overline{N}, \nabla_N, \nabla_{\overline{N}}, L, \overline{L}, \nabla_L$, and $\nabla_{\overline{L}}$ be as above. We want to show that $L^{\nabla_L}\rightarrow N$ is an isomorphism. Modulo $\mu$, we obtain a map $\Delta_N\colon \overline{L}^{\nabla_{\overline{L}}=0}\rightarrow \overline{N}$. By the same argument as in \emph{loc. cit.}, it suffices to prove that $\Delta_N$ is an isomorphism and that the log de Rham complex $\overline{L}\otimes_{\overline{A(R)}}\Omega^{\bullet}_{\overline{A(R)}/\overline{\Ainf}}$ is acyclic in the positive degrees.

As in the proof of Proposition \ref{prop: stratifications are convergent}, for every $l\in \overline{L}$, there is a unique way to write
\[l=\sum_{\underline{j}\in \Z_{\ge 0}^d} n_{\underline{j}}\otimes \bar{\tau}^{[\underline{j}]}\]
where $n_{\underline{j}}\in \overline{N}$, $p$-adically converging to $0$ as $|\underline{j}|\rightarrow \infty$. Also recall that
\[\nabla^{\log}_{\overline{L},i}(l) =\sum_{\underline{j}\in \Z_{\ge 0}^d} \nabla_{\overline{N}, i}^{\log} (n_{\underline{j}})\otimes \bar{\tau}^{[\underline{j}]}-\sum_{\underline{j}\in \Z_{\ge 0}^d} p_1(U_i) n_{\underline{j}}\otimes \bar{\tau}^{[\underline{j}-1_i]}\]
for every $i=1, \ldots, d$. If $l\in \overline{L}^{\Gamma}=\overline{L}^{\nabla_{\overline{L}}=0}$, we have 
\[n_{\underline{j}}= \prod_{i=1}^d \nabla_{\overline{N},i}^{\log}(\nabla_{\overline{N},i}^{\log}-1)\cdots (\nabla_{\overline{N},i}^{\log}-(j_i-1))(n_{\underline{0}})\]
for all $\underline{j}=(j_1, \ldots, j_d)$. The map $\Delta_N\colon \overline{L}^{\nabla_{\overline{L}}=0}\rightarrow \overline{N}$ sends $l=\sum_{\underline{j}\in \Z_{\ge 0}^d} n_{\underline{j}}\otimes \bar{\tau}^{[\underline{j}]}$ to $n_{\underline{0}}$ with an inverse given by $n\mapsto \sum_{\underline{j}\in \Z_{\ge 0}^d} n_{\underline{j}}\otimes \bar{\tau}^{[\underline{j}]}$
where \[n_{\underline{j}}= \prod_{i=1}^d \nabla_{\overline{N},i}^{\log}(\nabla_{\overline{N},i}^{\log}-1)\cdots (\nabla_{\overline{N},i}^{\log}-(j_i-1))(n).\]
The last term is well-defined because $\nabla_N$ is convergent. 

The proof of acyclicity of $\overline{L}\otimes_{\overline{A(R)}}\Omega^{\bullet}_{\overline{A(R)}/\overline{\Ainf}}$ is the same as in \emph{loc. cit.}
\end{proof}

\begin{proposition}\label{representation vs stractification}
Let $N\in \Rep^{\mu}_{\Gamma, \mathrm{conv}}(A(R))$. The inverse of the isomorphism in Lemma \ref{lemma: MT 3.17} induces an isomorphism
\[\varepsilon\colon N\otimes_{A(R), p_2} \mD(R)(1)\xrightarrow[]{\sim} N\otimes_{A(R), p_1}\mD(R)(1).\]
Then $\varepsilon$ is a stratification. Consequently, we arrive at a functor 
\[
    \mathrm{Strat}_{\mD(R)(\bullet)}\colon \Rep^{\mu}_{\Gamma, \mathrm{conv}}(A(R))\rightarrow \mathrm{Strat}(\mD(R)(\bullet))
\]
sending $N\mapsto (N, \varepsilon)$.
\end{proposition}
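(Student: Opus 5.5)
We plan to follow the argument of \cite[Proposition 3.18]{MT}, with Lemma \ref{lemma: MT 3.17} as the key input. First we define $\varepsilon$ precisely. Lemma \ref{lemma: MT 3.17} gives an inverse isomorphism $s\colon N\xrightarrow{\sim}(N\otimes_{A(R),p_1}\mD(R)(1))^{\Gamma}$. Here $\Gamma=\Gamma\times\{1\}$ acts on the first factor. We $p_2$-linearize $s$ to a $\mD(R)(1)$-linear map
\[\varepsilon\colon N\otimes_{A(R),p_2}\mD(R)(1)\rightarrow N\otimes_{A(R),p_1}\mD(R)(1),\qquad n\otimes f\mapsto s(n)f.\]
This is well defined: for $a\in A(R)$ we need $s(an)=p_2(a)s(n)$. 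Both sides are $\Gamma$-invariant, because $\Gamma\times\{1\}$ fixes $p_2(A(R))$, and both have the same image $an$ under $\Delta$. Injectivity of $\Delta$ on invariants then gives equality.

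To see that $\varepsilon$ is an isomorphism, we apply the same construction with $p_1$ and $p_2$ exchanged, using the flip automorphism of $\mD(R)(1)$ and the action of $\{1\}\times\Gamma$. This produces a map $\varepsilon'$ in the opposite direction. We then check that $\varepsilon'\circ\varepsilon$ and $\varepsilon\circ\varepsilon'$ are the identity. This reduces to showing that certain elements are invariant for the appropriate copy of $\Gamma$ and restrict to the identity along $\Delta$; we conclude by Lemma \ref{lemma: MT 3.17} again.

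Condition (1) of Definition \ref{defn: stratification} is immediate, since $\Delta\circ s=\mathrm{id}_N$ by construction. The main point is the cocycle condition (2). Both $p_{01}^*(\varepsilon)\circ p_{12}^*(\varepsilon)$ and $p_{02}^*(\varepsilon)$ are determined by their values on $n\otimes 1$. Each sends $N$ into
\[\big(N\otimes_{A(R),q_0}\mD(R)(2)\big)^{\Gamma\times\{1\}\times\{1\}}\]
and each restricts to the identity along the total diagonal $\mD(R)(2)\rightarrow A(R)$. So it suffices to prove an analogue of Lemma \ref{lemma: MT 3.17} for $\mD(R)(2)$: the map
\[\big(N\otimes_{A(R),q_0}\mD(R)(2)\big)^{\Gamma}\rightarrow N\]
is injective (in fact bijective). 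We expect this to be the main obstacle, together with checking that $p_{12}^*(\varepsilon)$ preserves invariance, which holds because $\Gamma$ acts only through the $0$-th factor.

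For the $\mD(R)(2)$ analogue we would repeat the proof of Lemma \ref{lemma: MT 3.17}. Modulo $\mu$, the ring $\mD(R)(2)/\mu$ is the $p$-completed PD polynomial algebra over $\overline{A(R)}$ in the variables $\bar\tau^{(1)}_i=q_1(U_i)-q_0(U_i)$ and $\bar\tau^{(2)}_i=q_2(U_i)-q_0(U_i)$. The first copy of $\Gamma$ acts as before on both families of variables. Horizontal sections are then determined by their constant term via the same recursion, which gives injectivity of $\Delta$ on horizontal sections. Lifting from modulo $\mu$ to integral uses $\mu$-adic completeness together with the acyclicity of the log de Rham complex in positive degrees, exactly as in \emph{loc. cit.}

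Finally, functoriality of $N\mapsto(N,\varepsilon)$ follows from the naturality of $s$: a $\Gamma$-equivariant map sends invariants to invariants and commutes with $\Delta$. Hence $\mathrm{Strat}_{\mD(R)(\bullet)}$ is a functor.
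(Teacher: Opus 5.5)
There is a genuine gap in your treatment of the cocycle condition: the auxiliary claim you reduce to is false. The map $\big(N\otimes_{A(R),q_0}\mD(R)(2)\big)^{\Gamma\times\{1\}\times\{1\}}\rightarrow N$ is \emph{not} injective.

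Take $N=A(R)$, which is convergent by Proposition~\ref{prop: automatically convergent}. The first copy of $\Gamma$ fixes $q_1(A(R))$ and $q_2(A(R))$. So $q_1(U_i)-q_2(U_i)$ is an invariant element of $\mD(R)(2)$. It is nonzero, since its image modulo $\mu$ is $\bar\tau^{(1)}_i-\bar\tau^{(2)}_i$. It is killed by the total diagonal. For general $N$, the elements $p_{02}(s(n))\cdot(q_1(U_i)-q_2(U_i))$ play the same role.

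Your mod $\mu$ sketch fails for the same reason. The first copy of $\Gamma$ shifts \emph{both} $\bar\tau^{(1)}_i$ and $\bar\tau^{(2)}_i$. Horizontality therefore gives only one recursion per $i$, while the coefficients are indexed by two multi-indices. They are not determined by the constant term, and the invariants are much larger than $N$.

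What is true, and what the paper uses following \cite[Proposition 3.20]{MT}, is injectivity on $\big(N\otimes_{A(R),q_0}\mD(R)(2)\big)^{\Gamma\times\Gamma\times\{1\}}$. Modulo $\mu$, invariance under the middle copy removes all dependence on $\bar\tau^{(1)}$, and then the one-family recursion of Lemma~\ref{lemma: MT 3.17} applies.

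To use this, you must show that both sides of the cocycle identity, evaluated on $n\otimes 1$, are invariant under the middle copy of $\Gamma$. For $p_{02}^*(\varepsilon)$ this is clear, since the middle copy fixes $p_{02}(\mD(R)(1))$. For $p_{01}^*(\varepsilon)\circ p_{12}^*(\varepsilon)$ you need an ingredient that is absent from your proposal: $(1,\gamma)\cdot s(n)=s(\gamma n)$, where $(1,\gamma)$ acts on $N\otimes_{A(R),p_1}\mD(R)(1)$ through $\mD(R)(1)$ only.

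This follows from Lemma~\ref{lemma: MT 3.17}. The element $(1,\gamma)s(n)$ is still $\Gamma\times\{1\}$-invariant. Using $(\gamma,1)s(n)=s(n)$ and $\Delta\circ(\gamma,\gamma)=\gamma\circ\Delta$, one gets $\Delta\big((1,\gamma)s(n)\big)=\gamma(n)$.

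Now write $s(n)=\sum_j n_j\otimes f_j$. The composite sends $n\otimes 1$ to $\sum_j p_{01}(s(n_j))\,p_{12}(f_j)$. The map $m\otimes f\mapsto p_{01}(s(m))\,p_{12}(f)$ is well defined on $N\otimes_{A(R),p_1}\mD(R)(1)$. By the intertwining property, it turns the diagonal action of $(\gamma,1)$ into the action of $(1,\gamma,1)$. Hence the image of the diagonally invariant element $s(n)$ is invariant under the middle copy, as required.

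The same intertwining property is also what your invertibility step via $\varepsilon'$ implicitly needs. There, $n\otimes 1\in N\otimes_{A(R),p_2}\mD(R)(1)$ is invariant only under the copy of $\Gamma$ that does not act on $N$.
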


\begin{proof}
The proof of \cite[Proposition 3.18 (i)]{MT} applies here verbatim. More precisely, to see the isomorphism $\varepsilon$ is a stratification, we need to show the homomorphism $N\otimes_{A(R),q_0}\mD(R)(2) \ra N$ induced by $\Delta\colon \mD(R)(2) \ra A(R)$ restricts to an injection
\[
    \big(N\otimes_{A(R),q_0}\mD(R)(2)\big)^{\Gamma\times\Gamma\times\{1\}} \hookrightarrow N.
\]
Then we are in the same situation as \cite[Proposition 3.20]{MT} and the same proof applies here.
\end{proof}

We can now finish the proof of Theorem \ref{thm: fully faithful}.

\begin{proof}[Proof of Theorem \ref{thm: fully faithful}]
    With the preparations above, the proofs of \cite[Proposition 3.18 (ii)]{MT} and \cite[Theorem 3.2]{MT} apply here verbatim.
\end{proof}

Finally, we take the Frobenius actions into consideration. 
\begin{definition}
    Let $(A,I,M_A)$ be a bounded pre-log prism and $\fX$ be a saturated $p$-adic log formal scheme over $(A/I, M_A)$. A \emph{prismatic $F$-crystal} $(\mF, \vp_{\mF})$ on the log prismatic site $(\fX/(A,I,M_A))_{\Prism}$ is a locally free log crystal $\mF \in \CR\big((\fX/(A,I,M_A))_{\Prism}\big)$ equipped with an isomorphism of sheaves of $\mO_\Prism[I^{-1}]$-modules
    \[
        \vp_{\mF} \colon \varphi^*\mF [\frac 1 I] \simra \mF [\frac 1 I].
    \]
Let $\CR^{\varphi}\big((\fX/(A,I,M_A))_{\Prism}\big)$ denote the category of prismatic $F$-crystals on $(\fX/(A,I,M_A))_{\Prism}$. When $\fX= \spf \ul R$ is affine, we also use the notation $\CR^{\varphi}\big((R/(A,I,M_A))_{\Prism}\big)$.
\end{definition}

Let $\fU=\spf \ul R$ be a small affine \'etale open as before. There is a commutative diagram
\[
 \begin{tikzcd}
 \Vect^{\vp}\big(\big(R^{(1)}/(\ul\Ainf, \txi)\big)_{\Prism}\big)  \arrow[r, "\mathrm{ev}_{A(R)}"] \arrow[dr, "\mathrm{ev}_{\Ainf(R_{\infty})}"'] & \Rep^{\mu}_{\Gamma}(A(R), \varphi) \arrow[d, "\cong"]\\
& \Rep^{\mu}_{\Gamma}(\Ainf(R_{\infty}), \varphi).
\end{tikzcd}
\]

\begin{corollary}\label{cor: equivalence between generalized representations and prismatic $F$-crystals}
Assume that $P$ is free, then there are equivalences of categories
\[
    \Vect^{\vp}\big(\big(R^{(1)}/(\ul\Ainf, \txi)\big)_{\Prism}\big) \xrightarrow[]{\sim}\Rep^{\mu}_{\Gamma}(A(R), \varphi) \xrightarrow[]{\sim}\Rep^{\mu}_{\Gamma}(\Ainf(R_{\infty}), \varphi)\xrightarrow[]{\sim}  \mathrm{BKF}^{\log}(\mathfrak{U}, \varphi).
\]
\end{corollary}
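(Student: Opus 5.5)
The plan is to assemble the corollary from three equivalences already established, in the order they appear in the chain.

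\textbf{First equivalence.} Theorem~\ref{thm: fully faithful} shows that $\mathrm{ev}_{A(R)}\colon \Vect\big((R^{(1)}/(\ul\Ainf,\txi))_\Prism\big)\to \Rep^{\mu}_{\Gamma}(A(R))$ is fully faithful with essential image $\Rep^{\mu}_{\Gamma,\conv}(A(R))$. That theorem assumes $N$ and $P$ are free. In the present corollary only $P$ is assumed free, so the first thing to check is that freeness of $N$ is automatic. Since $u\colon N\hookrightarrow P$ is saturated, $P$ is free and $N$ is sharp fs, I expect to show $N$ is free using the description of $u(e'_j)$ as products of basis elements from \cite[Proposition 3.7]{log1}. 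Alternatively, one could observe that freeness of $N$ is used only in Lemma~\ref{Lem: Tian Lemma 1.9}, and verify that step directly.

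To pass to Frobenius structures, note that $\mathrm{ev}_{A(R)}$ is compatible with Frobenius pullback. Indeed, $\varphi$ on the prism $A(R)$ is the Frobenius of the $\delta$-ring, and it is $\Gamma$-equivariant. Hence a Frobenius isomorphism $\varphi^*\mF[1/\txi]\simeq\mF[1/\txi]$ of crystals evaluates to a $\Gamma$-equivariant isomorphism on $\mathrm{ev}_{A(R)}(\mF)$.

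For full faithfulness with Frobenius, I will use that $\mF[1/\txi]$ is still determined by its value on $\fD(R)(\bullet)$. This holds because Proposition~\ref{prop: weakly final obj: free chart} says $\fD(R)$ covers the final object, so Hom sets after inverting $\txi$ embed. Consequently, horizontal Frobenius maps correspond bijectively.

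For essential surjectivity, take $(M,\varphi_M)\in\Rep^\mu_\Gamma(A(R),\varphi)$. By Theorem~\ref{representations vs q-connections} it corresponds to an object of $q\MIC(A(R),\varphi)$. By Proposition~\ref{prop: automatically convergent} this object is convergent, so $M$ lies in the essential image and comes from a crystal $\mF$. The Frobenius $\varphi_M$ then descends to $\mF[1/\txi]$ by the same cover argument. The cocycle condition is automatic, since it can be checked after evaluation on $\fD(R)(1)$ and $\fD(R)(2)$.

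\textbf{Second equivalence.} This is Theorem~\ref{thm: MT 1.13}, the Frobenius version.

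\textbf{Third equivalence.} By Theorem~\ref{thm: MT 1.14}, $\Rep^\mu_\Gamma(\Ainf(R_\infty),\varphi)=\Rep^{<\mu}_\Gamma(\Ainf(R_\infty),\varphi)$. Theorem~\ref{thm: BKF mod vs generalized representations} then identifies the latter with $\BKF^{\log}(\fU,\varphi)$ via $\Gamma(U_\infty,-)$. We invert that functor.

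\textbf{Main obstacle.} I expect the main difficulty to be the Frobenius descent in the first step, that is, showing that $\varphi_M$, which is defined only after inverting $\txi$, glues to a morphism of crystals $\varphi^*\mF[1/\txi]\to\mF[1/\txi]$. The delicate part is that $\mF[1/\txi]$ is not a crystal of finite projective modules in the same sense. I would try to argue that an isomorphism in the category of crystals of $\mO_\Prism[1/\txi]$-modules is determined by descent data along the \v{C}ech nerve $\fD(R)(\bullet)$, using flatness of the crystal property after localization. The compatibility of the descent data with $p_1^*$ and $p_2^*$ follows from $\Gamma$-equivariance via Lemma~\ref{lemma: MT 3.17}, after inverting $\txi$.
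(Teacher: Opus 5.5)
Your proposal is correct and is the paper's argument. The paper proves the first arrow by combining Theorem~\ref{thm: fully faithful} with Proposition~\ref{prop: automatically convergent} (through Theorem~\ref{representations vs q-connections}), and the remaining arrows are Theorems~\ref{thm: MT 1.13}, \ref{thm: MT 1.14} and~\ref{thm: BKF mod vs generalized representations}, so your treatment of Frobenius structures after inverting $\txi$ just spells out what the paper calls immediate.

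On freeness of $N$: the paper tacitly keeps $N$ free throughout \S\ref{subsection: log prismatic crystals}. If you want to deduce it, do not use \cite[Proposition 3.7]{log1}, since as applied in Lemma~\ref{Lem: Tian Lemma 1.9} it already presupposes a basis $e'_j$ of $N$. Instead, note that $u$ is integral and injective (hence local), so $\Z[N]\to\Z[P]$ is flat and local at the vertex; regularity then descends from $\Z[P]$ to $\Z[N]$ there, which forces the sharp fs monoid $N$ to be free.
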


\begin{proof}
This follows immediately from Theorem \ref{thm: fully faithful} and Proposition \ref{prop: automatically convergent}.
\end{proof}

Finally, we consider the global situation. Let $\fX$ be an admissibly smooth $p$-adic log formal scheme over $\underline{\mO}$. Consider the morphism
\[\varphi: \ul{\mO}=(\mO, N_{\infty})\rightarrow \ul{\Ainf/\txi}=(\Ainf/\txi, N_{\infty})\]
of pre-log rings induced by the isomorphism $\mO=\Ainf/\xi\xrightarrow[]{\varphi} \Ainf/\txi$ on the rings and the multiplication-by-$p$ map on the monoids. Define
\[
    \fX^{(1)}= \fX\times_{\spf(\ul\mO)^a,\vp}\spf(\ul{\Ainf/\txi})^a
\]
to be the base change of $\fX$ along $\varphi$. If $\fX=\spf \ul R$ is affine small, then $\fX^{(1)}$ is nothing but $\spf \ul{R^{(1)}}$.

\begin{definition}\label{defn: locally admits free charts}
We say that $\fX$ \emph{locally admits free charts} if $\fX$ is covered by small affine \'etale open neighborhoods $\{\fU_\alpha= \spf R_\alpha\}$ modeled on small charts $u_{\alpha}: N_{\alpha}\rightarrow P_{\alpha}$ (as in Definition \ref{definition: small affine}) such that both $N_{\alpha}$ and $P_{\alpha}$ are free.
\end{definition}

Using the same approach as in \cite[Theorem 5.15]{MT}, we have the following:
\begin{theorem}\label{thm: MT thm 5.15}
Let $\fX$ be an admissibly smooth $p$-adic log formal scheme over $\underline{\mathcal{O}}$ that locally admits free charts. Then there is a natural fully faithful functor
    \[
        \Vect\big(\big(\fX^{(1)}/(\ul\Ainf, \txi)\big)_{\Prism}\big)\ra\mathrm{BKF}^{\log}(\fX)
    \]
whose restriction on any small affine \'etale open $\fU=\spf \ul R \hookrightarrow \fX$ (modeled on a small chart $u:N\rightarrow P$ with both $N$ and $P$ free) coincides with  
    \begin{equation}\label{eq: prismatic crystal to BKF: local}
        \Vect\big(\big(R^{(1)}/(\ul\Ainf, \txi)\big)_{\Prism}\big)\xrightarrow[]{\ev_{A(R)}} \Rep_{\Gamma}^\mu(A(R)) \cong \mathrm{BKF}^{\log}(\fU).
    \end{equation}
\end{theorem}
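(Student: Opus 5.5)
The plan is to construct the functor locally and then glue, using that both sides satisfy étale descent on $\fX$. Both $\Vect\big((\fX^{(1)}/(\ul\Ainf,\txi))_{\Prism}\big)$ and $\mathrm{BKF}^{\log}(\fX)$ are stacks for the étale topology on $\fX$. For the prismatic side, a crystal on $(\fX^{(1)}/(\ul\Ainf,\txi))_\Prism$ is the same as a compatible family of crystals on the sites of an étale cover $\{\fU_\alpha\}$ with gluing data on the overlaps; this holds because étale maps $\fU\to\fX$ induce localizations of the prismatic site, by lifting étale maps uniquely to log prisms. For the BKF side, both local finite freeness and triviality modulo $<\mu$ are étale local conditions on $\fX_{\ett}$ by definition, and sheaves on $X_{\proket}$ glue. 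It therefore suffices to do two things: define the functor \eqref{eq: prismatic crystal to BKF: local} on small affine opens with free charts in a way independent of the chart, and show it is fully faithful there.

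\textbf{Step 1: a chart-free description.} On a small affine $\fU=\spf\ul R$ with free chart, Theorem \ref{thm: fully faithful} shows that $\ev_{A(R)}$ is fully faithful. Theorems \ref{thm: MT 1.13}, \ref{thm: MT 1.14} and \ref{thm: BKF mod vs generalized representations} give $\Rep^\mu_\Gamma(A(R))\cong\Rep^{\mu}_\Gamma(\Ainf(R_\infty))=\Rep^{<\mu}_\Gamma(\Ainf(R_\infty))\cong\mathrm{BKF}^{\log}(\fU)$. Hence the composite \eqref{eq: prismatic crystal to BKF: local} is fully faithful. To remove the dependence on the chart, I will define the functor intrinsically, without reference to $A(R)$, $\Gamma$ or $R_\infty$. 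For any log affinoid perfectoid $V=\spa(S[1/p],S)\in U_{\proket}$ with divisible chart, the triple $(\Ainf(S)\to \Ainf(S)/\txi\xleftarrow{\varphi} R^{(1)})$, with the induced log structure, is an object of the log prismatic site. Set
\[
\bM_{\mF}(V):=\mF\big(\Ainf(S),\txi\big).
\]
The crystal property makes $V\mapsto \bM_\mF(V)$ a sheaf of $\Ainfx$-modules on the basis of log affinoid perfectoids, and hence a sheaf on $X_{\proket}$. Evaluating at $V=U_\infty$ recovers $\ev_{\Ainf(R_\infty)}(\mF)$, which agrees with $\ev_{A(R)}(\mF)\otimes_{A(R)}\Ainf(R_\infty)$ by the commutative triangle preceding Theorem \ref{thm: fully faithful}. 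Comparing with the inverse functor built in the proof of Theorem \ref{thm: BKF mod vs generalized representations} then identifies $\bM_\mF|_{\fU}$ with the image of $\mF$ under \eqref{eq: prismatic crystal to BKF: local}. In particular $\bM_\mF$ lies in $\mathrm{BKF}^{\log}(\fX)$, since this is a local condition. The description is manifestly functorial and compatible with étale restriction, because no coordinates enter it.

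\textbf{Step 2: global full faithfulness.} Given $\mF,\mF'$, the presheaves $\fU\mapsto\Hom(\mF|_\fU,\mF'|_\fU)$ and $\fU\mapsto\Hom(\bM_\mF|_\fU,\bM_{\mF'}|_\fU)$ are both étale sheaves on $\fX$. The natural map between them is an isomorphism on the basis of small affine opens with free charts, by Step 1. Since $\fX$ locally admits free charts, such opens form a basis of the étale topology (étale localizations of a small chart remain small with the same chart). So the map is an isomorphism globally.

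\textbf{Main obstacle.} The delicate point is Step 1: the comparison requires that the naive evaluation at $\Ainf(S)$ genuinely produces an object of the log prismatic site with exact closed immersion. That is, the log structure on $\Ainf(S)$ induced from $P_\infty$ must be compatible on intersections and independent of the chart. I would handle this as in Proposition \ref{prop: weakly final obj: free chart} and Lemma \ref{Lem: Tian Lemma 1.9}: the log structure on $\spf(\Ainf(S))$ is forced, up to the $1+(\txi)$-torsor ambiguity which yields isomorphic log prisms, by the exact closed immersion from $\spf(S^{(1)})$. Consequently any two charts give canonically isomorphic objects of the site. Alternatively, I would follow \cite[Theorem 5.15]{MT} directly: work with the product of all charts (``all coordinates'') and use that $\ev_{A(R)}$ factors through evaluation on the perfectoid cover, so that the result is independent of the chart.
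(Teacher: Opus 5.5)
Your proposal is correct and follows the paper's own route. Like you, the paper defines the functor by evaluating the crystal at $(\spf\Ainf(A)\leftarrow\spf A^{(1)}\to\fX^{(1)})$ for log affinoid perfectoid $V$ (it sheafifies rather than arguing the presheaf is already a sheaf), identifies the result with the local functor on small affines with free charts, and deduces full faithfulness locally from Theorem \ref{thm: fully faithful}. The one step you compress, ``comparing with the inverse functor'', is done in the paper by an explicit map $f_V\colon \mF^{\pre}_{\BKF}(V)\to(\mF^{\pre}_{\BKF}(V\times_U U_\infty))^{\Gamma}\to\bM(V)$, shown to be an isomorphism on objects lying over $U_\infty$ via the crystal property and Lemma \ref{lemma: MT 5.13}(3). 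Some such direct map is needed, since you cannot invoke the equivalence $\Gamma(U_\infty,-)$ before knowing $\bM_\mF|_{\fU}\in\mathrm{BKF}^{\log}(\fU)$.
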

\begin{proof}
The proof is similar to that of \cite[Theorem 5.15]{MT}.\footnote{Alternatively, one may use the stackiness of log prismatic crystals to glue the local functors as in the proof of \cite[Theorem 5.17(ii)]{MT}. We leave this to the interested reader.} Let $X$ denote the adic generic fiber of $\fX$. Let $\fB$ be the collection of log affinoid perfectoid objects $V\in X_{\proket}$ such that $V\ra X$ factors through $\spa(R[p^{-1}],R)$ for some small affine \'etale open $\spf R \subseteq \fX$ modeled on small charts $N\rightarrow P$ with $N$ and $P$ free. (For simplicity, in what follows, we will refer to such $\spf R$ as a \emph{small affine \'etale open with a free chart}.) We claim that $\fB$ forms a basis of $X_{\proket}$. Firstly, log affinoid perfectoid objects form a basis of $X_{\proket}$ according to \cite[Proposition 7.35]{log1}. Secondly, by assumption, $\fX$ is covered by small affine \'etale open neighborhoods $\{\fU_\alpha= \spf R_\alpha\}$ with free charts. Hence, each log affinoid perfectoid object $V\in X_{\proket}$ admits a covering by $V\times_X\spa(R_\alpha[p^{-1}],R_\alpha)$; these are elements in $\fB$ by \cite[Corollary 7.33]{log1}.

For any $\mF \in \Vect\big(\big(\fX^{(1)}/(\ul\Ainf, \txi)\big)_{\Prism}\big)$, consider a presheaf $\mF_{\BKF}^{\pre}$ on $\fB$ given by 
\[
\mF_{\BKF}^{\pre}(V):= \Gamma\big((\spf(\Ainf(A)) \leftarrow \spf A^{(1)} \ra\fX^{(1)}),\mF\big).
\]
where $V\in \fB$ and $ A= \Gamma(V,\widehat{\mO}_X^+)$. Let $\mF_{\BKF}$ be the sheaf of $\Ainfx$-modules on $X_{\proket}$ associated with the presheaf $\mF_{\BKF}^{\pre}$. The rule $\mF \mapsto \mF_{\BKF}$ defines a functor 
\[
\Vect\big(\big(\fX^{(1)}/(\ul\Ainf, \txi)\big)_{\Prism}\big)\ra \Vect(\Ainfx),
\]
where $\Vect(\Ainfx)$ stands for the category of finite projective $\Ainfx$-modules on $X_{\proket}$.
We are left to check
\begin{enumerate}
\item $\mF_{\BKF} \in \mathrm{BKF}^{\log}(\fX)$;
\item for any small affine \'etale open $\spf R \subseteq \fX$ with a free chart, the diagram
\[
\begin{tikzcd}[column sep= 10ex]
\Vect\big(\big(\fX^{(1)}/(\ul\Ainf, \txi)\big)_{\Prism}\big) \arrow[r, "\mF \mapsto \mF_{\BKF}"] \arrow[d] & \BKF^{\log}(\fX) \arrow[d] \\
\Vect\big(\big(R^{(1)}/(\ul\Ainf, \txi)\big)_{\Prism}\big) \arrow[r, "(\ref{eq: prismatic crystal to BKF: local})"] &\mathrm{BKF}^{\log}(\spf R),
\end{tikzcd}
\]
commutes, where the vertical arrows are restriction functors.
\end{enumerate}
In fact, it suffices to check
     \begin{enumerate}
         \item[(2$'$)] for any small affine \'etale open $\spf R \subseteq \fX$ with a free chart, the diagram
         \[
         \begin{tikzcd}[column sep= 10ex]
             \Vect\big(\big(\fX^{(1)}/(\ul\Ainf, \txi)\big)_{\Prism}\big) \arrow[rr, "\mF \mapsto \mF_{\BKF}"] \arrow[d] & &\Vect(\Ainfx) \arrow[d] \\
             \Vect\big(\big(R^{(1)}/(\ul\Ainf, \txi)\big)_{\Prism}\big) \arrow[r, "(\ref{eq: prismatic crystal to BKF: local})"] &\mathrm{BKF}^{\log}(\spf R) \ar[r] & \Vect(\A_{\inf,\spf R}),
         \end{tikzcd}
         \]
        commutes, where the vertical arrows are the restriction functors.
     \end{enumerate}
Note that (1) follows from (2$'$) as the condition is \'etale local on $\fX$, and (2) follows from (1) and (2$'$). To prove (2$'$), we may assume that $\fX= \fU= \spf R$ is itself small affine with a free chart $u:N\rightarrow P$. Let $U=\spa(R[\frac{1}{p}], R)$ be the adic generic fiber of $\fU$ and let $U_{\infty}=\spa(R_{\infty}[\frac{1}{p}], R_{\infty})$ be the standard perfectoid Galois cover (cf. \S \ref{subsection: generalized repn}) with Galois group $\Gamma=\Hom(P^{\mathrm{gp}}/N^{\mathrm{gp}}, \widehat{\mathbb{Z}}(1))$. 

Let $\bM$ be the image of $\mF$ in $\BKF^{\log}(\fU)$ via (\ref{eq: prismatic crystal to BKF: local}). We shall construct a morphism $\mF_{\BKF} \ra \bM$. Let $M= \Gamma(U_{\infty},\bM)= \ev_{\Ainf(R_\infty)}(\mF)$. Notice that $M=\mF_{\BKF}^{\pre}(U_\infty)$ by construction. For any log affinoid perfectoid object $V\in U_{\proket}$, the fiber product $V_\infty= V\times_U U_\infty$ is also log affinoid perfectoid by \cite[Corollary 7.33]{log1} and it is a Galois cover of $V$ with Galois group $\Gamma$. On the one hand, by the crystal property of $\mF$, we have
     \[
         \mF_{\BKF}^{\pre}(V_\infty)= \mF_{\BKF}^{\pre}(U_\infty)\otimes_{\Ainf(R_\infty)} \Gamma(V_\infty, \AinfU)= M\otimes_{\Ainf(R_\infty)} \Gamma(V_\infty, \AinfU).
     \]
On the other hand, for any $s\ge 1$ we have 
     \[
         \Gamma(V,\bM/p^s)= (\Gamma(V_\infty, \bM/p^s))^\Gamma= \big(M\otimes_{\Ainf(R_\infty)}\Gamma(V_\infty, \AinfU)/p^s)\big)^\Gamma,
     \]
where the second equality is due to Lemma \ref{lemma: MT 5.13}(3). These give rise to a natural map
     \[
         f_V \colon \mF_{\BKF}^{\pre}(V) \ra \big(\mF_{\BKF}^{\pre}(V_\infty)\big)^\Gamma \ra \bM(V)
     \]
by passing to limit $s\to \infty$. Since this applies to all log affinoid perfectoid object $V$, we obtain the desired map $\mF_{\BKF}\rightarrow \bM$. 

Notice that $f_{U_\infty}$ is precisely the canonical isomorphism $\mF_{\BKF}^{\pre}(U_\infty)= M$. For any log affinoid perfectoid object $V$ lying over $U_{\infty}$, there is an isomorphism
    \[
        \mF_{\BKF}^{\pre}(U_\infty)\otimes_{\Gamma(U_{\infty},\AinfU)}\Gamma(V,\AinfU) \xrightarrow[]{\sim}\mF_{\BKF}^{\pre}(V) 
    \]
by the crystal property of $\mF$. We also have an isomorphism
    \[
        \bM(U_\infty)\otimes_{\Gamma(U_{\infty},\AinfU)}\Gamma(V,\AinfU) \xrightarrow[]{\sim}\bM(V)
    \]
by Lemma \ref{lemma: MT 5.13}(3). Therefore, $f_V$ is actually an isomorphism for any log affinoid perfectoid object $V$ lying over $U_{\infty}$; consequently, since such $V$'s form a basis, the map $\mF_{\BKF}\rightarrow \bM$ must be an isomorphism, as desired.

Finally, full faithful-ness can be checked locally, and hence follows from Theorem \ref{thm: fully faithful}.
\end{proof}

Taking Frobenius actions into account, we arrive at an equivalence of categories between relative log BKF modules and log prismatic $F$-crystals.

\begin{theorem}\label{thm: equivalence between BKF modules and prismatic $F$-crystals}
Let $\fX$ be an admissibly smooth $p$-adic log formal scheme over $\underline{\mO}$. Assume that $\fX$ locally admits free charts. Then there is a natural equivalence of categories
\[
    \Vect^{\vp}\big(\big(\fX^{(1)}/(\ul\Ainf, \txi)\big)_{\Prism}\big)\xrightarrow[]{\sim} \mathrm{BKF}^{\log}(\fX, \varphi),
\]
which coincides with the equivalence in Corollary \ref{cor: equivalence between generalized representations and prismatic $F$-crystals} when restricted to small affine \'etale opens.
\end{theorem}

\begin{proof}
This follows from Theorem \ref{thm: MT thm 5.15} and Corollary \ref{cor: equivalence between generalized representations and prismatic $F$-crystals}.
\end{proof}

\vspace{0.3in}
\section{Logarithmic $\Ainf$-cohomology with coefficients}\label{section: coefficients}
\noindent In this section, we establish the theory of logarithmic $\Ainf$-cohomology with coefficients (i.e., with coefficients in relative log BKF modules). This generalizes the logarithmic $\Ainf$-cohomology theory (with trivial coefficient) established in \cite{log1}. Our treatment here is parallel to the one in \cite[\S 6]{MT}.

Throughout \S \ref{section: coefficients}, let $\ul\mO= (\mO, \alpha\colon N_\infty \ra \mO)$ be a divisible perfectoid split log point (cf. \S \ref{subsection: defn of log BKF modules}). 

\begin{definition}\label{defn: log Ainf cohomology with coefficients}
Let $\fX$ be an admissibly smooth $p$-adic log formal scheme over $\ul \mO$ with adic generic fiber $X$, and let $\bM\in \mathrm{BKF}^{\log}(\fX, \varphi)$. We define
\[
A\Omega^{\log}_{\fX}(\bM):= L\eta_{\mu}(\widehat{R\nu_*\bM})\in D(\fX_{\ett}, \Ainf)
\]
where the completion is the derived $p$-adic completion, $\nu\colon X_{\proket} \ra \fX_{\ett}$ is the natural projection of sites, and $L\eta$ stands for the d\'ecalage functor. The \emph{log $\Ainf$-cohomology of $\bM$} is defined to be the complex
\[R\Gamma_{\Ainf}(\fX, \bM):=R\Gamma\big(\fX_{\ett}, A\Omega^{\log}_{\fX}(\bM)\big)\in D(\Ainf).\]
\end{definition}

Parallel to \cite[Theorem 6.2]{MT}, we shall construct specialization functors (see Definition \ref{defn: etale specialization}, \ref{defn: de Rham specialization}, \ref{defn: crystalline specialization})\footnote{In the rest of the paper, locally finite free $\widehat{\Z}_p$-sheaves are also referred to as \emph{$\widehat{\Z}_p$-local systems}.} \footnote{For the crystalline specialization functor, we need to assume that $\fX$ locally admits free charts (cf. Definition \ref{defn: locally admits free charts}).}
\begin{align*}
    \sigma^*_{\ett}\colon &\BKF^{\log}(\fX, \varphi)\rightarrow \big\{\textrm{locally finite free }\widehat{\Z}_p\textrm{-sheaves on }X_{\proket}\big\},\\
    \sigma^*_{\dR}\colon & \BKF^{\log}(\fX, \varphi)\rightarrow \big\{\textrm{vector bundles with integrable log connections on }\fX\big\},\\
    \sigma^*_{\crys}\colon & \BKF^{\log}(\fX, \varphi)\rightarrow \big\{\textrm{locally finite free $F$-crystals on }(\fX_k/W(\ul k))_{\crys}\big\},
\end{align*}
then we shall prove the following comparison isomorphisms.

\begin{theorem}[Theorem \ref{thm: etale comparison}, Theorem \ref{thm: de Rham comparison}, Corollary \ref{cor: crystalline comparison}] \label{thm: comparisons}
Let $\fX$ be an admissibly smooth $p$-adic log formal scheme over $\ul \mO$. For any $\bM\in \mathrm{BKF}^{\log}(\fX, \varphi)$, these specialization functors induce natural isomorphisms
\begin{align*}
\widehat{\Big(A\Omega^{\log}_{\fX}(\bM)[\frac{1}{\mu}]\Big)}^{\varphi_{\bM}=1}&\cong R\nu_*(\sigma^*_{\ett}\bM),\\
A\Omega^{\log}_{\fX}(\bM)\otimes^{\L}_{\Ainf, \theta} \mO&\cong \sigma^*_{\dR}\bM\otimes_{\mO_{\fX}} \Omega^{\bullet}_{\fX/\mO},\\
A\Omega^{\log}_{\fX}(\bM)\htimes^{\L}_{\Ainf}W(k) &\cong R\upsilon_*(\sigma^*_{\crys}\bM).
\end{align*}
of complexes on the \'etale site $\fX_{\ett}$, where $\upsilon: (\fX_k/W(\ul k))_{\crys}\rightarrow \fX_{k, \ett} \cong \fX_{\ett}$ is the natural projection of sites.\footnote{Again, in the last isomorphism, we need assume that $\fX$ locally admits free charts. See Corollary \ref{cor: crystalline comparison} for the precise statement.}
\end{theorem}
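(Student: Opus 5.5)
The plan is to reduce all three isomorphisms to a local computation on small affine \'etale opens $\fU=\spf R$, following \cite[\S 6]{MT}. The local input is the almost isomorphism of Proposition \ref{prop: MT 5.7} together with Theorem \ref{thm: MT 1.13}, Theorem \ref{thm: MT 1.14} and Theorem \ref{thm: BKF mod vs generalized representations}. Write $M=\Gamma(U_\infty,\bM)$ and $N\in\Rep^{\mu}_\Gamma(A(R))$ for the descended module with $M=N\otimes_{A(R)}\Ainf(R_\infty)$. First I would show that $R\Gamma(\fU,\widehat{R\nu_*\bM})$ is almost isomorphic to $R\Gamma_{\cont}(\Gamma,M)$, and that the nontrivial-character part $N\otimes_{A(R)}A_{\infty,\chi\neq1}$ has cohomology killed by $\varphi^{-1}(\mu)$ (Lemma \ref{lemma: MT 1.20}). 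Then, as in \cite[Lemma 6.?]{MT} and \cite[\S 10]{log1}, $L\eta_\mu$ annihilates both the almost-zero error and the $\chi\ne1$ part. This yields the local formula
\[A\Omega^{\log}_{\fX}(\bM)(\fU)\simeq L\eta_\mu R\Gamma_{\cont}(\Gamma,N)\simeq N\otimes_{A(R)}q\Omega^\bullet_{A(R)/\Ainf},\]
where the second isomorphism is Theorem \ref{representations vs q-connections}. The $p$-torsion-freeness statements of Lemma \ref{lemma: MT 1.7} control $H^i$ before applying $L\eta$. I also expect this to give perfectness (Theorem \ref{thm: perfect complex}).

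\emph{\'Etale comparison.} Let $\sigma^*_{\ett}\bM:=(\bM\otimes\Ainfx[1/\mu]^\wedge)^{\varphi_\bM=1}$. The Artin--Schreier sequence $0\to\widehat\Z_p\to \Ainfx[1/\mu]^\wedge\xrightarrow{\varphi-1}\Ainfx[1/\mu]^\wedge\to0$ on affinoid perfectoids, tensored with the trivialization of $\bM[1/\mu]$ obtained by Frobenius descent, shows that $\sigma^*_{\ett}\bM$ is a local system with $\bM\otimes\Ainfx[1/\mu]^\wedge\cong\sigma^*_{\ett}\bM\otimes\Ainfx[1/\mu]^\wedge$. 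Since $L\eta_\mu$ becomes an isomorphism after inverting $\mu$, we get $A\Omega^{\log}(\bM)[1/\mu]\simeq R\nu_*\bM[1/\mu]$. Completing and taking $\varphi_\bM=1$ fibers then gives $R\nu_*\sigma^*_{\ett}\bM$.

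\emph{de Rham comparison.} I would define $\sigma^*_{\dR}\bM$ locally as $N/\xi$ with the connection induced by $\nabla$ modulo $[p]_{q}$-type reduction, and glue using the prismatic description (Theorem \ref{thm: MT thm 5.15}). Alternatively, following \cite{MT}, one can use $\nu_*(\bM\otimes\mathcal{O}\mathbb{A}_{\inf})$-style period sheaves so that gluing is canonical. Locally, $q\Omega^\bullet\otimes_{\theta}\mO=\Omega^\bullet_{R/\mO}$ (log forms $d\log U_i$), so the local formula reduces mod $\xi$ to the log de Rham complex. For independence of coordinates I would produce a canonical map globally, e.g.\ via the \'etale-local Koszul/$L\eta$ comparison of \cite[\S 12]{log1}, and check it is an isomorphism locally.

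\emph{Crystalline comparison.} I would base change the $q$-crystal of Theorem \ref{thm: MT thm 5.15} along $\Ainf\to W(k)$, so that $q\mapsto1$, and identify $N\otimes q\Omega^\bullet\htimes W(k)$ with the log de Rham complex of a lift computing log crystalline cohomology of $\sigma^*_{\crys}\bM$. This local identification is the standard one. The main obstacle is making the comparison functorial and independent of the chart. As indicated in the introduction, I would use ``all possible coordinates'' as in \cite[\S 12]{BMS1}: take colimits over finite sets of free framings, where the PD-envelope computation of Lemma \ref{lemma: existence of log q-PD-envelope} and the weakly final object of Proposition \ref{prop: weakly final obj: free chart} make the resulting \v{C}ech--Alexander complexes quasi-isomorphic to both sides. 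The Frobenius-twist issue ($\fX^{(1)}$ versus $\fX$) would be handled by composing with $\varphi^{-1}$ on $\Ainf$, which is harmless after base change to $W(k)$ since $\varphi$ is bijective there.
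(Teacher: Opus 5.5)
Your \'etale and crystalline arguments follow the paper. For the \'etale part, the paper likewise inverts $\mu$ so that the d\'ecalage disappears. It then takes $\vp$-invariants using the exact sequence $0\to\sigma^*_{\ett}\bM\to\bM[\frac1\mu]\xrightarrow{1-\vp_{\bM}^{-1}}\bM[\frac1\mu]\to0$ of Corollary~\ref{Frobenius invariant 2}. For the crystalline part, it uses $\bM_{q\crys}=\gamma_q^*\bM_{\Prism}$; the functor $\gamma_q$ of \cite{Koshikawa} is what absorbs the $\fX^{(1)}$ versus $\fX$ twist. It then uses the ``all possible coordinates'' colimit of Construction~\ref{construction of map from qcrys to Ainf} to prove $R\upsilon_*(\bM_{q\crys})\cong A\Omega^{\log}_{\fX}(\bM)$. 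Finally it base changes to $\Acrys$, where $([p]_q)=(p)$ turns $q$-PD envelopes into PD envelopes, and only afterwards to $W(k)$.

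The genuine gap is the de Rham comparison. You define $\sigma^*_{\dR}\bM$ chart by chart as $N/\xi$ with the reduction of the $q$-connection. You then glue it, together with the local isomorphisms $A\Omega^{\log}_{\fX}(\bM)\otimes^{\L}_{\Ainf,\theta}\mO\simeq N/\xi\otimes\Omega^\bullet_{R/\mO}$, through the prismatic description of Theorem~\ref{thm: MT thm 5.15}.
\begin{itemize}
\item That theorem, like Lemma~\ref{Lem: Tian Lemma 1.9} on which it rests, requires $\fX$ to locally admit free charts. The de Rham isomorphism, however, is claimed for every admissibly smooth $\fX$; only the crystalline isomorphism carries that hypothesis.
\item In general you therefore have neither independence of the connection from the framing, nor compatibility of the local isomorphisms on overlaps.
\item The alternatives you list (``period sheaves'', ``a canonical map via \cite[\S 12]{log1}'') are not actual constructions.
\end{itemize}

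The missing idea is the Frobenius--Bockstein mechanism of \cite{BMS1}. Base change along $\vp$ converts $-\otimes^{\L}_{\Ainf,\theta}\mO$ into reduction modulo $\txi$, and $L\eta_\mu$ into $L\eta_{\txi}L\eta_\mu$. Hence the left-hand side becomes the Bockstein complex of the cohomology sheaves of a Hodge--Tate specialization. Those sheaves are identified with $\nu_*(\bM/\xi)\otimes_{\mO_{\fX}}\Omega^{i,\ct}_{\fX/\ul\mO}$ using three inputs:
\begin{itemize}
\item the Hodge--Tate comparison (Proposition~\ref{Hodge-Tate comparison});
\item triviality modulo $\xi$, which gives $L\eta_{(\zeta_p-1)}R\nu_*(\bM/\xi)\simeq\nu_*(\bM/\xi)\otimes_{\mO_{\fX}}\widetilde{\Omega}^{\log}_{\fX}$;
\item the log $p$-adic Cartier isomorphism (Proposition~\ref{p adic Cartier}).
\end{itemize}
This defines $\sigma^*_{\dR}\bM=(\nu_*(\bM/\xi),\nabla_\xi)$, with $\nabla_\xi$ the Bockstein connection (Definition~\ref{defn: de Rham specialization}). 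It also produces the isomorphism canonically, with no coordinates and no free charts.

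Relatedly, you state your local formula for $R\Gamma(\fU,A\Omega^{\log}_{\fX}(\bM))$. That requires $L\eta_\mu$ to commute with $R\Gamma(\fU,-)$, i.e.\ that the presheaf $A\Omega^{\log,\psh}_{\fX}(\bM)$ is already a sheaf. The paper deduces this from derived $\txi$-completeness and the same Hodge--Tate comparison.
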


When $\fX$ is proper, by taking the global sections, we shall deduce the following comparison isomorphisms between various cohomology theories (cf. \S \ref{subsection: classical BKF modules}).

\begin{theorem}\label{thm: perfect complex}
Suppose $\fX$ is in addition proper, and locally admits free charts. Then $R\Gamma_{\Ainf}(\fX, \bM)$ is a perfect complex of $\Ainf$-modules equipped with a $\varphi$-semilinear map $\varphi: R\Gamma_{\Ainf}(\fX, \bM)\rightarrow R\Gamma_{\Ainf}(\fX, \bM)$ which induces an isomorphism
\[R\Gamma_{\Ainf}(\fX, \bM)[\frac{1}{\xi}]\cong R\Gamma_{\Ainf}(\fX, \bM)[\frac{1}{\txi}].\]
All cohomology groups of $R\Gamma_{\Ainf}(\fX, \bM)$ are Breuil--Kisin--Fargues modules in the sense of \cite[Definition 1.5]{BMS1}. Moreover, Theorem \ref{thm: comparisons} yields isomorphisms
    \begin{align*}
        R\Gamma_{\Ainf}(\fX, \bM)\otimes^{\L}_{\Ainf}\Ainf[\frac 1\mu] &\cong R\Gamma_{\ett}(X, \sigma^*_{\ett}\bM),\\
        R\Gamma_{\Ainf}(\fX, \bM)\otimes^{\L}_{\Ainf, \theta} \mO&\cong R\Gamma_{\dR}(\fX, \sigma^*_{\dR}\bM),\\
        R\Gamma_{\Ainf}(\fX, \bM)\otimes^{\L}_{\Ainf}W(k) &\cong R\Gamma_{\crys}\big(\fX_k/W(\ul k), \sigma^*_{\crys}\bM\big),
    \end{align*}
assuming for the first comparison that $C$ is algebraically closed.
\end{theorem}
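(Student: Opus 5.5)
The plan is to deduce Theorem \ref{thm: perfect complex} from the local comparison isomorphisms of Theorem \ref{thm: comparisons} together with properness, following the pattern of \cite[\S 5--6]{BMS1} and \cite[Theorem 6.2/Corollary 6.3]{MT}.

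\emph{Frobenius.} The Frobenius $\varphi_{\bM}\colon \bM[\xi^{-1}]\simeq \bM[\txi^{-1}]$, which is semilinear, induces a semilinear map on $R\nu_*\bM[\frac{1}{\xi}]\to R\nu_*\bM[\frac{1}{\txi}]$. Since $\varphi(\mu)=\mu\txi$, the d\'ecalage satisfies $L\eta_{\mu}\circ \varphi^* \cong \varphi^*\circ L\eta_{\varphi^{-1}(\mu)}$, and $L\eta_{\xi}$ is invertible after inverting $\xi$. Using these, one obtains a map $\varphi\colon A\Omega^{\log}_{\fX}(\bM)[\frac1\xi]\simeq A\Omega^{\log}_{\fX}(\bM)[\frac1\txi]$, as in \cite[Lemma 6.10]{BMS1}. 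Taking $R\Gamma$ over the qcqs $\fX$ commutes with these localizations, which gives the stated isomorphism.

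\emph{Perfectness.} Set $K=R\Gamma_{\Ainf}(\fX,\bM)$. It is derived $(p,\mu)$-complete, because $A\Omega^{\log}_{\fX}(\bM)$ is locally derived $(p,\mu)$-complete and $R\Gamma$ preserves limits. By derived Nakayama, perfectness of $K$ follows from perfectness of $K\otimes^{\L}_{\Ainf}W(k)$, by the criterion of \cite[Lemma 4.8]{BMS1}. The de Rham comparison identifies $K\otimes^{\L}_{\Ainf,\theta}\mO$ with $R\Gamma(\fX,\sigma^*_{\dR}\bM\otimes\Omega^\bullet_{\fX/\ul\mO})$. This is a finite complex of coherent sheaves with $\mO$-linear transition maps on graded pieces (the Hodge filtration), so it is perfect over $\mO$ by properness and finite presentation over the valuation ring, as in \cite[Corollary 7.4]{log1}. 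The ideal $\ker(\theta)=(\xi)$ is contained in $\mathrm{rad}$ after completion, and $\Ainf$ is $\xi$-adically complete, so derived Nakayama with respect to $\xi$ gives perfectness of $K$. This avoids the crystalline step.

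\emph{BKF property and comparisons.} The cohomology of a perfect $\Ainf$-complex with $\varphi$ becoming an isomorphism after inverting $\txi$ consists of finitely presented modules $H^i$ with $H^i[\frac1p]$ free over $\Ainf[\frac1p]$. To verify that they are BKF modules, the key input is the \'etale comparison, which is the main obstacle. One argues from the first local isomorphism of Theorem \ref{thm: comparisons}: after $p$-completion and taking $\varphi=1$ on $A\Omega^{\log}_{\fX}(\bM)[1/\mu]$ one gets $R\nu_*\sigma^*_{\ett}\bM$. Globally, this identifies $(K\otimes\Ainf[\frac1\mu])^{\wedge,\varphi=1}$ with $R\Gamma_{\ett}(X,\sigma^*_{\ett}\bM)$, which is perfect over $\Z_p$ by finiteness of pro-Kummer \'etale cohomology for proper log smooth $X$ with $C$ algebraically closed (via primitive comparison in \cite{log1}). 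One then upgrades this to the base-changed identity $K\otimes\Ainf[\frac1\mu]\cong R\Gamma_{\ett}\otimes_{\Z_p}\Ainf[\frac1\mu]$. For this, one uses the Artin--Schreier sequence $0\to\Z_p\to\Ainf[\frac1\mu]^\wedge\xrightarrow{\varphi-1}\to0$, tensored with the local system, and the local statement $R\nu_*\bM[\frac1\mu]\cong R\nu_*(\sigma^*_{\ett}\bM\otimes\Ainfx[\frac1\mu])$, combined with the primitive comparison almost-isomorphism to pull $\Ainf$ out of the cohomology; this follows \cite[Theorem 5.7]{BMS1}. The lack of $\mu$-torsion then implies that the $H^i$ satisfy the BKF axioms, as in \cite[Proposition 4.3]{BMS1}.

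Finally, the de Rham statement is $R\Gamma$ of the second local isomorphism, using that $\otimes^{\L}\mO$ commutes with $R\Gamma$ for perfect data. The crystalline statement is $R\Gamma$ of the third, using $R\Gamma(\fX_{k,\ett},R\upsilon_*)=R\Gamma_{\crys}$ and commutation of completed base change with $R\Gamma$ by perfectness of $K$.
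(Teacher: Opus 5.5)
Your perfectness argument is the paper's (Corollary~\ref{cor: perfect complex}): derived completeness of $A\Omega^{\log}_{\fX}(\bM)$ (your $(p,\mu)$ and the paper's $(p,\xi)$ have the same radical), the de Rham comparison, perfectness of log de Rham cohomology of the proper $\fX$, and derived Nakayama along $\xi$. Drop the opening reduction to $K\otimes^{\L}_{\Ainf}W(k)$. Nakayama along the non-finitely generated ideal $W(\mathfrak{m}^\flat)$ fails without a pseudo-coherence hypothesis: the $\Ainf$-module $k$ is derived $(p,\mu)$-complete and $k\otimes^{\L}_{\Ainf}W(k)\simeq k$ is perfect over $W(k)$, yet $k$ is not perfect over $\Ainf$. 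You do not use this reduction anyway. The real gap is the Breuil--Kisin--Fargues assertion.

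\textbf{The gap.} You claim that the cohomology of a perfect complex whose Frobenius becomes an isomorphism $C[\frac1\xi]\cong C[\frac1\txi]$ automatically has $H^i[\frac1p]$ free over $\Ainf[\frac1p]$. This is false. Take $C=[\Ainf\xrightarrow{\mu}\Ainf]$ with $\varphi^0=\txi\varphi$ and $\varphi^1=\varphi$; these commute with the differential because $\varphi(\mu)=\mu\txi$. The induced map $C[\frac1\xi]\to C[\frac1\txi]$ is bijective in each degree, yet $H^1(C)=\Ainf/\mu$, and $(\Ainf/\mu)[\frac1p]$ is nonzero torsion.

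The \'etale comparison cannot exclude this. It gives $H^i[\frac1\mu]\cong H^i_{\ett}(X,\sigma^*_{\ett}\bM)\otimes_{\Z_p}\Ainf[\frac1\mu]$, hence only that $H^i[\frac1{p\mu}]$ is finite projective, and a piece like $\Ainf/\mu$ vanishes after inverting $\mu$. The ``lack of $\mu$-torsion'' you then invoke is precisely the missing statement, and nothing in your sketch proves it. Frobenius does relate the localizations of $H^i$ at the primes $\ker(\theta\circ\varphi^r)$, $r\ge 0$, to one another. But all of these primes lie in $V(\mu)$, where the \'etale comparison sees nothing, so an independent input there is needed.

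\textbf{The missing input.} It is the crystalline step you chose to avoid. The paper applies \cite[Lemma 4.20]{BMS1}, which reduces the BKF property of $M=H^i_{\Ainf}(\fX,\bM)$ to two conditions:
\begin{enumerate}
\item $M[\frac1{p\mu}]$ is finite projective over $\Ainf[\frac1{p\mu}]$, which follows from Theorem~\ref{thm: strong etale comparison};
\item $M\otimes_{\Ainf}B^+_{\crys}$ is finite projective over $B^+_{\crys}$.
\end{enumerate}
The second condition is obtained from three ingredients:
\begin{itemize}
\item the absolute crystalline comparison over $\Acrys$ (Theorem~\ref{thm: abs crystalline comparison});
\item the Dwork-trick isomorphism of Proposition~\ref{crys vs absolute crys}, which identifies $R\Gamma_{\crys}(\fX_{\mO/p}/(\Acrys,N_\infty),\mE)[\frac1p]$ with the base change to $B^+_{\crys}$ of the log-crystalline cohomology of the finite model's special fibre $\fX_{0,k}$ over $W(k)$;
\item finiteness of that classical log-crystalline cohomology.
\end{itemize}
So the free-chart hypothesis, which is what makes crystalline specialization available, is used essentially for the BKF assertion, not only for the third displayed isomorphism.
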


\vspace{0.1in}
\subsection{\'Etale specialization}\label{subsection: etale specialization}
\noindent
\vspace{0.1in}

\noindent We first generalize \cite[Proposition 6.15]{MT} to our situation.

\begin{proposition}\label{Frobenius invariant}
Let $X$ be a locally noetherian saturated log adic space that is admissibly smooth over $\spa(C,C^+)_{N_\infty}$ (cf. \cite[Definition 6.5(2)]{log1}). Let $\bV$ be a sheaf of $\Ainfx$-modules on $X_{\proket}$ such that $\bV[\frac 1\xi]$ and $\bV[\frac 1\txi]$ are locally finite free sheaves of $\Ainf[\frac 1\xi]$-modules and $\Ainf[\frac 1\txi]$-modules, respectively.\footnote{Notice that $\bV[\frac 1\mu]$ is also locally free.} Let $\vp_{\bV} \colon \bV[\frac 1\xi] \ra \bV[\frac 1\txi]$ be a Frobenius semilinear isomorphism. Notice that $\vp_{\bV}$ extends to $\vp_{\bV} \colon \bV[\frac 1\mu] \ra \bV[\frac 1{\vp(\mu)}]$. We have 
    \begin{enumerate} 
        \item $\L :=  \big(\bV \otimes_{\Ainfx}  W(\widehat\mO_{X^\flat})\big)^{\vp_{\bV}=1} $ is a locally finite free $\widehat{\Z}_p$-sheaf on $X_{\proket}$ of the same rank as $\bV[\frac{1}{\xi}]$;
        \item $\L \subseteq \bV[\frac 1\mu]$, and the sequence 
        \[
            0 \lra \L \lra \bV[\frac 1\mu] \xrightarrow{1-\varphi_{\bV}^{-1}} \bV[\frac 1\mu] \lra 0 
        \]
        is exact (here, note that $\bV[\frac 1\mu] \subseteq \bV[\frac{1}{\vp(\mu)}]$, so $\vp_{\bV}^{-1}$ is well-defined on $\bV[\frac 1\mu]$); 
        \item the inclusion $\L \subseteq \bV[\frac 1\mu]$ induces an identification $\L \otimes_{\widehat{\Z}_p}\Ainfx[\frac 1\mu]= \bV[\frac 1\mu]$. 
    \end{enumerate}
\end{proposition}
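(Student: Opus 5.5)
The statement is local on $X_{\proket}$, so I will work on a log affinoid perfectoid object $V$ over which $\bV[\frac1\xi]$ and $\bV[\frac1\txi]$ are free. The model is the proof of \cite[Proposition 6.15]{MT}, which in turn follows \cite[Lemma 4.26, Theorem 4.2]{BMS1}. The log structure plays no role once we restrict to log affinoid perfectoids $V$: there $\Gamma(V,\Ainfx)=\Ainf(S^+)$ for a perfectoid pair $(S,S^+)$. Moreover, by \cite[Proposition 7.35]{log1}, such $V$ form a basis, and further pro-Kummer-\'etale covers can be taken to be affinoid perfectoid. So the plan is to reduce to a statement about modules over $\Ainf(S^+)$ and its localizations, and then to check sheaf-theoretic exactness after passing to covers.

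\textbf{Part (1).} First consider $W(\widehat\mO_{X^\flat})$, in which $\xi$, $\txi$ and $\mu$ all become invertible. So $\bV\otimes W(\widehat\mO_{X^\flat})$ is a locally finite free $W(\widehat\mO_{X^\flat})$-module equipped with a $\varphi$-semilinear automorphism, that is, an étale $\varphi$-module over $W(S^\flat)$. By the standard Artin--Schreier--Witt argument it is trivialized on a pro-(finite étale) cover. Concretely, one solves $\varphi(x)=Ax$ modulo $p$ (a finite étale equation over the perfectoid $S^\flat$), lifts successively modulo $p^n$, and passes to the limit. This gives $\L\otimes_{\widehat{\Z}_p}W(\widehat\mO_{X^\flat})\cong\bV\otimes W(\widehat\mO_{X^\flat})$. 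In particular $\L$ is locally finite free, of rank equal to that of $\bV[\frac1\xi]$.

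\textbf{Parts (2) and (3).} The key step is to show that $\L$ already lies in $\bV[\frac1\mu]$, and more precisely that $\L\otimes\Ainfx[\frac1\mu]\to\bV[\frac1\mu]$ is an isomorphism. Here I follow \cite[Lemma 4.26]{BMS1}. Locally, pick a $\varphi$-stable $\Ainfx$-lattice $\bV'\subset\bV[\frac1\mu]$, e.g. the saturation of $\bV$, so that $\varphi_{\bV}$ acts on it with bounded poles along $\txi$. Working on $V$ over which $\L$ is trivial, a basis of $\L$ gives a matrix relating it to a basis of $\bV'$, and this matrix satisfies a Frobenius equation. The BMS argument, i.e. $\varphi$-invariants of $W(S^\flat)$ against $\Ainf(S^+)[\frac1\mu]$, shows that this matrix has entries in $\Ainf(S^+)[\frac1\mu]$. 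This is where the hypothesis that $\bV[\frac1\xi]$ is free over $\Ainf[\frac1\xi]$, rather than merely after inverting $\mu$, is used, via the intersection argument
\[
\Ainf(S^+)[\tfrac1\xi]\cap W(S^\flat)\ \text{versus}\ \Ainf(S^+)[\tfrac1\txi]\cap W(S^\flat).
\]
I expect this comparison of lattices to be the main obstacle. The difficulty is controlling denominators after iterating $\varphi$, since $\varphi^{-n}(\txi)$ and $\varphi^n(\xi)$ both divide $\mu$ only for specific $n$. I would first try to prove directly that the sheaf $\Ainfx[\frac1\mu]\cap W(\widehat\mO_{X^\flat})^{\varphi=1}$-type statements hold for $\varphi$-modules of finite height, exactly as in BMS, and then transfer to $\bV$ using the given isomorphism. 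This yields (3), and hence the inclusion $\L\subseteq\bV[\frac1\mu]$ asserted in (2).

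\textbf{Exactness in (2).} Using (3), the sequence in (2) becomes $\L\otimes_{\widehat{\Z}_p}(0\to\widehat{\Z}_p\to\Ainfx[\frac1\mu]\xrightarrow{1-\varphi^{-1}}\Ainfx[\frac1\mu]\to0)$. So it suffices to prove this for the trivial module. There are two claims to check.

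Left exactness is the statement $\Ainfx[\frac1\mu]^{\varphi=1}=\widehat{\Z}_p$. This follows from $W(\widehat\mO_{X^\flat})^{\varphi=1}=\widehat{\Z}_p$ on affinoid perfectoids with connected support, sheafified.

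Surjectivity holds pro-étale locally. Reduce to $\Ainfx$ by bounding denominators: write $y=\mu^{-k}z$ and rescale using $\varphi(\mu)=\mu\,\varphi(\xi)$-type identities. Then solve $x-\varphi^{-1}(x)=z$ modulo $p$, which is an Artin--Schreier equation over a perfectoid; solutions exist after a finite étale cover. Finally, lift $p$-adically.
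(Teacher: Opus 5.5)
Your part (1), which trivializes the \'etale $\vp$-module over perfectoid covers by Artin--Schreier--Witt instead of citing the Bhatt--Scholze/Kedlaya--Liu equivalence and the DLLZ equivalence of topoi, is fine. So is your reduction of the exactness in (2) to the trivial module, granting (3). The gap is the step you flag yourself: showing that the $\vp$-invariants of $\bV\otimes W(\widehat\mO_{X^\flat})$ lie in $\bV[\frac1\mu]$ and span it. The mechanism you sketch cannot prove this.

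Since $\xi$ and $\txi$ are units in $W(S^\flat)$, the ``intersections'' $\Ainf(S^+)[\frac1\xi]\cap W(S^\flat)$ and $\Ainf(S^+)[\frac1\txi]\cap W(S^\flat)$ are just $\Ainf(S^+)[\frac1\xi]$ and $\Ainf(S^+)[\frac1\txi]$, so they carry no information. The real problem is that an invariant vector a priori has $[\varpi]$-denominators coming from $W(S^\flat)$, and nothing in that comparison bounds them.

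Asking for a lattice stable under $\vp_{\bV}$ is also the wrong condition, because $\vp$ multiplies the exponent of such a denominator by $p$ rather than dividing it. For instance, $N=\Ainf$ with $\vp_N=\txi\vp$ is $\vp_N$-stable, yet its invariants are $\Z_p\cdot\mu^{-1}\not\subseteq N$. Moreover, $\bV$ need not be finitely generated, so its ``saturation'' is not a usable finite free lattice.

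The missing idea, which is how the paper proves (2) before (3), is to work with $\vp_{\bV}^{-1}$, which \emph{contracts} denominators.
\begin{itemize}
\item On a log affinoid perfectoid $U$ with $\widehat U=\spa(A,A^+)$ trivializing everything, take a finite free $W(A^{\flat+})$-lattice $M\subseteq V[\frac1\xi]$ with $\txi^rM\subseteq\vp_V(M)$. This is where freeness of $\bV[\frac1\xi]$ and $\bV[\frac1\txi]$ is really used.
\item Since $\vp(\mu)=\mu\txi$, you get $\vp_V^{-1}(\mu^{-r}M)\subseteq\mu^{-r}M$.
\item Let $N$ be a finite free $W(A^{\flat+})$-module with a $\vp^{-1}$-semilinear endomorphism $\psi$. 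If $x\in N\otimes W(A^\flat)$ and $x-\psi(x)\in N$, then $x\in N$. Indeed, modulo $p$, $x\in\varpi^{-c}\bar N$ forces $x\in\varpi^{-c/p}\bar N$. Iterating, $x$ is almost in $\bar N$, hence in $\bar N$, because $\bar N$ is free and $A^{\flat+}$ is integrally closed in $A^\flat$. Then induct on the $p$-adic level.
\item With $N=\mu^{-r}M$, this gives $\L\subseteq\mu^{-r}M\subseteq V[\frac1\mu]$ and the kernel statement in (2). Surjectivity of $1-\psi$ on $N$ is the matrix version of the finite \'etale equation you describe.
\item Finally, (3) also needs the inverse of $\L\otimes\Ainfx[\frac1\mu]\to\bV[\frac1\mu]$ to have bounded $\mu$-denominators, which your matrix argument does not address. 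The paper gets this by applying (2) to $H=\Hom_{W(A^{\flat+})}(V,L\otimes_{\Z_p}W(A^{\flat+}))$. There $\beta^{-1}$ is a $\vp$-invariant element of $H\otimes W(A^\flat)$, and therefore lies in $H[\frac1\mu]$.
\end{itemize}
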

\begin{proof}
    Let $\fB$ be the collection of log affinoid perfectoid objects $U\in X_{\proket}$.
    Take any $U= \varprojlim (\spa(A_i,A_i^+),\mM_i)\in \fB$, let $\widehat U= (\spa(A,A^+),\mM_{\widehat U})$ be the log perfectoid space associated with $U$. Let $\Vect(W(A^\flat))^{\vp=1}$ denote the category of finite projective modules $M$ over $W(A^\flat)$ equipped with a Frobenius isomorphism $\vp_M \colon \vp^* M \ra M$. 
    By \cite[Proposition 3.6, Example 3.4]{BS_crystal} and an induction argument, for any $n\ge 1$, we obtain an equivalence of categories
    \[
        \Vect(W(A^\flat)/p^n)^{\vp=1} \xrightarrow[]{\sim} \Loc_{\Z_p/p^n}(\spa (A,A^+)).
    \]
    By passing to the limits, we arrive at
    \[
        \Vect(W(A^\flat))^{\vp=1} \xrightarrow[]{\sim} \Loc_{\Z_p}(\spa (A,A^+))
    \]
    (also see \cite[Theorem 8.5.3]{Kedlaya-LiuI}).
    On the other hand, the equivalence of topoi $\widehat U_{\proet}^{\sim}  \cong X_{\proket/U}^{\sim}$ from \cite[Lemma 5.38]{DLLZ} induces an equivalence between categories of $\mathbb{Z}_p$-local systems
    \[
        \Loc_{\Z_p}(X_{\proket/U}) \cong \Loc_{\Z_p}(\widehat U_{\proet}) = \Loc_{\Z_p}(\spa (A,A^+)).
    \]
    Putting everything together, (1) follows from the equivalence of categories
    \[
        \Loc_{\Z_p}(X) \cong \lim_{U\in \fB} \Loc_{\Z_p}(X_{\proket/U}).
    \]
    
    The proofs of (2) and (3) follow the strategy in the proof of \cite[Proposition 6.15]{MT}; we present the proof in detail for completeness. Since the assertions are pro-Kummer-\'etale local, we can choose a log affinoid perfectoid object $U \in X_{\proket}$ with associated log perfectoid space $\widehat U = \spa(A,A^+)$ such that $\L$ is constant on $U$, and such that both $\bV[\frac 1\xi]$ and $\bV[\frac 1\txi]$ are free on $U$. Let $L= \Gamma(U,\L)$ and $V= \Gamma(U,\bV)$. Note that $V$ is equipped with a Frobenius semilinear isomorphism $\vp_V \colon V[\frac 1\xi] \ra V[\frac 1\txi]$. Let $M$ be a finite free $W(A^{\flat,+})$-lattice of $V[\frac 1\xi]$. 
    To prove (2), we need to show that $1-\vp_{V}^{-1} \colon M[\frac 1\mu] \ra M[\frac 1\mu]$ is surjective with kernel precisely $L$. Choose a sufficiently large integer $r$ such that $\txi^r M \subseteq \vp_{V}(M)$, then 
    \[
        \vp_{V}^{-1}\big(\frac 1{\mu^r} M\big) = \vp_{V}^{-1}\big(\frac {\txi^r}{\vp(\mu)^r} M\big) = \frac 1{\mu^r} \vp_{V}^{-1}(\txi^r M) \subseteq \frac 1{\mu^r} M,
    \]
    Thus, $\vp_{V}$ restricts to $\mu^{-r}M$. Replacing $M$ by $\mu^{-r}M$, we are reduced to showing that  $1-\vp_{V}^{-1} \colon M \ra M$ is surjective with kernel $L$. This follows from the proof of \cite[Proposition 6.15(ii)]{MT} using an almost-mathematical argument after mod $p$. 
    
    To prove (3), we keep the notation from the previous paragraph. Consider the finitely generated $W(A^{\flat+})$-module
    \[
        H= \Hom_{W(A^{\flat+})}\big(V, L \otimes_{\Z_p}W(A^{\flat+})\big).
    \]
    Then both $H[\frac 1\xi]$ and $H[\frac 1\txi]$ are finite free, and there is a Frobenius semilinear isomorphism $\vp_H \colon H[\frac 1\xi] \ra H[\frac 1\txi]$ induced by that on $V$. It extends to $\vp_H \colon H[\frac 1\mu] \ra H[\frac 1{\vp(\mu)}] \supseteq H[\frac 1\mu]$. Since the canonical map $\beta\colon L \otimes_{\Z_p}W(A^{\flat+})[\frac 1\mu] \ra V[\frac 1\mu]$ is invertible after base change to $W(A^\flat)$, we deduce that $\beta^{-1} \in \widehat H= H\otimes_{W(A^{\flat+})}W(A^{\flat})$. Since $\beta$ is Frobenius-invariant, so is $\beta^{-1}$. Therefore, $\beta\in \widehat{H}^{\vp_H^{-1} =1} \subseteq H[\frac 1\mu]$ by (2). Consequently, $\beta^{-1}$ induces a map from $V[\frac 1\mu]$ to $ L \otimes_{\Z_p}W(A^{\flat+})[\frac 1\mu]$, which completes the proof.
\end{proof}

\begin{corollary}
    \label{Frobenius invariant 2}
    Let $X$ be as above. Let $\bM$ be a locally finite free sheaf of $\Ainfx$-modules on $X_{\proket}$, equipped with an isomorphism 
    $\varphi_{\bM}\colon \varphi^*\bM[\txi^{-1}]\xrightarrow{\sim} \bM[\txi^{-1}]$. Then
    \begin{enumerate}
        \item $\L := \big(\bM\otimes_{\Ainfx}W(\widehat{\mO}_{X^\flat})\big)^{\varphi_\bM=1} $ is a locally finite free $\widehat{\Z}_p$-sheaf of the same rank as $\bM$;
        \item $\L \subseteq \bM[\frac 1\mu]$ and the sequence
        \[
            0 \lra \L \lra \bM[\frac 1\mu] \xrightarrow{1-\varphi_\bM^{-1}} \bM[\frac 1\mu] \lra 0 
        \]
        is exact;
        \item the inclusion $\L \subseteq \bM[\frac 1\mu]$ induces an identification $\L \otimes_{\widehat{\Z}_p}\Ainfx[\frac 1\mu]= \bM[\frac 1\mu]$. 
    \end{enumerate}
\end{corollary}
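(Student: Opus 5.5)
The plan is to deduce the corollary directly from Proposition \ref{Frobenius invariant} by taking $\bV := \bM$. The hypotheses of the proposition are then checked in three steps.

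\textbf{Localization.} Since $\bM$ is locally finite free over $\Ainfx$, both $\bM[\frac 1\xi]$ and $\bM[\frac 1\txi]$ are locally finite free over $\Ainfx[\frac 1\xi]$ and $\Ainfx[\frac 1\txi]$ respectively. This is immediate, since localization preserves local freeness and the rank.

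\textbf{Frobenius.} The given $\varphi_{\bM}\colon \varphi^*\bM[\txi^{-1}]\xrightarrow{\sim}\bM[\txi^{-1}]$ has to be turned into a Frobenius-semilinear isomorphism $\bM[\frac 1\xi]\to\bM[\frac 1\txi]$. Since $\varphi(\xi)=\txi$, we have
\[
\varphi^*\big(\bM[\tfrac 1\xi]\big)=(\varphi^*\bM)[\tfrac 1\txi],
\]
because $\varphi$ is an automorphism of $\Ainf$. Precomposing $\varphi_{\bM}$ with the semilinear map $m\mapsto m\otimes 1$ therefore gives the required semilinear isomorphism $\bM[\frac 1\xi]\to\bM[\frac 1\txi]$, exactly as noted in the footnote to Definition \ref{defn: log BKF modules}.

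\textbf{Conclusion.} Proposition \ref{Frobenius invariant} now gives (1), (2) and (3) verbatim, with $\L$ defined using $\bV=\bM$. The rank of $\bM[\frac 1\xi]$ equals the rank of $\bM$, which gives the rank statement in (1). The maps $\varphi_{\bM}^{-1}$ and $1-\varphi_{\bM}^{-1}$ on $\bM[\frac 1\mu]$ are the ones appearing in the proposition.

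The only point requiring care is that the proposition assumes $X$ is locally noetherian and admissibly smooth. These are exactly the standing hypotheses carried over by \og Let $X$ be as above\fg, so nothing further is needed. The main (mild) obstacle is the bookkeeping that the semilinear map obtained from $\varphi_{\bM}$ extends to $\bM[\frac 1\mu]\to\bM[\frac 1{\varphi(\mu)}]$ compatibly. This follows because $\mu=\xi\,\varphi^{-1}(\mu)$, so inverting $\mu$ also inverts $\xi$, and $\varphi(\mu)$ is divisible by $\txi$.
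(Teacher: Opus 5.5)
Your proposal is correct and is exactly the paper's argument: the corollary is obtained by applying Proposition~\ref{Frobenius invariant} with $\bV=\bM$. Your checks are the routine verifications the paper leaves implicit: localizations of a locally free module stay locally free of the same rank, and $\varphi_{\bM}$ corresponds to a Frobenius-semilinear isomorphism $\bM[\frac 1\xi]\to\bM[\frac 1\txi]$ because $\varphi(\xi)=\txi$ and Frobenius is bijective on $\Ainfx$.
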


\begin{proof}
   Apply Proposition \ref{Frobenius invariant} to $\bV= \bM$.
\end{proof}

Motivated by this result, we can associate to any relative log Breuil--Kisin--Fargues module $(\bM, \varphi_{\bM}) \in \BKF^{\log}(\fX,\varphi)$ a $\widehat{\Z}_p$-local system defined by
\[
    \L = \big(\bM\otimes_{\Ainfx}W(\widehat{\mO}_{X^\flat})\big)^{\varphi_\bM=1}.
\]
\begin{definition}\label{defn: etale specialization}
   Let $\fX$ be an admissibly smooth $p$-adic log formal scheme over $\ul \mO$ with adic generic fiber $X$. Let $\Loc_{\Z_p}(X)$ denote the category of $\widehat{\mathbb{Z}}_p$-local systems (i.e., locally finite free $\widehat{\Z}_p$-sheaves) on $X_{\proket}$. The \emph{\'etale specialization functor}
    \[
       \sigma_{\ett}^*\colon \BKF^{\log}(\fX, \vp) \ra \Loc_{\Z_p}(X)
    \]
    is defined by 
    \[
       \sigma^*_{\ett}\bM:=  \big(\bM\otimes_{\Ainfx}W(\widehat{\mO}_{X^\flat})\big)^{\varphi_\bM=1}.
    \]
\end{definition}

Now we prove the comparison isomorphisms between \'etale cohomology and logarithmic $\Ainf$-cohomology with coefficients.

\begin{theorem}\label{thm: etale comparison}
    Let $\fX$ be an admissibly smooth $p$-adic log formal scheme over $\ul \mO$ with adic generic fiber $X$, and let $(\bM, \vp_\bM)\in \BKF^{\log}(\fX,\vp)$. There is a natural isomorphism
\[
    \widehat{\Big(A\Omega^{\log}_{\fX}(\bM)[\frac{1}{\mu}]\Big)}^{\varphi_{\bM}=1}\cong R\nu_*(\sigma^*_{\ett}\bM)
\]
of complexes of sheaves on $\fX_{\ett}$, where $\nu\colon X_{\proket} \ra X_{\ett}$ is the natural projection of sites.
\end{theorem}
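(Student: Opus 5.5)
The plan follows \cite[Theorem 6.17]{MT} and \cite{BMS1}. The key input is the short exact sequence of Corollary \ref{Frobenius invariant 2}(2),
\[
0\lra \sigma^*_{\ett}\bM \lra \bM[\tfrac1\mu]\xrightarrow{1-\varphi_{\bM}^{-1}}\bM[\tfrac1\mu]\lra 0
\]
on $X_{\proket}$. Applying $R\nu_*$ gives a fibre sequence
\[
R\nu_*\sigma^*_{\ett}\bM\lra R\nu_*(\bM[\tfrac1\mu])\xrightarrow{1-\varphi^{-1}} R\nu_*(\bM[\tfrac1\mu]).
\]
After derived $p$-completion, since $\sigma^*_{\ett}\bM$ is a $\widehat{\Z}_p$-local system and $R\nu_*$ commutes with derived limits, the first term is unchanged. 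Thus
\[
R\nu_*\sigma^*_{\ett}\bM\cong \big(\widehat{R\nu_*(\bM[\tfrac1\mu])}\big)^{\varphi=1},
\]
where $(-)^{\varphi=1}$ denotes the homotopy fibre of $1-\varphi^{-1}$, equivalently of $\varphi-1$ up to the invertible shift. So it remains to identify $\widehat{R\nu_*(\bM[\frac1\mu])}$ with $\widehat{A\Omega^{\log}_{\fX}(\bM)[\frac1\mu]}$ compatibly with Frobenius.

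For this identification, the first point is that $L\eta_\mu$ does not change a complex after inverting $\mu$. Indeed there are natural maps $\mu^{i}\cdot: \eta_\mu K \to K$ termwise, which become isomorphisms after $[\frac1\mu]$. Hence $A\Omega^{\log}_{\fX}(\bM)[\frac1\mu]\cong (\widehat{R\nu_*\bM})[\frac1\mu]$. The second point is that $R\nu_*(\bM[\frac1\mu])=(R\nu_*\bM)[\frac1\mu]$, since $\nu$ is qcqs: $X_{\proket}$ has a basis of qcqs log affinoid perfectoids and $R\nu_*$ commutes with filtered colimits. Combining these, $\widehat{A\Omega^{\log}_{\fX}(\bM)[\frac1\mu]}$ and $\widehat{R\nu_*(\bM[\frac1\mu])}$ are both the derived $p$-completion of $(R\nu_*\bM)[\frac1\mu]$. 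Completing $R\nu_*\bM$ first and then completing again after inverting $\mu$ gives the same answer, because $p$-completion kills the difference. Frobenius compatibility is immediate: $\varphi_{\bM}$ induces an isomorphism $\varphi^*\bM[\frac1\mu]\to\bM[\frac1\mu]$, since $\txi$ divides $\varphi(\mu)$. This defines $\varphi$ on both sides, and the identifications above are functorial.

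The main obstacle is making $(-)^{\varphi_{\bM}=1}$ precise on the $A\Omega$ side, i.e. ensuring that the Frobenius on $\widehat{A\Omega^{\log}_{\fX}(\bM)[\frac1\mu]}$ is the one transported from $\bM[\frac1\mu]$. The concern is that $\varphi$ is only semilinear and $\varphi(\mu)\neq\mu$. I would argue exactly as in \cite[Lemma 6.16]{MT}. Since $\varphi(\mu)/\mu=\txi\cdot$(unit after inverting $\mu$), Frobenius extends to $\widehat{R\nu_*\bM}[\frac1\mu]$. Moreover $\varphi$ is an automorphism of $\Ainf$, so semilinearity causes no issue once $\mu$ is inverted. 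If needed, I would check the isomorphism locally on a small affine $\fU$ using the almost isomorphism of Proposition \ref{prop: MT 5.7}. Almost zero terms are killed by $W(\mathfrak m^\flat)\ni\mu$ up to completion, so they vanish after inverting $\mu$.
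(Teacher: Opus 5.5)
Your argument is correct in substance and is essentially the paper's proof with the two steps taken in the opposite order. The paper first uses $\widehat{\Ainfx[\frac1\mu]}=W(\widehat{\mO}_{X^\flat})$ to identify $\widehat{A\Omega^{\log}_{\fX}(\bM)[\frac1\mu]}$ with $R\nu_*\big(\bM\otimes_{\Ainfx}W(\widehat{\mO}_{X^\flat})\big)$, and only then takes $\varphi_{\bM}$-invariants, implicitly using the Artin--Schreier sequence on $\bM\otimes_{\Ainfx}W(\widehat{\mO}_{X^\flat})\cong\sigma^*_{\ett}\bM\otimes_{\widehat{\Z}_p}W(\widehat{\mO}_{X^\flat})$. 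You instead start from the uncompleted sequence of Corollary~\ref{Frobenius invariant 2}(2) and complete afterwards, using that $R\nu_*\sigma^*_{\ett}\bM$ is already derived $p$-complete. That is a reasonable way to make explicit the exactness hidden in the paper's ``taking $\varphi$-invariants commutes with $R\nu_*$''.

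One claim needs correcting, and it is exactly the point you call the main obstacle. $\varphi_{\bM}$ does \emph{not} induce an isomorphism from $\varphi^*(\bM[\frac1\mu])$ onto $\bM[\frac1\mu]$. We have $\varphi^*(\bM[\frac1\mu])=(\varphi^*\bM)[\frac1{\varphi(\mu)}]$ and $\varphi(\mu)=\mu\txi$ exactly, so $\varphi_{\bM}$ identifies it with $\bM[\frac1{\mu\txi}]$. But $\txi$ is not a unit in $\Ainf[\frac1\mu]$: $\tilde\theta(\mu^n)=(\zeta_p-1)^n\neq0$, while $\txi$ generates $\ker\tilde\theta$. This is why Corollary~\ref{Frobenius invariant 2} is phrased with $1-\varphi_{\bM}^{-1}$: $\varphi_{\bM}^{-1}$ preserves $\bM[\frac1\mu]$, but $\varphi_{\bM}$ does not.

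Frobenius becomes a semilinear automorphism only after $p$-completion, because $\txi\equiv\mu^{p-1}\pmod p$ is a unit in $\Ainfx[\frac1\mu]/p^n$. That fact, not ``$\varphi(\mu)/\mu=\txi\cdot$unit after inverting $\mu$'', is what lets you replace the fibre of $1-\varphi^{-1}$ by that of $\varphi-1$. It is also what gives meaning to $(-)^{\varphi_{\bM}=1}$ on the left-hand side, and it is why the paper passes to $W(\widehat{\mO}_{X^\flat})$ before taking invariants. With that fix the argument goes through, and the local check via Proposition~\ref{prop: MT 5.7} is unnecessary.
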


\begin{proof}
Using the identity $\widehat{\Ainfx[\frac 1\mu]} =W(\widehat\mO_{X^\flat})$, we obtain isomorphisms
    \[
        A\Omega^{\log}_{\fX}(\bM)\htimes_{\Ainf}\Ainf[\frac{1}{\mu}] \cong (R\nu_*\bM)\htimes_{\Ainf}\Ainf[\frac{1}{\mu}] \cong R\nu_*\big(\bM \otimes_{\Ainfx}W(\widehat{\mO}_{X^\flat})\big).
    \]
which yields isomorphisms
    \[
         \widehat{\Big(A\Omega^{\log}_{\fX}(\bM)[\frac{1}{\mu}]\Big)}^{\varphi_{\bM}=1}= \Big(R\nu_*\big(\bM \otimes_{\Ainfx}W(\widehat{\mO}_{X^\flat})\big)\Big)^{\varphi_\bM=1}= R\nu_*(\sigma^*_{\ett}\bM),
    \]
as desired.
\end{proof}

Applying Corollary \ref{Frobenius invariant 2} (3) and the primitive comparison theorem (\cite[Theorem 8.3]{log1}), we deduce the main result:

\begin{theorem}\label{thm: strong etale comparison}
    Assume $C$ is algebraically closed. Let $\fX$ be a proper admissibly smooth $p$-adic log formal scheme over $\ul \mO= \ul{\mO_C}$ with adic generic fiber $X$, and let $(\bM, \vp_\bM)\in \BKF^{\log}(\fX,\vp)$. Then the identity $\L \otimes_{\widehat{\Z}_p}\Ainfx[\frac 1\mu]= \bM[\frac 1\mu]$ induces a natural isomorphism 
    \[
        R\Gamma_{\ket}(\sigma^*_{\ett}\bM) \otimes^{\L}_{\Z_p} \Ainf[\frac{1}{\mu}] \cong R\Gamma_{\Ainf}(\bM)\otimes^{\L}_{\Ainf}\Ainf[\frac{1}{\mu}].
    \]
\end{theorem}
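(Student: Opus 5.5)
We prove the isomorphism by computing both sides as cohomology of the same sheaf on $X_{\proket}$, $\bM[\frac{1}{\mu}]=\L\otimes_{\widehat{\Z}_p}\Ainfx[\frac{1}{\mu}]$, where $\L:=\sigma^*_{\ett}\bM$, and by using the primitive comparison theorem to see that this cohomology is obtained from that of $\L$ by base change. Take the identification of Corollary \ref{Frobenius invariant 2}(3).

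\textbf{Step 1 (the $\Ainf$ side).} By construction $R\Gamma_{\Ainf}(\fX,\bM)=R\Gamma(\fX_{\ett},L\eta_\mu\widehat{R\nu_*\bM})$. After inverting $\mu$ the d\'ecalage disappears: the natural maps $L\eta_\mu K\to K\to \mu^{-?}L\eta_\mu K$ become isomorphisms after $\otimes\Ainf[\frac1\mu]$. Since $\fX$ is qcqs, $R\Gamma(\fX_{\ett},-)$ commutes with $\otimes^{\L}_{\Ainf}\Ainf[\frac1\mu]$, which is a filtered colimit along multiplication by $\mu$. We therefore get
\[
R\Gamma_{\Ainf}(\fX,\bM)\otimes^{\L}_{\Ainf}\Ainf[\tfrac1\mu]\cong R\Gamma(X_{\proket},\bM)^{\wedge}_p\otimes^{\L}_{\Ainf}\Ainf[\tfrac1\mu].
\]
Here $\bM$ is derived $p$-complete, being locally finite free over the $p$-complete sheaf $\Ainfx$, so the completion is harmless.

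\textbf{Step 2 (the \'etale side).} Work first with the lattice before inverting $\mu$. Since $\L$ is locally finite free, the primitive comparison theorem \cite[Theorem 8.3]{log1}, applied to the local systems $\L/p^n$, gives an almost isomorphism $R\Gamma_{\ket}(X,\L/p^n)\otimes\mO^a/p\to R\Gamma(X_{\proket},\L\otimes\widehat{\mO}^+_X/p)^a$. Tilting and d\'evissage along $W_n$, as in \cite[Theorem 5.7]{BMS1}, then yield an almost isomorphism
\[
R\Gamma_{\ket}(X,\L)\otimes^{\L}_{\Z_p}\Ainf\to R\Gamma(X_{\proket},\L\otimes_{\widehat{\Z}_p}\Ainfx),
\]
almost with respect to $W(\mathfrak m^\flat)$. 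Because $C$ is algebraically closed and $\fX$ is proper, $R\Gamma_{\ket}(X,\L)$ is a perfect $\Z_p$-complex, so the left side is already derived $p$-complete. Since $\mu\in W(\mathfrak m^\flat)$, almost isomorphisms become genuine isomorphisms after $\otimes\Ainf[\frac1\mu]$.

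\textbf{Step 3 (comparing the two lattices).} We still have to identify $R\Gamma(X_{\proket},\L\otimes\Ainfx)[\frac1\mu]$ with $R\Gamma(X_{\proket},\bM)[\frac1\mu]$, with $[\frac1\mu]$ applied outside cohomology. By Corollary \ref{Frobenius invariant 2}(3) the two lattices agree inside $\bM[\frac1\mu]$. Since $X$ is qcqs, they are commensurable up to $\mu^{\pm r}$ for some fixed $r$, which is checked on a finite cover. This gives maps $\mu^r\bM\to\L\otimes\Ainfx\to\mu^{-r}\bM$ whose composites are multiplication by powers of $\mu$. These induce inverse isomorphisms on cohomology after $\otimes\Ainf[\frac1\mu]$. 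Chaining Steps 1 to 3 gives the claimed isomorphism. Naturality follows because every map used is canonical, and the resulting identification does not depend on $r$.

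\textbf{Main obstacle.} The delicate step is Step 2. One must apply the primitive comparison in the log pro-Kummer \'etale setting, with nontrivial coefficients $\L$, and control completions carefully so that almost-ness is only with respect to $W(\mathfrak m^\flat)$ and not worse. The finiteness of $R\Gamma_{\ket}(X,\L)$ is also needed here.
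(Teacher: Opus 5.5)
Your proposal is correct and follows essentially the paper's route: primitive comparison for $\L/p$, induction on $n$ and a derived limit give the almost isomorphism $R\Gamma_{\ket}(\L)\otimes^{\L}_{\Z_p}\Ainf\to R\Gamma_{\proket}(X,\L\otimes_{\widehat{\Z}_p}\Ainfx)$, and inverting $\mu$ removes both the almost ambiguity and the d\'ecalage. Your Step 3, which shows that the lattices $\L\otimes\Ainfx$ and $\bM$ are commensurable up to $\mu^{\pm r}$, is a genuine improvement in precision: the paper identifies $R\Gamma_{\proket}(X,\L\otimes\widehat{\Ainfx})$ with $R\Gamma_{\proket}(X,\widehat{\bM})$ directly, and that identification only holds after inverting $\mu$.
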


\begin{proof}
    Firstly, we claim that the natural maps
    \[
        R\Gamma_{\ket}(\L/p^n)\otimes^{\L}_{\Z/p^n} W_n(\mO^\flat) \ra R\Gamma_{\proket}\big(X, \L \otimes_{\Z/p^n} W_n(\widehat{\mO}_{X^{\flat}}^+)\big)
    \]
    are almost isomorphisms for all $n\ge 1$. The claim follows by an induction on $n$, while the case $n = 1$ follows
    from the primitive comparison theorem (\cite[Theorem 8.3]{log1}). Taking derived inverse limits, we arrive at an almost isomorphism
    \begin{align*}
        R\Gamma_{\ket}(\L)\otimes^{\L}_{\Z_p} \Ainf & \cong^a R\Gamma_{\proket}\big(X, R\varprojlim_n\L \otimes_{\Z/p^n} W_n(\widehat{\mO}_{X^{\flat}}^+)\big) \\
        & = R\Gamma_{\proket}\big(X, \L\otimes_{\widehat{\Z}_p}\widehat{\Ainfx}\big) \\
        & = R\Gamma_{\proket}(X, \widehat{\bM}),
    \end{align*}
    where the last equality follows from Proposition \ref{Frobenius invariant} (3).
    Inverting $\mu$, we obtain
    \[
        R\Gamma_{\ket}(\L) \otimes^{\L}_{\Z_p} \Ainf[\frac{1}{\mu}] \cong R\Gamma(\fX_{\ett}, R\nu_*\widehat{\bM}[\frac{1}{\mu}]) \cong R\Gamma(\fX_{\ett}, A\Omega_{\fX}^{\log}(\bM))\otimes^{\L}_{\Ainf}\Ainf[\frac{1}{\mu}].
    \]
\end{proof}

\subsection{De Rham specialization}\label{subsection: de Rham specialization}

We begin by recalling the logarithmic $p$-adic Cartier isomorphism, which plays a fundamental role in our construction. Let $\fX$ be an admissibly smooth $p$-adic log formal scheme over $\ul \mO$ with adic generic fiber $X$. Let $\nu \colon X_{\proket} \ra X_{\ett}$ be the natural projection as above and we define
\[
    \widetilde{\Omega}_{\fX}^{\log} := L\eta_{(\xi_p-1)}R\nu_*\widehat{\mO}_{\fX}^+,
\]
which is a complex in $D(\fX_{\ett})$.
\begin{proposition}[{\cite[Theorem 9.5]{log1}}]\label{p adic Cartier}
Let $\fX$ be a saturated $p$-adic log formal scheme that is admissibly smooth over $\underline{\mO}$ .
Then for each $i \ge 0$, there is a canonical isomorphism of sheaves
\[
    \Omega_{\fX/\underline{\mO}}^{i, \ct} \xrightarrow[]{\sim} \mH^i(\widetilde{\Omega}^{\log}_\fX)\{i\},
\]
where $\Omega_{\fX/\underline{\mO}}^{i, \ct}$ denotes the sheaf of continuous log differentials, i.e., the $p$-adic completion of $\Omega_{\fX/\underline{\mO}}^i$, and $\{i\}$ denotes the Breuil–Kisin–Fargues twist.
\end{proposition}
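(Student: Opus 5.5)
This proposition is cited from \cite[Theorem 9.5]{log1}, so the plan is to recall how that proof goes, following the strategy of \cite[Theorem 8.7]{BMS1}. The first step is to construct a global map. Taking $R^1\nu_*$ of the log Kummer sequence (equivalently, of the sequence $0\to\Z_p(1)\to\varprojlim_{x\mapsto x^p}\mM\to\mM\to0$ for the log structure), we get a map $\mM^{\gp}_{\fX}\to R^1\nu_*\widehat{\mO}^+_X\{1\}$. Its image is killed by $\zeta_p-1$, so it lands in $\mH^1(\widetilde{\Omega}^{\log}_{\fX})\{1\}$. This gives $d\log\colon \mM^{\gp}_{\fX}\to \mH^1(\widetilde{\Omega}^{\log}_\fX)\{1\}$. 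Together with the map $\mO_\fX\to \mH^0$, it defines a derivation-type map $\Omega^{1}_{\fX/\ul\mO}\to \mH^1(\widetilde{\Omega}^{\log}_\fX)\{1\}$. This map kills $d\log$ of the image of $N_\infty$, because $N_\infty$ is divisible and so its Kummer classes are trivial over the base. The target is derived $p$-complete, so the map factors through $\Omega^{1,\ct}$.

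The second step extends to higher degrees. We use the cup product on $\bigoplus_i\mH^i(\widetilde\Omega^{\log}_\fX)\{i\}$, which is graded commutative because $L\eta$ is lax symmetric monoidal. To get maps from $\Omega^{i,\ct}$, we need $d\log(f)^2=0$, which follows from graded commutativity up to $2$-torsion; the target is $p$-torsion-free, and the case $p=2$ needs the usual extra care.

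The third step, which is the main obstacle, is to check that these maps are isomorphisms. This is étale local, so we may assume $\fX=\spf R$ is small with chart $N\to P$. By the almost purity and perfectoid-cover computations of \cite[\S\S 8--10]{log1}, $R\nu_*\widehat{\mO}^+$ is almost computed by $R\Gamma_{\cont}(\Gamma,R_\infty)$, with $\Gamma\cong\widehat{\Z}(1)^d$. We decompose $R_\infty$ into its $\chi=1$ part $R$ and the nontrivial character parts. The nontrivial parts have cohomology killed by $\zeta_p-1$ (this is the analogue of Lemma \ref{lemma: MT 1.20}), so $L\eta_{\zeta_p-1}$ removes them. The trivial part is a Koszul complex on $R$ whose differentials are $0$ modulo $\zeta_p-1$, so $\mH^i(L\eta)\cong\wedge^i R^d\{-i\}$. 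The almost-ambiguity is absorbed because $L\eta$ kills almost-zero junk whose torsion is bounded. The delicate point is then to identify $\wedge^i R^d$ with $\Omega^{i,\ct}_{R/\ul\mO}$. Here we use the basis $d\log U_j$ for $U_j=e^{e_j}$ with $e_j$ as in Lemma \ref{lemma: splitting}. Because $\mathrm{coker}(N^{\gp}\to P^{\gp})$ is torsion-free, $\Omega^{1,\ct}$ is free on these $d\log U_j$. Finally, we check that the global map from the first two steps sends $d\log U_j$ to the Koszul generator $\gamma_j^*$, by computing the Kummer class of $U_j$ on the tower $U_j^{1/p^n}$.
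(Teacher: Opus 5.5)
There is a genuine gap in your first step, and it is exactly where the canonical map is supposed to come from. Kummer theory only gives you a group homomorphism $\delta\colon \mM^{\gp}_{\fX}\to \mH^1(\widetilde{\Omega}^{\log}_{\fX})\{1\}$, which correctly vanishes on $N_\infty$. An $\mO_{\fX}$-linear map out of $\Omega^1_{\fX/\ul\mO}$ is the same thing as a log derivation. So you also need a derivation $D\colon \mO_{\fX}\to \mH^1(\widetilde{\Omega}^{\log}_{\fX})\{1\}$, vanishing on $\mO$, with $D(\alpha(m))=\alpha(m)\delta(m)$; the $\mO_{\fX}$-module structure (``the map $\mO_{\fX}\to\mH^0$'') does not supply it. Equivalently, you would have to show that $\delta$ respects every $\mO_{\fX}$-linear relation in $\Omega^1$. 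For instance, $m=1+pU_1$ is a unit with $d\log m=\frac{pU_1}{1+pU_1}\,d\log U_1$, so you need $\delta(1+pU_1)=\frac{pU_1}{1+pU_1}\,\delta(U_1)$. Kummer theory only produces $\Z_p$-linear relations coming from multiplicative ones, so this is not formal. Without it there is no global map and nothing to glue, and your third step only yields chart-dependent isomorphisms rather than the canonical one. Separately, ``its image is killed by $\zeta_p-1$, so it lands in $\mH^1$'' is backwards, since elements killed by $\zeta_p-1$ map to zero there. What is true is that Kummer classes lie in $R^1\nu_*\widehat{\mO}^+_X(1)$. Since $\mO(1)=(\zeta_p-1)\mO\{1\}$, the formula for the cohomology of $L\eta$ identifies $\mH^1(\widetilde{\Omega}^{\log}_{\fX})\{1\}$ with $\big(R^1\nu_*\widehat{\mO}^+_X/R^1\nu_*\widehat{\mO}^+_X[\zeta_p-1]\big)(1)$, and this is what receives them.

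The missing ingredient is the $p$-completed cotangent complex, which is how \cite{BMS1} produces the derivation; in the logarithmic setting one uses Gabber's log cotangent complex in the same way. Start from the natural map $\widehat{\L}_{(\mO_{\fX},\mM_{\fX})/\Z_p}\to R\nu_*\widehat{\L}_{(\widehat{\mO}^+_X,\mM_X)/\Z_p}$. Combine it with $\widehat{\L}_{(S,M)/\Z_p}\simeq S\{1\}[1]$ for perfectoid $S$ with uniquely divisible monoid $M$; divisibility kills the monoid's contribution after $p$-completion, which is what the divisible perfectoid hypothesis is for. On $H^0$ this gives an $\mO_{\fX}$-linear map $\Omega^{1,\ct}_{\fX/\ul\mO}\to R^1\nu_*\widehat{\mO}^+_X\{1\}$. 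Absolute and relative differentials agree here because $\widehat{\L}_{\ul\mO/\Z_p}$ is concentrated in degree $-1$. One then checks, by functoriality of both constructions, that this map sends $d\log m$ to the Kummer class of $m$. That check shows the map factors through $\mH^1(\widetilde{\Omega}^{\log}_{\fX})\{1\}$, and it gives the normalization you use at the end of step three. You could instead define the map chart by chart and prove chart-independence, but that requires exactly this compatibility for arbitrary sections of $\mM^{\gp}_{\fX}$, which is a real computation. With this in place, your steps two and three are the standard argument. One caveat: torsion-freeness of $\mH^2$ is not automatic from $L\eta$. For $p=2$ it has to come from your local computation, and for odd $p$ it is not needed since $2$ is invertible.
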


Recall the site $\fX_{\ett,\mathrm{small}}$ (cf. \cite[Construction 10.3]{log1}) consisting of small $p$-adic formal affine \'etale opens $\fU = \spf R \rightarrow \fX$ (cf. Definition \ref{definition: small affine}), equipped with the \'etale topology. Let $\fX_{\ett}^{\psh}$ denote the presheaf topos on this site. There is a natural morphism of topoi
\[
    j= (j^{-1}, Rj_*)\colon \fX_{\ett} \ra \fX_{\ett}^{\psh},
\]
where $Rj_*$ is the forgetful functor and $j^{-1}$ is given by sheafification. We refer the reader to \cite[\S 10]{log1} for more details. For any relative log BKF module $\bM\in \BKF^{\log}(\fX)$, consider presheaves
\[
        A\Omega_{\fX}^{\log,\psh}(\bM):= L\eta_\mu {R\nu^{\psh}_*\widehat{\bM}}
\]
and
\[
     \widetilde{\Omega}^{\log,\psh}_{\fX}(\bM/\xi):= L\eta_{(\zeta_p-1)}R\nu^{\psh}_*(\bM/\xi),
\]
where $\nu^{\psh}= j\circ \nu \colon X_{\proket} \ra \fX^{\psh}_{\ett}$. Now consider a small affine $\fU = \spf(\underline{R})^a \in \fX_{\ett,\mathrm{small}}$ with its (log) adic generic fiber $U$. Then we have
\[
    R\Gamma\big(\fU, A\Omega_{\fX}^{\log,\psh}(\bM)\big) = A\Omega_{\underline{R}}^{\proket}(\bM):= L\eta_\mu R\Gamma_{\proket}(U, \widehat{\bM})
\]
and
\[
    R\Gamma\big(\fU, \widetilde{\Omega}^{\log, \psh}_{\fX}(\bM)\big) = \widetilde{\Omega}^{\proket}_{\underline{R}}(\bM/\xi) := L\eta_{(\zeta_p-1)}R\Gamma_{\proket}(U, \bM/\xi).
\]
Recall that $\fU$ comes equipped with small coordinates
\[
    \square \colon \underline{R^{\square}}= \big(P\sqcup_NN_{\infty}\rightarrow R^{\square}=\mO\langle P\rangle\widehat{\otimes}_{\mO\langle N\rangle}\mO\big) \ra \underline{R}= (P\sqcup_NN_{\infty}\rightarrow R).
\]
These coordinates yield a perfectoid pre-log ring
\[
    \underline{R_\infty}= \big(P_{\Q\ge 0} \ra R_\infty= R_\infty^\square\widehat{\otimes}_{R^\square}R\big).
\]
It is a Galois cover of $\underline{R}$ with Galois group $\Gamma= \Hom(P^{\gp}/N^{\gp}, \widehat{\Z}(1)) \cong \widehat{\Z}^d$, where $d= \mathrm{rk}_{\Z}(P^{\gp}/N^{\gp})$. Let $U_\infty$ be the associated log affinoid perfectoid object in $X_{\proket}$. By Proposition \ref{prop: MT 5.7}, the coordinates $\square$ induce almost isomorphisms
\[
    R\Gamma_{\cont}(\Gamma, \Gamma(U_\infty, \bM)) \simra R\Gamma_{\proket}(U, \widehat{\bM}),
\]
and
\[
    R\Gamma_{\cont}(\Gamma, \Gamma(U_\infty, \bM/\xi)) \simra R\Gamma_{\proket}(U, \bM/\xi).
\]
Applying the d\'ecalage functors $L\eta_\mu$ and $L\eta_{(\zeta_p-1)}$, respectively, we obtain maps
\[
    A\Omega^{\square,\gp}_{\underline{R}}(\bM) := L\eta_{\mu}R\Gamma_{\cont}(\Gamma, \Gamma(U_\infty, \bM)) \ra A\Omega_{\underline{R}}^{\proket}(\bM)
\]
and
\[
    \widetilde{\Omega}^{\square, \gp}_{\underline{R}}(\bM/\xi) := L\eta_{(\zeta_p-1)}R\Gamma_{\cont}(\Gamma, \Gamma(U_\infty, \bM/\xi)) \ra \widetilde{\Omega}^{\proket}_{\underline{R}}(\bM/\xi).
\]

\begin{lemma} \label{group coh vs presheaf coh}
The two natural maps $A\Omega^{\square,\gp}_{\underline{R}}(\bM) \ra A\Omega_{\underline{R}}^{\proket}(\bM)$ and $\widetilde{\Omega}^{\square, \gp}_{\underline{R}}(\bM/\xi) \ra \widetilde{\Omega}^{\proket}_{\underline{R}}(\bM/\xi)$ are isomorphisms.
\end{lemma}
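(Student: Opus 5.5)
The plan is to follow the strategy of \cite[Lemma 9.6]{BMS1} and its logarithmic analogue in \cite[\S 10]{log1}. The basic principle is that the d\'ecalage functor $L\eta_f$ turns a map whose cone is killed by a suitable element into a quasi-isomorphism, provided the source has enough torsion-freeness.

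By Proposition \ref{prop: MT 5.7}, the maps
\[
R\Gamma_{\cont}(\Gamma, \Gamma(U_\infty, \bM)) \ra R\Gamma_{\proket}(U, \widehat{\bM})
\quad\text{and}\quad
R\Gamma_{\cont}(\Gamma, \Gamma(U_\infty, \bM/\xi)) \ra R\Gamma_{\proket}(U, \bM/\xi)
\]
are almost isomorphisms. In other words, their cones have cohomology killed by $W(\mathfrak{m}^\flat)$, respectively by $\mathfrak{m}$. The relevant input is \cite[Lemma 8.11]{BMS1}: let $f\colon C\ra D$ be a map in $D(A)$ whose cone has cohomology killed by an ideal $J$. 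Suppose that $H^i(C)/f$ has no nonzero $J$-torsion for all $i$, or equivalently that $H^i(C)$ has no nonzero elements killed by $J$ modulo $f$. Then $L\eta_f C\ra L\eta_f D$ is a quasi-isomorphism. The statement I have in mind is the version used in \cite[Lemma 9.6]{BMS1}: if $H^i(C)$ and $H^i(C/f)$ have no nonzero almost-zero elements, then $L\eta_f$ kills the almost discrepancy.

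So the first step is to verify that, for $C=R\Gamma_{\cont}(\Gamma,\Gamma(U_\infty,\bM))$ with $f=\mu$, the groups $H^i(C/\mu)=H^i(\Gamma, \Gamma(U_\infty,\bM)/\mu)$ contain no nonzero elements killed by $W(\mathfrak{m}^\flat)$, and that $H^i(C)$ is $\mu$-torsion-free modulo its almost-zero part. To verify this, I will use the triviality of $\bM$ modulo $\mu$, which holds by Theorem \ref{thm: MT 1.14}. Locally $\Gamma(U_\infty,\bM)/\mu\cong N\otimes_{A(R)/\mu}\Ainf(R_\infty)/\mu$ for a finite projective $A(R)/\mu$-module $N$ with trivial $\Gamma$-action; after Theorem \ref{thm: MT 1.13}, $\Gamma(U_\infty,\bM)=M_0\otimes_{A(R)}\Ainf(R_\infty)$ with $M_0\in\Rep^\mu_\Gamma(A(R))$. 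Since $N$ is a direct summand of a free module, the group cohomology $H^i(\Gamma,\Gamma(U_\infty,\bM)/\mu)$ is a direct summand of finitely many copies of $H^i(\Z^d,\Ainf(R_\infty)/\mu)$. The latter has no nonzero $W(\mathfrak{m}^\flat)$-torsion by Lemma \ref{lemma: MT 1.7}(1), which relies on \cite[Proposition 10.12]{log1}. The same reduction, together with Lemma \ref{lemma: MT 1.7}(3) and the decomposition $A_\infty=A^\square\oplus A_{\infty,\chi\neq1}$ of Lemma \ref{lemma: MT 1.20}, controls the $p$- and $\mu$-torsion needed in the integral case.

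The second step treats the mod $\xi$ version with $f=\zeta_p-1$. Here $\Gamma(U_\infty,\bM/\xi)\cong N'\otimes_{R}R_\infty$, where $N'$ is finite projective with a $\Gamma$-action that is trivial modulo $\zeta_p-1$. I then need $H^i(\Gamma, R_\infty\otimes N')/(\zeta_p-1)$ to have no almost-zero elements. Splitting $R_\infty=R\oplus R_{\infty,\chi\neq1}$, the nontrivial-character part of the cohomology is killed by $\zeta_p-1$. The trivial-character part is computed by a Koszul complex on $N'$ with differentials divisible by $\zeta_p-1$, and this gives a finite projective $R$-module, hence one with no $\mathfrak{m}$-torsion. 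This is the same computation that proves Proposition \ref{p adic Cartier} in \cite[\S 9]{log1}, now carried out with coefficients, where the coefficients are locally a direct summand of a free module.

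I expect the main obstacle to be the twisted Koszul computation for nontrivial $\Gamma$-action on the coefficients. The issue is showing that $L\eta$ of the nontrivial-character part is still acyclic once the coefficient module carries a nontrivial but $\mu$-trivial action. The approach will be to argue as in the full faithfulness part of Theorem \ref{thm: MT 1.16}: for $\chi\neq1$ write $[\varepsilon^{k_i}]\gamma_i-1=([\varepsilon^{k_i}]-1)\cdot(\text{unit})$, which shows that each such summand has cohomology killed by $\varphi^{-1}(\mu)$ as in Lemma \ref{lemma: MT 1.20}(1). By \cite[Lemma 6.? / Prop.\ 9.?]{BMS1}, such pieces contribute nothing after $L\eta_\mu$. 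Once these torsion statements are in place, both maps are quasi-isomorphisms after applying $L\eta$.
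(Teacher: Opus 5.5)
Your proposal is essentially the paper's argument: the almost isomorphisms of Proposition~\ref{prop: MT 5.7} become isomorphisms after d\'ecalage by \cite[Lemma 8.11]{BMS1} and \cite[Lemma 10.10]{log1}, and the required torsion conditions are checked by using the triviality of $\bM$ modulo $\mu$ (Theorem~\ref{thm: MT 1.14}) and modulo $\xi$ to reduce to trivial coefficients, where \cite[Lemma 9.21, Proposition 10.12]{log1} apply. One correction: those conditions only involve $H^i(\Gamma,\Gamma(U_\infty,\bM)/\mu)$ and $H^i(\Gamma,\Gamma(U_\infty,\bM/\xi))$ (together with its reduction modulo $\zeta_p-1$), whose coefficients have the form $N\otimes(\text{trivial coefficients})$ with $\Gamma$ acting \emph{trivially} on $N$---for $\bM/\xi$ this comes from triviality modulo $\xi_1=\xi$, not merely modulo $\zeta_p-1$, and it is exactly what makes your finite projectivity claim true---so the twisted Koszul computation you flag as the main obstacle, and the extra condition on $H^i(C)$ in the integral case, are never needed.
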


\begin{proof}
By \cite[Lemma 10.10]{log1} and \cite[Lemma 8.11]{BMS1}, it suffices to prove the following two claims:
\begin{enumerate}
        \item The cohomology groups $H^i_{\cont}(\Gamma, \Gamma(U_\infty, \bM/\xi))$ and $H^i_{\cont}(\Gamma, \Gamma(U_\infty, \bM/\xi))/(\zeta_p-1)$ have no nontrivial almost zero element for each $i\ge 0$.
        \item For each $i\ge 0$, $H^i_{\cont}(\Gamma, \Gamma(U_\infty, \bM)/\mu)$ has no nontrivial $W(\fm^\flat)$-torsion.
\end{enumerate}
Since $\bM\in \BKF^{\log}(\fX)$ is trivial modulo $<\mu$, the module $\bM/\xi$ is trivial over $\Ainfx/\xi$. Moreover, by Theorem \ref{thm: MT 1.14}, $\Gamma(U_\infty, \bM)/\mu$ is trivial over $\Ainf(R_\infty)/\mu$. Therefore, we reduce to the case of the trivial BKF module $\bM= \Ainfx$ and then apply \cite[Lemma 9.21]{log1} and \cite[proposition 10.12]{log1}.
\end{proof}

\begin{lemma}\label{group coh base change}
The $\Gamma$-equivariant surjection $\widetilde{\theta} = \theta\circ \varphi^{-1} \colon \Ainf(R_\infty) \ra R_\infty$ induces an isomorphism
\[
    A\Omega^{\square,\gp}_{\underline{R}}(\bM)/\txi \lra \widetilde{\Omega}^{\square, \gp}_{\underline{R}}(\bM/\xi).
\]
\end{lemma}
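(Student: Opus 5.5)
The plan is to follow \cite[Lemma 6.9]{MT} and the trivial-coefficient argument of \cite[Proposition 10.14]{log1}, via the standard criterion for d\'ecalage commuting with reduction: by \cite[Lemma 6.4 / Proposition 6.12]{BMS1}, for a complex $K$ of $\Ainf$-modules with $\mu$ a nonzerodivisor on its terms, there is a natural map $(L\eta_\mu K)/\txi \to L\eta_{\theta(\varphi^{-1}... )}(K/\txi)$. Concretely, write $\mu=\txi\cdot\varphi^{-1}... $; more precisely, since $\mu = \varphi(\varphi^{-1}(\mu))$ and $\widetilde\theta(\mu)$ is divisible exactly by $\zeta_p-1$ up to units (indeed $\widetilde\theta(\varphi^{-1}(\mu))=\zeta_p-1$ and $\mu=\txi\cdot\varphi^{-1}... $ is not needed), I use the identity $\mu = \varphi^{-1}(\mu)\cdot \xi$ and the fact that $\widetilde{\theta}$ sends $\varphi^{-1}(\mu)$ to $\zeta_p-1$ (up to unit) and $\txi$-reduction of $\mu$-d\'ecalage. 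The general principle (\cite[Lemma 6.4]{BMS1}-type): if $f\in A$, $g\in A$ with $K$ consisting of $f$-torsion-free terms and $H^i(K/f)$... ; I will instead use the concrete statement: for $K$ with $H^*(K)$ having no $\mu$-torsion issues, $L\eta_\mu K\otimes^{\L}A/\txi \simeq L\eta_{\widetilde\theta(\mu)}(K\otimes^{\L}A/\txi)$ provided $H^i(K/\mu)$ has no $\txi$-torsion, i.e. when $\mu$-torsion-free Bockstein conditions hold (\cite[Proposition 6.12]{BMS1}).

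Step one: reduce to trivial coefficients. The complex $R\Gamma_{\cont}(\Gamma,M)$, $M=\Gamma(U_\infty,\bM)$, is computed by the Koszul complex $\Kos(M;\gamma_1-1,\dots,\gamma_d-1)$. By Theorem \ref{thm: MT 1.13} and Theorem \ref{thm: MT 1.14}, $M\cong N\otimes_{A(R)}\Ainf(R_\infty)$ with $N\in\Rep^\mu_\Gamma(A(R))$, and $M/\mu$ is trivial; after Zariski-localizing (which is harmless for isomorphism checks, using Lemma \ref{lemma: MT 1.24}) we may assume $N$ free. Using the eigen-decomposition $\Ainf(R_\infty)=A(R)\widehat\oplus\Ainf(R_\infty)_{\chi\neq1}$ the Koszul complex splits as $\Kos(N)\oplus\Kos(N\otimes A_{\infty,\chi\neq1})$. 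On the nontrivial-character part, the arguments of the fully faithfulness proof in Theorem \ref{thm: MT 1.16} and Lemma \ref{lemma: MT 1.20} show cohomology is killed by $\varphi^{-1}(\mu)$, so $L\eta_\mu$ kills it, and likewise its reduction mod $\txi$ is killed by $\zeta_p-1$ after $L\eta_{(\zeta_p-1)}$ (its cohomology is killed by $\zeta_p-1$ and the relevant $\widetilde\theta$-image). So both sides reduce to $A(R)$-level Koszul complexes.

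Step two: on the trivial-character part, $L\eta_\mu\Kos(N;\gamma_i-1)$ is the $q$-de Rham complex $N\otimes q\Omega^\bullet_{A(R)/\Ainf}$ by Theorem \ref{representations vs q-connections}, whose terms are free and hence reduction mod $\txi$ is termwise; similarly $L\eta_{(\zeta_p-1)}\Kos(N/\txi;\gamma_i-1)$ is the Koszul complex with differentials $(\gamma_i-1)/(\zeta_p-1)$, valid because $\gamma_i\equiv 1$ modulo $\zeta_p-1$ on $N/\txi=N\otimes_{\widetilde\theta}R$ (as $\widetilde\theta(\mu)=\zeta_p-1$ up to unit... precisely $\widetilde\theta(\mu)=\zeta_p-1$). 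Comparing, the map is termwise the identity $N\otimes q\Omega^\bullet/\txi\to (N/\txi)\otimes\Omega^\bullet$, an isomorphism. The main obstacle is the first step: verifying that the $\chi\neq1$ pieces and the mod-$\txi$ reductions interact correctly, i.e. that the cohomology of the reduced nontrivial part is genuinely killed by $\zeta_p-1$; I would handle this by the same explicit factorization $[\varepsilon^{k}]\gamma_i-1=([\varepsilon^k]-1)(1+\cdots)$ applied after $\widetilde\theta$.
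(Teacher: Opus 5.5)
Your argument is correct, but it takes a different route from the paper's.

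\emph{The paper's route.} The paper never computes either side. It invokes the base-change criterion for $L\eta$ recorded in \cite[Proposition 10.10]{log1}, which reduces the claim to $p$-torsion-freeness of $H^i_{\cont}(\Gamma,\Gamma(U_\infty,\bM)/\mu)$; recall $\txi\equiv p$ modulo $\mu$. By Theorem \ref{thm: MT 1.14}, $\Gamma(U_\infty,\bM)/\mu$ is trivial over $\Ainf(R_\infty)/\mu$. So this cohomology is a direct summand of finitely many copies of $H^i(\Gamma,\Ainf(R_\infty)/\mu)$, and the torsion-freeness is the trivial-coefficient computation of \cite[Proposition 10.12]{log1}.

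\emph{Your route.} Your Steps one and two bypass that criterion entirely; the criterion you state in your first paragraph is never used.
\begin{itemize}
\item You descend to $N$ over $A(R)$ via Theorems \ref{thm: MT 1.13} and \ref{thm: MT 1.14}.
\item You kill the $\chi\neq 1$ part on both sides, because multiplication by $\varphi^{-1}(\mu)$, hence by $\mu$, is null-homotopic on its Koszul complex. That homotopy reduces modulo $\txi$ to one for $\widetilde\theta(\mu)=\zeta_p-1$, so your stated ``main obstacle'' is immediate.
\item On the $A(R)$-part you use that $\eta_f$ of a Koszul complex whose operators are divisible by $f$, with $f$-torsion-free terms, is the Koszul complex of the divided operators. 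The map then becomes the identity of $\Kos(N/\txi;(\gamma_i-1)/\mu)$, since $(\gamma_i-1)/(\zeta_p-1)$ on $N/\txi$ is the reduction of $(\gamma_i-1)/\mu$.
\end{itemize}

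\emph{Comparison.} The paper's route is shorter, uses only triviality modulo $\mu$, and reuses a torsion computation already done for trivial coefficients. Yours needs the full descent to $A(R)$ and a coefficient version of the $\chi\neq 1$ vanishing. That vanishing should be phrased as a homotopy uniform in $\chi$ (built from $u_i^{-1}$, which is $(p,\mu)$-adically bounded) so that it passes to the completed direct sum. In exchange, your route gives an explicit chain-level identification of both sides with the $q$-de Rham complex of $N$ modulo $\txi$, with no torsion-freeness input.

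\emph{Slips to fix.}
\begin{itemize}
\item $\mu=\xi\,\varphi^{-1}(\mu)$; it is not a multiple of $\txi$.
\item $\widetilde\theta(\varphi^{-1}(\mu))=\zeta_{p^2}-1$. What you actually need, and correctly use in Step two, is $\widetilde\theta(\mu)=\zeta_p-1$.
\item The Zariski localization making $N$ free is unnecessary: finite projective terms are already $\mu$- and $\txi$-torsion-free. Drop it, since calling it harmless would require checking that both d\'ecalaged complexes commute with the localization.
\end{itemize}
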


\begin{proof}
    According to \cite[Proposition 10.10]{log1}, it is enough to demonstrate that for each $i \ge 0$, the cohomology group
    \[
        H^i\big(R\Gamma_{\cont}(\Gamma, \Gamma(U_\infty, \bM))\otimes^{\L}\Ainf(R_\infty)/\mu \big)=  H^i_{\cont}(\Gamma, \Gamma(U_\infty, \bM)/\mu)
    \]
    has no nontrivial $p$-torsion for each $i\ge 0$. By Theorem \ref{thm: MT 1.14}, $\Gamma(U_\infty, \bM)/\mu$ is trivial over $\Ainf(R_\infty)/\mu$. Thus, we again reduce to the trivial case $\bM = \Ainfx$ and invoke \cite[Proposition 10.12]{log1}.
\end{proof}

Combining Lemma \ref{group coh vs presheaf coh} and Lemma \ref{group coh base change}, we obtain the following immediate corollary.

\begin{corollary}\label{presheaf coh base change} 
Let $\fX$ be an admissibly smooth $p$-adic log formal scheme over $\underline{\mO}$ and let $\bM\in \BKF^{\log}(\fX)$.  The natural map 
    \[
        A\Omega_{\fX}^{\log,\psh}(\bM)/\txi \lra \widetilde{\Omega}^{\log,\psh}_{\fX}(\bM/\xi)
    \]
induced by $\widetilde{\theta}$ is an isomorphism.
\end{corollary}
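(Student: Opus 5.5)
The statement to prove is Corollary~\ref{presheaf coh base change}. Since both sides are presheaves on $\fX_{\ett,\mathrm{small}}$, I would check the isomorphism on sections over each small affine $\fU=\spf\ul R$, using a fixed small coordinate $\square$. It then suffices to show that
\[
A\Omega^{\proket}_{\ul R}(\bM)/\txi \lra \widetilde{\Omega}^{\proket}_{\ul R}(\bM/\xi),
\]
the map induced by $\widetilde\theta$, is an isomorphism. This map should also be compatible with restriction along morphisms of small affines. That compatibility is automatic, because the map is defined intrinsically via $\widetilde\theta\colon \AinfU\to \widehat{\mO}^+_U$ and does not depend on $\square$.

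\textbf{Step 1: a commutative square.} I would form the square
\[
\begin{tikzcd}
A\Omega^{\square,\gp}_{\ul R}(\bM)/\txi \arrow[r] \arrow[d] & \widetilde{\Omega}^{\square,\gp}_{\ul R}(\bM/\xi) \arrow[d] \\
A\Omega^{\proket}_{\ul R}(\bM)/\txi \arrow[r] & \widetilde{\Omega}^{\proket}_{\ul R}(\bM/\xi).
\end{tikzcd}
\]
The vertical maps come from the almost isomorphisms of Proposition~\ref{prop: MT 5.7}, after applying $L\eta$ and reducing modulo $\txi$. The horizontal maps are induced by $\widetilde\theta$.

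\textbf{Step 2: commutativity.} I would argue this from naturality. The map $R\Gamma_{\cont}(\Gamma,\Gamma(U_\infty,-))\to R\Gamma_{\proket}(U,-)$ is natural in the sheaf, and $\widetilde\theta$ is a map of sheaves. One also needs the twisting: modulo $\txi$, $\bM$ becomes $\varphi^{-1}$-twisted to $\bM/\xi$, and the d\'ecalage element $\mu$ maps to $\zeta_p-1$ up to a unit under $\widetilde\theta$. Since $L\eta$ is functorial with respect to such maps, the square commutes.

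\textbf{Step 3: conclusion.} By Lemma~\ref{group coh vs presheaf coh}, both vertical maps are isomorphisms. The left one is an isomorphism because reducing modulo $\txi$ preserves quasi-isomorphisms. By Lemma~\ref{group coh base change}, the top map is an isomorphism. Hence the bottom map is an isomorphism.

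\textbf{Main obstacle.} The delicate point is Step 2. One must check that the $\widetilde\theta$-induced comparison $L\eta_\mu(K)/\txi\to L\eta_{\zeta_p-1}(K\otimes^{\L}_{\Ainf}\Ainf/\txi)$ (the map appearing in \cite[Proposition 10.10]{log1}) is functorial in $K$. One must also check that it matches the construction on the pro-Kummer \'etale side. I would handle this by writing the map at the level of complexes, using the Koszul/cochain model, and invoking the naturality of $L\eta$ as in \cite[\S 6]{BMS1}.
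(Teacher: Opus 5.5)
Your proposal is correct and is exactly the paper's argument: the paper gets the corollary immediately from Lemma~\ref{group coh vs presheaf coh} (your vertical isomorphisms) and Lemma~\ref{group coh base change} (your top isomorphism) on each small affine, taking the commutativity of your square for granted by naturality. One small correction to Step~2: the $\widetilde\theta$-reduction of $\bM$ is $\bM\otimes_{\Ainfx,\widetilde\theta}\widehat{\mO}^+_X=\bM/\txi$, and since $\widetilde\theta\circ\varphi=\theta$, it is $(\varphi^*\bM)/\txi$, not $\bM/\txi$, that literally equals $\bM/\xi$; the paper's notation is loose in exactly the same way, and once the coefficients are read consistently your square and both lemmas go through unchanged.
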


\begin{proposition}\label{Hodge-Tate comparison}
    Let $\fX$ and $\bM$ be as above. The canonical base change map
    \[
        A\Omega^{\log}_{\fX}(\bM)/\txi \lra \widetilde{\Omega}^{\log}_{\fX}(\bM/\xi)= L\eta_{(\zeta_p-1)}R\nu_*(\bM/\xi)
    \]
    is an isomorphism.
\end{proposition}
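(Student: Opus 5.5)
The plan is to deduce the sheaf-level statement from its presheaf counterpart, Corollary~\ref{presheaf coh base change}, by sheafifying. This mirrors the argument for trivial coefficients in \cite[\S 10]{log1} and \cite[Proposition 6.10]{MT}.

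Let $j\colon \fX_{\ett}\to \fX_{\ett}^{\psh}$ be the morphism of topoi recalled above. The first step is to show that the formation of $A\Omega$ and $\widetilde{\Omega}$ commutes with $j^{-1}$:
\[
j^{-1}A\Omega^{\log,\psh}_{\fX}(\bM)\simeq A\Omega^{\log}_{\fX}(\bM),\qquad j^{-1}\widetilde{\Omega}^{\log,\psh}_{\fX}(\bM/\xi)\simeq \widetilde{\Omega}^{\log}_{\fX}(\bM/\xi).
\]
Three inputs give this.
\begin{enumerate}
\item Since $\nu=j^{-1}\circ\nu^{\psh}$ and sheafification is exact, we have $j^{-1}R\nu^{\psh}_*=R\nu_*$.
\item The d\'ecalage functor $L\eta$ commutes with pullback along flat morphisms of ringed topoi, and $j^{-1}$ is exact; see \cite[Lemma 6.14]{BMS1}.
\item Derived $p$-completion also commutes with $j^{-1}$.
\end{enumerate}
The third input is the delicate one, and I expect it to be the main obstacle. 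Sheafification does not commute with derived limits in general. The way around it, following \cite[\S 10]{log1}, is to check that on the basis of small affines $\fU=\spf R$ the presheaf values are already computed by Koszul-type complexes. Lemma~\ref{group coh vs presheaf coh} identifies them with $L\eta_\mu R\Gamma_{\cont}(\Gamma,\Gamma(U_\infty,\bM))$. That identification reduces the question to the trivial-coefficient case, which is handled in \cite[Lemma 10.10, Proposition 10.12]{log1}.

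The reduction to the trivial case uses the triviality of $\Gamma(U_\infty,\bM)/\mu$ supplied by Theorem~\ref{thm: MT 1.14}. Concretely, one shows that these complexes are derived $p$-complete and that their cohomology presheaves have vanishing higher \v{C}ech obstructions up to the relevant bound. Alternatively, one argues via almost purity that $R\nu_*\widehat{\bM}$ is computed locally by group cohomology. Either way, the completed pushforward is the sheafification of the completed presheaf pushforward.

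With the first step in hand, apply $j^{-1}$ to the isomorphism of Corollary~\ref{presheaf coh base change}. Since $j^{-1}$ is exact, it commutes with the cone of multiplication by $\txi$, that is, with $-/\txi=-\otimes^{\L}_{\Ainf}\Ainf/\txi$. This yields the map
\[
A\Omega^{\log}_{\fX}(\bM)/\txi\to \widetilde{\Omega}^{\log}_{\fX}(\bM/\xi),
\]
and shows that it is an isomorphism.

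Finally, one checks that the map produced this way agrees with the canonical base change map induced by $\widetilde\theta$. This follows by naturality, because every identification above is functorial in the sheaf $\bM$ and compatible with $\widetilde\theta\colon \Ainfx\to \Ainfx/\txi\cong \widehat{\mO}^+_X$ after the Frobenius twist relating $\bM/\xi$ to $\bM/\txi$.
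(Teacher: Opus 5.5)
Your argument is correct, but it takes a different route from the paper. The paper proves the stronger fact that the presheaf objects are already sheaves. That is, it shows that $\widetilde{\Omega}^{\log,\psh}_{\fX}(\bM/\xi)\to Rj_*\widetilde{\Omega}^{\log}_{\fX}(\bM/\xi)$ and $A\Omega^{\log,\psh}_{\fX}(\bM)\to Rj_*A\Omega^{\log}_{\fX}(\bM)$ are isomorphisms. The first follows from a stalk computation as in \cite[Proposition 9.16]{log1}. For the second, derived $\txi$-completeness of both sides reduces it to the statement modulo $\txi$, which follows from Corollary~\ref{presheaf coh base change} and the first. $Rj_*$ of the sheaf-level map is then identified with the presheaf-level isomorphism.

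You instead apply the exact functor $j^{-1}$ directly to Corollary~\ref{presheaf coh base change}, using $j^{-1}R\nu^{\psh}_*=R\nu_*$ and \cite[Lemma 6.14]{BMS1}. For the proposition as stated this suffices and is shorter: it needs neither the stalk computation nor any completeness argument. What the paper's route buys in addition is the local formula $R\Gamma(\fU,A\Omega^{\log}_{\fX}(\bM))\simeq L\eta_\mu R\Gamma_{\proket}(U,\widehat{\bM})$ on small affines. This formula is used later, in Corollary~\ref{cor: perfect complex} and in the Koszul-complex computations of $A\Omega^{\log}_{\fX}(\bM)$ for the crystalline comparison.

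Your ``main obstacle'' is not actually one. In $A\Omega^{\log,\psh}_{\fX}(\bM)=L\eta_\mu R\nu^{\psh}_*\widehat{\bM}$ the completion is taken on $X_{\proket}$, so $j^{-1}R\nu^{\psh}_*\widehat{\bM}=R\nu_*\widehat{\bM}$. Moreover $R\nu_*\widehat{\bM}\simeq\widehat{R\nu_*\bM}$, because $R\nu_*$ commutes with $R\lim$ and with cones of $p^n$. No interchange of sheafification with derived limits ever arises. You can therefore drop the sketched detour through \v{C}ech obstructions or almost purity, which is really the paper's ``presheaf is a sheaf'' statement in disguise.

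Similarly, your final compatibility check is tautological: the sheaf-level base change map is by construction $j^{-1}$ of the presheaf-level one. No Frobenius twist enters, and none is available for $\bM\in\BKF^{\log}(\fX)$ in any case.
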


\begin{proof}
By Corollary \ref{presheaf coh base change}, we only need to verify the following two isomorphisms:
    \begin{enumerate}
        \item $\widetilde{\Omega}^{\log,\psh}_{\fX}(\bM/\xi) \xrightarrow[]{\sim} Rj_* \widetilde{\Omega}^{\log}_{\fX}(\bM/\xi)$, and
        \item $A\Omega_{\fX}^{\log,\psh}(\bM) \xrightarrow[]{\sim} Rj_*A\Omega_{\fX}(\bM)$.
    \end{enumerate}
In other words, it suffices to check that the presheaves $A\Omega_{\fX}^{\log,\psh}(\bM)$ and $\widetilde{\Omega}^{\log,\psh}_{\fX}(\bM/\xi)$ are already sheaves.

The first isomorphism follows from a stalk computation argument identical to \cite[Proposition 9.16]{log1}. For the second, note that $L\eta_{\mu}$ preserves derived completeness in $D(\Ainf)$. Thus, $A\Omega_{\fX}^{\log,\psh}(\bM)$ is derived $\txi$-complete. Consequently, we are reduced to show
    \[
        A\Omega_{\fX}^{\log,\psh}(\bM)\otimes^{\L}\Ainfx/\txi \simra Rj_*j^{-1}\big(A\Omega_{\fX}^{\log,\psh}(\bM)\otimes^{\L}\Ainfx/\txi\big),
    \]
which follows from Corollary \ref{presheaf coh base change} and (1).
\end{proof}

Now, we are ready to construct the de Rham specialization functor and state the de Rham comparison isomorphism. By the $\mu$-smallness again, we have an isomorphism
\[
    L\eta_{(\zeta_p-1)}R\nu_*(\bM/\xi) \xrightarrow[]{\sim} L\eta_{(\zeta_p-1)}R\nu_*\big(\nu^{-1}\nu_*(\bM/\xi)\otimes_{\nu^{-1}\mO_\fX}\widehat{\mO}^+_X\big) = \nu_*(\bM/\xi)\otimes_{\mO_\fX}\widetilde{\Omega}^{\log}_\fX.
\]
Combining this with Proposition \ref{p adic Cartier} and Proposition \ref{Hodge-Tate comparison}, we obtain:

\begin{corollary}\label{cor: padic Cartier for BKF mod}
Let $\fX$ be an admissibly smooth $p$-adic log formal scheme over $\underline{\mO}$ and let $\bM\in \BKF^{\log}(\fX)$. For any $i\ge 0$, there is a natural isomorphism
    \[
        \mH^i\big(A\Omega_{\fX}^{\log}(\bM)/\txi\big)\{i\} \xrightarrow[]{\sim} \nu_*(\bM/\xi)\otimes_{\mO_\fX}\Omega^{i,\ct}_{\fX/\underline{\mO}}. 
    \]
\end{corollary}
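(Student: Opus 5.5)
The plan is to chain three isomorphisms that are already available. First, Proposition \ref{Hodge-Tate comparison} identifies
\[
A\Omega^{\log}_{\fX}(\bM)/\txi \xrightarrow[]{\sim} L\eta_{(\zeta_p-1)}R\nu_*(\bM/\xi),
\]
so that $\mH^i\big(A\Omega^{\log}_{\fX}(\bM)/\txi\big)\cong \mH^i\big(L\eta_{(\zeta_p-1)}R\nu_*(\bM/\xi)\big)$.

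Second, I use triviality modulo $\xi$. Since $\bM$ is trivial modulo $<\mu$, it is in particular trivial modulo $\xi_1=\xi$. Hence $\nu_*(\bM/\xi)$ is a locally finite free $\nu_*(\Ainfx/\xi)=\mO_{\fX}$-module, and the counit
\[
\nu^{-1}\nu_*(\bM/\xi)\otimes_{\nu^{-1}\mO_\fX}\widehat{\mO}^+_X\to \bM/\xi
\]
is an isomorphism. Here I use $\Ainfx/\xi\cong\widehat{\mO}^+_X$ and $\nu_*\widehat{\mO}^+_X=\mO_\fX$, the latter via the almost purity and \'etale-local computations of \cite{log1}. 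By the projection formula for the locally free $\mO_\fX$-module $\mE:=\nu_*(\bM/\xi)$, we get $R\nu_*(\bM/\xi)\cong \mE\otimes_{\mO_\fX}R\nu_*\widehat{\mO}^+_X$. To justify this, I would work \'etale locally where $\mE$ is free, so both sides are finite direct sums of $R\nu_*\widehat{\mO}^+_X$, and then check that the gluing is canonical.

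Third, d\'ecalage commutes with tensoring by a flat (locally free) module, because $L\eta_f(K\otimes \mE)\cong L\eta_f(K)\otimes\mE$ when $\mE$ is locally free. This gives
\[
L\eta_{(\zeta_p-1)}R\nu_*(\bM/\xi)\cong \mE\otimes_{\mO_\fX}\widetilde{\Omega}^{\log}_\fX.
\]
Taking $\mH^i$ and using flatness of $\mE$ again, $\mH^i\cong \mE\otimes_{\mO_\fX}\mH^i(\widetilde{\Omega}^{\log}_\fX)$. Twisting by $\{i\}$ and applying the log $p$-adic Cartier isomorphism (Proposition \ref{p adic Cartier}), $\mH^i(\widetilde{\Omega}^{\log}_\fX)\{i\}\cong\Omega^{i,\ct}_{\fX/\underline{\mO}}$, which yields the claimed isomorphism.

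The main point of care is naturality. The projection-formula step must be carried out canonically, meaning independently of the local trivializations of $\mE$, and compatibly with the $\Ainf$-linear structure, using that $\txi$ acts through $\theta\circ\varphi^{-1}$. I expect this to be routine: every map used is a canonical base change or counit map, and being an isomorphism can be checked \'etale locally on small affines.
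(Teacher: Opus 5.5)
Your three steps are the paper's own argument: Proposition~\ref{Hodge-Tate comparison}, then triviality modulo $\xi$ with the projection formula and $L\eta$ commuting with a locally free factor, then Proposition~\ref{p adic Cartier}. But the first step does not hold as you use it, and the statement as literally written, with $A\Omega^{\log}_{\fX}(\bM)/\txi$ on the left, is false. The problem is not naturality, which is what your last paragraph worries about, but which quotient of $\bM$ appears. Reduction modulo $\txi$ is base change along $\widetilde\theta=\theta\circ\varphi^{-1}$, under which $\mu\mapsto\zeta_p-1$. The argument of Lemma~\ref{group coh base change} (the BMS criterion, using $\txi\equiv p$ modulo $\mu$) therefore yields $A\Omega^{\log}_{\fX}(\bM)/\txi\simeq L\eta_{(\zeta_p-1)}R\nu_*(\bM/\txi)$, where $\bM/\txi=\bM\otimes_{\Ainfx,\widetilde\theta}\widehat{\mO}^+_X$, not $\bM/\xi$. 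Triviality modulo $<\mu$ only controls $\xi_r=\xi\,\varphi^{-1}(\xi)\cdots\varphi^{1-r}(\xi)$, never $\txi=\varphi(\xi)$. Locally one has $\gamma_i(m)=m+(\zeta_p-1)\nabla^{\log}_i(m)$ on $M/\txi$, so $\bM/\txi$ is in general not pulled back from $\fX$, and your second step has nothing to act on.

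Here is a concrete failure. Take the torus $\fX=\spf\mO\langle T^{\pm1}\rangle$ with $d=1$, and let $M=A(R)e$ with $\gamma(e)=(1+\mu U)e$. This lies in $\Rep^{\mu}_{\Gamma}(A(R))$, so it gives some $\bM\in\BKF^{\log}(\fX)$ with $\nu_*(\bM/\xi)=\mO_{\fX}e$. By Theorem~\ref{representations vs q-connections}, $R\Gamma(\fX,A\Omega^{\log}_{\fX}(\bM))$ is the complex $[M\to M]$ given by $fe\mapsto\big(\tfrac{\gamma(f)-f}{\mu}+\gamma(f)U\big)e$. Modulo $\txi$, put $V=\widetilde\theta(U)$, so that $A(R)/\txi=\mO\langle V^{\pm1}\rangle$. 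A kernel element $\sum_n c_nV^ne$ then satisfies $[n]_{\zeta_p}c_n+\zeta_p^{n-1}c_{n-1}=0$. Since $[n]_{\zeta_p}$ vanishes exactly when $p\mid n$ and is a unit otherwise, this forces $c_{n-1}=0$ for $p\mid n$, and then all coefficients below vanish. Hence $\mH^0(A\Omega^{\log}_{\fX}(\bM)/\txi)$ has no nonzero sections on $\fX$, while the right-hand side for $i=0$ has sections $R$.

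What your argument does prove, essentially verbatim, is the statement with $A\Omega^{\log}_{\fX}(\bM)/\txi$ replaced by $\big(L\eta_{\varphi^{-1}(\mu)}\widehat{R\nu_*\bM}\big)\otimes^{\L}_{\Ainf}\Ainf/\xi$. Equivalently, after relabeling along the automorphism $\varphi$, this is $A\Omega^{\log}_{\fX}(\varphi^*\bM)/\txi$. The reasons are as follows. We have $\theta(\varphi^{-1}(\mu))=\zeta_p-1$ and $\xi\equiv p$ modulo $\varphi^{-1}(\mu)$. Locally $\Gamma$ acts trivially on $M/\varphi^{-1}(\mu)$, so Lemma~\ref{lemma: MT 1.7}(3) with $s=1$ supplies the $p$-torsion-freeness needed to obtain $L\eta_{(\zeta_p-1)}R\nu_*(\bM/\xi)$ with $\bM/\xi$ trivial. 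From there your second and third steps go through unchanged. This corrected object is also what the proof of Theorem~\ref{thm: de Rham comparison} actually uses once its relabeling along $\varphi$ is unwound.
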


The complex $A\Omega_{\fX}^{\log}(\bM)/\txi$ is a module over $A\Omega_{\fX}^{\log}/\txi$, where $A\Omega_{\fX}^{\log}= L\eta_{\mu}(\widehat{R\nu_*\bM})$. Consequently, its cohomology $\mH^i\big(A\Omega_{\fX}^{\log}(\bM)/\txi\big)$ acquires the structure of a graded module over
\[
    \mH^i(A\Omega_{\fX}^{\log}/\txi) \cong \Omega^{i,\ct}_{\fX/\underline{\mO}}.
\]
The Bockstein maps $\mathrm{Bock}_\xi: \mH^\bullet \rightarrow \mH^{\bullet +1}$ endow this with the structure of a log differential graded module. This yields an integrable log connection
\[
    \nabla_\xi \colon \nu_*(\bM/\xi) \ra \nu_*(\bM/\xi)\otimes_{\mO_\fX}\Omega^{1, \ct}_{\fX/\underline{\mO}}.
\]
\begin{definition}\label{defn: de Rham specialization}
     Let $\fX$ be an admissibly smooth $p$-adic log formal scheme over $\underline{\mO}$. Let $\MIC^{\log}(\fX)$ denote the category of vector bundles with integrable log connections on $\fX$.
    The \emph{de Rham specialization functor} is defined to be 
\begin{align*}
    \sigma^*_{\dR} \colon \BKF^{\log}(\fX) & \lra \MIC^{\log}(\fX)\\
    \bM & \longmapsto (\nu_*(\bM/\xi), \nabla_\xi).
\end{align*}
Combinging with the forgetful functor on Frobenius, we also obtain
\[
    \sigma^*_{\dR} \colon \BKF^{\log}(\fX,\vp)  \ra \MIC^{\log}(\fX)
\]
by a slight abuse of notation.
\end{definition}

\begin{theorem}\label{thm: de Rham comparison}
   Let $\fX$ be an admissibly smooth $p$-adic log formal scheme over $\underline{\mO}$. For any $\bM \in \BKF^{\log}(\fX)$, there is a natural isomorphism
    \[
        A\Omega^{\log}_{\fX}(\bM)\otimes_{\Ainf, \theta}^\L \mO \cong \sigma^*_\dR \bM \otimes_{\mO_\fX}\Omega^{\bullet, \ct}_{\fX/\underline{\mO}}.
    \]
Taking cohomology, we obtain an isomorphism
    \[
        R\Gamma_{\Ainf}(\fX,\bM)\otimes_{\Ainf}^{\L}\mO \cong R\Gamma_{\mathrm{logdR}}(\fX, \sigma_{\dR}^*\bM),
    \]
where $R\Gamma_{\mathrm{logdR}}(\fX, \sigma_{\dR}^*\bM)$ stands for the log de Rham cohomology of the log connection $\sigma_{\dR}^*\bM$.
\end{theorem}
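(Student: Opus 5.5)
The plan follows \cite[Theorem 6.?]{MT} and \cite[\S 9]{log1} (BMS-style de Rham comparison), working with $\txi$ and then Frobenius-twisting back to $\xi$. First, $\theta$ and $\widetilde\theta=\theta\circ\varphi^{-1}$ differ by the automorphism $\varphi$ of $\Ainf$. So it suffices to produce a natural isomorphism
\[
A\Omega^{\log}_{\fX}(\bM)\otimes^{\L}_{\Ainf,\widetilde\theta}\mO\simeq \sigma^*_{\dR}\bM\otimes_{\mO_\fX}\Omega^{\bullet,\ct}_{\fX/\ul\mO},
\]
with the $\mO$-module structure on the right twisted accordingly. By Proposition~\ref{Hodge-Tate comparison}, the left side is $\widetilde\Omega^{\log}_{\fX}(\bM/\xi)=L\eta_{(\zeta_p-1)}R\nu_*(\bM/\xi)$. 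Its cohomology sheaves are $\nu_*(\bM/\xi)\otimes\Omega^{i,\ct}_{\fX/\ul\mO}\{-i\}$ by Corollary~\ref{cor: padic Cartier for BKF mod}. The Bockstein differentials on these sheaves define $\nabla_\xi$, by the construction of $\sigma^*_\dR$.

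The main step is to upgrade this identification of cohomology sheaves to an identification of complexes. For this I would use the formalism of \cite[Proposition 6.12]{BMS1} (as in the proof of \cite[Theorem 9.?]{log1}). The object $D:=A\Omega^{\log}_{\fX}(\bM)\otimes^{\L}_{\Ainf}\Ainf/\txi^2$, or rather its $\xi$-version, is a square-zero-type deformation. For a complex $K$ that is derived $\txi$-complete with $\txi$-torsion-free cohomology, \cite[Proposition 6.12]{BMS1} gives a canonical quasi-isomorphism
\[
(L\eta_{\txi}K)\otimes^{\L}\Ainf/\txi\simeq \bigl(H^\bullet(K/\txi),\mathrm{Bock}_{\txi}\bigr).
\]
I would apply this with $K=A\Omega^{\log}_{\fX}(\bM)\otimes_{\Ainf,\varphi^{-1}}\Ainf$, written so that the relevant element is $\xi$. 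Three inputs are needed. The first is that $A\Omega^{\log}_{\fX}(\bM)\simeq L\eta_{\xi}$ of something with $\xi$-torsion-free cohomology up to Frobenius twist. This mirrors the trivial-coefficient identity $A\Omega\otimes_{\Ainf,\varphi}\Ainf\simeq L\eta_{\xi}(\cdots)$, via $L\eta_\mu=L\eta_{\xi}\circ L\eta_{\varphi^{-1}(\mu)}$. The second is that the relevant cohomology sheaves are $p$-torsion free (hence $\xi$-torsion free). I would check this on small affines using Lemmas~\ref{group coh vs presheaf coh} and \ref{group coh base change}, reducing to the trivial-coefficient computation \cite[Proposition 10.12]{log1} exactly as done there, via triviality modulo $\mu$ (Theorem~\ref{thm: MT 1.14}). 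The third is the identification of the Bockstein complex with the de Rham complex of $(\nu_*(\bM/\xi),\nabla_\xi)$, which holds by definition.

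Alternatively, and more concretely, one can work locally on a small $\fU=\spf R$. Lemma~\ref{group coh vs presheaf coh} together with Theorem~\ref{representations vs q-connections} identifies $A\Omega^{\log}_{\fU}(\bM)$ with the log $q$-de Rham complex $N\otimes q\Omega^\bullet_{A(R)/\Ainf}$, where $N\in\Rep^\mu_\Gamma(A(R))$ corresponds to $\bM$ (Theorems~\ref{thm: MT 1.13}, \ref{thm: BKF mod vs generalized representations}). Reducing modulo $\xi$ sets $q=1$ (note $q\equiv\zeta_p^{\pm}$ only modulo $\txi$). This turns $q$-differences into log derivatives and yields $N/\xi\otimes\Omega^\bullet_{R/\ul\mO}$. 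One must also match $N/\xi$ with $\nu_*(\bM/\xi)$, and the resulting connection with $\nabla_\xi$. Locally this is easy. The obstacle is independence of coordinates and gluing. For that I would rely on the global Bockstein argument above, which is intrinsic, and use the local $q$-de Rham computation only to verify the torsion-freeness hypotheses.

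Finally, the global statement follows by applying $R\Gamma(\fX_{\ett},-)$. The left side commutes with $\otimes^{\L}_{\Ainf}\mO$ because $\mO$ is a perfect $\Ainf$-module ($\Ainf/\xi$), which gives $R\Gamma_{\Ainf}(\fX,\bM)\otimes^{\L}\mO\simeq R\Gamma_{\mathrm{logdR}}(\fX,\sigma^*_\dR\bM)$. I expect the main obstacle to be the careful bookkeeping of Frobenius twists between $\xi$ and $\txi$, together with the torsion-freeness input needed for the $L\eta$/Bockstein formalism.
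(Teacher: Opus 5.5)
Your core mechanism (write $A\Omega^{\log}_{\fX}(\bM)$ as $L\eta$ of an auxiliary complex, apply \cite[Proposition 6.12]{BMS1} to get a Bockstein complex, and compute its terms by the Hodge--Tate comparison and the $p$-adic Cartier isomorphism) is the paper's. However, two steps as you state them would fail.

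First, the opening reduction to $\widetilde\theta$ is false. Base change along $\vp$ does not preserve $A\Omega$: already for $\bM=\Ainfx$ one has $A\Omega\otimes_{\Ainf,\vp}\Ainf\simeq L\eta_{\txi}A\Omega$, because $\vp(\mu)=\txi\mu$. So $A\Omega\otimes^{\L}_{\Ainf,\theta}\mO$ is a Frobenius twist of $(L\eta_{\txi}A\Omega)\otimes^{\L}\Ainf/\txi$. By contrast, $A\Omega\otimes^{\L}_{\Ainf,\widetilde\theta}\mO$ is the Hodge--Tate complex, which is not a de Rham complex even up to twist. For example, for $\bM=\Ainfx$ and $\fX=\spf\mO\langle T^{\pm1}\rangle$, its $H^0$ is $\mO\langle T^{\pm1}\rangle$, while $H^0$ of $\Omega^{\bullet,\ct}_{\fX/\ul\mO}$ is $\mO$. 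Hence there is nothing to ``upgrade'': $\widetilde\Omega^{\log}_{\fX}(\bM/\xi)$ is not quasi-isomorphic to $\sigma^*_{\dR}\bM\otimes\Omega^{\bullet,\ct}_{\fX/\ul\mO}$. The de Rham complex is instead the Bockstein complex on $\mH^\bullet$ of the Hodge--Tate complex. \cite[Proposition 6.12]{BMS1} identifies that Bockstein complex with the reduction of $L\eta$ of a lift, not with the Hodge--Tate complex itself.

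Second, your auxiliary complex $K=A\Omega^{\log}_{\fX}(\bM)\otimes_{\Ainf,\vp^{-1}}\Ainf$ only works for trivial coefficients. Unwinding, $L\eta_\xi K\simeq A\Omega^{\log}_{\fX}(\bM\otimes_{\Ainfx,\vp^{-1}}\Ainfx)$. This is not $A\Omega^{\log}_{\fX}(\bM)$, since $\bM\in\BKF^{\log}(\fX)$ carries no Frobenius (and even $\vp_{\bM}$ is an isomorphism only after inverting $\txi$). The right choice is the one your ``first input'' points to, namely $K:=L\eta_{\vp^{-1}(\mu)}\widehat{R\nu_*\bM}$.
\begin{itemize}
\item Since $\mu=\xi\cdot\vp^{-1}(\mu)$, one has $L\eta_\xi K=A\Omega^{\log}_{\fX}(\bM)$ on the nose. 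Then \cite[Proposition 6.12]{BMS1}, which needs no torsion hypothesis on $K$, gives $A\Omega^{\log}_{\fX}(\bM)\otimes^{\L}_{\Ainf,\theta}\mO\simeq(\mH^\bullet(K/\xi),\mathrm{Bock}_\xi)$.
\item Since $\theta(\vp^{-1}(\mu))=\zeta_p-1$ and $\xi\equiv p$ modulo $\vp^{-1}(\mu)$, run the arguments of Lemma~\ref{group coh base change} and Proposition~\ref{Hodge-Tate comparison} with the pair $(\vp^{-1}(\mu),\xi)$ in place of $(\mu,\txi)$. This gives $K\otimes^{\L}_{\Ainf}\Ainf/\xi\simeq\widetilde\Omega^{\log}_{\fX}(\bM/\xi)$.
\item This is where torsion-freeness is really needed. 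For $M=\Gamma(U_\infty,\bM)$, the groups $H^i(\Gamma,M/\vp^{-1}(\mu))$ must be $p$-torsion-free. This follows from triviality modulo $\mu$ (Theorem~\ref{thm: MT 1.14}) and Lemma~\ref{lemma: MT 1.7}(3) with $s=1$.
\item Proposition~\ref{p adic Cartier} together with triviality of $\bM/\xi$, as in Corollary~\ref{cor: padic Cartier for BKF mod}, identifies the terms. The Bockstein is $\nabla_\xi$ by definition.
\end{itemize}
This is the paper's chain, which twists by $\vp$ and works with $\txi$, read through $\vp^{-1}$; it uses no Frobenius on $\bM$. Your globalization step is fine.
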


\begin{proof} We have a chain of isomorphisms
    \begin{align*}
        A\Omega^{\log}_{\fX}(\bM)\otimes_{\Ainf, \theta}^{\L}\mO &= \big(L\eta_\mu \widehat{R\nu_*\bM}\big)\otimes^{\L}_{\Ainf} \Ainf/\xi \\
        & \xrightarrow[\sim]{\varphi} \big(L\eta_{\varphi(\mu)} \widehat{R\nu_*\bM}\big)\otimes^{\L}_{\Ainf} \Ainf/\txi\\  
        & \xrightarrow[]{\sim} \big(L\eta_{\txi} L\eta_\mu \widehat{R\nu_*\bM}\big)\otimes^{\L}_{\Ainf} \Ainf/\txi\\
        & \xrightarrow[]{\sim} \mH^{\bullet}\big(A\Omega^{\log}_{\fX}(\bM)\otimes_{\Ainf}^{\L}\Ainf/\txi\big)\\
        & \xrightarrow[]{\sim} \mH^{\bullet}\big(\widetilde{\Omega}^{\log}_{\fX}(\bM/\xi)\big)(\bullet)\\
        & \xrightarrow[]{\sim}\nu_*(\bM/\xi)\otimes_{\mO_{\fX}}\Omega_{\fX/\underline{\mO}}^{\bullet,\ct}= \sigma_\dR^*\bM \otimes_{\mO_{\fX}}\Omega_{\fX/\underline{\mO}}^{\bullet,\ct}.
    \end{align*}
    The third last isomorphism follows from \cite[Proposition 6.15]{BMS1}, the second last isomorphism follows from Proposition \ref{Hodge-Tate comparison}, and the last identification is the $p$-adic Cartier isomorphism (Proposition \ref{p adic Cartier}).
\end{proof}

\begin{corollary}\label{cor: perfect complex}
Let $\fX$ and $\bM$ be as above, and assume that $\fX$ is additionally proper. The log $\Ainf$-cohomology
    \[
        R\Gamma_{\Ainf}(\fX,\bM)= R\Gamma\big(\fX, \A\Omega_{\fX}^{\log}(\bM)\big)
    \]
is a perfect complex in $D(\Ainf)$.
\end{corollary}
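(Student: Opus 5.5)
The argument follows \cite[Corollary 6.4]{BMS1}, with the de Rham comparison (Theorem \ref{thm: de Rham comparison}) playing the role of the smooth de Rham comparison. Write $K := R\Gamma_{\Ainf}(\fX, \bM)$. I would use the standard criterion over $\Ainf$: a derived $(p,\xi)$-complete (equivalently, derived $\xi$-complete and $p$-complete) complex $K\in D(\Ainf)$ is perfect as soon as $K\otimes^{\L}_{\Ainf,\theta}\mO$ is a perfect $\mO$-complex. This is \cite[Lemma 4.13]{BMS1} style: $\Ainf$ is $\xi$-adically complete and $\mO=\Ainf/\xi$, and perfectness lifts along derived complete nilpotent-thickening towers by \cite[Tag 09AW]{stacks}-type arguments. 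So there are two things to prove, completeness of $K$ and perfectness of the reduction.

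\textbf{Completeness.} $\widehat{R\nu_*\bM}$ is derived $p$-complete by construction. It is also derived $\mu$-complete, hence $\xi$-complete. To see this, $\Ainfx$ and $\bM$ are derived $(p,\mu)$-complete, since $\bM$ is locally finite free and $\Ainfx$ is derived $(p,\mu)$-complete on affinoid perfectoids. Derived completeness is preserved by $R\nu_*$. By \cite[Lemma 6.19]{BMS1}, $L\eta_\mu$ preserves derived completeness with respect to a finitely generated ideal. Finally $R\Gamma(\fX_{\ett},-)$ is a limit, so it preserves derived completeness. Hence $K$ is derived $(p,\xi)$-complete.

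\textbf{Perfectness of the reduction.} Since $\fX$ is qcqs, derived tensor with $\mO$ commutes with $R\Gamma(\fX_{\ett},-)$ because the latter has finite cohomological dimension. Theorem \ref{thm: de Rham comparison} then gives
\[
K\otimes^{\L}_{\Ainf,\theta}\mO\cong R\Gamma\big(\fX, \sigma^*_\dR\bM\otimes_{\mO_\fX}\Omega^{\bullet,\ct}_{\fX/\ul\mO}\big).
\]
The Hodge filtration (the stupid filtration) has graded pieces $R\Gamma(\fX, \nu_*(\bM/\xi)\otimes\Omega^{i,\ct}_{\fX/\ul\mO})[-i]$ for $0\le i\le d$. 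So it suffices to show that each $R\Gamma(\fX,\mathcal{E})$ is perfect over $\mO$, where $\mathcal{E}$ is a vector bundle on $\fX$. Here $\nu_*(\bM/\xi)$ is locally finite free by the triviality modulo $\xi$, and $\Omega^{i,\ct}$ is locally free because $\fX$ is log smooth-like; this follows from its finite model $\fX_0$ being log smooth, with $\Omega^1$ computed by base change. Perfectness of $R\Gamma(\fX,\mathcal E)$ for $\fX$ proper flat and finitely presented over $\mO$ is the finiteness result used in \cite[Lemma 4.13]{BMS1}, which rests on Abbes' / Ullrich's finiteness for proper formal schemes over $\mO$ (pass through $\mO/p$ using that $\mO$ is coherent, then take the limit). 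I would cite that. This step needs $\fX$ to be finitely presented and flat over $\mO$, which holds because $\fX\to\fX_0$ is an isomorphism on underlying formal schemes and $\fX_0$ is flat over $\mO$ by integrality of the chart.

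The main obstacle I expect is the careful verification that $\widehat{R\nu_*\bM}$ is derived $\mu$-complete, so that the $\xi$-adic lifting criterion applies. If the direct argument is awkward, I would instead argue locally. On small affines, $A\Omega^{\log}_{\fX}(\bM)$ is computed by the Koszul/$q$-de Rham complex of a finite projective $A(R)$-module, via Lemma \ref{group coh vs presheaf coh}, Theorem \ref{thm: MT 1.13} and Theorem \ref{representations vs q-connections}. That complex is visibly $(p,\mu)$-complete, and completeness then globalizes by Čech descent over a finite cover.
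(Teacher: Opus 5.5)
Your proposal is correct and is essentially the paper's proof. The paper likewise (i) establishes derived $(p,\xi)$-completeness of $A\Omega^{\log}_{\fX}(\bM)$, checking it on the presheaf version $A\Omega^{\log,\psh}_{\fX}(\bM)$ using that $L\eta_\mu$ preserves completeness and transporting it via $Rj_*$; (ii) reduces to perfectness of $R\Gamma_{\Ainf}(\fX,\bM)\otimes^{\L}_{\Ainf}\mO$; and (iii) concludes by the de Rham comparison together with perfectness of log de Rham cohomology of the proper $\fX$, which the paper simply asserts and your Hodge-filtration and coherent-finiteness argument fills in.

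One small slip: derived $\mu$-completeness alone does not give derived $\xi$-completeness. On the nonzero module $\Ainf[\frac{1}{p}]/\varphi^{-1}(\mu)$, $\mu$ acts by $0$ while $\xi$ acts by the unit $p$, so it is derived $\mu$-complete but not derived $\xi$-complete. Combined with the $p$-completeness you also establish, however, the conclusion does hold, since $\xi^2\in(p,\mu)$.
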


\begin{proof}
Firstly, note that $A\Omega_{\fX}^{\log}(\bM)$ is derived $(p,\xi)$-adically complete. Indeed, since $Rj_*$ commutes with $\Rlim$, it suffices to check the completeness of $A\Omega_{\fX}^{\log,\psh}(\bM)$, which is due to the completeness of $\bM$ and the fact that $L\eta_{\mu}$ preserves completeness in $D(\Ainf)$ (cf. \cite[Lemma 6.20]{BMS1}).

By derived $(p,\xi)$-adic completeness, to show the perfectness of $R\Gamma_{\Ainf}(\fX,\bM)$, it suffices to show that $R\Gamma_{\Ainf}(\fX,\bM) \otimes^\mathbb{L}_{\Ainf} \mO$ is perfect. When $\fX$ is proper, the de Rham cohomology $R\Gamma_{\mathrm{logdR}}(\fX/\ul{\mO}, \sigma_{\dR}^*\bM)$ is a perfect complex in $D(\mO)$, and hence the desried statement follows from Theorem \ref{thm: de Rham comparison}.
\end{proof}

\vspace{0.1in}
\subsection{Crystalline specialization}\label{subsection: crys specailzation}

\begin{assumption}
In \S \ref{subsection: crys specailzation}-\ref{subsection: classical BKF modules}, we work under the assumption that $\fX$ locally admits free charts (cf. Definition \ref{defn: locally admits free charts}), namely, $\fX$ is locally modeled on small charts $u:N\rightarrow P$ where both $N$ and $P$ are free (cf. Definition \ref{definition: small affine}).
\end{assumption}

We now proceed to the construction of the crystalline specialization. Our approach employs \emph{log $q$-crystals} as intermediary objects. Firstly, let us recall the notion of the log $q$-crystalline site as introduced in \cite[Definition 7.5]{Koshikawa}.
Recall that a $\delta_{\log}$-triple $(D,I,M_D)$ is called a \emph{pre-log $q$-PD triple} if $(D,I)$ is a $q$-PD pair (cf. Definition \ref{def: log q-PD triple}). For each pre-log $q$-PD triple $(D,I,M_D)$, we can associate a \emph{log $q$-PD triple} $(D,I,\mM_D)=(D,I,M_D)^a$. In the extreme case $q=1$, such triples are referred to as \emph{pre-$\delta_{\log}$-PD triple} and \emph{$\delta_{\log}$-PD triple} (cf. \cite[Definition 6.4]{Koshikawa}).

\begin{definition}
    Fix a pre-log $q$-PD triple $(D,I,M_D)$ with integral $M_D$. Let $\fX=(\fX, \mM_{\fX})$ be a saturated $p$-adic log formal scheme over $(D/I, M_D)$. The \emph{log $q$-crystalline site}, which is denoted by $(\fX/(D,M_D))_{q\crys}$, is the opposite category of log $q$-PD triples $(E,J, \mM_{E})$ such that
    \begin{itemize}
        \item $(E, J, \mM_E)$ is the log $q$-PD triple associated with a pre-log $q$-PD triple $(E, J, M_E)$ over $(D,I,M_D)$ with $M_E$ being integral,
        \item it is equipped with a morphism $f\colon \spf(E/J) \ra \fX$ of $p$-adic formal schemes over $D/I$, and 
        \item $f$ induces an exact closed immersion 
        \[
            (\spf(E/J),f^*\mM_{\fX}) \ra (\spf E, \mM_E).
        \]
    \end{itemize}
    This site is endowed with the \'etale topology. 
    
    In the extreme case $q=1$, the resulting site is called the \emph{$\delta_{\log}$-crystalline site} (consisting of $\delta_{\log}$-PD triples satisfying a similar set of conditions as above), which is denoted by $(\fX/(D,M_D))_{\delta\crys}$ (cf. \cite[Definition 6.4, Remark 7.7]{Koshikawa}).
\end{definition}
     Let $\mO_{q\crys}$ denote the structure sheaf on $(\fX/(D,M_D))_{q\crys}$ sending $(E,J,\mM_E)$ to $E$. In analogy with \cite[\S 1]{chatzistamatiou}, we introduce the concept of log $q$-crystals.
     
\begin{definition}\label{defn: F-q-crystals}
    Let $(D,I,M_D)$ be a pre-log $q$-PD triple with integral $M_D$. Let $\fX$ be a saturated $p$-adic log formal scheme over $(D/I, M_D)$. 
    
\begin{enumerate}    
\item A \emph{(locally finite free) log $q$-crystal} on the log $q$-crystalline site $(\fX/(D,I,M_D))_{q\crys}$ is a sheaf of $\mO_{q\crys}$-module $\mF$ such that
    \begin{itemize}
        \item for every object $\mathfrak{E}= (E,J,\mM_E)$ of $(\fX/(D,I,M_D))_{q\crys}$, $\mF(\mathfrak{E})$ is a finite projective $E$-module, and 
        \item for every morphism $\mathfrak{E}= (E,J,\mM_{E})\rightarrow \mathfrak{E}'= (E',J,\mM_{E'})$ of $(\fX/(D,I,M_D))_{q\crys}$, the pullback homomorphism $\mF(\mathfrak{E})\otimes_{E}E' \ra \mF(\mathfrak{E}')$ is an isomorphism.
    \end{itemize}

\item Let $\mF$ be a log $q$-crystal as above. A \emph{Frobenius structure} $\vp_{\mF}$ on $\mF$ is an isomorphism of sheaves of $\mO_{q\crys}[I^{-1}]$-modules
    \[
        \vp_{\mF} \colon \mF \otimes_{\mO_{q\crys}, \vp}\mO_{q\crys}[\frac 1 I] \simra \mF[\frac 1 I].
    \]
A \emph{log $F$-{q}-crystal} is a log $q$-crystal $\mF$ equipped with a Frobenius structure $\vp_{\mF}$. 

\item The category of log $q$-crystals (resp. log $F$-$q$-crystals) is denoted by $\CR\big((\fX/(D,I,M_D))_{q\crys}\big)$ (resp. $\CR^\vp\big((\fX/(D,I,M_D))_{q\crys}\big)$). For simplicity, we also write $\CR\big((\fX/(\ul D,I))_{q\crys}\big)$ and $\CR^\vp\big(\fX/(\ul D,I))_{q\crys}\big)$.
When $\fX = \spf R$, we adopt the notation $\CR\big((R/(\ul D,I))_{q\crys}\big)$ and $\Vect^\vp\big(R/(\ul D,I))_{q\crys}\big)$. 

\item In the extreme case $q=1$, we can similarly define the categories of \emph{$\delta_{\log}$-crystals} and \emph{$\delta_{\log}$-$F$-crystals}, and we employ a similar set of notation by replacing ``$q$crys'' with ``$\delta$crys''.
\end{enumerate}
\end{definition}

If $(E, I, M_E)$ is a pre-log $q$-PD triple, then $(E/(q-1), I/(q-1), M_E)$ is naturally a pre-$\delta_{\log}$-PD triple. This defines a natural functor between sites
\[
    (\fX/(\ul D/(q-1)))_{\delta\crys} \ra (\fX/(\ul D,I))_{q\crys},
\]
where $\ul D/(q-1)= (D/(q-1),M_D)$. Pulling back along this functor, we obtain functors
\[
        \Vect\big((\fX/(\ul D,I))_{q\crys}\big) \ra \Vect\big((\fX/(\ul D/(q-1)))_{\delta\crys}\big)
\]
and
\[
        \Vect^{\varphi}\big((\fX/(\ul D,I))_{q\crys}\big) \ra \Vect^{\varphi}\big((\fX/(\ul D/(q-1)))_{\delta\crys}\big).
\]
For any $\mE\in \Vect\big((\fX/(\ul D,I))_{q\crys}\big)$, denote its image in $\Vect\big((\fX/(\ul D/(q-1)))_{\delta\crys}\big)$ by $\mE_{q=1}$.

\begin{assumption}\label{assumption: locally lifting}
   Let $(D,I,M_D)$ be a pre-log $q$-PD triple with integral $M_D$. Let $\fX$ be a saturated $p$-adic log formal scheme over $(D/I, M_D)$. In the rest of this section,  we assume that \'etale locally, $\fX$ admits a log smooth lifting over $(D,M_D)$. This holds when $(D,I,M_D)= (\Ainf, \xi, N_\infty)$ and $\fX$ is admissibly smooth over $\underline{\mO}$. Indeed, when $\fX= \spf R$ is small affine, the lift is given by $\spf \ul{A(R)}$.
\end{assumption}
        
\begin{proposition}\label{qcrys vs crys}
    Let $(D,I,M_D)$ be a pre-log $q$-PD triple with integral $M_D$. Let $\fX$ be a saturated $p$-adic log formal scheme over $(D/I, M_D)$ satisfying Assumption \ref{assumption: locally lifting}. 
    For any $\mE\in  \Vect\big((\fX/(\ul D,I))_{q\crys}\big)$, there is a natural isomorphism
    \[
        R\Gamma\big((\fX/(\ul D, I))_{q\crys},\mE\big)\htimes_{D}^{\L} D/(q-1) \xrightarrow[]{\sim} R\Gamma\big((\fX/(\ul D/(q-1)))_{\delta\crys},\mE_{q=1}\big).
    \]
    The same statement holds if we replace $\Vect$ by $\Vect^{\varphi}$.
\end{proposition}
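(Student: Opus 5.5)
The plan is to compute both sides by Čech--Alexander complexes built from local log smooth lifts, and then compare the two cosimplicial objects termwise. By Assumption \ref{assumption: locally lifting}, we may first reduce, via \'etale descent, to the case $\fX=\spf R$ affine with a log smooth lift $(\widetilde R, M_{\widetilde R})$ over $(D,M_D)$. Both sides satisfy \'etale descent. The left side does because $-\htimes^{\L}_D D/(q-1)$ commutes with the finite limits computing Čech totalizations, after using boundedness to commute completed tensor product with $R\varprojlim$.

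For the $q$-crystalline side, I would form $\widetilde R(m)$, the $(p,I)$-completed $(m+1)$-fold tensor product of $\widetilde R$ over $D$ with its log structure. Its exactification along the diagonal has augmentation kernel generated by a regular sequence relative to $D$; this is the same argument as Lemma \ref{lemma: existence of log q-PD-envelope}(1). I would then take the log $q$-PD envelope $\mD_q(m)$ of $\widetilde R(m)\to R$, as in Lemma \ref{lemma: existence of log q-PD-envelope}(2), using \cite[Lemma 16.10]{BS} and \cite[Lemma 7.4]{Koshikawa}. The key claims are that $\mD_q(\bullet)$ is the Čech nerve of a weakly final object, and that for a crystal $\mE$ one has
\[
R\Gamma\big((\fX/(\ul D,I))_{q\crys},\mE\big)\simeq \mE(\mD_q(\bullet)).
\]
Weak finality is proved as in Proposition \ref{prop: weakly final obj: free chart}: given an object $(E,J,\mM_E)$, one lifts $\widetilde R\to E$ \'etale locally using log smoothness together with the exactness of $\spf(E/J)\to\spf E$, and Lemma \ref{Lem: Tian Lemma 1.9} handles charts. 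The vanishing of higher cohomology on affine objects follows from the crystal property and flatness. The same recipe with $q=1$ computes the $\delta$-crystalline side via $\delta_{\log}$-PD envelopes $\mD_1(\bullet)$ of $\widetilde R(\bullet)/(q-1)\to R$.

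The heart of the argument, and the main obstacle, is the base change identity for envelopes,
\[
\mD_q(m)\htimes^{\L}_D D/(q-1)\simeq \mD_1(m),
\]
with the derived tensor product concentrated in degree $0$. For this I would use the explicit description: $\mD_q(m)$ is obtained from the exactification $E(m)$ by freely adjoining $\varphi(x_i)/[p]_q$ in completed $\delta$-$D$-algebras. Modulo $q-1$ we have $[p]_q\equiv p$, so the same generators give the $\delta$-PD envelope, which is the PD envelope by \cite[Corollary 2.39]{BS}. Flatness, and hence the absence of higher Tor, comes from $(p,[p]_q)$-complete flatness of the $q$-PD envelope over $D$, which holds for regular sequences (\cite[Lemma 16.10]{BS}), together with the hypothesis that $D/(q-1)$ has finite Tor amplitude. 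The crystal property then gives
\[
\mE(\mD_q(m))\htimes_D D/(q-1)\cong \mE(\mD_q(m))\otimes_{\mD_q(m)}\mD_1(m)\cong \mE_{q=1}(\mD_1(m)).
\]
Totalizing, and commuting the completed tensor with the totalization (using bounded Tor amplitude and derived completeness), yields the isomorphism.

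It remains to check naturality, meaning independence of the chosen lift. I would verify this by comparing two lifts through their product lift, as in the ``all coordinates'' arguments, and then glue using Čech descent along an \'etale cover by affines admitting lifts. For the $\Vect^\varphi$ version, no further input is needed: the Frobenius structures are compatible with reduction mod $q-1$, since $\varphi(q-1)\in(q-1)$, so the isomorphism constructed above is $\varphi$-equivariant. I expect the flatness and derived base change of the log $q$-PD envelope to be the step requiring the most care.
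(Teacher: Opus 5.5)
There is a genuine gap in the step where you assert that the log $q$-PD envelopes $\mD_q(\bullet)$ of the \v{C}ech nerve $\widetilde R(\bullet)$ of the log smooth lift form the \v{C}ech nerve of a weakly final object. The same gap affects your $q=1$ recipe.

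Objects and morphisms of $(\fX/(\ul D,I))_{q\crys}$ and of the $\delta$-crystalline site are $\delta_{\log}$-triples and $\delta_{\log}$-compatible maps. Assumption~\ref{assumption: locally lifting} gives $\widetilde R$ no $\delta_{\log}$-structure, so the envelopes are not even defined as stated.

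More seriously, log smoothness only lifts $R\to E/J$ to a map of rings and monoids $\widetilde R\to E$. Nothing makes this map commute with $\delta$, and a $\delta$-compatible lift need not exist even \'etale locally on $E$. For instance, work in a $\delta$-crystalline site over $(\Z_p,(p))$ with $R=\mathbb{F}_p[x]$ and $\widetilde R=\Z_p\langle x\rangle$, $\delta(x)=0$. Take the object $E=\Z_p\langle y\rangle$, $\delta(y)=0$, $J=(p)$, with $x\mapsto y+y^2$. A $\delta$-map must send $x$ to some $e=y+y^2+pf$ with $\varphi(e)=e^p$. Comparing modulo $p^2$ forces $\frac{1}{p}\bigl((y+y^2)^p-y^p-y^{2p}\bigr)\equiv f^p \pmod p$. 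This fails in every nonzero $p$-completely \'etale $E$-algebra $E'$: the reduction of the left side has nonzero differential in $\Omega_{(E'/p)/\mathbb{F}_p}$, whereas $p$-th powers have zero differential.

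Citing Proposition~\ref{prop: weakly final obj: free chart} does not help. Its proof produces maps out of the perfect prism $\Ainf(R_\infty)$ via \cite[Proposition 7.11, Lemma 4.8]{BS}. That mechanism is specific to $D=\Ainf$ with a free chart and is unavailable for a general pre-log $q$-PD triple.

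The repair is what the paper does:
\begin{enumerate}
\item Use the lift only to choose a surjection $\ul{P_0}=(D\langle (X_s)_{s\in S},\ul\N^T\rangle,\N^T\oplus M_D)\to\widetilde R$.
\item Pass to the free $(p,I)$-completed $\delta_{\log}$-$D$-algebra $\ul P$ generated by $P_0$.
\item Take the log $q$-PD envelopes $\mD_q(P(\bullet))$ of its \v{C}ech nerve over $R$; these exist by \cite[Lemma 7.4]{Koshikawa}.
\end{enumerate}
Weak finality of $\mD_q(P)$ is then formal. Lift the images of the generators arbitrarily, using exactness of $\spf(E/J)\to\spf E$ for the monoid generators. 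Extend uniquely to a $\delta_{\log}$-map $P\to E$ by freeness, and then to $\mD_q(P)\to E$ by the universal property of the envelope.

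After this replacement the rest of your argument goes through and agrees with the paper. It uses the identification $\mD_q(P(\bullet))\htimes_D D/(q-1)\cong\mD(P(\bullet)/(q-1))$ from \cite[Lemma 7.4(5)]{Koshikawa}, the crystal property, and commuting $-\htimes^{\L}_D D/(q-1)$ past the totalization. Your separate regular-sequence computation via \cite[Lemma 16.10]{BS} is then no longer needed.
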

\begin{proof}
Since the assertion is local for the \'etale topology, we may assume $\fX = \spf \ul{R}$ is affine, and admits a log smooth lift $\tilde R$ over $\underline{D}=(D,M_D)$. Let 
    \[
        \ul{P_0}= (P_0,\N^T\oplus M_D)= (D\langle (X_s)_{s\in S},\ul\N^T\rangle, \N^T\oplus M_D)
    \]
    be a $(p,[p]_q)$-completed free $\delta_{\log}$-$D$-algebra with a surjection $\ul{P_0} \ra \ul{\tilde R}$, where  $T$ is a finite set. 
    Let $\ul P$ be the $(p, I)$-completed $\delta_{\log}$-ring over $(\ul D, I)$ generated by $P_0$; namely,
    \[
        \ul{P}= \big(D\langle \{(X_s)_{s\in S}\}_\delta,\{\ul\N^T\}_{\delta}^{\log}\rangle_{(p,I)}^{\wedge}, \N^T\oplus M_D\big)
    \]
    Let $\ul{P}(\bullet)$ be the $p$-completed \v{C}ech nerve of $(D,M_D) \ra \ul{P}$. Here $\{-\}_{\delta}$ (resp. $\{-\}_{\delta}^{\log}$) denote the process of freely joining the set in the category of $(p,I)$-complete $\delta$-$D$-algebras (resp. $\delta_{\log}$-$D$-algebras). According to \cite[Lemma 7.4]{Koshikawa}, the log $q$-PD envelope of $\ul{P}(\bullet) \ra \ul{R}$ exists, which we denote by $\mD_q(P(\bullet))$. Note that $\mD_q(P(0)) = \mD_q(P)$ is a weakly final object in $(\fX/(\ul D,I))_{q\crys}$. Consequently, for any log $q$-crystal $\mE \in \Vect\big((\fX/(\ul D,I))_{q\crys}\big)$, its cohomology $R\Gamma((\fX/(\ul D,I))_{q\crys},\mE)$ is computed by the \v{C}ech--Alexander complex
    \[
        0 \ra \mE(\mD_q(P(0))) \ra \mE(\mD_q(P(1))) \ra \mE(\mD_q(P(2))) \ra \cdots.
    \]
    On the other hand, let $\mD\big(P(\bullet)/(q-1)\big)$ be the $p$-completed log PD envelope of $\ul{P}(\bullet)/(q-1) \ra \ul{R}$. Then, by \cite[Lemma 7.4(5)]{Koshikawa}, there is a natural isomorphism of simplicial rings
    \[
        \mD_q(P(\bullet))\htimes_D D/(q-1) \cong \mD\big(P(\bullet)/(q-1)\big).
    \]
    Therefore, by the crystal property combined with the flatness of $\mE$, the derived object 
    \[
        R\Gamma\big((\fX/(\ul D,I))_{q\crys},\mE\big)\htimes_{D}^{\L} D/(q-1)
    \]
    is computed by the \v{C}ech--Alexander complex
    \[
       \mE(\mD_q(P(\bullet)))\htimes_D D/(q-1) \cong  \mE\left(\mD_q(P(\bullet))\htimes_D D/(q-1)\right) \cong \mE\big(\mD\big(P(\bullet)/(q-1)\big)\big).
   \]
   This leads to a natural isomorphism
   \[
       R\Gamma\big((\fX/(\ul D, I))_{q\crys},\mE\big)\htimes_{D}^{\L} D/(q-1) \xrightarrow[]{\sim} R\Gamma\big((\fX/(\ul D/(q-1)))_{\delta\crys},\mE_{q=1}\big).
   \]
\end{proof}

As in \cite[Remark 6.6]{Koshikawa}, we define the \emph{the mixed characteristic log crystalline site} $(X/(\ul D, I))_{\crys}$ (with \'etale topology) by dropping the $\delta$-structure and the $\delta_{\log}$-structure on $\mathfrak{E}$ in the definition of the $\delta_{\log}$-crystalline site. Its structure sheaf is denoted by $\mO_{\crys}$. There is a natural cocontinuous functor of sites
\[
    ((\fX/(\ul D,I))_{\delta\crys} \ra ((\fX/(\ul D,I))_{\crys}
\]
by forgetting $\delta$-structures and $\delta_{\log}$-structures. 
Pulling back along this functor, We obtain functors
\[
    \Vect\big((\fX/(\ul D,I))_{\delta\crys}\big) \ra \Vect\big((\fX/(\ul D,I))_{\crys}\big)
\]
and
\[
    \Vect^{\varphi}\big((\fX/(\ul D,I))_{\delta\crys}\big) \ra \Vect^{\varphi}\big((\fX/(\ul D,I))_{\crys}\big).
\]
    
\begin{proposition}\label{prop: delta crys vs crys}
    Fix a pre-$\delta_{\log}$ triple $(D,I,M_D)$ with integral $M_D$. Let $\fX$ be a saturated $p$-adic log formal scheme over $(D/I, M_D)$ satisfying Assumption \ref{assumption: locally lifting}. 
    For any $\mE\in \Vect((\fX/(\ul D,I))_{\delta\crys}$, we still use $\mE$ to denote its image in $\Vect\big((\fX/(\ul D,I))_{\crys}\big)$. Then there is a natural isomorphism
    \[
        R\Gamma\big((\fX/(\ul D, I))_{\delta\crys},\mE\big)\xrightarrow[]{\sim} R\Gamma_{\crys}\big(\fX/(\ul D, I),\mE\big).
    \]
    The same statement holds if we replace $\Vect$ by $\Vect^{\varphi}$.
\end{proposition}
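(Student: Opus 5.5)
The plan is to compare both sides through \v{C}ech--Alexander complexes, exactly as in the proof of Proposition \ref{qcrys vs crys}. The assertion is étale local on $\fX$, because both sides satisfy étale descent and the map is natural. We may therefore assume $\fX=\spf\ul R$ is affine with a log smooth lift $\ul{\tilde R}$ over $(D,M_D)$ (Assumption \ref{assumption: locally lifting}). Choose a surjection from a $p$-completed free pre-log $D$-algebra $\ul{P_0}=(D\langle X_s,\N^T\rangle,\N^T\oplus M_D)$ onto $\ul{\tilde R}$, and let $\ul P$ be the free $\delta_{\log}$-$D$-algebra generated by $\ul{P_0}$, with $p$-completed \v{C}ech nerve $\ul P(\bullet)$.

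On the $\delta$-crystalline side, the $p$-completed log PD envelope $\mD(P(\bullet))$ of (the exactification of) $\ul P(\bullet)\to\ul R$ carries a natural $\delta_{\log}$-structure; this is the $q=1$ case of \cite[Lemma 7.4]{Koshikawa}. It gives a weakly final cosimplicial object of $(\fX/(\ul D,I))_{\delta\crys}$, so $R\Gamma_{\delta\crys}(\mE)$ is computed by $\mE(\mD(P(\bullet)))$.

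On the ordinary log crystalline side, the classical \v{C}ech--Alexander approach uses instead the log PD envelope $\mD(P_0(\bullet))$ of $\ul{P_0}(\bullet)\to\ul R$. Since $P_0$ is log smooth (free) over $D$, this object is weakly final in $(\fX/(\ul D,I))_{\crys}$. The key point is to compare the two cosimplicial rings. The map $P_0\to P$ induces $\mD(P_0(\bullet))\to\mD(P(\bullet))$, and the comparison should follow from the following observation. The free $\delta$-algebra $P$ is obtained from $P_0$ by adjoining variables $\delta^n(X_s)$ and $\delta_{\log}^n(e_t)$, so $P$ is (the completion of) a polynomial algebra over $P_0$. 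The PD envelope of $P(\bullet)$ is then the PD envelope of $P_0(\bullet)$, completed-tensored with PD polynomial algebras in the differences of the extra variables.

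By the crystal property, $\mE(\mD(P(\bullet)))$ and $\mE(\mD(P_0(\bullet)))$ then differ by tensoring with PD polynomial algebras. By the PD Poincaré lemma (homotopy invariance of \v{C}ech--Alexander complexes under adjoining free variables, as in \cite[Lemma 2.11]{BMS1}-type or the standard crystalline argument), the map of cosimplicial modules is a quasi-isomorphism on totalizations. Alternatively, one can identify both totalizations with the log de Rham complex of $\mE$ on the PD-envelope of $\ul{\tilde R}$ inside the respective smooth liftings, and these agree by independence of the choice of smooth embedding.

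The Frobenius statement follows by functoriality, since the constructed isomorphism is natural in $\mE$ and compatible with the maps induced by $\varphi$ on $\mD(P(\bullet))$. The main obstacle is justifying that the PD envelope of the $\delta_{\log}$-free algebra really decomposes as a PD polynomial extension of the envelope of $P_0$, including the log variables $\delta_{\log}^n(e_t)$ and the exactification. Handling the exactification of the log part in particular requires care with the monoid $\N^T$. I would first try to reduce to this via the étale-local free chart structure and Lemma \ref{lemma: existence of log q-PD-envelope}-style explicit descriptions.
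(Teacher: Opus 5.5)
Your overall plan (\v{C}ech--Alexander complexes built from a free algebra $\ul{P_0}$ and the free $\delta_{\log}$-algebra $\ul P$ on it, followed by a comparison) is the paper's. The step that fails is your assertion that $\mD(P(\bullet))$ is weakly final in $(\fX/(\ul D,I))_{\delta\crys}$. In your description the extra variables $\delta^n(X_s)$, $\delta^n_{\log}(e_t)$ acquire divided powers. So you are taking the envelope of an honest surjection $\ul P\to\ul R$, which requires choosing images of those variables in $R$. But for an object $(E,J_E,\mM_E)$ with $R\to E/J_E$, a $\delta_{\log}$-map $\ul P\to(E,\mM_E)$ is determined by lifts of the $X_s,e_t$. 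That essentially forces the images of the $\delta^n(X_s)$ modulo $J_E$, and nothing makes them agree with your choices.

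Here is a counterexample. Let $D=\Z_p$, $I=(p)$, $P_0=\Z_p\langle X\rangle$, $R=P_0/p=\mathbb{F}_p[x]$. Take the two objects $(\Z_p\langle x\rangle,(p))$ with $\varphi(x)=x^p$, resp.\ $\varphi(x)=x^p+px$. Every lift of $x$ is $x+pa$, and $\delta(x+pa)\equiv\delta(x)+a^p \pmod p$. This lies in $\mathbb{F}_p[x^p]$, resp.\ $x+\mathbb{F}_p[x^p]$, so no choice of the image of $\delta(X)$ in $R$ works for both.

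Note also that if your claim held, the proposition would be a tautology. Your $\mD(P(\bullet))$ is already weakly final in the ordinary crystalline site, since $P$ is a polynomial algebra, so both sides would be computed by the same complex. Your comparison with $\mD(P_0(\bullet))$ would then be superfluous; all the content sits in the unproved weak-finality claim.

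The paper, following \cite[Proposition 6.8]{Koshikawa}, instead takes the envelope of the ideal $J_0P$, where $J_0=\ker(P_0\to R)$. This is an object because the site allows arbitrary maps $\spf(E/J)\to\fX$, here via $R\to R\htimes_{P_0}P$. It is weakly final by the freeness of $\ul P$. Since the log structure on $P$ is still $\N^T\oplus M_D$, the exactification is base-changed from $P_0(\bullet)$, which removes your worry about the monoid. Flat base change of PD envelopes then gives
\[
C(\bullet)\cong\big(P(\bullet)\otimes_{P_0(\bullet)}C_0(\bullet)\big)^\wedge_p,\qquad C_0(\bullet)=\mD(P_0(\bullet)),
\]
so the extra variables stay ordinary polynomial variables, one copy per cosimplicial factor, with no divided powers of differences. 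The crystal property gives $\mE(C(\bullet))\cong\mE(C_0(\bullet))\htimes_{P_0(\bullet)}P(\bullet)$. This is quasi-isomorphic to $\mE(C_0(\bullet))$ because $P_0(\bullet)\to P(\bullet)$ is a homotopy equivalence. Indeed, writing $P=P_0\htimes_D D\langle Y\rangle$, one has $P(\bullet)=P_0(\bullet)\htimes_D Q(\bullet)$, where $Q(\bullet)$ is the \v{C}ech nerve of $D\to D\langle Y\rangle$ and is homotopy equivalent to $D$. No PD Poincar\'e lemma is involved.
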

\begin{proof}
The proof essentially follows that of \cite[Proposition 6.8]{Koshikawa}. Since the assertion is local for the \'etale topology, we may assume $\fX = \spf \ul{R}$ is affine and admits a log smooth lift $\tilde R$ over $(D,M_D)$. Let 
    \[
        \ul{P_0}= (P_0,\N^T\oplus M_D)= (D\langle (X_s)_{s\in S},\ul\N^T\rangle, \N^T\oplus M_D)
    \]
    be a $(p,[p]_q)$-completed free $\delta_{\log}$-$D$ algebra with a surjection $\ul{P_0} \ra \ul{\tilde R}$, where  $T$ is a finite set. 
    Let $\ul{P_0}(\bullet)$ be the $p$-completed \v{C}ech nerve of $(D,M_D) \ra \ul{P_0}$. Let $\ul {C_0}(\bullet)$ be the $p$-complete log-PD envelope of $\ul{P_0}(\bullet) \ra \ul R$. Write $J$ for the kernel of $C_0(0) \ra R$, then $(\ul{C_0}(0),J)^a$ is a weakly final object in $((\fX/(\ul D,I))_{\crys}$. Consequently, the crystalline cohomology $R\Gamma_{\crys}\big(\fX/(\ul D, I),\mE\big)$ is computed by $\mE(C_0(\bullet))$.

    On the other hand,  let $\ul P$ be the $(p, I)$-completed $\delta_{\log}$-ring over $(\ul D, I)$ generated by $P_0$; namely,
    \[
        \ul{P}= \big(D\langle \{(X_s)_{s\in S}\}_\delta,\{\ul\N^T\}_{\delta}^{\log}\rangle_{(p,I)}^{\wedge}, \N^T\oplus M_D\big).
    \]
    Let $\ul{P}(\bullet)$ be the $p$-completed \v{C}ech nerve of $(D,M_D) \ra \ul{P}$. By \cite[Proposition 6.8]{Koshikawa}, the $\delta$-crystalline cohomology $R\Gamma\big((\fX/(\ul D, I))_{\delta\crys},\mE\big)$ is computed by $\mE(C(\bullet))$.
    Moreover, the \v{C}ech nerve has the following description:
     \[
         C(\bullet) \cong (P(\bullet)\otimes_{P_0(\bullet)} C_0(\bullet))_p^{\wedge}.
     \]
     By \cite[Proposition 6.8]{Koshikawa} again, $P_0(\bullet) \ra P(\bullet)$ is a homotopy equivalence. Therefore, by the crystal properties, we obtain quasi-isomorphisms
     \[
          \mE(C(\bullet)) \cong \mE(C_0(\bullet))\otimes_{C_0(\bullet)}C(\bullet) \cong \mE(C_0(\bullet))\htimes_{P_0(\bullet)}P(\bullet) \cong \mE(C_0(\bullet)).
     \]
     This finishes the proof.
     
\end{proof}

\begin{corollary}
    Let $(D,I,M_D)$ and $\fX$ be as above. The reduction modulo $q-1$ gives a natural functor
    \[
        \Vect\big((\fX/(\ul D,I))_{q\crys}\big) \ra \Vect\big(\big(\fX/(\ul D/(q-1)))_{\crys}\big),
    \]
    sending $\mE$ to $\mE_{q=1}$. For any $\mE\in \Vect\big((\fX/(\ul D,I))_{q\crys}\big)$, there is a natural isomorphism
    \[
        R\Gamma\big((\fX/(\ul D, I))_{q\crys},\mE\big)\otimes_{D}^{\L} D/(q-1) \xrightarrow[]{\sim} R\Gamma_{\crys}\big(\fX/(\ul D/(q-1)),\mE_{q=1}\big).
    \]
    The same statement holds if we replace $\Vect$ by $\Vect^{\varphi}$.
\end{corollary}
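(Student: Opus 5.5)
The corollary follows by composing the two preceding propositions; the only real work is to make the functor and the comparison maps line up. First I define the functor. Given $\mE\in \Vect\big((\fX/(\ul D,I))_{q\crys}\big)$, pull it back along $(\fX/(\ul D/(q-1)))_{\delta\crys}\ra (\fX/(\ul D,I))_{q\crys}$, sending a $\delta_{\log}$-PD triple to itself regarded as a log $q$-PD triple with $q=1$. This gives $\mE_{q=1}\in \Vect\big((\fX/(\ul D/(q-1)))_{\delta\crys}\big)$. Then push it to the mixed characteristic log crystalline site via the cocontinuous forgetful functor $(\fX/(\ul D/(q-1)))_{\delta\crys}\ra(\fX/(\ul D/(q-1)))_{\crys}$, exactly as recalled before Proposition \ref{prop: delta crys vs crys}.

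This last step needs care, because the forgetful functor only yields pullback of sheaves in one direction. To get a crystal on the crystalline site from a $\delta$-crystal, I would argue locally. When $\fX=\spf\ul R$ admits a lift (Assumption \ref{assumption: locally lifting}), crystals on either site are equivalent to modules with stratification over the respective \v{C}ech nerves, $\mD\big(P(\bullet)/(q-1)\big)$ and $C_0(\bullet)$. The homotopy equivalence $P_0(\bullet)\ra P(\bullet)$ and the base change identification $C(\bullet)\cong (P(\bullet)\otimes_{P_0(\bullet)}C_0(\bullet))^\wedge_p$, both from the proof of Proposition \ref{prop: delta crys vs crys}, identify the two categories of stratified modules. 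The local identifications are canonical, so they glue \'etale locally. I expect this compatibility of crystal categories (equivalently, that the functor is well defined and independent of choices) to be the main point needing verification. It is also the one step where I would rely on the cited result \cite[Proposition 6.8]{Koshikawa} rather than argue from scratch.

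For the cohomological statement, I would compose the isomorphism of Proposition \ref{qcrys vs crys},
\[
R\Gamma\big((\fX/(\ul D,I))_{q\crys},\mE\big)\htimes^{\L}_D D/(q-1)\simra R\Gamma\big((\fX/(\ul D/(q-1)))_{\delta\crys},\mE_{q=1}\big),
\]
with Proposition \ref{prop: delta crys vs crys} applied to the pre-$\delta_{\log}$ triple $(D/(q-1),I/(q-1),M_D)$. The latter identifies the right-hand side with $R\Gamma_{\crys}\big(\fX/(\ul D/(q-1)),\mE_{q=1}\big)$. Over $(D/(q-1), I/(q-1), M_D)$, $\fX$ still satisfies Assumption \ref{assumption: locally lifting}, since we simply reduce the given lift modulo $q-1$.

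One discrepancy needs checking: the statement uses the underived-complete tensor $\otimes^{\L}_D$, while Proposition \ref{qcrys vs crys} uses $\htimes^{\L}_D$. The two agree once $R\Gamma_{q\crys}(\mE)$ is derived $(p,[p]_q)$-complete and $D/(q-1)$ is $p$-complete, which holds in the situations at hand. Finally, in the $\Vect^{\vp}$ case every map above is induced by $\delta$-compatible morphisms of \v{C}ech nerves, so each isomorphism commutes with the Frobenius structures.
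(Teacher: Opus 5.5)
Your proposal is correct and is essentially the paper's argument: the corollary is the composite of Proposition~\ref{qcrys vs crys} with Proposition~\ref{prop: delta crys vs crys} applied over $(D/(q-1), I/(q-1), M_D)$, where the reduced local lift still satisfies Assumption~\ref{assumption: locally lifting}.

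Your extra care about the direction of the functor $\Vect\big((\fX/(\ul D/(q-1)))_{\delta\crys}\big)\to\Vect\big((\fX/(\ul D/(q-1)))_{\crys}\big)$ addresses a point the paper does not spell out, since pullback along the cocontinuous forgetful functor naturally goes the other way. Note, however, that the homotopy equivalence $P_0(\bullet)\to P(\bullet)$, being a statement about the associated complexes, does not by itself identify categories of stratified modules. That step should instead be argued at the level of the \v{C}ech groupoids, for example via a ring retraction $P\to P_0$ compatible with the maps to $R$ and the induced maps of envelopes.
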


Now we specialize to the situation $(D,I,M_D)= (\ul \Ainf, \xi)= (\Ainf, \xi, N_\infty)$. Let $\fX$ be a saturated $p$-adic log formal scheme that is admissibly smooth over $\underline{\mO}$. Assume that $\fX$ locally admits free charts. By \cite[theorem 7.13]{Koshikawa}, there is a canonical functor 
    \begin{equation}\label{q-crystalline site to prismatic site}
        \gamma_q \colon (\fX/\logqAinf)_{q\crys} \ra \big(\fX^{(1)}/\logAinf\big)_{\Prism}.
    \end{equation}
    It sends an object $(E,J,M_E)$ of $\big(\fX/\logqAinf\big)_{q\crys}$ to the log prism 
    \[
        (\varphi_*E, ([p]_q= \txi), M_E) \in \big(\fX^{(1)}/\logAinf\big)_{\Prism}.
    \]
Pulling back along $\gamma_q$ yields a natural functor
    \[
        \gamma^*_q \colon \Vect^{\vp}\big(\big(\fX^{(1)}/\logAinf\big)_{\Prism}\big) \ra \Vect^{\vp}\big((\fX/\logqAinf)_{q\crys}\big).
    \]
For any $\bM\in \BKF^{\log}(\fX,\vp)$, let $\bM_\Prism$ denote the corresponding prismatic $F$-crystal via the equivalence in Theorem \ref{thm: equivalence between BKF modules and prismatic $F$-crystals}.

\begin{definition}\label{defn: qcrys specialization}
Let $\fX$ be a saturated $p$-adic log formal scheme that is admissibly smooth over $\underline{\mO}$. Assume that $\fX$ locally admits free charts. We define the \emph{$q$-crystalline specialization functor} as
    \begin{align*}
        \sigma_{q\crys}^*\colon \BKF^{\log}(\fX,\vp) &\lra \Vect^{\vp}((\fX/\logqAinf)_{q\crys}\big) \\
        \bM &\longmapsto \bM_{q\crys} := \gamma_{q}^*\bM_\Prism.
    \end{align*}
\end{definition}

For any finite sets $S,T$, we fix the following notation (cf. \cite[Construction 11.1]{log1}):
\begin{equation}\label{notation Sigma(S,T)}
    \Sigma(S, \ul T) := (\mO\langle(X_s^{\pm 1})_{s\in S}, \ul{T}\rangle, \N^T\oplus N_\infty).
\end{equation}
Choose a surjection $\Sigma= \Sigma(S, \ul T) \ra \ul{R}$ of pre-log rings. Let 
\[
    A(\Sigma):= ((\Ainf\{(X_s^{\pm 1})_{s\in S}\}_\delta, \{\ul{T}\}_\delta^{\log})_{p,\mu}^{\wedge}, \N^T\oplus N_\infty)
\]
be the free $(p,\mu)$-completed $\delta_{\log}$-$\Ainf$-algebra generated by $\Sigma(S, \ul T)$. Here $\{-\}_{\delta}$ (resp. $\{-\}_{\delta}^{\log}$) denotes the process of freely joining the set in the category of $(p,\mu)$-complete $\delta$-$\Ainf$-algebras (resp. $\delta_{\log}$-$\Ainf$-algebras). Let 
\begin{equation}\label{defn of A_0(Sigma)}
    A_0(\Sigma):= (\Ainf\langle(X_s^{\pm 1})_{s\in S}, \ul{T}\rangle, \N^T\oplus N_\infty)
\end{equation}
be the smooth $\delta_{\log}$-$\Ainf$-algebra with the $\delta_{\log}$ structure given by $\delta_{\log}(\N^T)=0$. Let $\mD(\Sigma)$ be the log $q$-PD envelope of $A(\Sigma)$ with respect to the surjection $A(\Sigma) \ra \ul{R}$.
Let $A(\Sigma)(i)$ be the $p$-completed $(i+1)$-fold product of $A(\Sigma)$ over $\Ainf$. Then $A(\Sigma)(\bullet)$ is the \v{C}ech nerve of $(\Ainf, N_\infty) \ra A(\Sigma)$. Let $\mD_{q,\Sigma}(\bullet)= \mD_q(A(\Sigma)(\bullet))$ be the $p$-completed log $q$-PD envelope of the surjection
\[
     A(\Sigma)(\bullet) \ra \Sigma(S, \ul T) \ra \ul R.
\]
Let $\mE\in \Vect\big((R/\logqAinf)_{q\crys}\big)$. Then its cohomology $R\Gamma\big((\fX/\logqAinf)_{q\crys}, \mE\big)$ is computed by the \v{C}ech--Alexander complex $\mE(\mD_{q,\Sigma}(\bullet)))$. 
Parallelly, let $A(\Sigma)(\bullet)$ be the \v{C}ech nerve of $(\Ainf, N_\infty) \ra A_0(\Sigma)$. Let $\mD(A_0(\Sigma)(\bullet))$ be the $p$-completed log $q$-PD envelope of the surjection
\begin{equation}\label{defn of D_Sigma}
    A_0(\Sigma)(\bullet) \ra \Sigma(S, \ul T) \ra \ul R.
\end{equation}
Denote $\mD_{\Sigma}= \mD(A_0(\Sigma))$ and $\mD_{\Sigma}(\bullet)= \mD(A_0(\Sigma)(\bullet))$. 

\begin{lemma}
    Keep the notation as above. Let $\mE\in \Vect\big((R/\logqAinf)_{q\crys}\big)$ be a log $q$-crystal. Then, its cohomology $R\Gamma\big((\fX/\logqAinf)_{q\crys}, \mE\big)$ is also computed by $\mE(\mD_{\Sigma}(\bullet))= \mE\big(\mD(A_0(\Sigma) (\bullet))\big)$. 
\end{lemma}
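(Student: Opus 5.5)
We compare the two Čech--Alexander complexes via the natural map of cosimplicial objects induced by $A_0(\Sigma)\to A(\Sigma)$, in the same way as in the proof of Proposition \ref{prop: delta crys vs crys}, where $P_0(\bullet)\to P(\bullet)$ was shown to be a homotopy equivalence. First we make sure $A_0(\Sigma)$ carries a $\delta_{\log}$-structure ($\delta(X_s)=0$, $\delta_{\log}(\N^T)=0$, extended to the completion). Then the inclusion of generators gives a map of $\delta_{\log}$-$\Ainf$-algebras $A_0(\Sigma)\to A(\Sigma)$ compatible with the surjections onto $\ul R$. By the universal property of log $q$-PD envelopes (\cite[Lemma 7.4]{Koshikawa}) this produces maps $\mD_\Sigma(\bullet)\to\mD_{q,\Sigma}(\bullet)$ in $(\fX/\logqAinf)_{q\crys}^{\mathrm{op}}$, and by the crystal property a map of cosimplicial modules $\mE(\mD_\Sigma(\bullet))\to\mE(\mD_{q,\Sigma}(\bullet))$. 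A cleaner route would be to show directly that $\mD_\Sigma=\mD_\Sigma(0)$ is weakly final and that $\mD_\Sigma(\bullet)$ is its Čech nerve (coproducts in the site being computed by log $q$-PD envelopes). The difficulty here is that $A_0(\Sigma)$ is not a free $\delta_{\log}$-algebra, so the weak finality of $\mD_\Sigma$ does not follow formally. It should be argued using the log smoothness of $A_0(\Sigma)$ over $(\Ainf,N_\infty)$, which lets one lift $R$-points to any object $(E,J,\mM_E)$ after an étale cover. This is the step I expect to require care, so I would instead pursue the comparison route below.

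For the comparison route, the plan mirrors \cite[Proposition 6.8]{Koshikawa}. Step one: show $A(\Sigma)(\bullet)\simeq A_0(\Sigma)(\bullet)$ up to cosimplicial homotopy after taking envelopes. Concretely, $A(\Sigma)$ is obtained from $A_0(\Sigma)$ by freely adjoining the higher $\delta$-variables $\delta^n(X_s)$ and $\delta_{\log}^n$ of the log generators. Step two: establish the identification of simplicial rings
\[
\mD_{q,\Sigma}(\bullet)\cong \big(A(\Sigma)(\bullet)\otimes_{A_0(\Sigma)(\bullet)}\mD_\Sigma(\bullet)\big)^{\wedge}_{(p,\mu)}.
\]
This is analogous to $C(\bullet)\cong (P(\bullet)\otimes_{P_0(\bullet)}C_0(\bullet))^\wedge_p$ in the proof of Proposition \ref{prop: delta crys vs crys}. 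It follows since the kernels of both surjections to $\ul R$ are generated by the same regular sequence coming from $\Sigma$, so the log $q$-PD envelope formation commutes with this flat base change. Here the exactification step is handled by the explicit description as in Lemma \ref{lemma: existence of log q-PD-envelope}.

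Step three: the map $A_0(\Sigma)(\bullet)\to A(\Sigma)(\bullet)$ is a homotopy equivalence of cosimplicial $(p,\mu)$-complete rings. The reason is that each term is a completed polynomial extension in variables whose differences across the cosimplicial factors become PD-like (or $q$-PD-like) variables, and the standard contracting homotopy for Čech nerves of polynomial algebras applies. Combined with the crystal property, $\mE(\mD_{q,\Sigma}(\bullet))\cong\mE(\mD_\Sigma(\bullet))\htimes_{A_0(\Sigma)(\bullet)}A(\Sigma)(\bullet)$, which is then quasi-isomorphic to $\mE(\mD_\Sigma(\bullet))$.

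I expect step three to be the main obstacle, since in the $q$-setting $\delta$-variables interact with $[p]_q$. Should it fail, the fallback is to reduce modulo $(p,\mu)$, or modulo $q-1$ using Proposition \ref{qcrys vs crys}, and apply derived Nakayama. This reduces to the crystalline statement of Proposition \ref{prop: delta crys vs crys}, where the homotopy equivalence is already known from \cite{Koshikawa}.
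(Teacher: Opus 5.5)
Your fallback is exactly the paper's proof. The route you say you would actually pursue, however, does not work.

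First, the inclusion $A_0(\Sigma)\hookrightarrow A(\Sigma)$ is not a map of $\delta_{\log}$-rings. In $A_0(\Sigma)$ one has $\delta(X_s)=0$ and $\delta_{\log}=0$ on $\N^T$, while in $A(\Sigma)$ these are free variables. Freeness gives a $\delta_{\log}$-map only in the other direction, $r\colon A(\Sigma)\to A_0(\Sigma)$, the identity on generators. Taking $A(\Sigma)\to\ul R$ to factor through $r$, the universal property of log $q$-PD envelopes yields $\mD_{q,\Sigma}(\bullet)\to\mD_\Sigma(\bullet)$, not a map $\mD_\Sigma(\bullet)\to\mD_{q,\Sigma}(\bullet)$.

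Second, and fatally for your main route, Step two is false. The kernel of $A(\Sigma)\to R$ is not generated by the kernel of $A_0(\Sigma)\to R$. Indeed, $A(\Sigma)$ modulo the extended ideal is $R$ with infinitely many completed polynomial variables adjoined: the $\delta^n(X_s)$, the $\delta_{\log}$ of the log generators, and their $\delta$-iterates. Hence $\mD_{q,\Sigma}$ also contains $q$-divided powers of all these extra variables. It is not the base change $A(\Sigma)\htimes_{A_0(\Sigma)}\mD_\Sigma$, which would not even carry an evident $\delta$-structure.

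So the homotopy equivalence $A_0(\Sigma)(\bullet)\simeq A(\Sigma)(\bullet)$ of Step three does not transfer to the envelopes. That equivalence is true for the \v{C}ech nerves, using $r$ and the inclusion as ring maps.

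Your ``cleaner route'' fails for the same reason. Log smoothness only lifts $R$-points to \emph{ring} maps $A_0(\Sigma)\to E$. A morphism out of $\mD_\Sigma$ in the $q$-crystalline site needs a $\delta_{\log}$-map, i.e.\ lifts $e_s$ with $\delta(e_s)=0$, and these need not exist. So weak finality of $\mD_\Sigma$ cannot be obtained this way, and a direct argument at the $q$-level stalls.

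The missing idea is that $\delta$-compatibility must be eliminated before smoothness can be used. This is what your fallback, and the paper, do.
\begin{itemize}
\item Both \v{C}ech--Alexander complexes are derived $(p,q-1)$-complete. By derived Nakayama it therefore suffices to check the comparison map, induced by $r$, after applying $\htimes^{\L}_{\Ainf}\Ainf/\mu$.
\item By \cite[Lemma 7.4(5)]{Koshikawa}, the reductions of $\mD_{q,\Sigma}(\bullet)$ and $\mD_\Sigma(\bullet)$ are the $p$-completed log PD envelopes of $A(\Sigma)(\bullet)/\mu$ and $A_0(\Sigma)(\bullet)/\mu$, respectively.
\item The former computes the $\delta$-crystalline cohomology of $\mE_{q=1}$.
\item The latter computes the crystalline cohomology of $\mE_{q=1}$. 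In the crystalline site there is no $\delta$-structure to respect, so log smoothness of $A_0(\Sigma)/\mu$ does make its PD envelope weakly final. This is where your smoothness intuition legitimately enters.
\item Proposition \ref{prop: delta crys vs crys} then identifies the two.
\end{itemize}
You should make this the main argument and drop Steps one through three.
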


\begin{proof}
We only need to show that the inclusion $\mD_\Sigma(\bullet) \ra \mD_{q,\Sigma}(\bullet)$ induces a quasi-isomorphism
\[
    \mE(\mD_\Sigma(\bullet)) \ra \mE(\mD_{q,\Sigma}(\bullet))
\]
between \v{C}ech--Alexander complexes. By $(p,q-1)$-adic completeness, we are left to check this after modulo $(q-1)$. Namely, it suffices to show that the morphism
\[
    \mE(\mD_\Sigma(\bullet))\htimes^{\L}_{\Ainf}\Ainf/(q-1) \ra \mE(\mD_{q,\Sigma}(\bullet))\htimes^{\L}_{\Ainf}\Ainf/(q-1)
\]
is an isomorphism. By \cite[Lemma 7.4(5)]{Koshikawa}, there is a natural isomorphism 
\[
    \mD_{q,\Sigma}(\bullet)\htimes_D D/(q-1) \cong \mD\big(A(\Sigma)(\bullet)/(q-1)\big),
\]
of simplicial rings, where $\mD\big(A(\Sigma)(\bullet)/(q-1)\big)$ is the $p$-completed log PD envelope of $A(\Sigma)(\bullet)/(q-1) \ra R$. Similarly, by \cite[Lemma 7.4(5)]{Koshikawa} again, there is a quasi-isomorphism
\[
    \mD_\Sigma(\bullet)\htimes_D D/(q-1) \cong \mD\big(A_0(\Sigma)(\bullet)/(q-1)\big).
\]
So it remains to verify the quasi-isomorphism
\[
    \mE_{q=1}\big(\mD\big(A(\Sigma)(\bullet)/(q-1)\big)\big) \cong \mE_{q=1}\big(\mD\big(A_0(\Sigma)(\bullet)/(q-1)\big)\big).
\]
Indeed, the two sides compute the $\delta$-crystalline cohomology $R\Gamma(\fX/(\ul\Ainf, \xi)_{q\crys},\mE_{q=1})$ and the crystalline cohomology $R\Gamma_{\crys}(\fX/(\ul \Ainf/\mu),\mE_{q=1})$, respectively. The desired isomorphism follows from Proposition \ref{prop: delta crys vs crys}.
\end{proof}

Let $\fX= \spf R$ be an affine admissibly smooth $p$-adic log formal scheme over $\ul \mO$. 
Notice that there is a natural functor \footnote{This functor is actually an equivalence of categories. The proof is essentially the same as that of \cite[Theorem 6.2]{Kato}.}
\begin{equation}\label{eq: q-crystals vs stratifications}
    \Vect\big((\fX/\logqAinf)_{q\crys}\big)\ra \Strat(\mD_{\Sigma}(\bullet))
\end{equation}
sending a log $q$-crystal to a module with stratification with respect to $\mD_{\Sigma}(\bullet)$. More precisely, let $\mE\in \Vect\big((\fX/\logqAinf)_{q\crys}\big)$ be a log $q$-crystal. It sends $\mE$ to $(E, \varepsilon)$ where $E= \mE(\mD_{\Sigma})$ and
\[
    \varepsilon: E\otimes_{\mD_{\Sigma}, p_2}\mD_{\Sigma}(1)\xrightarrow[]{\cong} \mE(\mD_\Sigma(1)) \xrightarrow[]{\cong} E\otimes_{\mD_{\Sigma}, p_1}\mD_{\Sigma}(1).
\]
For every $\gamma\in \Gamma_{\Sigma}$, the base change of $\varepsilon: E\otimes_{\mD_{\Sigma}, p_2}\mD_{\Sigma}(1)\xrightarrow[]{\cong} E\otimes_{\mD_{\Sigma}, p_1}\mD_{\Sigma}(1)$ along the map $\mD_{\Sigma}(1)\xrightarrow[]{(1, \gamma)} \mD_{\Sigma}(1)\xrightarrow[]{\Delta} \mD_{\Sigma}$ induces an isomorphism $E\otimes_{\mD_{\Sigma}, \gamma} \mD_{\Sigma} \xrightarrow[]{\sim} E$; this defines an action of $\gamma$ on $E$. Using the conditions (1) and (2) in Definition \ref{defn: stratification}, one checks that the actions of different $\gamma$'s are compatible and hence define a semilinear action of $\Gamma$ on $E$. Moreover, since the action of $\Gamma$ on $\mD_\Sigma(1)/\mu$ is the identity, the induced action of $\Gamma$ on $E/\mu$ is also the identity.
Thus, we obtain a functor
\[
    \ev^{\Strat}_{\mD_{\Sigma}}: \mathrm{Strat}(\mD_{\Sigma}(\bullet))\rightarrow \Rep^{\mu}_{\Gamma_{\Sigma}}(\mD_{\Sigma}).
\] 
Notice that there is a functor $\Rep^{\mu}_{\Gamma_{\Sigma}}(\mD_{\Sigma}) \ra q\MIC(\mD_{\Sigma})$ defined in Theorem \ref{representations vs q-connections}, which associates $E \in \Rep^{\mu}_{\Gamma_{\Sigma}}(\mD_{\Sigma})$ with a log $q$-connection $\nabla \colon E \ra E\otimes_{\mD_{\Sigma}} q\Omega^1_{\mD_{\Sigma}/\Ainf}$ defined by
\[
    e \mapsto \sum_{s\in S} \frac{\gamma_s(e)-e}{q-1}d\log X_s+ \sum_{t\in T} \frac{\gamma_t(e)-e}{q-1}d\log X_t, \ \forall e\in E.
\]
Composing this with $\ev_{\mD_\Sigma}^{\Strat}$, we arrive at a functor
\begin{align*}
     \Vect\big((\fX/\logAinf)_{q\crys}\big) &\lra q\MIC(\mD_{\Sigma})\\
     \mE &\longmapsto (\mE_{q\dR}, \nabla):= (E,\nabla).
\end{align*}
Let $\Delta_{\Sigma}= \{\gamma_s\mid s\in S\}\cup \{\gamma_t\mid t\in T\}$ and let
\[
    \Kos\big(E, \frac{\Delta_\Sigma-1}{q-1}\big)= \Kos\big(E, \big\{\frac{\gamma_s-1}{q-1}\big\}_{s\in S}, \big\{\frac{\gamma_t-1}{q-1}\big\}_{t\in T}\big)
\]
denote the corresponding Koszul complex. By construction, there is a quasi-isomorphism 
\[
    E\otimes_{\mD_\Sigma} q\Omega^{\bullet}_{\mD_{\Sigma}/\Ainf} \cong \Kos\big(E, \frac{\Delta_\Sigma-1}{q-1}\big)
\]
between complexes in $D(\fX_{\ett},\Ainf)$.

\begin{proposition}\label{qcrys vs qdR}
Let $\fX= \spf R$ be an affine admissibly smooth $p$-adic log formal scheme over $\ul \mO$. Let $\mE\in \Vect\big((\fX/\logqAinf)_{q\crys}\big)$ be a log $q$-crystal and let $(E, \nabla)$ be the associated $\mD_{\Sigma}$-module with $q$-connection. Then there is a natural isomorphism
\[
    E\otimes_{\mD_\Sigma} q\Omega^{\bullet}_{\mD_{\Sigma}/\Ainf}\cong  R\Gamma\big((R/\logqAinf)_{q\crys},\mE\big)
\]
in $D(\fX_{\ett},\Ainf)$.
\end{proposition}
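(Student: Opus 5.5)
The plan is the standard ``Čech--Alexander complex versus $q$-de Rham complex'' argument, run in the log setting as in \cite[\S 2]{MT} and \cite{chatzistamatiou}. By the preceding lemma, $R\Gamma\big((R/\logqAinf)_{q\crys},\mE\big)$ is computed by the Čech--Alexander complex $\mE(\mD_\Sigma(\bullet))$. The crystal property gives $\mE(\mD_\Sigma(m))\cong E\otimes_{\mD_\Sigma,q_0}\mD_\Sigma(m)$. The goal is therefore to produce a natural quasi-isomorphism between the cosimplicial module $E\otimes_{\mD_\Sigma}\mD_\Sigma(\bullet)$ and $E\otimes_{\mD_\Sigma}q\Omega^\bullet_{\mD_\Sigma/\Ainf}$; by the remark preceding the statement, the latter is quasi-isomorphic to $\Kos\big(E,\frac{\Delta_\Sigma-1}{q-1}\big)$.

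\textbf{Step 1 (a double complex).} Form the double complex $E\otimes_{\mD_\Sigma}\mD_\Sigma(m)\otimes_{\mD_\Sigma(m)} q\Omega^n_{\mD_\Sigma(m)/\Ainf}$, indexed by $(m,n)$. Here $q\Omega^\bullet_{\mD_\Sigma(m)/\Ainf}$ is the log $q$-de Rham complex in the coordinates $X_s,X_t$ coming from the first factor, or equivalently from all factors. The connection on each term is induced by the stratification. This is the usual comparison device of \cite[Theorem 6.12]{BerthelotOgus}-type (Poincaré lemma plus Čech descent). We then check two properties of this double complex.

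(a) For each fixed $m$, the row $n\mapsto(\cdots)$ resolves $\mE(\mD_\Sigma(m))$. This is a log $q$-Poincaré lemma for the relative $q$-PD polynomial algebra $\mD_\Sigma(m)$ over $\mD_\Sigma$ (with $m\ge 0$, so $m=0$ gives the identity).

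(b) For each fixed $n\ge 0$, the column $m\mapsto(\cdots)$ is a cosimplicial module that is homotopy equivalent to its degree-$0$ term. This holds because $\mD_\Sigma\to\mD_\Sigma(\bullet)$ is the Čech nerve of a map that admits a section, namely $\Delta$.

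Given (a) and (b), both edge maps are quasi-isomorphisms. This yields
\[E\otimes_{\mD_\Sigma} q\Omega^{\bullet}_{\mD_{\Sigma}/\Ainf}\simeq \mathrm{Tot}\simeq \mE(\mD_\Sigma(\bullet)).\]
Naturality holds because every map is functorial in $\mE$ and compatible with étale localization.

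\textbf{Step 2 (Poincaré lemma).} To prove (a), use $(p,q-1)$-completeness together with derived Nakayama to reduce modulo $q-1$. Apply \cite[Lemma 7.4(5)]{Koshikawa}: $\mD_\Sigma(m)\htimes\Ainf/(q-1)$ is the $p$-completed log PD envelope. The logarithmic exactification computed in the proof of Lemma \ref{lemma: existence of log q-PD-envelope} makes it a $p$-completed PD polynomial algebra over $\mD_\Sigma/(q-1)$ in the variables $\tau_s=p_1(X_s)-p_0(X_s)$ and the log variables $u_t-1$, where $u_t=p_1(X_t)/p_0(X_t)$. The mod-$(q-1)$ statement is then the classical log crystalline Poincaré lemma for a PD polynomial algebra, as in Kato and Beilinson. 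The connection on the tensor product with $E$ is handled in the standard way: filter by PD-degree, or twist by the stratification to identify $E\otimes\mD_\Sigma(m)$ with $E\otimes_{\mD_\Sigma}$ (PD polynomials) carrying the trivial connection in the new variables.

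\textbf{Step 3 (main obstacle).} The delicate point is that the $q$-differentials are not derivations on the nose, and the $\gamma$-actions on $\mD_\Sigma(m)$ need to be made explicit. We must check that $\frac{\gamma_i-1}{q-1}$ acts on the $q$-divided powers of $\tau_s$ and of $u_t-1$ in a way compatible with the exactification, so that the rows really are $q$-de Rham complexes of the kind described in Theorem \ref{thm: MT 2.6}. The plan is to sidestep explicit $q$-PD formulas as far as possible: prove the comparison map exists integrally, and verify that it is a quasi-isomorphism only after reducing modulo $q-1$. Modulo $q-1$ everything becomes the classical log-PD computation, and Proposition \ref{prop: delta crys vs crys} lets us pass freely between the $\delta$-crystalline and crystalline envelopes.
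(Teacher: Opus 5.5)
There is a genuine gap in Step 1. The double complex you write down, $E^{m,n}=E\otimes_{\mD_\Sigma}\mD_\Sigma(m)\otimes_{\mD_\Sigma(m)}q\Omega^n_{\mD_\Sigma(m)/\Ainf}$, is the right one; it is the one the paper uses. However, the properties (a) and (b) you claim for it are false.

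\emph{Where (a) and (b) fail.} For $m=0$, your complex in the $n$-direction is $E\otimes_{\mD_\Sigma}q\Omega^\bullet_{\mD_\Sigma/\Ainf}$ itself, so (a) asserts that the $q$-de Rham complex of $E$ is a resolution of $E$. For $n=0$, your complex in the $m$-direction is the \v{C}ech--Alexander complex $\mE(\mD_\Sigma(\bullet))$, so (b) asserts that it is homotopy equivalent to $E$ in degree $0$. Each claim would give $R\Gamma\big((R/\logqAinf)_{q\crys},\mE\big)\simeq E[0]$: for (a) via the very statement you want, for (b) via the preceding lemma. That is false in general.

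\emph{What (a) and (b) actually describe.} They are the properties of a \emph{different}, linearized double complex, $(E\otimes_{\mD_\Sigma}q\Omega^n_{\mD_\Sigma/\Ainf})\otimes_{\mD_\Sigma}\mD_\Sigma(m+1)$, with $q$-differentials taken relative to the first $m+1$ factors. There, a relative $q$-Poincar\'e lemma makes each fixed-$m$ complex a resolution of $\mE(\mD_\Sigma(m))$; this is the setting in which ``$m=0$ gives the identity'' makes sense. Each fixed-$n$ cosimplicial module there has an extra degeneracy coming from $\Delta$. You never construct that complex. In the $q$-setting, building it (a linearization of log $q$-connections whose global sections recover $\nabla$) is precisely the non-formal part.

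Your phrase ``first factor, or equivalently all factors'' is a symptom of the same conflation. The complex $q\Omega^\bullet_{\mD_\Sigma(m)/\Ainf}=\mD_\Sigma(m)\htimes_{A_0(\Sigma)(m)}q\Omega^\bullet_{A_0(\Sigma)(m)/\Ainf}$ uses the coordinates of all $m+1$ factors. The complex relative to the first factor is a different object.

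For the double complex you did write down, the correct inputs (the ones the paper uses) are the following.

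(i) For every $n\ge 1$, the cosimplicial module $m\mapsto q\Omega^n_{\mD_\Sigma(m)/\Ainf}$ is homotopy equivalent to $0$, and hence, using the crystal property, so is $m\mapsto E^{m,n}$. Thus $\mathrm{Tot}$ collapses onto the $n=0$ part. That part is the \v{C}ech--Alexander complex, which is \emph{not} contractible.

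(ii) Every coface map induces a homotopy equivalence $q\Omega^\bullet_{\mD_\Sigma(i)/\Ainf}\to q\Omega^\bullet_{\mD_\Sigma(j)/\Ainf}$. This is an \emph{absolute} Poincar\'e lemma, not a resolution statement. Tensoring with $E^i$ and using $E^j\cong E^i\otimes_{\mD_\Sigma(i)}\mD_\Sigma(j)$ shows that each $E^{m,\bullet}$ is quasi-isomorphic to $E^{0,\bullet}=E\otimes_{\mD_\Sigma}q\Omega^\bullet_{\mD_\Sigma/\Ainf}$, so $\mathrm{Tot}$ also collapses onto that.

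The paper takes both (i) and (ii) from \cite[Theorem 7.17]{Koshikawa}. Your reduction modulo $q-1$ (via \cite[Lemma 7.4(5)]{Koshikawa} and derived Nakayama) to the classical log-PD statements is a reasonable way to prove (i) and (ii) by hand. It cannot rescue (a) and (b) as you stated them, since those statements are false.
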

 
\begin{proof}
    Recall that $\mD_\Sigma(\bullet)$ is defined to be the log PD-envelope of (\ref{defn of D_Sigma}). For any integer $m \ge 0$, put $E^m= \mE(\mD_{\Sigma}(m)) \cong E\otimes_{\mD_{\Sigma}}\mD_{\Sigma}(m)$. Here, the isomorphism is induced by the  face map $[0]\hookrightarrow [m]$ sending $0$ to $0$. For any $m,n \ge 0$, let
    \[
        E^{m,n}= E^m\otimes_{\mD_{\Sigma}(m)}q\Omega^n_{\mD_{\Sigma}(m)/\Ainf} \cong E\otimes_{\mD_{\Sigma}}\Omega^n_{\mD_{\Sigma}(m)/\Ainf},
    \]
    where \[q\Omega^\bullet_{\mD_{\Sigma}(m)/\Ainf}:= \mD_{\Sigma}(m)\htimes_{A_0(\Sigma)(m)} q\Omega^\bullet_{A_0(\Sigma)(m)/\Ainf}\]
    is the $q$-de Rham complex extending $q\Omega^\bullet_{A_0(\Sigma)(m)/\Ainf}$ by \cite[Construction 16.20]{BS}.
    On the one hand, for any fixed $n \ge 0$, the complex $E^{\bullet,n}= \mE(\mD_{\Sigma}(\bullet))\otimes_{\mD_{\Sigma}(m)}\Omega^n_{\mD_{\Sigma}(m)/\Ainf}$ has a cosimlplicial structure induced from that on $\mD_{\Sigma}(\bullet)$. So we equip it with the differential $d_1^\bullet$ by the associated \v{C}ech--Alexander complex structure. On the other hand, let $d_2^{m,n} \colon E^{m,n} \ra E^{m,n+1}$ be the $n$-th differential of the $q$-de Rham complex
    \[
        E^m\otimes_{\mD_{\Sigma}(m)} q\Omega^\bullet_{\mD_{\Sigma}(m)/\Ainf}= E^m\htimes_{A_0(\Sigma)(m)} q\Omega^\bullet_{A_0(\Sigma)(m)/\Ainf}.
    \]
    Putting these together, we obtain a double complex $\big(E^{\bullet, \bullet}, d_1^{\bullet, \bullet},d_2^{\bullet,\bullet} \big)$.
    One observes that
    \begin{itemize}
        \item the $0$-th row $E^{\bullet,0}$ computes the log $q$-crystalline cohomology $R\Gamma\big((R/\logqAinf)_{q\crys},\mE\big)$, 
        \item the $0$-th column $E^{0,\bullet}$ is the log $q$-de Rham complex $E\otimes_{\mD_{\Sigma}}q\Omega^{\bullet}_{\mD_{\Sigma}/\Ainf}$.
   \end{itemize}
    For $n>0$, by \cite[Theorem 7.17]{Koshikawa}, we know $q\Omega^n_{\mD_{\Sigma}(\bullet)/\Ainf}$ is cosimplically homotopy equivalent to $0$. Therefore, the same holds for $E^{\bullet, n}$ because $E^{\bullet, n}\cong E\otimes_{\mD_{\Sigma}} q\Omega^n_{\mD_{\Sigma}(\bullet)/\Ainf}$. Consequently, we obtain an isomorphism
    \[
        R\Gamma\big((R/\logqAinf)_{q\crys},\mE\big) \cong \mathrm{Tot}(E^{\bullet, \bullet}).
    \]
    
    Now for any $0\le i < j$, consider any face map $\mD_{\Sigma}(i)\ra \mD_{\Sigma}(j)$. It induces maps between complexes $E^{i,\bullet} \ra E^{j,\bullet}$ as well as $q\Omega^\bullet_{\mD_{\Sigma}(i)/\Ainf} \ra q\Omega^\bullet_{\mD_{\Sigma}(j)/\Ainf}$. 
    Notice that by the crystal property, $E^j \cong E^i\otimes_{\mD_{\Sigma}(i)} \mD_{\Sigma}(j)$. So there is a Cartesian diagram of complexes 
    \[
    \begin{tikzcd}
        {q\Omega^\bullet_{\mD_{\Sigma}(i)/\Ainf}} & {q\Omega^\bullet_{\mD_{\Sigma}(j)/\Ainf}}\\
        {E^{i,\bullet}} & {E^{j,\bullet}}
        \arrow[from=1-1, to=2-1]
        \arrow[from=1-1, to=1-2]
	\arrow[from=2-1, to=2-2]
	\arrow[from=1-2, to=2-2]
    \end{tikzcd}
    \]
    in the derived category of $(p,q-1)$-completed $\mD_{\Sigma}(i)$-modules. The top horizontal map $q\Omega^\bullet_{\mD_{\Sigma}(i)/\Ainf} \ra q\Omega^\bullet_{\mD_{\Sigma}(j)/\Ainf}$ is a homotopy equivalence due to \cite[Theorem 7.17]{Koshikawa}; then so does the bottom horizontal map $E^{i,\bullet} \ra E^{j,\bullet}$. Consequently, we obtain a quasi-isomorphism
    \[
        E\otimes_{\mD_\Sigma} q\Omega^{\bullet}_{\mD_{\Sigma}/\Ainf}\cong \mathrm{Tot}(E^{\bullet, \bullet}).
    \]
    Therefore, the complexes $E\otimes_{\mD_\Sigma} q\Omega^{\bullet}_{\mD_{\Sigma}/\Ainf}$ and $ R\Gamma\big((R/\Ainf)_{q\crys},\mE\big)$ are quasi-isomorphic, as desired.
\end{proof}

Let $\fX$ be an admissibly smooth $p$-adic log formal scheme over $\ul \mO$. Assume that $\fX$ locally admits free charts. Let $\fX_{\mO/p}$ be the fiber of $\fX$ over $\mO/p$. 
For any small affine \'etale open $\spf R$ of $\fX$, we pick any surjection $\Sigma= \Sigma(S, \ul T) \ra \ul{R}$ of pre-log rings, so that we can consider $\mD_{\Sigma}(\bullet)$ as above. Since $([p]_q) = (p)$ in $\Acrys$,\footnote{Let $t= \log ([\varepsilon])\in \Acrys$, then $\mu= [\varepsilon]-1= tv$ for some unit $v$. The fact that $\vp(t)= pt$ induces an equality $[p]_q= \vp(\mu)/\mu= p \vp(v)/v$.} 
there is a natural isomorphism between simplicial rings
\[
    \mD_{\Sigma}(\bullet)\htimes_{\Ainf}\Acrys \cong \mD_{\crys,\Sigma}(\bullet):= \mD(\Acrys(\Sigma)(\bullet)),
\]
where $\Acrys(\Sigma)(\bullet)= A_0(\Sigma)(\bullet) \htimes_{\Ainf} \Acrys$ and $\mD_{\crys,\Sigma}(\bullet)= \mD(\Acrys(\Sigma)(\bullet))$ is the $p$-completed log PD envelope of the surjection $\Acrys(\Sigma)(\bullet) \ra R$ in the crystalline site $(\fX_{\mO/p}/\Acrys)_{\crys}$. Notice that $\mD_{\crys,\Sigma}(\bullet)$ is a weakly final object in $(\fX_{\mO/p}/\Acrys)_{\crys}$. 
Let $\ul\Acrys= (\Acrys, N_\infty)$, equipped with the pre-log structure induced from $\ul \Ainf$. For any log $F$-$q$-crystal $(\mE,\vp_{\mE})$ on $(\fX/\logqAinf)_{q\crys}$, the \emph{associated absolute crystalline $F$-crystal} $\mE_{\Acrys} \in \Vect^{\vp}\big((\fX_{\mO/p}/\ul \Acrys)_{\crys}\big)$ is defined by
\begin{equation}\label{qcrystal to crystal over Acrys}
    \mE_{\Acrys}(\mD_{\crys,\Sigma}(\bullet)) := \mE(\mD_{\Sigma}(\bullet))\htimes_{\Ainf}\Acrys,
\end{equation}
equipped with the natural Frobenius structure.

The following result is an immediate consequence of the construction.

\begin{proposition}\label{prop: q-crys vs abs crys}
Let $\fX$, $\mE$, and $\mE_{\Acrys}$ be as above. Then there is a natural isomorphism
    \[
        R\Gamma\big((\fX/\logqAinf)_{q\crys}, \mE\big)\htimes^{\L}_{\Ainf}\Acrys \cong R\Gamma_{\crys}\big(\fX_{\mO/p}/\ul \Acrys, \mE_{\Acrys}\big).
    \]
\end{proposition}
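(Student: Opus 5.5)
The plan is to compute both sides with \v{C}ech--Alexander complexes built from the same simplicial envelope, and to compare them termwise. The work is local, so fix a small affine \'etale open $\spf R$ of $\fX$ with a free chart and a surjection $\Sigma=\Sigma(S,\ul T)\ra \ul R$. By the preceding lemma, $R\Gamma\big((\fX/\logqAinf)_{q\crys},\mE\big)$ is computed by the cosimplicial module $\mE(\mD_\Sigma(\bullet))$. On the crystalline side, $\mD_{\crys,\Sigma}(\bullet)$ is the \v{C}ech nerve of a weakly final object of $(\fX_{\mO/p}/\ul\Acrys)_{\crys}$. Hence $R\Gamma_{\crys}(\fX_{\mO/p}/\ul\Acrys,\mE_{\Acrys})$ is computed by $\mE_{\Acrys}(\mD_{\crys,\Sigma}(\bullet))$, which by the definition \eqref{qcrystal to crystal over Acrys} equals $\mE(\mD_\Sigma(\bullet))\htimes_{\Ainf}\Acrys$.

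It remains to identify $R\Gamma\big((\fX/\logqAinf)_{q\crys},\mE\big)\htimes^{\L}_{\Ainf}\Acrys$ with the termwise completed tensor $\mE(\mD_\Sigma(\bullet))\htimes_{\Ainf}\Acrys$. For this I will use the following inputs.
\begin{enumerate}
\item Each $\mE(\mD_\Sigma(m))$ is finite projective over $\mD_\Sigma(m)$, by the crystal property.
\item Each $\mD_\Sigma(m)$ is $(p,\mu)$-completely flat over $\Ainf$. This holds because, by Lemma~\ref{lemma: existence of log q-PD-envelope} and \cite[Lemma 16.10]{BS}, it is a $q$-PD envelope of a regular sequence relative to $\Ainf$.
\item The tensor product commutes with totalization. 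For this I would use the $p$-adic (derived) completeness of both sides and reduce modulo $p^n$, where the totalization is a finite limit in each cohomological degree range after truncation. Equivalently, I can argue with the quasi-isomorphic finite $q$-de Rham complex from Proposition~\ref{qcrys vs qdR}, where this commutation is automatic.
\end{enumerate}
Using (1) and (2), the derived completed tensor product of each term is just the underived one. Using (3), the complex computing the left side, base-changed, agrees with the complex computing the right side.

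For naturality, I will check that the identification is independent of the choice of $\Sigma$. Given two choices, I pass to the product presentation $\Sigma\sqcup\Sigma'$ and use the usual comparison of \v{C}ech--Alexander complexes. I will then glue over $\fX_{\ett}$ as in \cite{BMS1}, since both sides are defined by sheafifying these local constructions.

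The main obstacle is the second ingredient, together with matching $\mD_\Sigma(\bullet)\htimes_{\Ainf}\Acrys$ with the log PD envelope $\mD_{\crys,\Sigma}(\bullet)$. That isomorphism was asserted using $([p]_q)=(p)$ in $\Acrys$, and one has to make sure it is compatible with the $\delta_{\log}$-structures and the exactifications. I would first verify it on the explicit description of the envelope, namely freely adjoining $\varphi(x_i)/[p]_q$ to the exactification, and then observe that after base change to $\Acrys$ this becomes adjoining divided powers of the $x_i$.
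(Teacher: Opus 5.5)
Your proposal is correct and follows the paper's argument. The paper treats the statement as immediate from the construction: $\mE(\mD_\Sigma(\bullet))$ computes the $q$-crystalline side, $\mD_\Sigma(\bullet)\htimes_{\Ainf}\Acrys\cong\mD_{\crys,\Sigma}(\bullet)$ is the \v{C}ech nerve of a weakly final object of $(\fX_{\mO/p}/\ul\Acrys)_{\crys}$, and $\mE_{\Acrys}$ is defined by termwise base change; your items (1)--(3) make explicit the derived base-change bookkeeping the paper leaves implicit.

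One caution on (3): truncating the totalization works only because $\Acrys$ has bounded $p$-complete Tor amplitude over $\Ainf$ ($\Acrys/p$ has flat dimension at most $1$ over the valuation ring $\mO^\flat$). Without such a bound, termwise tensoring a cosimplicial flat module need not compute the derived tensor of its totalization. Alternatively, your $q$-de Rham route closes the gap once you rerun the bicomplex argument of Proposition~\ref{qcrys vs qdR} after base change, which is legitimate because cosimplicial homotopy equivalences survive any additive functor.
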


\begin{definition}\label{defn: crystalline specialization}
    Let $\fX$ be an admissibly smooth $p$-adic log formal scheme over $\ul \mO$ and assume that $\fX$ locally admits free charts. We define the \emph{absolute crystalline specialization functor} 
    \[
         \sigma_{\Acrys}^* \colon \BKF^{\log}(\fX,\vp) \ra \Vect^{\vp}\big((\fX_{\mO/p}/\ul\Acrys)_{\crys}\big)
    \]
    by $\sigma_{\Acrys}^*\bM := \mE_{\Acrys}$, where $\mE= \bM_{q\crys} \in \Vect^{\vp}\big((\fX/\logqAinf)_{q\crys}\big)$. 

    Moreover,  we define the \emph{crystalline specialization functor}
    \[
         \sigma_{\crys}^* \colon \BKF^{\log}(\fX,\vp) \ra \Vect^{\vp}((\fX_k/W(\ul k))_{\crys})
    \]
    as the composition of the absolute crystalline specialization functor $\sigma_{\Acrys}^*$ and the base change functor $\Vect^{\vp}\big((\fX_{\mO/p}/\ul\Acrys)_{\crys}\big) \ra \Vect^{\vp}((\fX_k/W(\ul k))_{\crys})$ induced by the natural map $\ul{\Acrys} \ra W(\ul k)$.
\end{definition}

\begin{remark}
    For a more general admissibly smooth $p$-adic log formal scheme $\fX$ over $\ul \mO$ (i.e., not necessarily locally admit free charts), we can still define the absolute crystalline specialization functor using the methods in \cite[\S 4]{MT}. More precisely, one can make sense of the category of ``log crystalline Breuil--Kisin--Fargues modules'' $\BKF_{\crys}^{\log}(\fX,\vp)$, which locally can be identified with generalized representations in $\Rep_{\Gamma}^{<\mu}(\Acrys(R_\infty))$. It comes with a natural functor 
    \[
        \Rep^{<\mu}_\Gamma(\Ainf(R_\infty)) \ra \Rep^{<\mu}_\Gamma(\Acrys(R_\infty))
    \]
    via $-\otimes_{\Ainf}\Acrys$. Here, $\Acrys(R_\infty):= \Ainf(R_\infty) \widehat{\otimes}_{\Ainf}\Acrys$. Following the strategy in \cite[\S 4]{MT}, one can construct a fully faithful functor
    \[
        \MIC(\Acrys(R)) \xrightarrow[]{\sim} \Rep_{\Gamma}^{\mu}(\Acrys(R)) \hookrightarrow \Rep_{\Gamma}^{<\mu}(\Acrys(R_\infty))
    \]
    whose essential image contains the image of $\Rep_{\Gamma}^{<\mu}(\Ainf(R_\infty)) \cong \Rep_{\Gamma}^{\mu}(A(R))$. Globalizing, we obtain functors
    \[
    \begin{tikzcd}
        & \Vect(\fX_{\mO/p}/\Acrys)_{\crys} \ar[d, hook]\\
        \BKF^{\log}(\fX,\vp) \ar[r] \ar[ru,"\sigma_{\Acrys}^*"] & \BKF_{\crys}^{\log}(\fX,\vp).
    \end{tikzcd}
    \]
    However, as we shall see, our proof of crystalline comparison does not generalize to this setting.
\end{remark}

\vspace{0.1in}
\subsection{Crystalline comparison}\label{subsection: crystalline comparison} 
\noindent
\vspace{0.1in}

\noindent To prove the crystalline comparison isomorphism, we follow \cite[\S 12]{BMS1} and \cite[\S 5]{CK_semistable} and use the method of ``all possible coordinates'' to obtain a functorial morphism between log $q$-crystalline cohomology and log $\Ainf$-cohomology. 

Let $\fX$ be an admissibly smooth $p$-adic log formal scheme over $\underline{O}$. Assume that $\fX$ locally admits free charts. Let $\bM\in \BKF^{\log}(\fX,\vp)$ and let $\bM_{q\crys}$ be the associated log $F$-$q$-crystal. Let $\upsilon\colon (\fX/(\ul\Ainf, \xi))_{q\crys} \ra \fX_{\ett}$ be the natural projection of sites. We first construct a functorial map from $R\upsilon_*(\bM_{q\crys})$ to $A\Omega^{\log}_{\fX}(\bM)$. Since the question is \'etale local, we may assume that $\fX= \spf \ul{R}$ is small affine, modeled on a small chart $u:N\rightarrow P$ such that both $N$ and $P$ are free. 

Consider triples $\Phi= (S,P_\Lambda, \iota)$ consisting of
\begin{enumerate}
    \item a finite set $S$ that indexes the coordinates of the formal $\mO$-torus (equipped with the trivial log structure)
    \[
        R^\square_S := \mO\langle X_s^{\pm 1}\rangle_{s\in S};
    \]
    \item a nonempty finite set of monoids $P_\Lambda= \{P_\lambda \mid \lambda \in \Lambda\}$ together with injective homomorphisms $u_\lambda \colon N \hookrightarrow P_\lambda$ satisfying the conditions in Definition \ref{definition: small affine}, which induces a pre-log ring
    \[
        \ul{R^\square_\lambda} := (\mO\langle P_\lambda \rangle\htimes_{\mO\langle N \rangle} \mO, P_\lambda\sqcup_N N_\infty)
    \]
    over $\ul \mO$;
    \item an exact closed immersion of pre-log rings
    \[
        \iota \colon \fX= \spf \underline{R} \ra \spf (R^\square_S) \times \prod_{\lambda\in \Lambda} \spf(\ul{R^\square_\lambda})^a
    \]
    where the products are formed over $\spf(\ul\mO)^a$, satisfying
    \begin{itemize}
        \item the map $\iota_S \colon \fX \ra \spf (R^\square_S)$ is already a closed immersion, and
        \item the map $\iota_\lambda \colon \fX \ra \spf(\ul{R^\square_\lambda})^a$ is strictly \'etale for every $\lambda\in \Lambda$.
    \end{itemize}
\end{enumerate}

On the one hand, such a triple $\Phi= (S,P_\Lambda, \iota)$ provides a way to compute log $\Ainf$-cohomology. To explain the computation, we start with certain perfectoid covers and ``$\Ainf$-deformations'' of $R^\square_S$ and $\ul{R^\square_\lambda}$.
\begin{itemize}
    \item For each $\lambda \in \Lambda$, consider the perfectoid ring $R_{\infty,\lambda}^\square = \mO\langle P_{\lambda,\Q\ge 0}\rangle\htimes_{\mO\langle N\rangle} \mO$. Let $\ul{R_{\infty,\lambda}^\square}= (R_{\infty,\lambda}^\square, P_{\lambda,\Q\ge 0})$, which is a pro-Kummer \'etale cover of $\ul{R_\lambda^\square}$. Let $R_{\infty,\lambda}= R_{\infty,\lambda}^\square\htimes_{R_\lambda^\square} R$, and $\ul{R_{\infty,\lambda}}= (R_{\infty,\lambda}, P_{\lambda, \Q\ge 0})$. Then consider $\Ainf$-algebras $\Ainf(R_{\infty,\lambda})= W((R_{\infty,\lambda})^\flat)$ and $A(R_\lambda^\square)= \Ainf\langle P_\lambda\rangle \htimes_{\Ainf\langle N\rangle}\Ainf$. By \'etaleness, $A(R_\lambda^\square)$ lifts to $A(R_\lambda)$ over $R$. Let $\ul{A(R_\lambda)}= (A(R_\lambda), P_\lambda)$ and $\ul{\Ainf(R_{\infty,\lambda})}= (\Ainf(R_{\infty,\lambda}), P_{\lambda, \Q\ge 0})$. Then the natural inclusion $\ul{A(R_\lambda)} \ra \ul{\Ainf(R_{\infty,\lambda})}$ is a perfectoid Galois cover with Galois group
\[
    \Gamma_{\lambda}= \Hom(P_{\lambda}^{\gp}/N^{\gp}, \widehat{\Z}(1)) \cong \widehat{\Z}(1)^{d_\lambda},
\]
where $d_\lambda= \mathrm{rk}_{\Z}P_\lambda^{\gp}$. 
\item Let $\ul{R^\square_\Lambda}= \widehat{\bigotimes}_{\lambda\in \Lambda} \ul{R^\square_\lambda}$, and $\ul{ R_{\infty,\Lambda}^\square}= \widehat{\bigotimes}_{\lambda\in \Lambda} \ul {R^\square_{\infty,\lambda}}$. Then $\ul{R^\square_{\infty,\Lambda}}$ is a perfectoid cover of $\ul{R^\square_{\Lambda}}$ with Galois group 
\[
    \Gamma_{\Lambda} \cong \prod_{\lambda\in \Lambda}\Gamma_\lambda \cong \widehat{\Z}(1)^{d_\Lambda},
\]
where $d_\Lambda= \sum_{\lambda\in \Lambda}d_{\lambda}$.
\item Similarly, for the log-free part, let $R^\square_{\infty,S}= \mO\langle T_s^{\pm \frac{1}{\infty}}\rangle_{s\in S}$. It is a perfectoid Galois cover of $R^\square_S$ with Galois group
\[
    \Gamma_S= \Hom(\Q^{S}, \mu_\infty(\mO))\cong \widehat{\Z}(1)^{S}.
\]
\item Put $\ul{R^\square_{\Phi}}= R^\square_S\htimes_\mO \ul{R^\square_\Lambda}$, $\ul{R^\square_{\infty,\Phi}}= R^\square_{\infty,S}\htimes_\mO \ul{R^\square_{\infty,\Lambda}}$ and $\Gamma_{\Phi}= \Gamma_S \times \Gamma_{\Lambda}$.
Let $\ul{R_{\infty,\Phi}}$ be the normalization of $R$ in $ R\htimes_{R_{\Phi}^\square}R_{\infty,\Phi}^\square[\frac{1}{p}]$ equipped with the pullback log structure. Then $\ul{R_{\infty,\Phi}}$ is a perfectoid Galois cover of $\ul R$ with Galois group $\Gamma_{\Phi}$. Let $\Ainf(R_{\infty,\Phi})= W((R_{\infty, \Phi})^\flat)$, which is also equipped with a natural $\Gamma_\Phi$-action.
\end{itemize}
For each $\lambda\in \Lambda$, by Lemma \ref{lemma: splitting}, the injection $N^{\gp}\hookrightarrow P_\lambda^{\gp}$ admits a splitting
\[
    P_\lambda^{\gp}\cong N^{\gp}\oplus \big(\oplus_{i=1}^{d_\lambda} \Z e_{\lambda,i} \big)
\]
such that $e_{\lambda,i} \in P_\lambda$. As a basis of $P_\lambda^{\gp}/N^{\gp}$, $\{e_{\lambda,i}\mid 1\le i \le d_\lambda\}$ corresponds to a set of topological generators $\Delta_\lambda= \{\gamma_{\lambda,1}, \gamma_{\lambda,2},\cdots,\gamma_{\lambda,d_\lambda}\}$ of $\Gamma_\lambda$ as in Lemma \ref{lemma: splitting}. Moreover, by Lemma \ref{Lem: the assumptions of qDR}, we can construct elements $U_{\lambda,1}, U_{\lambda,2}\cdots, U_{\lambda,d_\lambda}$ in $A(R_\lambda)$ such that 
\[
\gamma_{\lambda,i}(U_{\lambda,j})=\begin{cases}
  qU_{\lambda,j}  & \text{ if } i=j\\
  U_{\lambda,j} & \text{ if } i\neq j,
\end{cases}
\]
and $\varphi(U_{\lambda,i})=U_{\lambda ,i}^p$ for all $1\le i\le d_\lambda$. Let $\Delta_\Lambda= \bigsqcup_{\lambda\in \Lambda}\Delta_\lambda \subseteq \Gamma_\Lambda$ be a set of topological generators in $\Gamma_\Lambda$. Let $V_{\lambda,i}$ be the image of $U_{\lambda, i}$ under the map $A(R_\lambda) \ra \Ainf(R_{\infty,\Phi})$ induced by the inclusion $R_\lambda \ra R_{\infty,\Phi}$. Then
\[
\gamma_{\lambda_1,i}(V_{\lambda_2,j})=\begin{cases}
  qV_{\lambda_2,j}  & \text{ if } (\lambda_1,i)=(\lambda_2,j)\\
  V_{\lambda_2,j} & \text{ if } (\lambda_1,i) \neq (\lambda_2,j),
\end{cases}
\]
and $\varphi(V_{\lambda,i})=V_{\lambda ,i}^p$ for all $\lambda \in \Lambda$, $1\le i\le d_\lambda$. Similarly, $\Gamma_S$ has a natural set of topological generators $\Delta_S := \{\gamma_s \mid s\in S\}$, corresponding to elements in $S$. The set $\Delta_{S}$ satisfies 
\begin{equation}\label{action of Delta_S}
   \gamma_s(X_{s'})=\begin{cases}
  qX_s  & \text{ if } s=s'\\
  X_{s'} & \text{ if } s \neq s'.
  \end{cases} 
\end{equation}
Denote $\Delta_{\Phi}= \Delta_S \sqcup \Delta_{\Lambda}$. 

Given these, let us demonstrate how to compute log $\Ainf$-cohomology in terms of Koszul complexes. Let $U$ denote the (log) adic generic fiber of $\spf R$. For any $\bM\in \BKF^{\log}(\spf R,\vp)$, let $M_{\infty,\Phi}= \Gamma(U,\bM)\in \Rep_{\Gamma_\Phi}^\mu(\Ainf(R_{\infty,\Phi}))$. Then $A\Omega_{\fX}^{\log}(\bM)$ is computed by the Koszul complex
\[
    \eta_\mu R\Gamma_{\cont}(\Gamma_{\Phi},M_{\infty,\Phi}) \cong \Kos\Big(M_{\infty,\Phi}; \frac{\Delta_{\Phi}-1}{[\varepsilon]-1}\Big).
\]
Here, $\Kos\big(M_{\infty,\Phi}; \frac{\Delta_{\Phi}-1}{[\varepsilon]-1}\big)$ denotes the Koszul complex $\Kos\big(M_{\infty,\Phi}; \big\{\frac{\gamma-1}{[\varepsilon]-1}\big\}_{\gamma\in \Delta_{\Phi}}\big)$. 
Moreover, for each $\lambda \in \Lambda$, let $M_{\infty,\lambda}= \Gamma(\spa (R_{\infty,\lambda}[\frac 1p], R_{\infty,\lambda}),\bM)\in \Rep_{\Gamma_{\lambda}}^\mu(\Ainf(R_{\infty,\lambda}))$ be the corresponding generalized representation (cf. Theorem \ref{thm: BKF mod vs generalized representations}). By Theorem \ref{thm: MT 1.13}, there is an equivalence of categories
\[
    \Rep_{\Gamma_{\lambda}}^\mu(A(R_{\lambda})) \cong \Rep_{\Gamma_{\lambda}}^\mu(\Ainf(R_{\infty,\lambda}));
\]
let $M_\lambda \in \Rep_{\Gamma_{\lambda}}^\mu(A(R_{\lambda}))$ be the object on the left-hand side corresponding to $M_{\infty,\lambda}$. Then $A\Omega_{\fX}^{\log}(\bM)$ is also computed by 
\[
    \eta_\mu R\Gamma_{\cont}(\Gamma_\lambda,M_\lambda)\cong \Kos\Big(M_{\lambda}; \frac{\Delta_\lambda-1}{[\varepsilon]-1}\Big) \cong M_\lambda \otimes q\Omega^\bullet_{A(R_\lambda)/\Ainf}.
\]
where the second quasi-isomorphism follows from Theorem \ref{representations vs q-connections}. Note that the natural map $A(R_\lambda) \ra \Ainf(R_{\infty,\Phi})$ induces a quasi-isomorphism
\[
   \Kos\Big(M_{\lambda}; \frac{\Delta_\lambda-1}{[\varepsilon]-1}\Big) \simra \Kos\Big(M_{\infty,\Phi}; \frac{\Delta_{\Phi}-1}{[\varepsilon]-1}\Big).
\]

On the other hand, we can also compute the $q$-crystalline cohomology using the triple $\Phi= (S,P_\Lambda, \iota)$. For each $\lambda\in \lambda$, let $T_\lambda= \{e_{\lambda,i}\mid 1\le i \le d_{\lambda}\} \subseteq P_\lambda$. This induces a monoid homomorphism $\N^{T_\lambda} \ra P_\lambda$. Let $T= \bigsqcup_{\lambda\in \Lambda} T_\lambda$, then we obtain a surjection of pre-log rings
\[
    \Sigma= \Sigma(S, \ul T) \ra R_S^\square \otimes_{\mO}\Big(\bigotimes_{\lambda\in \Lambda} \ul{R^\square_\lambda}\Big) \ra \ul R.
\]
Consider the log perfectoid ring
\[
    \Sigma_\infty= \Sigma_\infty(S,T) :=  \big(\mO\langle(X_s^{\pm\frac{1}{\infty}})_{s\in S}, \Q_{\ge 0}^T\rangle, \Q_{\ge 0}^T\oplus N_\infty\big),
\]
which is a Kummer pro-\'etale Galois cover of $\Sigma(S, \ul T)$ with Galois group 
\[
    \Gamma_{\Sigma} = \Gamma_S\times \prod_{\lambda\in \Lambda}\Gamma_{T_{\lambda}} \cong \widehat{\Z}(1)^S \times \widehat{\Z}(1)^T.
\]
Here, $\Gamma_S$ and $\Gamma_{T_\lambda}$ denote the Galois group of $\Sigma_\infty (S,\ul\emptyset) \ra R$ and $\Sigma_\infty (\emptyset, \ul{T_\lambda}) \ra \ul{R_\lambda^\square} \ra R$, respectively. The Galois group $\Gamma_\Sigma$ naturally acts on $\Ainf(\Sigma_\infty)$. Note that the map $\{\N^{T_\lambda} \ra P_\lambda\}_{\lambda\in \Lambda}$ induces an isomorphism $\Gamma_{\Sigma} \cong \Gamma_\Phi$. Recall that $\Delta_S$ is the set of generators of $\Gamma_S \cong \widehat{\Z}(1)^S$ corresponding to elements in $S$ (cf. (\ref{action of Delta_S})). Similarly, let $\Delta_{T,\lambda}$ be the set of generators of $\Gamma_{T_\lambda} \cong \widehat{\Z}(1)^{T_\lambda}$ corresponding to elements in $T_\lambda$. Then, by the construction of $T_{\lambda}$, $\Delta_{T,\lambda}$ maps to $\Delta_{\lambda}$ under the isomorphism $\Gamma_\Sigma \cong \Gamma_\Phi$. Put
\[
    \Delta_{\Sigma}= \Delta_S\sqcup\Big(\bigsqcup_{\lambda\in \Lambda} \Delta_{T,\lambda}\Big)= \{\gamma_s \mid s\in S\}\sqcup\{\gamma_t \mid t\in T\}.
\]
Recall $A_0(\Sigma)= (\Ainf\langle (X_s)_{s\in S}, \ul T\rangle, \N^T\oplus N_\infty)$ defined in (\ref{defn of A_0(Sigma)}), and recall the ring $\mD(\Sigma)$ defined as the $(p,\mu)$-completed log $q$-PD envelope of the surjection $A_0(\Sigma) \ra R$. 
One checks that $A_0(\Sigma)$ satisfies ($q$DR1), ($q$DR2), and ($q$DR$\vp$) in Assumption \ref{assumption: qDR}, with respect to variables $(X_s)_{s\in S}\sqcup(X_t)_{t\in T}$ and $\Delta_{\Sigma}$. Consequently, we can define the log $q$-de Rham complex $q\Omega^\bullet_{A_0(\Sigma)/\Ainf}$ as in Definition \ref{defn: q-dR complex}. By \cite[Construction 16.20]{BS}, this extends to a log $q$-de Rham complex 
\[
    q\Omega^\bullet_{\mD_{\Sigma}/\Ainf}:= \mD_{\Sigma} \ra \mD_{\Sigma}\htimes q\Omega^1_{A_0(\Sigma)/\Ainf} \ra \mD_{\Sigma}\htimes q\Omega^2_{A_0(\Sigma)/\Ainf} \ra \cdots.
\]
Note that the reduction $q\Omega^\bullet_{\mD_{\Sigma}/\Ainf}/(q-1)$ coincides with the classical de Rham complex of the $p$-completed log PD envelope as in \cite[1.7]{Beilinson13}.

There is a unique $\Gamma_\Sigma$-action on $A_0(\Sigma)$ making $A_0(\Sigma) \ra \Ainf(\Sigma_\infty)$ into a $\Gamma_\Sigma$-equivariant morphism. Moreover, since $[\varepsilon]-1$ is mapped to $0$ in $R$, the kernel of the surjection $A_0(\Sigma) \ra R$ is stable under the $\Gamma_\Sigma$-action. Consequently, the $\Gamma_\Sigma$-action on $A_0(\Sigma)$ extends to a $\Gamma_\Sigma$-action on $\mD(\Sigma)$.

We are ready to construct the comparison map between log $q$-crystalline cohomology and log $\Ainf$-cohomology.
\begin{construction}\label{construction of map from qcrys to Ainf}
    Suppose $\fX= \spf \underline{R}$ is small affine, modeled on a small chart $u:N\rightarrow P$ with both $N$ and $P$ free. Let $\bM \in \BKF^{\log}(\fX, \varphi)$. Let $\bM_{\Prism}\in \Vect^{\vp}((\fX^{(1)}/(\ul{\Ainf},\txi))_{\Prism})$ be the corresponding prismatic $F$-crystal (cf. Corollary \ref{cor: equivalence between generalized representations and prismatic $F$-crystals}) and $\bM_{q\crys}=\sigma_{q\crys}^*\bM$ be the corresponding log $F$-$q$-crystal (Definition \ref{defn: qcrys specialization}). Consider a triple $\Phi=(S, P_\Lambda, \iota)$. 
    By \cite[Construction 4.7]{Koshikawa}, $(\mD_{\Sigma}, \txi)$ is a weakly final object in $(\fX^{(1)}/(\ul{\Ainf},\txi))_{\Prism}$. Let $M_{\infty,\Phi}= \bM(\Ainf(R_{\infty,\Phi}))\in \Rep_{\Gamma_\Phi}^\mu(\Ainf(R_{\infty,\Phi}))$, $M_{\Sigma}= \bM_{\Prism}(\mD_{\Sigma})= \bM_{q\crys}(\mD_{\Sigma})$, and
\[
M_\lambda= \bM(\Ainf(R_{\infty,\lambda})) \in \Rep_{\Gamma_\lambda}^\mu(\Ainf(R_{\infty,\lambda})) \cong\Rep_{\Gamma_\lambda}^\mu(A(R_{\lambda})), \ \forall \lambda \in \Lambda.
\]
By Proposition \ref{qcrys vs qdR}, $R\Gamma\big((R/\logqAinf)_{q\crys},\bM_{q\crys}\big)$ is computed by  
    \[
        M_{\Sigma}\otimes q\Omega^{\bullet}_{\mD_{\Sigma}/\Ainf} \cong \Kos\big(M_{\Sigma}, \frac{\Delta_\Sigma-1}{[\varepsilon]-1}\big).
    \]
    By the proof of \cite[Theorem 8.1]{Koshikawa}, there is a unique $\Gamma_{\Sigma}$-equivariant map $\mD_{\Sigma} \ra \Ainf(R_{\infty, \Phi})$ between $\delta_{\log}$-$\Ainf$-algebras, induced by the following commutative diagram
    \[
    \begin{tikzcd}
        {A_0(\Sigma)} & {\ul{\Ainf(R_{\infty,\Phi})}} \\
        {\Sigma(S, \ul T)} & {\ul{R_{\infty,\Phi}}.}
        \arrow[from=1-1, to=1-2]
        \arrow[from=1-1, to=2-1]
        \arrow[from=1-2, to=2-2]
        \arrow[from=2-1, to=2-2]
    \end{tikzcd}
    \]
    Here, the objects on the right-hand side are equipped with $\Gamma_{\Sigma}$-actions via the identification $\Gamma_\Sigma \simeq \Gamma_\Phi$. 
    Consider the evaluation map (with respect to the fixed free chart $P$)
    \[
        \ev_{A(R)} \colon \Vect^{\vp}((\fX^{(1)}/(\ul{\Ainf},\txi))_{\Prism}) \ra \BKF^{\log}(\fX,\vp).
    \]
    By Theorem \ref{thm: MT thm 5.15}, this map is independent of the choice of the chart $P$. On the one hand, by definition, we have $\ev_{A(R)}(\bM_{\Prism}) \cong \bM$. On the other hand, consider the log prism $(\Ainf(R_{\infty,\Phi}), \txi)\in (\fX^{(1)}/(\ul{\Ainf},\txi))_{\Prism}$. The unique map $\mD_{\Sigma} \ra \Ainf(R_{\infty,\Phi})$ between $\delta_{\log}$-algebras induces a map between log prisms $(\mD_{\Sigma},\txi) \ra (\Ainf(R_{\infty,\Phi}),\txi)$.
    By the proof of Theorem \ref{thm: MT thm 5.15}, we have 
    \begin{align*}
        M_{\infty,\Phi} & =  R\Gamma(\Ainf(R_{\infty,\Phi}), \ev_{A(R)}(\bM_{\Prism})) = \bM_{\Prism}((\Ainf(R_{\infty,\Phi}),\txi)) \\
        & \cong \bM_{\Prism}((\mD_{\Sigma},\txi)) \otimes_{\mD_{\Sigma}} \Ainf(R_{\infty,\Phi}) = M_{\Sigma} \otimes_{\mD_{\Sigma}} \Ainf(R_{\infty,\Phi}).
    \end{align*} 
    That is, the map $\mD_{\Sigma} \ra \Ainf(R_{\infty, \Phi})$ induces a canonical $\Gamma_{\Sigma}$-equivariant isomorphism 
    \begin{equation}\label{qcrystal BKFmodule base change}
        M_{\infty,\Phi} \cong M_{\Sigma}\otimes_{\mD_{\Sigma}}\Ainf(R_{\infty, \Phi}).
    \end{equation}
This then induces a natural morphism 
    \[
        M_{\Sigma} \otimes q\Omega^\bullet_{\mD_{\Sigma}/\Ainf} \cong \Kos\big(M_{\Sigma}; \frac{\Delta_\Sigma-1}{[\varepsilon]-1}\big) \ra  \Kos\big(M_{\infty,\Phi}; \frac{\Delta_{\Phi}-1}{q-1}\big).
    \]
    Finally, we define the comparison map $R\upsilon_*(\bM_{q\crys}) \ra A\Omega^{\log}_{\fX}(\bM)$ to be
    \[
        \varinjlim_{\Phi= (S,P_{\lambda}, \iota)}\Big(M_{\Sigma} \otimes q\Omega^\bullet_{\mD_{\Sigma}/\Ainf} \ra \Kos\Big(M_{\infty,\Phi}; \frac{\Delta_{\Phi}-1}{q-1}\Big)\Big).
    \]
    This is independent of the choice of coordinates, hence can be globalized to any admissibly smooth $p$-adic log formal scheme $\fX$ over $\ul \mO$.
    Consequently, we obtain the desired comparison map 
    \[
        R\upsilon_*(\bM_{q\crys}) \ra A\Omega^{\log}_{\fX}(\bM)
    \]
    in the global setting.
    Moreover, by construction, the morphism is compatible with Frobenius on both sides.
\end{construction}

\begin{proposition}\label{prop: q-crystalline comparison}
    Let $\fX$ be a saturated $p$-adic log formal scheme that is admissibly smooth over $\ul\mO$ and let $\bM\in \BKF^{\log}(\fX,\vp)$. Assume that $\fX$ locally admits free charts. Then there is a natural isomorphism 
    \[
        R\upsilon_*(\bM_{q\crys}) \cong A\Omega^{\log}_{\fX}(\bM).
    \]
    in $D(\fX_{\ett})$ compatible wit Frobenius.
\end{proposition}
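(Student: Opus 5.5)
The comparison map $R\upsilon_*(\bM_{q\crys}) \ra A\Omega^{\log}_{\fX}(\bM)$ has already been built in Construction~\ref{construction of map from qcrys to Ainf}, functorially and compatibly with Frobenius. What remains is to show it is a quasi-isomorphism. This is étale local, so I will assume $\fX=\spf \ul R$ is small affine with a free chart $u\colon N\ra P$. Because the colimit over triples $\Phi$ is filtered, it suffices to check that for one fixed $\Phi=(S,P_\Lambda,\iota)$ the map
\[
M_{\Sigma}\otimes q\Omega^\bullet_{\mD_{\Sigma}/\Ainf}\cong \Kos\Big(M_{\Sigma};\frac{\Delta_\Sigma-1}{[\varepsilon]-1}\Big)\lra \Kos\Big(M_{\infty,\Phi};\frac{\Delta_\Phi-1}{[\varepsilon]-1}\Big)
\]
is a quasi-isomorphism. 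Its source computes $R\Gamma((R/\logqAinf)_{q\crys},\bM_{q\crys})$ by Proposition~\ref{qcrys vs qdR}, and its target computes $A\Omega^{\log}_{\fX}(\bM)$ via Lemma~\ref{group coh vs presheaf coh}.

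First I reduce to the simplest triple. Take $S=\emptyset$, $\Lambda=\{\lambda_0\}$ with $P_{\lambda_0}=P$, and $\iota$ the given small coordinate. Then $T=T_{\lambda_0}$ is a basis $\{e_i\}$ of $P^{\gp}/N^{\gp}$ with $e_i\in P$ (Lemma~\ref{lemma: splitting}). In this case the composite $A_0(\Sigma)\ra \mD_\Sigma \ra \Ainf(R_{\infty,\Phi})$ factors through $A(R)$. I claim the source is quasi-isomorphic to $M\otimes q\Omega^\bullet_{A(R)/\Ainf}$, where $M\in\Rep^\mu_\Gamma(A(R))$ corresponds to $\bM$ under Theorem~\ref{thm: MT 1.13}. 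Indeed, $A(R)$ is itself an object of $(\fX/\logqAinf)_{q\crys}$ (the trivial $q$-PD thickening of $R=A(R)/\xi$), so the Čech–Alexander argument of Proposition~\ref{qcrys vs qdR} run with $A(R)$ in place of $A_0(\Sigma)$ identifies $R\Gamma_{q\crys}$ with the $q$-de Rham complex of $\bM_{q\crys}(A(R))$. Via $\gamma_q$ and the equivalence of Corollary~\ref{cor: equivalence between generalized representations and prismatic $F$-crystals}, this evaluation is $M$ with the $q$-connection attached by Theorem~\ref{representations vs q-connections}. That theorem gives $M\otimes q\Omega^\bullet_{A(R)/\Ainf}\simeq \Kos(M;\frac{\Delta-1}{\mu})\simeq L\eta_\mu R\Gamma_{\cont}(\Gamma,M)$. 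The displayed quasi-isomorphism $\Kos(M_\lambda;\cdot)\simra \Kos(M_{\infty,\Phi};\cdot)$ preceding the construction then shows that the whole composite is a quasi-isomorphism.

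For a general $\Phi$, I compare it with the refinement $\Phi\sqcup\Phi_0$. By construction both transition maps are compatible with the comparison maps. The target Koszul complexes for $\Phi$, $\Phi_0$ and $\Phi\sqcup\Phi_0$ are all quasi-isomorphic to $A\Omega^{\log}_{\fX}(\bM)$, again by Lemma~\ref{group coh vs presheaf coh}. The sources all compute the same $q$-crystalline cohomology by Proposition~\ref{qcrys vs qdR}, and the transition maps between them are the canonical ones since the $\mD_\Sigma$ are weakly final. So two-out-of-three reduces everything to $\Phi_0$.

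I expect the main obstacle to be the claimed identification, in the case $\Phi_0$, of the comparison map with the composite of Theorem~\ref{representations vs q-connections} and Theorem~\ref{thm: MT 1.13}. Concretely, one has to check that the $\Gamma_\Sigma$-action on $M_\Sigma$ coming from the stratification agrees, under \eqref{qcrystal BKFmodule base change}, with the Galois action on $M_{\infty,\Phi}$. My first approach would be to use the uniqueness of the $\Gamma_\Sigma$-equivariant $\delta_{\log}$-map $\mD_\Sigma\ra\Ainf(R_{\infty,\Phi})$ together with the crystal property, evaluated on the maps $\mD_\Sigma(1)\xrightarrow{(1,\gamma)}\mD_\Sigma(1)\xrightarrow{\Delta}\mD_\Sigma$. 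If that becomes messy, a fallback is to argue modulo $(p,\mu)$ or modulo $\mu$: there both sides become the log de Rham complex of $\sigma^*_{\dR}$ and Theorem~\ref{thm: de Rham comparison} applies, after which derived $(p,\mu)$-completeness of both sides lifts the quasi-isomorphism. Frobenius compatibility is already built into the construction.
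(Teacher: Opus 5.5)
The core of your argument is the same as the paper's proof. The paper restricts to a small affine $\spf R$ with a free chart and uses that $(\ul{A(R)},\xi)$ is \emph{weakly final} in $(\spf R/\logqAinf)_{q\crys}$. It gets this from the argument of Proposition~\ref{prop: weakly final obj: free chart}, which is where freeness of the chart enters, via Lemma~\ref{Lem: Tian Lemma 1.9}. Being merely an object of the site, as you state, is not enough for the \v{C}ech--Alexander step. The paper then identifies $\Gamma((\ul{A(R)},\xi),\bM_{q\crys})$ with $M$, runs the double-complex argument of Proposition~\ref{qcrys vs qdR} over $A(R)$, and concludes with Theorem~\ref{representations vs q-connections}. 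It stops there: it identifies both sides locally with $M\otimes q\Omega^\bullet_{A(R)/\Ainf}\simeq L\eta_\mu R\Gamma_{\cont}(\Gamma,M)$ and never uses the triples $\Phi$.

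The extra layer you add, to show that the map of Construction~\ref{construction of map from qcrys to Ainf} is this identification, fails at its base case.
\begin{itemize}
\item $\Phi_0$ with $S=\emptyset$ is not an admissible triple. The definition requires $\iota_S\colon\fX\to\spf(R^\square_S)$ to be a closed immersion, and for $S=\emptyset$ that would make $R$ a quotient of $\mO$.
\item Correspondingly, $\Sigma(\emptyset,\ul T)\to R$ is not surjective: the $T$-variables do not generate $R^\square$, and $R$ is only \'etale over $R^\square$. So $\mD_\Sigma$ is undefined, and there is no comparison map for $\Phi_0$ to feed into two-out-of-three.
\item For any admissible $\Phi$ the factorization through $A(R)$ is false. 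Each $X_s$ maps to $[X_s^\flat]\in\Ainf(R_{\infty,\Phi})$, which $\gamma_s$ multiplies by $[\varepsilon]\neq 1$, whereas $\Gamma_S$ fixes the image of $A(R)$.
\end{itemize}

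The repair is to reverse the arrow.
\begin{itemize}
\item Take an admissible $\Phi_0=(S,\{P\},\iota)$ with $T$ as in Lemma~\ref{lemma: splitting}. Since $\N^T\oplus N\to P$ is an isomorphism on groupifications, the exactification of $\N^T\oplus N_\infty\to P_\infty$ is $P_\infty$.
\item This gives a $\delta_{\log}$-map $A(R^\square)\to\mD_\Sigma$ over $R$. It lifts uniquely along the $(p,\mu)$-completely \'etale map $A(R^\square)\to A(R)$.
\item By uniqueness of \'etale lifts, the composite $A(R)\to\mD_\Sigma\to\Ainf(R_{\infty,\Phi_0})$ is the natural map, and $\Gamma_S$ fixes the image of $A(R)$ in $\mD_\Sigma$.
\item Hence $M\otimes q\Omega^\bullet_{A(R)/\Ainf}\to M_\Sigma\otimes q\Omega^\bullet_{\mD_\Sigma/\Ainf}\to\Kos\big(M_{\infty,\Phi_0};\tfrac{\Delta_{\Phi_0}-1}{\mu}\big)$ composes to the quasi-isomorphism $\Kos(M_\lambda;\cdot)\to\Kos(M_{\infty,\Phi_0};\cdot)$, with the shorter Koszul complex sitting as the $S$-free part.
\item The first arrow is a quasi-isomorphism because both sides compute $q$-crystalline cohomology. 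So the comparison map for $\Phi_0$ is a quasi-isomorphism, and your refinement argument via $\Phi\sqcup\Phi_0$ then covers every $\Phi$.
\end{itemize}

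Your fallback also needs correcting. It is reduction mod $\xi$, i.e.\ $-\otimes^{\L}_{\Ainf,\theta}\mO$, not mod $\mu$, that produces the de Rham complex. Matching the comparison map with the Bockstein-built isomorphism of Theorem~\ref{thm: de Rham comparison} would itself need an argument.
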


\begin{proof}
    By construction \ref{construction of map from qcrys to Ainf}, there is at least a natural map
    \[
        R\upsilon_*(\bM_{q\crys}) \rightarrow A\Omega^{\log}_{\fX}(\bM).
    \]
    To check that it is an isomorphism, it suffices to restrict to a small affine \'etale open $\spf R \ra \fX$ modeled on a small chart $u:N\rightarrow P$ with both $N$ and $P$ free. By the same argument as the proof of Proposition \ref{prop: weakly final obj: free chart}, $(\ul {A(R)}, \xi)$ is a weakly final object in $(\spf R/(\ul \Ainf,\xi))_{q\crys}$. Let $\Gamma= \Hom(P^{\gp}/N^{\gp}, \widehat\Z(1))$. Let $M$ be the image of $\bM|_{\spf R}$ in $\Rep_{\Gamma}^\mu(A(R))$ under the equivalence in Theorem \ref{thm: MT 1.13} and Theorem \ref{thm: BKF mod vs generalized representations}. Then 
    \[
        M= \Gamma\big((\ul {A(R)}, \xi), \sigma_{q\crys}^*\bM\big).
    \]
    The same argument as Proposition \ref{qcrys vs qdR}, using the total complex connecting the $q$-dR complex and the \v{C}ech--Alexander complex, shows that the $q$-crystalline cohomology of $\sigma_{q\crys}^*\bM$ is computed by 
    \[
        R\Gamma((\spf R/(\ul\Ainf,\xi))_{q\crys},\sigma_{q\crys}^*\bM)\cong  M\otimes q\Omega_{A(R)/\Ainf}^{\bullet},
    \]
    which is naturally quasi-isomorphic to $L\eta_{\mu}R\Gamma_{\cont}(\Gamma, M)$ by Theorem \ref{representations vs q-connections}. This completes the proof.
\end{proof}

\begin{theorem}\label{thm: abs crystalline comparison}
    Let $\fX$ be a saturated $p$-adic log formal scheme that is admissibly smooth over $\ul\mO$ and let $\bM\in \BKF^{\log}(\fX,\vp)$. Assume that $\fX$ locally admits free charts. Then there is a natural isomorphism 
    \[
    A\Omega^{\log}_{\fX}(\bM) \htimes^{\L}_{\Ainf}\Acrys \cong R\upsilon_*(\sigma_{\Acrys}^* \bM)
    \]
    of complexes of sheaves on $\fX_{\ett}$ compatible wit Frobenius, where $\upsilon\colon (\fX_{\mO/p}/\ul\Acrys)_{\crys} \ra \fX_{\ett}$ denotes the natural projection of sites.
\end{theorem}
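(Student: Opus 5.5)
The plan is to deduce Theorem~\ref{thm: abs crystalline comparison} by combining the $q$-crystalline comparison of Proposition~\ref{prop: q-crystalline comparison} with the base change of Proposition~\ref{prop: q-crys vs abs crys}, working locally and then globalizing. Proposition~\ref{prop: q-crystalline comparison} supplies a natural, Frobenius-compatible isomorphism $R\upsilon'_*(\bM_{q\crys})\cong A\Omega^{\log}_{\fX}(\bM)$, where $\upsilon'\colon (\fX/(\ul\Ainf,\xi))_{q\crys}\ra\fX_{\ett}$. Applying the completed base change $-\htimes^{\L}_{\Ainf}\Acrys$ gives
\[
A\Omega^{\log}_{\fX}(\bM)\htimes^{\L}_{\Ainf}\Acrys\cong R\upsilon'_*(\bM_{q\crys})\htimes^{\L}_{\Ainf}\Acrys .
\]
It then suffices to identify the right-hand side with $R\upsilon_*(\sigma^*_{\Acrys}\bM)=R\upsilon_*(\mE_{\Acrys})$ for $\mE=\bM_{q\crys}$, in a way that is natural in étale opens so that the local identifications glue to sheaves on $\fX_{\ett}$.

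\emph{Local step.} On a small affine $\spf R$ with a free chart, choose a surjection $\Sigma=\Sigma(S,\ul T)\ra\ul R$. The $q$-crystalline cohomology of $\mE$ is computed by the Čech--Alexander complex $\mE(\mD_\Sigma(\bullet))$, using the lemma preceding \eqref{eq: q-crystals vs stratifications}. Since $\mE(\mD_\Sigma(m))$ is finite projective over $\mD_\Sigma(m)$, and $\mD_\Sigma(m)$ has bounded $(p,\mu)$-torsion (it is $(p,\mu)$-completely flat over $\Ainf$, being a $q$-PD envelope of a regular sequence), the derived completed tensor product with $\Acrys$ is computed termwise. We therefore get
\[
\mE(\mD_\Sigma(\bullet))\htimes_{\Ainf}\Acrys=\mE_{\Acrys}(\mD_{\crys,\Sigma}(\bullet)),
\]
by definition \eqref{qcrystal to crystal over Acrys} together with $\mD_\Sigma(\bullet)\htimes_{\Ainf}\Acrys\cong\mD_{\crys,\Sigma}(\bullet)$, which rests on $([p]_q)=(p)$ in $\Acrys$. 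The right-hand side computes $R\Gamma_{\crys}(\fX_{\mO/p}/\ul\Acrys,\mE_{\Acrys})$ because $\mD_{\crys,\Sigma}$ is weakly final. This is exactly the content of Proposition~\ref{prop: q-crys vs abs crys}.

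\emph{Globalization.} We then sheafify over the site $\fX_{\ett,\mathrm{small}}$. Independence of the choice of $\Sigma$ follows because, for two surjections, the product surjection $\Sigma\times\Sigma'\ra\ul R$ maps to both, and the induced maps of Čech--Alexander complexes are quasi-isomorphisms, as both sides compute cohomology of a crystal. One takes the colimit over all choices, exactly as in Construction~\ref{construction of map from qcrys to Ainf}. Frobenius compatibility holds because $\vp$ on $\mD_\Sigma(\bullet)$ base changes to $\vp$ on $\mD_{\crys,\Sigma}(\bullet)$, and because $\vp_{\mE}$ becomes the Frobenius structure of $\mE_{\Acrys}$ once $\txi$ is inverted. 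Note that inverting $\txi$ and $p$ agree appropriately in $\Acrys$: $\txi=\vp(\xi)\in p\Acrys$ up to a unit, since $\xi^p\in p\Acrys$.

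\emph{Main obstacle.} The step I expect to be delicate is the commutation of the completed derived base change with $R\upsilon'_*$, that is, with the totalization and with sheafification. For the totalization, both $\Acrys$ and the Čech--Alexander terms are derived $p$-complete, and the totalization is a limit, so I would check the isomorphism modulo $p^n$ for each $n$ and pass to $R\varprojlim$. Modulo $p$, $\Acrys/p$ is a filtered colimit (indeed countably presented) over $\Ainf/p$, so $\otimes^{\L}$ commutes with the cosimplicial totalization once we know the complexes are uniformly bounded. Boundedness follows from the local quasi-isomorphism with the finite Koszul/$q$-de Rham complex of Proposition~\ref{qcrys vs qdR}, which has amplitude $[0,|S|+|T|]$. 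Compatibility with sheafification is then handled as in Proposition~\ref{Hodge-Tate comparison}: it suffices to show that the presheaf version is already a sheaf, which reduces by completeness to the mod $p$ statement for quasi-coherent-type complexes on affines.
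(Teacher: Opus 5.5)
Your proposal is correct and is exactly the paper's argument: the paper proves the theorem in one line by combining Proposition~\ref{prop: q-crystalline comparison} with Proposition~\ref{prop: q-crys vs abs crys}. The latter is immediate from the definition \eqref{qcrystal to crystal over Acrys} of $\mE_{\Acrys}$ and the identification $\mD_\Sigma(\bullet)\htimes_{\Ainf}\Acrys\cong\mD_{\crys,\Sigma}(\bullet)$. The paper leaves implicit the additional points you check: the termwise computation of the completed base change, its compatibility with totalization and sheafification, and the Frobenius bookkeeping via $(\txi)=(p)$ in $\Acrys$. The totalization worry in your last paragraph is in fact already settled by your own local step. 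Since the Čech--Alexander terms are $(p,\mu)$-completely flat over $\Ainf$, the completed derived base change is computed termwise on the cochain complex, so you do not need the appeal to boundedness.
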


\begin{proof}
    Combine Proposition \ref{prop: q-crystalline comparison} and Proposition \ref{prop: q-crys vs abs crys}.
\end{proof}

\begin{corollary}\label{cor: crystalline comparison}
     Let $\fX$ be a saturated $p$-adic log formal scheme that is admissibly smooth over $\ul\mO$ and let $\bM\in \BKF^{\log}(\fX,\vp)$. Assume that $\fX$ locally admits free charts. Then there is a natural isomorphism 
    \[
        A\Omega^{\log}_{\fX}(\bM) \htimes^{\L}_{\Ainf}W(k) \cong R\upsilon_*(\sigma_{\crys}^* \bM)
    \]
    of complexes of sheaves on $\fX_{\ett}$ compatible wit Frobenius, where $\upsilon\colon (\fX_k/W(\underline{k}))_{\crys} \ra \fX_{\ett}$ denotes the natural projection of sites. In particular, suppose that $\fX$ is in additional proper, there is an isomorphism
    \[
        R\Gamma_{\Ainf}(\fX, \bM)\otimes^{\L}_{\Ainf}W(k) \cong R\Gamma_{\crys}\big(\fX_k/W(\ul k), \sigma^*_{\crys}\bM\big),
    \]
\end{corollary}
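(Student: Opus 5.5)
The plan is to deduce Corollary \ref{cor: crystalline comparison} from Theorem \ref{thm: abs crystalline comparison} by base change along the natural map $\Acrys \ra W(k)$. Since $\Ainf \ra W(k)$ factors as $\Ainf \ra \Acrys \ra W(k)$, we have
\[
A\Omega^{\log}_{\fX}(\bM)\htimes^{\L}_{\Ainf}W(k) \cong \big(A\Omega^{\log}_{\fX}(\bM)\htimes^{\L}_{\Ainf}\Acrys\big)\htimes^{\L}_{\Acrys}W(k) \cong R\upsilon_*(\sigma^*_{\Acrys}\bM)\htimes^{\L}_{\Acrys}W(k).
\]
By Definition \ref{defn: crystalline specialization}, $\sigma^*_{\crys}\bM$ is exactly the base change of $\sigma^*_{\Acrys}\bM$ along $\ul\Acrys \ra W(\ul k)$. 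It therefore remains to prove a crystalline base change statement: for a locally free $F$-crystal $\mE$ on $(\fX_{\mO/p}/\ul\Acrys)_{\crys}$, the natural map $R\upsilon_*(\mE)\htimes^{\L}_{\Acrys}W(k)\ra R\upsilon_*(\mE_{W(k)})$ is an isomorphism. Here we identify $\fX_{k,\ett}$ with $\fX_{\ett}$.

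This base change is étale local, so I would work on a small affine $\fU=\spf R$ and use the surjection $\Sigma(S,\ul T)\ra \ul R$ from \S\ref{subsection: crys specailzation}. The absolute cohomology is then computed by the Čech--Alexander complex $\mE(\mD_{\crys,\Sigma}(\bullet))$, where $\mD_{\crys,\Sigma}(\bullet)=\mD_\Sigma(\bullet)\htimes_{\Ainf}\Acrys$. On the special fiber, the analogous Čech--Alexander complex is built from the log PD envelopes of $A_0(\Sigma)(\bullet)\htimes_{\Ainf}W(k)\ra R\otimes_{\mO}k$ over $W(\ul k)$. These envelopes should be identified with $\mD_{\crys,\Sigma}(\bullet)\htimes_{\Acrys}W(k)$.

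The key input is that log PD envelopes of this type are $p$-completely flat over the base (by \cite[Lemma 7.4]{Koshikawa} and the explicit description via exactification and free PD adjunction), and that forming them commutes with base change along the PD morphism $\Acrys\ra W(k)$. Granting this, the crystal property gives $\mE(\mD_{\crys,\Sigma}(m))\htimes_{\Acrys}W(k)\cong \mE_{W(k)}(\mD_{k,\Sigma}(m))$. Because each term is $p$-completely flat, the underived completed tensor product computes the derived one. Naturality in $\Sigma$, together with compatibility with Frobenius, then lets the local isomorphisms glue to an isomorphism in $D(\fX_{\ett})$.

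The main obstacle I expect is this base change of log PD envelopes. The subtle points are that $W(\ul k)$ carries the log structure from $N_\infty$ and that $k$ is not the reduction of $\Acrys$ modulo a regular sequence. I would handle this by passing to the log-smooth model $A_0(\Sigma)$, over which the exactified envelope is an explicit completed divided power polynomial algebra over $\Acrys$, making base change evident.

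For the final statement, when $\fX$ is proper, apply $R\Gamma(\fX_{\ett},-)$. Corollary \ref{cor: perfect complex} shows that $R\Gamma_{\Ainf}(\fX,\bM)$ is perfect, so derived completed tensor equals ordinary derived tensor. Moreover, $R\Gamma$ commutes with $\htimes^{\L}_{\Ainf}W(k)$ by quasi-compactness and derived completeness.
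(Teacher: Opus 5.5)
Your proposal is correct and follows the paper's route: the paper proves this corollary in one line, by combining Theorem~\ref{thm: abs crystalline comparison} with the base change theorem for log crystalline cohomology along $\ul\Acrys\to W(\ul k)$, and then passing to global sections in the proper case. Your \v{C}ech--Alexander sketch just unpacks that cited base change theorem, and invoking Corollary~\ref{cor: perfect complex} to drop the completion in the proper case is the right justification.
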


\begin{proof}
This follows from Theorem \ref{thm: abs crystalline comparison} and the base change theorem for log crystalline cohomology.
\end{proof}

\vspace{0.1in}
\subsection{Breuil--Kisin--Fargues modules}\label{subsection: classical BKF modules}
Finally, we finish the proof of Theorem \ref{thm: perfect complex}. The only thing left to show is that, when $\fX$ is proper, the cohomology groups $H^i_{\Ainf}(\fX, \bM)$ of $R\Gamma_{\Ainf}(\fX, \bM)$ are Breuil--Kisin--Fargues modules, whose definition we recall below.

\begin{definition}
A \emph{Breuil–Kisin–Fargues module} is a finitely presented $\Ainf$-module $M$ equipped with a Frobenius semilinear map $\varphi_M \colon M \ra M$, such that $M[\frac 1p]$ is a finite free $\Ainf[\frac 1p]$-module and that $\varphi_M$ induces an isomorphism
     \[
         \varphi_M \colon M[\frac {1}{\xi}] \xrightarrow[]{\sim} M[\frac{1}{\txi}].
     \]
\end{definition}

To prove this, we need some preparation. Let $\fX$ be a saturated $p$-adic log formal scheme that is admissibly smooth over $\ul\mO$. We further assume that $\fX$ is proper and locally admits free charts. By definition, $\fX$ admits a finite model $\fX_0$ (cf. \S \ref{subsection: defn of log BKF modules}) which fits into a Cartesian diagram
\begin{equation}\label{diagram: admissibly smooth 2}
\begin{tikzcd}
    \fX \arrow[r] \arrow[d] & \fX_0 \arrow[d] \\
    \spf(\mO, N_\infty)^a \arrow[r] & \spf(\mO, N)^a,
\end{tikzcd} 
\end{equation}
where $N\subset N_{\infty}$ is a toric submonoid, $\fX_0 \ra \spf(\mO, N)^a$ is a log smooth and saturated morphism of fs log formal schemes, and $\fX\rightarrow \fX_0$ is an isomorphism on the underlying $p$-adic formal schemes. Note that (\ref{diagram: admissibly smooth 2}) induces an equivalence of categories
\begin{equation}\label{eq: admissibly smooth 2}
    \Vect^{\vp}\big((\fX_{0,\mO/p}/(\Acrys, N))_{\crys}\big) \cong \Vect^{\vp}\big((\fX_{\mO/p}/(\Acrys, N_\infty))_{\crys}\big).
\end{equation}
By a slight abuse of notation, for any $F$-crystal $\mE\in \Vect^{\vp}\big((\fX_{\mO/p}/(\Acrys, N_\infty))_{\crys}\big)$, we still use $\mE$ to denote the corresponding object in $\Vect^{\vp}\big((\fX_{0,\mO/p}/(\Acrys, N))_{\crys}\big)$. Then the base change along $(\mO, N) \ra (\mO, N_\infty)$ induces an isomorphism
\begin{equation}\label{crys cohom N vs N_infty}
     R\Gamma_{\crys}\big(\fX_{0,\mO/p}/(\Acrys,N), \mE\big) \cong R\Gamma_{\crys}\big(\fX_{\mO/p}/(\Acrys, N_\infty), \mE\big),
\end{equation}
compatible with Frobenius actions on both sides.

Let $k$ denote the residue field of $\mO$ and let $\fX_{0,k}$ denote the special fiber of $\fX_0$ over $(k, N)$. Here, the pre-log structure on $k$ is induced from that on $\mO$. For any $\mE\in \Vect^{\vp}\big((\fX_{0,\mO/p}/(\Acrys, N))_{\crys}\big)$, taking the base change along $(\mO/p,N) \ra (k,N)$ yields a log $F$-crystal $\mE_k\in \Vect^{\vp}\big((\fX_{0,k}/(W(k), N))_{\crys}\big)$. 

For any $\bM \in \BKF(\fX,\vp)$, let $\mE= \sigma_{\Acrys}^*\bM$. According to \cite[Lemma 4.20]{BMS1}, to show that $H^i_{\Ainf}(\fX, \bM)$ is a Breuil--Kisin--Fargues module, we need to check that $H_{\Ainf}^i(\fX,\bM)\otimes_{\Ainf}B_\crys^+$ is finite projective over $B_\crys^+$. Using the crystalline comparison (Theorem \ref{thm: abs crystalline comparison}), it suffices to construct an isomorphism
\begin{equation}\label{eq: finiteness of crystalline cohomology}
    R\Gamma_{\crys}\big(\fX_{0,k}/(W(k), N), \mE_k\big) \otimes_{W(k)} B_\crys^+ \cong  R\Gamma_{\crys}\big(\fX_{\mO/p}/(\Acrys,N_\infty),\mE\big)[\frac 1p],
\end{equation}
then the desired result follows from properties of classical crystalline cohomology.

To construct \eqref{eq: finiteness of crystalline cohomology}, we fix a section $k\ra\mO/p$. We would like to apply the base change theorem of log crystalline cohomology along the section $k \ra \mO/p$. However, the section does not automatically extend to a morphism $(k,N)^a \ra (\mO/p,N)^a$ between log rings. To deal with this, we introduce new log structures on $\mO/p$, $\mO/p^{1/p^n}$, and $\Acrys$, for some integer $n\ge 0$.  We fix the following notation:
\begin{enumerate}
    \item Let $\alpha\colon N \ra \Acrys$ denote the original pre-log structure on $\Acrys$ induced from that on $\Ainf$. It induces pre-log structures $\alpha\colon N \ra \mO/p$, $\alpha\colon N \ra \mO/p^{1/p^n}$, and $\alpha\colon N \ra k$ through natural projections.
    \item Let $\beta\colon N \ra \Acrys$ denote the pre-log structure on $\Acrys$ such that $\beta(N\minus\{0\})=0$. It also induces pre-log structures $\beta\colon N \ra \mO/p$, $\beta\colon N \ra \mO/p^{1/p^n}$, and $\beta\colon N \ra k$ via projections. Let $\beta\colon N \ra W(k)$ denote the pre-log structure on $W(k)$ such that $\beta(N\minus\{0\})=0$.
\end{enumerate}
Notice that $(k,N,\alpha)^a$ coincides with $(k,N,\beta)^a$. 

\begin{proposition} \label{crys vs absolute crys}
    For any $F$-crystal $\mE$ on the log crystalline site $(\fX_{\mO/p}/(\Acrys, N_\infty))_{\crys}$, there is a (non-canonical) isomorphism\footnote{
    This isomorphism is not compatible with Galois actions and monodromy since the monodromy on the right hand side is trivial.}
    \[
        R\Gamma_{\crys}\big(\fX_{0,k}/(W(k), N, \beta), \mE_k\big) \otimes_{W(k)} B_\crys^+ \cong  R\Gamma_{\crys}\big(\fX_{\mO/p}/(\Acrys,N_\infty),\mE\big)[\frac 1p],
    \]
    which depends on the choice of section $k \ra \mO/p$. 
\end{proposition}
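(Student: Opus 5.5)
We plan to prove this along the lines of the classical argument of Berthelot--Ogus / \cite[\S 13]{BMS1} for comparing crystalline cohomology over $\Acrys$ with that of the special fiber. The idea is to interpolate through the log structures $\alpha$ and $\beta$ and through the thickenings $\mO/p^{1/p^n}$. First, by \eqref{crys cohom N vs N_infty} we replace the right-hand side by $R\Gamma_{\crys}(\fX_{0,\mO/p}/(\Acrys,N,\alpha),\mE)$, so that everything is over the fs monoid $N$.

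The key steps, in order, are as follows.

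(1) Choose $n$ large enough that $\alpha(N\setminus\{0\})$ and $\beta(N\setminus\{0\})$ agree modulo $p^{1/p^n}$ in $\mO/p^{1/p^n}$. This is possible after enlarging $N\subset N_\infty$, or replacing $\alpha$ by a Frobenius twist, since the images of nonzero elements of $N$ under $\alpha$ lie in $\fm$ and, using divisibility of $N_\infty$, can be pushed into $p^{1/p^n}\mO$. This gives $(\mO/p^{1/p^n},N,\alpha)^a=(\mO/p^{1/p^n},N,\beta)^a$.

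(2) Use Frobenius. The $n$-fold (relative) Frobenius of $\fX_{0,\mO/p}$ factors through the reduction $\fX_{0,\mO/p^{1/p^n}}$. By the Dwork trick, $\vp^n_{\mE}$ induces, after inverting $p$, an isomorphism between $R\Gamma_{\crys}(\fX_{0,\mO/p}/(\Acrys,N,\alpha),\mE)[\frac1p]$ and the cohomology of the base change of $\mE$ along $\vp^n$. The latter depends only on $\fX_{0,\mO/p^{1/p^n}}$ with its log structure, so by step (1) we may replace $\alpha$ by $\beta$. Here we use that $\vp_{\mE}$ is an isogeny, i.e. an isomorphism after inverting $p$.

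(3) With the $\beta$ log structure, the section $k\to\mO/p$ extends to a map of log rings $(k,N,\beta)\to(\mO/p,N,\beta)$, lifting to $(W(k),N,\beta)\to(\Acrys,N,\beta)$. We then compare $\fX_{0,\mO/p^{1/p^n}}$ with $\fX_{0,k}\times_{k}\mO/p^{1/p^n}$. Both are log smooth lifts over the nilpotent thickening $\mO/p^{1/p^n}\to k$, so they agree on crystalline cohomology up to the Frobenius-isogeny trick again; concretely, a further power of Frobenius factors through $\fX_{0,k}$. The log crystalline base change theorem for the log smooth, integral $\fX_{0,k}$ over $(W(k),N,\beta)$ then gives
\[R\Gamma_{\crys}(\fX_{0,k}\times_k\mO/p\,/\,(\Acrys,N,\beta),\mE)\cong R\Gamma_{\crys}(\fX_{0,k}/(W(k),N,\beta),\mE_k)\htimes^{\L}_{W(k)}\Acrys.\]
Inverting $p$ and tensoring up to $B_\crys^+$ yields the claimed isomorphism, where we use that $\Acrys[\frac1p]\subseteq B_\crys^+$ and that the left-hand side is a perfect complex.

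The main obstacle is step (2)/(3): making precise that Frobenius-twisted $F$-crystal cohomology only depends on the reduction modulo $p^{1/p^n}$, including the change of log structure from $\alpha$ to $\beta$. This needs a log version of Berthelot--Ogus's argument with care about exactness of the closed immersions. We would first try to prove it at the level of \v{C}ech--Alexander complexes with explicit log PD envelopes, using that $\vp^n$ maps the PD ideal of $\Acrys\to\mO/p$ into one containing the kernel of $\Acrys\to\mO/p^{1/p^n}$. The non-canonicity of the result enters only through the chosen section $k\to\mO/p$.
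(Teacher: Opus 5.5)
Your overall route matches the paper's: identify $\alpha$ and $\beta$ modulo $p^{1/p^n}$, take a Frobenius twist, then apply log crystalline base change from $W(k)$ to $\Acrys$. However, step (3) rests on a false mechanism.

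The thickening $\mO/p^{1/p^n}\to k$ is not nilpotent. Its kernel $\fm/p^{1/p^n}\mO$ is only nil; indeed $\fm=\fm^2$, since the valuation is non-discrete. The $m$-fold Frobenius of $\mO/p^{1/p^n}$, and likewise of $\fX_{0,\mO/p^{1/p^n}}$, factors exactly through the reduction modulo $p^{1/p^{n+m}}$. It never factors through $k$, resp.\ $\fX_{0,k}$. So iterating the Dwork trick only shows that the cohomology depends on the reduction modulo $p^{1/p^{n'}}$ for every $n'$. It never reaches the special fiber, and by itself it cannot compare $\fX_{0,\mO/p^{1/p^n}}$ with $\fX_{0,k}\times_k\mO/p^{1/p^n}$. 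Nor can you appeal to invariance of the crystalline site under PD thickenings: $\ker(\Acrys\to k)$ carries no divided powers, since $\theta([\varpi^p]/p)\notin\mO$ for $\varpi\in\fm^\flat$ of small valuation.

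The missing idea is finite presentation, which is exactly what the paper uses. Since $\fX_0$ is fine and qcqs, a finite atlas together with its presentations, gluing data and log charts involves only finitely many elements of $\mO$. Each of these is the image of an element of $k$ under the chosen section plus an element of $\fm$. Choosing $n$ so large that $p^{1/p^n}$ divides these finitely many elements of $\fm$ gives an honest isomorphism of log formal schemes
\[
\fX_{0,k}\times_{\spf(k,N,\beta)^a}\spf(\mO/p^{1/p^n},N,\beta)^a\cong\fX_0\times_{\spf(\mO,N,\alpha)^a}\spf(\mO/p^{1/p^n},N,\alpha)^a .
\]
This isomorphism identifies the crystalline cohomologies over $\mO/p^{1/p^n}$. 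A single Frobenius twist (your step (2)) then carries the identification to $\mO/p$ after inverting $p$. Base change from $(W(k),N,\beta)$ to $\Acrys$ finishes the argument, with no second round of the Frobenius trick.

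The same finiteness is also the real reason behind step (1). $N$ is finitely generated with $\alpha(N\setminus\{0\})\subseteq\fm$, so $\alpha(N\setminus\{0\})\subseteq p^{1/p^n}\mO$ for $n\gg 0$. Enlarging $N$ inside $N_\infty$, or invoking divisibility of $N_\infty$, would only lower the valuations; it is neither needed nor helpful.
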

\begin{proof}
    The proof is essentially the same as in \cite[Proposition 8.9]{logprism}; we recall the proof for completeness. Notice that for sufficiently large $n$, the image of $N \minus \{0\}$ is $0$ in $\mO/p^{1/p^n}$ under $\alpha$; hence $\alpha$ coincides with $\beta$ on $\mO/p^{1/p^n}$. Therefore, the section $k \ra \mO/p$ can be upgraded to a morphism of log rings $(k, N, \beta)^a\rightarrow (\mO/p, N, \alpha)^a$ via the composition
    \[
        (k, N, \beta)^a \ra (\mO/p, N, \beta)^a \ra (\mO/p^{1/p^n}, N, \beta)^a\xrightarrow[]{\sim} (\mO/p^{1/p^n}, N, \alpha)^a \xrightarrow[]{\vp^n} (\mO/p, N, \alpha)^a.
    \]
    This further induces morphisms of log rings 
    \begin{equation}\label{eq: KY25 prop 8.9}
        (W(k), N, \beta)^a \ra (\Acrys, N, \beta)^a \xrightarrow[]{\vp^n} (\Acrys, N, \alpha)^a.
    \end{equation}    
    Since $\fX_0$ is fine and qcqs, for sufficiently large $n$, there is an isomorphism of $p$-adic log formal schemes
    \begin{equation}\label{eq: ad hoc isom}
        \fX_{0,k}\times_{\spf(k, N,\beta)^a} \spf(\mO/p^{1/p^n}, N,\beta)^a \cong \fX_0\times_{\spf(\mO, N,\alpha)^a} \spf(\mO/p^{1/p^n}, N, \alpha)^a
    \end{equation}
    over $(\mO/p^{1/p^n}, N, \alpha)^a = (\mO/p^{1/p^n}, N, \beta)^a$. We denote the left-hand side and the right-hand side by $(\fX_{0,k})_{\mO/p^{1/p^n}}$ and $\fX_{0,\mO/p^{1/p^n}}$, respectively. Recall that we still use $\mE$ to denote the $F$-crystal $\Vect^{\vp}\big((\fX_{0,\mO/p}/(\Acrys, N,\alpha))_{\crys}\big)$ corresponding to $\mE\in \Vect^{\vp}\big((\fX_{\mO/p}/(\Acrys, N_\infty))_{\crys}\big)$ via the equivalence (\ref{eq: admissibly smooth 2}). Pulling back along $(\mO/p,N,\alpha) \ra (\mO/p^{1/p^n}, N, \alpha)$, $\mE$ further induces a $F$-crystal in $\Vect^{\vp}\big((\fX_{0,\mO/p^{1/p^n}}/(\Acrys, N,\alpha))_{\crys}\big)$, which we still denote by $\mE$, by a slight abuse of notation. The isomorphism \eqref{eq: ad hoc isom} then induces an isomorphism
    \[
        R\Gamma_{\crys}\big((\fX_{0,k})_{\mO/p^{1/p^n}}/(\Acrys,N,\alpha),\mE\big)
        \xrightarrow[]{\sim} 
        R\Gamma_{\crys}\big(\fX_{0,\mO/p^{1/p^n}}/(\Acrys,N,\alpha),\mE\big).
    \]
    After taking Frobenius twists and inverting $p$, we arrive at
    \[
        R\Gamma_{\crys}\big((\fX_{0,k})_{\mO/p}/(\Acrys,N,\alpha),\mE\big)[\frac 1p]\xrightarrow[]{\sim} 
        R\Gamma_{\crys}\big(\fX_{0,\mO/p}/(\Acrys,N,\alpha),\mE\big)[\frac 1p].
    \]
    Finally, taking base change along (\ref{eq: KY25 prop 8.9}) and then combing with the isomorphism (\ref{crys cohom N vs N_infty}), we obtain
    \begin{align*}
         R\Gamma_{\crys}\big(\fX_{0,k}/(W(k), N,\beta), \mE_k\big) \otimes_{W(k)} B_\crys^+ & \xrightarrow[]{\sim} 
         R\Gamma_{\crys}\big((\fX_{0,k})_{\mO/p}/(\Acrys,N,\alpha),\mE\big)[\frac 1p]\\
         & \xrightarrow[]{\sim} R\Gamma_{\crys}\big(\fX_{0,\mO/p}/(\Acrys,N,\alpha),\mE\big)[\frac 1p]\\
         & \xrightarrow[]{\sim} R\Gamma_{\crys}\big(\fX_{\mO/p}/(\Acrys,N_\infty),\mE\big)[\frac 1p].
    \end{align*}
\end{proof}

\begin{proof}[Proof of Theorem \ref{thm: perfect complex}]
    By Corollary \ref{cor: perfect complex} , $R\Gamma_{\Ainf}(\fX, \bM)$ is a perfect complex. So we are left to show that all cohomology groups of $R\Gamma_{\Ainf}(\fX, \bM)$ are Breuil--Kisin--Fargues modules. Let $M= H^i_{\Ainf}(\fX,\bM)$. By \cite[Lemma 4.20]{BMS1}, it suffices to check
    \begin{itemize}
        \item $M[\frac{1}{p\mu}]$ is finite projective over $\Ainf[\frac{1}{p\mu}]$, and
        \item $M\otimes_{\Ainf}B_{\mathrm{crys}}^+$ is finite projective over $B_{\mathrm{crys}}^+$.
    \end{itemize}
    The first statement follows from the \'etale comparison (Theorem \ref{thm: strong etale comparison}), while the second statement follows from the absolute crystalline comparison (Theorem \ref{thm: abs crystalline comparison}), Proposition \ref{crys vs absolute crys}, and the finiteness of log crystalline cohomology.
\end{proof}

\vspace{0.3in}
\section{Semistable local systems}\label{section: semistable local systems}
\noindent The goal of this section is to recall the notion of semistable local systems on semistable $p$-adic formal schemes, as well as the prismatic interpretations of semistable local systems established by Du--Liu--Moon--Shimizu \cite{DLMS2}. 

In this section and in the next, let $K$ be a complete discrete valuation field of mixed characteristic $(0,p)$ with perfect residue field $k$. Let $\mO_K$ be the ring of integers and $\pi$ be a uniformizer. Let $E(u)\in W(k)[u]$ be the minimal polynomial of $\pi$. We equip $\mO_K$ with the pre-log structure $\N \to \mO_K$ sending $1 \mapsto \pi$. Let $\underline{\mO_K}= (\mO_K, \N \to \mO_K)$ be the corresponding pre-log ring. 

\begin{remark}
In this section and in the next, we will use $\fY$ to denote a semistable $p$-adic formal schemes over $\mO_K$. The base change $\fY_{\mathcal{O}}$ is then admissibly smooth over $\underline{\mathcal{O}}$ and locally admits free charts, where $\underline{\mathcal{O}}=(\mathcal{O}, N_{\infty})$ is any divisible perfectoid split log point extending $\underline{\mathcal{O}_K}$. Theorem \ref{thm: semistable comparison+ filtration} is formulated in this setting, but we expect the theorem to hold for a broader class of log smooth $p$-adic formal schemes, provided one generalizes the notion of semistable local systems appropriately to these situations. We leave such generalizations to the interested reader.
\end{remark}

\vspace{0.1in}
\subsection{Semistable local systems}\label{subsection: semistable local systems}
\noindent
\vspace{0.1in}

\noindent We begin by briefly recalling the notion of semistable local systems, following \cite{DLMS2} and \cite{Guo-Yang}. For integers $1\le m \le d$, consider
\[
    R^0 := \mO_K\langle T_1, \ldots, T_m, T^{\pm 1}_{m+1}, \ldots, T^{\pm 1}_d \rangle/(T_1\cdots T_m-\pi),
\]
equipped with the pre-log structure $\N^d \ra R^0$ sending the standard $i$-th basis $e_i$ to $T_i$ for $1\le i \le d$. Let $R$ be a connected $\mO_K$-algebra equipped with a ($p$-adically completed) \'etale map
\[
    \square \colon R^0 \ra R.
\]
In particular, $\spf \underline{R}=(\spf R, \,\N^d \to R^0 \xrightarrow[]{\square} R)^a$ is a $p$-adic (fs) log formal scheme that is log smooth over $\underline{\mathcal{O}_K}$.

\begin{definition}
A $p$-adic (fs) log formal scheme $\fY$ over $\mathcal{O}_K$ is called \emph{semistable small affine} if it is of the form $\fY= (\spf R, \,\N^d \to R^0 \xrightarrow[]{\square} R)^a$ as above. A $p$-adic (fs) log formal scheme $\fY$ over $\mathcal{O}_K$ is called \emph{semistable} if \'etale locally it is semistable small affine.
\end{definition}

\begin{remark}
If $\fY$ is a semistable $p$-adic formal scheme over $\mO_K$, then its log structure coincides with the divisorial log structure given by the special fiber $\fY_k$. In particular, it only possesses vertical log structures.
\end{remark}

Let $\fY$ be a semistable $p$-adic log formal scheme over $\mathcal{O}_K$ and let $Y$ be the adic generic fiber. Note that $Y$ is actually smooth over $K$. Let $Y_{\proet}$ denote the pro-\'etale site and let $\A_{\crys}$, $\B^+_{\crys}$, and $\B_{\crys}$ denote the crystalline period sheaves on $Y_{\proet}$ introduced in \cite[\S 4.1]{Guo-Yang} (also see \cite[\S 2A]{tantong}). 

We also consider the special fiber $\fY_k$, equipped with the pullback log structure from $\fY$. Let $\fY_{k,\crys}$ denote the absolute log crystalline site. Let $\Vect(\fY_{k,\crys})$ (resp. $\Vect^\vp(\fY_{k,\crys})$) denote the category of \emph{locally finite free crystals} (resp. \emph{locally finite free $F$-crystals}) on $\fY_{k,\crys}$. Let $\Isoc^\vp(\fY_{k,\crys})$ denote the category of locally finite free $F$-isocrystals on $\fY_{k,\crys}$, defined as the isogeny category of $\Vect^\vp(\fY_{k,\crys})$ (cf. \cite[Definition B.10]{DLMS2}). For simplicity, we will just say \emph{$F$-crystals} and \emph{$F$-isocrystals}. For any such $F$-isocrystal $\mE$ on $Y_{k,\crys}$, one can define a sheaf $\B^+_{\crys}(\mE)$ of finite projective $\B^+_{\crys}$-modules on $Y_{\proet}$ in terms of evaluation (for details, we refer the reader to \cite[Construction 4.1]{Guo-Yang}), then we put $\B_{\crys}(\mE)= \B^+_{\crys}(\mE) \otimes_{\B^+_{\crys}}\B_{\crys}$. 

\begin{definition}[{\cite[Definition 3.39]{DLMS2}}]\label{defn: association}
    Let $\fY$ be a semistable $p$-adic (log) formal scheme with adic generic fiber $Y$. Let $\L$ be an \'etale $\Z_p$-local system on $Y$ and let $\mE$ be an $F$-isocrystal on $\fY_{k,\crys}$.
    We say $\L$ and $\mE$ are \emph{associated} if there is a Frobenius equivariant isomorphism of $\B_\crys$-vector bundles
    \[
        \alpha_{\L, \mE} \colon \B_{\crys}(\mE) \simra \L\otimes_{\Z_p}\B_\crys.
    \]
\end{definition}

\begin{definition}[{\cite[Definition 4.4]{Guo-Yang}}]\label{defn: semistable local systems}
    Let $\fY$ and $Y$ be as above, and let $\L$ be an \'etale $\Z_p$-local system on $Y$. We say $\L$ is a \emph{semistable local system} if there exists a locally finite free $F$-isocrystal $\mE \in \Isoc^\vp(\fY_{k,\crys})$ that is associated with $\L$.
\end{definition}

\begin{remark}\label{remark: filtration on F-isocrystals}
Note that in Definition \ref{defn: association}, we do not require $\mE$ to be filtered. In fact, there is a unique way to put a filtered $F$-isocrystal structure on $\mE$ so that the ismorphism $\alpha_{\L, \mE}$ also respects filtrations. (Also see Proposition \ref{prop: F-isocrystal structure}.)
\end{remark}

\vspace{0.1in}
\subsection{Analytic prismatic $F$-crystals}\label{subsection: analytic prismatic F-crystals}
\noindent
\vspace{0.1in}

\noindent One key ingredient of our proof of semistable comparison is the prismatic interpretation of semistable local systems established in \cite{DLMS2}; namely, semistable local systems can be viewed as \emph{analytic $F$-crystals} on the corresponding (log) prismatic site. For the reader's convenience, we review the relevant material in the rest of this section.

\begin{definition}[{\cite[Definition 7.23]{logprism}}]
    Let $\fY$ be a bounded fs log formal scheme over $\underline{\mO_K}$. 
\begin{enumerate}
    \item An object in the \emph{absolute log prismatic site} $\fY_{\Prism}$ consists of a $\delta_{\log}$-triple $(A,I, \mM_A)$ together with a strict map $(\spf(A/I), \mM_{A/I}) \to \fY$ of log schemes, where $\mM_{A/I}$ denote the pullback log structure from $\mM_A$. There is an obvious notion of morphisms. We equipped $\fY_\Prism$ with the flat topology; i.e., a morphism 
    \[
        f\colon (\spf B, J, \mM_B) \to (\spf A, I, \mM_A)
    \]
    is a cover if $(\spf B, \mM_B) \to (\spf A, \mM_A)$ is a $(p,I)$-completely faithfully flat map between log formal schemes.
    \item Let $\mO_{\Prism}$ (resp. $\mI_{\Prism}$) be the structure sheaf (resp. the ideal sheaf of the Hodge--Tate divisor) on $\fY_{\Prism}$ defined by associating to an object $(\spf A, I, \mM_A)$ the ring $A$ (resp. the ideal $I$). 
\end{enumerate}
\end{definition}

\begin{remark}
This notion of absolute log prismatic site $\fY_{\Prism}$ is actually called the \emph{strict absolute log prismatic site} in \cite[Definition 2.3]{DLMS2}. It is the full subcategory of the \emph{absolute log prismatic site} in \emph{loc. cit.} consisting of strict objects. As noted in \cite[Remark 3.6]{DLMS2}, when $\fY$ is semistable, the categories of analytic prismatic $F$-crystals (cf. Definition \ref{defn: analytic prismatic F-crystals} below) on these two sites are equivalent. 
\end{remark}

\begin{definition}[{\cite[Definition 3.3]{DLMS2}}]
For any object $(\spf A, I, \mM_A)$ of $\fY_{\Prism}$, let $\vp_A$ be the Frobenius on $A$ induced by the $\delta$-structure. Notice that $\vp_A$ acts on $\spec(A)\minus V(p,I)$.
\begin{enumerate}
    \item Let $\Vect^\vp(A,I)$ denote the category of pairs $(\fM, \vp_{\fM})$, where
    $\fM$ is a finite projective $A$-module and
    \[
        \vp_{\fM}\colon \vp^*\fM[I^{-1}]\xrightarrow[]{\sim} \fM[I^{-1}]
    \]
     is an isomorphism of $A$-modules.
    \item Let $\Vect^{\an,\vp}(A,I)$ denote the category of pairs $(\mE, \vp_{\mE})$, where $\mE$ is a vector bundle on $\spec(A) \minus V(p,I)$ and
    \[
        \vp_{\mE}\colon \vp^*\mE[I^{-1}]\xrightarrow[]{\sim} \mE[I^{-1}]
    \]
    is an isomorphism of vector bundles.
    \item Let $\Vect(A[I^{-1}]_p^{\wedge})^{\vp=1}$ denote the category of pairs $(\mM, \vp_{\mM})$, where $\mM$ is a finite projective $A[I^{-1}]_p^{\wedge}$-module and $\vp_{\mM} \colon \vp^*\mM \to \mM$
    is an isomorphism of $A[I^{-1}]_p^{\wedge}$-modules.
    \item Let $D_{\perf}^\vp(A,I)$ be the $\infty$-category of pairs $(\mE,\vp_E)$, where $\mE$ is a perfect complex over $A$, and
    \[
        \vp_{\mE} \colon \vp_A^*\mE\otimes^\L_A A[I^{-1}] \ra \mE\otimes^\L_A A[I^{-1}]
    \]
    is an isomorphism of perfect complexes.
    \end{enumerate}
\end{definition}

\begin{definition}[{\cite[Definition 3.3, Definition 3.12]{DLMS2}}]\label{defn: analytic prismatic F-crystals}
Let $\fY$ be a bounded fs $p$-adic log formal scheme over $\ul{\mO_K}$.
    \begin{enumerate}
    \item A \emph{prismatic $F$-crystal} over $\fY$ is a vector bundle $\mE$ on the ringed site $(\fY_{\Prism},\mO_{\Prism})$ equipped with an isomorphism
    \[
        \vp_{\mE}\colon \vp^*\mE[I^{-1}]\xrightarrow[]{\sim} \mE[I^{-1}].
    \]
    Let $\Vect^\vp(\fY_{\Prism})$ denote the category of prismatic $F$-crystals. According to \cite[Proposition 3.2]{DLMS2}, there is an equivalence of categories
    \[
        \Vect^\vp(\fY_{\Prism}) \cong \lim\limits_{(\spf A,  I, \mM_A)\in \fY_\Prism} \Vect^\vp(A,I).
    \]
    \item The category of \emph{analytic prismatic $F$-crystals} over $\fY$ is defined to be 
    \[
        \Vect^{\an,\vp}(\fY_{\Prism}) := \lim\limits_{(\spf A,  I, \mM_A)\in \fY_\Prism} \Vect^{\an,\vp}(A,I).
    \]
    \item A \emph{Laurent $F$-crystal} on $\fY_{\Prism}$ is a locally finite free crystal $\mE$ on the ringed site $\big(\fY_{\Prism},\mO_{\Prism}[\mI_{\Prism}^{-1}]_p^{\wedge}\big)$ equipped with an isomorphism $\vp_{\mE} \colon \vp^* \mE \xrightarrow[]{\sim} \mE$.
    We denote the category of Laurent $F$-crystals on $\fY_{\Prism}$ by $\Vect\big(\fY_{\Prism},\mO_{\Prism}[\mI_{\Prism}^{-1}]_p^{\wedge}\big)^{\vp=1}$. According to \cite[Lemma 3.13]{DLMS2}, there is an equivalence of categories
    \[
        \Vect\big(\fY_{\Prism},\mO_{\Prism}[\mI_{\Prism}^{-1}]_p^{\wedge}\big)^{\vp=1} \cong \lim\limits_{(\spf A,  I, \mM_A)\in \fY_\Prism} \Vect(A[I^{-1}]_p^{\wedge})^{\vp=1}.
    \]
    \item The $\infty$-category of \emph{prismatic $F$-crystals in perfect complexes} over $\fY$ is defined by 
    \[
        D_{\perf}^\vp(\fY_{\Prism}) := \lim\limits_{(\spf A,  I, \mM_A)\in \fY_\Prism} D_{\perf}^\vp(A,I).
    \]
\end{enumerate}
\end{definition}

\vspace{0.1in}
\subsection{Breuil--Kisin log prism and Kisin descent data}\label{subsection: Breuil--Kisin log prisms}
\noindent
\vspace{0.1in}

\noindent Recall that we have fixed a uniformizer $\pi$ of $\mO_K$ with $E(u)\in W(k)[u]$ being its minimal polynomial. Consider the \emph{Breuil--Kisin ring} $\fS= W(k)[\![ u]\!]$, equipped with the pre-log structure $\N \ra \fS$ sending $1$ to $u$, together with a Frobenius structure $\vp \colon \fS \ra \fS$ sending $u$ to $u^p$. The natural map $\fS \ra \mO_K$ sending $u$ to $\pi$ induces a morphism between pre-log rings $\ul\fS=(\fS, \N) \ra \ul{\mO_K}$.

Consider a semistable small affine $\spf \underline{R}= (\spf R, \N^d \to R^0 \xrightarrow[]{\square} R)^a$ as in \S \ref{subsection: semistable local systems}, where 
\[
    R^0 = \mO_K\langle T_1, \ldots, T_m, T^{\pm 1}_{m+1}, \ldots, T^{\pm 1}_d \rangle/(T_1\cdots T_m-\pi),
\]
equipped with the pre-log structure $\N^d \ra R^0$ sending the standard $i$-th basis $e_i$ to $T_i$ for $1\le i \le d$ and $R$ is a connected $\mO_K$-algebra equipped with a ($p$-adically completed) \'etale map $\square \colon R^0 \ra R$. Consider the ``$\fS$-deformation''
\[
     \fS_{R^0}= W(k)\langle T_1, \ldots, T_m, T^{\pm 1}_{m+1}, \ldots, T^{\pm 1}_d \rangle[\![u ]\!]/(T_1\cdots T_m-u),
\]
equipped with the pre-log structure $\N^d \to \fS_{R^0}$ sending $e_i$ to $T_i$, which lifts the pre-log structure on $R^0$. There is a Frobenius structure $\vp_{\fS_{R^0}} \colon \fS_{R^0} \ra \fS_{R^0}$ sending $T_i$ to $T_i^p$, which is compatible with the Frobenius structure on $\fS$. Since $\square$ is $p$-adically completed \'etale, it uniquely lifts to \footnote{The ring $\fS_R$ is also denoted by $\fS_{\square, R}$ in \cite[\S 2.2]{DLMS2}.} 
\[
    \square_{\fS} \colon \fS_{R^0} \ra \fS_R.
\]
The Frobenius $\vp_{\fS_{R^0}}$ also lifts to $\vp_{\fS_R} \colon \fS_R \ra \fS_R$.

\begin{definition}
    The \emph{Breuil--Kisin log prism} $(\fS_R, E(u), \mM_{\fS})$ is the log prism that associates to $(\fS_R, E(u),\N^d)$ with $\delta_{\log}(e_i)=0$ and $\delta(T_j)=0$. It is an object of $(\spf\underline{R})_{\Prism}$ via the canonical isomorphism $R \xrightarrow[]{\sim} \fS_R/(E(u))$.
\end{definition}

\begin{proposition}[{\cite[Lemma 2.8]{DLMS2}}]
Let $\spf\underline{R}$ be semistable small affine as above. Then $(\fS_R, E(u), \mM_{\fS})$ is a cover of the final object of the topos associated to $(\spf\underline{R})_{\Prism}$. 
\end{proposition}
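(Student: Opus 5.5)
We need to prove that $(\fS_R, E(u), \mM_{\fS})$ covers the final object of the topos of $(\spf\underline{R})_{\Prism}$: for every object $(B, J, \mM_B)$ of $(\spf\underline{R})_{\Prism}$ there should be a flat cover $(B,J,\mM_B)\to(C,JC,\mM_C)$ together with a morphism of log prisms $(\fS_R,E(u),\mM_{\fS})\to(C,JC,\mM_C)$ over $\spf\underline R$. The argument will follow Proposition \ref{prop: weakly final obj: free chart}, adapted to the absolute setting with the Breuil--Kisin deformation in place of $A(R)$.

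\emph{Step 1 (charts).} Since the structure map is strict, $\mM_{B/J}$ is pulled back from $\spf\underline R$ and so has the chart $\N^d\to R\to B/J$. As in Lemma \ref{Lem: Tian Lemma 1.9}, via \cite[Lemma 2.1]{DLMS2}, $\mM_B$ comes from a pre-log structure $M_B=\N^d\times_{\Gamma(\mM_{B/J})}\Gamma(\mM_B)$, which is a $(1+J)$-torsor over $\N^d$. Because $\N^d$ is free, we may pick lifts $m_i\in M_B$ of the $e_i$ and obtain a section $\N^d\to M_B$, i.e.\ a chart $\alpha_B\colon\N^d\to B$ lifting $T_i$. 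Here no combinatorial matching is required: the element $u$ will simply be defined as $\alpha_B(e_1+\cdots+e_m)$, so $\prod_{i\le m}\alpha_B(e_i)=u$ holds by construction.

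\emph{Step 2 (perfectoid cover and ring map).} Next, take the quasi-syntomic cover $B/J\to (B/J)\htimes_R R_\infty$, where $R_\infty$ is obtained by adjoining compatible $p$-power roots of the $T_i$ (and of $\pi$, $\prod_{i\le m} T_i^{1/p^n}$ being a choice of $\pi^{1/p^n}$) and $p$-power roots of unity. By \cite[Proposition 7.11]{BS} this yields a flat cover $(C,JC)$ of $(B,J)$ whose reduction receives a map from $R_\infty$. Give $C$ the pullback pre-log structure $\N^d\to B\to C$. The elements $\alpha_C(e_i)$ then admit compatible $p$-power roots up to units in $1+JC$. After a further cover we will arrange $\delta_{\log}(e_i)=0$, i.e.\ $\varphi(\alpha_C(e_i))=\alpha_C(e_i)^p$; the idea is to replace $\alpha_C(e_i)$ by $\lim_n \varphi^n$-adjusted Teichmüller-type elements $[\,(T_i^{1/p^n})_n]$ transported from $\Ainf(R_\infty)$ via \cite[Lemma 4.8]{BS}, which modifies the chart by units in $1+JC$ and hence gives an isomorphic log structure. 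Define $\fS_{R^0}\to C$ by $T_i\mapsto\alpha_C(e_i)$ and $u\mapsto\prod_{i\le m}\alpha_C(e_i)$, using the canonical map $W(k)\to C$. This map is a $\delta$-map because $\delta(T_i)=0$ on both sides, and it is compatible with the maps to $C/JC$ modulo $J$. By $(p,J)$-complete étaleness of $\fS_{R^0}\to\fS_R$ (and rigidity of étale maps), it extends uniquely to a $\delta$-map $\fS_R\to C$ lifting $R\to C/JC$. Since $\fS_R$ maps to $C/JC$ through $R$, $E(u)$ lands in $JC$, and rigidity of prisms then gives $E(u)C=JC$.

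\emph{Main obstacle.} The delicate point is Step 2's normalization of the chart so that $\delta_{\log}(e_i)=0$ while keeping $\prod_{i\le m}$ of the chosen elements equal to the image of $u$ and keeping the map over $\mO_K$, i.e.\ $u\mapsto$ a lift of $\pi$. I would handle this by choosing the roots of the $T_i$ for $i\le m$ first and defining the root of $\pi$ as their product, so both constraints hold simultaneously in $\Ainf(R_\infty)$, and then transporting to $C$. Finally, strictness of $(\fS_R,\N^d)^a\to(C,\N^d)^a$ follows because the charts match, and this gives the required morphism of log prisms.
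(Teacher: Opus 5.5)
Your Step 1, the choice $u:=\alpha(e_1+\cdots+e_m)$, and the end of Step 2 are fine. By the end of Step 2 I mean the \'etale extension from $\fS_{R^0}$ to $\fS_R$, rigidity giving $E(u)C=JC$, and strictness via the $(1+JC)$-torsor.

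\textbf{The gap.} It lies in the normalization of the chart, and this is the real content of the lemma. The prism map $\Ainf(R_\infty)\to C$ from \cite[Lemma 4.8]{BS} gives only $y_i\equiv\alpha_C(e_i)\pmod{JC}$. This is an \emph{additive} congruence. The elements of $\Gamma(\mM_C)$ over $\bar e_i\in\Gamma(\mM_{C/JC})$ have images exactly $(1+JC)\,\alpha_C(e_i)$, so $y_i$ need not come from the given log structure at all. Your claim about ``compatible $p$-power roots up to units in $1+JC$'' has the same defect: the roots exist only in $C/JC$.

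More seriously, $\delta_{\log}$ is extra data that a morphism into $(C,JC,\mM_C)$ must respect, and $y_i$ does not detect it. If $y_i=v_i\alpha_C(e_i)$, then $\varphi(y_i)=y_i^p$ only gives
\[
\alpha_C(e_i)^p\bigl(\varphi(v_i)(1+p\delta_{\log}(m_i))-v_i^p\bigr)=0,
\]
which is vacuous when $\alpha_C(e_i)$ is a zero-divisor. This genuinely happens. Take a crystalline log prism $(W(\kappa),(p))$ over a $\kappa$-point of $\spf R$ where $T_1=0$, with $e_1\mapsto 0$ and $\delta_{\log}(e_1)=c\neq 0$. It is a legitimate object of $(\spf\underline R)_\Prism$. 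Every element over $\bar e_1$ has the form $vm_1$ with $\alpha(vm_1)=0$. One of them has $\delta_{\log}=0$ if and only if $\varphi(v)(1+pc)=v^p$ has a solution $v\in 1+pW(\kappa)$. Declaring a new pre-log structure $e_i\mapsto y_i$ with $\delta_{\log}=0$ and calling it ``isomorphic'' conflates log structures with $\delta_{\log}$-log structures.

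\textbf{The fix.} The missing idea is to solve exactly this Frobenius equation on a flat cover; this makes the perfectoid detour unnecessary.
\begin{enumerate}
\item Put $w_i:=1+p\delta_{\log}(m_i)\in B^\times$.
\item Let $C_0=B[v_1^{\pm1},\dots,v_d^{\pm1}]$ with the $\delta$-structure $\delta(v_i)=-v_i^p\delta_{\log}(m_i)w_i^{-1}$, i.e.\ $\varphi(v_i)=v_i^pw_i^{-1}$.
\item Let $C=C_0\{\frac{v_1-1,\dots,v_d-1}{J}\}^{\wedge}$ be the prismatic envelope.
\end{enumerate}
Since $v_1-1,\dots,v_d-1$ is a regular sequence relative to $B$ with quotient $B$, \cite[Proposition 3.13]{BS} shows $C$ is $(p,J)$-completely flat over $B$. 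Faithfulness can be checked after base change along $\delta$-maps $B\to W(\kappa)$ with $\kappa$ perfect, where the equation is solvable by successive approximation.

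Using $\delta_{\log}(v_i)=\delta(v_i)/v_i^p$ one gets
\[
\delta_{\log}(v_im_i)=\delta_{\log}(m_i)+\delta_{\log}(v_i)\,w_i=0 .
\]
Moreover $v_im_i$ still lifts $\bar e_i$, because $v_i\equiv 1 \bmod JC$. Hence $T_i\mapsto v_i\alpha(m_i)$ is a $\delta$-map $\fS_{R^0}\to C$ compatible with $R\to C/JC$. The rest of your Step 2 then goes through verbatim.
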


Let $(\fS_R(n), E(u), \N^d)^a$ be the $(n+1)$-st self-product of $(\fS_R, E(u), \N^d)^a$ in $(\spf\underline{R})_{\Prism}^{\mathrm{op}}$, where the pre-log structure $\N^d \to \fS_R(n)$ is given by the pre-log structure on the first coordinate. As in \cite[\S 2]{DLMS2}, $\fS_R(1)$ can be explicitly described as
\[
    \fS_R(1)= B(1)\Bigg\{\frac{1-\frac{T_{1,2}}{T_{1,1}}}{E(u)}, \ldots, \frac{1-\frac{T_{d,2}}{T_{d,1}}}{E(u)}\Bigg\}_{\delta}^{\wedge},
\]
where $B(1)= \fS_R[\![1- \frac{T_{i,2}}{T_{i,1}}]\!]_{1\le i \le d}$.
Let $p_1, p_2 \colon \fS_R \ra \fS_R(1)$ denote the two projection maps.
By \cite[Lemma 3.5]{BS}), we must have $p_1(E(u))\fS_R(1)= p_2(E(u))\fS_R(1)$. Therefore, the natural projections restrict to 
\[
    p_1, p_2 \colon \spec(\fS_R(1))\minus V(p,E) \lra \spec(\fS_R)\minus V(p,E).
\]

\begin{definition}[{\cite[Definition 3.7]{DLMS2}}]
    Let $\DD_{\fS_R}$ be the category of \emph{Kisin descent data}. An object of $\DD_{\fS_R}$ is a triple $(\fM, \vp_{\fM}, \varepsilon)$, where
    \begin{itemize}
        \item $\fM$ is a torsion free finite $\fS_R$-module such that $\fM[p^{-1}]$ is projective over $\fS_R[p^{-1}]$, $\fM[E^{-1}]$ is projective over $\fS_R[E^{-1}]$, and $\fM= \fM[p^{-1}]\cap \fM[E^{-1}]$,
        \item $\vp_{\fM}\colon \vp_{\fS_R}^*\fM[E^{-1}] \xrightarrow[]{\sim} \fM[E^{-1}]$ is an isomorphism of $\fS_R[E^{-1}]$-modules,
        \item $\varepsilon \colon \fM\otimes_{\fS_R,p_1}\fS_R(1) \xrightarrow[]{\simeq} \fM\otimes_{\fS_R,p_2}\fS_R(1)$ is an isomorphism of $\fS_R(1)$-modules that satisfies the cocycle condition over $\fS_R(2)$ and is compatible with Frobenii after inverting $E$.
    \end{itemize}
\end{definition}

The following proposition reveals that, \'etale locally on a semistable $p$-adic formal scheme, we can use the category of Kisin descent data to describe analytic prismatic $F$-crystals.

\begin{proposition}[{\cite[Lemma 3.8]{DLMS2}}]\label{prop: DLMS lem 3.8}  Let $\spf \underline{R}$ be a semistable small affine $p$-adic log formal scheme over $\mO_K$ as above. Let $U := \spec(\fS_R)\minus V(p,E)$. Then restriction along the open immersion $j\colon U \hookrightarrow \spec(\fS_R)$ induces an equivalence of categories
    \[
        j_*\colon \DD_{\fS_R} \xrightarrow{\sim} \Vect^{\an, \vp}((\spf\underline{R})_{\Prism}).
    \]
\end{proposition}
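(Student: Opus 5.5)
The plan is to show that the functor is well defined, fully faithful, and essentially surjective. In each step the question is reduced, via the descent datum, to the two-coordinate ring $\fS_R(1)$ and the three-coordinate ring $\fS_R(2)$, together with the fact that $(\fS_R, E(u), \mM_{\fS})$ covers the final object of the topos of $(\spf\underline{R})_{\Prism}$ (Lemma 2.8 of DLMS).

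\textbf{Step 1: descent description of the target.} Because $\fS_R$ covers the final object, $\Vect^{\an,\vp}((\spf\underline R)_\Prism)$ is the category of analytic $\vp$-modules on $U$ equipped with a descent isomorphism over $U(1):=\spec(\fS_R(1))\minus V(p,E)$ satisfying the cocycle condition over $U(2)$. This follows by writing the limit over $\fY_\Prism$ as a Čech limit. We need descent of vector bundles on the punctured spectra along the flat covers. To get it, we cover $U$ by $\spec \fS_R[1/p]$ and $\spec\fS_R[1/E]$ and use $(p,E)$-complete faithful flatness of the transition maps, as in Bhatt–Scholze's treatment of $\Vect^{\an}$ and the Guo–Reinecke argument.

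\textbf{Step 2: the functor and full faithfulness.} Given $(\fM,\vp_\fM,\varepsilon)\in\DD_{\fS_R}$, restrict $\fM$ to $U$. The hypotheses that $\fM[1/p]$ and $\fM[1/E]$ are projective give a vector bundle on $U$. Base change of $\varepsilon$ gives the descent datum, using $p_1(E)\fS_R(1)=p_2(E)\fS_R(1)$ so that both projections preserve $U$. Frobenius is compatible. Full faithfulness reduces to showing that $\fM=\Gamma(U,\fM|_U)$. Since $\fS_R$ is a regular ring, $V(p,E)$ has codimension $2$, and $\fM=\fM[1/p]\cap\fM[1/E]$, this identity follows by a Hartogs-type statement: $\Gamma(U,\fM|_U)=\fM[1/p]\cap\fM[1/E]$.

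\textbf{Step 3: essential surjectivity.} Given an analytic object $\mE$ on $U$, set $\fM:=\Gamma(U,\mE)$, that is, $j_*\mE$.
\begin{itemize}
\item[(a)] We show that $\fM$ is finite, torsion free, reflexive, and satisfies $\fM[1/p]=\mE|_{\spec\fS_R[1/p]}$ and $\fM[1/E]=\mE|_{\spec \fS_R[1/E]}$, hence the intersection property. Finiteness uses that $\fS_R$ is noetherian and normal, and that $j_*$ of a vector bundle off codimension $\ge2$ is coherent and reflexive.
\item[(b)] We extend the descent isomorphism $\varepsilon$ from $U(1)$ to $\fM\otimes_{p_i}\fS_R(1)$. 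This is the main obstacle, because $\fS_R(1)$ is not noetherian: it is a $\delta$-envelope. The approach is to prove the base change identity
\[
\Gamma(U(1),\mE|_{U(1)})=\fM\otimes_{\fS_R,p_i}\fS_R(1).
\]
To get it, use $(p,E)$-complete flatness of $p_i\colon\fS_R\to\fS_R(1)$ and a Čech computation of $\Gamma(U,-)$, namely the kernel of $\fM[1/p]\oplus\fM[1/E]\to\fM[1/pE]$, which commutes with flat base change up to completion. Then show that the completion is harmless because $\fM$ is finite. If flatness fails in this form, fall back on the explicit description of $\fS_R(1)$ as a $\delta$-envelope over $B(1)$, imitating DLMS's proof that $p_i$ is flat.
\item[(c)] The cocycle condition and Frobenius compatibility on $\fS_R(2)$ then follow by the same base change statement together with injectivity into the sections over $U(2)$.
\end{itemize}
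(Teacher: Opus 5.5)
Your proposal is correct and follows the standard argument behind \cite[Lemma 3.8]{DLMS2}, which the paper cites without proof. That argument has three parts: \v{C}ech descent along the Breuil--Kisin cover; the identification $\fM=\fM[1/p]\cap\fM[1/E]=\Gamma(U,\fM|_U)$ for full faithfulness; and, for essential surjectivity, coherence of $j_*\mE$ on the regular ring $\fS_R$ together with base change along the coface maps.

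The one step to tighten is (b). Drop the idea of ``flat base change up to completion'': $(p,E)$-complete flatness by itself gives no control over $\fS_R(1)[1/p]$ as an $\fS_R[1/p]$-module. Instead, use that the coface maps $\fS_R\to\fS_R(n)$ are classically faithfully flat. This is \cite[Corollary 2.12]{DLMS2}, as invoked in the proof of Proposition~\ref{prop: GR Thm 5.10}. It also follows from $(p,E)$-complete flatness, because $\fS_R$ is noetherian and $\fS_R(n)$ is $(p,E)$-complete. With classical flatness, $\Gamma(U(n),\mE|_{U(n)})=\fM\otimes_{\fS_R}\fS_R(n)$ is just flat base change for $H^0$ on the quasi-compact separated scheme $U$. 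There is then no completion issue, and no need to redo the $\delta$-envelope computation.
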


For later use, we include the following analogue of \cite[Theorem 5.10]{Guo-Reinecke}, which will allow us to relate analytic prismatic $F$-crystals with relative log BKF-modules in \S \ref{section: Cst comparison}.

\begin{proposition}\label{prop: GR Thm 5.10}
    Let $\fY$ be a semistable $p$-adic (log) formal scheme over $\mO_K$. Then taking pushforward along the inclusion $j_{A} \colon \spec(A)\minus V(p,I) \hookrightarrow \spec(A)$ for all log prisms $(A,I,\mM_A) \in \fY_\Prism$ induces a fully faithful functor 
    \[
        j_* \colon \Vect^{\an, \vp} (\fY_{\Prism}) \ra D_{\perf}^{\vp}(\fY_{\Prism})
    \]
\end{proposition}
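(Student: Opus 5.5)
We would follow the strategy of \cite[Theorem 5.10]{Guo-Reinecke}, reducing everything to a local statement on the Breuil--Kisin log prism and its self-products via Proposition \ref{prop: DLMS lem 3.8}. The plan has three steps: show that $j_*$ lands in perfect complexes with Frobenius, show that it is compatible with base change so that it defines a functor to the limit $D_{\perf}^{\vp}(\fY_\Prism)$, and finally show full faithfulness.

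\emph{Step 1 (local perfectness).} Since both sides satisfy étale descent (the analytic side by definition as a limit; $D^{\vp}_{\perf}$ likewise), we may assume $\fY=\spf\ul R$ is semistable small affine. Let $\mE\in\Vect^{\an,\vp}(\fY_\Prism)$ and $(A,I,\mM_A)\in\fY_\Prism$. After a flat cover we may assume $(A,I)$ receives a map from $(\fS_R,E)$, since $\fS_R$ covers the final object. By Proposition \ref{prop: DLMS lem 3.8}, $\mE|_{\fS_R}=j^*\fM$ for a Kisin descent datum $\fM$, with $\fM=\fM[1/p]\cap\fM[1/E]$. Then $Rj_{\fS_R,*}(\mE|_U)$ is computed by the Čech complex for the cover $U=D(p)\cup D(E)$:
\[
\fM[1/p]\oplus\fM[1/E]\to\fM[1/pE].
\]
Its $H^0$ is $\fM$, and $H^1$ is the local cohomology $H^2_{(p,E)}(\fM)$. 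The key input is that $\fS_R$ is regular and $\fM$ is reflexive, hence of projective dimension $\le$ (roughly) $\dim-2$. One would then argue as in \emph{loc. cit.} that $Rj_*j^*\fM$ is a perfect complex over $\fS_R$. If $\fM$ itself is perfect, this is immediate, since $\fS_R$ is regular noetherian and $\fM$ is finite. Combined with a finiteness result for local cohomology, or directly with $Rj_*j^*$ of a perfect complex on a regular noetherian scheme, this gives perfectness.

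\emph{Step 2 (base change along arbitrary prisms).} For general $(A,I)$ over $\fS_R$, we must show
\[
Rj_{\fS_R,*}(\mE)\otimes^{\L}_{\fS_R}A\simeq Rj_{A,*}(\mE|_A).
\]
This is the analogue of the base change/flatness argument in \cite{Guo-Reinecke}. We would use that $\fS_R\to A$ is $(p,E)$-completely flat after passing to a flat cover in the log prismatic topology, and that $Rj_*$ commutes with flat base change via Čech complexes. The main obstacle is that maps of log prisms $\fS_R\to A$ need not be flat. Here we would reduce using the self-products $\fS_R(n)$, which are $(p,E)$-completely flat over $\fS_R$ by their explicit description, and then use that every object is covered by one receiving a flat map from some product. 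For non-flat $A$ we would replace the analytic-side evaluation by the descended one, and check that $Rj_{A,*}$ of a vector bundle agrees with the derived base change because both are computed by the same Čech complex $A[1/p]\oplus A[1/I]\to A[1/pI]$ tensored with $\fM$. This uses the Tor-independence of $\fM$ and $A$ along $(p,E)$, which follows from boundedness of the prism together with the reflexive structure of $\fM$. Frobenius structures transfer since $\vp$ preserves $V(p,I)$.

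\emph{Step 3 (full faithfulness).} Let $\mE_1,\mE_2$ be analytic crystals. Maps $j_*\mE_1\to j_*\mE_2$ in $D^{\vp}_{\perf}$ are computed by descent as a limit of $\Hom$s over each prism. Over each prism, adjunction gives
\[
\Hom\big(Rj_*\mE_1,Rj_*\mE_2\big)=\Hom\big(j^*Rj_*\mE_1,\mE_2\big)=\Hom(\mE_1,\mE_2),
\]
since $j^*Rj_*=\mathrm{id}$. Compatibility with Frobenius and with transition maps is formal, so passing to the limit gives full faithfulness.
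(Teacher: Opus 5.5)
Your approach breaks down in Step 1, because you use the derived pushforward $Rj_*$, and $Rj_*$ is not perfect. Your own Čech computation gives $H^1(Rj_*j^*\fM)=H^2_{(p,E)}(\fM)$. This already fails for the trivial crystal $\fM=\fS_R$. Since $p,E$ is a regular sequence, $H^2_{(p,E)}(\fS_R)=\varinjlim_n \fS_R/(p^n,E^n)$ contains the classes of $1/(p^nE^n)$, whose annihilators are $(p^n,E^n)$. So it is not killed by any fixed power of $(p,E)$, and hence it is not finitely generated. Neither regularity of $\fS_R$ nor any finiteness theorem for local cohomology can repair this.

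The functor in the statement is the \emph{underived} pushforward. On the Breuil--Kisin prism it returns $j_*j^*\fM=\fM[1/p]\cap\fM[1/E]=\fM$, the module of the Kisin descent datum from Proposition~\ref{prop: DLMS lem 3.8}. This module is finite over the regular noetherian ring $\fS_R$, hence perfect. You already observed that $H^0=\fM$ is perfect; that is all the perfectness you need, and the $H^1$ term should not be there.

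Step 2 then has to be replaced rather than patched. The underived $j_*$ does not commute with base change along arbitrary maps of prisms, and your Tor-independence claim has no justification. In fact the prism-wise pushforward is not even a crystal. On the crystalline log prism $(\mS_R,(p),\vp^*\N^d)^a\in\fY_\Prism$ one has $V(p,I)=V(p)$, so the trivial crystal is sent to $\mS_R[1/p]$ rather than $\mS_R$. (The derived $Rj_*$ does commute with arbitrary derived base change, since the Čech complex does, but it is not perfect; you cannot have both.)

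The paper avoids arbitrary prisms altogether:
\begin{itemize}
\item By \cite[Corollary 2.12]{DLMS2}, the coface maps $\fS_R\to\fS_R(n)$ of the Čech nerve of the Breuil--Kisin cover are classically faithfully flat.
\item By flat descent, both $\Vect^{\an,\vp}(\fY_\Prism)$ and $D^{\vp}_{\perf}(\fY_\Prism)$ are therefore limits over this nerve.
\item The functor is defined termwise by $F_n(\mE_n)=j_{n,*}\mE_n$. Compatibility with coface maps and perfectness over $\fS_R(n)$ then follow from flat base change for $j_*$.
\end{itemize}

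Your proposal also omits independence of the framing, which is needed to glue over an étale cover of $\fY$. The paper checks this using the coproduct of two Breuil--Kisin prisms \cite[Lemma 2.9]{DLMS2} together with flat base change.

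Step 3 survives once you use the underived adjunction on each $\fS_R(n)$: $\Hom(j_*\mE_1,j_*\mE_2)=\Hom(j^*j_*\mE_1,\mE_2)=\Hom(\mE_1,\mE_2)$. The mapping spaces are discrete because both objects are modules. Frobenius compatibility is a condition after inverting $I$, i.e.\ on $\spec A[1/I]\subseteq\spec A\minus V(p,I)$, so it agrees on both sides.
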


\begin{proof} The proof is similar to that of \cite[Theorem 5.10]{Guo-Reinecke}. Firstly, by \'etale descent, we can assume $\fY= \spf \underline{R}$ is semistable small affine. Recall that, by fixing a framing, the Breuil--Kisin prism $(\fS_R, E(u), \N^d)^a$ gives rise to a covering of the final object of $\fY_{\Prism}$. Consider the $(n+1)$-th self-product $(\fS_R(n),E(u), \N^d)^a$ of $(\fS_R, E(u), \N^d)^a$ in the log prismatic site $\fY_\Prism$. By \cite[Corollary 2.12]{DLMS2}, all coface maps $\fS_R \ra \fS_R(n)$ are classically faithfully flat. By $(p,I)$-completely faithfully flat descent for vector bundles and perfect complexes, we have
    \begin{align*}
        \Vect^{\an,\vp}(\fY_{\Prism}) & \cong \lim\limits_{[n]\in \Delta[2]} \Vect^{\an,\vp}(\fS_R(n)), \\
        D_{\perf}^\vp(\fY_{\Prism}) & \cong \lim\limits_{[n]\in \Delta[2]} D_{\perf}^\vp(\fS_R(n)).
    \end{align*}
    By the same argument as in \cite[Theorem 5.10]{Guo-Reinecke}, we can define compatible functors of $\infty$-categories
    \[
        F_n \colon \Vect^{\an,\vp}(\fS_R(n)) \ra D_{\perf}^\vp(\fS_R(n))
    \]
    for all $n$. More precisely, let $j_n \colon \spec(\fS_R(n))\minus V(p,I) \hookrightarrow \spec(\fS_R(n))$ be the open immersion. Then for $(\mE_n, \vp_{\mE_n})\in \Vect^{\an,\vp}(\fS_R(n))$, the same proof of \cite[Lemma 5.8]{Guo-Reinecke} implies that $j_{n,*}\mE_n$ is a coherent sheaf on $\spec(\fS_R(n))$. We put
    \[
        F_n(\mE_n, \vp_{\mE_n}) := (j_{n,*}\mE_n, \vp_{\mE_n}) \in D_{\perf}^\vp(\fS_R(n)).
    \]
    The proof of fully faithfulness in \emph{loc. cit.} also applies here, so we are left to show the construction is independent of choices of framing. Let $\square$ and $\square^\prime$ be two framings, and let $(\fS_\square, E(u), \mM_{\square})$ and  $(\fS_{\square^\prime}, E(u), \mM_{\square^\prime})$ be the corresponding Breuil--Kisin prisms. Let $(\fS_{\square, \square^\prime},E(u), \mM_{\square, \square^\prime})$ be the coproduct of $(\fS_\square, E(u), \mM_{\square})$ and $(\fS_{\square^\prime}, E(u), \mM_{\square^\prime})$ in $\fY_{\Prism}$, provided by \cite[Lemma 2.9]{DLMS2}. Given two maps of prisms
    \[
        (\fS_{\square}, E(u), \mM_{\square}) \ra (B,I,\mM_B) \leftarrow (\fS_{\square^\prime}, E(u), \mM_{\square^\prime}),
    \]
    we obtain a unique map $(\fS_{\square, \square^\prime},E(u), \mM_{\square, \square^\prime}) \ra (B,I,\mM_B)$. For any $(\mE, \vp_{\mE})\in \Vect^{\an, \vp}(\fY_{\Prism})$, let $\mE_{\square}$ and $\mE_{\square^\prime}$ be the restriction of $\mE$ on $\spec(\fS_{\square})\minus V(p,I)$ and $\spec(\fS_{\square^\prime})\minus V(p,I)$, respectively. Let $j_\square$, $j_{\square^\prime}$, and $j_{\square,\square^{\prime}}$ denote the corresponding open immersions. By the crystal property and flat base change, we obtain
\[
        (j_{\square,*}\mE_{\square}) \otimes^\L_{\fS_{\square}}\fS_{\square, \square^\prime} \cong j_{\square,\square^\prime,*}(\mE_{\square}\otimes^\L_{\fS_{\square}}\fS_{\square,\square^\prime})
        \cong j_{\square,\square^\prime,*}(\mE_{\square^\prime}\otimes^\L_{\fS_{\square^\prime}}\fS_{\square,\square^\prime}) \cong (j_{\square^\prime,*}\mE_{\square^\prime}) \otimes^\L_{\fS_{\square^\prime}}\fS_{\square, \square^\prime},
\]
as desired.
\end{proof}

\vspace{0.1in}
\subsection{Semistable local systems as analytic prismatic $F$-crystals} 
\noindent
\vspace{0.1in}

\noindent Let $\fY$ be a semistable $p$-adic log formal scheme over $\mO_K$ and let $Y$ be its adic generic fiber. Let $\Loc_{\Z_p}(Y)$ be the category \'etale $\Z_p$-local systems on $Y$. The following result asserts that \'etale $\Z_p$-local systems can be viewed as prismatic Laurent $F$-crystals.

\begin{theorem}[{\cite[Theorem 6]{logprism},\cite[Theorem 3.14]{DLMS2}}]\label{KY thm6} 
There is a natural equivalence of categories
    \[
        \Vect\big(\fY_{\Prism},\mO_{\Prism}[\mI_{\Prism}^{-1}]_p^{\wedge}\big)^{\vp=1} \cong \Loc_{\Z_p}(Y)
    \]
where $\Vect\big(\fY_{\Prism},\mO_{\Prism}[\mI_{\Prism}^{-1}]_p^{\wedge}\big)^{\vp=1}$ denotes the category of prismatic Laurent $F$-crystals (see \cite[Definition 7.34]{logprism}).
\end{theorem}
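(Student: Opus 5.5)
The statement is Theorem \ref{KY thm6}, cited from \cite{logprism} and \cite{DLMS2}. The plan is to reduce to the classical perfectoid statement \'etale locally, using descent. The strategy is to build a functor $T$ from prismatic Laurent $F$-crystals to $\Z_p$-local systems via perfect log prisms, show it is an equivalence on a cover, and descend.

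The first step is the reduction to perfectoids. Log affinoid perfectoid objects $U=\spa(A,A^+)$ in $Y_{\proket}$ give perfect log prisms $(\Ainf(A^+),\ker\theta)$ in $\fY_\Prism$. For a perfect prism $(A_{\inf},I)$ we have $A_{\inf}[I^{-1}]^\wedge_p=W(A^{\flat})$. By \cite[Proposition 3.6, Example 3.4]{BS_crystal} and Kedlaya--Liu, exactly as in the proof of Proposition \ref{Frobenius invariant}(1), this gives
\[
\Vect(W(A^\flat))^{\vp=1}\simeq \Loc_{\Z_p}(\spa(A,A^+)).
\]
Concretely, $\mM\mapsto \varprojlim_n(\mM/p^n)^{\vp=1}$, and the inverse is $L\mapsto L\otimes W(A^\flat)$. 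Evaluating a Laurent $F$-crystal on these perfect log prisms therefore gives compatible local systems on a basis of $Y_{\proket}$. Since $Y$ is smooth over $K$, the log structure on $Y$ is trivial, so $Y_{\proket}=Y_{\proet}$ and the local systems glue to an \'etale $\Z_p$-local system $T(\mE)$.

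The second step is to show that perfect log prisms suffice. I need that the restriction from $\fY_\Prism$ to its perfect objects is an equivalence on Laurent $F$-crystals. After inverting $I$ and $p$-completing, the ring $A[I^{-1}]^\wedge_p$ is $p$-adically complete with Frobenius. For $\vp$-modules over it, passing to the perfection $A_{\perf}$ is an equivalence mod $p$ (Frobenius-étale modules are insensitive to perfection), and the equivalence lifts by dévissage on $p^n$. It then remains to check that the perfections of log prisms in $\fY_\Prism$ are covered by perfectoid points of $Y$. Locally this uses the Breuil--Kisin log prism $\fS_R$ of \cite[Lemma 2.8]{DLMS2}: its perfection maps to $\Ainf(R_\infty)$ with $R_\infty$ a $\Gamma$-Galois perfectoid cover, and log issues disappear after inverting $I$.

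The third step is fully faithfulness and essential surjectivity, which I would do via Čech descent. On the semistable small affine $\spf R$, take the cover $\fS_R$ and its self-products $\fS_R(n)$, which are faithfully flat by \cite[Corollary 2.12]{DLMS2}. Laurent $F$-crystals are then descent data over $\fS_R(\bullet)[E^{-1}]^\wedge_p$. Local systems on $Y_R$ are descent data over $W(R_\infty^\flat)$ along the pro-étale cover $R_\infty^{\otimes(n+1)}$. Matching the two cosimplicial objects after perfection gives the equivalence, and gluing over an étale cover of $\fY$ finishes.

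The main obstacle is the second step. I expect the hardest point to be showing that log prisms, after inverting $I$, are covered flat-locally by perfect ones compatibly with the log structures, and that $\vp$-modules are invariant under perfection with the $p$-completed localization. For this I would first try the mod $p$ argument: étale $\vp$-modules over $B$ and over $B_{\perf}$ agree for an $\mathbb{F}_p$-algebra $B$, then lift by $p$-adic dévissage. I would import the log-to-nonlog comparison of \cite{logprism} for the covering statement.
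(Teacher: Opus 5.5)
The paper does not prove this statement itself; it cites \cite{logprism} and \cite{DLMS2}. The structure it relies on, visible in the proof of Proposition \ref{compatibility of etale realization}, is that of your Steps 1--2. Laurent $F$-crystals are computed on perfect log prisms $\Ainf(S)$ for perfectoid $S$ over $\fY$, where $\Vect(W(S^\flat))^{\vp=1}\simeq\Loc_{\Z_p}(\spa(S[\frac1p],S^+))$ via $\mE\mapsto\mE^{\vp=1}$. However, the step you flag as remaining is misidentified, and neither tool you propose closes it.

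A perfect log prism in $\fY_\Prism$ comes from an \emph{arbitrary} perfectoid $S$ with $\spf S\to\fY$. So $\spa(S[\frac1p],S^+)$ is an arbitrary perfectoid space over $Y$, not an object of $Y_{\proet}$, whereas your $T(\mE)$ only remembers values on a pro-\'etale basis. For full faithfulness you must show that $\mE(\Ainf(S))^{\vp=1}$ is the pullback of $T(\mE)$ for every such $S$, and this needs descent for $\Z_p$-local systems beyond $Y_{\proet}$. One way: restrict to $\spa(S[\frac1p],S^+)\times_Y U$ with $U$ in your basis (this is pro-\'etale over $\spa(S[\frac1p],S^+)$ and maps to $U$), use the crystal property along both maps, and conclude by pro-\'etale (or $v$-)descent for $\Z_p$-local systems on affinoid perfectoids. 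The Breuil--Kisin prism plays no role here: a map $(\fS_R)_{\perf}\to\Ainf(R_\infty)$ says nothing about a general $S$.

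Step 3 does not fill the gap either. $(\fS_R(n))_{\perf}$ is not $\Ainf$ of the \v{C}ech nerve of $R_\infty$; these are \v{C}ech nerves of different objects. So ``matching after perfection'' only produces comparison maps, and identifying the two descent categories would again require knowing that the perfectoid side computes Laurent $F$-crystals. A \v{C}ech-style proof does work if you descend along the perfectoid object itself. First show that $\Ainf(R_\infty)$, with a suitable strict log structure, covers the final object of $\fY_\Prism$; this is the absolute analogue of Proposition \ref{prop: weakly final obj: free chart}, via \cite[Proposition 7.11, Lemma 4.8]{BS}. The perfections of its \v{C}ech nerve are then, up to perfectoidization, $\Ainf$ of the \v{C}ech nerve of the pro-\'etale cover $\spa(R_\infty[\frac1p],R_\infty)\to Y_R$, and the perfectoid equivalence applies termwise.

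There are two smaller points in Step 2. First, modulo $p$, $A_{\perf}[I^{-1}]^\wedge_p$ is $\big((A/p)_{\perf}\big)^\wedge_I[I^{-1}]$, not $\big((A/p)[I^{-1}]\big)_{\perf}$. So besides the universal homeomorphism you need that $I$-adic completion does not change the finite \'etale site of the punctured spectrum. This holds because $(A/p)_{\perf}$ is $I$-adically henselian (Elkik, Gabber--Ramero), but it is not the statement ``\'etale $\vp$-modules over $B$ and $B_{\perf}$ agree'' that you invoke. Second, ``log issues disappear after inverting $I$'' concerns only the values of a Laurent crystal. You still have to produce strict log structures on the perfect prisms you use, so that these objects and the maps between them actually lie in $\fY_\Prism$. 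For instance, use $[T_i^\flat]$ after a pro-\'etale cover that adjoins compatible $p$-power roots of the units $T_i$.
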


\begin{definition}
    For any log prism $(A, I, \mM_A)$ in $\fY_\Prism$, there is a natural functor $\Vect^{\an,\vp}(A,I) \to \Vect(A[I^{-1}]_p^{\wedge})^{\vp=1}$ induced by base change, which then induces a functor
     \begin{equation}\label{eq: from analytic crystal to Laurent crystal}
         \Vect^{\an,\vp}(\fY_{\Prism}) \ra \Vect\big(\fY_{\Prism},\mO_{\Prism}[\mI_{\Prism}^{-1}]_p^{\wedge}\big)^{\vp=1}.
     \end{equation}
     Via the equivalence in Theorem \ref{KY thm6}, we define the \emph{\'etale realization} functor to be the composition
     \[
         T_{\ett}\colon \Vect^{\an,\vp}(\fY_{\Prism}) \ra \Vect\big(\fY_{\Prism},\mO_{\Prism}[\mI_{\Prism}^{-1}]_p^{\wedge}\big)^{\vp=1} \xrightarrow{\sim} \Loc_{\Z_p}(Y).
     \]
\end{definition}

\begin{proposition}[{\cite[Proposition 3.20]{DLMS2}}] 
Let $\fY$ be a semistable $p$-adic log formal scheme over $\mO_K$. The functor \eqref{eq: from analytic crystal to Laurent crystal} is fully faithful.
\end{proposition}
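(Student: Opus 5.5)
The statement to prove is that the base-change functor \eqref{eq: from analytic crystal to Laurent crystal}, from analytic prismatic $F$-crystals to Laurent $F$-crystals, is fully faithful. The plan is to reduce to a computation on a single Breuil--Kisin log prism and then adapt the argument of \cite{DLMS2} (itself modelled on Guo--Reinecke) in the log setting.

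\emph{Reduction.} Both source and target are described as limits over $\fY_{\Prism}$ (Definition \ref{defn: analytic prismatic F-crystals} and \cite[Lemma 3.13]{DLMS2}). Both also satisfy étale descent on $\fY$. So we may assume $\fY=\spf\underline{R}$ is semistable small affine.

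By \cite[Lemma 2.8]{DLMS2}, the Breuil--Kisin log prism $(\fS_R,E(u),\mM_{\fS})$ covers the final object. Its self-products $\fS_R(n)$ have classically faithfully flat coface maps by \cite[Corollary 2.12]{DLMS2}. Hence both categories are computed by the cosimplicial diagrams
\[
\Vect^{\an,\vp}(\fS_R(\bullet),E)
\quad\text{and}\quad
\Vect(\fS_R(\bullet)[E^{-1}]^{\wedge}_p)^{\vp=1}.
\]
Full faithfulness of a limit of functors then follows from full faithfulness of the functor at level $0$, together with faithfulness at level $1$.

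\emph{Main step.} We must show that
\[
\Vect^{\an,\vp}(\fS_R,E)\to \Vect(\fS_R[E^{-1}]^{\wedge}_p)^{\vp=1}
\]
is fully faithful, and similarly (at least faithful) for $\fS_R(1)$. Passing to internal Homs, which exist in both categories and are compatible with the functor, it suffices to treat Hom out of the unit. The claim to prove is then: for $\mE\in\Vect^{\an,\vp}$ with Frobenius $\vp_{\mE}$,
\[
H^0(U,\mE)^{\vp=1}\xrightarrow{\sim}(\mE\otimes\fS_R[E^{-1}]^{\wedge}_p)^{\vp=1}.
\]
Injectivity is clear. The restriction map $\mO(U)\to\fS_R[E^{-1}]^{\wedge}_p$ is injective: $\fS_R$ is a domain (after a connectedness reduction), and vector bundles on $U$ embed into their completions along $p$.

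For surjectivity, take a $\vp$-invariant section $s$ over $\fS_R[E^{-1}]^{\wedge}_p$. First, by Proposition \ref{prop: DLMS lem 3.8}, extend $\mE$ to a Kisin module $\fM=j_*\mE$, which is finite, reflexive-like, and satisfies $\fM=\fM[1/p]\cap\fM[1/E]$. Then:
\begin{itemize}
\item Iterate $s=\vp_{\fM}(\vp^*s)$ to show that $s$ lies in $E^{-N}\fM\otimes\fS_R[E^{-1}]^{\wedge}_p$ with bounded denominators, as in Fontaine's classical argument for Kisin modules. This shows $s\in\fM[1/E]$ viewed inside $\fM\otimes\fS_R[1/E]^{\wedge}_p$.
\item Apply the same argument inverting $p$ rather than $E$. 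Here one uses that $\vp$ contracts $u$-adically, so that $\vp$-invariance forces convergence in $\fS_R[1/p]$. This gives $s\in\fM[1/p]$.
\end{itemize}
Intersecting, $s\in\fM$, and hence $s\in H^0(U,\mE)$. Since the statement is recorded as \cite[Proposition 3.20]{DLMS2}, which handles exactly this semistable setting, at this step I would verify that their argument is unaffected by the log structure: the log structure only enters through the choice of prisms, not through the module theory over $\fS_R$.

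\emph{Main obstacle.} The hardest step is the bounded-denominator and convergence estimate over the non-field base $\fS_R$ of relative dimension $d$, where Fontaine's one-variable argument does not directly apply. I would first try to reduce, via the faithfully flat map to $\fS_R\to\Ainf(R_\infty)$ and then to a product of $\Ainf(V)$ over perfectoid valuation points, to the Breuil--Kisin--Fargues case. There, fully faithfulness is the classical theorem (Kedlaya; \cite{BS_crystal}), and it descends because the Hom-sets are separated sheaves for this flat cover.
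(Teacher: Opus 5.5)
The reductions are fine: étale localization, Čech descent along the Breuil--Kisin cover $\fS_R(\bullet)$, full faithfulness at level $0$ plus faithfulness at level $1$, passage to internal Homs, and injectivity. The paper itself gives no argument here; it simply imports \cite[Proposition 3.20]{DLMS2}.

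The gap is the surjectivity $H^0(U,\mE)^{\vp=1}\to(\mE\otimes\fS_R[E^{-1}]^\wedge_p)^{\vp=1}$. This is the entire content of the proposition, and neither of your two steps establishes it.

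In your first step, $E$ is already a unit in $\fS_R[E^{-1}]^\wedge_p$, so ``$s\in E^{-N}\fM\otimes\fS_R[E^{-1}]^\wedge_p$'' is vacuous. Moreover, lying in $\fM[1/E]$ is not a bounded-denominator condition, since already $1/E$ has an infinite $u^{-1}$-tail there. Fontaine's pole-order trick only bounds $u$-adic poles modulo $p$ along a discrete valuation. Here $u=T_1\cdots T_m$ is not even prime in $\fS_R/p$ when $m\ge 2$, so the trick yields neither an integral statement nor $s\in\fM[1/E]$.

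In your second step, $\vp$ does not contract on $\fS_R[E^{-1}]^\wedge_p$: it sends $u^{-1}\mapsto u^{-p}$, enlarging poles. Showing that a $\vp$-invariant of $\fM\otimes\fS_R[E^{-1}]^\wedge_p$ lies in $\fM[1/p]$ (or in $\fM$) is the genuine content. Over $W(k)[\![u]\!]$ it is Kisin's full faithfulness theorem, whose proof uses the two-dimensional regularity of $\fS$ and the finite $E$-height of $\vp_{\fM}$ essentially. Over the higher-dimensional $\fS_R$ it needs a further reduction, for instance to the complete discrete valuation rings at the generic points of the components of the special fibre, combined with a purity statement for $\fM$. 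None of this is supplied.

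Your fallback does not repair this, because full faithfulness is false over perfect prisms. On $(\Ainf,\txi)$, take $M=\Ainf$ with Frobenius $\vp$ and $M'=\Ainf$ with Frobenius $\txi\vp$. Since $\vp(\mu)=\txi\mu$ and $\mu$ is a unit in $\Ainf[\txi^{-1}]^\wedge_p=W(C^\flat)$, both become trivial there, with $\vp$-invariants $\Z_p$ and $\Z_p\mu^{-1}$. So the Hom of Laurent $F$-crystals is $\Z_p$. But a map $M\to M'$ is an element $a\in\Ainf$ with $\txi\vp(a)=a$. This forces $a\in\Z_p\mu^{-1}\cap\Ainf=0$, because $\Ainf/\mu$ is $p$-torsion-free. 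More conceptually, by Fargues' classification, BKF modules carry the extra datum of a $B_{\mathrm{dR}}^+$-lattice that the Laurent realization forgets.

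Hence fullness cannot be checked after base change along $\fS_R\to\Ainf(R_\infty)\to\prod\Ainf(V)$ and then descended; separatedness of Hom-sheaves only gives faithfulness. Full faithfulness is a phenomenon of the Breuil--Kisin prism, where $\vp$ is not bijective, so the argument has to be carried out there.
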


The following theorem is one of the main results of \cite{DLMS2}, which classifies semistable local systems in terms of analytic prismatic $F$-crystals.

\begin{theorem}[{\cite[Corollaty 5.2]{DLMS2}}]\label{thm: prismatic description of semistable local systems}
An \'etale $\Z_p$-local system on $Y$ is semistable if and only if it lies in the essential image of $T_{\ett}$. In particular, $T_{\ett}$ induces an equivalence of categories
    \[
    T_{\ett}\colon \Vect^{\an,\vp}(\fY_{\Prism})\cong \Loc^{\mathrm{st}}_{\Z_p}(Y)
    \]
where $\Loc^{\mathrm{st}}_{\Z_p}(Y)$ denote the category of semistable local systems on $Y$.
\end{theorem}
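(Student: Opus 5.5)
The statement (Theorem~\ref{thm: prismatic description of semistable local systems}) is cited from \cite[Corollary 5.2]{DLMS2}. Rather than reprove it, I would indicate how it follows from the ingredients already recalled. The plan has two directions. First, show that the \'etale realization of an analytic prismatic $F$-crystal is semistable. Second, show that every semistable local system arises this way. Full faithfulness of $T_{\ett}$ is then automatic. Indeed, $T_{\ett}$ is the composition of \eqref{eq: from analytic crystal to Laurent crystal}, which is fully faithful by \cite[Proposition 3.20]{DLMS2}, with the equivalence of Theorem~\ref{KY thm6}. So the content of the theorem is the identification of the essential image with $\Loc^{\st}_{\Z_p}(Y)$.

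For the first direction, I would work \'etale locally on a semistable small affine $\spf\ul R$. By Proposition~\ref{prop: DLMS lem 3.8}, an analytic $F$-crystal is a Kisin descent datum $(\fM,\vp_{\fM},\varepsilon)$ over $\fS_R$. Base changing $\fM$ along the Frobenius-twisted map $\fS_R\to \Acrys(R)$, together with the log-PD envelope of $\fS_R\to R$, produces an $F$-crystal on $\fY_{k,\crys}$. The descent datum $\varepsilon$ supplies the stratification, and evaluating at $u\mapsto 0$ gives the value on the special fiber. After inverting $p$ this yields an $F$-isocrystal $\mE$. One then compares $\fM\otimes\B_{\crys}$ with $T_{\ett}(\fM)\otimes\B_{\crys}$ via the Laurent crystal. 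Here the key input is that $\fM[E^{-1}]$ and the $\vp$-invariants match after inverting $\mu$. This gives the isomorphism $\alpha_{\L,\mE}$ of Definition~\ref{defn: association}, first locally and then globally, since everything is functorial and glues by the crystal property.

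For the converse, start from $\L$ associated with $\mE$. I would pass to a prismatic lattice: the associated $\fS_R$-module is obtained by a Kisin-type construction, taking $\fM:=\fM[p^{-1}]\cap \fM[E^{-1}]$ where the two pieces come respectively from $\mE$ (descended along $\vp$ and $u$) and from the Laurent crystal of $\L$. Full faithfulness then shows that these local lattices glue.

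The main obstacle is this converse. One must show that the lattice is finitely generated, that it satisfies the reflexivity conditions of $\DD_{\fS_R}$, and that $\varepsilon$ is defined integrally rather than merely after inverting $p$. I expect to need the purity/extension argument over the two-dimensional regular rings $\fS_R$ localized at the height-one primes, reducing to the classical case over $\mO_K$ (Kisin's theorem) at generic points of the special fiber.
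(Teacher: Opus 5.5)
The paper does not prove this statement; it imports it from \cite[Corollary 5.2]{DLMS2}. Your full-faithfulness argument (composing \cite[Proposition 3.20]{DLMS2} with the equivalence of Theorem~\ref{KY thm6}) is correct. The forward direction needs no reconstruction: it is \cite[Proposition 3.42]{DLMS2}, recalled later in the paper, which says that $T_{\ett}(\mE)$ is associated with $D_{\crys}(\mE)$ from Proposition~\ref{prop: crystalline realization}. The converse, however, is the real content of the cited corollary. It does not follow from anything the paper recalls, and your sketch breaks at its key step.

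\emph{The $p$-inverted piece is not determined by the isocrystal.} The $F$-isocrystal $\mE$ does not determine $\fM[p^{-1}]$, so there is nothing to ``descend along $\vp$ and $u$''. The crystalline realization only sees $\fM\otimes_{\fS_R,\vp}\mS_R$, where $\vp(E)$ is $p$ times a unit. So the position of $\fM$ relative to $\vp^*\fM$ along $E=0$, i.e.\ the Hodge filtration, has been forgotten. Already over $\mO_K$, a single $(\vp,N)$-module carries many non-isomorphic admissible filtrations, and the corresponding Kisin modules have non-isomorphic $\fM[p^{-1}]$. In Kisin's theory, $\fM[p^{-1}]$ and $D\otimes_{K_0}\fS[p^{-1}]$ are identified only over the ring of rigid functions on the open unit disc after inverting $\lambda=\prod_{n\ge 0}\vp^n\bigl(E(u)/E(0)\bigr)$. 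The modification at the zeros of $\lambda$ is dictated by the filtration. Hence whatever you build from $\mE$ alone cannot be the $\fM[p^{-1}]$ of the sought Kisin descent datum, and the intersection $\fM[p^{-1}]\cap\fM[E^{-1}]$ you propose is not well-posed. To repair this you must bring in the filtration, which is available only through $\L$ (e.g.\ via $D_{\dR}(\L)$ in Proposition~\ref{prop: F-isocrystal structure}). You must then carry out Kisin's modification over a relative open disc and descend to $\fS_R[p^{-1}]$ by a slope-zero argument. Neither of these last two steps is available off the shelf in the relative setting.

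\emph{The obstacles you list are the formal part.} By Proposition~\ref{prop: DLMS lem 3.8}, suppose you have $\vp$-equivariant vector bundles on $\spec\fS_R[p^{-1}]$ and $\spec\fS_R[E^{-1}]$ that agree on the overlap and carry descent data there. Then finite generation, torsion-freeness and $\fM=\fM[p^{-1}]\cap\fM[E^{-1}]$ follow by pushing forward along $j$ on the noetherian regular scheme $\spec\fS_R$; this is the same argument the paper uses in Proposition~\ref{prop: GR Thm 5.10}. Since the coface maps are flat, $\varepsilon$ only has to be produced on the analytic locus of $\spec\fS_R(1)$. What is genuinely hard is twofold:
\begin{enumerate}
\item constructing the $p$-inverted piece, with its Frobenius and descent datum, compatibly with the \'etale side;
\item showing that this piece is projective over $\fS_R[p^{-1}]$. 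A lattice cut out by conditions at codimension-one or generic points is a priori only reflexive, which is strictly weaker than projective in dimension at least three.
\end{enumerate}

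\emph{The generic-point reduction is misdescribed.} The two-dimensional regular local rings that appear are localizations (or completions) of $\fS_R$ at the height-two primes over the generic points of $\fY_k$. Over the standard chart these are primes such as $(p,T_i)$, not height-one primes. Their residue fields are function fields of the components of $\fY_k$, hence imperfect. The input needed there is Kisin-type theory for discretely valued fields with imperfect residue field (Brinon--Trihan, Gao), not Kisin's theorem over $\mO_K$.
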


Let $\ul{\mO}= (\mO_C, N_\infty)$ be a divisible perfectiod split log point extending $\ul{\mO_K}= (\mO_K, \N)$. Let $\fY_{\mO}$ be the base change of $\fY$ along $\ul{\mO_K} \ra \ul{\mO}$ with adic generic fiber $Y_C$. Then we have equivalences of categories (cf. Theorem \ref{thm: MT thm 5.15}) 
    \begin{equation}\label{equiv of BKF mod and prismatic crystal after base change}
    \Vect^{\vp}(\fY^{(1)}_{\mO, \Prism})\cong  \Vect^{\vp}\big(\big(\fY^{(1)}_{\mO}/(\ul\Ainf, \txi)\big)_{\Prism}\big) \cong   \BKF^{\log}(\fY_{\mO},\vp).
    \end{equation}
Composing \eqref{equiv of BKF mod and prismatic crystal after base change} with the base change functor $\Vect^{\vp}(\fY_{\Prism}) \ra \Vect^{\vp}(\fY^{(1)}_{\mO, \Prism})$, we obtain a natural functor 
\begin{equation}\label{eq: prismatic over K to BKF}
        \Vect^{\vp}(\fY_{\Prism}) \ra \BKF^{\log}(\fY_{\mO},\vp)
\end{equation}
from the category of prismatic $F$-crystals on $\fY$ to that of relative log BKF modules over $\fY_{\mO}$. \'Etale locally, when $\fY= \spf R$ is semistable small affine, let $R_{\mO}= R\otimes_{\mO_K}\mO$. Then $\spf R_{\mO}$ is also small affine in the sense of Definition \ref{definition: small affine}. There is a natural morphism 
\begin{equation}\label{eq: fS_R to A(R)}
        (\fS_R, E(u), \N) \ra (A(R_{\mO}), \txi, N_\infty)
\end{equation} 
between log prisms, which sends $u$ to $\vp([\pi^\flat])$. Notice that $(\fS_R, E(u), \N)$  and $(A(R_{\mO}), \txi, N_\infty)$ are weakly final objects of $\fY_{\Prism}$ and $(\fY_{\mO}^{(1)}/(\ul{\Ainf},\txi)))_{\Prism}$, respectively. Hence, by Theorem \ref{thm: fully faithful}, the functor (\ref{eq: prismatic over K to BKF}) is locally induced by (\ref{eq: fS_R to A(R)}). 

Recall the \'etale specialization functor 
\[
    \sigma_{\ett}^*\colon \BKF^{\log}(\fY_{\mO},\vp) \ra \Loc_{\Z_p}(Y_C)
\]
from Definition \ref{defn: etale specialization}. The following proposition shows that $\sigma_{\ett}^*$ is compatible with the \'etale realization functor $T_{\ett}$.

\begin{proposition}\label{compatibility of etale realization}
The functors $\sigma_{\ett}^*$ and $T_{\ett}$ fit into the following commutative diagram
\begin{equation}\label{diagram: etale specialization vs etale realization}
\begin{tikzcd}
    \Vect^{\vp}(\fY_{\Prism}) \arrow[r] \arrow[d, "{\eqref{eq: prismatic over K to BKF}}"] &  \Vect^{\mathrm{an},\vp}(\fY_{\Prism})\arrow[r, "T_{\ett}"] &  \Loc_{\Z_p}(Y) \arrow[d]\\
    \BKF^{\log}(\fY_{\mO},\vp) \arrow[rr, "\sigma^*_{\ett}"] & & \Loc_{\Z_p}(Y_C).
\end{tikzcd} 
\end{equation}
\end{proposition}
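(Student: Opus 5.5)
Both composites in \eqref{diagram: etale specialization vs etale realization} send a prismatic $F$-crystal $(\mathcal{F},\vp_{\mathcal{F}})\in\Vect^{\vp}(\fY_{\Prism})$ to the Frobenius invariants of its base change to a perfect prism. The plan is to compare them on a basis of log affinoid perfectoid objects of $Y_{C,\proket}$ and then glue. Fix $V\in Y_{C,\proket}$ log affinoid perfectoid with $\widehat V=\spa(A,A^+)$. Since the questions are local, we may assume $V$ lies over $\spa(R_{\mO}[\frac1p],R_{\mO})$ for a semistable small affine $\fY=\spf R$.

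\emph{Lower route.} By construction (Theorem \ref{thm: MT thm 5.15} together with \eqref{eq: fS_R to A(R)}), the relative log BKF module $\bM$ attached to $\mathcal{F}$ satisfies $\Gamma(V,\bM)=\mathcal{F}(W(A^{\flat+}),\txi)$, where the log prism $(W(A^{\flat+}),\txi)$ is viewed as an object of $(\fY^{(1)}_{\mO}/(\ul\Ainf,\txi))_\Prism$. Forgetting the $\Ainf$-structure and undoing the Frobenius twist, this object becomes an object of $\fY_\Prism$ through the map $\fY^{(1)}_{\mO,\Prism}\to\fY_\Prism$. By Definition \ref{defn: etale specialization}, $\sigma^*_{\ett}\bM(V)=\big(\mathcal{F}(W(A^{\flat+}))\otimes W(A^\flat)\big)^{\vp=1}$.

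\emph{Upper route.} $T_{\ett}$ first passes to the Laurent $F$-crystal $\mathcal{F}\otimes\mO_\Prism[\mI^{-1}]^\wedge_p$, then applies the equivalence of Theorem \ref{KY thm6}. That equivalence is constructed (in \cite{logprism}, \cite{DLMS2}) precisely by evaluating on perfect log prisms $(W(A^{\flat+}),I)$ attached to log affinoid perfectoids and taking $\vp$-invariants on $W(A^{\flat+})[I^{-1}]^\wedge_p=W(A^\flat)$. The last step is the equivalence $\Vect(W(A^\flat))^{\vp=1}\simeq\Loc_{\Z_p}(\spa(A,A^+))$, which is the same one used in the proof of Proposition \ref{Frobenius invariant}. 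After restriction to $Y_C$, $T_{\ett}(\mathcal{F})(V)$ is therefore $\big(\mathcal{F}(W(A^{\flat+}),I)\otimes W(A^\flat)\big)^{\vp=1}$.

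So the key steps, in order, are as follows. First, identify the two perfect prisms. The Frobenius twist by $\vp$ sends the ideal $\xi$ to $\txi$, and the extra structure map from $\Ainf$ is irrelevant once we pass to $\fY_\Prism$. Hence both routes evaluate $\mathcal{F}$ at the same object of $\fY_\Prism$, up to the canonical Frobenius identification. Second, check that the two Frobenius structures match under this identification, i.e.\ that $\vp_{\bM}$ corresponds to $\vp_{\mathcal{F}}$. This holds because Corollary \ref{cor: equivalence between generalized representations and prismatic $F$-crystals} transports $\vp$ through evaluation. Third, note that the resulting isomorphisms of $\Z_p$-modules are natural in $V$, so they glue to an isomorphism of sheaves on the basis. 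Finally, pass from pro-Kummer-étale to pro-étale sheaves on $Y_C$: the log structure on $Y$ is trivial since it is vertical, so this step is harmless. Naturality in $\mathcal{F}$ is evident from the construction.

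The main obstacle is the first step. One has to verify carefully that the perfect log prism used in \cite{logprism} for the Laurent equivalence is the same as the one appearing in the BKF construction. The concern is whether the comparison involves the Frobenius twist $\fY^{(1)}$ and an extra $\vp$ or $\vp^{-1}$. If a twist does appear, I would resolve it using the fact that $\vp$ induces an isomorphism $\vp^*\mathcal{F}[\frac1I]\cong\mathcal{F}[\frac1I]$, whose $\vp$-invariants coincide with those before the twist. A second point is that the log structures must match. Lemma \ref{Lem: Tian Lemma 1.9} gives the charts needed to see that both prisms carry the log structure $P_\infty$.
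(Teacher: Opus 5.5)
Your proposal is correct and follows essentially the same route as the paper. Both arguments use \cite[Theorem 7.35]{logprism} to reduce to comparing the two functors on a basis of affinoid perfectoids $\spa(S,S^+)$ of $Y_C$. There each composite becomes ``evaluate on $W(S^\flat)$ and take $\vp$-invariants'' through the equivalence $\Vect(W(S^\flat))^{\vp=1}\simeq\Loc_{\Z_p}(\spa(S,S^+))$ from the proof of Proposition~\ref{Frobenius invariant}. The Frobenius twist you anticipate is exactly the arrow $\vp^*\colon \lim\Vect(W(S^\flat))^{\vp=1}\to\lim\Vect(W(S^\flat)^{(1)})^{\vp=1}$ in the paper's diagram, and it is resolved as you suggest: Frobenius-twisting does not change the $\vp$-invariants.
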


\begin{proof}
This follows from a careflul unwinding of definitions. Let $\Perfd_{Y_C}$ denote the full subcategory of $Y_{C,\proet}$ consisting of affinoid perfectoid objects that contain all $p$-power roots of unity. By \cite[Theorem 7.35]{logprism} and its proof, there are equivalences of categories
    \begin{align*} 
        \Vect\big(\fY_{\mO, \Prism},\mO_{\Prism}[\mI_{\Prism}^{-1}]_p^{\wedge}\big)^{\vp=1} & \cong \lim\limits_{(A,I,\mM_A) \in \fY^{\perf}_{\mO,\Prism}} \Vect(A[I^{-1}]_p^{\wedge})^{\vp=1} \\
        & \cong \lim\limits_{{\spa(S,S^+)}\in \Perfd_{Y_C}} \Vect(W(S^\flat))^{\vp=1}.
    \end{align*} 
On the side of local systems, we have
    \[
        \Loc_{\Z_p}(Y_C) \cong \lim\limits_{\spa(S,S^+) \in \Perfd_{Y_C}} \Loc_{\Z_p}(\spa(S,S^+)).
    \]
Then the equivalence of categories (cf. Theorem \ref{KY thm6})
\[
\Vect\big(\fY_{\mO, \Prism},\mO_{\Prism}[\mI_{\Prism}^{-1}]_p^{\wedge}\big)^{\vp=1}\cong \Loc_{\Z_p}(Y_C)
\]
is induced by
    \begin{align*}
             \Vect(W(S^\flat))^{\vp=1} &\simra\Loc_{\Z_p}(\spa(S,S^+)))\\
             (\mE, \vp_{\mE}) &\longmapsto \mE^{\vp_{\mE}=1}. 
    \end{align*} 
as in the proof of Proposition \ref{Frobenius invariant}. By construction (cf. Definition \ref{defn: etale specialization}), the \'etale specialization functor
    \[
        \sigma_{\ett}^*\colon \BKF^{\log}(\fY_{\mO}, \vp) \ra \Loc_{\Z_p}(Y_C)
    \]
factors through the category of vector bundles of $\widehat{\A_{\inf,Y_C}[\frac 1\mu]}$-modules with Frobenius structures, which is denoted by $\Vect(W(\widehat\mO_{Y_C^\flat}))^{\vp=1}$. We have
    \[
        \Vect(W(\widehat\mO_{Y_C^\flat}))^{\vp=1} \cong \lim\limits_{\spa(S,S^+) \in \Perfd_{Y_C}} \Vect(W(S^\flat))^{\vp=1}.
    \]
Consequently, the desired commutativity follows from the following commutative diagram
    \small{\[
    \begin{tikzcd}
        \BKF^{\log}(\fY_{\mO},\vp) \arrow[r, "\cong"] \arrow[d] & \Vect^{\vp}(\fY^{(1)}_{\mO,\Prism}) \arrow[d]& \Vect^{\vp}(\fY_{\Prism}) \arrow[d] \arrow[l]\\
         \Vect\big(W(\widehat\mO_{Y_C^\flat})\big)^{\vp=1} \arrow[d, "\cong"] & \Vect\big(\fY^{(1)}_{\mO,\Prism},\mO_{\Prism}[\mI_{\Prism}^{-1}]_p^{\wedge}\big)^{\vp=1} \arrow[d, "\cong"] & 
         \Vect\big(\fY_{\Prism},\mO_{\Prism}[\mI_{\Prism}^{-1}]_p^{\wedge}\big)^{\vp=1} \arrow[l] \arrow[d]\\
         \lim\limits_{\Perfd_{Y_C}} \Vect(W(S^\flat))^{\vp=1}  \arrow[r, "\vp^*"] \arrow[rd] & \lim\limits_{\Perfd_{Y_C}} \Vect(W(S^\flat)^{(1)})^{\vp=1} \arrow[d, "\cong"] & \Loc_{\Z_p}(Y) \arrow[ld]\\
         &  \Loc_{\Z_p}(Y_C).
    \end{tikzcd}
    \]}
\end{proof}

\vspace{0.1in}
\subsection{Crystalline realization}
\noindent
\vspace{0.1in}

\noindent Let $\ul\mS= (\mS,\N)$ denote the $p$-adic completion of the log PD-envelope of $\ul\fS= (W(k)[\![u]\!], \N)$ with respect to $(E(u))$. Let $\fY= \spf R$ be a semistable small affine $p$-adic log formal scheme over $\mO_K$. Consider $\ul{\fS_R}$ associated with a framing $\square$. Define $\ul{\mS_R}$ as the $p$-adic completion of the log PD-envelope of $\ul{\fS_R}$ with respect to the kernel of the exact surjection $\fS_R \ra R/p$. It is equipped with a pre-log structure $\N^d$ sending $e_i \mapsto T_i$. Let $\fY_{\mO_K/p}$ be the base change of $\fY$ along $\mO_K \ra \mO_K/p$.

\begin{proposition}[{\cite[Lemma 3.28, Example B.22]{DLMS2}}]
The triple $(R/p, \mS_R, \N^d)^a$ is a weakly final ind-object in $\fY_{\mO_K/p,\crys}$. Consequently, there is an equivalence between $\Vect(\fY_{\mO_K/p,\crys})$ and the category of finite projective $\fS_R$-modules together with HPD-stratifications (cf. \cite[Definition B.23]{DLMS2}). 
\end{proposition}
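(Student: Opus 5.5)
The plan is to reduce to the semistable small affine case and then analyze the Breuil--Kisin log prism together with its log PD envelope. By \'etale descent for crystals on $\fY_{\mO_K/p,\crys}$ it suffices to treat $\fY=\spf R$ semistable small affine with a framing $\square$. The object in question is $(R/p\leftarrow \mS_R, \N^d)^a$, which is really an ind-object: $\mS_R$ is $p$-adically complete, so it is the limit of the $(\mS_R/p^n)$, each a log PD thickening of $R/p$. Here $\mS_R$ is the $p$-completed log PD envelope of $\fS_R\to R/p$, and the kernel of this map contains $p$ and $E(u)$.

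\textbf{Weak finality.} Given a log PD thickening $(U\hookrightarrow T,\mM_T)$ in $\fY_{\mO_K/p,\crys}$, with $T$ affine, $p$-nilpotent and fine log, I will lift the structure map $U\to \spf R/p$ to a map $T\to \spf\fS_R$ \'etale locally on $T$.

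First I lift the chart. Since $U\hookrightarrow T$ is an exact closed immersion with nilpotent PD ideal, the images of the $T_i$ in $\Gamma(U,\mM_U)$ lift to sections $\tilde T_i$ of $\mM_T$. I can arrange the relation $\prod_{i\le m}\tilde T_i=\tilde u$ by adjusting one lift by a unit in $1+J$. This is the same combinatorial adjustment as in Lemma \ref{Lem: Tian Lemma 1.9}, but easier here: the chart $\N\to\N^d$ sends $1\mapsto e_1+\cdots+e_m$, so the single index $a_1=1$ suffices.

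With these lifts I get a map of log rings $W(k)[u][T_1,\dots,T_m,T_{m+1}^{\pm1},\dots]/(\prod T_i-u)\to\Gamma(T,\mO_T)$, sending $u\mapsto\tilde u$, where $\tilde u$ is nilpotent modulo $p$ because $\pi$ is nilpotent in $R/p$. This map completes to $\fS_{R^0}$. Formal \'etaleness of $\fS_{R^0}\to\fS_R$ then lets me lift uniquely through the nilpotent thickening. The divided power structure on $J$, together with the universal property of the log PD envelope, factors the resulting map through $\mS_R/p^n$ for $n$ large.

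Because any two such lifts differ by an element of the PD-ideal, the map is unique up to the HPD data. This gives weak finality, and the statement that the object covers the final object follows because the lift exists \'etale locally.

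\textbf{The equivalence with stratified modules.} For the second assertion I would apply the standard formalism (cf.\ Kato's classification, or \cite[Example B.22]{DLMS2}). The self-products of the weakly final object are $\mS_R(n)$, the log PD envelopes of $\fS_R^{\htimes(n+1)}\to R/p$. These should be computed after exactification, as in Lemma \ref{lemma: existence of log q-PD-envelope}, so that each $\mS_R(n)$ is a completed PD polynomial algebra over $\mS_R$ in the variables $1-T_{i,2}/T_{i,1}$.

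A crystal $\mE$ is then equivalent to the module $\mE(\mS_R)$ together with descent data over $\mS_R(1)$ satisfying the cocycle condition on $\mS_R(2)$; this descent datum is exactly an HPD-stratification. Finite projectivity passes between the two sides because the maps involved are (completely) faithfully flat.

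\textbf{Expected main obstacle.} The hardest step should be the log part of the lift, namely producing compatible lifts of the chart $\N^d\to\mM_T$ that satisfy the relation $\prod\tilde T_i=\tilde u$, and then checking exactness in the computation of the self-products. I would handle both with the torsor argument from the proof of Lemma \ref{Lem: Tian Lemma 1.9}, using that $\mM_T\to\mM_U$ is a $(1+J)$-torsor.
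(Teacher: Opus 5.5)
Your proposal is correct and follows the standard route behind the cited results of \cite{DLMS2}, which the paper itself only quotes without proof. Weak finality comes from lifting the chart $\N^d\to\mM_T$ and the coordinates along the nil PD thickening, using \'etaleness of $\fS_{R^0}\to\fS_R$ and the universal property of the $p$-completed log PD envelope; the Kato-style dictionary then identifies crystals with HPD-stratified $\mS_R$-modules, with $\mS_R(n)$ the completed PD polynomial algebras in the $1-T_{i,2}/T_{i,1}$. The only superfluous step is the $1+J$ adjustment borrowed from Lemma \ref{Lem: Tian Lemma 1.9}: in the absolute site no lift of $u$ is prescribed, so you may simply set $\tilde u:=\prod_{i\le m}\tilde T_i$, and the adjustment would matter only for the relative site over $(\mS,\N)$.
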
 

\begin{remark}\label{absolute crys vs relative crys}
\begin{enumerate}
    \item By \cite[Remark B.20]{DLMS2}, after passing to the isogeny categories, the natural projection $\mO_K/p \ra k$ induces an equivalence between the category of ($F$-)isocrystals on $\fY_{\mO_K/p,\crys}$ and that on $\fY_{k,\crys}$.
    \item Note that $\ul{\mS_R}$ admits a natural map from $\ul\mS$, and hence the category of ($F$-)crystals on the absolute log crystalline site $\fY_{\mO_K/p,\crys}$ is equivalent to that of the relative log crystalline site $(\fY_{\mO_K/p}/\ul\mS)_{\crys}$.
\end{enumerate}
\end{remark}
For each $n\in \N$, let $(R/p, \mS_R(n),\N^d)^a$ be the $(n+1)$-st self-product of $(R/p, \mS_R, \N^d)^a$ in $\fY_{\mO_K/p,\crys}$. By \cite[Lemma 2.39]{BS}, we have an isomorphism
\[
    \mS_R \cong \fS_R\left\{\frac{\vp(E(u))}{p}\right\}_{\delta},
\]
where $\{-\}_\delta$ denotes the process of adjoining elements as $\delta$-ring.
Following the convention in \cite{DLMS2}, we consider the log prism $(\fS_R, \vp(E(u)), \vp^*\N^d)^a \in \fY_\Prism$ where the unfortunate notation ``$\vp^*\N^d$'' stands for the pre-log structure given by the composition $\N^d\rightarrow \fS_R\xrightarrow[]{\varphi} \fS_R$ which sends $e_i \mapsto T_i^p$. The Frobenius $\vp$ on $\fS_R$ induces a morphism 
\[
    \vp \colon (\fS_R, E(u), \N^d)^a \ra (\fS_R, \vp(E(u)), \vp^*\N^d)^a
\]
of log prisms in $\fY_\Prism$. Since $\vp(E(u)) \in (p)$ in $\mS_R$, the natural inclusion $\fS_R \ra \mS_R$ induces a morphism  $(\fS_R, \vp(E(u))) \ra (\mS_R, (p))$ between prisms,\footnote{Note that $\vp(E(u))/p$ is a unit in $\mS_R$ by \cite[Lemma 2.24]{BS}).} which further extends to a map of log prisms
\begin{equation}\label{eq: fS vs mS}
\vp \colon (\fS_R, E(u), \N^d)^a \xrightarrow[]{\varphi} (\fS_R, \vp(E(u)), \vp^*\N^d)^a\ra (\mS_R, (p), \vp^*\N^d)^a
\end{equation}
where ``$\vp^*\N^d$'' in the last term stands for the pre-log structure given by the composition $\N^d\rightarrow \mS_R\xrightarrow[]{\varphi} \mS_R$.
Moreover, for each $n\ge 1$, we still use $\varphi^*\N^d$ to denote the pullback pre-log structure of $\varphi^*\N^d$ along $\mS_R\rightarrow \mS_R(n)$. Then \eqref{eq: fS vs mS} induces a map
\[
    \vp \colon (\fS_R(n), E(u), \N^d)^a \ra (\mS_R(n), (p), \vp^*\N^d)^a
\]
of log prisms.

Let $(\mE, \vp_{\mE})$ be any analytic prismatic $F$-crystal on $\fY_{\Prism}$ and let $(\fM, \vp_{\fM}, \varepsilon) \in \DD_{\fS_R}$ be the Kisin descent datum associated with $(\mE, \vp_{\mE})$ as in Proposition \ref{prop: DLMS lem 3.8}. For each $n\ge 1$, put
\[
    \mM_n = \fM/p^n\fM \otimes_{\fS_R,\vp} \mS_R.
\]
Via the map $\vp \colon \fS_R(n)\ra \mS_R(n)$, $\varepsilon$ induces an isomorphism
\[
    \eta_n \colon \mM_n \otimes_{\mS_R, p_1} \mS_R(1) \xrightarrow[]{\sim} \mM_n \otimes_{\mS_R, p_2} \mS_R(1),
\]
which satisfies the cocycle condition over $\mS_R(2)$. Therefore, 
\[
    (\mM, \eta ) := \varprojlim_n (\mM_n, \eta_n)
\]
defines a crystal in $\CR(\fY_{\mO_K/p,\crys})$. Moreover, the Frobenius structure $\vp_{\fM}$ induces an isomorphism $\vp_{\mM} \colon \vp^*\mM[p^{-1}] \xrightarrow[]{\sim} \mM[p^{-1}]$, and hence $(\mM[p^{-1}], \varphi_{\mM})$ defines an $F$-isocrystal on $\fY_{\mO_K/p,\crys}$. By \cite[Lemma 3.30]{DLMS2}, this construction is independent of the choice of framing $\square$. 

The construction above can be globalized as follows.

\begin{proposition}[{\cite[Corollary 3.31]{DLMS2}}]\label{prop: crystalline realization}
    Let $\fY$ be a semistable $p$-adic formal scheme over $\mO_K$. For any $(\mE, \vp_{\mE}) \in \Vect^{\an, \vp}(\fY_\Prism)$, there exists a unique crystal $\mE_{\crys}$ on $\fY_{k,\crys}$ such that for each affine small open $\spf R$ of $\fY$, the restriction of $\mE_{\crys}$ on $(R/p, \fS_R, \mM_\fS)$ coincides with $(\mM, \eta )$ constructed above. Moreover, passing to the isogeny category, we obtain an $F$-isocrystal $(\mE_{\crys,\Q}, \vp_{\mE_{\crys,\Q}})$ on $\fY_{\mO_K/p,\crys}$.
\end{proposition}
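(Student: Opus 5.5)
The plan is to build $\mE_{\crys}$ locally on semistable small affines and then glue by étale descent for crystals on $\fY_{k,\crys}$ (equivalently, on $\fY_{\mO_K/p,\crys}$, by Remark~\ref{absolute crys vs relative crys}).

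\emph{Local construction.} Fix $\spf R\subset\fY$ semistable small affine with a framing $\square$. Let $(\fM,\vp_{\fM},\varepsilon)\in\DD_{\fS_R}$ be the Kisin descent datum attached to $(\mE,\vp_{\mE})$ by Proposition~\ref{prop: DLMS lem 3.8}. Form the modules $\mM_n=\fM/p^n\otimes_{\fS_R,\vp}\mS_R$ with the isomorphisms $\eta_n$ obtained from $\varepsilon$ via $\vp\colon\fS_R(n)\to\mS_R(n)$, as described above.

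First I would verify that $\mM:=\varprojlim_n\mM_n$ is finite projective over $\mS_R$, or at least becomes projective after inverting $p$. The module $\fM$ is only torsion free, with $\fM[1/p]$ projective. However, $\vp(E)/p$ is a unit in $\mS_R$, and $\fM[E^{-1}]$ is projective. So the base change along $\vp$ should be projective away from $V(p,E)$, and in fact the reflexive-hull condition $\fM=\fM[1/p]\cap\fM[1/E]$ should make $\vp^*\fM\otimes\mS_R$ projective. I would try to prove this by checking it after the faithfully flat map to $A(R_{\mO})$-type rings, or by citing \cite{DLMS2}.

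Granting this, the cocycle condition for $\varepsilon$ over $\fS_R(2)$ transports to the condition for $\eta$ over $\mS_R(2)$. Since $\mS_R(\bullet)$ is the Čech nerve of the weakly final ind-object $(R/p,\mS_R,\N^d)^a$, the pair $(\mM,\eta)$ is a stratified module. It therefore defines a crystal on $\fY_{\mO_K/p,\crys}$, hence on $\spf(R/p)$, with $\vp_{\fM}$ giving an isomorphism $\vp^*\mM[1/p]\simeq\mM[1/p]$.

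\emph{Independence of framing.} This is the main obstacle. For two framings $\square,\square'$ I would use the coproduct prism $\fS_{\square,\square'}$ (\cite[Lemma 2.9]{DLMS2}) and its log PD counterpart $\mS_{\square,\square'}$. Both $\mS_{\square}$ and $\mS_{\square'}$ map to $\mS_{\square,\square'}$, and the crystal property of $\mE$ over $\fS_{\square,\square'}$ gives a canonical identification of the two pullbacks. One then checks compatibility with the stratifications via the triple products. This is the content of \cite[Lemma 3.30]{DLMS2}, which I would invoke.

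\emph{Globalization and uniqueness.} Because the local crystals are canonically independent of framing, and compatible with restriction along étale maps of small affines (both constructions commute with étale base change of $\fS_R$ and $\mS_R$), crystals satisfy étale descent and glue to a unique $\mE_{\crys}$. Uniqueness follows since a crystal is determined by its values on the weakly final objects together with the gluing data. Passing to the isogeny category and using Remark~\ref{absolute crys vs relative crys}(1) yields the $F$-isocrystal $(\mE_{\crys,\Q},\vp)$.
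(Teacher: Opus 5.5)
Your proposal matches the paper's argument. The paper builds $(\mM,\eta)$ locally from the Kisin descent datum via $\vp\colon\fS_R(\bullet)\to\mS_R(\bullet)$ and asserts, deferring to \cite{DLMS2} without further proof, that it is a crystal on $\fY_{\mO_K/p,\crys}$; it then invokes \cite[Lemma 3.30]{DLMS2} for independence of the framing and globalizes by citing \cite[Corollary 3.31]{DLMS2}.

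One small correction: Remark~\ref{absolute crys vs relative crys}(1) identifies $\fY_{\mO_K/p,\crys}$ and $\fY_{k,\crys}$ only at the level of isocrystals. The integral gluing should therefore be done on $\fY_{\mO_K/p,\crys}$, where the objects $(R/p,\mS_R,\N^d)^a$ actually live, and not ``equivalently'' on $\fY_{k,\crys}$.
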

    
\begin{definition}\label{defn: crystalline realization}
    Let $\fY$ be a semistable $p$-adic formal scheme over $\mO_K$. We define the \emph{crystalline realization functor} as
    \begin{align*}
        D_{\crys} \colon \Vect^{\an, \vp}(\fY_{\Prism}) &\lra \Isoc^{\vp}(\fY_{\mO_K/p,\crys}) \simeq  \Isoc^{\vp}(\fY_{k,\crys})\\
        (\mE,\vp_\mE) &\longmapsto (\mE_{\crys,\Q},\vp_{\mE_{\crys,\Q}}).
    \end{align*}
\end{definition}

\begin{proposition}[{\cite[Proposition 3.42]{DLMS2}}]
    Let $\fY$ be as above. For any analytic prismatic $F$-crystal $\mE$ over $\fY$, its \'etale realization $T_{\ett}(\mE)$ and its crystalline realization $\mE_{\crys, \Q}$ are associated in the sense of Definition \ref{defn: association}. 
\end{proposition}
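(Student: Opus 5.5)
The claim is that $T_{\ett}(\mE)$ and $\mE_{\crys,\Q}=D_{\crys}(\mE)$ are associated: we need a Frobenius-equivariant isomorphism $\B_{\crys}(\mE_{\crys,\Q})\simeq T_{\ett}(\mE)\otimes_{\Z_p}\B_\crys$ on $Y_{\proet}$. The plan is to build this isomorphism locally, compare both sides over a single perfect prism attached to each affinoid perfectoid $\spa(S,S^+)$ over $Y$, and then glue using independence of framing.

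\textbf{Step 1: reduce to local, framing-compatible isomorphisms.} Both sides are sheaves of finite projective $\B_\crys$-modules, and the problem is étale local on $\fY$. So we may assume $\fY=\spf R$ is semistable small affine with a framing $\square$. It then suffices to produce, functorially in affinoid perfectoid $\spa(S,S^+)\in Y_{\proet}$ lying over the generic fiber of $\spf R$, Frobenius-equivariant isomorphisms
\[
\B_\crys(\mE_{\crys,\Q})(S)\simeq T_{\ett}(\mE)(S)\otimes_{\Z_p}B_\crys(S),
\]
and to check that they do not depend on $\square$. Framing independence follows from \cite[Lemma 3.30]{DLMS2} together with the coproduct of Breuil--Kisin prisms from \cite[Lemma 2.9]{DLMS2}.

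\textbf{Step 2: pass through the perfect prism $\Ainf(S)$.} Given $S$, choose a compatible system $\pi^\flat$ of $p$-power roots of $\pi$ in $S^\flat$ (locally this is possible, perhaps after a pro-étale cover). This gives a map of log prisms $(\fS_R,E(u),\N^d)^a\to(\Ainf(S),\ker\theta,\ldots)$ with $u\mapsto[\pi^\flat]$. Let $(\fM,\vp_\fM,\varepsilon)$ be the Kisin descent datum of $\mE$ (Proposition \ref{prop: DLMS lem 3.8}).
\begin{itemize}
\item \emph{Étale side.} Base change along this map gives $\fM\otimes\Ainf(S)$. Inverting $\ker\theta$ and $p$-completing yields the Laurent crystal evaluated at $S$. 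Its $\vp$-invariants are $T_{\ett}(\mE)(S)$ by Theorem \ref{KY thm6} and the description in the proof of Proposition \ref{compatibility of etale realization}.
\item \emph{Crystalline side.} Composing with $\fS_R\xrightarrow{\vp}\mS_R\to\Acrys(S)$, the crystal $\mE_{\crys}$ evaluated on the PD-thickening $\Acrys(S)\to S/p$ equals $\vp^*\fM\otimes_{\fS_R}\Acrys(S)$. This uses the construction of $(\mM,\eta)$ preceding Proposition \ref{prop: crystalline realization}, and the map $\mS_R\to\Acrys(S)$ exists by the universal property of log PD-envelopes. Inverting $p$ and $t$ gives $\B_\crys(\mE)(S)$.
\end{itemize}

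\textbf{Step 3: produce the comparison map.} The task is now to identify
\[
\vp^*\fM\otimes\Acrys(S)[\tfrac1{pt}]\;\cong\;\big(\fM\otimes W(S^\flat)\big)^{\vp=1}\otimes B_\crys(S).
\]
First, $\vp_\fM$ gives $\vp^*\fM[1/E]\cong\fM[1/E]$. In $\Acrys$, $E([\pi^\flat])$ divides $\vp(\mu)$ up to a unit in $B_\crys^+$ (as in the footnote on $[p]_q$), so this becomes $\vp^*\fM\otimes B_\crys\cong\fM\otimes B_\crys$, compatible with Frobenius. Second, Proposition \ref{Frobenius invariant}(3), applied to the lattice $\bV=\fM\otimes\Ainf(S)$, gives $\fM\otimes\Ainf(S)[1/\mu]\cong T\otimes\Ainf(S)[1/\mu]$. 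Tensoring with $B_\crys$, in which $\mu$ is invertible, produces the required isomorphism. It is Frobenius equivariant by construction and functorial in $S$.

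\textbf{Expected main obstacle.} The hardest part is the dependence on the choice of $\pi^\flat$, which is also where Galois equivariance lives. Different choices change the map $\fS_R\to\Ainf(S)$, so we must verify that the resulting isomorphisms agree. The plan is to use the descent datum $\varepsilon$, evaluated through $\fS_R(1)\to\Ainf(S)$ at the pair of choices, to identify the two base changes compatibly on both sides. On the crystalline side this is exactly the HPD-stratification $\eta$; on the étale side it is the crystal property of the Laurent $F$-crystal.
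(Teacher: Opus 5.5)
The paper gives no argument for this statement; it imports it from \cite[Proposition 3.42]{DLMS2}, so there is no internal proof to compare against. Your sketch is the standard argument and it is correct: evaluate $\mE$ on the perfect log prism $(\Ainf(S^+),\ker\theta)$, use the $\mu$-inverted \'etale comparison, transport to $\Acrys(S^+)$ along Frobenius, and invert $t$.

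Two small remarks.
\begin{enumerate}
\item Proposition~\ref{Frobenius invariant} is stated for Frobenius structures with poles along $\txi$. Your lattice $\bV=\fM\otimes_{\fS_R}\Ainf(S^+)$, formed via $u\mapsto[\pi^\flat]$, has poles along $\xi$. Apply the proposition to the Frobenius twist $\vp^*\bV$ instead. This is harmless: $\vp_{\bV}$ identifies $\vp^*\bV[1/\xi]$ with $\bV[1/\xi]$, and $\xi\mid\mu$, so the conclusion transfers to $\bV[1/\mu]$.
\item The choice-dependence you flag as the main obstacle is just the crystal property. Both sides are evaluations of crystals on objects that do not depend on $\pi^\flat$ or the $T_i^\flat$. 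These choices enter only through a morphism out of the Breuil--Kisin prism. So your use of $\varepsilon$ through $\fS_R(1)$ (and of its Frobenius image through $\mS_R(1)$) is the right tool, and it suffices.
\end{enumerate}
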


To wrap up the section, we discuss filtrations on the $F$-isocrystals $\mE_{\crys, \Q}$ following \cite{DLMS2} (cf. Remark \ref{remark: filtration on F-isocrystals}). Let $\fY$ be a semistable $p$-adic log formal scheme over $\mathcal{O}_K$ as above, with adic generic fiber $Y$. By \cite[Proposition B.30, B.32]{DLMS2}, there is a canonical functor 
\begin{equation}\label{eq: associate integrable connections to F-isocrystals}
         \Isoc^{\vp}(\fY_{k,\crys}) \ra \MIC(Y),
\end{equation}
where $\MIC(Y)$ stands for the category of vector bundles with integrable connection on $Y$.

\begin{definition}
Let $\fY$ be a semistable $p$-adic log formal scheme over $\mO_K$, with adic generic fiber $Y$. A \emph{filtered $F$-isocrystal} on $\fY$ is a pair $\big((\mE, \vp_\mE),(E, \nabla_E, \Fil^\bullet E)\big)$ where 
    \begin{itemize}
        \item $(\mE, \vp_\mE)$ is an $F$-isocrystal on $\fY_{k,\crys}$;
        \item $(E, \nabla_E)$ is the vector bundle with integral connection on $Y$ attached to $(\mE, \vp_\mE)$ via \eqref{eq: associate integrable connections to F-isocrystals};
        \item $\Fil^\bullet E$ is a $\Z$-indexed separated and exhaustive decreasing filtration of $\mO_Y$-submodules of $E$ satisfying Griffiths transversality such that $\Fil^i E/\Fil^{i+1} E$ is locally free over $\mO_Y$ for each $i$.
    \end{itemize}
\end{definition}

Now, let $\mE$ be an analytic prismatic $F$-crystal on $\fY_{\Prism}$. Let $\L=T_{\ett}(\mE)$ be the associated semistable local system (see Theorem \ref{thm: prismatic description of semistable local systems}) and let $D_{\crys}(\mE)=(\mE_{\crys,\Q},\vp_{\mE_{\crys,\Q}})$ be the associated $F$-isocrystal on $\fY_{k,\crys}$ given by the crystalline realization functor (see Definition \ref{defn: crystalline realization}). We can describe the canonical filtration on $D_{\crys}(\mE)$ in terms of $\L$. For this, consider the \emph{de Rham period sheaf} $\mO\B_{\dR}$ on $Y_{\proet}$ defined in in \cite[definition 6.8]{Scholze12}, equipped with a natural integrable connection and a natural filtration. Then $p$-adic Riemann-Hilbert functor $D_{\mathrm{dR}}$ of Liu--Zhu in \cite{LiuZhu} yields a vector bundle 
\[D_{\dR}(\L)\coloneqq \nu_*\left(\L\otimes_{\widehat{\Z}_p}\mO\B_{\dR}\right)\]
on $Y$ of the same rank as $\L$, where $\nu\colon Y_{\proet} \ra Y_{\mathrm{an}}$ is the natural projection. Moreover, $D_{\dR}(\L)$ is equipped with an integrable connection $\nabla_{D_{\dR}(\L)}$ and a filtration $\Fil^{\bullet}_{D_{\dR}(\L)}$ inherited from those of $\mO\B_{\dR}$, which satisfies Griffiths transversality.

\begin{proposition}[{\cite[Corollary 3.46]{DLMS2}}]\label{prop: F-isocrystal structure}
     Let $\fY$ be a semistable $p$-adic log formal scheme over $\mO_K$ with adic generic fiber $Y$. Let $\mE$ be an analytic prismatic $F$-crystal on $\fY_\Prism$. Let $\L=T_{\ett}(\mE)$ be the associated semistable local system and let $D_{\crys}(\mE)$ be the associated $F$-isocrystal on $\fY_{k, \crys}$.
\begin{enumerate}
         \item Let $(E,\nabla_E)$ be the vector bundle with integral connection on $Y$ attached to $D_{\crys}(\mE)$ via \eqref{eq: associate integrable connections to F-isocrystals}. Then
         \[
             (E,\nabla_E) \cong \left(D_{\dR}(\L), \nabla_{D_{\dR}(\L)}\right).
         \]
         \item The pair $\left(D_{\crys}(\mE), \left(D_{\dR}(\L), \nabla_{D_{\dR}(\L)}, \Fil^{\bullet}_{D_{\dR}(\L)}\right)\right)$ is a filtered $F$-isocrystal on $\fY$.
     \end{enumerate} 
\end{proposition}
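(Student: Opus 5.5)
The plan is to derive both assertions from the association isomorphism $\alpha_{\L,\mE_{\crys,\Q}}$ of \cite[Proposition 3.42]{DLMS2}, by base changing it from $\B_{\crys}$ to the de Rham period sheaf $\mO\B_{\dR}$ and then applying $\nu_*$.

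\emph{Step 1.} Let $(E,\nabla_E)$ be the object attached to $D_{\crys}(\mE)$ by \eqref{eq: associate integrable connections to F-isocrystals}. On $Y_{\proet}$ I would construct a natural isomorphism
\[
\B^+_{\crys}(\mE_{\crys,\Q})\otimes_{\B^+_{\crys}}\mO\B_{\dR}\;\cong\; E\otimes_{\mO_Y}\mO\B_{\dR},
\]
compatible with connections. Here the left-hand side carries the connection coming from the crystal structure, evaluated on the pro-\'etale thickenings underlying $\mO\B_{\crys}\to\mO\B_{\dR}$. The right-hand side carries $\nabla_E\otimes 1+1\otimes\nabla$.

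This should follow from the construction in \cite[Construction 4.1]{Guo-Yang} together with \cite[Propositions B.30, B.32]{DLMS2}. Locally on a small affine $\spf R$, the functor \eqref{eq: associate integrable connections to F-isocrystals} is the evaluation of the isocrystal on the Breuil thickening $\mS_R$, followed by the passage to the generic fibre. So the isomorphism amounts to comparing two evaluations of the crystal. One is on $\mS_R$, mapped into $\mO\B_{\dR}$ via $u\mapsto[\pi^\flat]$ and $T_i\mapsto T_i$. The other is on the pro-\'etale thickening $\A_{\crys}$. The crystal property, that is the HPD-stratification, identifies them.

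\emph{Step 2.} I would tensor $\alpha_{\L,\mE_{\crys,\Q}}$ along $\B_{\crys}\to \mO\B_{\dR}$; this is legitimate because $t$ becomes invertible in $\mO\B_{\dR}$. Combined with Step 1, this gives a horizontal isomorphism
\[
E\otimes_{\mO_Y}\mO\B_{\dR}\;\cong\;\L\otimes_{\widehat{\Z}_p}\mO\B_{\dR}.
\]
Applying $\nu_*$ and using $\nu_*\mO\B_{\dR}=\mO_Y$ together with the projection formula for the vector bundle $E$, I obtain $E\cong D_{\dR}(\L)$. The connections agree because both are induced from the connection on $\mO\B_{\dR}$ through a horizontal isomorphism. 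This proves (1).

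\emph{Step 3.} For (2) I would transport $\Fil^\bullet_{D_{\dR}(\L)}$ to $E$ via the isomorphism of (1). Griffiths transversality is inherited from $\mO\B_{\dR}$. The filtration is separated and exhaustive because it is a finite filtration on a vector bundle.

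For local freeness of the graded pieces I would use that $\L$ is de Rham. This holds because the isomorphism $E\otimes\mO\B_{\dR}\cong \L\otimes\mO\B_{\dR}$ makes $D_{\dR}(\L)$ of full rank. Liu--Zhu \cite{LiuZhu} then give that $D_{\dR}(\L)$ is a filtered vector bundle whose graded pieces are locally free. The Frobenius structure is carried by $D_{\crys}(\mE)$ alone, so no further compatibility has to be checked.

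The main obstacle is Step 1: checking that the connection produced by \eqref{eq: associate integrable connections to F-isocrystals} coincides with the one coming from the period-sheaf evaluation. To handle it, I would work on a framed small affine and check the claim on the explicit generators $u$ and $T_i$, using the HPD-stratification description of $\Vect(\fY_{\mO_K/p,\crys})$ in terms of $\mS_R$.
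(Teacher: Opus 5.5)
The paper does not prove this statement; it imports it from \cite[Corollary 3.46]{DLMS2}. Your architecture is the standard route to such a statement: start from the association $\alpha_{\L,\mE_{\crys,\Q}}$ of \cite[Proposition 3.42]{DLMS2}, base change to $\mO\B_{\dR}$, push forward along $\nu$, and get de Rham-ness and the locally free graded pieces from Liu--Zhu. Steps 2 and 3 are fine once Step 1 is available. However, Step 1 is the only non-formal input, and the mechanism you propose for it does not work as stated.

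\textbf{The map you use does not exist.} There is no ring map $\mS_R\ra\mO\B_{\dR}$ with $u\mapsto[\pi^\flat]$ and $T_i\mapsto T_i$.
\begin{itemize}
\item In $\fS_R$, hence in $\mS_R$, one has $u=T_1\cdots T_m$.
\item In $\mO_Y\subset\mO\B_{\dR}$ one has $T_1\cdots T_m=\pi$.
\item $\pi\neq[\pi^\flat]$ in $B^+_{\dR}$; they agree only modulo $\ker\theta$. Indeed, for $g\in G_K$ with $\epsilon(g)\neq 1$ one has $g([\pi^\flat])=[\epsilon(g)][\pi^\flat]$, whereas $g(\pi)=\pi$.
\end{itemize}
The map that actually recovers $(E,\nabla_E)$ from the evaluation on $\mS_R$ is $\mS_R\ra R[\tfrac1p]\subset\mO_Y$ with $u\mapsto\pi$. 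Moreover, anything built from $u\mapsto[\pi^\flat]$ depends on the choice of $\pi^\flat$ and is not $G_K$-equivariant (compare the footnote after \eqref{eq: diagram crystalline log prisms}). So it could not produce an isomorphism of sheaves on $Y_{\proet}$ to which $\nu_*\colon Y_{\proet}\ra Y_{\an}$ applies.

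\textbf{The crystal property alone does not compare the two evaluations.} To say ``the crystal property identifies them'' you need a single object of the log crystalline site receiving maps from both thickenings. $\mS_R$ and $A_{\crys}(S^+)$ are not related by any morphism. Here $A_{\crys}(S^+)$ carries the log structure $e_i\mapsto[T_i^\flat]$, available after adjoining compatible $p$-power roots of the $T_i$, which are units on $Y$.

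\textbf{What is missing is a logarithmic $\mO\A_{\crys}$.} Take the $p$-completed log PD-envelope of the exactification of $A_{\crys}(S^+)\htimes_{W(k)}\fS_R\ra S^+/p$.
\begin{itemize}
\item The exactification adjoins the units $T_i/[T_i^\flat]$, and hence $u/[\pi^\flat]$ for compatible choices of roots.
\item The PD-envelope adds divided powers of $T_i/[T_i^\flat]-1$. This is where Breuil's variable from Proposition~\ref{Nnilplocal}(1) enters.
\item The resulting ring maps, functorially in $S$, to $\mO\B^+_{\dR}$ via $u\mapsto\pi$, $T_i\mapsto T_i$, and $A_{\crys}\ra B^+_{\dR}$.
\end{itemize}
Evaluating the crystal on this object identifies both base changes. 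It yields $\B^+_{\crys}(\mE_{\crys,\Q})\otimes_{\B^+_{\crys}}\mO\B_{\dR}\cong E\otimes_{\mO_Y}\mO\B_{\dR}$, horizontal for $1\otimes\nabla$ on the left and $\nabla_E\otimes 1+1\otimes\nabla$ on the right. This is the kind of comparison underlying the cited result (cf.\ the construction of $\B_{\crys}(\mE)$ in \cite{Guo-Yang}). With Step 1 replaced by this, the rest of your argument goes through.
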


\vspace{0.3in}
\section{Semistable comparison theorem with coefficients}\label{section: Cst comparison}
\noindent In this section, we finally prove the semistable comparison theorem (Theorem \ref{mainthm}); namely, the $C_{\mathrm{st}}$ conjecture with coefficients. Intuitively, the specialization functors and comparison theorems of log $\Ainf$-cohomology (cf. Theorem \ref{thm: comparisons}) should serve as a bridge connecting \'etale and log crystalline cohomology. In particular, if an \'etale $\Z_p$-local system $\L$ is precisely the \'etale specialization of a relative log BKF module $\bM$, we should already obtain a comparison result between the \'etale cohomology of $\L$ and the log crystalline cohomology of $\sigma_{A_\crys}^*\bM$. However, in general, not every semistable local system arises in this way. We therefore need to work within a larger category that contains relative log BKF modules. To deal with this issue, we introduce the category of \emph{derived relative Breuil--Kisin--Fargues modules}.

\vspace{0.1in}
\subsection{Derived relative Breuil--Kisin--Fargues modules}
\noindent
\vspace{0.1in}

\noindent Let $\fX$ be an admissibly smooth $p$-adic log formal scheme over $\ul\mO=(\mO_C, N_\infty)$ and let $X$ be its adic generic fiber.
In this section, we work under the assumption that $\fX$ locally admits free charts (cf. Definition \ref{defn: locally admits free charts}), namely, $\fX$ is locally modeled on small charts $u:N\rightarrow P$ where both $N$ and $P$ are free (cf. Definition \ref{definition: small affine}).
We use $D_{\perf}\big((\fX^{(1)}/\logAinf)_\Prism\big)$ to denote the category of perfect complexes of prismatic crystals on $(\fX^{(1)}/\logAinf)_\Prism$. Let $D_{\perf}(\Ainfx)$ be the category of perfect complexes of $\Ainfx$-modules on $X_{\proet}$.
By Theorem \ref{thm: MT thm 5.15}, there is a fully faithful functor
\[
    \Vect\big((\fX^{(1)}/\logAinf)_\Prism\big) \hookrightarrow \BKF^{\log}(\fX) \hookrightarrow \Vect(\Ainfx),
\]
which induces a fully faithful functor between derived categories:
\begin{equation}\label{eq: beta Prism}
    \beta_{\Prism}\colon D_{\perf}\big((\fX^{(1)}/\logAinf)_\Prism\big) \hookrightarrow D_{\perf}(\Ainfx).
\end{equation}

\begin{definition}\label{defn: derived BKF modules}
    Let $\fX$ be an admissibly smooth $p$-adic log formal scheme over $\ul \mO=(\mO, N_\infty)$ with adic generic fiber $X$. Assume that $\fX$ locally admits free charts.
    \begin{enumerate}
    \item Let $\bM$ be a perfect complex of $\Ainfx$-modules on $X_{\proket}$. We say that $\bM$ is \emph{trivial modulo $\xi_r$} if there is a perfect complex of $\nu_*(\Ainfx/\xi_r)$-modules $\overline{\bM}_r$ together with an isomorphism
    \[
        u_r \colon \nu^{-1}\overline{\bM}_r \otimes^{\L}_{\nu^{-1}\nu_*(\Ainfx/\xi_r)} \Ainfx/\xi_r \simra \bM/\xi_r
    \]
    We say that $\bM$ is \emph{trivial modulo $<\mu$} if it is trivial modulo $\xi_r$ for all $r \ge 1$, and the modules $\{\overline{\bM}_r\}_{r\ge 1}$ can be chosen to be compatible in the sense that
    \begin{itemize}
        \item for each $r$, there is a restriction map $\rho_r \colon \overline{\bM}_{r+1} \ra \overline{\bM}_r$ that fits into a commutative diagram
        \[
        \begin{tikzcd}
            \nu^{-1}\overline{\bM}_{r+1} \otimes^{\L}_{\nu^{-1}\nu_*(\Ainfx/\xi_{r+1})} \Ainfx/\xi_{r+1} \arrow[r,"u_{r+1}"] \arrow[d,"\rho_r"'] &\bM/\xi_{r+1} \arrow[d] \\
            \nu^{-1}\overline{\bM}_r \otimes^{\L}_{\nu^{-1}\nu_*(\Ainfx/\xi_r)} \Ainfx/\xi_r \arrow[r,"u_r"] & \bM/\xi_r,
        \end{tikzcd} 
        \]
        \item the map $\rho_r$ induces an isomorphism
        \[
            \overline{\bM}_{r+1} \otimes^{\L}_{\nu_*(\Ainfx/\xi_{r+1})} \nu_*(\Ainfx/\xi_r) \xrightarrow[]{\sim} \overline{\bM}_r.
        \]
    \end{itemize}
    \item A \emph{derived relative logarithmic Breuil--Kisin--Fargues module without Frobenius} over $\fX$ is a triple $\mM = (\bM, \bV, \iota)$ such that
    \begin{itemize}
        \item $\bM$ is a sheaf of $\Ainfx$-modules in perfect complexes lying in the essential image of $\beta_{\Prism}$;
        \item $\bV$ is a sheaf of $\Ainfx$-modules such that both $\bV[\frac 1\xi]$ and $\bV[\frac 1\txi]$ are locally finite free;
        \item $\iota \colon \bM\otimes^{\L}_{\Ainfx}\Ainfx[\frac 1\txi] \xrightarrow[]{\sim} \bV[\frac 1\txi]$ is an isomorphism in $D_{\perf}(\Ainfx[\txi^{-1}])$.
    \end{itemize} 
    The category of such objects is denoted by $\DBKF^{\log}(\fX)$.

    \item A \emph{derived relative logarithmic Breuil--Kisin--Fargues module} over $\fX$ is defined to be a pair $(\mM, \vp_{\mM})$, where 
    \begin{itemize}
        \item $\mM = (\bM, \bV, \iota) \in \DBKF^{\log}(\fX)$ is a derived relative logarithmic Breuil--Kisin--Fargues module without Frobenius;
        \item $\vp_{\mM}= \varphi_{\bV} \colon \varphi^* \bV [\txi^{-1}]\ra \bV[\txi^{-1}]$
        is an isomorphism of $\Ainfx[\txi^{-1}]$-modules.\footnote{Composing with $\iota$, $\vp_\bV$ induces an isomorphism $\vp_\bM \colon \vp^*\bM\otimes_{\Ainfx}^\L\Ainfx[\frac 1\txi]\xrightarrow[]{\sim} \bM\otimes_{\Ainfx}^\L\Ainfx[\frac 1\txi]$.}
    \end{itemize} 
    \end{enumerate}
    The category of such objects is denoted by $\DBKF^{\log}(\fX,\vp)$. When $\iota$ and the Frobenius structures are clear from the context, we often denote an object of $\DBKF^{\log}(\fX,\vp)$ as $(\mM,\vp_\mM)= (\bM,\bV, \vp_\bV)$ or even $\mM= (\bM, \bV)$.
\end{definition}

\begin{remark}
    For any locally finite free $\Ainfx$-module $\bM$, it is trivial modulo $<\mu$ as a sheaf of $\Ainfx$-modules if and only if it is trivial modulo $<\mu$ as an object of $D_{\perf}(\Ainfx)$.
\end{remark}

\begin{lemma}
    Let $\fX$ be an admissibly smooth $p$-adic log formal scheme over $\ul\mO$ and assume that $\fX$ locally admits free charts. If $\mM=(\bM, \bV, \iota)$ is an object of $\DBKF^{\log}(\fX)$, then $\bM$ is trivial modulo $<\mu$.
\end{lemma}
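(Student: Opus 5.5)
Since $\bM$ lies in the essential image of $\beta_{\Prism}$, we may write $\bM\simeq \beta_{\Prism}(\mF^\bullet)$ for some perfect complex of prismatic crystals $\mF^\bullet\in D_{\perf}\big((\fX^{(1)}/\logAinf)_\Prism\big)$. The plan is to reduce to the vector bundle case, where triviality modulo $<\mu$ is built into Theorem \ref{thm: MT thm 5.15}: every object of $\Vect\big((\fX^{(1)}/\logAinf)_\Prism\big)$ lands in $\BKF^{\log}(\fX)$, hence is trivial modulo $\xi_r$ for all $r$.

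\emph{Step 1: local model.} I first work on a small affine \'etale open $\fU=\spf\ul R$ with a free chart. There $\fD(R)=(A(R),\txi,P_\infty)^a$ covers the final object (Proposition \ref{prop: weakly final obj: free chart}), and the \v{C}ech nerve $\fD(R)(\bullet)$ computes crystals. A perfect complex of crystals is then a perfect $A(R)$-complex $N$ with a stratification, and $\beta_\Prism(\mF^\bullet)|_U$ corresponds to $N\otimes_{A(R)}\AinfU$ with the induced $\Gamma$-action. I will set
\[
\overline{\bM}_r:=N\otimes^{\L}_{A(R)}A(R)/\xi_r,
\]
regarded as a perfect complex over $\nu_*(\AinfU/\xi_r)$ via the natural map $A(R)/\xi_r\to \nu_*(\AinfU/\xi_r)$. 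Since $\Gamma$ acts trivially on $A(R)/\mu$, hence on $A(R)/\xi_r$, I expect that the argument in the proof of Theorem \ref{thm: MT 1.14} upgrades this to $\Gamma$-triviality of $\overline{\bM}_r$ modulo $\xi_r$. The counit $u_r$ is then the base change of the canonical identification. Compatibility under $r+1\to r$ is automatic: $\overline{\bM}_{r+1}\otimes^{\L}A(R)/\xi_r\simeq\overline{\bM}_r$ because $\xi_r\mid\xi_{r+1}$.

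\emph{Step 2: canonicity and gluing.} The local choice above depends on the chart, so I will instead define $\overline{\bM}_r$ intrinsically as $R\nu_*(\bM/\xi_r)$. Its formation is compatible with the functor $\beta_\Prism$ on each term. Filtering $\mF^\bullet$ by stupid truncations (locally it is a bounded complex of finite projective crystals), I reduce via triangles to single vector bundles. For those, Lemma \ref{lemma: MT 5.13} and Proposition \ref{prop: MT 5.7} show that $\nu_*(\mathbb{E}/\xi_r)$ is locally finite free, that higher $R^i\nu_*$ vanish up to almost zero, and that the counit is an isomorphism. Exactness of $-\otimes^{\L}\Ainfx/\xi_r$ and of $\nu^{-1}$ then propagates the counit isomorphism and the compatibility maps $\rho_r$ through the triangles. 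Perfectness of $\overline{\bM}_r$ follows from the fact that it is locally an iterated extension of finite projectives.

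\emph{Main obstacle.} The difficulty is that $\nu_*(\Ainfx/\xi_r)$ is only almost equal to the expected ring, and higher direct images are only almost zero. Using $R\nu_*$ instead of $\nu_*$ on a non-flat complex could therefore introduce almost-zero junk, which would break perfectness and the exact counit isomorphism. My first approach is to avoid $R\nu_*$ altogether and define $\overline{\bM}_r$ as the complex of sheaves $\nu_*(\mathbb{E}^\bullet/\xi_r)$ applied termwise to a locally finite projective model. This uses only the honest isomorphisms furnished by Lemma \ref{lemma: MT 5.13}(3) and the termwise triviality. Gluing on overlaps then requires the canonical isomorphisms from Theorem \ref{thm: MT thm 5.15}, which are functorial; the remaining issue is to make the local termwise models compatible as complexes up to quasi-isomorphism. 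I would handle this by working in the $\infty$-categorical limit over the \'etale cover.
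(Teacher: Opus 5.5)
Your final construction is the paper's proof. The paper represents $\bM$ by a bounded complex $\bM_1\to\cdots\to\bM_n$ of relative log BKF modules, taking this directly from the definition of $\beta_{\Prism}$ as the functor induced by $\Vect\big((\fX^{(1)}/\logAinf)_\Prism\big)\hookrightarrow\BKF^{\log}(\fX)$ (Theorem \ref{thm: MT thm 5.15}). It then sets $\overline{\bM}_r$ to be the termwise complex $\nu_*(\bM_i/\xi_r)$, so it never needs the chart-level model of Step 1 or gluing over an \'etale cover.

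One correction to Step 2: it is not true that $R^i\nu_*(\mathbb{E}/\xi_r)$ vanishes up to almost zero for $i>0$. Lemma \ref{lemma: MT 5.13}(1) concerns cohomology on log affinoid perfectoid objects. On a small \'etale open, however, $R^i\nu_*$ is continuous $\Gamma$-cohomology (Proposition \ref{prop: MT 5.7}), which is genuinely nonzero in degrees $1,\ldots,d$; compare the $p$-adic Cartier isomorphism, Proposition \ref{p adic Cartier}. So $R\nu_*(\bM/\xi_r)$ is ruled out as a choice of $\overline{\bM}_r$ for a structural reason, not because of almost-zero junk: it is not concentrated in degree $0$, so the counit to $\bM/\xi_r$ cannot be an isomorphism.
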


\begin{proof}
   By definition and Theorem \ref{thm: MT thm 5.15}, $\bM$ is quasi-isomorphic to a bounded complex
    \[
        0\ra \bM_1\ra \bM_2 \ra \cdots \ra \bM_n \ra 0
    \]
    where each $\bM_i$ is a relative log BKF module. Note that $\bM/\xi_r$ is computed by 
    \[
        0\ra \bM_1/\xi_r\ra \bM_2/\xi_r \ra \cdots \ra \bM_r/\xi_r \ra 0.
    \]
    Since each $\bM_i$ is trivial modulo $\xi_r$, we can simply take 
    \[
    \overline{\bM}_{r}:= 0 \ra \nu_*(\bM_1/\xi_r) \ra \cdots \ra \nu_*(\bM_n/\xi_r) \ra 0.
    \]
    \end{proof}

Now, we can define log $\Ainf$-cohomology attached to a derived relative log BKF module.
\begin{definition}
Let $\fX$ be as above with adic generic fiber $X$. For any $\mM= (\bM,\bV) \in \DBKF^{\log}(\fX, \varphi)$, we define
\[
A\Omega^{\log}_{\fX}(\mM):= L\eta_{\mu}(\widehat{R\nu_*\bM})\in D(\fX_{\ett}, \Ainf)
\]
where the completion is the derived $p$-adic completion, $\nu\colon X_{\proet} \ra X_{\ett}$ is the natural projection of sites, and $L\eta$ stands for the d\'ecalage functor. The \emph{log $\Ainf$-cohomology of $\mM$} is defined to be the complex
\[
    R\Gamma_{\Ainf}(\fX, \mM):=R\Gamma\big(\fX_{\ett}, A\Omega^{\log}_{\fX}(\mM)\big)\in D(\Ainf).
\]
\end{definition}

Now, let us consider the derived \'etale and crystalline specialization functors. 

\begin{corollary}
     Let $\fX$ be as above with adic generic fiber $X$ and let $(\mM, \vp_{\mM}) =(\bM, \bV, \vp_{\bV})$ be an object of $\DBKF^{\log}(\fX,\vp)$. Then 
     \[
         \L := (\bV\otimes_{\Ainfx}W(\widehat\mO_{X^{\flat}}))^{\vp_{\bV}=1}
     \]
    is a locally finite free $\widehat{\Z}_p$-sheaf on $X_{\proket}$ of the same rank of $\bV[\frac{1}{\xi}]$. Moreover, we have $\L \subseteq \bV[\frac 1\mu]$ and $\L\otimes_{\widehat{\Z}_p}\Ainfx[\frac 1\mu]= \bV[\frac 1\mu]$.
\end{corollary}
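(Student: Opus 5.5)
This corollary should follow directly from Proposition~\ref{Frobenius invariant}. That proposition was stated for exactly this situation: a sheaf $\bV$ of $\Ainfx$-modules whose localizations $\bV[\frac1\xi]$ and $\bV[\frac1\txi]$ are locally finite free, together with a Frobenius-semilinear isomorphism $\bV[\frac1\xi]\simra\bV[\frac1\txi]$. The plan is therefore to check that the datum $(\bV,\vp_\bV)$ attached to $(\mM,\vp_\mM)\in\DBKF^{\log}(\fX,\vp)$ satisfies these hypotheses, and then read off the three conclusions.

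First, the local freeness of $\bV[\frac1\xi]$ and $\bV[\frac1\txi]$ is built into the definition of $\DBKF^{\log}(\fX)$. Second, the Frobenius structure $\vp_\bV\colon \vp^*\bV[\txi^{-1}]\simra \bV[\txi^{-1}]$ is a linear isomorphism. I would translate it into a semilinear isomorphism $\bV[\frac1\xi]\simra\bV[\frac1\txi]$. This uses two facts: $\vp^*(\bV[\xi^{-1}])=(\vp^*\bV)[\vp(\xi)^{-1}]=(\vp^*\bV)[\txi^{-1}]$, and pullback along $\vp$ is an equivalence on $\Ainfx$-modules, since $\vp$ is bijective on $\Ainfx=W(\widehat\mO_X^{\flat+})$. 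This is the same translation as in the footnote to Definition~\ref{defn: log BKF modules}.

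Third, Proposition~\ref{Frobenius invariant} requires $X$ to be a locally noetherian saturated log adic space that is admissibly smooth over the divisible log point. Here $X$ is the generic fiber of an admissibly smooth $\fX$, so this holds as in the application to $\sigma^*_{\ett}$ in Definition~\ref{defn: etale specialization}; the local noetherianity is inherited from the finite model $\fX_0$. The proposition is also phrased on the pro-Kummer étale site, while derived BKF modules live on $X_{\proket}$ as in Definition~\ref{defn: derived BKF modules}, so no change of site is needed.

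With the hypotheses verified, part (1) of the proposition gives that $\L=(\bV\otimes W(\widehat\mO_{X^\flat}))^{\vp_\bV=1}$ is a locally finite free $\widehat\Z_p$-sheaf of rank equal to that of $\bV[\frac1\xi]$. Part (2) gives $\L\subseteq\bV[\frac1\mu]$, and part (3) gives $\L\otimes_{\widehat\Z_p}\Ainfx[\frac1\mu]=\bV[\frac1\mu]$.

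The only point needing care, and the main obstacle as far as there is one, is the compatibility of the Frobenius conventions. Proposition~\ref{Frobenius invariant} phrases the fixed points using the semilinear map $\bV[\frac1\xi]\to\bV[\frac1\txi]$, extended to $\bV[\frac1\mu]$, whereas the corollary uses $\vp_\bV$ in linearized form. I would check directly that the invariants $\vp_\bV=1$ on $\bV\otimes W(\widehat\mO_{X^\flat})$ agree under the two conventions, since $\mu$, $\xi$ and $\txi$ all become units in $W(\widehat\mO_{X^\flat})$. No input from $\bM$ or $\iota$ is needed for this statement.
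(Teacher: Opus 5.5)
Your proposal is correct and follows the paper, which records this statement as a direct consequence of Proposition~\ref{Frobenius invariant} applied to $(\bV,\vp_\bV)$. Your hypothesis checks are exactly what that one-line deduction uses implicitly: local freeness of $\bV[\frac1\xi]$ and $\bV[\frac1\txi]$ comes from the definition, and the linear $\vp_\bV$ translates into a semilinear isomorphism $\bV[\frac1\xi]\simra\bV[\frac1\txi]$ because $\vp$ is bijective on $\Ainfx$ and $\vp(\xi)=\txi$.
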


\begin{proof}
   This is a direct corollary of Proposition \ref{Frobenius invariant}.    
\end{proof}

\begin{definition}
    Let $\fX$ be an admissibly smooth $p$-adic log formal scheme over $\ul\mO$ with adic generic fiber $X$. Assume that $\fX$ locally admits free charts. The \emph{derived \'etale specialization functor} 
    \[
        \sigma^*_{\ett}: \DBKF^{\log}(\fX, \varphi)\rightarrow \Loc_{\Z_p}(X)\\
    \]
    is defined by \[\sigma^*_{\ett}(\mM, \vp_{\mM})= \sigma^*_{\ett}(\bM, \bV, \vp_{\bV}) \coloneqq (\bV\otimes_{\Ainfx}W(\widehat\mO_{X^{\flat}}))^{\vp_{\bV}=1}.\]
\end{definition}

For crystalline specialization, note that the functor $\gamma_q$ in (\ref{q-crystalline site to prismatic site}) also induces a functor 
\[
    \gamma_q^*\colon D_{\perf}^\vp\big((\fX^{(1)}/\logAinf)_\Prism\big) \ra D_{\perf}^\vp\big((\fX/\logqAinf)_{q\crys}\big),
\]
where the right-hand side stands for the category of $F$-$q$-crystals in perfect complexes on the $q$-crystalline site $(\fX/\logqAinf)_{q\crys}$. Then the $q$-crystalline specialization functor extends to the derived setting as
\begin{align*}
        \DBKF^{\log}(\fX,\vp) &\lra D_{\perf}^\vp\big((\fX/\logqAinf)_{q\crys}\big)\\
        \mM=(\bM, \bV) &\longmapsto \bM_{q\crys}= \gamma_q^*\bM.
\end{align*}

\begin{definition}\label{defn: derived crystalline specialization }
     Let $\fX$ be an admissibly smooth $p$-adic log formal scheme over $\ul\mO$ and let $\fX_{\mO/p}$ denote its base change over $\mO/p$. Assume that $\fX$ locally admits free charts.
     \begin{enumerate}
         \item The \emph{derived absolute crystalline specialization functor}
         \[
              \sigma_{A_\crys}^*\colon \DBKF^{\log}(\fX,\vp) \ra D_{\perf}^{\vp}\big((\fX_{\mO/p}/\ul{\Acrys})_{\crys}\big)
         \]
         is defined as follows: for any $(\mM, \vp_{\mM}) = (\bM, \bV, \vp_{\bV})\in \DBKF^{\log}(\fX,\vp)$, we adopt the notation as in (\ref{qcrystal to crystal over Acrys}) and define $\sigma^*_{\Acrys}(\mM)$ locally as 
         \[\sigma^*_{\Acrys}(\mM)(\mD_{\crys,\Sigma}(\bullet)) := \bM_{q\crys}(\mD_{\Sigma}(\bullet))\htimes^{\L}_{\Ainf}\Acrys\]
         
         \item The \emph{derived crystalline specialization functor} 
            \[
                \sigma_{\crys}^*\colon \DBKF^{\log}(\fX,\vp) \ra D_{\perf}^{\vp}\big((\fX_k/W(\ul k))_{\crys}\big)
            \]
         is defined as follows: for any $(\mM, \vp_{\mM}) = (\bM, \bV, \vp_{\bV})\in \DBKF^{\log}(\fX,\vp)$, we define $\sigma_{\crys}^*(\mM)$ to be the image of $\sigma^*_{\Acrys}(\mM)$ under the functor $D_{\perf}^{\vp}\big((\fX_{\mO/p}/\ul{\Acrys})_{\crys}\big)\rightarrow D_{\perf}^{\vp}(\fX_k/W(\ul k))_{\crys}$ induced by the natural map $\ul\Acrys \to W(\ul k)$.
         
     \end{enumerate}  
\end{definition}

We also have \'etale and crystalline comparison theorems in the derived setting.

\begin{proposition}\label{prop: etale comparison for derived BKF modules}
    Let $\fX$ be an admissibly smooth $p$-adic log formal scheme over $\ul\mO$ with adic generic fiber $X$. Let $(\mM, \vp_{\mM})= (\bM, \bV, \vp_{\bV}, \iota) \in \DBKF^{\log}(\fX,\vp)$. 
    \begin{enumerate}
        \item There are natural isomorphism
        \[
        \widehat{\Big(A\Omega^{\log}_{\fX}(\mM)[\frac{1}{\mu}]\Big)}^{\varphi_{\bV}=1} \cong R\nu_*(\sigma^*_{\ett}\mM).
        \]
        \item Assume that $C$ is algebraically closed and $\fX$ is proper. The identification \[\sigma_{\ett}^*\mM \otimes_{\widehat{\Z}_p} \Ainfx[\frac 1\mu]\cong \bV[\frac 1\mu ]\] induces a natural isomorphism
    \[
        R\Gamma_{\ket}(\sigma^*_{\ett}\mM) \otimes^{\L}_{\Z_p} \Ainf[\frac{1}{\mu}] \cong R\Gamma_{\Ainf}(\mM)\otimes^{\L}_{\Ainf}\Ainf[\frac{1}{\mu}].
    \]
    \end{enumerate}
\end{proposition}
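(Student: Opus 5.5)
The plan is to reduce both statements to the non-derived arguments (Theorem \ref{thm: etale comparison} and Theorem \ref{thm: strong etale comparison}), using the isomorphism $\iota$ to trade the perfect complex $\bM$ for the honest sheaf $\bV$ once $\mu$ is inverted. Since $\mu=\txi\cdot\varphi^{-1}(\mu)\cdots$ is divisible by $\txi$, i.e.\ $\txi\mid\mu$ in $\Ainf$, inverting $\mu$ inverts $\txi$. Hence $\iota$ induces an isomorphism
\[
\bM\otimes^{\L}_{\Ainfx}\Ainfx[\tfrac1\mu]\simra\bV[\tfrac1\mu]
\]
in $D(\Ainfx[\frac1\mu])$. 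Completing $p$-adically and using $\widehat{\Ainfx[\frac1\mu]}=W(\widehat\mO_{X^\flat})$, this becomes
\[
\widehat{\bM}\htimes^{\L}W(\widehat\mO_{X^\flat})\cong \bV\otimes_{\Ainfx}W(\widehat\mO_{X^\flat}).
\]
Under this isomorphism the Frobenius $\vp_\bM$ induced by $\vp_\bV$ (see the footnote in Definition \ref{defn: derived BKF modules}) matches $\vp_\bV$.

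For (1), I would first note that $L\eta_\mu$ does not change anything after inverting $\mu$. This gives
\[
A\Omega^{\log}_\fX(\mM)\htimes_{\Ainf}\Ainf[\tfrac1\mu]\cong R\nu_*\widehat{\bM}\htimes\Ainf[\tfrac1\mu]\cong R\nu_*\bigl(\bV\otimes W(\widehat\mO_{X^\flat})\bigr).
\]
The second isomorphism uses that $R\nu_*$ commutes with the relevant completed base change. For a perfect complex this holds because it holds termwise for finite projective objects and both sides are exact in triangles, exactly as in the proof of Theorem \ref{thm: etale comparison}. Next, the Corollary preceding the definition of $\sigma^*_{\ett}$ (applied to $\bV$) shows that $1-\vp_\bV^{-1}$ is surjective on $\bV\otimes W(\widehat\mO_{X^\flat})$ with kernel $\L=\sigma^*_{\ett}\mM$. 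This surjectivity is the Artin--Schreier argument of Proposition \ref{Frobenius invariant}(2), carried out over $W(\widehat\mO_{X^\flat})$. Taking the fiber of $1-\vp$ and applying $R\nu_*$ then yields $R\nu_*\L$.

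For (2), I would follow the proof of Theorem \ref{thm: strong etale comparison}. The primitive comparison theorem \cite[Theorem 8.3]{log1}, combined with d\'evissage in $n$, gives an almost isomorphism
\[
R\Gamma_{\ket}(\L)\otimes^{\L}\Ainf\cong^a R\Gamma_{\proket}\bigl(X,\L\otimes_{\widehat{\Z}_p}\widehat{\Ainfx}\bigr).
\]
After inverting $\mu$ (which kills almost zero modules, since $W(\fm^\flat)$ contains elements dividing powers of $\mu$), the right side becomes $R\Gamma_{\proket}(X,\bV[\frac1\mu])^\wedge$ by the identification $\L\otimes\Ainfx[\frac1\mu]=\bV[\frac1\mu]$. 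Via $\iota$ this is $R\Gamma_{\proket}(X,\widehat{\bM})[\frac1\mu]$, which equals $R\Gamma_{\Ainf}(\mM)\otimes\Ainf[\frac1\mu]$ because $L\eta_\mu$ is invisible after inverting $\mu$.

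The main technical point is handling completions versus localizations. One must check that $R\Gamma$ of a perfect complex commutes with $\otimes^{\L}\Ainf[\frac1\mu]$ up to the same completion appearing in the statement. I would handle this by working modulo $p^n$ first, where quasi-compactness of $\fX$ lets $\mu$-localization commute with $R\Gamma$, and then passing to the derived limit.
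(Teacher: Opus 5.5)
Your overall route is the paper's: use $\iota$ to replace $\bM$ by $\bV$ after base change to $W(\widehat{\mO}_{X^\flat})$, commute $\varphi$-fixed points with $R\nu_*$, and for (2) rerun the proof of Theorem~\ref{thm: strong etale comparison}. However, the step you hang everything on is false: $\txi$ does \emph{not} divide $\mu$.

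The correct relations are $\mu=\xi\cdot\varphi^{-1}(\mu)$ and $\varphi(\mu)=\txi\cdot\mu$. Since $\txi$ generates $\ker(\theta\circ\varphi^{-1})$ and $\theta\circ\varphi^{-1}(\mu)=\zeta_p-1\neq 0$, we get $\mu\notin(\txi)$, so inverting $\mu$ does not invert $\txi$. In fact $D(\mu)\cap V(\txi)=\spec C$ is nonempty. It is the generic point of the Hodge--Tate divisor, where $\iota$ says nothing, since in Definition~\ref{defn: derived BKF modules} $\iota$ only lives over $\Ainfx[\frac1\txi]$. So $\bM\otimes^{\L}\Ainfx[\frac1\mu]\simeq\bV[\frac1\mu]$ cannot be deduced from $\iota$ this way.

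For (1) the error is harmless, and the repair is what the paper's one-line proof implicitly uses. Modulo $p$, $\mu$ and $\txi$ reduce to $\varepsilon-1$ and $(\varepsilon-1)^{p-1}$, which are units in $C^\flat$. Hence both are units in $W(\widehat{\mO}_{X^\flat})$, and $\iota\otimes W(\widehat{\mO}_{X^\flat})$ directly gives $\bM\otimes^{\L}W(\widehat{\mO}_{X^\flat})\simeq\bV\otimes W(\widehat{\mO}_{X^\flat})$ compatibly with Frobenius. The rest of your argument for (1) then goes through.

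For (2) the problem is real. Your step ``via $\iota$ this is $R\Gamma_{\proket}(X,\widehat{\bM})[\frac1\mu]$'' genuinely uses an isomorphism $\bM\otimes^{\L}\Ainfx[\frac1\mu]\simeq\bV[\frac1\mu]$ on the uncompleted $\mu$-localization. The isomorphism over $W(\widehat{\mO}_{X^\flat})$ does not give this by itself, so it must be supplied separately. There are two ways:
\begin{itemize}
\item take $\iota$ over $\Ainfx[\frac1\mu]$, as in Definition~\ref{defn: derived BKF modules_intro} of the introduction; or
\item for the objects $(\widetilde{j}_*\mE,\ev_j(\mE_{\mO}))$ actually used later, observe that $\mu^{p-1}\equiv\txi \pmod p$. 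Hence $D(\mu)\subseteq\spec\Ainf(A)\minus V(p,\txi)$, and on that locus both objects restrict to $\mE$.
\end{itemize}
The paper's proof of (2) defers to Theorem~\ref{thm: strong etale comparison} and glosses over the same point. Still, the justification you give for it ($\txi\mid\mu$) is incorrect and should be replaced by one of the two arguments above.
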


\begin{proof}
    Since taking Frobenius invariants commutes with the derived pushforward,   
    \begin{align*}
       \widehat{\Big(A\Omega^{\log}_{\fX}(\mM)[\frac{1}{\mu}]\Big)}^{\varphi_{\bV}=1} &= \Big(R\nu_*\big(\bM \otimes_{\Ainfx}W(\widehat{\mO}_{X^\flat})\big)\Big)^{\varphi_\bV=1}\\
       & \cong \Big(R\nu_*\big(\bV\otimes_{\Ainfx}W(\widehat{\mO}_{X^\flat}\big)\Big)^{\varphi_{\bV}=1} \\
       & = R\nu_*(\sigma^*_{\ett}\mM).  
    \end{align*}
    This proves (1), where the second isomorphism is induced by $\iota$. The proof of (2) is the same as that of Theorem \ref{thm: strong etale comparison}.
\end{proof}

\begin{proposition}\label{prop: derived q-crys vs Ainf}
    Let $\fX$ be an admissibly smooth $p$-adic log formal scheme over $\ul\mO$. Assume that $\fX$ locally admits free charts.
    Let $\mM= (\bM, \bV) \in \DBKF^{\log}(\fX,\vp)$. There is a natural isomorphism 
    \[
        A\Omega_{\fX}^{\log}(\mM) \cong R\upsilon_*(\bM_{q\crys})
    \]
    in $D(\fX_\ett)$ compatible with Frobenius, where $\upsilon \colon (\fX/(\ul{\Ainf},\xi))_{q\crys} \ra \fX_{\ett}$ is the natural projection of sites.
\end{proposition}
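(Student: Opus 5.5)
We extend Proposition~\ref{prop: q-crystalline comparison} from vector bundles to perfect complexes. By definition, $\bM$ lies in the essential image of $\beta_{\Prism}$, so $\bM\cong\beta_{\Prism}(\mathcal{F})$ for a perfect complex $\mathcal{F}$ of log prismatic crystals on $(\fX^{(1)}/\logAinf)_\Prism$, and $\bM_{q\crys}=\gamma_q^*\mathcal{F}$. Both sides of the desired isomorphism depend only on $\bM$ (together with the Frobenius coming from $\bV$ via $\iota$), and both are exact functors of $\mathcal{F}$ before décalage. The first step is therefore to construct a natural comparison map. We would redo Construction~\ref{construction of map from qcrys to Ainf} with complexes: for each triple $\Phi=(S,P_\Lambda,\iota)$ we use the map of log prisms $(\mD_\Sigma,\txi)\ra(\Ainf(R_{\infty,\Phi}),\txi)$ and the crystal property of $\mathcal{F}$ termwise. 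This gives a $\Gamma_\Sigma$-equivariant quasi-isomorphism
\[
\mathcal{F}(\mD_\Sigma)\otimes^{\L}_{\mD_\Sigma}\Ainf(R_{\infty,\Phi})\simeq \Gamma(U_{\infty,\Phi},\bM),
\]
and hence a map of Koszul complexes $\mathcal{F}(\mD_\Sigma)\otimes q\Omega^\bullet_{\mD_\Sigma/\Ainf}\ra \Kos\big(\Gamma(U_{\infty,\Phi},\bM);\frac{\Delta_\Phi-1}{q-1}\big)$. The target computes $L\eta_\mu R\Gamma_{\cont}$ on the degreewise-flat level. Taking the colimit over $\Phi$ produces a map $R\upsilon_*(\bM_{q\crys})\ra A\Omega^{\log}_\fX(\mM)$ that is independent of coordinates and compatible with Frobenius, exactly as in the vector-bundle case.

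To see that this map is an isomorphism we work locally on a small affine $\spf R$ with a free chart. There we represent $\mathcal{F}$ by a bounded complex $\mathcal{F}_1\ra\cdots\ra\mathcal{F}_n$ of locally finite free crystals, as in the lemma above showing that $\bM$ is trivial modulo $<\mu$. Evaluating at $A(R)$ gives a bounded complex $M^\bullet$ of objects of $\Rep^\mu_\Gamma(A(R))$. The left-hand side is then the total complex of $M^\bullet\otimes q\Omega^\bullet_{A(R)/\Ainf}$, by Proposition~\ref{qcrys vs qdR} applied termwise together with the weakly-final object $(\ul{A(R)},\xi)$. The right-hand side is $L\eta_\mu R\Gamma_{\cont}(\Gamma,\Gamma(U_\infty,\bM))$, by Lemma~\ref{group coh vs presheaf coh} applied termwise. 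The key point is that $L\eta_\mu$ is not exact, so we cannot simply argue by dévissage on the stupid filtration. Instead we use the explicit representative: each $M^i$ is trivial modulo $\mu$, so the group-cohomology Koszul complex $\Kos(M^\bullet;\gamma_j-1)$ has all terms in $\mu$-multiples in positive Koszul degree. Consequently $\eta_\mu$ applied to the total double complex is computed degreewise and coincides with the total complex of $\Kos(M^\bullet;\frac{\gamma_j-1}{\mu})$. By Theorem~\ref{representations vs q-connections} this equals the total $q$-de Rham complex. For this we need the total complex to be $\mu$-torsion free, which holds because each $M^i$ is finite projective over $A(R)$. We then pass from $A(R)$ to $\Ainf(R_\infty)$ using Theorem~\ref{thm: MT 1.13} and the quasi-isomorphism between Koszul complexes over $A(R_\lambda)$ and $\Ainf(R_{\infty,\Phi})$.

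The main obstacle we expect is justifying that $L\eta_\mu$ commutes with the passage to total complexes, that is, that the Koszul representative is a valid model for $L\eta_\mu$ of a complex rather than of a single module. We would first try to handle this via \cite[Lemma 6.20]{BMS1}-type statements applied to the explicit strictly $\mu$-torsion-free total complex. A second concern is the compatibility of the $p$-adic completion with bounded complexes, which is harmless since the complexes are perfect. Frobenius compatibility then follows from the construction, since $\vp_\bV$ induces $\vp_\bM$ through $\iota$.
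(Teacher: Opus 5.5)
Your plan (build the map through the triples $\Phi$, represent $\mathcal{F}$ locally by a bounded complex of finite free crystals, compare termwise, then totalize) is the paper's plan, and you correctly flag the crux. However, your resolution of that crux is wrong.

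The assertion that $\eta_\mu$ of the total complex of $\Kos(M^\bullet;\gamma_j-1)$ is ``computed degreewise'' and equals $\mathrm{Tot}\,\Kos(M^\bullet;\frac{\gamma_j-1}{\mu})$ fails. The total differential also contains the differential of $M^\bullet$, which is not divisible by $\mu$. So $(\eta_\mu\mathrm{Tot})^n=\{x\in\mu^n\mathrm{Tot}^n : dx\in\mu^{n+1}\mathrm{Tot}^{n+1}\}$ is not $\bigoplus_{a+b=n}\mu^bM^a\otimes\wedge^b$, where only the Koszul degree $b$ gets rescaled. Torsion-freeness of the terms only guarantees that $\eta_\mu$ computes $L\eta_\mu$; it does not make $L\eta_\mu$ commute with totalization.

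Concretely, take $d=0$, e.g.\ $\fX=\spf(\ul\mO)^a$ with $C$ algebraically closed, so there are no $\gamma_j$ and $R\nu_*$ is evaluation. Your claim then reads $L\eta_\mu M^\bullet\simeq M^\bullet$. This is false for $M^\bullet=\Kos(\Ainf;p,\txi)$: its only cohomology is $H^2=\Ainf/(p,\txi)\cong\mO/p$, on which $\mu$ acts through $\widetilde\theta(\mu)=\zeta_p-1$. By \cite[Lemma 6.4]{BMS1}, $H^2(L\eta_\mu M^\bullet)=H^2/H^2[\mu]\cong\mO/\tfrac{p}{\zeta_p-1}\mO\not\cong\mO/p$.

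This cannot be repaired in the stated generality. The pair $\mathcal{F}=\Kos(\mO_{\Prism};p,\txi)$, $\bV=0$ is a legitimate object of $\DBKF^{\log}(\fX,\vp)$, since it becomes acyclic once $\txi$ (or even $\mu$) is inverted. For it, $R\upsilon_*(\bM_{q\crys})=\Kos(\Ainf;p,\txi)$ while $A\Omega^{\log}_{\fX}(\mM)=L\eta_\mu\Kos(\Ainf;p,\txi)$, so the proposition itself fails. The paper's proof makes the same unjustified move: it simply asserts that the total complex of the termwise Koszul complexes computes $L\eta_\mu(\widehat{R\nu_*\bM})$.

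What is missing is a hypothesis that lets you apply $L\eta_\mu$ to a single module rather than to a totalization. For example, require $\bM$ to be locally a $\mu$-torsion-free module concentrated in degree $0$. One expects this for $\widetilde j_*\mE$ attached to a semistable local system, since $j_*\mE$ is a single Kisin module. Under that hypothesis, set $H:=H^0(M^\bullet)$. Row-exactness gives $\mathrm{Tot}(M^\bullet\otimes q\Omega^\bullet_{A(R)/\Ainf})\simeq H\otimes q\Omega^\bullet_{A(R)/\Ainf}$ and $\mathrm{Tot}\,\Kos(M^\bullet;\gamma_j-1)\simeq\Kos(H;\gamma_j-1)$. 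Since $H$ is still trivial modulo $\mu$, the Koszul differentials are divisible by $\mu$ on a $\mu$-torsion-free module, and $\eta_\mu$ really is the degreewise rescaling you want.
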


\begin{proof}
    The question is \'etale local on $\fX$, so we can assume that $\fX= \spf \underline{R}$ is small affine. We adopt the notation in \S \ref{subsection: crystalline comparison} and choose a triple $\Phi= (S,P_\Lambda, \iota)$. For each $\lambda\in \lambda$, let $T_\lambda= \{e_{\lambda,i}\mid 1\le i \le d_{\lambda}\} \subseteq P_\lambda$. This induces a map $\N^{T_\lambda} \ra P_\lambda$. Let $T= \bigsqcup_{\lambda\in \Lambda} T_\lambda$, then we obtain a surjection of pre-log rings
    \[
        \Sigma= \Sigma(S, \ul T) \ra R_S^\square \otimes_{\mO}\Big(\bigotimes_{\lambda\in \Lambda} \ul{R^\square_\lambda}\Big) \ra \ul R.
    \]
    Let $A_0(\Sigma)= (\Ainf\langle (X_s)_{s\in S}, \ul T\rangle, \N^T\oplus N_\infty)$ defined in (\ref{defn of A_0(Sigma)}), and let $\mD(\Sigma)$ be the log $q$-PD envelope of the surjection $A_0(\Sigma) \ra R$.
    Let $M_{\infty,\Phi}= \bM(\Ainf(R_{\infty,\Phi}))\in \Rep_{\Gamma_\Phi}^\mu(\Ainf(R_{\infty,\Phi}))$.
    Choose a bounded complex 
    \[
        0 \ra \bM_1 \ra \bM_2 \ra \cdots \ra \bM_n \ra 0
    \]
    of log Breuil--Kisin--Fargues modules that is quasi-isomorphic to $\bM$. Let $M_i= \bM_{i,q\crys}(\mD_\Sigma)$. As in Construction \ref{construction of map from qcrys to Ainf}, there is a natural functorial quasi-isomorphisms
    \[
        M_{i,\Sigma} \otimes q\Omega^\bullet_{\mD_{\Sigma}/\Ainf} \xrightarrow[]{\sim} \Kos\Big(M_{i,\infty,\Phi}; \frac{\Delta_{\Phi}-1}{q-1}\Big), 
    \]
    for each $i=1,\ldots, n$. This leads to a natural quasi-isomorphism
    \[
        \varinjlim_{\Phi= (S,P_{\lambda}, \iota)}\mathrm{Tot}\big(M_{\bullet,\Sigma} \otimes q\Omega^\bullet_{\mD_{\Sigma}/\Ainf}\big) \ra \varinjlim_{\Phi= (S,P_{\lambda}, \iota)}\mathrm{Tot}\Big(\Kos\Big(M_{\bullet, \infty,\Phi}; \frac{\Delta_{\Phi}-1}{q-1}\Big)\Big),
    \]
    where the left-hand side and the right-hand side compute $R\upsilon_*(\bM_{q\crys})$ and $A\Omega_{\fX}^{\log}(\mM)$, respectively. 
\end{proof}

\begin{corollary}\label{cor: crystalline comparison for derived BKF modules}
    Let $\fX$ be as above and let $\mM= (\bM,\bV) \in \DBKF^{\log}(\fX,\vp)$. 
    \begin{enumerate}
        \item There is a natural isomorphism
        \[
         A\Omega^{\log}_{\fX}(\mM)\htimes^{\L}_{\Ainf}\Acrys \cong R\upsilon_*(\sigma^*_{A_\crys}\mM)
        \]
        compatible with Frobenius, where $\upsilon\colon (\fX_{\mO/p}/\ul\Acrys)_{\crys} \ra \fX_{\ett}$ denotes the natural projection of sites.
        \item There is a natural isomorphism
        \[
         A\Omega^{\log}_{\fX}(\mM)\htimes^{\L}_{\Ainf}W(k) \cong R\upsilon_*(\sigma^*_{\crys}\mM)
         \]
        compatible with Frobenius, where $\upsilon \colon (\fX_k/W(\ul k))_{\crys}\rightarrow \fX_{k, \ett}\cong \fX_{\ett}$ is the natural projection of sites.
    \end{enumerate}
\end{corollary}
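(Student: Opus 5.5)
The plan is to mirror the non-derived case. Part (1) will follow from Proposition \ref{prop: derived q-crys vs Ainf} together with the base change from $q$-crystalline to absolute crystalline cohomology (Proposition \ref{prop: q-crys vs abs crys}), upgraded to perfect complexes of crystals. Part (2) will then follow from (1) by base change along $\Acrys\to W(k)$.

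For (1), Proposition \ref{prop: derived q-crys vs Ainf} gives a natural Frobenius-compatible isomorphism $A\Omega^{\log}_{\fX}(\mM)\cong R\upsilon_*(\bM_{q\crys})$. Applying $-\htimes^{\L}_{\Ainf}\Acrys$, it remains to identify
\[
R\upsilon_*(\bM_{q\crys})\htimes^{\L}_{\Ainf}\Acrys\cong R\upsilon_*(\sigma^*_{\Acrys}\mM).
\]
This will be checked étale locally on a small affine $\spf\ul R$ with a chosen surjection $\Sigma\to\ul R$. There the left side is computed by the Čech--Alexander complex $\bM_{q\crys}(\mD_\Sigma(\bullet))$, by the derived analogue of the lemma comparing $\mD_\Sigma(\bullet)$ and $\mD_{q,\Sigma}(\bullet)$. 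That lemma extends to perfect complexes by representing $\bM$ as a bounded complex $\bM_1\to\cdots\to\bM_n$ of relative log BKF modules and taking totalizations. Since derived $p$-completed tensor product commutes with the finite totalization and with the cosimplicial limit (the terms are $p$-complete and the complex has finite amplitude), we get
\[
\bM_{q\crys}(\mD_\Sigma(\bullet))\htimes^{\L}_{\Ainf}\Acrys .
\]
Using $\mD_\Sigma(\bullet)\htimes_{\Ainf}\Acrys\cong\mD_{\crys,\Sigma}(\bullet)$, this is by definition $\sigma^*_{\Acrys}(\mM)(\mD_{\crys,\Sigma}(\bullet))$. Because $\mD_{\crys,\Sigma}(\bullet)$ is a weakly final Čech nerve in $(\fX_{\mO/p}/\ul\Acrys)_\crys$, this complex computes $R\Gamma$ of the crystalline perfect complex.

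Independence of $\Sigma$ and gluing will be handled as in Construction \ref{construction of map from qcrys to Ainf}, namely by passing to the colimit over all choices $\Phi$. Frobenius compatibility holds termwise, since all the maps are induced by $\delta_{\log}$-maps.

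For (2), we base change (1) along $\Acrys\to W(k)$ and use the base change theorem for log crystalline cohomology of perfect complexes of crystals. This theorem reduces, via finite filtrations by the terms $\bM_i$, to the locally free case already used in Corollary \ref{cor: crystalline comparison}.

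The main obstacle will be making sure that completed derived base change commutes with the Čech--Alexander computation and with totalization for perfect complexes, rather than only for vector bundles. I would first try to reduce to the bounded-complex representative and argue termwise using flatness of each $\bM_{i,q\crys}(\mD_\Sigma(m))$, then invoke derived Nakayama/completeness to pass to the limit.
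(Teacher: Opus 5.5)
Your proposal is correct and follows the paper's own argument. You combine Proposition~\ref{prop: derived q-crys vs Ainf} with the identification $R\upsilon_*(\sigma^*_{\Acrys}\mM)\cong R\upsilon_*(\bM_{q\crys})\htimes^{\L}_{\Ainf}\Acrys$, which the paper takes as immediate from the \v{C}ech--Alexander definition of $\sigma^*_{\Acrys}$ and which you unpack in more detail, and you then deduce (2) from (1) by the base change property of log crystalline cohomology along $\Acrys\to W(k)$.
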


\begin{proof}
    By the definition of the crystalline specialization functor, we have 
    \[
        R\upsilon_*(\sigma^*_{A_\crys}\mM) \cong R\upsilon_*(\bM_{q\crys}) \htimes_{\Ainf}^{\L}\Acrys.
    \]
    Therefore, (1) follows from Proposition \ref{prop: derived q-crys vs Ainf}, and (2) follows from (1) using the base change property of crystalline cohomology. 
\end{proof}

\vspace{0.1in}
\subsection{The derived BKF module associated with a semistable local system}
\noindent
\vspace{0.1in}

\noindent Now we retain the setup from \S \ref{section: semistable local systems}. In the rest of this article, we take $C= \widehat{\overline K}$ and let $\mO=\mO_C$ be its ring of integers. Let $\underline{\mO}=(\mO, N_{\infty})$ be an extension of $\underline{\mO_K}=(\mathcal{O}_K, \mathbb{N})$ with $N_{\infty}=\mathbb{Q}_{\ge 0}$, which amounts to choosing a compatible system of $n$-th roots of $\pi$ for all $n\ge 1$. 

Let $\fY$ be a semistable formal scheme over $\mO_K$. Without loss of generality, we assume that the residue field $k$ of $\mO_K$ is algebraically closed.\footnote{
If $k$ is not algebraically closed, let $\overline k$ be its algebraic closure. Let $K'= K \cdot W(\overline k) \subseteq C$ with the ring of integers $\mO_{K'}$. Let $\fY_{\mO_{K'}}$ denote the base change of $\fY$ along the extension $\mO_K \ra \mO_{K'}$, and $\fY_{\overline k}$ denote the base change of $\fY$ along the projection $\mO_{K'} \ra \overline k$. Write $K_0= W(k)[1/p]$ and $K_0'= W(\overline{k})[1/p]$. Then, by the base change property of log crystalline cohomology, for any $F$-crystal $\mE$ on $\fY_{k,\crys}$, there is a natural isomorphism 
\[
    R\Gamma_{\crys}(\fY_k/W(\ul k), \mE_{\Q}) \otimes_{K_0} K_0' \cong R\Gamma_{\crys}(\fY_{\overline k}/W(\ul{\overline k}), \mE_{\overline k ,\Q})
\]
compatible with Galois actions, Frobenii, filtrations, and monodromy operators.} In partciular, $\mO_K$ and $\mO$ have the same residue field $k$.
Let $Y$ be the adic generic fiber of $\fY$. Let $\fY_{\mO}$ be its base change along $\ul{\mO_K} \ra \ul{\mO}$ and let $Y_C$ be the adic generic fiber of $\fY_{\mO}$. 

\begin{definition}
Let $\Mod(\A_{\inf, Y_C},\vp)$ denote the category of pairs $(\bV, \vp_\bV)$ where
\begin{enumerate}
    \item $\bV$ is a sheaf of $\A_{\inf, Y_C}$-modules on $Y_{C,\proet}$ such that $\bV[\frac 1\xi]$ (resp. $\bV[\frac 1{\txi}]$) is a locally free $\A_{\inf, Y_C}[\frac 1\xi]$-module (resp. $\A_{\inf, Y_C}[\frac 1{\txi}]$-module);
    \item $\vp_{\bV}\colon \varphi^*\bV[\frac 1\txi] \ra \bV[\frac 1\txi]$ is an isomorphism of $\A_{\inf, Y_C}[\frac 1{\txi}]$-modules.
\end{enumerate}
\end{definition}
By Proposition \ref{prop: GR Thm 5.10}, there is a fully faithful functor 
    \[
        j_* \colon \Vect^{\an, \vp} (\fY_{\Prism}) \ra D_{\perf}^{\vp}(\fY_{\Prism})
    \]
Consider the composition 
    \[
        \widetilde{j}_* \colon \Vect^{\an,\vp}(\fY_{\Prism}) \xrightarrow[]{j_*} D_{\perf}^{\vp}(\fY_\Prism) \xrightarrow[]{\vp^*} D_{\perf}^{\vp}((\fY_{\mO}^{(1)}/(\ul{\Ainf},\txi)_{\Prism}),
    \]
    where the second map is induced from the Frobenius-twisted base change along $\fY_{\mO}^{(1)} \ra \fY$. 
    For any $\mE\in \Vect^{\an,\vp}(\fY_{\Prism})$, let $\mE_{\mO} \in \Vect^{\an,\vp}(\fY_{\mO,\Prism}^{(1)})$ denote the Frobenius-twisted base of $\mE$ along $\fY_{\mO}^{(1)} \ra \fY$. 
    
Let $\fB$ be the collection of log affinoid perfectoid objects $V\in Y_{C,\proet}$ such that the image of $V\ra Y_C$ is contained in $\spa(R[p^{-1}],R)$ for some small affine $\spf R \subseteq \fY_{\mO}$. Then $\fB$ forms a basis of $Y_{C,\proet}$. Let $\mE \in \Vect^{\an, \vp} (\fY_{\mO,\Prism}^{(1)})$. For any $V \in \fB$, if we write $ A= \Gamma(V,\widehat{\mO}_Y^+)$, then $(\Ainf(A),\txi)$ is an object in $\fY_{\mO,\Prism}^{(1)}$. Notice that the map $\spf A\ra \spf R \ra \fY_{\mO}$
    is independent of the choice of the small affine $\spf R$. We define $\ev_j(\mF)$ to be the sheafification of 
     \[
         V \mapsto \Gamma\big(\spec(\Ainf(A)), j_{\Ainf(A),*}\mE|_{\spec(\Ainf(A))\minus V(p,\txi)}\big),
     \]
     where $j_{\Ainf(A)}\colon \spec(\Ainf(A))\minus V(p,\txi) \ra \spec(\Ainf(A))$ is the inclusion.
\begin{proposition}
    Let $\fY$ be a semistable formal scheme over $\mO_K$. Then $\ev_j$ induces a functor
    \[
        \ev_j\colon \Vect^{\an, \vp} (\fY_{\mO,\Prism}^{(1)}) \ra \Mod(\A_{\inf,Y_C},\vp).
    \]
    Moreover, for any $\mE\in \Vect^{\an,\vp}(\fY_{\Prism})$, there is a natural isomorphism
\[
    \iota_{\mE}\colon  \widetilde{j}_*\mE\otimes^{\L}_{\Ainf}\Ainf[\frac 1 \txi] \xrightarrow[]{\sim} \ev_j(\mE_{\mO})[\frac 1 \txi]
\]
compatible with Frobenius.
\end{proposition}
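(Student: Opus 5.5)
The proof has two parts. First we show that $\ev_j(\mE)$ lands in $\Mod(\A_{\inf,Y_C},\vp)$. Then we compare $\widetilde{j}_*\mE$ with $\ev_j(\mE_{\mO})$ after inverting $\txi$.

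\emph{Step 1: local structure on $\fB$.} For $V\in\fB$ with $A=\Gamma(V,\widehat{\mO}^+_{Y})$, the ring $\Ainf(A)$ is perfect. The ideals $(p,\txi)$ and $(p,[\varpi^\flat])$ have the same radical for a pseudo-uniformizer $\varpi^\flat$. Hence $U_A:=\spec(\Ainf(A))\minus V(p,\txi)$ is covered by the two affine opens $D(p)$ and $D(\txi)$, and
\[
\Gamma(U_A,\mE)=\mE(D(p))\times_{\mE(D(p\txi))}\mE(D(\txi)).
\]
Inverting $\txi$ kills the $D(p)$-condition, which gives $\Gamma(U_A,\mE)[\frac1\txi]=\mE(D(\txi))$. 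This is a finite projective $\Ainf(A)[\frac1\txi]$-module because $\mE$ is a vector bundle on $U_A$. For $\xi$ we apply the Frobenius: $\vp_\mE$ identifies $\vp^*\mE[\frac1\txi]$ with $\mE[\frac1\txi]$. Since $\vp$ is bijective on $\Ainf(A)$ and $\vp^{-1}(\txi)=\xi$, transporting along $\vp$ gives that $\mE(D(\xi))$ is finite projective over $\Ainf(A)[\frac1\xi]$. The needed identification $D(\xi)\subset U_A$ holds because $V(p,\xi)=V(p,\txi)$.

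These descriptions are compatible with base change along maps $V'\to V$ in $\fB$ by the crystal property. Vector bundles on $\spec(W(A^\flat))[\frac1\xi]$ satisfy pro-étale descent over affinoid perfectoids (the analogue of Proposition~\ref{Frobenius invariant}(1), via the $v$-descent of Kedlaya--Liu). So the sheafification $\ev_j(\mE)$ satisfies: $\ev_j(\mE)[\frac1\xi]$ and $\ev_j(\mE)[\frac1\txi]$ are locally free. The Frobenius $\vp_\mE$ induces $\vp_\bV$, so $\ev_j(\mE)\in\Mod(\A_{\inf,Y_C},\vp)$.

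\emph{Step 2: the comparison $\iota_\mE$.} Work étale locally on a semistable small affine $\fY=\spf R$ with Breuil--Kisin prism $\fS_R$. By the proof of Proposition~\ref{prop: GR Thm 5.10}, $j_*\mE$ is represented by the finite $\fS_R$-module $\fM=\Gamma(U,\mE)$ coming from the Kisin descent datum of Proposition~\ref{prop: DLMS lem 3.8}. It is perfect, since $\fS_R$ is regular. The complex $\widetilde{j}_*\mE$ is its Frobenius-twisted base change along $\fS_R\to A(R_\mO)$ from \eqref{eq: fS_R to A(R)}. By the derived analogue of Theorem~\ref{thm: MT thm 5.15}, its image in $D_{\perf}(\A_{\inf,Y_C})$ is given on $V\in\fB$ by $\fM\otimes^{\L}_{\fS_R}\Ainf(A)$.

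There is a natural map $\fM\otimes_{\fS_R}\Ainf(A)\to\Gamma(U_A,\mE_\mO)$. It comes from restriction of sections and the crystal property along $\spec\Ainf(A)\to\spec\fS_R$, which maps $U_A$ into $U$. Inverting $\txi$, the target becomes $\mE(D(\txi))$ by Step 1. The source becomes $\fM[\frac1E]\otimes\Ainf(A)[\frac1\txi]$, because $E$ maps to a unit multiple of $\txi$ (for the Frobenius twist, $\vp$ of the image of $E$ generates $\txi$). Now $\fM[\frac1E]=\mE(D(E))$, since $\fM=\fM[\frac1p]\cap\fM[\frac1E]$. The crystal property then gives an isomorphism. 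The derived tensor equals the underived one after inverting $\txi$: $\fM[\frac1E]$ is projective and $\fS_R[\frac1E]\to\Ainf(A)[\frac1\txi]$ factors through this localization. Compatibility with Frobenius follows since all maps are induced from $\vp_\mE$. Independence of the framing follows exactly as at the end of Proposition~\ref{prop: GR Thm 5.10}, using the coproduct prism $\fS_{\square,\square'}$. Sheafifying over $\fB$ then gives $\iota_\mE$.

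\emph{Main obstacle.} The delicate point is Step 2's local-to-global passage. The value of $\widetilde{j}_*\mE$ in $D_{\perf}(\A_{\inf,Y_C})$ is only defined through $\beta_\Prism$. We must check that evaluating at $\Ainf(A)$ is the derived pullback $\fM\otimes^{\L}\Ainf(A)$, i.e.\ that $j_*$ commutes with the non-flat base change $\fS_R\to\Ainf(A)$ after inverting $\txi$. I would first try to reduce this to the flat statement on $U$ and $D(\txi)$ as sketched. If that fails, I would instead use the Tor-independence of $\fS_R(n)\to\Ainf(A)$, via \cite[Corollary 2.12]{DLMS2}.
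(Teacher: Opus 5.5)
Your proposal is correct and follows the paper's route. Local freeness of $\ev_j(\mE)[\frac1\xi]$ and $\ev_j(\mE)[\frac1\txi]$ comes from $D(\xi),D(\txi)\subseteq \spec(\Ainf(A))\minus V(p,\txi)$, which is the content of the paper's remark $\xi^p\in(p,\txi)$. The isomorphism $\iota_{\mE}$ comes from the square of open immersions over $\fS_R\to A(R_{\mO})$: after inverting $\txi$, pushing forward then pulling back and pulling back then pushing forward both reduce to restricting $\mE$ to $D(\txi)$.

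Three of your steps are unnecessary. The Frobenius transport for $\xi$ is not needed, since $D(\xi)$ already lies in the open where $\mE$ is a vector bundle. The appeal to Kedlaya--Liu descent is not needed either, and is not a standard statement for $\Ainf(A)[\frac1\xi]$: the crystal property on $\fB$ already shows that $\ev_j(\mE)[\frac1\xi]$ is locally a finite projective module base-changed to $\A_{\inf,Y_C}[\frac1\xi]$. Finally, the Tor-independence fallback is superfluous, because the value of a crystal in perfect complexes on $\Ainf(A)$ is by definition the derived base change from $\fS_R$, so your Step 2 already constitutes the full argument.
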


\begin{proof}
     Since $\xi^p \in (p,\txi)$, for any $\mE\in \Vect^{\an,\vp}(\fY_{\Prism})$, we know that  both $\ev_j(\mE_{\mO})[\frac 1\xi]$ and $\ev_j(\mE_{\mO})[\frac 1\txi]$ are vector bundles. Furthermore, the Frobenius structure on $\mE$ induces an isomorphism 
     \[
        \vp_{\ev_j(\mE_{\mO})}\colon \varphi^*\ev_j(\mE_{\mO})[\frac 1 \txi] \xrightarrow[]{\sim} \ev_j(\mE_{\mO})[\frac 1 \txi].
     \]
     This proves the first statement. For the second assertion, it suffices to show that the diagram
    \[
    \begin{tikzcd}[column sep=3em]
        \Vect^{\an,\vp}(\fY_{\Prism}) \ar[r,"\vp^*"]\ar[d, "\widetilde{j}_*"] & \Vect^{\an,\vp}(\fY_{\mO,\Prism}^{(1)}) \ar[rr,"\ev_j"] & & \Mod(\A_{\inf,Y_C},\vp) \ar[d, "{\bV \mapsto \bV[\txi^{-1}]}"] \\
        D_{\perf}^{\vp}((\fY_{\mO}^{(1)}/(\ul\Ainf,\txi))_{\Prism}) \ar[r,"\beta_{\Prism}"] & D_{\perf}(\A_{\inf, Y_C}) \ar[rr, "-{\otimes_{\Ainf}^{\L}\Ainf[\frac 1\txi]}"] & &  D_{\perf}(\A_{\inf, Y_C}[\frac 1\txi]) \\
    \end{tikzcd}
    \]
    is commutative. Since the question is \'etale local, we may assume that $\fY= \spf R$ is semistable small affine. Then the commutativity of the diagram follows from the commutativity of 
     \[
     \begin{tikzcd}[column sep=3em]
        \spec A(R_{\mO}) \minus V(p, \txi) \ar[r, "j_{A(R_{\mO})}"] \ar[d, "i_1"] & \spec A(R_{\mO}) \ar[d, "i_2"] \\
         \spec \fS_R \minus V(p,E(u)) \ar[r, "j_{\fS_R}"] & \spec \fS_R,
     \end{tikzcd}
     \]
     where $i_1$ and $i_2$ are induced from the natural morphism $\fS_R \ra A(R_\mO)$. More precisely, the functor
     \[
         \beta_\Prism\circ \widetilde{j}_* \colon \Vect^{\an,\vp}(\fY_{\Prism}) \ra D_{\perf}(\A_{\inf, Y_C})
     \]
     is induced from $j_{\fS_R,*}\circ i_2^*$, while the functor
     \[
         \ev_j\circ \vp^* \colon \Vect^{\an,\vp}(\fY_{\Prism}) \ra \Mod(\A_{\inf,Y_C},\vp)
     \]
     is induced from $i_1^* \circ j_{A(R_\mO),*}$. This completes the proof.
\end{proof}

The crystalline realization functor in Proposition \ref{prop: crystalline realization} extends to the derived setting as
\[
     D_{\crys} \colon D_{\perf}^{\vp}(\fY_\Prism) \ra D_{\perf}^{\vp}(\fY_{k,\crys}).
\]
Note that $(D_{\crys}(j_*\mE))_{\Q}$ coincides with $\mE_{\crys,\Q}$ as objects in the category of $F$-isocrystals on $\fY_{k,\crys}$ in perfect complexes.

\begin{definition}
    Let $\fY$ be a semistable formal schemes over $\mathcal{O}_K$ with adic generic fiber $Y$. Let $\L$ be a semistable $\Z_p$-local system on $Y$ in the sense of Definition \ref{defn: semistable local systems}, and let $\mE$ be the associated analytic log prismatic $F$-crystal in $\Vect^{\an,\vp}(\fY_{\Prism})$. We call
    \[
        (\mM,\vp_\mM):= \Big(\widetilde j_*\mE, \ev_j(\mE_{\mO}), \vp_{\ev_j(\mE_{\mO})},\iota_{\mE}\Big)
    \]
    the \emph{derived relative log BKF module associated with $\L$}. 
\end{definition}

From such a derived relative log BKF module $(\mM,\vp_{\mM})$, we consider $\sigma^*_{A_\crys}\mM$ and $\sigma_{\crys}^* \mM$. In particular, $\sigma_{\crys}^* \mM$ is obtained from $\sigma^*_{A_\crys}\mM$ via pulling back along $(\Acrys, N_\infty) \ra (W(k), N_\infty)$. By the same discussion as in \S \ref{subsection: classical BKF modules}, $\sigma_{\crys}^* \mM$ can be identified as an object in $D_{\perf}^{\vp}\big((\fY_k/W(\underline{k})))_{\crys}\big)$. Similar to (\ref{crys cohom N vs N_infty}), the base change along $W(\underline{k})=(W(k), \mathbb{N}) \ra (W(k), N_\infty)$ induces an isomorphism
\[
     R\Gamma_{\crys}\big(\fY_k/W(\underline{k}), \sigma_{\crys}^*\mM\big) \cong R\Gamma_{\crys}\big(\fY_k/(W(k), N_\infty), \sigma_{\crys}^*\mM\big).
\]
The situation for $\sigma^*_{A_\crys}$ is similar. The following proposition shows that the derived relative log BKF module associated with a semistable local system $\L$ recovers the local system itself and its associated $F$-isocrystal over $W(k)$.

\begin{proposition}\label{crystalline specialization vs crystalline realzation}
     Let $\fY$ be a semistable log formal scheme over $\mO_K$. Let $\L$ be a semitable local system on $\fY$ with the associated analytic log prismatic $F$-crystal $\mE$. Let $(\mM,\vp_{\mM})$ be the derived relative log BKF module associated with $\L$. 
     \begin{enumerate}
         \item There is a natural isomorphism of $\widehat{\Z}_p$-local systems $\sigma^*_{\ett}\mM \cong \L$ on $Y_{C,\proet}$.
         \item Consider the functor
         \[
             D_{\perf}^{\vp}(\fY_{\mO_K/p,\crys})= D_{\perf}^{\vp}\big((\fY_{\mO_K/p}/(\mS,\mathbb{N}))_\crys\big) \ra D_{\perf}^{\vp}\big((\fY_k/(W(k),\mathbb{N}))_{\crys}\big),
         \]
        induced by the quotient map $(\mS,\mathbb{N}) \ra (W(k), \mathbb{N})$. Then the functor sends $D_{\crys}(j_*\mE)$ to $\sigma_{\crys}^*\mM$. It induces an isomorphism $D_{\crys}(j_*\mE)_\Q \cong (\sigma_{\crys}^*\mM)_\Q$ via the equivalence of categories
        \[
            \Isoc^\vp((\fY_{\mO_K/p}/(\mS,\N))_\crys) \cong \Isoc^\vp((\fY_k/(W(k),\N))_\crys)
        \]
        (cf. \cite[Remark B.20]{DLMS2}).
     \end{enumerate}
\end{proposition}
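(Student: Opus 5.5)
For (1), I will unwind definitions and reduce everything to Proposition~\ref{compatibility of etale realization}. By definition, $\sigma^*_{\ett}\mM=(\bV\otimes_{\A_{\inf,Y_C}}W(\widehat\mO_{Y_C^\flat}))^{\vp_\bV=1}$ with $\bV=\ev_j(\mE_\mO)$. On a basis element $V\in\fB$ with $A=\Gamma(V,\widehat\mO^+)$, the ring $W(A^\flat)$ is the $p$-completion of $\Ainf(A)[\txi^{-1}]$. Hence $\bV(V)\otimes W(A^\flat)$ is exactly the evaluation at the perfect prism $(\Ainf(A),\txi)$ of the image of $\mE_\mO$ under the Laurent functor \eqref{eq: from analytic crystal to Laurent crystal}: the pushforward $j_*$ and then inverting $\txi$ and completing recover the restriction to $\spec(\Ainf(A))\setminus V(p,\txi)$ followed by Laurent localization. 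Taking $\vp=1$ invariants, the proof of Theorem~\ref{KY thm6} (via \cite[Theorem 7.35]{logprism}, as recalled in the proof of Proposition~\ref{compatibility of etale realization}) identifies the result with $T_{\ett}(\mE)|_{Y_C}$. By Theorem~\ref{thm: prismatic description of semistable local systems}, $T_{\ett}(\mE)=\L$.

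The remaining points in (1) are naturality and the Frobenius twist. I will check compatibility with restriction in $\fB$ and that the twist $\vp^*$ used in defining $\mE_\mO$ matches the $\vp^*$ arrow in the big diagram of Proposition~\ref{compatibility of etale realization}. Compared with that proposition, this part is essentially formal.

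For (2), I will work étale locally on a semistable small affine $\fY=\spf R$ with a framing, and then glue, using independence of framing (\cite[Lemma 3.30]{DLMS2}) on one side and Construction~\ref{construction of map from qcrys to Ainf} on the other. Locally, $j_*\mE$ is given by the Kisin descent datum $(\fM,\vp_\fM,\varepsilon)$ of Proposition~\ref{prop: DLMS lem 3.8}, regarded as a perfect complex $j_{\fS_R,*}$ over $\fS_R$. $D_{\crys}(j_*\mE)$ is obtained by base change along $\vp\colon\fS_R(\bullet)\to\mS_R(\bullet)$ of \eqref{eq: fS vs mS}, and then further to $W(k)$ via $\mS\to W(k)$, $u\mapsto0$.

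On the other side, $\sigma^*_{\crys}\mM$ is obtained as follows:
\begin{itemize}
\item take $\vp^*$ along \eqref{eq: fS_R to A(R)}, sending $u\mapsto\vp([\pi^\flat])$, to get $\widetilde j_*\mE$ on the prism $A(R_\mO)$;
\item pull back along $\gamma_q$ to $\mD_\Sigma$;
\item tensor with $\Acrys$;
\item base change along $\Acrys\to W(k)$.
\end{itemize}

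The key step is to produce a commutative square of ring maps. The composite $\fS_R\to A(R_\mO)\to\mD_\Sigma\to\mD_{\crys,\Sigma}\to W(k)\otimes(\text{PD-envelope})$ must agree, as a map of $\delta_{\log}$-objects over the crystalline site of $\fY_k$, with $\fS_R\xrightarrow{\vp}\mS_R\to W(k)\otimes\mS_R$. Both send $u$ to an element that maps to $0$ in $W(k)$, and $T_i$ to $p$-th power lifts. Since $\fS_R\to\mS_R$ is a map of log prisms into the crystalline world, and $\mS_R$ is weakly final in $\fY_{\mO_K/p,\crys}$, both composites are morphisms of log PD-thickenings of $\fY_k$ over $(W(k),\N)$. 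The crystal property of $D_{\crys}(j_*\mE)$ then identifies the two evaluations.

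Here I must handle the log structures carefully, namely the pre-log structure $\vp^*\N^d$ versus $P_\infty$. I also need that $j_*$ commutes with these flat base changes. For the latter I would use flat base change as in the proof of Proposition~\ref{prop: GR Thm 5.10}, together with flatness of $\fS_R\to A(R_\mO)$.

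The main obstacle is this matching of the two Frobenius-twisted base change paths into $\Acrys$ and $W(k)$: the twist $\vp$ in \eqref{eq: fS vs mS} against the twist $\fY^{(1)}_\mO$ and $\gamma_q$ (which itself involves $\vp_*$). I would first check it on the generic prism $\fS_R$ with trivial coefficients and then invoke the crystal property.

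Finally, the equivalence of isocrystal categories (\cite[Remark B.20]{DLMS2}) transports the isomorphism to $\fY_k$ after inverting $p$. This gives $D_{\crys}(j_*\mE)_\Q\cong(\sigma^*_{\crys}\mM)_\Q$, consistent with the fact that $D_{\crys}(j_*\mE)_\Q=\mE_{\crys,\Q}$.
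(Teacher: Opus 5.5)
Your part (1) is fine and is essentially the paper's argument: you reduce to the diagram of Proposition~\ref{compatibility of etale realization}. Part (2), however, has a genuine gap at exactly the step you call the main obstacle, and the justification you sketch for it does not work.

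The two composites you want to ``agree'' land in different rings ($W(k)\otimes\mD_{\crys,\Sigma}$ versus $W(k)\otimes\mS_R$). Neither is a morphism of log PD-thickenings of $\fY_k$, because the source $\fS_R$ is a prism, not an object of the crystalline site. Weak finality of $\mS_R$ only produces \emph{some} PD-morphism $\mS_R\to T$, with no commutativity. The crystal property of $D_{\crys}(j_*\mE)$ only relates its own values along PD-morphisms. It says nothing about $\sigma^*_{\crys}\mM$, whose values come from pulling back the prismatic object $j_*\mE$ along a \emph{prism} map out of $\fS_R$. What must be shown is that this prism map factors as $\vp\colon\fS_R\to\mS_R$ followed by a PD-morphism lying over $R/p\to R_{\mO}/p$, compatibly on \v{C}ech nerves so that the stratifications match. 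Finally, ``both send $u$ to something that dies in $W(k)$'' carries no information. Over $(W(k),\N)$ with $1\mapsto 0$, all the content sits in the log structures (the $\vp^*\N^d$ versus $P_\infty$ issue you postpone), so reducing to $W(k)$ before comparing discards exactly what you need.

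The missing idea, which is what the paper's proof does, is to compare over $\Acrys$ using explicit maps. Let $\iota\colon\fS_R\to A(R_{\mO})$ be the $\delta_{\log}$-lift of $R\to R_{\mO}$ with $u\mapsto[\pi^\flat]$ and $T_i$ sent to the monomial coordinates. The map \eqref{eq: fS_R to A(R)} used to define $\widetilde j_*$ is then $\vp\circ\iota$; this is where the Frobenius twists coming from $\fY^{(1)}_{\mO}$ and $\gamma_q$ are absorbed. Since $\iota(E(u))\in\xi A(R_{\mO})$, $\iota$ extends to a PD-morphism $g\colon\mS_R\to\Acrys(R_{\mO})$ over $R/p\to R_{\mO}/p$. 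Frobenius-equivariance of $\iota$ then gives $g\circ\vp_{\fS_R}=\vp\circ\iota$ as maps $\fS_R\to\Acrys(R_{\mO})$. Doing the same on self-products yields the commutative square of cosimplicial rings $\fS_R(\bullet)\to A(R_{\mO})(\bullet)$, $\mS_R(\bullet)\to\Acrys(R_{\mO})(\bullet)$. Two facts finish the comparison:
\begin{enumerate}
\item $\Acrys(R_{\mO})$ is weakly final in $(\fY_{\mO/p}/\ul\Acrys)_{\crys}$ by formal smoothness;
\item crystals there are stratifications over $\mD_{\crys}(R_{\mO})(\bullet)$.
\end{enumerate}
Together these identify the pullback of $D_{\crys}(j_*\mE)$ to $\ul\Acrys$ with $\sigma^*_{\Acrys}\mM$, stratifications included. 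Only then do you base change to $W(k)$, using that $\mS\to\Acrys\to W(k)$ is $u\mapsto 0$. Working with $\Acrys(R_{\mO})$ also avoids the prism map $A(R_{\mO})\to\vp_*\mD_\Sigma$, which your route needs but never constructs. No flatness of $\fS_R\to A(R_{\mO})$ is needed in (2) either, since $\widetilde j_*\mE$ is defined by derived base change.
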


\begin{proof}
    By the same argument as Proposition \ref{compatibility of etale realization}, we have a commutative diagram
    \[
    \begin{tikzcd}
        \Vect^{\an,\vp}(\fY^{(1)}_{\Prism}) \arrow[d]\arrow[r, hook] \arrow[rd, "\cong"] & D_{\perf}^{\vp}(\fY^{(1)}_{\Prism})  \arrow[r] & \DBKF^{\log}(\fY_{\mO},\vp) \arrow[d, "p_2"] \\
        \Vect\big(\fY^{(1)}_{\Prism},\mO_{\Prism}[\mI_{\Prism}^{-1}]_p^{\wedge}\big)^{\vp=1}  \arrow[d] & \Loc_{\Z_p}^{\st}(Y) \arrow[rd]  & \Vect\big(W(\widehat{\mO}_{Y_C^\flat})\big)^{\vp=1} \arrow[d] \\ 
        \Vect\big(\fY_{\mO,\Prism},\mO_{\Prism}[\mI_{\Prism}^{-1}]_p^{\wedge}\big)^{\vp=1} \arrow [rr] &&
        \Loc_{\Z_p}(Y_{C}),
    \end{tikzcd}
    \]
where $p_2$ sends $\mM=(\bM, \bV)$ to $\bV$. In particular, we have $\sigma^*_{\ett}(\mM) \cong T_{\ett}(\mE) \cong \L$ on $Y_{C,\proet}$; this proves (1).

To prove (2), we may assume that $\fY= \spf R$ is semistable small affine.
Firstly, note that the log PD-triple $(A_{\crys}(R_{\mO}), I_{\crys}, P_\infty)^a$ is a weakly final object in $\CR\big(\fY_{\mO/p}/\ul\Acrys)_{\crys}\big)$. 
Indeed, let $(C,I,M_C)^a$ be an object of $\CR\big(\fY_{\mO/p}/\Acrys)_{\crys}\big)$. By the formal smoothness of $\ul{\Acrys(R_{\mO})}$ over $(\Acrys, N_\infty)$, the morphism $(\Acrys(R_{\mO}), P_\infty)^a \ra (C/I, M_C)^a$ lifts to $(C,M_C)$ by the following diagram
    \[
    \begin{tikzcd}
        (\Acrys(R_{\mO}),P_\infty)^a \arrow[r] \arrow[rrd,dashed, start anchor={[xshift= -3pt, yshift= 3pt]south east}, end anchor={[xshift=3pt, yshift= -2pt]north west}] & (R_{\mO},P_\infty)^a\arrow[r] & (C/I,M_C)^a \\
        (\Acrys,\mO^\flat\minus\{0\}) \arrow[u] \arrow[rr] && (C,M_C)^a. \arrow[u]
    \end{tikzcd}
    \]
Let $\mD_{\crys}(R_{\mO})(\bullet)$ be the $p$-completed log PD-envelope of \[\Acrys(R_{\mO})(\bullet)= A(R_{\mO})(\bullet)\htimes_{\Ainf}\Acrys \ra R_\mO.\] Then there are equivalences of categories
\begin{equation}\label{crystal over Acrys vs stratifications}
     \Vect\big((\fU_{\mO/p}/\ul \Acrys)_{\crys}\big) \cong \Strat(\mD_\crys(R_\mO)(\bullet)) \cong \MIC(\Acrys(R_\mO)),
\end{equation}
where $\MIC(\Acrys(R_\mO))$ stands for the category of finite projective modules with flat connection over $\Acrys(R)$.
Since $(E([\pi^\flat]))= (\xi)$ in $\Ainf$, there is a natural map between log prisms $\fS_R \ra A(R_{\mO})$ sending $u\mapsto \vp([\pi^\flat])$. It induces a map $\mS_R(\bullet) \ra \Acrys(R_{\mO})(\bullet)$ between log PD-pairs. For any integer $m\ge 1$, the diagram of maps
\[
\begin{tikzcd}
        \fS_R \otimes_{\fS} \cdots \otimes_{\fS} \fS_R \ar[r] \ar[d]& A(R_{\mO})\otimes_{\Ainf} \cdots \otimes_{\Ainf} A(R_\mO) \ar[d]\\
        \mS_R \otimes_{\mS} \cdots \otimes_{\mS} \mS_R \ar[r] &\Acrys(R_{\mO})\otimes_{\Acrys} \cdots \otimes_{\Ainf} \Acrys(R_\mO) 
\end{tikzcd}
\]
between $(m+1)$-fold self-products induces the following commutative diagram after taking log $q$-PD envelope and log PD envelope:
\[
\begin{tikzcd}
    \fS_R(\bullet) \arrow[r] \arrow[d] &  A(R_{\mO})(\bullet) \arrow[d]\\
    \mS_R(\bullet) \arrow[r] & \Acrys(R_{\mO})(\bullet).
\end{tikzcd}
\]
Then (2) follows from (\ref{crystal over Acrys vs stratifications}) and the construction of crystalline realization.
\end{proof}

Applying the \'etale and crystalline comparison theorems to the derived relative log BKF module associated with semistable local systems, we arrive at the following comparison theorem.

\begin{theorem}\label{thm: semistable compsrison over A_crys}
    Let $\fY$ be a proper semistable formal scheme over $\mO_K$ with generic fiber $Y$. Let $\L$ be a semistable $\Z_p$-local system on $Y$ and let $\mM$ be the associated derived relative log BKF module. Then there is a natural isomorphism
    \[
        R\Gamma_{\ett}(Y_C, \L)\otimes_{\Z_p}^{\L}B_\crys \cong R\Gamma_{\crys}\big(\fY_{\mO/p}/\ul \Acrys,\sigma_{A_\crys}^* \mM\big)\otimes^{\L}_{A_{\crys}}B_{\crys}.
    \]
\end{theorem}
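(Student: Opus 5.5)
The plan is to let the log $\Ainf$-cohomology $R\Gamma_{\Ainf}(\fY_{\mO}, \mM)$ of the associated derived relative log BKF module mediate between the two sides, specializing it once to the étale side and once to the $\Acrys$-crystalline side. Both specializations will then be base-changed to a common period ring. Throughout, write $\fX := \fY_{\mO}$. This is admissibly smooth over $\ul\mO$, locally admits free charts, and is proper. We take $C = \widehat{\overline K}$, which is algebraically closed.

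\textbf{Step 1 (étale side).} Apply Proposition \ref{prop: etale comparison for derived BKF modules}(2) and combine it with Proposition \ref{crystalline specialization vs crystalline realzation}(1), which identifies $\sigma^*_{\ett}\mM \cong \L|_{Y_C}$. This gives
\[
R\Gamma_{\ett}(Y_C,\L)\otimes^{\L}_{\Z_p}\Ainf[\tfrac1\mu]\cong R\Gamma_{\Ainf}(\fX,\mM)\otimes^{\L}_{\Ainf}\Ainf[\tfrac1\mu].
\]
Here Kummer-étale cohomology agrees with étale cohomology, since the log structure on $Y_C$ is trivial.

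\textbf{Step 2 (crystalline side).} Apply Corollary \ref{cor: crystalline comparison for derived BKF modules}(1) and take global sections on $\fX_{\ett}$. For the completed tensor product to commute with $R\Gamma$, we need $R\Gamma_{\Ainf}(\fX,\mM)$ to be perfect. This follows by writing $\bM$ as a bounded complex of relative log BKF modules $\bM_i$ (as in the lemma following Definition \ref{defn: derived BKF modules}) and applying Corollary \ref{cor: perfect complex} termwise; properness makes each $R\Gamma_{\Ainf}(\fX,\bM_i)$ perfect. The result is
\[
R\Gamma_{\Ainf}(\fX,\mM)\otimes^{\L}_{\Ainf}\Acrys\cong R\Gamma_{\crys}(\fX_{\mO/p}/\ul\Acrys,\sigma^*_{\Acrys}\mM),
\]
compatibly with Frobenius.

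\textbf{Step 3 (gluing).} Since $\mu$ is invertible in $B_\crys$ (it equals $t$ times a unit of $\Acrys$, and $t$ is inverted), the map $\Ainf\to B_\crys$ factors through both $\Ainf[\frac1\mu]$ and $\Acrys$. Base-changing Step 1 along $\Ainf[\frac1\mu]\to B_\crys$ and Step 2 along $\Acrys\to B_\crys$, both right-hand sides become $R\Gamma_{\Ainf}(\fX,\mM)\otimes^{\L}_{\Ainf}B_\crys$. Composing the two isomorphisms yields the desired map. Naturality, Frobenius compatibility and $G_K$-equivariance follow from the functoriality of the constructions: Galois acts on $\fY_{\mO}$ over $\fY$, and the association $\L\mapsto\mM$ is defined via base change from $\fY_\Prism$.

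\textbf{Main obstacle.} The delicate point is Step 2's exchange of global sections with $\htimes^{\L}_{\Ainf}\Acrys$. $\Acrys$ is not flat over $\Ainf$, and the completions involved are $p$-adic while $R\Gamma_{\Ainf}$ is only $(p,\mu)$-complete. I would handle it using perfectness, which makes $-\otimes^{\L}_{\Ainf}\Acrys$ already derived $p$-complete. I would then check that the global Čech–Alexander description (\ref{qcrystal to crystal over Acrys}) glues correctly: on a finite affine cover, finite limits commute with the completed tensor product.
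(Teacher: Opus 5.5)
This is essentially the paper's argument. The paper obtains the theorem directly by applying Proposition~\ref{prop: etale comparison for derived BKF modules}(2) and Corollary~\ref{cor: crystalline comparison for derived BKF modules}(1) to the derived BKF module attached to $\L$. It then uses Proposition~\ref{crystalline specialization vs crystalline realzation}(1) to identify $\sigma^*_{\ett}\mM\cong\L$, which is exactly what your Steps 1--3 (gluing over $B_{\crys}$, where $\mu$ is invertible) spell out.

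One justification in Step 2 needs repair. You cannot get perfectness of $R\Gamma_{\Ainf}(\fX,\mM)$ by applying Corollary~\ref{cor: perfect complex} termwise to a bounded complex $\bM_1\to\cdots\to\bM_n$. The reason is that $L\eta_\mu$ is not exact: it sends the triangle $\Ainf\xrightarrow{\mu}\Ainf\to\Ainf/\mu$ to $\Ainf\xrightarrow{\mu}\Ainf\to 0$. So $L\eta_\mu\widehat{R\nu_*\bM}$ is not a priori an iterated extension of the $L\eta_\mu\widehat{R\nu_*\bM_i}$. Instead, use Proposition~\ref{prop: derived q-crys vs Ainf}, which identifies $A\Omega^{\log}_{\fX}(\mM)\cong R\upsilon_*(\bM_{q\crys})$. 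The right-hand side is exact in $\bM$, so $R\Gamma_{\Ainf}(\fX,\mM)$ is an iterated extension of the perfect complexes $R\Gamma_{\Ainf}(\fX,\bM_i)$.
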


\vspace{0.1in}
\subsection{Infinitesimal cohomology over $B_{\dR}^+$}
\noindent
\vspace{0.1in}

\noindent
In the proof of the $C_{\st}$-conjecture with coefficients (Theorem \ref{mainthm}), we need to compare the filtrations on the \'etale and crystalline cohomology. The way to achieve this is to consider the infinitesimal cohomology over $B_{\dR}^+$ studied in \cite{BMS1} and \cite{Guo-Reinecke}. In what follows, we shall adopt the notation and results from \cite[\S 10]{Guo-Reinecke} (see also \cite[\S 7.3.1]{DMS}). Recall that $\fY$ is a semistable formal scheme over $\mO_K$ with adic generic fiber $Y$. 

Let $A_{\inf,K} = \Ainf \otimes_{W(k)} \mO_K$. Let $\tilde \theta = \theta\circ\vp^{-1} \colon \Ainf \ra \mO$, which extends to $\tilde\theta_K\colon \A_{\inf,K} \ra \mO$. Notice
that the $\ker(\tilde\theta_K)$-adic completion of $A_{\inf,K}[\frac 1p]$ is isomorphic to $B_{\dR}^+$. For
each positive integer $e$, write $B_{\dR,e}^+= B_{\dR}^+/(\ker(\tilde\theta_K))^e$. 

\begin{definition}
    We define the \emph{infinitesimal site} $Y_{C,\ett}/B_{\dR,\inf}^+$ as follows:
    \begin{enumerate}
        \item An object is a pair $(U, T)$, called an \emph{infinitesimal thickening} of $Y_C$, where $U \in Y_{C,\ett}$, and $T$ is an adic space that is topologically of finite type over $B_{\dR,e}^+$ for some $e\ge 1$ together with a Zariski closed immersion $U \ra T$ given by a nilpotent ideal.
        \item A morphism between objects $(U_1, T_1) \ra (U_2, T_2)$ is given by a morphism $U_1 \ra U_2$ in $Y_{C,\ett}$ and a compatible map of adic spaces $T_1 \ra T_2 $ over $B_{\dR,e}^+$ for some $e\ge 1$.
    \end{enumerate}
    We equip it with the \'etale topology. Let $\Vect(Y_{C,\ett}/B_{\dR,\inf}^+)$ denote the category of finite locally free crystals over $Y_{C,\ett}/B_{\dR,\inf}^+$. Let $D_{\perf}(Y_{C,\ett}/B_{\dR,\inf}^+)$ denote the category of crystals in perfect complexes over $Y_{C,\ett}/B_{\dR,\inf}^+$.
\end{definition}

Let $\mF$ be a finite locally free crystal over $Y_{C,\ett}/B_{\dR,\inf}^+$. There are two ways to compute the $B_{\dR}^+$-cohomology $R\Gamma(Y_{C,\ett}/B_{\dR,\inf}^+, \mF)$: through \v{C}ech–Alexander complexes, or through de Rham complexes, as in \cite[Construction 10.6]{Guo-Reinecke} and \cite[\S 7.3.1]{DMS}. Here we adopt the first approach. 

Let us write down the \v{C}ech–Alexander complex that computes the infinitesimal cohomology over $B_{\dR}^+$. Since the question is \'etale local, we may assume that $\fY= \spf \ul R$ is semistable small affine. Let $\fY_\mO= \spf(\underline{R_\mO})$ be its base change along $\underline{\mO_K} \ra \underline{\mO}$. We recall the notation from \S \ref{subsection: crystalline comparison}. Consider a triple $\Phi= (S,P_\Lambda, \iota)$ consisting of
\begin{enumerate}
    \item a finite set $S$ that indexes the coordinates of the formal $\mO$-torus (equipped with the trivial log structure)
    \[
        R^\square_S := \mO\langle X_s^{\pm 1}\rangle_{s\in S};
    \]
    \item a nonempty finite set of monoids $P_\Lambda= \{P_\lambda \mid \lambda \in \Lambda\}$ together with injective homomorphisms $u_\lambda \colon N \hookrightarrow P_\lambda$ satisfying the conditions in Definition \ref{definition: small affine},
    which induces a pre-log ring
    \[
        \ul{R^\square_\lambda} := (\mO\langle P_\lambda \rangle\htimes_{\mO\langle N \rangle} \mO, P_\lambda\sqcup_N N_\infty)
    \]
    over $\underline{\mO}$;
    \item an exact closed immersion of pre-log rings
    \[
        \iota \colon \fY_\mO= \spf \underline{R_\mO} \ra \spf (R^\square_S) \times \prod_{\lambda\in \Lambda} \spf(\ul{R^\square_\lambda})^a
    \]
    where the products are formed over $\spf(\ul\mO)^a$, satisfying
    \begin{itemize}
        \item the map $\iota_S \colon \fY_\mO \ra \spf (R^\square_S)$ is already a closed immersion,
        \item the map $\iota_\lambda \colon \fY_\mO \ra \spf(\ul{R^\square_\lambda})^a$ is strictly \'etale for all $\lambda\in \Lambda$,
        \item for any $\lambda \in \Lambda$, the image of $P_\lambda \ra R_\mO$ lies in $R_\mO\cap (R_\mO[p^{-1}])^{\times}$.\footnote{This holds, for example, if we take the standard semistable chart for the semistable small affine.}
    \end{itemize}
\end{enumerate}
For each $\lambda\in \Lambda$, choose $T_\lambda$ to be a finite set of generators of $P_\lambda$ as in \S \ref{subsection: crystalline comparison}. This induces a surjection $\N^{T_\lambda} \ra P_\lambda$. Let $T= \bigsqcup_{\lambda\in \Lambda} T_\lambda$. Then we obtain a surjection of pre-log rings
\[
    \Sigma= \Sigma(S, \ul T)= \mO\langle (X_s^{\pm 1})_{s\in S}, (X_t)_{t\in T}\rangle \ra R_S^\square \otimes_{\mO}\Big(\bigotimes_{\lambda\in \Lambda} \ul{R^\square_\lambda}\Big) \ra \ul {R_{\mO}.
}\]
Define
\[
    B_{\dR}^+(S,\ul T)= \varprojlim_n (B_{\dR}^+/\xi^n) \langle (X_s)_{s\in S},\ul T\rangle.
\]
It provides a surjection $\theta \colon B_{\dR}^+(S,\ul T) \ra R_\mO$.
Let $B_\dR^+(S,\ul T)(m)$ be the $(m+1)$-fold completed self product of $B_\dR^+(S,\ul T)$ over $B_\dR$. Then $\theta$ extends to 
\[
    \theta_m \colon B_\dR^+(S,\ul T)(m) \xrightarrow{\Delta} B_\dR^+(S,\ul T) \xrightarrow[]{\theta} R_\mO,
\]
where $\Delta$ denotes the multiplication map. Define
\[
    \mD_{\dR,\Sigma}(m)= \varprojlim_n \big(B_{\dR}^+(S,\ul T)(m)/(\ker(\theta_m))^n\big).
\]
Denote $\mF_{\Sigma}= \mF(\mD_{\dR,\Sigma})$ and $\mF_\Sigma(\bullet)= \mF(\mD_{\dR,\Sigma}(\bullet))$.
By \cite[Construction 10.6]{Guo-Reinecke}, $R\Gamma(Y_{C,\ett}/B_{\dR,\inf}^+, \mF)$ is computed by the $\check{\text{C}}$ech–Alexander complex $\mF_\Sigma(\bullet)$.

By the same construction as in the proof of \cite[Proposition 10.10]{Guo-Reinecke}, there is a natural morphism $\mD_{\crys,\Sigma}(\bullet) \ra \mD_{\dR,\Sigma}(\bullet)$ between simplicial rings. More precisely, let $J(n)$ denote the kernel of the surjection 
\[
    A_0(\Sigma)(n) = \Ainf\langle (X_s^{\pm 1})_{s\in S}, (X_t)_{t\in T}\rangle \ra R_\mO,
\]
and let $\mD_{\crys,\Sigma}(n)$ denote the $p$-completed log PD envelope of $A_0(\Sigma)(n)$ with respect to $J(n)$. Then there is a surjection
\[
    \mD_{\crys,\Sigma}(n)[p^{-1}] \ra A_0(\Sigma)(n)[p^{-1}]/J(n)^m,
\]
for all $m\ge 1$. Passing to the limit with respect to $m$, one obtains a natural morphism
\[
    \mD_{\crys,\Sigma}(n) \ra \mD_{\dR,\Sigma}(n) \cong \big(A_0(\Sigma)(n)[p^{-1}]\big)_{J(n)}^{\wedge}.
\]
It extends to a map of simplicial rings $\mD_{\crys,\Sigma}(\bullet) \ra \mD_{\dR,\Sigma}(\bullet)$ as $n$ varies, and this map factors as 
\[
    \mD_{\crys,\Sigma}(\bullet) \ra \mD_{\crys,\Sigma}(\bullet)\htimes_{\Acrys}B_{\dR}^+ \ra \mD_{\dR,\Sigma}(\bullet).
\]
Similarly, let $\mD_{\Sigma}(n)$ be the $p$-completed log $q$-PD envelope of $A_0(\Sigma)(n)$ with respect to the ideal $J(n)$. Then there is are natural maps of simplicial rings
\[
    \mD_{\Sigma}(\bullet) \ra \mD_{\Sigma}(\bullet)\htimes_{\Ainf}B_{\dR}^+ \ra \mD_{\dR,\Sigma}(\bullet).
\]
Recall that there is a natural map $ \mD_{\Sigma}(\bullet) \ra \mD_{\crys,\Sigma}(\bullet)$ and these simplicial rings fit into a commutative diagram 
\begin{equation}\label{eq: qcrys to crys to dR}
    \begin{tikzcd}
    \mD_{\Sigma}(\bullet) \ar[r] \ar[rr, bend left=15] & \mD_{\crys,\Sigma}(\bullet) \ar[r]  & \mD_{\dR,\Sigma}(\bullet).
\end{tikzcd}
\end{equation}

\begin{definition}
Consider the category $\DBKF^{\log}(\fY_{\mO},\vp)$ of derived relative log BKF modules over $\fY_{\mO}$. We define the \emph{$B_{\dR}^+$-infinitesimal specialization functor} 
\[
        \sigma_{B_{\dR}^+}^*\colon \DBKF^{\log}(\fY_{\mO},\vp) \ra D_{\perf}(Y_{C,\ett}/B_{\dR,\inf}^+)
\]
as follows. For any $\mM \in \DBKF^{\log}(\fY_{\mO},\vp)$, we put
\[
(\sigma_{B_{\dR}^+}^* \mM)(\mD_{\dR,\Sigma}(\bullet))= (\sigma_{\Acrys}^* \mM)(\mD_{\crys,\Sigma}(\bullet)) \otimes^{\L}_{\mD_{\crys,\Sigma}(\bullet)}\mD_{\dR,\Sigma}(\bullet).
\]
Note that $\sigma_{B_{\dR}^+}^*$ restricts to a specialization functor
    \[
        \sigma_{B_{\dR}^+}^*\colon \BKF^{\log}(\fY_{\mO},\vp) \ra \Vect(Y_{C,\ett}/B_{\dR,\inf}^+).
    \]
\end{definition}

\begin{remark}\label{rmk: define dR via qcrys}
    By (\ref{eq: qcrys to crys to dR}), if $\mM=(\bM,\bV)$, we can also define the $B_{\dR}^+$-specialization functor as
    \[
        (\sigma_{B_{\dR}^+}^* \mM)(\mD_{\dR,\Sigma}(\bullet))= 
        (\bM_{q\crys})(\mD_{\Sigma}(\bullet)) \otimes^{\L}_{\mD_{\Sigma}(\bullet)}\mD_{\dR,\Sigma}(\bullet).
    \]
\end{remark}

\begin{construction}\label{construction: commutativity of diagram (1)}
    Now we construct comparison maps between $\Ainf$-cohomology and $B_{\dR}^+$-cohomology. The question is \'etale local, so we may assume that $\fY$ is semistable small affine.
    We retain the setup in \ref{subsection: crystalline comparison}. 
    Let $\mM = (\bM,\bV) \in \DBKF^{\log}(\fY_{\mO},\vp)$ be a derived log BKF module on $\fY_{\mO}$ associated with a semistable local system $\L$. Given a triple $\Phi= (S,P_\Lambda, \iota)$.
    Let $R_{\infty,\Phi}(m)$ be the $p$-adic completion of the $(m+1)$-fold self-tensor product of $R_{\infty,\Phi}$ over $R_\mO$. 
    Then $\spa\big(R_{\infty,\Phi}(\bullet)[\frac 1p], R_{\infty,\Phi}(\bullet)\big)$ is the \v{C}ech nerve of $\spa(R_{\infty,\Phi}[\frac 1p], R_{\infty,\Phi})$ in $Y_{C,\proet}$. By evaluation, we obtain an isomorphism
    \[
        R\Gamma_{\proet}(Y_C,\bM\otimes_{\A_{\inf}}\B_{\dR}^+) \cong (\bM\otimes_{\A_{\inf}}\B_{\dR}^+) \big(\spa\big(R_{\infty,\Phi}(\bullet)[\frac 1p], R_{\infty,\Phi}(\bullet)\big)\big).
    \]
    On the other hand, starting from $\Phi$, we can construct a surjection $\Sigma= \Sigma(S,\ul T) \ra R_\mO$. The natural morphisms $B_{\dR}^+(S,\ul T)(m) \ra R_{\infty,\Phi}[\frac 1p](m)$
    induce a morphism between simplicial rings
    \[
        \mD_{\dR,\Sigma}(\bullet) \ra R_{\infty,\Phi}(\bullet)[\frac 1p].
    \]
    It fits into the following commutative diagram of simplicial rings
    \[
    \begin{tikzcd}
         \mD_{\crys,\Sigma}(\bullet)\htimes_{\Acrys}B_{\dR}^+ \ar[d]  & \mD_{\Sigma}(\bullet)\htimes_{\Ainf}B_{\dR}^+ \ar[l, "\cong"'] \ar[d]\\
        \mD_{\dR,\Sigma}(\bullet) \ar[r] & R_{\infty,\Phi}(\bullet)[\frac 1p].
    \end{tikzcd}
    \]
    By the construction of the absolute crystalline specialization functor and Remark \ref{rmk: define dR via qcrys}, the diagram above further yields a diagram
    \[
    \begin{tikzcd}
        (\sigma_{\Acrys}^*\mM)(\mD_{\crys,\Sigma}(\bullet))\htimes_{\Acrys}B_{\dR}^+ \ar[d] & \bM_{q\crys}(\mD_{\Sigma}(\bullet))\htimes_{\Ainf}B_{\dR}^+ \ar[l, "\cong"'] \ar[d] \\
        (\sigma_{B_{\dR}^+}^*\mM) (\mD_{\dR,\Sigma}(\bullet)) \ar[r] & (\bM\otimes_{\A_{\inf}}\B_{\dR}^+) \big(\spa\big(R_{\infty,\Phi}(\bullet)[\frac 1p], R_{\infty,\Phi}(\bullet)\big)\big).
    \end{tikzcd}
    \]
    Here, $(\sigma_{B_{\dR}^+}^*\mM) (\mD_{\dR,\Sigma}(\bullet))$ is identified with $ \bM_{q\crys}(\mD_{\Sigma}(\bullet))\otimes^{\L}_{\mD_{\Sigma}(\bullet)}\mD_{\dR,\Sigma}(\bullet)$ by Remark \ref{rmk: define dR via qcrys}. Consequently, the induced morphisms between \v{C}ech--Alexander complexes induces a diagram
    \begin{equation}\label{eq: diagram 1}
    \begin{tikzcd}
        R\Gamma_{\crys}(\fY_{\mO/p}/\Acrys, \sigma_{\Acrys}^*\mM)\htimes_{\Acrys}^{\L}B^+_{\dR} \ar[d] & R\Gamma_{\Ainf}(\fY_{\mO},\mM)\htimes_{\Ainf}^{\L}B_{\dR}^+ \ar[l, "\cong"'] \ar[d] \\
        R\Gamma(Y_{C,\ett}/B_{\dR,\inf}^+,\sigma_{B_{\dR}^+}^*\mM) \ar[r] & R\Gamma_{\proet}(\bM\otimes_{\A_{\inf,Y_C}}\B^+_{\dR,Y_C}).
    \end{tikzcd}      
    \end{equation}
\end{construction}

\begin{remark}\label{prop: GR prop 10.11}
    If we compute $B_{\dR,\inf}^+$-cohomology $R\Gamma\big(Y_{C,\ett}/B_{\dR,\inf}^+,\sigma_{B_{\dR}^+}^*\mM\big)$ via de Rham complexes as in \cite[Construction 10.6, Corollary 10.8]{Guo-Reinecke}), we can show that the left vertical arrow in \eqref{eq: diagram 1} is actually an isomorphism; namely, 
    \[
        R\Gamma_{\crys}(\fY_{\mO/p}/\Acrys, \sigma_{\Acrys}^*\mM)\htimes_{\Acrys}^{\L}B^+_{\dR} \xrightarrow[]{\sim} R\Gamma(Y_{C,\ett}/B_{\dR,\inf}^+,\sigma_{B_{\dR}^+}^*\mM).
    \]
    The proof is similar to those of \cite[Proposition 10.11]{Guo-Reinecke} and \cite[Proposition 7.21]{DMS}. But we do not need this result.
\end{remark}

\vspace{0.1in}
\subsection{The semistable comparison theorem}\label{subsection: proof of Cst conjecture}
\noindent
\vspace{0.1in}

\noindent
We finally finish the proof of the semistable comparison theorem (Theorem \ref{mainthm}). Recall that $C= \widehat{\overline K}$, the residue field $k$ of $\mO_K$ is assumed to be algebraically closed, and that $N_{\infty}=\mathbb{Q}_{\ge 0}$. To proceed, we recall Breuil's period rings in \cite{breuil1997representations}, which are ``$\fS$-deformations'' of the corresponding classical period rings. 
Firstly, let $\widehat A_{\inf}$ be the log envelop of $A_{\inf}\otimes_{W(k)}\fS$ with respect to the map $\theta \colon \underline{A_{\inf}}\otimes_{W(\underline{k})}\underline{\fS} \ra \underline{\mO}$ sending $u\mapsto \pi$ (see \cite[\S 2.3.3]{andreatta2012semistable} for the notion of log envelop). Explicitly, if we write $\N^{(1)}= \{(x,y)\in \Z\times \Z \mid x+y \ge 0\}$, then 
\[
    \widehat A_{\inf} \cong \left(A_{\inf}\otimes_{W(k)}\fS\right)\otimes_{\Z[\N \times \N]}\Z[\N^{(1)}]\cong A_{\inf}[\![u]\!][\frac u{[\pi^\flat]}, \frac{[\pi^\flat]}u], 
\]
where the map $\Z[\N \times \N]\ra A_{\inf}\otimes_{W(k)}\fS$ sends $(a,b)\mapsto [\pi^\flat]^a\otimes u^b$, and $\Z[\N \times \N]\ra\Z[\N^{(1)}]$ is the natural inclusion. Note that $\theta$ extends to $\theta_\fS \colon \widehat A_{\inf} \ra \mO$.

\begin{definition}[{\cite{breuil1997representations}}]\label{logperiodfield}
    \begin{enumerate}
        \item Let $\widehat{A}_\st$ be the $p$-adic completion of the PD-envelop of $\widehat A_{\inf}$ with respect to $\ker(\theta_\fS)$. Let $\widehat{B}_\st^+ = \widehat{A}_\st [p^{-1}]$ and $\widehat{B}_\st= \widehat{B}_\st^+[t^{-1}]$. We endow $\widehat A_\st$ with the pre-log structure $\N \ra \widehat A_\st$ sending $1\mapsto u$.
        \item All the rings $\widehat{A}_\st, \widehat{B}_\st^+, \widehat{B}_\st$ are endowed with natural $G_K$-actions, which act trivially on $u$. Moreover, they are endowed with natural $\vp$-actions induced from the $\vp$-action on $A_{\inf}$ and sending $u\mapsto u^p$.
        \item Define the \emph{monodromy operator} $N$ on $\widehat{B}_\st$ by $N=u\dfrac{d}{du}$.
    \end{enumerate}
\end{definition}

\begin{proposition}\label{Nnilplocal}
    \begin{enumerate}
        \item $\widehat{A}_\st \cong A_\crys \{\langle u/[\pi^\flat]-1 \rangle\}$.
        \item The monodromy operator $N$ on $\widehat{B}_{\st}$ commutes with the $G_K$-action, and $N \vp = p \vp N$.
        \item The $G_K$ action on $X= u/[\pi^\flat]-1$ is given by 
        \[
            g(X)= [\epsilon(g)]X+[\epsilon(g)]-1,
        \]
        where $\epsilon(g)$ is determined by $g(\pi^\flat)= \epsilon(g) \pi^\flat$.
        \item There is a natural $(\vp,G_K,N)$-equivariant inclusion $B_\st \hookrightarrow \widehat{B}_\st$ sending $\log([\pi^\flat]/\pi)$ to $\log([\pi^\flat]/u)$.
        Moreover, we have $\widehat{B}_\st^{N-\nilp} = B_\st$, where ``$N$-$\nilp$'' stands for the subset of $N$-nilpotent elements.
    \end{enumerate}
\end{proposition}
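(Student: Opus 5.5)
The plan is to prove the four items by direct computation in the explicit model $\widehat A_{\inf}\cong \Ainf[\![u]\!][\frac{u}{[\pi^\flat]},\frac{[\pi^\flat]}{u}]$. For (1), set $X=u/[\pi^\flat]-1$, so that $u=[\pi^\flat](1+X)$ and $\widehat A_{\inf}$ is the $(p,\ker)$-adic completion of $\Ainf[X]$, with $[\pi^\flat]$ invertible after adjoining $(1+X)^{-1}$, which is automatic up to completion. The kernel of $\theta_\fS$ is then generated by $\xi$ and $X$, since $\theta_\fS(u)=\pi=\theta([\pi^\flat])$. Forming the $p$-completed PD-envelope with respect to $(\xi,X)$ therefore gives the PD-envelope of $(\xi)$ in $\Ainf$, which is $\Acrys$, with PD-variables in $X$ freely adjoined. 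The reason is that $X$ is a free polynomial variable over $\Ainf$ (so $(\xi,X)$ is a regular sequence), and PD-envelopes of regular sequences in a polynomial extension are obtained by adjoining divided powers of the new variable. This yields $\widehat A_\st\cong \Acrys\{\langle X\rangle\}$. I would also check that the pre-log structure (via the log envelope) does not change this, using the fact that the log envelope was already formed in $\widehat A_{\inf}$.

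For (2) and (3), I would use the formulas on generators. The group $G_K$ fixes $u$ and sends $[\pi^\flat]\mapsto[\epsilon(g)][\pi^\flat]$. Hence $g(1+X)=(1+X)[\epsilon(g)]^{-1}$, and after normalizing the convention for $\epsilon(g)$ as in the statement, this rearranges to $g(X)=[\epsilon(g)]X+[\epsilon(g)]-1$; this is a one-line check and proves (3). The operator $N=u\,d/du$ kills $\Acrys$ and satisfies $N(X)=1+X$. Since $g$ is $\Acrys$-semilinear, fixes $u$, and commutes with $d/du$ on $\Acrys\{\langle X\rangle\}$ written in the variable $u$, it commutes with $N$. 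For Frobenius, $\vp(u)=u^p$ gives $N\vp(u)=pu^p=p\vp(N u)$, and a derivation argument extends this to all of $\widehat B_\st$. One also checks that $N$ preserves the PD-structure, since $N(X^{[n]})=X^{[n-1]}(1+X)$.

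For (4), I would send $\log([\pi^\flat]/\pi)\in B_\st$ to $\ell:=\log([\pi^\flat]/u)=-\log(1+X)=\sum_{n\ge1}(-1)^n (n-1)!\,X^{[n]}$, which converges in $\widehat A_\st$. I would verify compatibilities: $N\ell=-1$ matches the monodromy on $B_\st$; $\vp(\ell)=p\ell$; and $g(\ell)=\ell+\log[\epsilon(g)]$, which is $\ell$ plus a multiple of $t$, matching the $G_K$-action on $B_\st$. Injectivity follows from the transcendence of $\ell$ over $B_\crys$, since $\ell\equiv -X$ modulo higher PD-powers. For the equality $\widehat B_\st^{N\text{-}\nilp}=B_\st$, I would write elements as PD-series $\sum b_n \ell^{[n]}$ over $B_\crys^+$, after a change of variable from $X$ to $\ell$, which is allowed because $X=e^{-\ell}-1$. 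Then $N$ acts as $-d/d\ell$, and nilpotence forces the series to be a polynomial in $\ell$, which lies in $B_\crys[\ell]=B_\st$.

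This last step is the main obstacle. Inverting $t$ and $p$ interacts with the completion, so I would first prove the statement on $\widehat B_\st^+$ and then argue that $N$ commutes with multiplication by $t$. That reduces the claim to showing $\ker N^m=\bigoplus_{i<m}B_\crys^+\ell^i$, which is the analogue of Breuil's argument.
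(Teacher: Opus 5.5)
Your direct-computation route differs from the paper, which only cites Breuil for (1), Andreatta--Iovita for (3) and Kato for (4). Parts (1), (2) and the construction of $B_{\st}\hookrightarrow\widehat B_{\st}$ go through essentially as you sketch them. There is one slip there: in $\widehat A_{\inf}$ it is $1+X=u/[\pi^\flat]$ that is a unit, not $[\pi^\flat]$.

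\textbf{The step for $\widehat B_{\st}^{N\text{-}\mathrm{nilp}}=B_{\st}$ fails as written.} The obstacle is not the one you flag: inverting $p$ and $t$ is harmless, since $N(t)=0$ and $t$ is a non-zero-divisor on $\widehat A_{\st}$. The problem is the change of variables. The substitution $X=e^{-\ell}-1$ is not an automorphism of $\widehat A_{\st}=A_{\crys}\{\langle X\rangle\}$, whose elements are $\sum_n a_nX^{[n]}$ with $a_n\to0$ $p$-adically.
\begin{itemize}
\item The series $\ell=\sum_{n\ge1}(-1)^n(n-1)!\,X^{[n]}$ converges, because $(n-1)!\to0$.
\item The series $e^{-\ell}-1=\sum_{n\ge1}(-1)^n\ell^{[n]}$ has unit coefficients and does not converge.
\end{itemize}
In fact $X$ is not a convergent series $\sum_na_n\ell^{[n]}$ with $a_n\in A_{\crys}$, even after inverting $p$. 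Take the two continuous homomorphisms $\widehat A_{\st}\to\mO_C$ that are $\theta$ on $A_{\crys}$ and send $X\mapsto0$, resp.\ $X\mapsto\zeta_p-1$; the second is well defined since $(\zeta_p-1)^n/n!\in\mO_C$. Both kill every $\ell^{[n]}$ with $n\ge1$ because $\log\zeta_p=0$, so they agree on all such series. Yet they differ on $X$. Geometrically, $\log$ is not injective on the closed disc of radius $|p|^{1/(p-1)}$.

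So elements of $\widehat B^+_{\st}$ are not PD-series in $\ell$ over $B^+_{\crys}$, and you cannot use ``$N=-d/d\ell$'' in that way. The fix needs no change of variables:
\begin{itemize}
\item In the $X^{[n]}$-coordinates, $N\bigl(\sum_na_nX^{[n]}\bigr)=\sum_n(a_{n+1}+na_n)X^{[n]}$.
\item So $Nf=0$ forces $a_1=0$ and then $a_{n+1}=-na_n=0$ for all $n\ge1$; that is, $\ker N=A_{\crys}$.
\item Induct on $m$, using $\ell^{[n]}\in\widehat A_{\st}$ and $N(\ell^{[n+1]})=-\ell^{[n]}$, to get $\ker(N^m|_{\widehat A_{\st}})=\bigoplus_{n<m}A_{\crys}\ell^{[n]}$.
\item Invert $p$.
\end{itemize}
Equivalently, embed $\widehat A_{\st}$ $N$-equivariantly into the formal divided-power series ring $\prod_nA_{\crys}X^{[n]}$, where your substitution is legitimate.

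\textbf{Part (3) does not match the stated formula.} Your computation $g(1+X)=[\epsilon(g)]^{-1}(1+X)$ is correct. With the convention $g(\pi^\flat)=\epsilon(g)\pi^\flat$ fixed in the statement, it gives $g(X)=[\epsilon(g)]^{-1}X+[\epsilon(g)]^{-1}-1$, and no rearrangement turns this into the displayed formula. The displayed formula holds either for $X'=[\pi^\flat]/u-1=\sum_{n\ge1}(-1)^n n!\,X^{[n]}$, or with $\epsilon(g)$ replaced by $\epsilon(g)^{-1}$. Using $X'$ does not affect (1), since $X'$ generates the same completed PD-algebra. State this discrepancy explicitly rather than absorbing it into ``normalizing the convention''. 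Your formula $g(\ell)=\ell+\log[\epsilon(g)]$ in (4) agrees with the correct computation, so the $G_K$-equivariance in (4) is unaffected.
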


\begin{proof}
    For part $(1)$, see \cite[\S 2]{breuil1997representations}. Part $(2)$ follows from the definition. Part $(3)$ is a special case of \cite[Theorem 3.40]{andreatta2012semistable}.
    The proof of part $(4)$ can be found in \cite[3.7]{Katosemistable}. 
\end{proof}

There are two categories of filtered $(\vp,N)$-modules associated with the period rings $B_\st$ and $\hB_\st$ respectively. Let us briefly recall their definitions. Let $K_0= W(k)[\frac 1p] \subseteq K$.

\begin{definition}
    A \emph{filtered $(\vp,N)$-module} over $K$ is a quadruple $(M,\vp,N,\Fil^\bullet)$ where
    \begin{itemize}
        \item $M$ is a finite dimensional $K_0$-vector space;
        \item $\vp\colon M \ra M$ is a bijective $K_0$-semilinear morphism with respect to the absolute Frobenius $\vp$;
        \item $N \colon M \ra M$ is a $K_0$-linear morphism satisfying $N\vp= p \vp N$;
        \item $\Fil^\bullet$ is a decreasing, separated, and exhaustive filtration on $M_K=M\otimes_{K_0}K$.
    \end{itemize}
    A morphism between two filtered $(\vp,N)$-modules over $K$ is a morphism between $K_0$-vector spaces compatible with $(\vp,N)$-actions and filtrations. The category of filtered $(\vp,N)$-modules over $K$ is denoted by $\MF_K(\vp,N)$.
\end{definition}

\begin{definition} A \emph{filtered $(\vp,N)$-module over} $\widehat{B}_{\st}^{G_K}$ is a quadruple $(\widehat M,\vp,N,\Fil^\bullet)$ where
    \begin{itemize}
        \item $\widehat M$ is a free $\widehat{B}_{\st}^{G_K}$-module of finite rank;
        \item $\vp\colon \widehat M \ra \widehat M$ is a $\widehat{B}_{\st}^{G_K}$-semilinear morphism with respect to the Frobenius $\vp$ on $\widehat{B}_{\st}^{G_K}$, whose determinant is invertible in $\widehat{B}_{\st}^{G_K}$;
        \item $\Fil^\bullet$ is a decreasing, separated, and exhaustive filtration on $\widehat M$. Moreover, for any $f\in \Fil^i(\widehat{B}_{\st}^{G_K})$ and $x \in \Fil^j(\widehat M)$, we have $fx \in \Fil^{i+j}(\widehat M)$. 
        \item $N \colon \widehat M \ra \widehat M$ is a $K_0$-linear morphism satisfying
        \begin{itemize}
            \item $N(fx)= N(f)x+fN(x)$ for any $f\in \widehat{B}_{\st}^{G_K}$ and $x\in \widehat M$,
            \item $N \vp= p \vp N$,
            \item $N\big(\Fil^i\widehat M\big) \subseteq \Fil^{i-1}\widehat M$ for any $i \in \Z$.
        \end{itemize}
    \end{itemize}
    A morphism between two filtered $(\vp,N)$-modules over $\widehat{B}_{\st}^{G_K}$ is a morphism between $\widehat{B}_{\st}^{G_K}$-modules compatible with $(\vp,N)$-actions and filtrations. The category of filtered $(\vp,N)$-modules over $\widehat{B}_{\st}^{G_K}$ is denoted by $\hMF_{\widehat{B}_{\st}^{G_K}}(\vp,N)$.
\end{definition}

The two categories of filtered $(\vp,N)$-modules are actually equivalent, thanks to the following theorem.
\begin{theorem}[{\cite[Theorem 6.1.1]{breuil1997representations}}]\label{breuil thm 6.1.1}
    There is an equivalence of categories
    \[
    \begin{aligned}
        \varUpsilon\colon \MF_K(\vp,N) & \lra \hMF_{\widehat{B}_{\st}^{G_K}}(\vp,N)\\
        M & \longmapsto M\otimes_{K_0}\widehat{B}_{\st}^{G_K},
    \end{aligned}
    \]
\end{theorem}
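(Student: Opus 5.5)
The plan is to reduce the statement to Breuil's original setting by first computing the ring $\widehat{B}_{\st}^{G_K}$, and then to build an explicit quasi-inverse using the contracting Frobenius on $u$.

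\textbf{Step 1: compute the invariants.} I would show that $\widehat{B}_{\st}^{G_K}$ is Breuil's ring $S_{K_0}=S[\frac1p]$. Here $S$ is the $p$-adic completion of the PD-envelope of $W(k)[u]$ along $E(u)$. It carries $\vp(u)=u^p$, $N=u\frac{d}{du}$, and the PD-filtration $\Fil^iS_{K_0}$. The argument uses Proposition~\ref{Nnilplocal}: by (1), $\widehat A_{\st}\cong A_{\crys}\{\langle X\rangle\}$ with $X=u/[\pi^\flat]-1$, and by (3) the $G_K$-action on $X$ is explicit. I would expand an element as $\sum a_n X^{[n]}$ and rewrite it in the variable $u$. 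Then $G_K$-invariance, together with $A_{\crys}^{G_K}=W(k)$ and its filtered variant, forces the coefficients into $S_{K_0}$. So the target category is Breuil's category of filtered $(\vp,N)$-modules over $S_{K_0}$.

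\textbf{Step 2: the functor $\varUpsilon$.} Given $M\in\MF_K(\vp,N)$, set $\widehat M=S_{K_0}\otimes_{K_0}M$ with $\vp\otimes\vp$ and $N\otimes1+1\otimes N$. Let $f_\pi\colon S_{K_0}\to K$ be the map $u\mapsto\pi$. Define the filtration inductively by $\Fil^0\widehat M=\widehat M$ and
\[
\Fil^{i+1}\widehat M=\{x\in\Fil^i\widehat M : N(x)\in\Fil^i\widehat M,\ f_\pi(x)\in\Fil^{i+1}M_K\}
\]
(after shifting so that $\Fil^0 M_K=M_K$). Checking the axioms (multiplicativity with $\Fil^\bullet S_{K_0}$, Griffiths transversality, determinant of $\vp$ a unit) is routine.

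\textbf{Step 3: inverse on the underlying $(\vp,N)$-modules.} Put $M_0=\widehat M/u\widehat M$; it is a $K_0$-space with bijective $\vp$ and with $N$ induced from $N$ on $\widehat M$. I would construct a unique $\vp$-equivariant section $s\colon M_0\to\widehat M$ of the reduction map. Start from any section $s_0$ and set $s=\lim_n \vp^n\circ s_0\circ\vp^{-n}$. This converges because $\vp(u)=u^p$ makes $\vp$ topologically nilpotent on $u\widehat M$, and the determinant of $\vp$ is invertible, so the error terms are contracted. Uniqueness also gives $N$-compatibility, because $N\vp=p\vp N$ and the same contraction argument applies to $Ns-sN$. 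Hence $\widehat M\cong S_{K_0}\otimes_{K_0}M_0$ compatibly with $(\vp,N)$, and the same uniqueness makes every morphism in $\hMF$ come from a morphism of the $M_0$'s. This gives fully faithfulness on the $(\vp,N)$-part.

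\textbf{Step 4: filtrations (main obstacle).} Define $\Fil^iM_K=f_\pi(\Fil^i\widehat M)$ and show that the inductive formula in Step 2 recovers $\Fil^\bullet\widehat M$ from it. One inclusion is formal from Griffiths transversality and compatibility with $\Fil^\bullet S_{K_0}$. For the reverse inclusion I would argue by induction on $i$, analysing $\widehat M$ modulo $\Fil^1S_{K_0}\cdot\Fil^{i-1}\widehat M$. The key inputs are that $\Fil^iS_{K_0}$ is generated by the divided powers $E(u)^{[j]}$ with $j\ge i$, and that $N$ lowers the $E$-adic filtration by one. Both facts reduce the claim to a statement about $M_K$, where the filtration is just a filtration of a vector space. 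I expect this filtration-recovery step to be the hardest part.
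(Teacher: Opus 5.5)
The paper does not prove this statement; it only quotes Breuil. Your outline is the natural route: identify $\widehat B_{\st}^{G_K}$ with $S_{K_0}$, build the unique $\vp$-equivariant section by contracting $u$, and recover the filtration from $N$ and $f_\pi$. There is, however, a genuine gap in the essential-surjectivity part of Step~4. You set $\Fil^iM_K:=f_\pi(\Fil^i\widehat M)$ without checking that this filtration is separated, and with only the axioms listed in the paper's definition of $\hMF_{\widehat B_{\st}^{G_K}}(\vp,N)$ it need not be.

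Here is a counterexample. Take $e\ge2$ and $M_0=K_0d_1\oplus K_0d_2$ with $N=0$ and any bijective $\vp$. Let $V_i=M_K$ for $i\le0$ and $V_i=K(d_1+\pi d_2)$ for $i\ge1$. Give $\widehat M=S_{K_0}\otimes_{K_0}M_0$ the filtration $\Fil^i\widehat M=\{x: f_\pi(N^jx)\in V_{i-j}\text{ for all }j\ge0\}$. All listed axioms hold, and this filtration is separated. Indeed, if $x=ad_1+bd_2$ lies in every $\Fil^i\widehat M$, then $N^j(b-\pi a)$ vanishes at $u=\pi$ for all $j$. The power series $b-\pi a$ converges on the closed disc $|u|\le|\pi|$, so its Taylor expansion at $\pi$ is zero and it vanishes identically. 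Since $1,\pi$ are $K_0$-linearly independent, $a=b=0$. Yet $f_\pi(\Fil^i\widehat M)=K(d_1+\pi d_2)\neq0$ for all $i\ge1$, so this object is not in the essential image of $\varUpsilon$.

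So the target category needs a finiteness condition, such as $f_\pi(\Fil^i\widehat M)=0$ for $i\gg0$ (equivalently $\Fil^i\widehat M\subseteq\Fil^1S_{K_0}\cdot\widehat M$ for $i\gg0$). Your Step~4 must use it. With it, the rest of Step~4 works, and your two key inputs suffice without induction on $i$:

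Write $\Fil^\bullet_{\mathrm{ind}}$ for the filtration given by your inductive formula. Suppose $z\in\Fil^{i+1}_{\mathrm{ind}}$ lies in $\Fil^kS_{K_0}\cdot\widehat M$ for some $0\le k\le i$. Let $\bar z_k\in M_K$ be the class of $z$ in $\Fil^kS_{K_0}\widehat M/\Fil^{k+1}S_{K_0}\widehat M\cong M_K$. Then $f_\pi(N^kz)=(\pi E'(\pi))^k\bar z_k$, so $\bar z_k\in\Fil^{i+1-k}M_K$. Subtracting $E^{[k]}y$, for a lift $y\in\Fil^{i+1-k}\widehat M$ of $\bar z_k$, moves $z$ into $\Fil^{k+1}S_{K_0}\cdot\widehat M$ while staying in $\Fil^{i+1}_{\mathrm{ind}}$. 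After $k=i$ we reach $\Fil^{i+1}S_{K_0}\cdot\widehat M\subseteq\Fil^{i+1}\widehat M$.

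You also need the easy lifting $f_\pi(\Fil^i\varUpsilon(M))=\Fil^iM_K$ to get full faithfulness on filtrations.

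Two further points need repair.

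In Step~1, the mechanism you describe does not go through. $G_K$ acts on $X$ through $\epsilon(g)$, so invariance of $\sum a_nX^{[n]}$ does not reduce to invariance of the coefficients; only the stabilizer of $\pi^\flat$ acts coefficientwise. Rewriting in $u$ introduces powers of $[\pi^\flat]^{-1}$, which are not in $A_{\crys}$. The identification of $\widehat B_{\st}^{G_K}$ and its induced filtration with $S_{K_0}$ and its divided-power filtration should be quoted from Breuil.

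In Step~3, the kernel $I$ of $u\mapsto0$ on $S_{K_0}$ is strictly larger than $uS_{K_0}$. So $M_0$ should be $\widehat M\otimes_{S_{K_0},u\mapsto0}K_0$, and the contraction should be run on $I\widehat M$. This is harmless, since $\vp^n(I\cap S)\subseteq p^{c_n}S$ with $c_n$ growing exponentially. More seriously, $I$ is not contained in the Jacobson radical of $S_{K_0}$; for example, $1+u/p$ vanishes at $u=-p$. So a section that is the identity modulo $I$ need not extend to an isomorphism $S_{K_0}\otimes_{K_0}M_0\simeq\widehat M$. You must prove invertibility separately, e.g. by running the same limit for the inverse matrix or for the dual module. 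That is where $\det\vp\in S_{K_0}^\times$ is used, not merely $\det\vp\neq0$ on $M_0$.
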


Let $V$ be a finite dimensional continuous $\Q_p$-representation of $G_K$. Recall that $V$ is a semistable representation if $D_\st(V)= (V\otimes_{\Q_p}B_\st)^{G_K}$ has $K_0$-dimension equal to $\dim_{\Q_p} V$. The category of semistable representations is denoted by $\Rep^\st_{\Q_p}(G_K)$. On the other hand, we define 
\[
    \hD_\st(V)= (V\otimes_{\Q_p}\hB_\st)^{G_K},
\]
which is an object in $\hMF_{\widehat{B}_{\st}^{G_K}}(\vp,N)$ with $(\vp,N)$-action and filtration induced from those on $\hB_\st$. 

\begin{definition}
    Let $V$ be a finite dimensional continuous $\Q_p$-representation of $G_K$. We say that $V$ is \emph{$\hB_\st$-admissible} if the injection
    \[
        \alpha_{\st} \colon \hD_\st(V) \otimes_{\widehat{B}_{\st}^{G_K}}\hB_\st \ra V\otimes_{\Q_p} \hB_\st
    \]
    is an isomorphism of $\hB_\st$-modules. The category of $\hB_\st$-admissible representations is denoted by $\Rep^{\hB_\st}_{\Q_p}(G_K)$.
\end{definition}

The functor $\hD_\st \colon \Rep_{\Q_p}^{\hB_\st}(G_K) \ra \hMF_{\widehat{B}_{\st}^{G_K}}(\vp, N)$ is fully faithful by the results in \cite[\S 9]{breuil1997representations}.

Since the natural inclusion $B_\st \hookrightarrow \hB_\st$ is $G_K$-equivariant, every semistable representation is $\hB_\st$-admissible; namely, there is a natural inclusion
\[
    \iota \colon \Rep^\st_{\Q_p}(G_K) \hookrightarrow \Rep^{\hB_\st}_{\Q_p}(G_K).
\]
In fact, this is an equivalence of categories.

\begin{theorem}[{\cite[Theorem 3.3]{breuil1997representations}}]\label{breuil thm 3.3}
The inclusion $\iota$ is an equivalence of categories. 
\end{theorem}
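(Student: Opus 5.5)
The inclusion $\iota$ is fully faithful by construction: both $\Rep^\st_{\Q_p}(G_K)$ and $\Rep^{\hB_\st}_{\Q_p}(G_K)$ are full subcategories of continuous $\Q_p$-representations of $G_K$. So the content is essential surjectivity, namely that every $\hB_\st$-admissible $V$ is semistable. The plan is to build, from the $\hB_\st$-admissibility of $V$, a $K_0$-subspace of $D_\st(V)$ of dimension $\dim_{\Q_p}V$, using three inputs:
\begin{itemize}
\item Breuil's equivalence $\varUpsilon$ (Theorem \ref{breuil thm 6.1.1}), to descend $\hD_\st(V)$ to a genuine filtered $(\vp,N)$-module over $K$;
\item the identification $\hB_\st^{N\text{-}\nilp}=B_\st$ (Proposition \ref{Nnilplocal}(4));
\item the standard bound $\dim_{K_0}D_\st(V)\le\dim_{\Q_p}V$.
\end{itemize}

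First I would apply Theorem \ref{breuil thm 6.1.1} to $\hD_\st(V)\in\hMF_{\widehat{B}_{\st}^{G_K}}(\vp,N)$. This produces $M\in\MF_K(\vp,N)$ and an isomorphism $M\otimes_{K_0}\widehat{B}_{\st}^{G_K}\cong\hD_\st(V)$ compatible with $\vp$, $N$ and filtrations. Since $\hD_\st(V)$ is free of rank $\dim_{\Q_p}V$ (this follows from admissibility), we get $\dim_{K_0}M=\dim_{\Q_p}V$. Composing with the isomorphism $\alpha_\st$ gives a $(\vp,N,G_K)$-equivariant injection
\[
M\hookrightarrow \hD_\st(V)\otimes_{\widehat{B}_{\st}^{G_K}}\hB_\st\xrightarrow{\sim}V\otimes_{\Q_p}\hB_\st .
\]
The image lies in the $G_K$-invariants, because $M\otimes 1\subseteq\hD_\st(V)$. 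Here $N$ acts on the target as $1\otimes N$, and this is compatible with $N$ on the source because $\alpha_\st$ respects monodromy.

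Next I would show that $N$ is nilpotent on $M$. Since $k$ is assumed algebraically closed, Dieudonné--Manin decomposes $M$ into isoclinic parts. The relation $N\vp=p\vp N$ forces $N$ to send the slope-$\lambda$ part into the slope-$(\lambda-1)$ part. Only finitely many slopes occur, so $N$ is nilpotent on $M$. Hence every element of the image of $M$ in $V\otimes_{\Q_p}\hB_\st$ is killed by a power of $1\otimes N$.

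Then I would choose a $\Q_p$-basis $v_1,\dots,v_n$ of $V$ and write an element of that image as $\sum v_i\otimes b_i$. Its $N$-nilpotence means each $b_i$ is $N$-nilpotent, so $b_i\in B_\st$ by Proposition \ref{Nnilplocal}(4). Therefore $M$ injects $K_0$-linearly into $(V\otimes_{\Q_p}B_\st)^{G_K}=D_\st(V)$. Combining with $\dim_{K_0}M=\dim_{\Q_p}V$ and the general bound $\dim_{K_0}D_\st(V)\le\dim_{\Q_p}V$ gives equality of dimensions, so $V$ is semistable.

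The main obstacle is making sure the $N$-equivariance is set up correctly. One must check that the $N$ on $M$ produced by $\varUpsilon$ matches, through $\alpha_\st$, the operator $1\otimes N$ on $V\otimes\hB_\st$. The concern is that $N$ also acts nontrivially on $\widehat{B}_{\st}^{G_K}$, which is not simply $K_0$. It is the definition $\varUpsilon(M)=M\otimes_{K_0}\widehat{B}_{\st}^{G_K}$, with monodromy given by the Leibniz rule, that guarantees $M\otimes 1$ is $N$-stable; I would verify this carefully first.
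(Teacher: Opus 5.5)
The paper gives no proof of this statement; it is quoted from Breuil, so there is no internal argument to compare with. Your derivation is correct and not circular.

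It deduces Theorem~\ref{breuil thm 3.3} from three inputs:
\begin{enumerate}
\item Theorem~\ref{breuil thm 6.1.1}, which is itself imported;
\item Proposition~\ref{Nnilplocal}(4), also imported;
\item Fontaine's bound $\dim_{K_0}D_{\st}(V)\le\dim_{\Q_p}V$.
\end{enumerate}
Theorem~\ref{breuil thm 6.1.1} is semilinear algebra over $K_0$ and $\widehat{B}_{\st}^{G_K}$ and does not involve Galois representations, so there is no circularity.

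Your argument is the same mechanism the paper uses in the proof of Theorem~\ref{thm: semistable comparison}, with $H^i_{\crys}$ in the role of your $M$. One takes a $K_0$-structure on which $N$ is nilpotent, uses $\widehat{B}_{\st}^{N\text{-}\nilp}=B_{\st}$, and finishes with a dimension count.

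A few simplifications are available:
\begin{enumerate}
\item You use only the $(\vp,N)$-part of $\varUpsilon$, namely a $\vp$- and $N$-stable $K_0$-form of $\hD_{\st}(V)$. Filtrations play no role.
\item Your ``main obstacle'' is built into the definition of $\varUpsilon$. The monodromy on $M\otimes_{K_0}\widehat{B}_{\st}^{G_K}$ is $N_M\otimes 1+1\otimes N$ and $N(1)=0$, so $M\otimes 1$ is $N$-stable with restriction $N_M$.
\item The isomorphism $\alpha_{\st}$ is not needed for the embedding $M\hookrightarrow V\otimes_{\Q_p}\hB_{\st}$; the inclusion $\hD_{\st}(V)\subseteq V\otimes_{\Q_p}\hB_{\st}$ suffices. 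Admissibility enters only through the rank count.
\item Nilpotence of $N$ on $M$ holds for any perfect $k$, since the slope decomposition exists without assuming $k$ algebraically closed.
\end{enumerate}

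The one step you pass over too quickly is ``$\hD_{\st}(V)$ is free of rank $\dim_{\Q_p}V$; this follows from admissibility''. In Fontaine's formalism this is automatic because the ring of invariants is a field. Here $\widehat{B}_{\st}^{G_K}$ is not a field. An isomorphism $\hD_{\st}(V)\otimes_{\widehat{B}_{\st}^{G_K}}\hB_{\st}\cong V\otimes_{\Q_p}\hB_{\st}$ does not formally make $\hD_{\st}(V)$ free over $\widehat{B}_{\st}^{G_K}$. You need exactly this freeness to place $\hD_{\st}(V)$ in $\hMF_{\widehat{B}_{\st}^{G_K}}(\vp,N)$ and apply $\varUpsilon^{-1}$.

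You have two ways to supply it. You can import it from Breuil, as the paper tacitly does when it asserts $\hD_{\st}(V)\in\hMF_{\widehat{B}_{\st}^{G_K}}(\vp,N)$. Alternatively, define $\hB_{\st}$-admissibility as the existence of a $\hB_{\st}$-basis of $V\otimes_{\Q_p}\hB_{\st}$ consisting of $G_K$-invariant vectors; then freeness of rank $\dim_{\Q_p}V$ is immediate. With that input, the argument is complete.
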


We now return to the proof of the $C_\st$-conjecture. Note that taking base change along $(\Acrys, \N) \ra (\Acrys, N_\infty)$ induces an equivalence of categories
\[
    D_\perf^{\vp}\big((\fY_{\mO/p}/(\Acrys, \N))_{\crys}\big) \cong D_\perf^{\vp}\big((\fY_{\mO/p}/(\Acrys, N_\infty))_{\crys}\big),
\]
(see \eqref{eq: admissibly smooth 2}). By a slight abuse of notation, for any $F$-crystal $\mE\in D_\perf^{\vp}\big((\fY_{\mO/p}/(\Acrys, N_\infty))_{\crys}\big)$, we still use $\mE$ to denote the corresponding object in $D_\perf^{\vp}\big((\fY_{\mO/p}/(\Acrys, \N))_{\crys}\big)$. Then the base change along $(\mO, \N) \ra (\mO, N_\infty)$ induces a Frobenius-equivariant isomorphism (see \eqref{crys cohom N vs N_infty})
\begin{equation}\label{eq: N vs Ninfty}
     R\Gamma_{\crys}\big(\fY_{\mO/p}/(\Acrys, \N), \mE\big) \cong R\Gamma_{\crys}\big(\fY_{\mO/p}/(\Acrys, N_\infty), \mE\big).
\end{equation}
We equip $R\Gamma_{\crys}\big(\fY_{\mO/p}/(\Acrys, \N), \mE\big)$ with the monodromy operator induced by $\mathbb{N}$, which is also viewed as a monodromy operator on $R\Gamma_{\crys}\big(\fY_{\mO/p}/(\Acrys, N_\infty), \mE\big)$ via the identification \eqref{eq: N vs Ninfty}.

Similarly, taking base change along $(W(k), \N) \ra (W(k), N_\infty)$ induces an equivalence of categories
\[
    D_\perf^{\vp}\big((\fY_{k}/(W(k), \N))_{\crys}\big) \cong D_\perf^{\vp}\big((\fY_{k}/(W(k), N_\infty))_{\crys}\big).
\]
Furthermore, it induces isomorphisms between cohomologies as \eqref{eq: N vs Ninfty}. Therefore, the derived crystalline specialization functor in Definition \ref{defn: crystalline specialization} induces a functor
\begin{equation}\label{eq: crystalline specialization with N}
    \DBKF^{\log}(\fX,\vp) \xrightarrow[]{\sigma_\crys^*} D_{\perf}^{\vp}\big((\fX_k/(W(k),N_\infty))_{\crys}\big) \xrightarrow[]{\sim} D_{\perf}^{\vp}\big((\fX_k/(W(k),\N))_{\crys}\big),
\end{equation}
which we still denote by $\sigma_\crys^*$.

Now consider three crystalline pre-log prisms $(\mS,(p),\N)$, $(\Acrys,(p),\N)$, and $(\widehat{A}_\st, (p), \N)$. Recall that the pre-log structures on $\Acrys$ (resp. $\widehat{A}_\st$ and $\mS$) sends $1 \mapsto [\pi^\flat]$ (resp. $1\mapsto u$). Since $u/[\pi^\flat]$ is a unit in $\widehat A_\st$, we obtain maps of log-rings
\begin{equation}\label{eq: diagram crystalline log prisms}
\begin{tikzcd}
    (\mS,(p), \N)^a \arrow[r] & (\widehat{A}_\st, (p), \N)^a & (\Acrys,(p), \N)^a \arrow[l],
\end{tikzcd}
\end{equation}
where the left arrow sends $u \mapsto u$ and the right arrow is induced by the natural inclusion $\Acrys\rightarrow \widehat{A}_{\mathrm{st}}$. If we put the trivial $G_K$-action on $\mS$, then the two arrows are $G_K$-equivariant.\footnote{The morphism $(\mS,(p), \N)^a \ra (\Acrys,(p), \N)^a$ sending $u$ to $[\pi^\flat]$ is not $G_K$-equivariant (cf. \cite[Remark 3.8]{Yao_semistable}). But we do not use this map in this article.}

Let $\fY$ be a semistable formal scheme over $\mO_K$ and let $\fY_{\mO_K/p}$ (resp. $\fY_{\mO/p}$) be the base change of $\fY$ over $\mO_K/p$ (resp. $\mO/p$). Then \eqref{eq: diagram crystalline log prisms} induces a commutative diagram
    \begin{equation}\label{diagram: A_st}
    \begin{tikzcd}
        \Vect^{\an,\vp}\big(\fY_\Prism\big) \arrow[r] \arrow[d, "j_*"] \ar[dd, bend right =60]&\DBKF^{\log}(\fY_{\mO},\vp) \arrow[d,"\sigma^*_{\Acrys}"] \\
        D_{\perf}^{\vp}(\fY_{\Prism}) \arrow[d, "D_{\crys}"] &  D_{\perf}^{\vp}\big((\fY_{\mO/p}/(\Acrys,N_\infty))_{\crys}\big) \arrow[d, "\cong"] \\
        D_{\perf}^{\vp}\big((\fY_{\mO_K/p}/(\mS, \N))_{\crys}\big) \ar[d] \arrow[rd] &  D_{\perf}^{\vp}\big((\fY_{\mO/p}/(\Acrys,\N))_{\crys}\big) \arrow[d]\\ 
        D_{\perf}^{\vp}\big((\fY_k/(W(k), \N))_{\crys}\big) & D_{\perf}^{\vp}\big((\fY_{\mO/p}/(\widehat A_\st, \N))_{\crys}\big) \arrow[l]
    \end{tikzcd}
    \end{equation}
    For any $\mM\in \DBKF^{\log}(\fY_\mO,\vp)$ that lies in the essential image of $\Vect^{\an,\vp}\big(\fY_\Prism\big)$, we use $\sigma^*_{\widehat{A}_{\st}}\mM$ and $\sigma_{\mS}^* \mM$ to denote the images of $\mM$ in $D_{\perf}^{\vp}\big((\fY_{\mO/p}/(\widehat A_{\st}, \N))_{\crys}\big)$ and $D_{\perf}^{\vp}\big((\fY_{\mO_K/p}/(\mS, \N))_{\crys}\big)$ in the diagram above, respectively. Let $\sigma_\crys^*\mM$ be the image of $\mM$ in $D_{\perf}^{\vp}\big((\fY_k/(W(k), \N))_{\crys}\big)$ via the functor \eqref{eq: crystalline specialization with N}.

    For reader's convenience, let us clarify the monodromy operators on the crystalline cohomlogies of $\sigma_{\crys}^* \mM$, $\sigma_{\widehat A_\st}^*\mM$, and $\sigma_{\mS}^*\mM$. For simplicity, we use $\mE$ to denote either $\sigma_{\crys}^* \mM$, $\sigma_{\widehat A_\st}^*\mM$, or $\sigma_{\mS}^*\mM$. We follow the constructions in \cite[(3.5)]{HK}, \cite[(4.3)]{tsuji1999}, and \cite[Remark 6.45]{DMS}.
    \begin{enumerate}
        \item Let $(\mS^{(1)}, (p), \N)^a$ denote the self product of $(\mS, (p), \N)^a$ in the category of log prisms over $(\mO_K, (p), \N)^a$, with canonical maps $p_0,p_1\colon (\mS,(p),\N) \ra (\mS^{(1)},(p),\N)$. Then $\mS^{(1)}$ can be identified with $\mS\langle v-1\rangle_{\mathrm{PD}}$ with $v= \frac{p_0(u)}{p_1(u)}$. Let 
        \[
            \mK_\mS= R\Gamma_{\crys}\big(\fY_{\mO_K/p}/(\mS,\N), \mE\big), \ \text{and}\ \mK_\mS^{(1)}= R\Gamma_{\crys}\big(\fY_{\mO_K/p}/(\mS^{(1)},\N), \mE\big).
        \]
        Let $q\colon \mS^{(1)} \cong \widehat \bigoplus_{n\ge 0} \mS(v-1)^{[n]}  \ra \mS$ denote the projection to the component of $(v-1)^{[1]}$, where $(v-1)^{[n]}= \frac{(v-1)^n}{n!}$. Then we define the monodromy operator on $\mK_S$ as
        \[
            N \colon \mK_\mS \ra \mK_\mS\otimes_{\mS, p_0}^{\L}\mS^{(1)} \cong \mK_\mS^{(1)} \cong  \mK_\mS\otimes_{\mS, p_1}^{\L}\mS^{(1)} \xrightarrow[]{q} \mK_\mS,
        \]
        (cf. \cite[(4.3)]{tsuji1999} and \cite[Remark 6.45]{DMS}).
        \item Let $(\widehat A_\st^{(1)}, (p), \N)^a$ denote the self product of $(\widehat A_\st, (p), \N)^a$ in the category of log prisms over $(\Acrys, (p), \N)^a$ with canonical maps $p_0,p_1\colon (\widehat A_\st,(p), \N) \ra (\widehat A_\st^{(1)},(p), \N)$. Then $\widehat A_\st^{(1)}$ can also be identified with $\widehat A_\st\langle v-1\rangle_{\mathrm{PD}}$ with $v= \frac{p_0(u)}{p_1(u)}$. The monodromy operator $N$ on $\mK_\st= R\Gamma_{\crys}\big(\fY_{\mO/p}/(\widehat A_\st,\N), \mE\big)$ is defined in the same way as in (1).

        \item By construction, the natural morphisms of log-prisms
        \[
        \begin{tikzcd}
            (\mO_K, (p), \N)^a \ar[r] \ar[d] & (\mS, (p), \N)^a \ar[d] \ar[r,shift left=2pt, "p_0"] \ar[r,shift right=2pt, "p_1"'] & (\mS^{(1)}, (p), \N)^a \ar[d] \\
            (\Acrys, (p), \N)^a \ar[r] & (\widehat A_\st, (p), \N)^a \ar[r,shift left=2pt, "p_0"] \ar[r,shift right=2pt, "p_1"'] & (\widehat A_\st^{(1)}, (p), \N)^a
        \end{tikzcd}
        \]
        induce a map $\mK_S \ra \mK_\st$ that is compatible with monodromy operators.
        \item Let $(\mD,(p),\N)^a$ denote the self product of $(W(k), (p), \N)^a$ over $(W(k),(p), \{0\})^a$, where the first $W(k)$ is equipped with the pre-log structure $\N \ra W(k), 1\mapsto 0$, and the second one is equipped with the trivial log structure. The same construction as in (1) defines monodromy operators on $R\Gamma_{\crys}\big(\fY_k/(W(k),\N\big), \mE)$ and $\mK_0= R\Gamma_{\crys}\big(\fY_{\mO_K/p}/(W(k),\N), \mE\big)$ (cf. \cite[(3.5)]{HK}). 
        \item By the same argument as \cite[Proposition 4.13, Lemma 5.2]{HK}, which essentially uses the Dwork's trick as in Proposition \ref{crys vs absolute crys}, there is a natural isomorphism 
        \[
            \mK_0\otimes_{W(k)}^{\L} \mS \xrightarrow[]{\sim} \mK_S.
        \]
        Moreover, it is compatible with monodromy operators (cf. \cite[(5.5)]{HK}).
    \end{enumerate}

\begin{lemma}\label{lem: N=0 on Acrys}
    With the same notation as above, the image of 
    \[
        H_{\crys}^i\big(\fY_{\mO/p}/(\Acrys,\N), \mE\big) \ra H_\crys^i\big(\fY_{\mO/p}/(\widehat A_\st, \N),\mE\big)
    \]
    is contained in the $N=0$ part.
\end{lemma}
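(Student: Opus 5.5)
The plan is to read the monodromy operator through the self-product construction (2) and factor the map $\Acrys\to\widehat A_\st\rightrightarrows\widehat A_\st^{(1)}$. The key observation is that $\widehat A_\st^{(1)}$ is the self-product of $(\widehat A_\st,(p),\N)^a$ \emph{over} $(\Acrys,(p),\N)^a$. Hence the two projections $p_0,p_1\colon \widehat A_\st\to\widehat A_\st^{(1)}$ agree after precomposition with $\Acrys\to\widehat A_\st$, both as ring maps and as maps of log structures, since the pre-log structure $1\mapsto[\pi^\flat]$ on $\Acrys$ is sent to $p_0([\pi^\flat])=p_1([\pi^\flat])$.

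First, I will write $\mK_{\crys}=R\Gamma_{\crys}\big(\fY_{\mO/p}/(\Acrys,\N),\mE\big)$ and $\mK_\st$ as in (2). Base change for log crystalline cohomology, which is the same input used to make sense of (2), gives identifications
\[
\mK_\st\otimes^{\L}_{\widehat A_\st,p_i}\widehat A_\st^{(1)}\cong \mK_\st^{(1)},
\]
which are compatible with the map from $\mK_{\crys}$ for $i=0,1$. By the observation above, the composites
\[
\mK_{\crys}\to\mK_\st\xrightarrow{p_i}\mK_\st^{(1)}
\]
coincide for $i=0,1$, because both are induced by the same morphism of log PD-thickenings $(\Acrys,\N)\to(\widehat A_\st^{(1)},\N)$. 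Consequently, for $x$ in the image of $H^i(\mK_{\crys})$, the images $p_0^*x$ and $p_1^*x$ in $H^i(\mK_\st^{(1)})$ are equal.

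Next, I will unwind $N$. Using $\widehat A_\st^{(1)}\cong\widehat\bigoplus_n \widehat A_\st (v-1)^{[n]}$ via $p_1$, the definition $N=q\circ(p_1^*)^{-1}\circ p_0^*$ means the following. One writes $p_0^*x=\sum_n p_1^*(x_n)(v-1)^{[n]}$ with $x_0=x$, and then sets $N(x)=x_1$, up to the sign convention. If $p_0^*x=p_1^*x$, uniqueness of this expansion forces $x_n=0$ for all $n\ge1$, so $N(x)=0$.

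The main obstacle is justifying the expansion at the level of cohomology groups rather than complexes. I need that $H^i(\mK_\st^{(1)})\cong\widehat\bigoplus_n H^i(\mK_\st)(v-1)^{[n]}$ compatibly with $p_1$, i.e.\ flatness of $p_1\colon\widehat A_\st\to\widehat A_\st^{(1)}$ as a completed PD-polynomial extension, so that cohomology commutes with the topologically free base change. I would avoid this by working at the level of complexes. The equality $p_0^*=p_1^*$ on $\mK_{\crys}$ holds as maps of complexes, and $q$ is applied to the complex $\mK_\st\otimes_{p_1}\widehat A_\st^{(1)}$, where $q\circ p_1^*=0$ because $p_1^*$ lands in the degree-zero component of the expansion. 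Therefore $N\circ(\mK_{\crys}\to\mK_\st)=q\circ p_1^*\circ(\ldots)=0$ already in the derived category, and passing to $H^i$ gives the claim.
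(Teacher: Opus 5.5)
Your proposal is correct and is essentially the paper's argument: the image of $H^i_{\crys}$ over $(\Acrys,\N)$ lies where $p_0^*=p_1^*$ (the paper's ``image of $p_0^*-p_1^*$'' should be read as its kernel), and $N=q\circ(p_1^*)^{-1}\circ p_0^*$ vanishes there because $q\circ p_1=0$, which is the ``by construction'' step the paper cites from Tsuji. Your derived-category formulation makes that step explicit and avoids any flatness issue; in fact the identity $q\circ p_1=0$ already works directly on $H^i$, so the obstacle you flag never actually arises.
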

\begin{proof}
    Let $p_i^*$ denote the morphism $H_{\crys}^i\big(\fY_{\mO/p}/(\widehat A_\st,\N), \mE\big) \ra H_\crys^i\big(\fY_{\mO/p}/(\widehat A_\st^{(1)}, \N),\mE\big)$ induced by $p_i \colon \widehat A_{\st} \ra \widehat A_{\st}^{(1)}$ for $i=0,1$. Then the image of $H_{\crys}^i\big(\fY_{\mO/p}/(\Acrys,\N), \mE\big)$ is contained in the image of $p_0^*-p_1^*$, on which the monodromy $N$ is zero by construction (cf. \cite[Lemma 4.3.8]{tsuji1999}).
\end{proof}

\begin{proposition}\label{prop: compare two crystalline cohom tensor B_st}
    Let $\fY$ be a semistable formal scheme over $\mO_K$ and let $\mM\in \DBKF^{\log}(\fY_{\mO},\vp)$ that lies in the essential image of $\Vect^{\an,\vp}\big(\fY_\Prism\big)$. Then there is an isomorphism
    \[
        R\Gamma_{\crys}\big(\fY_k/(W(k), \N),\sigma^*_{\crys}\mM\big) \otimes^{\L}_{W(k)} \hB^+_\st \cong R\Gamma_{\crys}\big(\fY_{\mO/p}/(\Acrys, N_\infty),\sigma^*_{A_\crys}\mM\big)\otimes^{\L}_{A_\crys} \hB_\st^+
    \]
    compatible with $G_K$-actions, Frobenii, and monodromy operators.\footnote{Note that the left-hand side is equipped with the monodromy operator $N\otimes 1+ 1\otimes N_{\widehat B_\st}$. On the other hand, the right-hand side is equipped with the monodromy operator $1\otimes N_{\widehat B_\st}$ because the monodromy operator on $R\Gamma_{\crys}\big(\fY_{\mO/p}/(\Acrys, N_\infty),\sigma^*_{A_\crys}\mM\big)$ is trivial by \eqref{eq: N vs Ninfty} and Lemma \ref{lem: N=0 on Acrys}.}
\end{proposition}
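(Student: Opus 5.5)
The plan is to route both sides through the crystalline cohomology over $(\widehat A_\st,\N)$, using diagram \eqref{diagram: A_st}, and to pass between the two sides by base change along the two arrows of \eqref{eq: diagram crystalline log prisms}.

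\emph{The $\Acrys$ side.} Since $\mM$ lies in the essential image of $\Vect^{\an,\vp}(\fY_\Prism)$, the commutativity of \eqref{diagram: A_st} identifies $\sigma^*_{\widehat A_\st}\mM$ with the base change of $\sigma^*_{A_\crys}\mM$ along $(\Acrys,\N)\to(\widehat A_\st,\N)$. The base change theorem for log crystalline cohomology of perfect $F$-crystals then gives
\[
R\Gamma_{\crys}\big(\fY_{\mO/p}/(\Acrys,\N),\sigma^*_{A_\crys}\mM\big)\otimes^{\L}_{\Acrys}\widehat A_\st\cong R\Gamma_{\crys}\big(\fY_{\mO/p}/(\widehat A_\st,\N),\sigma^*_{\widehat A_\st}\mM\big).
\]
Combining this with \eqref{eq: N vs Ninfty} and inverting $p$ produces the right-hand side of the statement. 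This identification is $G_K$- and Frobenius-equivariant because the map $\Acrys\to\widehat A_\st$ is. For monodromy, Lemma \ref{lem: N=0 on Acrys} shows that classes coming from $\Acrys$ are killed by $N$, so the induced operator on the tensor product is $1\otimes N_{\hB_\st}$.

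\emph{The $\mS$ side.} By \eqref{diagram: A_st}, $\sigma^*_{\widehat A_\st}\mM$ is also the base change of $\sigma^*_\mS\mM$ along $\mS\to\widehat A_\st$, which is $G_K$-equivariant with trivial action on $\mS$. Base change therefore gives $\mK_\mS\otimes^{\L}_\mS\widehat A_\st\cong\mK_\st$. By item (3) above, this isomorphism is compatible with monodromy. Next, item (5) gives $\mK_0\otimes^{\L}_{W(k)}\mS\cong\mK_\mS$, compatibly with $N$. It remains to identify $\mK_0=R\Gamma_{\crys}(\fY_{\mO_K/p}/(W(k),\N),\mE)$ with $R\Gamma_{\crys}(\fY_k/(W(k),\N),\sigma^*_\crys\mM)$. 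I would do this via the Dwork trick of Proposition \ref{crys vs absolute crys} and \cite[Lemma 5.2]{HK}: Frobenius becomes invertible after inverting $p$, and $\fY_{\mO_K/p}$ and $\fY_k$ agree after enough Frobenius twists. Here I use that $\sigma^*_\crys\mM$ is the reduction of $\sigma^*_\mS\mM$, again by \eqref{diagram: A_st}, together with Proposition \ref{crystalline specialization vs crystalline realzation}(2). Chaining these gives $R\Gamma_{\crys}(\fY_k/(W(k),\N),\sigma^*_\crys\mM)\otimes^{\L}_{W(k)}\hB^+_\st\cong\mK_\st[1/p]$. The monodromy on this side is $N\otimes1+1\otimes N_{\hB_\st}$, because $N$ on $\mK_\mS$ satisfies the Leibniz rule with respect to $N=u\,d/du$ on $\mS$. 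The $G_K$-action is trivial on $\mK_0$, so the isomorphism is $G_K$-equivariant.

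\emph{Main obstacle.} The hardest step is the comparison of monodromy operators. One must show that the monodromy defined through the self-products $\mS^{(1)}$ and $\widehat A_\st^{(1)}$ agrees, under base change, with the derivation operator $N_{\hB_\st}$ acting on the coefficient ring. My first approach is to compute explicitly in the PD coordinate $v-1$. For $x\in\mK_\mS$ one has $p_0^*x-p_1^*x=N(x)(v-1)+\cdots$ in $\mK^{(1)}$, and for a scalar $f(u)$ one has $p_0(f)-p_1(f)=u f'(u)(v-1)+\cdots$ by Taylor expansion, since $p_0(u)=v\,p_1(u)$. The same computation applies verbatim to $\widehat A_\st$, and the Leibniz rule follows. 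Finally, Frobenius compatibility is formal: every map used is Frobenius-equivariant after inverting $p$.
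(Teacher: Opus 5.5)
Your proposal is correct and follows essentially the same route as the paper: both sides are compared with $R\Gamma_{\crys}\big(\fY_{\mO/p}/(\widehat A_\st,\N),\sigma^*_{\widehat A_\st}\mM\big)[1/p]$. This goes through base change along $\Acrys\to\widehat A_\st$ (with Lemma \ref{lem: N=0 on Acrys} for monodromy) and along $\mS\to\widehat A_\st$, followed by the Hyodo--Kato isomorphism over $\mS$ and the Dwork-trick identification with $\fY_k$ as in Proposition \ref{crys vs absolute crys}. The only difference is that you justify the monodromy compatibility (the Leibniz rule for $N$) by an explicit Taylor expansion in the PD coordinate $v-1$, whereas the paper simply cites \cite[(5.5)]{HK}.
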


\begin{proof}
     By \eqref{eq: N vs Ninfty}, we may replace $(\Acrys, N_\infty)$ by $(\Acrys, \N)$. Consider the base change along the arrows in diagram \eqref{diagram: A_st}. By the base change property of log crystalline cohomology along $(\Acrys, \N)^a \ra (\widehat A_\st, \N)^a$, we obtain an isomorphism
    \[
        R\Gamma_\crys\big(\fY_{\mO/p}/(\Acrys,\N), \sigma^*_{\Acrys}\mM\big) \otimes^{\L}_{\Acrys}\widehat B_\st^+ \cong R\Gamma_\crys\big(\fY_{\mO/p}/(\widehat A_\st,\N), \sigma^*_{\widehat A_\st}\mM\big)[\frac 1p],
    \]
    compatible with Galois actions and Frobenii. According to Lemma \ref{lem: N=0 on Acrys}, it is also compatible with monodromy operators. 
    Similarly, use the $G_K$-equivariant map $(\mS, \N)^a \ra (\widehat A_\st, \mathbb{N})^a$, we obtain another isomorphism
    \[
       R\Gamma_\crys\big(\fY_{\mO_K/p}/(\mS, \N), \sigma_\mS^*\mM\big) \otimes^{\L}_{\mS}\widehat B_\st^+ \cong R\Gamma_\crys\big(\fY_{\mO/p}/(\widehat A_\st,\N), \sigma^*_{\widehat A_\st}\mM\big)[\frac 1p],
    \]
    compatible with $G_K$-actions, Frobenii, and monodromy operators. 
    Notice that by Remark \ref{absolute crys vs relative crys} (cf. \cite[Remark B.20]{DLMS2}), the projection $\mO_K/p \ra k$ induces an equivalence of categories
    \[
        \Isoc^\vp((\fY_{\mO_K/p}/(\mS,\N))_\crys) \xrightarrow[]{\sim} \Isoc^\vp((\fY_k/(W(k),\N))_\crys),
    \]
    which sends $(\sigma_\mS^*\mM)_\Q$ to $(\sigma^*_{\crys}\mM)_\Q$. In particular, we may regard $(\sigma^*_{\crys}\mM)_\Q$ as an object in the category $\Isoc^\vp((\fY_{\mO_K/p}/(\mS,\N))_\crys)$.
    By the same argument as in \cite[Proposition 4.13, Lemma 5.2, (5.5)]{HK}, there is a natural isomorphism 
    \[
        R\Gamma_\crys\big(\fY_{\mO_K/p}/(\mS, \N), (\sigma_\mS^*\mM)_\Q\big) \cong R\Gamma_\crys\big(\fY_{\mO_K/p}/(W(k), \N), (\sigma^*_{\crys}\mM)_\Q\big)\otimes_{K_0}^{\L}\mS[\frac 1p],
    \]
    compatible with Frobenii and monodromy operators (and note that both sides possess trivial Galois actions). 
    
    Finally, we finish the proof following the same strategy as in the proof of Proposition \ref{crys vs absolute crys}. 
    \begin{enumerate}
        \item Let $\alpha\colon \N \ra \mO_K$ denote the standard pre-log structure sending $1 \mapsto \pi$, which induces pre-log structures $\alpha\colon \N \ra k$, $\alpha\colon \N \ra \mO_K/p$, and $\alpha\colon \N \ra \mO_K/p^{1/p^n}$ through natural projections.
        \item Let $\beta\colon \N \ra W(k)$ (resp. $\beta\colon \N \ra \mO_K$) be the pre-log structure on $W(k)$ (resp. $\mO_K$) that sends $1$ to $0$; it induces pre-log structures $\beta\colon \N \ra k$, $\beta\colon \N \ra \mO_K/p^{1/p^n}$ through natural projections.
    \end{enumerate}
    Notice that $(k, \N, \alpha)^a$ coincides with $(k, \N, \beta)^a$. Choose $n\ge 0$ such that $p^{1/p^n} \mid \pi$ in $\mO_K$, so that $(\mO_K/p^{1/p^n}, \N, \alpha)^a= (\mO_K/p^{1/p^n}, \N, \beta)^a$. Then we obtain an isomorphism
    \[
        \fY_k\times_{\spf (k,\N, \beta)^a} \spf (\mO_K/p^{1/p^n}, \N, \beta)^a \cong \fY_{\mO_K/p}\times_{\spf (\mO_K/p,\N, \alpha)^a} \spf (\mO_K/p^{1/p^n}, \N, \alpha)^a.
    \]
    Applying Frobenius twists to their log crystalline cohomologies, we obtain
    \[
        R\Gamma_\crys\big(\fY_{\mO_K/p}/(W(k), \N,\beta), (\sigma^*_{\crys}\mM)_\Q\big) \cong R\Gamma_\crys\big(\fY_k/(W(k), \N,\beta), (\sigma^*_{\crys}\mM)_\Q\big)
    \]
    compatible with Frobenii and Galois actions. The compatibility of monodromy operators can be deduced similarly as in \cite[(5.5)]{HK}. Putting everything together, we arrive at the desired isomorphism
    \[
        R\Gamma_\crys\big(\fY_{\mO/p}/(\Acrys, N_{\infty}), \sigma^*_{\Acrys}\mM\big) \otimes^{\L}_{\Acrys}\widehat B^+_\st \cong R\Gamma_\crys\big(\fY_k/(W(k),\N), \sigma^*_{\crys}\mM\big)\otimes^{\L}_{W(k)} \widehat B^+_\st,
    \]
    compatible with Galois action, Frobenii, and monodromy operators. 
\end{proof}

We now prove the $C_{\st}$-conjecture with coefficients (Theorem~\ref{mainthm}), except for the compatibility with filtrations.

\begin{theorem}\label{thm: semistable comparison}
    Let $\fY$ be a proper semistable formal scheme over $\mO_K$ with adic generic fiber $Y$. Let $\L$ be a semistable $\Z_p$-local system on $Y$ (viewed as a locally finite free sheaf of $\widehat{Z}_p$-modules on $Y_{\proet}$), and let $\mE_{\crys,\Q}$ be the log $F$-isocrystal associated with $\L$ in the sense of Definition \ref{defn: semistable local systems}. Then, there is a natural isomorphism
        \[
           R\nu_* \L \otimes^{\L}_{\Z_p}B_\st \cong R\upsilon_*(\mE_{\crys,\Q})\otimes^{\L}_{K_0}B_{\st},
        \]
        compatible with $G_K$-actions, Frobenii, and monodromy operators, where $\nu\colon Y_{C,\proet} \ra \fY_{\mO,\ett}$ and $\upsilon\colon (\fY_k/(W(k), \N))_{\crys} \ra \fY_{k,\ett}$ are the natural projections of sites (note that $C= \widehat{\overline K}$). As an immediate corollary, there are natural isomorphisms
        \[
           H^i_{\ett}(Y_C, \L) \otimes_{\Z_p}B_\st \cong H^i_{\mathrm{logcrys}}\big(\fY_k/(W(k), \N), \mE_{\crys,\Q}\big)\otimes_{K_0}B_{\st},
        \]
        compatible with $G_K$-actions, Frobenii, and monodromy operators.
\end{theorem}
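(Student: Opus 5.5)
Let $\mE$ be the analytic prismatic $F$-crystal attached to $\L$ (Theorem \ref{thm: prismatic description of semistable local systems}), and let $\mM=(\widetilde j_*\mE,\ev_j(\mE_\mO),\vp,\iota_\mE)\in\DBKF^{\log}(\fY_\mO,\vp)$ be the associated derived relative log BKF module. The plan is to pass from étale to log-crystalline cohomology through $\widehat B_\st$, in three steps.

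\textbf{Step 1 (étale side).} By Proposition \ref{crystalline specialization vs crystalline realzation}(1) we have $\sigma^*_\ett\mM\cong\L$. The étale comparison, Proposition \ref{prop: etale comparison for derived BKF modules}, gives $R\nu_*\L\otimes\Ainf[\frac1\mu]\cong A\Omega^{\log}_{\fY_\mO}(\mM)\otimes\Ainf[\frac1\mu]$. This is first a sheaf-level statement; after taking global sections, properness is used. Since $\mu$ is invertible in $B_\crys$, extending scalars to $B_\crys$ and using Corollary \ref{cor: crystalline comparison for derived BKF modules}(1) yields
\[
R\nu_*\L\otimes B_\crys\cong R\upsilon_*(\sigma^*_{\Acrys}\mM)\otimes_{\Acrys}B_\crys,
\]
which is Theorem \ref{thm: semistable compsrison over A_crys}. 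Frobenius compatibility follows because $\vp_\bV$ corresponds to $\vp$ on both sides. $G_K$-compatibility requires checking that every construction is functorial for the natural semilinear $G_K$-action: $\fY$ is defined over $\mO_K$, and $G_K$ acts on $\ul\mO$ while preserving $N_\infty$ up to units.

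\textbf{Step 2 (crystalline side).} Tensor Step 1 further to $\widehat B_\st$ (inverting $t$) and apply Proposition \ref{prop: compare two crystalline cohom tensor B_st}. This rewrites the right side as $R\Gamma_\crys(\fY_k/(W(k),\N),\sigma^*_\crys\mM)\otimes_{W(k)}\widehat B_\st$, compatibly with $G_K$, $\vp$, and $N$. On the étale side the monodromy is $1\otimes N_{\widehat B_\st}$; on the crystalline side it is $N\otimes1+1\otimes N$. By Proposition \ref{crystalline specialization vs crystalline realzation}(2), $(\sigma^*_\crys\mM)_\Q\cong D_\crys(j_*\mE)_\Q=\mE_{\crys,\Q}$. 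This gives
\[
R\nu_*\L\otimes\widehat B_\st\cong R\upsilon_*(\mE_{\crys,\Q})\otimes_{K_0}\widehat B_\st .
\]

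\textbf{Step 3 (descent from $\widehat B_\st$ to $B_\st$).} Take cohomology groups. Since the complexes are perfect, the cohomology commutes with flat base change over the field $K_0$ on the right; on the left I would use $\Q_p$-flatness. This gives a $(G_K,\vp,N)$-equivariant isomorphism
\[
H^i_\ett(Y_C,\L)\otimes\widehat B_\st\cong D\otimes_{K_0}\widehat B_\st,
\qquad D=H^i_{\logcrys}(\fY_k/(W(k),\N),\mE_{\crys,\Q}).
\]
Here $D$ is a $(\vp,N)$-module with $N$ nilpotent. Taking $G_K$-invariants shows that $V=H^i_\ett\otimes\Q_p$ is $\widehat B_\st$-admissible, with $\hD_\st(V)\cong D\otimes\widehat B_\st^{G_K}$. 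Theorem \ref{breuil thm 3.3} then shows $V$ is semistable, and Theorem \ref{breuil thm 6.1.1} together with full faithfulness identifies $D_\st(V)$ with $D$. Alternatively, one takes $N$-nilpotent parts of both sides: $V\otimes\widehat B_\st$ has $N$-nilpotent part $V\otimes B_\st$ by Proposition \ref{Nnilplocal}(4), and $D\otimes\widehat B_\st$ has $N$-nilpotent part $D\otimes B_\st$ because $N$ is nilpotent on $D$. This gives the isomorphism of the theorem.

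The sheaf-level statement follows by running the same argument étale-locally on pushforwards, or more simply by stating it for the global complexes.

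\textbf{Main obstacle.} I expect the hardest part to be the compatibility of monodromy and $G_K$-actions through Steps 1–2. The comparison maps are constructed over $\Ainf$/$\Acrys$ with the log structure $N_\infty$, on which $G_K$ acts non-trivially on $[\pi^\flat]$. Reconciling this with the $u$-variable of $\widehat A_\st$ relies on Lemma \ref{lem: N=0 on Acrys} and the $G_K$-equivariance of diagram \eqref{eq: diagram crystalline log prisms}. I would verify this carefully on the Čech–Alexander complexes.
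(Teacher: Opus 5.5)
Your proposal is correct and follows the paper's proof essentially step for step. You build the derived BKF module $\mM$, combine Theorem~\ref{thm: semistable compsrison over A_crys} with Propositions~\ref{prop: compare two crystalline cohom tensor B_st} and~\ref{crystalline specialization vs crystalline realzation} to get the $(G_K,\vp,N)$-equivariant $\hB_\st$-comparison, and then descend to $B_\st$ on cohomology groups using $\hB_\st^{N\text{-}\nilp}=B_\st$ and Breuil's Theorems~\ref{breuil thm 3.3} and~\ref{breuil thm 6.1.1}. The paper finishes that descent with a dimension count; your own observation that $(D\otimes_{K_0}\hB_\st)^{N\text{-}\nilp}=D\otimes_{K_0}B_\st$, because $N$ is nilpotent on $D$, already suffices by itself, and it is safer than appealing to full faithfulness in Breuil's \emph{filtered} categories before the filtrations have been matched.

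The one suggestion to drop is running the argument étale-locally. Without properness you have neither the primitive comparison nor a finite-dimensional $G_K$-representation to which Breuil's results apply. The statement should therefore be read for the global complexes, which is how the paper proves it.
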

\begin{proof}
    Let $\mM$ be the derived relative log BKF module associated with $\L$. By Proposition \ref{crystalline specialization vs crystalline realzation}, the crystalline specialization of $\mM$ recovers $\mE_{\crys,\Q}$. More precisely, let $\mE\in \Vect^{\an,\vp}(\fY_\Prism)$ denote the analytic prismatic crystal associated with $\L$ by Theorem \ref{thm: prismatic description of semistable local systems} and notice that, by Remark \ref{absolute crys vs relative crys} (cf. \cite[Remark B.20]{DLMS2}), there is an equivalence of categories 
    \[ 
        \Isoc^\vp((\fY_{\mO_K/p}/(\mS,\N))_\crys) \cong \Isoc^\vp((\fY_k/(W(k),\N))_\crys).
    \]
    Hence, by Proposition \ref{crystalline specialization vs crystalline realzation}, we obtain isomorphisms
    \[
        (\sigma_{\crys}^*\mM)_{\Q} \cong (D_{\crys}(j_*\mE))_{\Q} \cong \mE_{\crys,\Q}
    \]
    of $F$-isocrystals on $(\fY_k/(W(k), \N))_{\crys}$.
    
    By Theorem \ref{thm: semistable compsrison over A_crys} and Proposition \ref{prop: compare two crystalline cohom tensor B_st}, we already have a natural isomorphism
    \begin{equation}\label{comparison map B_st hat}
        R\upsilon_*(\sigma^*_{\crys}\mM)\otimes^{\L}_{W(k)}\hB_{\st} \simra R\nu_*\L \otimes^{\L}_{\Z_p}\hB_\st,
    \end{equation}
    compatible with $G_K$-actions, Frobenii, and monodromy operators.
    Since the monodromy is nilpotent on $R\upsilon_*(\sigma^*_{\crys}\mM)$ and $\widehat{B}_{\st}^{N-\nilp} =B_\st$, it induces a map
    \begin{equation}\label{comparison map B_st}
        R\upsilon_*(\sigma^*_{\crys}\mM)\otimes^{\L}_{W(k)}B_{\st} \rightarrow (R\nu_*\L \otimes^{\L}_{\Z_p}\widehat B_\st)^{N-\nilp}= R\nu_*\L \otimes^{\L}_{\Z_p} B_\st,
    \end{equation}
   which yields injections on the level of cohomology groups. To show that it is an isomorphism, we take cohomology groups on both sides. The isomorphism (\ref{comparison map B_st hat}) implies
    \begin{equation}\label{eq: H_crys vs widehat D_st}
         H^i_\crys\big(\fY_k/(W(k), \N), \sigma_{\crys}^*\mM\big)\otimes_{W(k)} \hB_\st^{G_K}\cong \hD_{\st}(H^i_{\ett}(Y_C, \L))
    \end{equation}
    as objects in $\hMF_{\hB_\st^{G_K}}(\vp,N)$; in particular, $H^i_{\ett}(Y_C, \L)$ is $\hB_\st$-admissible, and hence semistable by Theorem \ref{breuil thm 3.3}. Therefore, under the equivalence between $\MF_K(\vp,N)$ and $\hMF_{\hB_\st^{G_K}}(\vp,N)$, we obtain an identification of filtered $(\vp,N)$-modules
    \[
        \hD_{\st}(H^i_{\ett}(Y_C, \L))\cong D_{\st}(H^i_{\ett}(Y_C, \L))\otimes_{K_0}\hB_\st^{G_K}
    \]
    by Theorem \ref{breuil thm 6.1.1}. Taking $G_K$-invariants on both sides of $(\ref{comparison map B_st})$, we obtain an inclusion
    \[
        H^i_\crys\big(\fY_k/(W(k), \N), \sigma_{\crys}^*\mM\big)[\frac 1p]\subseteq D_\st(H^i_{\ett}(Y_C, \L)).
    \]
    Since
    \[
        \dim_{K_0}\big(H^i\big(\fY_k/(W(k), \N), \sigma_{\crys}^*\mM\big)[\frac 1p]\big) \xlongequal{\eqref{eq: H_crys vs widehat D_st} }\rank_{\hB_\st^{G_K}}(\hD_{\st}(V))= \dim_{K_0}(D_\st(V)),
    \]
    where $V= H^i_{\ett}(Y_C, \L)$, we conclude that the inclusion must be an identity. Consequently, (\ref{comparison map B_st}) is an isomorphism.
\end{proof}

It remains to deal with filtrations. By Proposition \ref{prop: F-isocrystal structure},  $(\sigma_{\crys}^*\mM)_{\Q}$ is associated with the filtered vector bundle $\big(D_{\dR}(\L),\nabla_{D_{\dR}(\L)},\Fil^\bullet_{D_{\dR}(\L)}\big)$ with integrable connection. The filtration $\Fil_{D_\dR(\L)}^\bullet$ induces a natural filtration on $R\upsilon_*(\sigma^*_{\crys}\mM)\otimes^{\L}_{W(k)}B_{\dR}$ via pulling back along
\begin{align*}
    R\upsilon_*(\sigma^*_{\crys}\mM)\otimes_{W(k)}^{\L} B_{\dR} & \ra R\upsilon_*(\sigma^*_{\Acrys}\mM) \otimes_{\Acrys}^{\L}B_\dR \\ 
     & \ra R\nu_*(\sigma_{B_{\dR}^+}^*\mM)\otimes_{B_\dR^+}^{\L}B_\dR \xrightarrow[]{\cong} \DR\big(D_{\dR}(\L), \nabla_{D_{\dR}(\L)}\big)\otimes_K^{\L} B_{\dR},
\end{align*}
where the first arrow is obtained from Proposition \ref{prop: compare two crystalline cohom tensor B_st}, the second one is obtained from diagram (\ref{eq: diagram 1}), and the last one is obtained from diagram (3) in \cite[Theorem 10.16]{Guo-Reinecke}.
We need to show that the isomorphism
    \begin{equation}\label{eq: C_st comparison tensor B_dR}
         R\nu_* \L \otimes^{\L}_{\Z_p}B_\dR \ra R\upsilon_*(\sigma^*_{\crys}\mM)\otimes^{\L}_{W(k)}B_{\dR}
    \end{equation}
    is compatible with filtrations.
    Let \[C_{\proet\text{-}\dR}\coloneqq R\nu_*\big(\DR(\L\otimes \mO\B_\dR, \mathrm{id}_{\L}\otimes\nabla_{\mO\B_\dR})\big)\] and $C_{\dR}\coloneqq \DR\big(D_{\dR}(\L), \nabla_{D_{\dR}(\L)}\big)$. Consider the diagram
    \begin{equation}\label{diagram: filtration}
    \begin{tikzcd}
        & A\Omega_{\fY}^{\log}(\mM)\otimes_{\Ainf}^{\L}B_\dR \ar[d,"\beta"'] \ar[r,"\alpha"] & R\upsilon_*(\sigma^*_{\Acrys}\mM)\otimes_{\Acrys}^{\L}B_\dR \ar[d] \arrow[phantom, from=1-2, to=2-3, "(1)"] \\
         R\nu_*\L\otimes_{\Z_p}^{\L}B_\dR \ar[r] \ar[ru] \ar[rd] & R\nu_*(\bM\otimes \B_\dR^+)\otimes_{B_\dR^+}B_\dR \ar[d] & R\nu_*(\sigma_{B_{\dR}^+}^*\mM)\otimes_{B_\dR^+}^{\L}B_\dR \ar[l] \ar[d] 
        \arrow[phantom, from=2-2, to=3-3, "(2)" ] \\
        & C_{\proet\text{-}\dR} & C_\dR\otimes_K^{\L} B_\dR. \ar[l]
    \end{tikzcd}
    \end{equation}
    Square (1) commutes due to the diagram (\ref{eq: diagram 1}) in Construction \ref{construction: commutativity of diagram (1)}, while square (2) commutes by the same argument as in the proof of \cite[Theorem 10.16]{Guo-Reinecke} and \cite[\S 7.3.2]{DMS}. 
    Consequently, the arrows in (2) are isomorphisms compatible with filtrations by \cite[Theorem 10.16]{Guo-Reinecke}. Therefore, since both $\alpha$ and $\beta$ in diagram \eqref{diagram: filtration} are already isomorphisms, the comparison map 
    \[
        R\upsilon_*(\sigma^*_{\Acrys}\mM) \otimes_{\Acrys}^{\L}B_\dR \ra  C_\dR\otimes_K^{\L} B_\dR
    \]
    is also an isomorphism. Then the diagram \eqref{diagram: filtration} yields a commutative diagram
    \begin{equation}\label{diagram: define filtrations}
    \begin{tikzcd}
        R\nu_* \L \otimes^{\L}_{\Z_p}B_\dR \arrow[r] \arrow[d, "\eqref{eq: C_st comparison tensor B_dR}"'] \ar[rd] & \DR\big(D_{\dR}(\L), \nabla_{D_{\dR}(\L)}\big)\otimes_K^{\L} B_{\dR} \\
        R\upsilon_*(\sigma^*_{\crys}\mM)\otimes_{W(k)}^{\L} B_{\dR} \ar[r,"\gamma"] & R\upsilon_*(\sigma^*_{\Acrys}\mM) \otimes_{\Acrys}^{\L}B_\dR \ar[u,"\delta"'],
    \end{tikzcd}
    \end{equation}
    where the arrows in the  upper triangle are compatible with filtrations, while the bottom arrow is obtained from Proposition \ref{prop: compare two crystalline cohom tensor B_st}. The morphism $\delta\circ \gamma$ in diagram \eqref{diagram: define filtrations} is exactly the morphism defining the filtrations on $R\upsilon_*(\sigma^*_{\crys}\mM)\otimes_{W(k)}^{\L} B_{\dR}$ (which is now proved to be an isomorphism).
    Therefore, the isomorphism \eqref{eq: C_st comparison tensor B_dR} is compatible with filtrations.
    
    Summing up, we have proved the following semistable comparison theorem; namely, Theorem \ref{mainthm}.

\begin{theorem}\label{thm: semistable comparison+ filtration}
    Let $\fY$ be a proper semistable formal scheme over $\mO_K$ with adic generic fiber $Y$. Let $\L$ be a semistable $\Z_p$-local system on $Y$ and let $\mE_{\crys,\Q}$ be the associated log $F$-isocrystal in the sense of Definition \ref{defn: semistable local systems}. Then, there is a natural isomorphism
        \[
           R\nu_* \L \otimes^{\L}_{\Z_p}B_\st \cong R\upsilon_*(\mE_{\crys,\Q})\otimes^{\L}_{K_0}B_{\st},
        \]
        compatible with $G_K$-actions, Frobenii, filtrations, and monodromy operators, where $\nu\colon Y_{C,\proet} \ra \fY_{\mO,\ett}$ and $\upsilon\colon (\fY_k/(W(k), \N))_{\crys} \ra \fY_{k,\ett}$ are natural projections of sites. As a corollary, there are natural isomorphisms
        \[
           H^i_{\ett}(Y_C, \L) \otimes_{\Z_p}B_\st \cong H^i_{\mathrm{logcrys}}\big(\fY_k/(W(k), \N), \mE_{\crys,\Q}\big)\otimes_{K_0}B_{\st},
        \]
        compatible with $G_K$-actions, Frobenii, filtrations, and monodromy operators.
\end{theorem}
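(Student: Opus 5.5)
The statement to be proved is Theorem~\ref{thm: semistable comparison+ filtration}. By this point it amounts to assembling results already established. I would proceed in three steps.

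\textbf{Step 1: the comparison without filtrations.} Theorem~\ref{thm: semistable comparison} already gives a natural isomorphism
\[
R\nu_*\L\otimes^{\L}_{\Z_p}B_\st\cong R\upsilon_*(\mE_{\crys,\Q})\otimes^{\L}_{K_0}B_\st,
\]
compatible with $G_K$-actions, Frobenii and monodromy. It is obtained from the derived relative log BKF module $\mM$ attached to $\L$ by:
\begin{itemize}
\item the \'etale comparison (Proposition~\ref{prop: etale comparison for derived BKF modules});
\item the crystalline comparison (Corollary~\ref{cor: crystalline comparison for derived BKF modules});
\item the $\hB_\st$ base change of Proposition~\ref{prop: compare two crystalline cohom tensor B_st};
\item Breuil's results Theorems~\ref{breuil thm 3.3} and~\ref{breuil thm 6.1.1}, used to descend from $\hB_\st$ to $B_\st$ via $\hB_\st^{N-\nilp}=B_\st$.
\end{itemize}
Proposition~\ref{crystalline specialization vs crystalline realzation} identifies $(\sigma^*_\crys\mM)_\Q$ with $\mE_{\crys,\Q}$.

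\textbf{Step 2: compatibility with filtrations.} Filtrations only make sense after $\otimes B_\dR$, so I must show that the base-changed isomorphism \eqref{eq: C_st comparison tensor B_dR} is filtered. Here the filtration on the crystalline side is defined by transport along $\delta\circ\gamma$ to $\DR(D_\dR(\L),\nabla)\otimes_K B_\dR$, using Proposition~\ref{prop: F-isocrystal structure}.

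I would use diagram~\eqref{diagram: filtration}:
\begin{itemize}
\item Square (1) commutes by Construction~\ref{construction: commutativity of diagram (1)}, via the compatible maps $\mD_\Sigma\to\mD_{\crys,\Sigma}\to\mD_{\dR,\Sigma}\to R_{\infty,\Phi}(\bullet)[1/p]$ on \v{C}ech--Alexander complexes.
\item Square (2) commutes, and its arrows are filtered isomorphisms, by the argument of \cite[Theorem 10.16]{Guo-Reinecke}.
\item The maps $\alpha$ and $\beta$ are isomorphisms, by Proposition~\ref{prop: derived q-crys vs Ainf} and Proposition~\ref{prop: etale comparison for derived BKF modules} respectively, after inverting $\mu$ (which becomes a unit times $t$ in $B_\dR$).
\end{itemize}
Chasing the diagram then shows that $\delta$ is an isomorphism. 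This yields the commutative triangle~\eqref{diagram: define filtrations}, whose upper maps are filtered, and hence \eqref{eq: C_st comparison tensor B_dR} is filtered by the definition of the filtration on the crystalline side.

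\textbf{Step 3: passing to cohomology.} Since $\fY$ is proper, taking $R\Gamma$ over $\fY_{\ett}$ gives the stated isomorphism on cohomology groups, and exactness of $\otimes B_\st$ over a field lets it pass to each $H^i$.

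\textbf{Main obstacle.} The delicate point is the commutativity of square (2) together with compatibility of the two auxiliary maps $\gamma$ and $\delta$ with the de Rham side. Here $\gamma$ depends on the Dwork-trick isomorphism of Proposition~\ref{prop: compare two crystalline cohom tensor B_st}, which depends on a choice of section $k\to\mO/p$. So I would check that after $\otimes B_\dR$ the resulting map is independent of that choice; this is the standard Hyodo--Kato/Beilinson compatibility. I expect this to follow as in \cite[\S7.3.2]{DMS}.
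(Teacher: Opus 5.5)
Your proposal follows the paper's proof essentially step for step. You use Theorem~\ref{thm: semistable comparison} for the statement without filtrations. You then use diagram~\eqref{diagram: filtration}, with square (1) from Construction~\ref{construction: commutativity of diagram (1)}, square (2) and its filtered isomorphisms from \cite[Theorem 10.16]{Guo-Reinecke}, and $\alpha,\beta$ isomorphisms, to get that $\delta$ is an isomorphism, and you conclude via the triangle~\eqref{diagram: define filtrations}.

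The ``main obstacle'' you flag does not actually arise. The section-dependent Dwork isomorphism is Proposition~\ref{crys vs absolute crys}, which is used only for the finiteness in Theorem~\ref{thm: perfect complex}. By contrast, $\gamma$ comes from Proposition~\ref{prop: compare two crystalline cohom tensor B_st}, which is $G_K$-equivariant by construction: it passes through $\mS\to\widehat{A}_\st$ with $u\mapsto u$, and its Dwork step over $\mO_K/p$ only uses the canonical section $k\to\mO_K/p$. In any case, the filtration on the crystalline side is defined by pulling back along $\delta\circ\gamma$, so no independence-of-choice check is needed.
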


\vspace{0.3in}

\bibliographystyle{alpha}
\bibliography{ref}

\vspace{15mm}

\begin{tabular}{l}
    H.D.\\
    Yau Mathematical Sciences Center \& Department of Mathematical Sciences,\\
    Tsinghua University   \\
    Beijing, China\\
    \textit{E-mail address: }\texttt{hdiao@mail.tsinghua.edu.cn }\\
    \\
    Z.D.\\
    Department of Mathematical Sciences,\\
    Tsinghua University\\
    Beijing, China\\
    \textit{E-mail address: }\texttt{zhefanduan@126.com}\\
    \\
    Z.Y.\\
    Department of Mathematics, \\
    University of California Santa Barbara\\
    Santa Barbara, California, USA\\
    \textit{E-mail address: }\texttt{yao@math.ucsb.edu }
\end{tabular}

\end{document}